\numberwithin{equation}{section}
\definecolor{green}{rgb}{0,0.8,0} % Redefines the color green.
\newcommand{\brk}[1]{\langle#1\rangle}
\newcommand{\br}[1]{\overline{#1}}
\definecolor{deepgreen}{cmyk}{1,0,1,0.5}
\newcommand{\EQ}[1]{\begin{equation}\begin{split} #1 \end{split}\end{equation}}
\newcommand{\Del}[1]{}
\numberwithin{equation}{section}
\newtheorem{theorem}{Theorem}[section]
\newtheorem{corollary}[theorem]{Corollary}%[section]
\newtheorem{lemma}[theorem]{Lemma}%[section]
\newtheorem{proposition}[theorem]{Proposition}%[section]
\newtheorem{remark}[theorem]{Remark}%[section]
\newtheorem{definition}[theorem]{Definition}%[section]
\newcommand{\mas}{{\ \ \text{as} \ \ }}
\newcommand{\norm}[1]{\Vert#1\Vert}
\newcommand{\abs}[1]{\vert#1\vert}
\newcommand{\set}[1]{\{#1\}}
\newcommand{\jap}[1]{\langle #1\rangle}
\renewcommand{\Re}{\mathrm{Re}}
\renewcommand{\Im}{\mathrm{Im}}
\newcommand{\dvol}{\mathrm{dvol}}
\newcommand{\bfa}{{\bf a}}
\newcommand{\bfb}{{\bf b}}
\newcommand{\bfd}{{\bf d}}
\newcommand{\bfe}{{\bf e}}
\newcommand{\bfh}{{\bf h}}
\newcommand{\bfm}{{\bf m}}
\newcommand{\bfn}{{\bf n}}
\newcommand{\bfA}{{ A}}
\newcommand{\bfB}{{ B}}
\newcommand{\bfC}{{ C}}
\newcommand{\bfD}{{\bf D}}
\newcommand{\bfF}{{\bf F}}
\newcommand{\bfH}{{\bf H}}
\newcommand{\bfR}{{\bf R}}
\newcommand{\bfX}{{\bf X}}
\newcommand{\bfY}{{\bf Y}}
\newcommand{\bfZ}{{\bf Z}}
\newcommand{\bfmu}{\boldsymbol{\mu}}
\newcommand{\bfnu}{\boldsymbol{\nu}}
\newcommand{\bfxi}{\boldsymbol{\xi}}
\newcommand{\barg}{{\overline g}}
\renewcommand{\hbar}{{\underline h}}
\newcommand{\bbC}{\mathbb C}
\newcommand{\bbH}{\mathbb H}
\newcommand{\bbR}{\mathbb R}
\newcommand{\bbS}{\mathbb S}
\newcommand{\bbZ}{\mathbb Z}
\newcommand{\calD}{\mathcal D}
\newcommand{\calE}{\mathcal E}
\newcommand{\calH}{\mathcal H}
\newcommand{\calL}{\mathcal L}
\newcommand{\calM}{\mathcal M}
\newcommand{\calN}{\mathcal N}
\newcommand{\calS}{\mathcal S}
\newcommand{\tilM}{{\tilde{M}}}
\newcommand{\ringA}{{\mathring A}}
\newcommand{\ringC}{{\mathring C}}
\newcommand{\ringpsi}{\mathring{\psi}}
\newcommand{\ringPsi}{\mathring{\Psi}}
\newcommand{\supp}{\mathrm{supp}\,}
\newcommand{\ds}{\frac{\ud s}{s}}
\newcommand{\dsp}{\frac{\ud s'}{s'}}
\newcommand{\dspp}{\frac{\mathrm{d} s''}{s''}}
\newcommand{\dsigmap}{{\frac{\ud \sigma'}{\sigma'}}}
\newcommand{\dsigma}{{\frac{\ud \sigma}{\sigma}}}
\newcommand{\bR}{{\mathbb R}}
\newcommand{\bC}{{\mathbb C}}
\newcommand{\low}{{\mathrm{low}}}
\newcommand{\Db}{\mathbf{D}}
\newcommand{\ud}{\mathrm{d}}
\newcommand{\Ls}{L_{\frac{\ud s}{s}}}
\newcommand{\LLs}{L_{\frac{\ud s}{s}}^2\cap L_\ds^\infty}
\newcommand{\rest}{\!\!\restriction}
\newcommand{\bsomega}{{\boldsymbol{\omega}}}
\newcommand{\bsX}{{\boldsymbol{X}}}
\newcommand{\AQ}{\ringA_Q}
\newcommand{\AL}{\ringA_L}
\newcommand{\lot}{{\mathrm{l.o.t.}}}
\newcommand{\Ann}{A}
\newcommand{\nrm}[1]{\Vert#1\Vert}
\newcommand{\lap}{\Dlt}
\newcommand{\nb}{\nabla}
\newcommand{\rd}{\partial}
\newcommand{\aleq}{\lesssim}
\newcommand{\ageq}{\gtrsim}
\newcommand{\alp}{\alpha}
\newcommand{\bt}{\beta}
\newcommand{\Gmm}{\Gamma}
\newcommand{\dlt}{\delta}
\newcommand{\Dlt}{\Delta}
\newcommand{\eps}{\epsilon}
\newcommand{\kpp}{\kappa}
\newcommand{\lmb}{\lambda}
\newcommand{\sgm}{\sigma}
\newcommand{\Sgm}{\Sigma}
\newcommand{\tht}{\theta}
\newcommand{\vtht}{\vartheta}
\newcommand{\omg}{\omega}
\newcommand{\Omg}{\Omega}
\newcommand{\bsxi}{{\boldsymbol{\xi}}}
\newcommand{\bszeta}{{\boldsymbol{\zeta}}}
\newcommand{\bfkpp}{\boldsymbol{\kappa}}
\newcommand{\bflmb}{\boldsymbol{\lambda}}
\newcommand{\epsN}{\eps_{1}}							% small parameter for finding homotopy
\newcommand{\epshf}{\eps_{\ast}}							% small parameter for well-posedness near $Q$
\newcommand{\tgmfd}{\calN}							% Target manifold
\newcommand{\tg}{\tilde{\calN}}							% Target
\newcommand{\tcv}{\bfkpp}								% the Gauss curvature of the target manifold
\newcommand{\pfstep}[1]{\smallskip \noindent {\bf #1.}}
\definecolor{light-gray1}{gray}{0.90}
\definecolor{light-gray2}{gray}{0.80}
\newcommand{\LL}{\mathcal{L}}
\newcommand{\NN}{\mathcal{N}}
\newcommand{\C}{\mathbb{C}}
\newcommand{\Hp}{\mathbb{H}}
\newcommand{\N}{\mathbb{N}}
\newcommand{\R}{\mathbb{R}}
\newcommand{\Sp}{\mathbb{S}}
\newcommand{\Z}{\mathbb{Z}}
\newcommand{\bs}[1]{\boldsymbol{#1}}
\newcommand{\al}{\alpha}
\newcommand{\be}{\beta}
\newcommand{\de}{\delta}
\newcommand{\fy}{\varphi}
\newcommand{\te}{\theta}
\newcommand{\s}{\sigma}
\newcommand{\De}{\Delta}
\newcommand{\Om}{\Omega}
\newcommand{\p}{\partial}
\newcommand{\na}{\nabla}
\newcommand{\Rmnum}[1]{\expandafter\@slowromancap\romannumeral #1@}
\newcommand{\ti}{\widetilde}
\newcommand{\ba}{\overline}
\newcommand{\ang}[1]{\left\langle{#1}\right\rangle}
\renewcommand\Re{\mathrm{Re}\,}
\renewcommand\Im{\mathrm{Im}\,}
\newcommand{\bb}{\Big}
\newcommand{\Str}{\mathrm{Str}}
\newcommand{\ringbfA}{{\mathring{\bfA}}}
\newcommand{\ringbfB}{{\mathring{\bfB}}}
\newcommand{\ringbfC}{{\mathring{\bfC}}}
\newcommand{\bfAQ}{\ringbfA_Q}
\newcommand{\bfAL}{\ringbfA_L}
\title[Asymptotic stability of harmonic maps on $\bbH^2$ under the Schr\"odinger maps evolution]{Asymptotic stability of harmonic maps on the hyperbolic plane under the Schr\"odinger maps evolution}
\author{A. Lawrie}
\author{J. L\"uhrmann}
\author{S.-J. Oh}
\author{S. Shahshahani}
\begin{document}

\begin{abstract} 
We consider the Cauchy problem for the Schr\"odinger maps evolution when the domain is the hyperbolic plane.  An interesting feature of this problem compared to the more widely studied case on the Euclidean plane is the existence of a rich new family of finite energy harmonic maps. 
These are stationary solutions, and thus play an important role in the dynamics of Schr\"odinger maps. 
The main result of this article is the asymptotic stability of (some of) such harmonic maps under the Schr\"odinger maps evolution. More precisely, we prove the nonlinear asymptotic stability of a finite energy equivariant harmonic map $Q$ under the Schr\"odinger maps evolution with respect to non-equivariant perturbations, provided $Q$ obeys a suitable linearized stability condition. This condition is known to hold for all equivariant harmonic maps with values in the hyperbolic plane and for a subset of those maps taking values in the sphere. 
One of the main technical ingredients in the paper is a global-in-time local smoothing and Strichartz estimate for the operator obtained by linearization around a harmonic map, proved in the companion paper~\cite{LLOS1}. 
\end{abstract} 

\thanks{A. Lawrie was supported by NSF grant DMS-1700127 and a Sloan Research Fellowship. S.-J. Oh was supported by Samsung Science and Technology Foundation under Project Number SSTF-BA1702-02. 
A.L., J.L., and S.S. thank the Korea Institute for Advanced Study for its hospitality where part of this work was conducted.}

\maketitle

\section{Introduction}
Schr\"odinger maps are a geometric generalization of wave functions (i.e., complex-valued solutions $\psi$ to the Schr\"odinger equation $\rd_{t} \psi = i \lap \psi$) to Riemann-surface-valued maps. More precisely, given a Riemannian manifold $\Sgm$ and a Riemann surface\footnote{In this paper, by a \emph{Riemann surface}, we mean an orientable $2$-dimensional Riemannian manifold $(\tgmfd, g)$. By orientability, there exists a Riemannian volume form $\omg$, which in turn induces a compatible parallel complex structure $J$ by the relation $g(X, Y) = \omg(X, J Y)$.} $\tgmfd$, a map $u \colon \bR \times \Sgm \to \tgmfd$ is called a Schr\"odinger map if it solves the equation
\begin{equation} \label{equ:schroedinger_maps_equ}
 \partial_t u = \bfh^{jk} J(u) D_{j} \partial_{k} u,
\end{equation}
where $\bfh$ is the metric on $\Sgm$, $D$ is the pull-back covariant derivative on $u^{\ast} T \tgmfd$ (extended to $T^{\ast} \Sgm \otimes u^{\ast} T \tgmfd$ in the natural fashion) and $J(u)$ denotes the complex structure on~$u^\ast T\tgmfd$. 

In this article, we consider the Cauchy problem for the Schr\"odinger maps evolution when the domain is the hyperbolic plane, i.e., $\Sgm = \bbH^{2}$.
An interesting feature of this problem compared to the more widely studied case $\Sgm = \bbR^{2}$ is the existence of a rich new family of finite energy (i.e., $\rd u \in L^{2}(\Sgm)$) harmonic maps, which are stationary solutions to \eqref{equ:schroedinger_maps_equ} and thus play an important role in the dynamics of Schr\"odinger maps. For instance, while it is well-known that no finite energy harmonic maps from $\bbR^{2}$ into $\bbH^{2}$ exist other than the constant maps, there exist infinitely many nonconstant finite energy harmonic maps from $\bbH^{2}$ into $\bbH^{2}$, parametrized by the ``boundary data at infinity''; for more discussion on this point, we refer to \cite{LOS1}.

The main result of the present article is the asymptotic stability of (some of) such harmonic maps under the Schr\"odinger maps evolution. More precisely, we prove the asymptotic stability of a finite energy equivariant harmonic map $Q$ into $\bbS^{2}$ or $\bbH^{2}$ under the Schr\"odinger maps evolution with respect to possibly non-equivariant perturbations, provided that $Q$ obeys a suitable linearized stability condition. In particular, it is applicable to any finite energy equivariant harmonic map $Q$ when $\tgmfd = \bbH^{2}$, and to maps whose image is contained in a small enough geodesic ball when $\tgmfd = \bbS^{2}$. We refer to Subsection~\ref{subsec:result} for a more definite formulation of the main result (Theorem~\ref{thm:main}).

The present work may be thought of as the Schr\"odinger maps analogue of the work \cite{LOS1}, which concerns the asymptotic stability of finite energy equivariant harmonic maps from $\bbH^{2}$ to $\tgmfd = \bbS^{2}$ or $\bbH^{2}$ under the equivariant wave maps evolution, and of the works \cite{Li1, Li2, LMZ-DPDE}, which are on the same problem without any symmetry assumptions when $\tgmfd = \bbH^{2}$. However, analysis of the Schr\"odinger maps equation without symmetry around a nonconstant harmonic map brings on new challenges in comparison with the previous cases, which are discussed (along with their resolutions) in Subsection~\ref{subsec:ideas} below. A more detailed discussion of related prior works is given in Subsection~\ref{subsec:history}.

\subsection{Main result} \label{subsec:result}
The aim of this subsection is to provide a first statement of the main result (Theorem~\ref{thm:main}). We begin with some necessary preliminaries; we will often leave the details to Section~\ref{sec:prelim} below. In all of what follows, $\Sgm = \bbH^{2}$ and $\tgmfd = \bbS^{2}$ or $\bbH^{2}$. 

\subsubsection*{Finite energy equivariant harmonic maps into $\tgmfd = \bbS^{2}$ or $\bbH^{2}$}
Fix a point (the origin) in $\Sgm$ and $\tgmfd$, and consider the action of the rotation group $SO(2) = \set{e^{i \tht} : \tht \in \bbR}$ that fixes this point (there are two possible actions depending on the orientation, but the precise choice is inconsequential). We denote the infinitesimal generator of the rotation (in the direction of increasing $\tht$)  on $\Sgm$ and $\tgmfd$ by $\Omg$ and $\tilde{\Omg}$, respectively. Given a map $u : \Sgm \to \tgmfd$, we define its \emph{infinitesimal equivariant rotation} by
\begin{equation} \label{eq:equiv-rot}
	\calD_{\Omg} u = \ud u (\Omg) - \tilde{\Omg}(u).
\end{equation}
We say that $u$ is \emph{equivariant} (or co-rotational) if $\calD_{\Omg} u = 0$. Graphically, equivariance means that $u$ is invariant under the simultaneous rotations of the domain and the target manifolds by the same angle. In the polar coordinates\footnote{That is, given a point $P$ in $\Sgm$ (resp.~$\tgmfd$), $r$ (resp.~$\rho$) is the geodesic distance from the origin $O$, and $\tht$ (resp.~$\vtht$) is the angle measured at the origin between the geodesic segment $\overline{OP}$ and a reference ray emanating from $O$.} $(r, \tht)$ and $(\rho, \vtht)$ on $\Sgm$ and $\tgmfd$ defined with respect to the origin chosen above, an equivariant map $u$ takes the form
\begin{equation*}
	u(r, \tht) = (\rho(r), \tht).
\end{equation*}

In \cite{LOS1}, all finite energy equivariant harmonic maps from $\Sgm = \bbH^{2}$ into $\tgmfd = \bbS^{2}$ or $\bbH^{2}$ were classified.
\begin{proposition}[{\cite[Propositions~2.1 and 2.2]{LOS1}}] \label{prop:hm-classify}
The following statements hold.
\begin{enumerate}
\item Consider the case $\tgmfd = \bbS^{2}$. For every $0 \leq \alp < \pi$, there exists a unique finite energy equivariant harmonic map $Q_{\lmb} : \bbH^{2} \to \bbS^{2}$ such that $\lim_{r \to \infty} Q_{\lmb}(r, \tht) = (\alp, \tht)$. It takes the form
\begin{equation*}
	Q_{\lmb}(r, \tht) = (2 \mathrm{arctan}(\lmb \tanh(r/2)), \tht),
\end{equation*}
where $\lmb$ is determined by the relation $\alp = 2 \mathrm{arctan}(\lmb)$.
\item Consider the case $\tgmfd = \bbH^{2}$. For every $0 \leq \bt < \infty$, there exists a unique finite energy equivariant harmonic map $P_{\lmb} : \bbH^{2} \to \bbH^{2}$ such that $\lim_{r \to \infty} P_{\lmb}(r, \tht) = (\bt, \tht)$. It takes the form
\begin{equation*}
	P_{\lmb}(r, \tht) = (2 \mathrm{arctanh}(\lmb \tanh(r/2)), \tht),
\end{equation*}
where $\lmb$ is determined by the relation $\bt = 2 \mathrm{arctanh}(\lmb)$.
\end{enumerate}
\end{proposition}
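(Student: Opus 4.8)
Since the statement concerns only \emph{equivariant} maps, the plan is to reduce it to a single scalar ODE for the profile $\rho = \rho(r)$ and integrate it by hand. Write $g(\rho) = \sin\rho$ in the case $\tgmfd = \bbS^2$ and $g(\rho) = \sinh\rho$ in the case $\tgmfd = \bbH^2$, so that the target metric in polar coordinates is $\ud\rho^2 + g(\rho)^2\,\ud\vtht^2$; recall also that the metric on $\Sgm = \bbH^2$ in geodesic polar coordinates is $\ud r^2 + \sinh^2 r\,\ud\tht^2$. For an equivariant map $u(r,\tht) = (\rho(r),\tht)$ one computes the energy density $\abs{\ud u}^2 = (\rd_r\rho)^2 + g(\rho)^2\sinh^{-2} r$, so that
\[
	E[u] = \pi \int_0^\infty \Bigl( (\rd_r \rho)^2 + \frac{g(\rho)^2}{\sinh^2 r} \Bigr) \sinh r \, \ud r ,
\]
and, $Q$ being a critical point of $E$, its profile solves the Euler--Lagrange equation
\[
	\rd_r^2 \rho + \coth r \, \rd_r \rho = \frac{g(\rho) g'(\rho)}{\sinh^2 r} .
\]
For $u$ to extend continuously across $r = 0$, its value there must be a fixed point of the target rotation, which, under the natural normalization that $u$ sends the origin of $\Sgm$ to the origin of $\tgmfd$, forces the boundary condition $\rho(0) = 0$.

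The key simplification is the substitution $t = \log\tanh(r/2)$, a diffeomorphism of $(0,\infty)$ onto $(-\infty,0)$ with $\ud t/\ud r = 1/\sinh r$. Writing $\dot\rho = \ud\rho/\ud t$, a direct computation turns the harmonic map equation into the \emph{autonomous} ODE
\[
	\ddot\rho = g(\rho) g'(\rho) = \tfrac12 \bigl( g(\rho)^2 \bigr)' ,
\]
and transforms the energy into $E[u] = \pi \int_{-\infty}^0 \bigl( \dot\rho^2 + g(\rho)^2 \bigr)\, \ud t$. Multiplying the ODE by $\dot\rho$ and integrating once yields the first integral $\dot\rho^2 = g(\rho)^2 + C$. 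The boundary condition becomes $\rho(t) \to 0$ as $t \to -\infty$, so $g(\rho)\to 0$ and hence $\dot\rho^2 \to C \geq 0$; since $C > 0$ would make $\int_{-\infty}^0 \dot\rho^2\,\ud t$ diverge, contradicting $E[u] < \infty$, we conclude $C = 0$. Thus $\dot\rho = \pm g(\rho)$, and since $g(\rho) \sim \rho$ near $\rho = 0$, the minus sign forces $\rho$ to grow like $e^{-t}$ as $t \to -\infty$ and hence to blow up; so for any nonconstant finite energy solution $\dot\rho = g(\rho)$, with $\rho > 0$ and $\rho \to 0$ at $-\infty$.

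It remains to integrate $\dot\rho = g(\rho)$ and to read off the admissible parameter range. In the case $\tgmfd = \bbS^2$, separation of variables gives $\log\tan(\rho/2) = t + \mathrm{const}$, i.e.\ $\tan(\rho/2) = \lmb\tanh(r/2)$ with $\lmb \in [0,\infty)$ (the value $\lmb = 0$ being the constant map); this is exactly $Q_\lmb(r,\tht) = (2\,\mathrm{arctan}(\lmb\tanh(r/2)),\tht)$, it is defined for all $r$, and $\lim_{r\to\infty}\rho = 2\,\mathrm{arctan}(\lmb) =: \alp \in [0,\pi)$, with $\lmb = \tan(\alp/2)$ determined by $\alp$. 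In the case $\tgmfd = \bbH^2$, separation of variables gives $\tanh(\rho/2) = \lmb\tanh(r/2)$; here one needs $\lmb \leq 1$ for $\rho$ to be defined on all of $(0,\infty)$, and the borderline value $\lmb = 1$ is the identity map $\rho = r$, which has \emph{infinite} energy (indeed $E = 2\pi\int_0^{\rho_\infty}\sinh\rho\,\ud\rho = 2\pi(\cosh\rho_\infty - 1)$ with $\rho_\infty = \infty$ when $\lmb = 1$) and is excluded; so $\lmb \in [0,1)$ and $P_\lmb(r,\tht) = (2\,\mathrm{arctanh}(\lmb\tanh(r/2)),\tht)$ with $\lim_{r\to\infty}\rho = 2\,\mathrm{arctanh}(\lmb) =: \bt \in [0,\infty)$ and $\lmb = \tanh(\bt/2)$. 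Uniqueness in both cases is immediate: the solutions of $\dot\rho = g(\rho)$ with $\rho(-\infty)=0$ are the time-translates $\rho_\lmb$, and $\lmb \mapsto \rho_\lmb(\infty)$ is strictly increasing, so the boundary value at infinity determines $\lmb$.

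The energy and Euler--Lagrange computations and the explicit integrations are routine; the step that requires care is the classification in the second paragraph --- that finite energy together with regularity at $r = 0$ forces $\rho(0) = 0$, the vanishing of the first-integral constant, and the correct sign, thereby ruling out non-monotone or finite-$r$-blow-up solutions. This is what upgrades ``a solution of the ODE'' to ``exactly the one-parameter families $Q_\lmb$ and $P_\lmb$''.
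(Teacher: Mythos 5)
Your argument is correct, and it is essentially the standard proof. Note that this paper does not actually prove Proposition~\ref{prop:hm-classify} — it is quoted from \cite[Propositions~2.1 and 2.2]{LOS1} — and the proof given there proceeds exactly as you do: reduction to the radial ODE, the substitution $t=\log\tanh(r/2)$ turning it into the autonomous equation $\ddot\rho=g(\rho)g'(\rho)$, the first integral, the finite-energy argument killing the constant, and the resulting Bogomoln'yi-type equation $\dot\rho=g(\rho)$ which integrates explicitly. Your sign analysis and the exclusion of $\lmb\geq 1$ in the hyperbolic case are both sound (for completeness: a nonconstant solution cannot switch between the branches $\dot\rho=\pm g(\rho)$, since at a switching point $\dot\rho=g(\rho)=0$ and uniqueness for the second-order ODE would force $\rho$ to be constant).

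The one place where care is genuinely required is the one you flag: the normalization $\rho(0)=0$. For $\tgmfd=\bbH^{2}$ this is automatic (the origin is the unique fixed point of the rotation), but for $\tgmfd=\bbS^{2}$ the value at $r=0$ could a priori be the antipodal pole, and the branch $\dot\rho=-\sin\rho$ with $\rho\to\pi$ as $t\to-\infty$ produces a second family $\rho=2\arctan(\mu\coth(r/2))$, which is smooth, equivariant, and of finite energy $2\pi(1+\cos(2\arctan\mu))$, with the same limit $2\arctan\mu$ at $r=\infty$. So uniqueness in the sphere case holds only within the class of maps sending the origin to the origin; this normalization is part of the setup in \cite{LOS1} (where it is built into the definition of the admissible class) even though it is suppressed in the restatement here. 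Your proof makes it explicit, which is the right thing to do.
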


\subsubsection*{Linearized stability conditions}
We are interested in the asymptotic stability of the harmonic maps in Proposition~\ref{prop:hm-classify} under the nonlinear Schr\"odinger maps evolution. The starting point for this investigation is the formulation of suitable stability conditions for the linearized equation about such a harmonic map. 

Given a harmonic map $Q : \Sgm \to \tgmfd$ and a section $\phi$ of $Q^{\ast} T \tgmfd$, we introduce the operator 
\begin{equation} \label{eq:lin-hm}
	\calH_{Q} \phi = - D^{\ell} D_{\ell} \phi - R(Q)(\phi, \rd_{\ell} Q) \rd^{\ell} Q,
\end{equation}
where $D$ is the pull-back covariant derivative on $Q^\ast T \tgmfd$ and $R$ the pull-back curvature operator. Here and below, the standard convention of raising and lowering tensorial indices on the domain $\Sgm = \bbH^{2}$ via the metric $\bfh$ is in effect. This operator is linear and symmetric for smooth compactly supported $\varphi$ with respect to the natural $L^{2}$-pairing for sections of $Q^{\ast} T \tgmfd$. It arises as the linearization of the harmonic maps equation $D^{\ell} \rd_{\ell} u = 0$ about $Q$. Following a common convention, we call $\calH_{Q}$ the \emph{linearized operator} about $Q$. 

To motivate the first linearized stability condition, we start with an alternative interpretation of $\calH_{Q}$ in the context of calculus of variations. A harmonic map is a critical point (at least formally, with respect to the $L^{2}$ inner product for sections of $Q^{\ast} T \tgmfd$) of the energy functional
\begin{equation*}
\calE(u) = \frac{1}{2} \int_{\Sgm} (\rd^{\ell} u, \rd_{\ell} u).
\end{equation*}
The linearized operator $\calH_{Q}$ is the Hessian of the energy functional at the critical point $u = Q$. Thus, if $Q$ is a local minimum of the energy functional (in a suitable class of maps), then $\int_{\Sgm} (\calH_{Q} \phi, \phi)$ needs to be nonnegative. Our first linearized stability condition is a strengthening of this property:

\begin{definition} [Weak linearized stability] \label{def:weak-stab}
We say that $Q$ obeys the \emph{weak linearized stability condition}, or that it is \emph{weakly linearly stable}, if there exists $\rho_{Q} > 0$ such that, for any smooth compactly supported section $\phi$ of $Q^{\ast} T \calN$, 
\begin{equation} \label{eq:weak-stab}
	\int_{\Sgm} (\calH_{Q} \phi, \phi)  \geq \rho_{Q}^{2} \int_{\Sgm} (\phi, \phi).
\end{equation}
\end{definition}
By the Poincar\'e inequality (or equivalently, the spectral gap $\frac{1}{4}$ for the Laplacian) on $\bbH^{2}$, we note that \eqref{eq:weak-stab} holds with $\rho_{Q}^{2} =\frac{1}{4}$ when $Q$ is the constant map \cite{Bray}. In fact, all harmonic maps in Proposition~\ref{prop:hm-classify} are weakly linearly stable; see Proposition~\ref{prop:weak-stab} below.

The weak linearized stability condition suffices for the local rigidity of the harmonic map $Q$ under suitably regular and localized perturbations, as well as for the asymptotic stability of $Q$ under the harmonic maps \emph{heat flow} evolution (see Theorem~\ref{thm:hmhf-gwp-simple}; note that the latter implies the former). However, the implication of the weak linearized stability condition for the Schr\"odinger maps evolution is less clear. For our main result, we require the following stronger condition, whose role is to ensure that the associated linearized Schr\"odinger maps equation has good dispersive behaviors (see Proposition~\ref{prop:strong-stab-led} and the related discussions):

\begin{definition} [Strong linearized stability] \label{def:strong-stab}
We say that $Q$ satisfies the \emph{strong linearized stability condition}, or that it is \emph{strongly linearly stable}, if the following conditions hold:
\begin{itemize}
\item {\bf No eigenvalues below $\frac{1}{4}$.} For any smooth compactly supported section $\phi$ of $Q^{\ast} T \calN$, 
\begin{equation} \label{eq:no-eval-cov}
	\int_{\Sgm} (\calH_{Q} \phi, \phi)  \geq \frac{1}{4} \int_{\Sgm} (\phi, \phi).
\end{equation}
\item {\bf No threshold resonance.} There exists $C_{0} > 0$ such that, for any smooth compactly supported section $\phi$ of $Q^{\ast} T \calN$, 
\begin{equation} \label{eq:no-thr-cov}
	\nrm{\phi}_{\bfH^{1}_{thr}} \leq C_{0} \nrm{(\tfrac{1}{4} - \calH_{Q}) \phi}_{\bfH^{-1}_{thr}}
\end{equation}
where 
\begin{equation} \label{eq:H1-thr-cov}
	\nrm{\phi}_{\bfH^{1}_{thr}} := \nrm{(D_{r} + \tfrac{1}{2}) \phi}_{L^{2}} + \nrm{\frac{1}{\sinh r} D_{\tht} \phi}_{L^{2}} + \nrm{\frac{1}{(1 + r^{2})^{\frac{1}{2}}} \phi}_{L^{2}},
\end{equation}
and $\bfH^{-1}_{thr}$ is the dual of $\bfH^{1}_{thr}$.
\end{itemize}
\end{definition}

For all harmonic maps in Proposition~\ref{prop:hm-classify} in the case $\tgmfd = \bbH^{2}$, and for those with sufficiently small $\lmb$ in the case $\tgmfd = \bbS^{2}$, the strong linearized stability condition holds; see Proposition~\ref{prop:strong-stab} below. However, not all harmonic maps in Proposition~\ref{prop:hm-classify} satisfy this condition. Indeed, in \cite{LOS1}, it was shown that $\calH_{Q}$ has an eigenvalue in the gap $(0, \frac{1}{4})$ if $\tgmfd = \bbS^{2}$ and $\lmb$ is large enough; see also \cite{LOS4} for analogous results for harmonic maps in higher equivariance classes. 

\begin{remark}
Analogous to \cite[\S~1.3.1, Remark~3]{LOS1}, the presence of a gap eigenvalue implies the existence of a non-decaying spatially localized solution to the linearized Schr\"odinger maps equation. Hence, the asymptotic stability of $Q$ cannot be proved by simply relying on the dispersive properties of the underlying linear equation. Nevertheless, on the basis of conservation of energy, local rigidity of $Q$ and the heuristic expectation that the Schr\"odinger map would asymptotically relax to a stationary solution, we still conjecture that all finite energy equivariant harmonic maps into $\bbS^{2}$ must be asymptotically stable, possibly via a nonlinear mechanism as in the work of Soffer--Weinstein \cite{SW99}.
\end{remark}

\subsubsection*{Fractional Sobolev spaces for maps} 
We now turn to the task of defining the analogue of fractional Sobolev spaces for maps $\Sgm \to \tgmfd$. While it is desirable to find an intrinsic construction, it is rather cumbersome to carry out. Instead, we follow a commonly taken shortcut and base our definition on an auxiliary device, namely, an isometric embedding of the relevant part of $\tgmfd$ into a Euclidean space.

In the case when $\tgmfd$ is a closed manifold (so, under our convention, $\tgmfd = \bbS^{2}$), the definitions are straightforward. By Nash's isometric embedding theorem, there exists an isometric embedding $\iota: \tgmfd \hookrightarrow \bbR^{N}$ for some $N$. Abusing the notation a bit, we identify $\tgmfd$ with the submanifold $\iota(\tgmfd) \subseteq \bbR^{N}$. Then for any $\sgm \in \bbR$, we define the space of $H^{\sgm}$ maps into $\tgmfd$ with the same boundary data at infinity as $Q$ to be
\begin{equation} \label{eq:HQs} 
	H^{\sgm}_{Q}(\Sgm ; \tgmfd)= \set{ u : \Sgm \to \bbR^{N}: u - Q \in H^{\sgm} \hbox{ and } u(x) \in \tgmfd \hbox{ for a.e. } x \in \Sgm},
\end{equation}
and equip it with the distance $\nrm{u - v}_{H^{\sgm}}$ (we refer to Subsection~\ref{subsec:fs} below for the definition of the space $H^{\sgm}$).
Similarly, we define the space of $H^{1} \cap C^{0}$ maps into $\tgmfd$ with the same boundary data at infinity as $Q$ to be
\begin{equation*}
	(H^{1} \cap C^{0})_{Q}(\Sgm ; \tgmfd)= \set{ u : \Sgm \to \bbR^{N}: u - Q \in H^{1} \cap C^{0} \hbox{ and } u(x) \in \tgmfd \hbox{ for a.e. } x \in \Sgm},
\end{equation*}
and equip it with the distance $\nrm{u - v}_{H^{1} \cap L^{\infty}}$. For $u \in H^{\sgm}_{Q}(\Sgm; \tgmfd)$ with $\sgm > 1$ or $(H^{1} \cap C^{0})_{Q}(\Sgm; \tgmfd)$, we always choose the continuous representative, so that $u(x) \in \tgmfd$ for all $x \in \Sgm$. 

The isometric embedding $\iota: \tgmfd \to \bbR^{N}$ also allows us to identify sections $\phi$ of $u^{\ast} T \tgmfd$ with $\bbR^{N}$-valued functions. We define the $L^{p}$ and $H^{\sgm}$ norms of $\phi$ by viewing it as an $\bbR^{N}$-valued function; note that the $L^{p}$ norms defined as such are equivalent to the intrinsic $L^{p}$ norms defined using the induced metric on $u^{\ast} T \tgmfd$.

Next, we consider the case when $\tgmfd$ is noncompact (so, under our convention, $\tgmfd = \bbH^{2}$). Note that there cannot exist a uniform isometric embedding of $\tgmfd = \bbH^{2}$ into any Euclidean space, simply by consideration of the growth of the length of circles. On the other hand, note that the image of all harmonic maps $Q$ in Proposition~\ref{prop:hm-classify} are bounded. As we are interested in small perturbations of $Q$, it suffices for our purposes to define distances for maps whose image is close to that of $Q$. 

More precisely, given a bounded neighborhood $\tg$ of $Q(\Sgm)$ in $\tgmfd$, consider a modification $\tgmfd'$ of $\tgmfd$ outside $\tg$ that is a \emph{closed} $2$-dimensional Riemann surface\footnote{In our context, $\tg$ is contained in a disk inside $\bbH^{2}$, so the desired closed Riemann surface (which is nothing but a closed orientable $2$-dimensional Riemannian manifold) $\tgmfd'$ is easily constructed.}. Fix an isometric embedding $\iota$ of $\tgmfd'$ into a Euclidean space. Then we define the spaces
$H^{\sgm}_{Q}(\Sgm; \tg)$ and $(H^{1} \cap C^{0})_{Q}(\Sgm; \tg)$ of maps taking values in $\tg$, as well as the $L^{p}$ and $H^{\sgm}$ norms of sections of the pull-back tangent bundles, via this isometric embedding in the same fashion as above.

To unify the two cases, in what follows, we always {\bf fix a bounded open set $\tg$ in $\tgmfd$ that contains the image of the harmonic map $Q$ we are interested in}, and then {\bf fix an isometric embedding (as Riemannian manifolds) of the modified closed Riemann surface $\tgmfd' \hookrightarrow \bbR^{N}$}, which agrees with $\tgmfd$ on $\tg \subset \tgmfd'$. All spaces of maps into $\tg$ and norms for sections of the pull-back tangent bundles are defined with respect to this fixed isometric embedding.

\subsubsection*{Asymptotic stability of harmonic maps under the heat flow and caloric gauge}
In order to discuss the asymptotic behavior of Schr\"odinger maps about $Q$, we need an appropriate choice of gauge, which, at the naive level, refers to a suitable way of describing the difference between $u$ and $Q$. A geometrically natural and highly useful choice, called a \emph{caloric gauge}, may be made with the help of the associated parabolic flow, namely the \emph{harmonic map heat flow}.
The caloric gauge was first introduced by Tao \cite{Tao04, Tao37} in the context of the wave maps equation on $\bbR^{2}$. We refer to Subsections~\ref{subsec:ideas} and \ref{subsec:history} for further discussion on the caloric gauge and its history.
A one-parameter family of maps $U(s) : \bbH^{2}_{x} \times \bbR^{+}_{s} \to \tgmfd$ is called a \emph{harmonic map heat flow} if
\begin{equation} \label{eq:hmhf}
	\rd_{s} U = D^{\ell} \rd_{\ell} U.
\end{equation}
As alluded to earlier, all harmonic maps $Q$ in Proposition~\ref{prop:hm-classify} are asymptotically stable with respect to \eqref{eq:hmhf}, essentially as a consequence of the weak linearized stability condition.
\begin{theorem} \label{thm:hmhf-gwp-simple}
Let $Q : \bbH^{2} \to \tgmfd$ be a finite energy equivariant harmonic map in Proposition~\ref{prop:hm-classify}. Fix a bounded neighborhood $\tg$ of $Q(\bbH^{2})$ in $\tgmfd$, and consider a smooth map $u$ from $\bbH^{2}$ into $\tg$ such that
\begin{equation*}
 	\nrm{u - Q}_{H^{1} \cap L^{\infty}} \leq \eps_{0}.
\end{equation*}
If $\eps_{0}$ is sufficiently small depending on $Q$ and $\tg$, then the harmonic map heat flow $U(x, s)$ with $U(x, 0) = u(x)$ exists globally. For all $s \in [0, \infty)$, we have $U(x, s) \in \tg$ and  
\begin{equation*}
 	\nrm{U(s) - Q}_{H^{1} \cap L^{\infty}} \aleq e^{-\frac{1}{2} \rho_{Q}^{2} s} \nrm{u - Q}_{H^{1} \cap L^{\infty}}.
\end{equation*}
\end{theorem}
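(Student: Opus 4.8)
The plan is to reduce \thm{thm:hmhf-gwp-simple} to an a priori estimate for the harmonic map heat flow linearized about $Q$, and then close a continuity (bootstrap) argument. First I would pass to the extrinsic formulation: via the fixed isometric embedding $\tgmfd' \hookrightarrow \bbR^{N}$, a heat flow \eqref{eq:hmhf} with values in $\tg$ solves the semilinear parabolic system $\rd_{s} U = \lap_{\bbH^{2}} U + \mathcal{S}(U)(\nb U, \nb U)$, where $\mathcal{S}$ is a smooth extension to a tubular neighborhood of the second fundamental form of the embedding. Since $\tgmfd'$ is compact and $u$ is smooth, classical parabolic theory yields a unique smooth solution $U$ on a maximal interval $[0, S^{\ast})$ with $U(s) - Q \in H^{\sigma}$ for every $\sigma$, continuous in $s$, and with the continuation criterion that $S^{\ast} < \infty$ forces $\|U(s) - Q\|_{H^{1}\cap L^{\infty}}$ (in fact a subcritical norm) to become unbounded as $s \uparrow S^{\ast}$. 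Thus it suffices to prove that, as long as $\|U(s) - Q\|_{H^{1}\cap L^{\infty}}$ stays small, it in fact decays at the advertised rate; global existence then follows, and since $\|U(s) - Q\|_{L^{\infty}}$ stays small while $Q(\bbH^{2})$ lies compactly inside $\tg$, one also gets $U(s) \in \tg$ for all $s$.

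Let $\psi(s)$ be the difference field, taken extrinsically as $\psi = U - Q$ (equivalently, for $\|U-Q\|_{L^{\infty}}$ small, as the section of $Q^{\ast} T\tgmfd$ with $U = \exp_{Q}\psi$; the two agree up to an $O(|\psi|^{2})$ correction). Using that $Q$ is harmonic and expanding \eqref{eq:hmhf} about $Q$, one obtains
\[
	\rd_{s}\psi = -\calH_{Q}\psi + \mathcal{R}(\psi),
\]
with $\calH_{Q}$ the linearized operator \eqref{eq:lin-hm} and $\mathcal{R}(\psi)$ a nonlinear remainder. The structural point, which I would pin down first, is that $\mathcal{R}(\psi)$ vanishes to second order in $\psi$ and involves at most first derivatives of $\psi$: schematically $|\mathcal{R}(\psi)| \aleq |\nb\psi|^{2} + |\nb Q|\,|\psi|\,|\nb\psi| + |\nb Q|^{2}|\psi|^{2}$, where $|\nb Q|$ is bounded (indeed exponentially decaying for the maps of Proposition~\ref{prop:hm-classify}).

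The core is then a coercive energy estimate for this equation. Pairing with $\psi$ and with $\calH_{Q}\psi$ and using \eqref{eq:weak-stab} together with the spectral inequality $\|\calH_{Q}\psi\|_{L^{2}}^{2} \geq \rho_{Q}^{2}\!\int(\calH_{Q}\psi,\psi)$ controls $\|\psi\|_{L^{2}}$ and $\int(\calH_{Q}\psi,\psi)\sim\|\psi\|_{H^{1}}^{2}$, with parabolic dissipation $\|\calH_{Q}\psi\|_{L^{2}}^{2}$, which in turn controls $\|\psi\|_{\dot H^{2}}^{2}$ by elliptic regularity for $\calH_{Q} = -\lap + (\text{smooth, decaying potential})$ on $\bbH^{2}$. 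The quadratic remainder $\mathcal{R}(\psi)$ is then absorbed using the bootstrap smallness of $\|\psi\|_{H^{1}\cap L^{\infty}}$; the delicate piece is the two‑dimensional endpoint term $\|\,|\nb\psi|^{2}\|_{L^{2}} = \|\nb\psi\|_{L^{4}}^{2} \aleq \|\psi\|_{\dot H^{1}}\|\psi\|_{\dot H^{2}}$ (Gagliardo--Nirenberg), which — having no room in $H^{1}(\bbH^{2})$ on its own — is not estimated by itself but traded against the dissipation $\|\calH_{Q}\psi\|_{L^{2}}^{2}$, which dominates it once $\|\psi\|_{\dot H^{1}}$ is small. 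This gives $\tfrac{\ud}{\ud s}\|\psi(s)\|_{H^{1}}^{2} \leq -2\gamma\,\|\psi(s)\|_{H^{1}}^{2}$ for any $\gamma<\rho_{Q}^{2}$ (the deficit absorbing the nonlinear errors), hence exponential $H^{1}$-decay. For the $L^{\infty}$ bound: for small $s$ the maximum principle applied to $w = \tfrac12|U-Q|^{2}$, which satisfies $(\rd_{s}-\lap)w = -|\nb(U-Q)|^{2} + O\big((\|U-Q\|_{L^{\infty}}+|\nb Q|^{2})|\nb(U-Q)|^{2}\big) \leq 0$ once $\|U-Q\|_{L^{\infty}}$ is small, keeps $\|U(s)-Q\|_{L^{\infty}} \aleq \|u-Q\|_{L^{\infty}}$; and the $H^{1}$-decay is upgraded to $L^{\infty}$-decay by going slightly above $H^{1}$ — via parabolic smoothing for the semigroup $e^{-s\calH_{Q}}$ (which has exponential decay because $\calH_{Q}\geq\rho_{Q}^{2}$) together with the two-dimensional embedding $H^{1+\delta}\hookrightarrow L^{\infty}$ — and it is this step that accounts for the reduction of the rate to $\tfrac12\rho_{Q}^{2}$. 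Feeding these bounds back closes the bootstrap and yields both global existence and the stated decay.

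The step I expect to be the main obstacle is the passage to the equation for $\psi$ and the control of $\mathcal{R}(\psi)$: setting up a comparison between $U$ and $Q$ clean enough that the linear part is exactly $-\calH_{Q}$, and then bounding $\mathcal{R}(\psi)$ in the energy-level norms with enough room to be absorbed — in particular the borderline gradient-squared term, which has no room in $H^{1}(\bbH^{2})$ on its own and must be traded against the heat-flow dissipation, and, relatedly, obtaining the $L^{\infty}$ decay, which is again obstructed by the absence of an embedding $H^{1}(\bbH^{2})\hookrightarrow L^{\infty}$.
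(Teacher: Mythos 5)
Your plan is correct in outline, but it takes a genuinely different route from the paper. You linearize directly in the difference $\psi$ between $U$ and $Q$ (via $\exp_{Q}$ or the extrinsic difference) and must therefore control a quadratic remainder, including the borderline term $\||\nb\psi|^{2}\|_{L^{2}}$, which you correctly propose to trade against the $\dot H^{2}$-level parabolic dissipation $\|\calH_{Q}\psi\|_{L^{2}}^{2}$. The paper avoids this remainder entirely: it connects $Q$ to $u$ by a homotopy $u_{0}(\lmb)$ (Lemma~\ref{lem:diff-maps}), flows each $u_{0}(\lmb)$ by the heat flow, and observes that $\rd_{\lmb}u$ satisfies the \emph{exactly linear} inhomogeneous linearized heat flow \eqref{eq:l-hmhf-ex} with coefficients depending on $u(\lmb)$; in the parallel-transported moving frame the error terms are then all of the form (bootstrap-small coefficient)$\times$(linear in $\varphi$), so the weak stability condition yields the differential inequality with no borderline term and only an $H^{1}$-level energy estimate, after which $u-Q=\int_{0}^{1}\rd_{\lmb}u\,\ud\lmb$. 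The $L^{\infty}$ decay is also obtained differently: the paper uses the Bochner identity to show $|\varphi|$ is (essentially) a subsolution of the linear heat equation and invokes the $L^{2}_{s}L^{\infty}_{x}$ estimate of Lemma~\ref{lem:lin-heat-L2Linfty} plus pigeonholing, rather than your maximum principle for $|U-Q|^{2}$ combined with smoothing above $H^{1}$. Two points in your sketch deserve care if you pursue it: (i) the weak stability inequality \eqref{eq:weak-stab} applies only to \emph{tangent} sections of $Q^{\ast}T\tgmfd$, so the normal component of the extrinsic difference must be quotiented out (your $\exp_{Q}$ alternative does this); and (ii) in the maximum-principle step the error is of the schematic form $|U-Q|^{2}(|\nb U|^{2}+|\nb Q|^{2})$ rather than $|U-Q|\,|\nb(U-Q)|^{2}$, so you get a Gronwall factor rather than a clean subsolution, which is harmless on bounded $s$-intervals but should be stated as such. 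Your route is viable and arguably more self-contained, at the cost of a second-order energy estimate and elliptic regularity for $\calH_{Q}$; the paper's homotopy device buys a purely $H^{1}$-level argument with no quadratic remainder, and it simultaneously yields the global bounds for the linearized flow (Proposition~\ref{prop:l-hmhf-gwp}) needed later for the caloric gauge.
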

Therefore, given a small perturbation $u$ of $Q$, the harmonic map heat flow $U$ with $U(x, 0) = u(x)$ gives a natural splitting of $u$ into $Q$ and the remainder, namely,
\begin{equation*}
	u^{A}(x) = Q^{A}(x) - \int_{0}^{\infty} \rd_{s} U^{A}(x, s) \, \ud s,
\end{equation*}
where we used the isometric embedding of $\tg$ into $\bbR^{N}$ to write the above relation. To analyze a Schr\"odinger map $u(t)$ near $Q$, our strategy is to work with the linearized object $\rd_{s} U(t)$ defined as above, which captures the difference between $u(t)$ and $Q$. The gauge freedom in this approach is the choice of an orthonormal frame $e$ of $U^{\ast} T \tgmfd$, with which $\rd_{s} U$ may be described intrinsically (i.e., without reference to any isometric embedding $\tg \hookrightarrow \bbR^{N}$) as a complex-valued function $\psi_{s}$ on $\bbH^{2} \times (0, \infty)$ via
\begin{equation*}
	\psi_{s} = (\rd_{s} U, e_{1}) + i (\rd_{s} U, e_{2}).
\end{equation*}
We refer to Subsection~\ref{subsec:moving-frame} for a more detailed discussion of this formalism. 

The gauge (or the orthonormal frame) we use is specified by the following definition. 
\begin{definition} \label{def:caloric-simple}
Let $U: \bbH^{2} \times [0, \infty) \to \bbH^{2}$ be a global harmonic map heat flow converging to $Q$ as in Theorem~\ref{thm:hmhf-gwp-simple}. We say that an orthonormal frame $e$ of $U^{\ast} T \tgmfd$ is the \emph{caloric gauge with the Coulomb gauge at infinity} if the following conditions hold:
\begin{itemize} 
\item {\bf Positive orientation.} $e_{2} = J(U) e_{1}$.
\item {\bf Caloric gauge condition.} $D_{s} e = 0$ on $\bbH^{2} \times (0, \infty)$.
\item {\bf Coulomb gauge condition at infinity.} The limit $e^{\infty}(x) = \lim_{s \to \infty} e(x, s)$ is a well-defined orthonormal frame on $Q^{\ast} T \tgmfd$ that obeys $D^{\ell} D_{\ell} e^{\infty} = 0$.
\end{itemize}
\end{definition}
We refer to Subsections~\ref{subsec:hmhf-lwp} and \ref{subsec:hmhf-stab} for more details on Theorem~\ref{thm:hmhf-gwp-simple}, and to Subsections~\ref{subsec:hm} and \ref{subsec:caloric} for further properties of the gauge given in Definition~\ref{def:caloric-simple} (including, of course, its existence and uniqueness up to a global rotation).

\subsubsection*{Asymptotic stability of harmonic maps under the Schr\"odinger maps evolution}
We are now ready to state the main result. 
\begin{theorem} \label{thm:main}
Let $Q \colon \bbH^{2} \to \tgmfd$ be a finite energy equivariant harmonic map as in Proposition~\ref{prop:hm-classify}. Assume, in addition, that $Q$ is strongly linearly stable. Fix any small number $\dlt > 0$ and a bounded neighborhood $\tg$ of $Q(\bbH^{2})$ in $\tgmfd$. Consider a smooth map $u_{0}$ from $\bbH^{2}$ into $\tg$ such that
\begin{equation} \label{eq:main-hyp}
 	\nrm{u_{0} - Q}_{H^{1+2\dlt}} + \nrm{\calD_{\Omg} u_{0}}_{H^{1+ 2\dlt}} \leq \eps_{0}.
\end{equation}
If $\eps_{0}$ is sufficiently small depending on $Q$ and $\tg$, then there exists a unique global Schr\"odinger map $u(t)$ with $u(0) = u_{0}$. For all $t \in \bbR$, we have $u(t, \bbH^{2}) \subset \tg$ and
\begin{equation*}
 	\nrm{u(t) - Q}_{H^{1+2\dlt}} + \nrm{\calD_{\Omg} u(t)}_{H^{1+ 2\dlt}} \aleq \nrm{u_{0} - Q}_{H^{1+2\dlt}} + \nrm{\calD_{\Omg} u_{0}}_{H^{1+ 2\dlt}}.
\end{equation*}
Moreover, $u$ asymptotes to $Q$ as $t \to \pm \infty$ in the sense that the following uniform point-wise convergence statement holds:
\begin{equation} \label{eq:main-ptwise}
	\nrm{u(t) - Q}_{L^{\infty}} \to 0 \quad \hbox{ as } t \to \pm \infty.
\end{equation}
In fact, $\psi_{s}$ belongs to the ``dispersive space'' $\calS(\bbR)$, to be defined in Subsection~\ref{subsec:disp} below, in the caloric gauge with the Coulomb gauge at infinity.
\end{theorem}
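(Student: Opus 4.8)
The plan is to pass to the caloric gauge with the Coulomb gauge at infinity (Definition~\ref{def:caloric-simple}) and thereby recast the Schr\"odinger map $u(t)$ near $Q$ as a nonlinear covariant Schr\"odinger equation for the differentiated heat-flow field $\psi_{s}$, whose linear part is the linearized Schr\"odinger maps operator --- essentially $i\calH_{Q}$, twisted by the Coulomb connection $\bfA^{\infty}$ at infinity --- and then to close a bootstrap for $\psi_{s}$ in the dispersive space $\calS(\bbR)$ of Subsection~\ref{subsec:disp}. The strong linearized stability of $Q$ enters exactly here: it is what guarantees, via the companion paper~\cite{LLOS1} (see also Proposition~\ref{prop:strong-stab-led}), the global-in-time local smoothing and Strichartz bounds that make $\calS(\bbR)$ a workable iteration space. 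Before the a priori estimates I would record a local-in-time well-posedness and continuation statement in $H^{1+2\dlt}$ for the Schr\"odinger map near $Q$ (energy method, using the coercivity built into the weak linearized stability inequality~\eqref{eq:weak-stab} and the heat-flow control of Theorem~\ref{thm:hmhf-gwp-simple}), so that it suffices to propagate the asserted bounds on a fixed interval and let the interval grow; uniqueness comes from the same energy estimate applied to a difference.

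\textbf{Gauge reduction and the differentiated equation.} Given $u(t)$, run the harmonic map heat flow in $s$ with datum $u(t)$ at $s=0$; by Theorem~\ref{thm:hmhf-gwp-simple} it is global and converges to $Q$ at rate $e^{-\frac{1}{2}\rho_{Q}^{2}s}$ in $H^{1}\cap L^{\infty}$. In the caloric frame set $\psi_{\ell}=(D_{\ell}U,e_{1})+i(D_{\ell}U,e_{2})$ and $A_{\ell}=(D_{\ell}e_{1},e_{2})$ for $\ell\in\{1,2,t,s\}$, with $A_{s}=0$. These obey the usual structure equations: the heat relations $\rd_{s}\psi_{\ell}=D_{\ell}\psi_{s}$ with $\psi_{s}=\bfh^{mn}D_{m}\psi_{n}$, and the curl relations $\rd_{s}A_{m}=\kappa\,\Im(\psi_{s}\overline{\psi_{m}})$, $\rd_{\ell}A_{m}-\rd_{m}A_{\ell}=\kappa\,\Im(\psi_{\ell}\overline{\psi_{m}})$, from which $A_{m}(s)$ and $A_{t}$ are recovered from $\psi$ by integrating from $s=\infty$ (where $A_{m}$ equals the Coulomb connection $\bfA^{\infty}_{m}$) against the exponentially decaying heat data. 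Differentiating the Schr\"odinger map equation gives an evolution equation of the schematic form
\begin{equation*}
	i\rd_{t}\psi_{s}=\calH_{Q}\psi_{s}+\mathcal{V}[\psi]\,\psi_{s}+\mathcal{A}^{m}[\psi]\,D^{\infty}_{m}\psi_{s}+\mathcal{N}[\psi],
\end{equation*}
where $D^{\infty}$ is built from $\bfA^{\infty}$; the potential $\mathcal{V}[\psi]$ and the magnetic coefficient $\mathcal{A}^{m}[\psi]$ are the genuine corrections once the non-perturbative parts already sitting inside $\calH_{Q}$ are subtracted, so both vanish as $u\to Q$, and $\mathcal{N}[\psi]$ gathers the cubic-and-higher terms. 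For the non-equivariant piece, commute with $\Omg$: since $Q$ is equivariant, $\Omg$ commutes with $\calH_{Q}$, so the field encoding $\calD_{\Omg}u$ --- handled through the good angular derivative $\tfrac{1}{\sinh r}D_{\tht}$ --- solves a Schr\"odinger equation with the same principal operator and a forcing term quadratic-and-higher in $(\psi,\calD_{\Omg}\psi)$.

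\textbf{Nonlinear estimates, bootstrap, and asymptotics.} Assuming $\nrm{\psi_{s}}_{\calS(\bbR)}$ and the corresponding rotation-field norm are $\le C\eps_{0}$, I would estimate the right-hand side above in the source space dual to the local-smoothing/Strichartz output of~\cite{LLOS1}, to get $\nrm{\psi_{s}}_{\calS(\bbR)}\aleq\nrm{\psi_{s}(0)}_{H^{1+2\dlt}}+\nrm{\text{source}}$. The derivative-losing term $\mathcal{A}^{m}[\psi]D^{\infty}_{m}\psi_{s}$ is controlled by pairing the local-smoothing norm of $D^{\infty}_{m}\psi_{s}$ against the dual local-smoothing norm of $\mathcal{A}^{m}[\psi]$, which is small and spatially localized since it measures the gap between the caloric connection of $u$ and that of $Q$ --- bounded by $\nrm{\psi_{s}}_{\calS}$ through the heat and curl relations with exponential $s$-weights and parabolic smoothing. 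The zeroth-order term uses Strichartz and the algebra properties of $\calS$, and $\mathcal{N}[\psi]$ is multilinear and estimated the same way, gaining $\eps_{0}$; the $\Omg$-commuted equation is handled identically. In parallel, the energy method with coercivity~\eqref{eq:weak-stab} propagates $\nrm{\psi_{s}(t)}_{H^{1}}$, and a fractional commutator/energy argument upgrades this to $H^{1+2\dlt}$ uniformly in $t$. Closing the bootstrap for small $\eps_{0}$ yields the global bounds, hence $\psi_{s}\in\calS(\bbR)$; reconstructing $u^{A}(t)=Q^{A}-\int_{0}^{\infty}\rd_{s}U^{A}(t,s)\,\ud s$ and using $\nrm{u(t)-Q}_{L^{\infty}}\aleq\nrm{\psi_{s}(t)}$ (in the appropriate $s$-norm) together with the time-decay encoded in $\calS(\bbR)$ gives $\nrm{u(t)-Q}_{L^{\infty}}\to0$ as $t\to\pm\infty$; smallness of this quantity also keeps $u(t,\bbH^{2})\subset\tg$ for all $t$.

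\textbf{Main obstacle.} The crux will be the first-order ``magnetic'' nonlinearity $\mathcal{A}^{m}[\psi]D^{\infty}_{m}\psi_{s}$ together with the bookkeeping that makes it tractable: because the caloric connection $A_{m}(s)$ tends to $\bfA^{\infty}_{m}$ --- not to zero --- as $s\to\infty$, the magnetic term cannot be treated perturbatively as a whole; one must split it at the level of the heat flow, absorb $\bfA^{\infty}$ into $\calH_{Q}$ (and into the very definition of $\calS$), and then show that the remainder lies in the dual local-smoothing space with norm controlled by $\nrm{\psi_{s}}_{\calS}$, matching the one-derivative loss exactly against the local-smoothing gain of~\cite{LLOS1}. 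This is precisely where the exponential $s$-decay from weak linearized stability and from parabolic regularity are indispensable. A secondary difficulty is propagating the extra $H^{2\dlt}$ of regularity uniformly in time without spoiling the dispersive structure, and carrying the non-equivariant component $\calD_{\Omg}u$ through every step while keeping the angular-derivative weights $\tfrac{1}{\sinh r}D_{\tht}$ and the $\bfH^{1}_{thr}$-type norms under control near the threshold $\tfrac14$, where the linear bounds degenerate and the ``no threshold resonance'' hypothesis must be leveraged.
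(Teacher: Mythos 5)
Your overall architecture --- caloric gauge, the linearized operator $H$ in the Coulomb frame together with the local smoothing/Strichartz theory of \cite{LLOS1}, a bootstrap for $\psi_{s}$ in $\calS(\bbR)$ with the first-order magnetic term as the main enemy, and commutation with $\Omg$ to gain angular regularity and hence pointwise spatial decay of $\ringA$ --- is the same as the paper's. There are, however, two concrete gaps. The first is the local theory: you propose local well-posedness and a continuation criterion in $H^{1+2\dlt}$ ``by the energy method.'' For Schr\"odinger maps no such estimate is available --- the equation is a derivative nonlinear Schr\"odinger system and a naive $H^{\sgm}$ energy identity loses a derivative, which is precisely why local well-posedness is delicate (the paper invokes McGahagan's scheme, Lemma~\ref{l:lwp}, at the level $H^{5}$, via approximation by a wave-map-like system). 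Consequently the paper must work with smooth data, use the $H^{5}$ continuation criterion, and separately prove that two extra levels of regularity are propagated \emph{uniformly in time}; this is Proposition~\ref{prop:prop-reg} (the bounds on $H\psi_{s}$ and $H^{2}\psi_{s}$ in $\calS$), which feeds back through elliptic regularity and Proposition~\ref{prop:backward-caloric-t} to control $\nrm{u(t)-Q}_{H^{5+2\dlt}}$ and preclude blow-up. Without some replacement for this step your global argument does not close: the $\calS$-bootstrap alone controls only $H^{1+2\dlt}$, which is below the regularity at which the solution is known to exist and continue.

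The second gap is that your schematic evolution equation for $\psi_{s}$ omits the Schr\"odinger tension field $w=\psi_{t}-i\Db^{\ell}\psi_{\ell}$. For $s>0$ the heat-flowed map $U(t,\cdot,s)$ is \emph{not} a Schr\"odinger map, and the correct equation is \eqref{equ:schroedinger_equation_psi_s}: $(i\Db_{t}+\Db^{k}\Db_{k})\psi_{s}=i\partial_{s}w+\cdots$. The source $i\partial_{s}w$ is not subsumed by your $\mathcal{V}[\psi]\psi_{s}+\mathcal{A}^{m}[\psi]D^{\infty}_{m}\psi_{s}+\mathcal{N}[\psi]$; it must be estimated through the parabolic equation \eqref{equ:heat_equation_w} for $w$ in the $s$-direction with $w(s=0)=0$ (this is Lemma~\ref{lem:bounds_on_w}, an entire block of the argument). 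Moreover, both that equation and the $\psi_{s}$ equation contain terms that are a priori linear and anti-linear in $(w,\psi_{s})$ with coefficient $(\psi^{\infty})^{k}\psi^{\infty}_{k}$; these vanish only because of the Cauchy--Riemann identity \eqref{eq:CR} satisfied by the equivariant harmonic map, which is one of the key structural underpinnings of the whole scheme. Your assertion that ``the genuine corrections \ldots vanish as $u\to Q$'' presupposes exactly this cancellation without identifying its mechanism, and without it the linear part of the system is neither diagonal nor equal to $H$, so the linear theory of \cite{LLOS1} would not apply.
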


For a more precise statement of Theorem~\ref{thm:main}, see Theorem~\ref{thm:main1} below. We remark that \eqref{eq:main-ptwise} is \emph{qualitative}, in the sense that it requires the qualitative smoothness of $u_{0}$. On the other hand, we will be able to obtain a \emph{quantitative} control of the $\calS(\bbR)$ norm of $\psi_{s}$ in terms of the norm of $u_{0}$ on the LHS of \eqref{eq:main-hyp}. The finiteness of $\nrm{\psi_{s}}_{\calS(\bbR)}$ implies dispersive decay properties, namely the finiteness of appropriate global-in-time local smoothing and Strichartz norms; see Subsection~\ref{subsec:disp} for details. 

As a consequence of Theorem~\ref{thm:main} and Proposition~\ref{prop:strong-stab}, all harmonic maps in Proposition~\ref{prop:hm-classify} in the case $\tgmfd = \bbH^{2}$, as well as those with sufficiently small $\lmb$ in the case $\tgmfd = \bbS^{2}$, are asymptotically stable under the Schr\"odinger maps evolution in the sense of Theorem~\ref{thm:main}.

\begin{remark}
While the present paper only concerns the cases $\tgmfd = \bbS^{2}$ and $\bbH^{2}$ for the sake of simplicity, our approach seems applicable with minor modifications to any finite energy equivariant harmonic map $Q$ into any rotationally symmetric Riemann surface $\tgmfd$ (i.e., there exists an action of $SO(2)$ by isometries) obeying the strong linearized spectral stability condition and a suitable boundary condition at infinity. On the other hand, removing the equivariance assumption on $Q$ seems to require new ideas.
\end{remark}

\subsection{Main ideas for the proof of Theorem~\ref{thm:main}} \label{subsec:ideas}
\subsubsection*{The equations of motion in the caloric gauge}
As alluded to in Subsection~\ref{subsec:result}, our overall strategy for proving Theorem~\ref{thm:main} is to analyze the the linearized object $\rd_{s} U(t, x, s)$ in the caloric gauge (Definition~\ref{def:caloric-simple}). The main equations of motion in this formalism may be summarized as follows (see Subsection~\ref{subsec:equ-mo} for details):
\begin{itemize}
\item The complex-valued function $\psi_{s} = (\rd_{s} U, e_{1}) + i (\rd_{s} U, e_{2})$ obeys the inhomogeneous linearized Schr\"odinger maps equation about $U(t, x, s)$,
\begin{equation} \label{eq:caloric-sch-psi-s}
(i \Db_{t} + \Db^{k} \Db_{k}) \psi_{s} - i \tcv \Im(\psi^{k} \br{\psi_{s}}) \psi_{k} = i \rd_{s} w.
\end{equation}
This is the main equation for analyzing the time evolution of the Schr\"odinger map. Here, $\Db_{\bfmu} = \rd_{\bfmu} + i A_{\bfmu}$ is the induced covariant derivative with the connection $1$-form $A_{\bfmu} = (D_{\bfmu} e_{1}, e_{2})$, $\psi_{\bfmu} = (\rd_{\bfmu} U, e_{1}) + i (\rd_{\bfmu} U, e_{2})$ represents the differential of $U$ expressed with respect to the frame $(e_{1}, e_{2})$ and $\tcv$ is the Gauss curvature of $\tgmfd$; see Subsection~\ref{subsec:moving-frame} for more details on the notation. 

\item The complex-valued function $w = w(t, x, s)$ on the RHS of \eqref{eq:caloric-sch-psi-s} is the \emph{Schr\"odinger tension field}, 
\begin{equation*}
w = (\rd_{t} U - \bfh^{jk} J(U) D_{j} \rd_{k} U, e_{1}) + i (\rd_{t} U - \bfh^{jk} J(U) D_{j} \rd_{k} U, e_{2}), 
\end{equation*}
which measures the failure of the Schr\"odinger map equation for $U(t, x, s)$ with $s > 0$. It is estimated in terms of $\psi_{s}$ through the inhomogeneous linearized harmonic map heat flow about $U(t, x, s)$ in the $s$-direction,
\begin{equation} \label{eq:caloric-heat-w}
(\Db_{s} - \Db^{k} \Db_{k}) w + i \tcv \Im(\psi^{k} \br{w}) \psi_{k} = i \tcv \psi^{k} \psi_{k} \overline{\psi_{s}},
\end{equation}
along with the condition $w(s=0) = 0$, which is equivalent to the Schr\"odinger maps equation for $u(t, x) = U(t, x, 0)$. 

\item In the $s$-direction, $\psi_{s}$ obeys the linearized harmonic map heat flow about $U(t, x, s)$,
\begin{equation} \label{eq:caloric-heat-psi-s}
(\Db_{s} - \Db^{k} \Db_{k}) \psi_{s} + i \tcv \Im(\psi^{k} \br{\psi_{s}}) \psi_{k} = 0.
\end{equation}
\end{itemize}

To close this system of equations, we need to relate $A_{\bfmu}$ and $\psi_{\bfmu}$ with $\psi_{s}$; this is where the caloric gauge condition enters. First, computing $[\Db_{s}, \Db_{\mu}]$ in two different ways, it follows that $\rd_{s} A_{\mu} - \rd_{\mu} A_{s} = \tcv \Im(\psi_{s} \overline{\psi_{\ell}})$.
Moreover, the caloric gauge condition in Definition~\ref{def:caloric-simple} is equivalent to $A_{s} = (D_{s} e_{1}, e_{2}) = 0$. Hence, 
\begin{equation*}
	A_{\mu}(s) = -\int_{s}^{\infty} \tcv \Im(\psi_{s} \overline{\psi_{\ell}})(s') \, \ud s' + A^{\infty}_{\mu}(s),
\end{equation*}
where $A^{\infty}_{\mu} = (D_{\mu} e^{\infty}_{1}, e^{\infty}_{2})$. Next, by the compatibility condition $\Db_{s} \psi_{\mu} = \Db_{\mu} \psi_{s}$ and $A_{s} = 0$,
\begin{equation*}
	\psi_{\mu}(s) = - \int_{s}^{\infty} \Db_{\mu} \psi_{s}(s') \, \ud s' + \psi^{\infty}_{\mu}(s),
\end{equation*}
where $\psi^{\infty}_{\mu} = (\rd_{\mu} Q, e^{\infty}_{1}) + i (\rd_{\mu} Q, e^{\infty}_{2})$.
 
From these equations, we may observe several advantages of the caloric gauge formulation: 
\begin{enumerate}
\item While \eqref{equ:schroedinger_maps_equ} appears to be \emph{quasilinear} due to the presence of the coefficient $J(u)$ in front of $\bfh^{j k} D_{j} \rd_{k} u$, the main $t$-evolution equation \eqref{eq:caloric-sch-psi-s} is a \emph{semilinear Schr\"odinger equation} in $\psi_{s}$ (albeit with a derivative nonlinearity, which is still challenging; see the discussion below). 

\item The formula relating $A_{\mu}$ with the differential of $U$ is favorable (compared to, say, the Coulomb gauge) in that it does not involve inversion of an elliptic operator.

\item The key variables $\psi_{s}$ and $w$ are \emph{scalar}, as opposed to tensorial.

\item Even though $U(t, x, \infty) = Q(x)$ is a nontrivial map, the parts of the system \eqref{eq:caloric-sch-psi-s}, \eqref{eq:caloric-heat-w} and \eqref{eq:caloric-heat-psi-s} that are linear in $\psi_{s}$ and $w$ (over $\bbR$) are diagonal. The nontrivial observation is that the potentially non-diagonal linear terms, which are all of the form $i \tcv (\psi^{\infty})^{k} \psi^{\infty}_{k} \overline{\psi_{s}}$ or $i \tcv (\psi^{\infty})^{k} \psi^{\infty}_{k} \overline{w}$, vanish thanks to the Cauchy--Riemann equation satisfied by the harmonic map $Q$ between surfaces; see \eqref{eq:CR}.
\end{enumerate}
The first and second points were key in the proof of global regularity and scattering of small energy Schr\"odinger maps on $\bbR^{2}$ \cite{BIKT-annals}. The importance of avoiding the use of tensorial (as opposed to scalar) linear propagators, which may behave badly on a curved background such as $\Sgm = \bbH^{2}$, was emphasized in \cite{LOS5} in the context of wave maps on $\bbH^{d}$ $(d \geq 4)$. The final observation is one of the key structural underpinnings of our proof of Theorem~\ref{thm:main} in the caloric gauge.

\subsubsection*{Analysis of the Schr\"odinger equation for $\psi_{s}$}
The harmonic map heat flow equation \eqref{eq:hmhf} for $U$ allows us to relate $u(t) - Q$ with $\psi_{s}(t, s)$ in the caloric gauge under the bootstrap assumption that $u(t) - Q$ is not too large. More precisely, provided that $\nrm{u(t) - Q}_{H^{1} \cap L^{\infty}} \ll_{Q} 1$, for any $\dlt > 0$ we have
\begin{equation*}
\nrm{m(s) s^{\frac{1}{2}} \psi_{s}(t)}_{L^{2}_{\ds}((0, \infty); L^{2})} \aleq \nrm{u(t) - Q}_{H^{1+2\dlt}} \aleq \nrm{m(s) s^{\frac{1}{2}} \psi_{s}(t)}_{L^{2}_{\ds}((0, \infty); L^{2})}
\end{equation*}
where $m(s) = \min\set{s^{\dlt}, 1}$; see Subsections~\ref{subsec:forward-caloric} and \ref{subsec:backward-caloric} for the first and second inequalities, respectively. 

Accordingly, the key step of the proof of Theorem~\ref{thm:main} is setting up a bootstrap argument in the caloric gauge to
control $m(s) s^{\frac{1}{2}} \psi_{s}(t)$ in $L^{2}_{\ds}((0, \infty); L^{2}_{x})$ for all $t$ through the Schr\"odinger equation \eqref{eq:caloric-sch-psi-s}. The greatest difficulty in this step comes from the contribution of the nonlinearity in \eqref{eq:caloric-sch-psi-s} with a first-order derivative, referred to as the \emph{magnetic interaction term}. More specifically, \eqref{eq:caloric-sch-psi-s} may be rewritten as
\begin{equation*}
	(i \rd_{t} - H) \psi_{s}(s) = - 2i \ringA^{k}(s) \nb_{k} \psi_{s}(s) + (\hbox{easier terms}),
\end{equation*}
where $H$ is the expression of the linearized operator $\calH_{Q}$ with respect to the frame $(e_{1}, e_{2})$, and
\begin{equation*}
\ringA_{k}(s)=A_{k}(s)-A^{\infty}_{k}= - \int_{s}^{\infty} \tcv \Im(\psi_{s} \overline{\psi_{k}})(s') \, \ud s'.
\end{equation*}
To handle the presence of a derivative on the RHS, we rely on the local smoothing effect for the Schr\"odinger operator $i \rd_{t} - H$. The strong linearized stability condition (Definition~\ref{def:strong-stab}) allows us to apply the general theorem in the companion paper \cite{LLOS1} to conclude that $i \rd_{t} - H$ enjoys a global-in-time local smoothing estimate with spatial weights depending on $r$ (i.e., the distance to some fixed point); see Proposition~\ref{prop:strong-stab-led}. The global-in-time local smoothing estimate allows us to gain one derivative as needed; the price we pay is, among other things, that we need to uniformly bound $r^{4} \ringA^{k}(s)$ in the region $\set{r > 1}$. We obtain such a pointwise spatial decay of $\ringA^{k}$ by controlling its angular regularity (in the polar coordinates $(r, \tht)$ on $\Sgm$) and applying the radial Sobolev inequality (Lemma~\ref{lem:radial_sobolev}). The control of the extra angular regularity, in turn, is attained by commuting the Schr\"odinger maps equation with the equivariant rotation operator $\calD_{\Omg}$, which annihilates $Q$ thanks to its equivariance.

To make the above argument rigorous and sharp, we need to employ Littlewood--Paley projections $P_{\sgm}$ on $\bbH^{2}$, which we formulate using the linear heat flow (see Subsection~\ref{subsec:ineq}). The most dangerous paradifferential term (i.e., ``low-high'' interaction in $\ringA^{k}(s) \nb_{k} \psi_{s}(s)$) is treated using the global-in-time local smoothing estimate as sketched above; the sharp form of the spatial weights near $r = 0$ in the norms $LE$ and $LE^{\ast}$ (see Subsection~\ref{subsec:disp}) allows us to close the argument with an arbitrarily small $\dlt > 0$, where $\dlt = 0$ is optimal\footnote{Of course, Theorem~\ref{thm:main} is far from being optimal in terms of regularity and decay required for the initial data due to the presence of $\calD_{\Omg}$.}. Handling the remaining interactions in $\ringA^{k}(s) \nb_{k} \psi_{s}(s)$ requires additional tools, such as global-in-time Strichartz estimates for $i \rd_{t} - H$ on $\bbH^{2}$ (Lemma~\ref{lem:main_linear_estimate}, also proved in~\cite{LLOS1}), Bernstein-type inequalities adapted to the spatial weights in $LE$ (Lemmas~\ref{l:bern1} and \ref{l:bern2}) and the decay of the expression $P_{\sgm} \psi_{s}(s)$ off the diagonal $\set{\sgm \approx s}$ (Lemma~\ref{l:offdiag}). We refer to Section~\ref{s:schrod} for the rigorous treatment of the magnetic interaction term.

\subsection{Motivation and related works}  \label{subsec:history}

The Schr\"odinger maps equation arises in physics as a model equation in ferromagnetism, and is referred to there as the Heisenberg model or the Landau-Lifshitz equation; see e.g.,~\cite{KIK}.  From a purely mathematical perspective, Schr\"odinger maps are a natural generalization of the free Schr\"odinger equation for complex-valued fields to maps taking values in a K\"ahler manifold. The study of the Schr\"odinger maps equation~\eqref{equ:schroedinger_maps_equ} on Euclidean space as the domain $\Sigma = \bbR^d$, $d \geq 1$, has a long and rich history, see~\cite{DingWang, CSU, McGahagan, NSVZ, RRS09, GusKoo, GriStef, KLPST, GKT, GNT, BT-mams, MRR, Sm13, Smith14, Perel14, DS15, Bejenaru1, Bejenaru2, IK1, IK2, BIK, BIKT-annals, BIKT-duke, BIKT-kyoto, Li18, Li19, SSB1 } and references therein.

The case $\Sigma = \R^2$, $\NN = \Sp^2$ in~\eqref{equ:schroedinger_maps_equ} is of particular interest since it admits  explicit \emph{magnetic solitons} in the form of finite energy harmonic maps. In this setting, every smooth finite energy Schr\"odinger map comes with a topological invariant, its degree, and every harmonic map minimizes the energy amongst maps that share its degree.  The underlying symmetries of the equation turn each harmonic map of degree $k$ into a family of solutions.  The fact that the scaling symmetry preserves the energy makes this problem energy-critical.  The coercivity of the conserved energy (and the fact that harmonic maps are minimizers in their degree class) ensures that a solution that starts near the family generated by the ground state, stays near it. But the asymptotic dynamics of solutions near the ground state family of harmonic maps can be quite complicated. In fact, it was shown by Merle-Rapha\"{e}l-Rodnianski~\cite{MRR} and by Perelman~\cite{Perel14} that $1$-equivariant solutions can blow up in finite time by a concentration of energy in a dynamically rescaled harmonic map. Prior to this, Gustafson-Kang-Tsai~\cite{GKT} and later Gustafson-Nakanishi-Tsai~\cite{GNT} proved that harmonic maps with equivariance class $k \geq 3$ %$m \ge 3$ 
are asymptotically stable under equivariant perturbations and exhibited instability properties in the case $k=2$, %$m=2$
and  Bejenaru-Tataru~\cite{BT-mams} had proved instability in the case $k=1$. %$m=1$. 
Note that the stability results \cite{GKT} and \cite{GNT} are not expected to hold outside of equivariance, as the higher degree harmonic maps are not global minimizers of the energy. In fact, it is quite possible that no harmonic maps are stable under general energy class perturbations, although little is known in this fascinating direction. In this context, the change of domain geometry considered here ($\Sigma  = \Hp^2$) and Theorem~\ref{thm:main} produce a stark contrast. One appealing feature of~\eqref{equ:schroedinger_maps_equ} with $\Sigma = \Hp^2$ is that it is a geometrically natural, semilinear, dispersive PDE that exhibits asymptotically stable finite energy static solutions. Moreover, even though scaling is no longer a symmetry on $\bbH^2$, for highly localized solutions (or at high frequencies) the model exhibits many features of an energy-critical problem.

The study of nonlinear geometric PDEs on the hyperbolic plane was initiated in a sequence of works~\cite{LOS1, LOS3, LOS4, LOS5} on the related wave maps equation. There, one of the main goals was to understand the interplay between the geometry of the domain manifold and the nonlinear structure of the wave maps equation, the latter arising naturally from the constraints imposed by the geometry of the target. A main takeaway from these works is that there is a dramatic difference in nonlinear dynamics as compared to the Euclidean picture, mainly due to the presence of asymptotically stable harmonic maps.  In this vein, the present work is a natural continuation of~\cite{LOS1} which proved asymptotic stability of the same class of harmonic maps considered here, but in the much simpler setting of the \emph{equivariant symmetry reduced} wave maps equation. In view of the sophisticated linear machinery that is often needed for the global-in-time analysis of solutions to geometric wave equations for small initial data, stability results for non-scattering solutions outside of symmetry require overcoming many technical challenges.

For general, non-equivariant data the investigation of the wave maps equation on hyperbolic space $\Hp^d$ was commenced in~\cite{LOS5}, where optimal regularity small data global well-posedness in the case $d \geq 4$ was proved. There it was observed that the caloric gauge surmounts difficulties associated with other classical gauges (e.g., Coulomb gauge). The caloric gauge was first introduced by Tao \cite{Tao04, Tao37} in the context of the wave maps equation on $\bbR^{2}$, and was crucially used in the study of the energy-critical Schr\"odinger maps equation on $\bbR^{2}$, see \cite{BIKT-annals, Sm, Sm13, DS15, Li18}. 
A related caloric gauge construction based on the Yang-Mills heat flow is a key ingredient of the recent proof of the threshold conjecture for the energy-critical hyperbolic Yang-Mills equation \cite{Oh14, Oh15, OhTat_YM1, OhTat_YM2, OhTat_YM3, OhTat_YM4}. 
Finally, the caloric gauge played a fundamental role in the proof of asymptotic stability (in a subcritical Sobolev space) of nonconstant harmonic maps $Q : \bbH^{2} \to \bbH^{2}$ under the wave maps evolution in~\cite{Li1, Li2, LMZ-DPDE} and for the Landau-Lifshitz flow (but not the limiting case of Schr\"odinger maps) in~\cite{LZ-DCDS}. %We refer to Subsection~\ref{subsec:ideas} for further discussion on the caloric gauge.

As we discussed in Subsection~\ref{subsec:ideas}, the main linear estimates in this paper, i.e., local smoothing estimates, or local energy decay estimates, 
and Strichartz estimates for the operator obtained by linearization about a harmonic map, were established in the companion paper~\cite{LLOS1}. This local smoothing estimate is the key technical ingredient for dealing with the loss of smoothing for Schr\"odinger equations mentioned in the previous subsection. %paragraph.
%\Blue{This LED, or local smoothing estimate, is the key technical ingredient for dealing with the loss of smoothing for Schr\"odinger equations mentioned in the previous paragraph.}
There is a vast literature on local smoothing and dispersive estimates on hyperbolic, and more generally, symmetric spaces, but here we mention only a few of the most relevant ones:
A global-in-time local smoothing estimate for the unperturbed hyperbolic Laplacian $H = -\lap$ was proved by Kaizuka in~\cite{Kaizuka1},  heavily using the Helgason Fourier transform. Dispersive estimates for $- i \rd_{t} + \De$ were proved by Anker-Pierfelice \cite{AP09}, Banica~\cite{B07}, Banica-Carles-Staffilani~\cite{BCS}, and Ionescu-Staffilani \cite{IS}, see also Metcalfe-Taylor~\cite{MT11, MTay12}; by standard machinery, Strichartz estimates for $- i \rd_{t} + \De$ then follow.
For perturbations of the hyperbolic Laplacian, Borthwick-Marzuola \cite{BorMar1} recently proved a dispersive estimate for exponentially decaying, zeroth order perturbations of $-\De$ (and also the matrix Hamiltonian, which arises when linearizing the NLS around a standing wave). In a series of works~\cite{ChenHassell1, ChenHassell3} Chen-Hassell established uniform $L^{p}$ bounds for resolvents of the Laplace-Beltrami operator on non-trapped, asymptotically hyperbolic (with exponential decay) manifolds, and in the work \cite{Chen18} Chen proved global-in-time Strichartz estimates for such operators. 

Finally, we mention again the work of Li-Ma-Zhao~\cite{LMZ-DPDE} and Li \cite{Li1, Li2} in the closely related subject of perturbations of the wave equation on hyperbolic space. In their proof of stability of harmonic maps from $\bbH^{2}$ into $\bbH^{2}$ under the wave maps evolution, they established a global-in-time local energy decay estimate (yet with exponentially decaying weights) and $L^{2}_{t} L^{p}_{x}$-Strichartz estimates for the first and zeroth order perturbations of the wave equation arising from linearization around such harmonic maps. 
As a technical remark we point out that the asymptotic stability analysis for the Schr\"odinger maps equation substantially differs from that for wave maps due to the delicate smoothing properties of Schr\"odinger equations.

\section{Geometric and analytic preliminaries} \label{sec:prelim}
\subsection{Notation and conventions}
Here, we collect the notation and conventions used throughout the paper.
\begin{itemize}
\item We adopt the usual asymptotic notation and write $A \aleq B$ or $A = O(B)$ if there exists a constant $C > 0$, which may differ from line to line, such that $A \leq C B$. The dependency of the implicit constant is specified by subscripts, e.g., $A \aleq_{E} B$ or $A = O_{E}(B)$ if $A \leq C(E) B$.
\item Given two normed vector spaces $X$ and $Y$, $X \cap Y$ is equipped with the norm $\nrm{\cdot}_{X \cap Y } = \nrm{\cdot}_{X} + \nrm{\cdot}_{Y}$, and $X+Y$ is the smallest vector space containing both $X$ and $Y$ equipped with the norm $\nrm{z}_{X+Y} = \inf_{(x, y) : z = x + y} \nrm{x}_{X} + \nrm{y}_{Y}$.
\item Our convention for the indices is as follows: 
\begin{itemize}
\item Latin lower case indices beginning from $i$, namely $i, j, k, \ell, m, \ldots \in \set{1, 2}$ refer to components on the domain $\bbH^{2}$;
\item Greek lower case indices $\mu, \nu, \lmb, \kpp, \ldots \in \set{0, 1, 2}$ refer to coordinates $\set{x^{0}, x^{1}, x^{2}}$ on the domain $\bbR_{t} \times \bbH^{2}_{x^{1}, x^{2}}$, with $x^{0} = t$;
\item Bold greek lower case indices $\bfmu, \bfnu, \bflmb, \bfkpp, \ldots \in \set{0, 1, 2, s}$ refer to coordinates on $\bbR_{t} \times \bbH^{2}_{x^{1}, x^{2}} \times [0, \infty)_{s}$;
\item Latin lower case indices beginning from $a$, namely $a, b, c, d \in \set{1, 2}$ refer to components in the tangent space of the target $\tgmfd$;
\item Latin upper case indices beginning from $A$, namely $A, B, C, D$ refer to components in the ambient space $\bbR^{N}$ of the isometric embedding used in the definitions of spaces of maps, as well as in Section~\ref{sec:hmhf} below.
\end{itemize}
\item We adopt the usual convention of lowering and raising indices $i, j, k, \ell, m$ (resp. $a, b, c, d$) using the Riemannian metric on the domain (resp. on the target manifold).
\item For a one-form $\Phi$ on $\bbR_{t} \times \bbH^{2}_{x^{1}, x^{2}} \times [0, \infty)_{s}$ we write $\Phi_x$ for $\Phi_{x^1}\ud x^1+\Phi_{x^2}\ud x^2$. If $Y$ is a vector field on $\bbH^2$, we write $\Phi_Y$ for the contraction of $\Phi_x$ with $Y$.
\end{itemize}

\subsection{Geometry of the domain} \label{subsec:domain}

Let $\bbR^{2+1}$  denote the $(2+1)$-dimensional Minkowski space with rectilinear coordinates  $\{y^1,y^2, y^0\}$ and metric $\bfm$ given in these coordinates by $\bfm = \textrm{diag}(1, 1,  -1)$.  The hyperbolic plane $\bbH^{2}$ is defined as 
\begin{align*}
\bbH^2 := \{ y \in \bbR^{2+1} \colon (y^{0})^{2} - (y^{1})^{2} - (y^{2})^{2} = 1\, , \,y^0>0\}.
\end{align*}
We equip $\bbH^{2}$ with the pullback Riemannian metric $\bfh = \iota^{*} \bfm$, where $\iota : \bbH^{2} \hookrightarrow \bbR^{2+1}$ denotes the inclusion map. Furthermore, we endow $\bbH^{2}$ with the Riemannian volume form (which determines the orientation) $\dvol_{\bfh} = \iota^{\ast} \, i_{\bfn} (\ud y^{0} \wedge \ud y^{1} \wedge \ud y^{2})$, where $\bfn = \left( y^{0} \rd_{y^{0}} + y^{1} \rd_{y^{1}} + y^{2} \rd_{y^{2}} \right)$ is the unit normal to $\bbH^{2}$ pointing in the increasing $y^{0}$ direction.  For any coordinate system $\{x^{1}, x^{2}\}$ on $\bbH^{2}$, we denote by $\bfh_{jk}$ the components of $\bfh$ and by $\bfh^{jk}$ the components of the inverse matrix $\bfh^{-1}$, i.e.,
\begin{equation*}
\bfh_{jk} :=  \bfh\left( \frac{\rd}{\rd x^j}, \frac{\rd}{\rd x^k} \right), \quad \bfh^{jk}:= \bfh^{-1}(dx^j, dx^k).
\end{equation*}
We denote the Christoffel symbols on $\bbH^{2}$  by
\begin{equation*}
\Gmm_{jk}^{\ell}  = \frac{1}{2} \bfh^{\ell m}( \rd_{j} \bfh_{k m} + \rd_k \bfh_{m j} - \rd_m \bfh_{jk}).
\end{equation*}
Given a vector field $\bfX = \bfX^{j} \frac{\rd}{\rd x^{j}}$ in $\Gmm(T \bbH^2)$  or a $1$-form $\bsomega = \bsomega_j dx^j \in \Gmm( T^*\bbH^2)$, the metric covariant derivative $\nb$ is defined in coordinates by 
\begin{equation*}
\nb_{\rd_{j}} X = (\rd_{j} X^{k} + \Gmm^{k}_{j \ell} X^{\ell}) \rd_{k}, \quad
\nb_{\rd_{j}} \bsomega = (\rd_{j} \bsomega_{k} - \Gmm^{\ell}_{jk} \bsomega_{\ell}) d x^{k}
\end{equation*}
The covariant derivative extends to an arbitrary $(p,q)$ tensor field $\bsxi$ by requiring that $\nabla f=df$ for functions and that it satisfies the Leibniz rule with respect to tensor products. The $k$-th iteration of the covariant derivative, which is then a $(p, q+k)$ tensor field, is denoted by $\nb^{(k)} \bsxi$.

We denote the Riemann curvature tensor on $\bbH^2$ by $\bfR$ with components $\bfR_{ijk\ell}$. The Ricci tensor $\bfR_{ij}$ is defined by $\bfR_{ij}=\bfR^k_{~ikj}$. The curvature and Ricci tensors are explicitly given by
\[
 \bfR( \bfX, \bfY) \bfZ = - \bigl( \bfh( \bfY, \bfZ) \bfX - \bfh( \bfX, \bfZ) \bfY \bigr), \qquad \bfR_{jk}=-\bfh_{jk}.
\]
For any vector field $\bfX$
%%%%%%%
%%%%%%%
\begin{align*}
\begin{split}
[\nabla_i,\nabla_j]\bfX^k=\bfR^k_{~\ell ij}\bfX^\ell, \qquad
[\nabla_i,\nabla_j]\bfX^i=\bfR_{\ell j}\bfX^\ell=-\bfX_j.
\end{split}
\end{align*}
Analogous formulas hold for commutators of covariant derivatives applied to arbitrary $(p,q)$ tensor fields.

When working with tensor fields defined on $\bbR_{t} \times \bbH_{x}^{2} \times [0, \infty)_{s}$ we trivially extend the covariant derivative $\nabla$ to the $t$ and $s$ variables, so that $\nabla_s$ and $\nabla_t$ are the regular derivatives with respect to these parameters. 

Given a vector field $\bfY$ on $\bbH^{2}$, we denote the \emph{Lie derivative} with respect to $\bfY$ by $\calL_{\bfY}$. For a scalar function $f$ on $\bbH^{2}$, $\calL_{\bfY} f = \bfY^{j} \rd_{j} f$. For a vector field $\bfX = \bfX^{j} \frac{\rd}{\rd x^{j}} \in \Gmm(T \bbH^{2})$ or a $1$-form $\bsomega = \bsomega_{j} \ud x^{j} \in \Gmm(T^{\ast} \bbH^{2})$, $\calL_{Y}$ is related to $\nb_{Y} = Y^{j} \nb_{j}$ by
\begin{equation*}
	\calL_{\bfY} \bfX^{k} = \bfY^{j} \nb_{\rd_{j}} \bfX^{k} - \bfX^{j} \nb_{\rd_{j}} \bfY^{k}, \quad
	\calL_{\bfY} \bsomega_{k} = \bfY^{j} \nb_{\rd_{j}} \bsomega_{k} + \bsomega_{j} \nb_{\rd_{k}} \bfY^{j}.
\end{equation*}
%\begin{equation*}
%	(\bfY^{j} \nb_{j} \bsomga)(\rd_{k}) + \bsomega(\bfY^{j} \nb_{j} \rd_{k})
%	= \bfY(\bsomga(\rd_{k})) = (\calL_{\bfY} \bsomega) (\rd_{k}) - \bsomega (\calL_{\bfY}\rd_{k})
%	= (\calL_{\bfY} \bsomega) (\rd_{k}) + \bsomega_{j} (\bfY^{j} \nb_{\rd_{j}} \rd_{k}- \nb_{\rd_{k}} \bfY^{j})
%\end{equation*}
Analogous formulas for arbitrary $(p, q)$ tensor fields may be derived by the Leibniz rules for $\calL_{Y}$ and $\nb$.

One useful set of coordinates on $\bbH^{2}$ is the \emph{polar coordinates}. As in Subsection~\ref{subsec:result}, fix a point $o$ (the origin) in $\bbH^{2}$; without loss of generality, we may take $o = (0, 0, 1)$. Then the distance function $r = \bfd_{\bbH^{2}}(o, x)$ and the angle $\tht = \tan^{-1} (y^{2} / y^{1})$ defines the \emph{polar coordinates} with respect to $o$. The metric takes the form
\begin{equation*}
	\bfh = \ud r^{2} + \sinh^{2} r \, \ud \tht^{2}.
\end{equation*}
To describe tensor fields on $\bbH^{2}$, we will often use the following positively oriented frame defined using $(r, \tht)$:
\begin{equation} \label{eq:dom-frame}
\bfe_{1} = \rd_{r}, \quad \bfe_{2} = \frac{1}{\sinh r} \rd_{\tht}.
\end{equation}

Let $\Omg$ be the rotational vector field on $\bbH^{2}$ as in Subsection~\ref{subsec:result}, which is nothing but $\Omg = \rd_{\tht}$ in the polar coordinates. Since $\Omg$ is the infinitesimal generator of one parameter family of isometries (i.e., $\Omg$ is a Killing vector field and $\calL_{\Omg} \bfh = 0$), $\calL_{\Omg}$ commutes with $\nb$: 
\begin{lemma}  \label{l:commLOm} 
If $f$ is a scalar function then
\begin{align*}
\begin{split}
\calL_\Omega\nabla f=\nabla \Omega f.
\end{split}
\end{align*}
If $\bsomega$ is a $1$-form then \footnote{In our notation $\calL_\Omega \nabla_\mu\bsomega_{\nu}:=(\calL_\Omega \nabla\bsomega)_{\mu\nu}$ and $\nabla_\mu \calL_\Omega \bsomega_\nu:=\nabla_\mu (\calL_\Omega \bsomega)_\nu:=(\nabla\calL_\Omega \bsomega)_{\mu\nu}$.}
\begin{align*}
\begin{split}
\calL_\Omega \nabla^\mu\bsomega_{\mu}-\nabla^\mu \calL_\Omega \bsomega_\mu =0.
\end{split}
\end{align*}
Similarly if $\bsX$ is a vector field then
\begin{align*}
\begin{split}
\calL_\Omega \nabla_\mu\bsX^\mu-\nabla_\mu\calL_\Omega\bsX^\mu=0.
\end{split}
\end{align*}
\end{lemma}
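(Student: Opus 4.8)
The plan is to derive all three identities from the single structural fact that $\Omega$ is a Killing field on $\bbH^{2}$, namely $\calL_{\Omega}\bfh=0$ (recorded just above the lemma). Because the Levi--Civita connection is manufactured canonically from the metric via the Koszul formula, the (local) flow of $\Omega$ consists of isometries and therefore preserves $\nabla$; passing to the infinitesimal version, the Lie derivative of the connection vanishes, which is exactly the assertion that
\begin{equation*}
\calL_{\Omega}(\nabla T)=\nabla(\calL_{\Omega}T)
\end{equation*}
for every tensor field $T$ on $\bbH^{2}$. (After the trivial extension of $\nabla$ to the $t$ and $s$ directions this continues to hold on $\bbR_{t}\times\bbH^{2}_{x}\times[0,\infty)_{s}$, since $\calL_{\Omega}$ commutes with $\partial_{t}$ and $\partial_{s}$.) If a self-contained verification is preferred, one may instead argue in polar coordinates: $\Omega=\partial_{\tht}$ and the components $\bfh_{jk}$ in $(r,\tht)$ are $\tht$-independent, hence so are the Christoffel symbols $\Gmm^{\ell}_{jk}$, so $\calL_{\partial_{\tht}}=\partial_{\tht}$ manifestly commutes with every operation built from them; alternatively, one writes the $(1,2)$-tensor $\calL_{\Omega}\nabla$ in coordinates and kills it using the second-order Killing identity (which expresses $\nabla\nabla\Omega$ as a curvature contracted against $\Omega$) together with the explicit form of $\bfR$ on $\bbH^{2}$.

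Granting the displayed commutation, the first claim is immediate by taking $T=f$ a scalar function: $\calL_{\Omega}(\nabla f)=\nabla(\calL_{\Omega}f)=\nabla(\Omega f)$. For the second claim, one writes the scalar $\nabla^{\mu}\bsomega_{\mu}=\bfh^{\mu\nu}(\nabla\bsomega)_{\mu\nu}$ and uses that $\calL_{\Omega}$ is a derivation that commutes with metric contractions, commutes with $\nabla$ on the $(0,2)$-tensor $\nabla\bsomega$, and annihilates $\bfh^{-1}$ (since $\calL_{\Omega}\bfh=0$ forces $\calL_{\Omega}\bfh^{-1}=0$), giving
\begin{equation*}
\calL_{\Omega}\big(\nabla^{\mu}\bsomega_{\mu}\big)=\bfh^{\mu\nu}(\calL_{\Omega}\nabla\bsomega)_{\mu\nu}=\bfh^{\mu\nu}(\nabla\calL_{\Omega}\bsomega)_{\mu\nu}=\nabla^{\mu}(\calL_{\Omega}\bsomega)_{\mu}.
\end{equation*}
The third claim is even cleaner: $\nabla_{\mu}\bsX^{\mu}$ is the natural trace of the $(1,1)$-tensor $\nabla\bsX$, and $\calL_{\Omega}$ commutes with this trace and with $\nabla$, so $\calL_{\Omega}(\nabla_{\mu}\bsX^{\mu})=\nabla_{\mu}(\calL_{\Omega}\bsX)^{\mu}$; no metric is even needed here.

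There is no genuine obstacle in this lemma. The only point demanding (minor) care is the sign/index bookkeeping if one chooses to verify $\calL_{\Omega}\nabla=0$ by an explicit curvature computation rather than invoking isometry invariance of the Levi--Civita connection or the $\tht$-independence of the metric in polar coordinates; I would use one of the latter two routes to avoid that bookkeeping entirely.
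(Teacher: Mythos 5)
Your proof is correct and follows the same route the paper takes: the paper gives no detailed argument, justifying the lemma solely by the remark that $\Omega$ is a Killing vector field so that $\calL_{\Omega}$ commutes with $\nabla$, which is exactly the structural fact you establish and then specialize. Your additional observations (the polar-coordinate verification, and that $\calL_{\Omega}$ commutes with contraction and annihilates $\bfh^{-1}$) are correct elaborations of the same idea.
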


\subsection{Geometry of maps into a Riemann surface $\tgmfd$} \label{subsec:target}

Let $\calM$ be a smooth manifold equipped with a torsion-free connection $\nb$ on the tensor bundle, and assume furthermore that $\calM$ is contractible. 
In this subsection, we develop the differential-geometric formalism for analysis on the bundle $u^{*} T \tgmfd$, where $u$ is a (smooth) map from $\calM$ into a Riemann surface $\tgmfd$.
Specific examples to have in mind are of course $\calM = \bbH^{2}_{x}$, $\calM = \bbR_{t} \times \bbH^{2}_{x}$ or $\calM = \bbR_{t} \times \bbH^{2}_{x} \times [0, \infty)_{s}$ with $\nb$ as in Subsection~\ref{subsec:domain}.

Let $u$ be a smooth map from $\calM$ to $\tgmfd$, where $\tgmfd$ is a Riemann surface with metric $g$ and complex structure~$J$. The pullback bundle $u^\ast T \tgmfd$ is the vector bundle over $\calM$ whose fiber at $p \in \calM$ is the tangent space $T_{u(p)} \tgmfd$. Its dual vector bundle is naturally given by $u^{\ast} T^{\ast} \tgmfd$, whose fiber at $p \in \calM$ is the cotangent space $T^{\ast}_{u(p)} \tgmfd$. A section of $u^* T\tgmfd$ is a map $\phi : \calM  \to  T \tgmfd$ so that $\phi(p) \in T_{u(p)} \tgmfd$ for each $p \in \calM$. More generally, we call a section of $(T \calM)^{\otimes p} \otimes (T^{\ast} \calM)^{\otimes q} \otimes (u^{\ast} T \tgmfd)^{\otimes r} \otimes (u^{\ast} T^{\ast} \tgmfd)^{\otimes s}$ a $u^{\ast} T \tgmfd$-valued $(p, q; r, s)$ tensor field.

Given any vector field $\bfX$ on $\calM$, the pushforward $\ud u(\bfX) = \bfX^{\alp} \rd_{\alp} u$  naturally defines a section of $u^{\ast} T \tgmfd$. Moreover, given any $(r, s)$ tensor field $W$ on $\tgmfd$, we may define its pullback $W(u)$, which is a $u^{\ast} T \tgmfd$-valued $(0, 0; r, s)$ tensor field.

The Levi-Civita connection on $T\tgmfd$ induces the pullback covariant derivative $D$ on $u^* T \tgmfd$. In local coordinates $\{x^{\alp}\}$ on $\calM$ and  $ \{u^a\}$ on $\tgmfd$ we have 
\begin{equation*}
	D_{\alp} \phi = \left(\rd_{\alp} \phi^{a} + {}^{(\tgmfd)} \Gmm^{a}_{bc}(u) \phi^{b} \rd_{\alp} u^{c}\right) \frac{\rd}{\rd u^{a}}
\end{equation*}
where ${}^{(\calN)}\Gmm^{a}_{bc}$ are the Christoffel symbols on $T \tgmfd$. The pullback covariant derivative $D_{\alp}$ is naturally extended to $u^{\ast} T \tgmfd$-valued $(p, q; r, s)$ tensor fields using the connection $\nb_{\alp}$ on the tensor bundle of $\calM$ and the Levi-Civita connection on $\tgmfd$. As before, we denote by $D^{(k)}$ the $k$-fold application of the pullback covariant derivative $D$. 

For a section $\phi$ of $u^{\ast} T \tgmfd$, we have
\begin{equation*}
	[D_{\alp}, D_{\bt}] \phi(p) = R(u)(\rd_{\alp} u, \rd_{\bt} u) \phi(p),
\end{equation*}
where $R$ is the curvature tensor on $\tgmfd$, which is a $(1, 3)$ tensor field on $\tgmfd$. Analogous formulas hold for commutators of the pullback covariant derivatives applied to arbitrary $u^{\ast} T \tgmfd$-valued $(p, q; r, s)$ tensor fields.

The complex structure $J$ on $\tgmfd$ is a $(1, 1)$ tensor field on $\tgmfd$ such that $J^{2} = - 1$, $g(J X, J Y) = g(X, Y)$ and $J$ is parallel with respect to the Levi-Civita connection. Equivalently, it is a $(1, 1)$ tensor field that relates the metric $g$ and the Riemannian volume form $\omg$ by $g(X, Y) = \omg(X, J Y)$. Denote by $\tcv(u)$ the holomorphic sectional curvature (i.e., the Gauss curvature) at $u \in \tgmfd$, i.e., for any unit vector $X \in T_{u} \tgmfd$,
\begin{equation*}
	\tcv(u) = \bigl(R(u)(X, JX) JX, X \bigr).
\end{equation*}
Note that $\tcv(u)$ is invariant under the choice of the unit vector $X \in T_{u} \tgmfd$; thus it defines a real-valued function on $\tgmfd$. Observe that
\begin{equation*}
	\tcv \equiv \begin{cases} - 1 & \hbox{when } \tgmfd = \bbH^{2}, \\ + 1 & \hbox{when } \tgmfd = \bbS^{2}. \end{cases}
\end{equation*}

\subsection{Moving frame formalism} \label{subsec:moving-frame}
Let $e = (e_1, e_2)$ be a global orthonormal frame on $u^\ast T \tgmfd$ such that $e_{2} = J e_{1}$. Then the frame $e$ defines a global trivialization of $u^\ast T\tgmfd$ over the trivial complex line bundle $E = \calM \times \bC$, and we have an isomorphism
\begin{equation*}
 e \colon \Gamma(\bC) \to \Gamma(u^\ast T\tgmfd), \quad \varphi = \varphi^1 + i \varphi^2  \mapsto e \varphi := e_1 \Re(\varphi) + e_2 \Im(\varphi).
\end{equation*}
Under this isomorphism, we denote the pushforward $\rd_{\alp} u$ of each coordinate vector field $\rd_{\alp}$ by $\psi_{\alp}$, i.e.,
\begin{equation*}
\partial_\alpha u = e  \psi_\alpha. 
\end{equation*}
The spatial components of $\psi$ are denoted by $\Psi$, that is, 
\begin{align*}
\begin{split}
 \Psi=\psi_{x^1}\ud x^1+\psi_{x^2}\ud x^2.
\end{split}
\end{align*}
Next, we introduce the covariant derivative $\Db_\ell$ on $E$ induced by $D_\ell$ via the identity
\begin{equation*}
 D_\ell e \varphi = e \Db_{\ell} \varphi.
\end{equation*}
With respect to the trivial connection $\nabla$ on $E$, we may decompose
\begin{equation*}
 \Db_\ell = \nabla_\ell + i A_\ell
\end{equation*}
where $A = A_\ell \, dx^\ell$ is the real-valued connection $1$-form given by
\begin{equation*}
	\begin{pmatrix}
	(D_{\ell} e_{1}, e_{1}) & (D_{\ell} e_{1}, e_{2}) \\
	(D_{\ell} e_{2}, e_{1}) & (D_{\ell} e_{2}, e_{2})
	\end{pmatrix}
	= \begin{pmatrix}
	0 & A_{\ell} \\
	-A_{\ell} & 0
	\end{pmatrix}.
\end{equation*}
 The curvature two-form associated with $\Db$, which is defined to be the commutator $[\Db_k, \Db_\ell] \varphi$ is related through the map $u$ to the Riemann curvature $R$ of the target manifold $\tgmfd$ as follows
\begin{equation*}
 e [\Db_k, \Db_\ell] \varphi = R(u)(\partial_k u, \partial_\ell u) e \varphi = R(u)(e \psi_k, e \psi_\ell) e \varphi.
\end{equation*}
A direct computation reveals that, for a section $\varphi \in \Gamma(u^{\ast} T \tgmfd)$,
\begin{equation} \label{eq:comm-K}
 [ \Db_k, \Db_\ell ] \varphi = i \tcv(u) \Im( \psi_k \overline{\psi_\ell} ) \varphi.
\end{equation}
On the other hand, we also note that
\[
 [ \Db_k, \Db_\ell ] \varphi = i (\nabla_k A_\ell - \nabla_\ell A_k) \varphi.
\]

With respect to such a frame $e = (e_1, e_2)$, the Schr\"odinger maps equation~\eqref{equ:schroedinger_maps_equ}, the harmonic maps equation $D^{\ell} \rd_{\ell} U = 0$ and the harmonic maps heat flow equation \eqref{eq:hmhf} read, respectively,
\begin{align} 
 \psi_t &= i \Db^\ell \psi_\ell, 	\label{equ:schroedinger_maps_equ_derivative_formulation} \\
\Db^\ell \psi_\ell &= 0, 		\label{eq:harmonic_maps_equ_derivative_formulation} \\
\psi_{s} &= \Db^\ell \psi_\ell. 	\label{eq:hmhf_derivative_formulation}
\end{align}

\subsection{Basic function spaces on $\bbH^{2}$} \label{subsec:fs}
Let $C^{\infty}_{0}(\bbH^{2})$ denote the space of all complex-valued smooth and compactly supported functions on $\bbH^{2}$. 
For $f \in C^{\infty}_{0}(\bbH^{2})$ and $1 \leq p < \infty$, its $L^{p}$ norm is defined by 
\begin{align*}
\nrm{f}_{L^{p}} = \nrm{f}_{L^{p}_{x}} =  \left( \int_{\bbH^{2}} \abs{f}^{p} \, \dvol_{\bfh} \right)^{\frac{1}{p}}.
\end{align*} 
The $L^{p}(\bbH^{2})$ space for $1 \leq p < \infty$ is defined by taking the completion of $C_{0}^{\infty}(\bbH^{2})$ with respect to the $L^{p}$ norm. 
We also introduce the space $C^{0}(\bbH^{2})$ of bounded continuous complex-valued functions on $\bbH^{2}$, equipped with the norm
\begin{equation*}
	\nrm{f}_{L^{\infty}} = \nrm{f}_{L^{\infty}_{x}} =  \sup_{x \in \bbH^{2}} \abs{f(x)}.
\end{equation*}
Mixed norms in the polar coordinates $(r, \tht)$ are sometimes used. For $f \in C^{\infty}_{0}(\bbH^{2})$, we define
\begin{equation*}
	\nrm{f}_{L^{p}_{r} L^{q}_{\tht}} = \nrm{\nrm{f(r, \tht)}_{L^{q}_{\tht}(0, 2\pi)}}_{L^{p}_{r}(0, \infty)}.
\end{equation*}
Clearly, $\nrm{f}_{L^{p}} = \nrm{f (\sinh r)^{\frac{1}{p}}}_{L^{p}_{r} L^{p}_{\tht}}$.

To extend the definition of $L^{p}$ norms to real-, complex- or $u^{\ast} T \tgmfd$-valued tensor fields on $\bbH^{2}$, we need to specify the norm on the fiber, which is a finite dimensional vector space in each case. Abusing the notation a bit, we denote by $(\cdot, \cdot)$ the inner product on these finite dimensional vector spaces, where the space being used should be understood from the context. For instance, 
\begin{align*}
	(f, g) & = f \overline{g} \qquad \hbox{ for } f, g \in \bbC; \\
	(u, v) & = \sum_{I=1}^{N} u^{I} v^{J} \qquad \hbox{ for } u, v \in \bbR^{N}; \\
	(\bsxi, \bszeta) &= \bfh_{i_{1} i'_{1}} \cdots \bfh_{i_{p} i'_{p}} \bfh^{j_{1} j_{1}'} \cdots \bfh^{j_{q} \cdots j'_{q}}\bsxi^{i_{1} \cdots i_{p}}_{j_{1} \cdots j_{q}}  \bszeta^{i_{1}' \cdots i_{p}'}_{j_{1}' \cdots j_{q}'}  \qquad \hbox{ for } \bsxi, \bszeta \in (T_{p} \bbH^{2})^{\otimes p} \otimes (T^{\ast}_{p} \bbH^{2})^{\otimes q}; \\
	(X, Y) &= \bfh_{i_{1} i'_{1}} \cdots \bfh_{i_{p} i'_{p}} \bfh^{j_{1} j_{1}'} \cdots \bfh^{j_{q} \cdots j'_{q}}
	g_{a_{1} a'_{1}} \cdots g_{a_{r} a'_{r}} g^{b_{1} b_{1}'} \cdots g^{b_{s} \cdots b'_{s}} \\
	& \phantom{=}
	\times \tensor{X}{^{i_{1} \cdots i_{p}}_{j_{1} \cdots j_{q}}^{a_{1} \cdots a_{r}}_{b_{1} \cdots b_{s}}} \tensor{Y}{^{i'_{1} \cdots i'_{p}}_{j'_{1} \cdots j'_{q}}^{a'_{1} \cdots a'_{r}}_{b'_{1} \cdots b'_{s}}}
	\qquad \hbox{ for } \bsxi, \bszeta \in (T_{p} \bbH^{2})^{\otimes p} \otimes (T^{\ast}_{p} \bbH^{2})^{\otimes q} \otimes T_{u(p)} \tgmfd. 
\end{align*}
In all cases, the norm is given by $\abs{\cdot}^{2} = (\cdot, \cdot)$.

For $f \in C^{\infty}_{0}(\bbH^{2})$, $1 < p < \infty$ and a positive integer $k$, we define the $W^{k, p}$ norm of $f$ by
\begin{equation*}
	\nrm{f}_{W^{k, p}} = \sum_{k' = 0}^{k} \nrm{\nb^{(k')} f}_{L^{p}}.
\end{equation*}
The space $W^{k, p}(\bbH^{2})$ is then defined by taking the completion of $C_{0}^{\infty}(\bbH^{2})$ with respect to the $W^{k, p}$ norm. We define the space $W^{-k, p}(\bbH^{2})$ to be the dual of $W^{k, q}(\bbH^{2})$ where $\frac{1}{p} + \frac{1}{q} = 1$. 

To extend $W^{\sgm, p}(\bbH^{2})$ to nonintegral values of $\sgm$, we note the following well-known equivalence of norms for positive integers $\sgm$ (see, for instance, \cite{Tat01hyp}; see also Lemma~\ref{l:pi} and Lemma~\ref{l:riesz} below):
\begin{equation*}
	\nrm{f}_{W^{\sgm, p}} \simeq \nrm{(-\lap)^{\sgm} f}_{L^{p}} \qquad \hbox{ for } \sgm = 1, 2, \ldots
\end{equation*}
In view of such an equivalence, when $\sgm \in \bbR \setminus \bbZ$, we \emph{define} the $W^{\sgm, p}$ norm to be $\nrm{(-\lap)^{\sgm} f}_{L^{p}}$ and the space $W^{\sgm, p}(\bbH^{2})$ to be the completion of $C^{\infty}_{0}(\bbH^{2})$ with respect to this norm. As usual, in the case $p = 2$, we write $H^{\sgm} = W^{\sgm, p}$.

We end this subsection with some notational conventions. Let $X$ be a norm for functions on $\bbH^{2}$. For a function $f$ that depends also on $t \in \bbR$ and/or $s \in [0, \infty)$, we use the following notation for mixed norms:
\begin{align*}
	\nrm{f}_{L^{p}_{t} (I; X)} &= \nrm{\nrm{f(t)}_{X}}_{L^{p}_{t}(I)} = \nrm{f}_{L^{p}_{t} X(I \times \bbH^{2})}, \\
	\nrm{f}_{L^{p}_{s} (J; X)} &= \nrm{\nrm{f(s)}_{X}}_{L^{p}_{s}(J)}, \qquad
	\nrm{f}_{L^{p}_{\ds} (J; X)} = \nrm{\nrm{f(s)}_{X}}_{L^{p}_{\ds}(J)}.
\end{align*}
When the interval $I$ (resp.~$J$) is not specified, it is meant to be the whole line, i.e, $I = \bbR$ (resp. $J = [0, \infty)$).
We also introduce the notation
\begin{equation*}
 \| \jap{\Omega} f \|_{X} = \|f\|_X + \|\Omega f\|_X.
\end{equation*}
Similarly, for any integer $k\geq0$ we will use the notation
\begin{equation*}
 \| \jap{s\Delta}^\frac{k}{2} f\|_X = \sum_{k'=0}^k \|  (s^{\frac{1}{2}}\nabla)^{(k')} f \|_X.
\end{equation*}
For a tensor field $\bsxi$, we define
\begin{equation*}
 \| \jap{\calL_\Omega} \bsxi \|_X = \|\bsxi\|_X + \|\calL_\Omega\bsxi\|_X.
\end{equation*}
We also use the notation
\begin{equation*}
 \| \nabla^{(k)} \jap{\Omega} f \|_{X} = \| \nabla^{(k)} f \|_X + \| \nabla^{(k)} \Omega f\|_X
\end{equation*}
with similar definitions for $\| \nabla^{(k)} \jap{s\Delta} f \|_X$ and $\| \nabla^{(k)} \jap{\calL_\Omega} \bsxi \|_X$. Finally, when it is clear from the context that $k$ does not denote a tensorial index, we will often write $\nabla^{k} = \nabla^{(k)}$.

\subsection{Parabolic theory for the linear heat operator $\rd_{s} - \lap$}
We collect various estimates for the linear heat operator $\rd_{s} - \lap$ that are used in this paper.
Unless otherwise stated, all functions in this subsection are complex-valued.

\subsubsection{$L^{2}$ theory for $\rd_{s} - \lap$}
We begin with the fundamental $L^{2}$ bound for the inhomogeneous heat equation 
$(\rd_{s} - \lap) \varphi = F$.
\begin{lemma} \label{lem:lin-heat-L2}
Consider the initial value problem
\begin{align*}
	(\rd_{s} - \lap) \varphi = F \quad \hbox{ in } \bbH^{2} \times J, \quad \varphi(s=0) = \varphi_{0} \in L^{2}(\bbH^{2}),
\end{align*}
where $J$ is an interval of the form $J = [0, s_{0}]$ and $F \in L^{1}_{s}(J; L^{2}) + L^{2}_{s}(J; H^{-1})$. This problem is well-posed, and the unique solution $\varphi$ obeys 
\begin{equation} \label{eq:lin-heat-L2}
\nrm{\varphi}_{L^{\infty}_{s} (J; L^{2})}
+ \nrm{\varphi}_{L^{2}_{s}(J; H^{1})}
+ \nrm{\rd_{s} \varphi}_{L^{2}_{s}(J; H^{-1})}
\aleq \nrm{\varphi_{0}}_{L^{2}} + \nrm{F}_{L^{1}_{s} (J; L^{2}) + L^{2}_{s}(J; H^{-1})}.
\end{equation}
Moreover, for any $k = 1, 2, \ldots$,
\begin{equation} \label{eq:lin-heat-L2-high}
\begin{aligned}
& \nrm{s^{k}(-\lap)^{k} \varphi}_{L^{\infty}_{s} (J; L^{2})}
+ \nrm{s^{k}(-\lap)^{k} \varphi}_{L^{2}_{s}(J; H^{1})}
+ \nrm{s^{k}(-\lap)^{k} \rd_{s} \varphi}_{L^{2}_{s}(J; H^{-1})} \\
& \aleq_{k} \nrm{\varphi_{0}}_{L^{2}} + \nrm{F}_{L^{1}_{s} (J; L^{2}) + L^{2}_{s}(J; H^{-1})}
+ \nrm{s^{k}(-\lap)^{k} F}_{L^{1}_{s} (J; L^{2}) + L^{2}_{s}(J; H^{-1})}.
\end{aligned}\end{equation}
\end{lemma}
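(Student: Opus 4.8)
The plan is to establish the two displayed estimates in turn: first the base estimate \eqref{eq:lin-heat-L2} by the standard variational theory for parabolic equations together with semigroup bounds, and then \eqref{eq:lin-heat-L2-high} by an induction on $k$ that at each step only appeals to \eqref{eq:lin-heat-L2}. Throughout, the key structural facts are that $-\lap$ on $\bbH^{2}$ is self-adjoint and nonnegative with spectrum in $[\tfrac14, \infty)$ (the spectral gap / Poincar\'e inequality), so that the Dirichlet form $a(u,v) = \int_{\bbH^{2}}(\nabla u, \nabla v)$ satisfies $a(u,u) \geq \tfrac14 \nrm{u}_{L^{2}}^{2}$ and hence $a(u,u) \simeq \nrm{u}_{H^{1}}^{2}$, and that the heat semigroup obeys $\nrm{e^{s\lap}}_{L^{2}\to L^{2}} \leq 1$ and the smoothing bounds $\nrm{(s(-\lap))^{\alpha} e^{s\lap}}_{L^{2}\to L^{2}} \aleq_{\alpha} 1$ for all $\alpha \geq 0$ (both immediate from the functional calculus).

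For \eqref{eq:lin-heat-L2}, I would first reduce to smooth compactly supported $\varphi_{0}$ and $F$ by density, since for such data the solution is smooth on $\bbH^{2}\times J$ with all relevant norms finite, after which the general case follows from the a priori estimate and the uniqueness supplied by J.-L. Lions' theorem. For the $L^{2}_{s}(J; H^{-1})$ part of $F$, coercivity of $a$ on $H^{1}$ yields (via Lions' theorem) a unique $\varphi \in L^{2}_{s}(J;H^{1})$ with $\rd_{s}\varphi \in L^{2}_{s}(J;H^{-1})$, and pairing the equation with $\varphi$ and integrating gives the identity $\tfrac12 \rd_{s} \nrm{\varphi}_{L^{2}}^{2} + a(\varphi,\varphi) = \Re\langle F, \varphi\rangle$, whence $\nrm{\varphi}_{L^{\infty}_{s} L^{2}} + \nrm{\varphi}_{L^{2}_{s} H^{1}}$ is controlled; the bound on $\rd_{s}\varphi$ then follows from $\rd_{s}\varphi = \lap\varphi + F$ and $\nrm{\lap\varphi}_{H^{-1}} \aleq \nrm{\varphi}_{H^{1}}$. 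For the $L^{1}_{s}(J;L^{2})$ part of $F$, I would instead estimate its contribution to $\varphi(s) = e^{s\lap}\varphi_{0} + \int_{0}^{s} e^{(s-s')\lap} F(s')\,\ud s'$ directly, using $\nrm{e^{\sigma\lap}}_{L^{2}\to L^{2}} \leq 1$ for the $L^{\infty}_{s} L^{2}$ bound and $\nrm{(-\lap)^{1/2} e^{\sigma\lap}}_{L^{2}\to L^{2}} \aleq \sigma^{-1/2}$ together with Schur's test in the $s$-variable for the $L^{2}_{s} H^{1}$ bound, and then again reading off $\rd_{s}\varphi$ from the equation.

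For \eqref{eq:lin-heat-L2-high}, set $G_{k} := s^{k}(-\lap)^{k}\varphi$ and note, using that $(-\lap)^{k}$ commutes with $\rd_{s} - \lap$, the identity $\tfrac{k}{s} G_{k} = k(-\lap) G_{k-1}$, and $\rd_{s}\varphi = \lap\varphi + F$, that
\begin{equation*}
(\rd_{s} - \lap) G_{k} = k(-\lap) G_{k-1} + s^{k}(-\lap)^{k} F, \qquad G_{k}(0) = 0,
\end{equation*}
with $G_{0} = \varphi$. Working with smooth compactly supported data (so $G_{k}$ is a genuine smooth solution vanishing at $s = 0$ because of the $s^{k}$ factor), one runs the induction: assuming $G_{k-1}$ satisfies the $(k-1)$-analogue of \eqref{eq:lin-heat-L2-high}, the term $k(-\lap)G_{k-1}$ lies in $L^{2}_{s}(J;H^{-1})$ since $G_{k-1} \in L^{2}_{s}(J;H^{1})$, and $s^{k}(-\lap)^{k}F$ lies in $L^{1}_{s}(J;L^{2}) + L^{2}_{s}(J;H^{-1})$ by hypothesis; applying \eqref{eq:lin-heat-L2} to the equation for $G_{k}$ and substituting the inductive bound for $\nrm{G_{k-1}}_{L^{2}_{s} H^{1}}$ closes the step. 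The general case follows by density and the resulting a priori estimates.

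The step I expect to require the most care is the bookkeeping around $s = 0$ in the higher-order estimate: since only $\varphi_{0} \in L^{2}$ is assumed (and not $(-\lap)^{k}\varphi_{0}\in L^{2}$), the entire gain of $2k$ derivatives in \eqref{eq:lin-heat-L2-high} rests on parabolic smoothing, which is precisely why one must route the argument through the evolution equation for $G_{k}$ and the base estimate rather than through a direct weighted Duhamel computation (the naive weight split $s^{k} = \sum_{j} \binom{k}{j}(s-s')^{j}(s')^{k-j}$ produces non-integrable factors $(s-s')^{j-k}$ for $j < k$). A secondary point that needs attention is the duality pairing $\Re\langle F, \varphi\rangle$ when $F$ has an $H^{-1}$ component, and correctly tracking which piece of $F$ is assigned to which norm in the splitting $F \in L^{1}_{s}L^{2} + L^{2}_{s}H^{-1}$; these are routine but must be done consistently.
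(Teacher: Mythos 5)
Your overall architecture is sound and your treatment of the $L^2_s(J;H^{-1})$ component of $F$ via the energy identity matches the paper's, but one step fails as written: the $L^2_s(J;H^1)$ bound for the $L^1_s(J;L^2)$ component of $F$ cannot be obtained from Duhamel together with the pointwise smoothing bound $\nrm{(-\lap)^{1/2}e^{\sigma\lap}}_{L^2\to L^2}\aleq \sigma^{-1/2}$. That route requires the kernel $K(s,s')=(s-s')^{-1/2}\mathbf{1}_{\{0<s'<s\}}$ to be bounded from $L^1_{s'}$ to $L^2_s$, i.e.\ $\sup_{s'}\nrm{K(\cdot,s')}_{L^2_s}<\infty$, and this fails because $\int_{s'}^{s_0}(s-s')^{-1}\,\ud s=\infty$ (Schur's test, which you invoke, is in any case an $L^2\to L^2$ criterion and does not apply to $L^1$ input). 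The repair is to run the same energy identity you already use for the other component: multiplying by $\overline{\varphi}$ and integrating gives $\tfrac12\rd_s\nrm{\varphi}_{L^2}^2+\nrm{\nabla\varphi}_{L^2}^2=\Re\int(F,\varphi)$, and the $L^1_sL^2$ piece of $F$ is paired as $\int_J\nrm{F^0}_{L^2}\nrm{\varphi}_{L^2}\,\ud s\leq\nrm{F^0}_{L^1_sL^2}\nrm{\varphi}_{L^\infty_sL^2}$. This is what the paper does; Duhamel and the semigroup are then needed only for the $\rd_s\varphi$ bound, via the square-function estimate $\nrm{\rd_s e^{\sigma\lap}g}_{L^2_\sigma(J;H^{-1})}\aleq\nrm{g}_{L^2}$ rather than the pointwise-in-$\sigma$ bound.

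For \eqref{eq:lin-heat-L2-high} your route is genuinely different from the paper's and is viable, but does not quite close as stated. The paper does not induct: it writes $(\rd_s-\lap)(s^k(-\lap)^k\varphi)=s^k(-\lap)^kF+ks^{k-1}(-\lap)^k\varphi$, estimates the last term in $L^2_s(J;H^{-1})$ by $k\nrm{s^{k-1}(-\lap)^{k-1}\varphi}_{L^2_s(J;H^1)}$, interpolates this between $\nrm{\varphi}_{L^2_s(J;H^1)}$ and $\nrm{s^k(-\lap)^k\varphi}_{L^2_s(J;H^1)}$, and absorbs the small high-order piece into the left-hand side. Your induction instead feeds $k(-\lap)G_{k-1}$ into the base estimate using the inductive control of $\nrm{G_{k-1}}_{L^2_s(J;H^1)}$; but that control carries the norm of $s^{k-1}(-\lap)^{k-1}F$, which is not one of the two $F$-norms on the right of \eqref{eq:lin-heat-L2-high}. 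To recover the stated estimate you must add an interpolation step for the inhomogeneity, e.g.\ $\nrm{(s(-\lap))^jg}_{X}\aleq\nrm{g}_X^{1-j/k}\nrm{(s(-\lap))^kg}_X^{j/k}$ for $X=L^2$ or $H^{-1}$ pointwise in $s$ (spectral theorem) followed by H\"older in $s$, applied to each component of the decomposition of $F$. With that addition, and the qualitative finiteness supplied by first working with smooth compactly supported data (needed in your argument for the induction and in the paper's for the absorption), your approach goes through.
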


In what follows, we denote by $e^{s \lap}$ the solution operator for $(\rd_{s} -\lap) \varphi = 0$.
\begin{proof}
We first prove the a-priori estimate \eqref{eq:lin-heat-L2}. With the exception of the term $\nrm{\rd_{s} \varphi}_{L^{2}(J; H^{-1})}$, \eqref{eq:lin-heat-L2} immediately follows by multiplying by $\overline{\varphi}$, taking the real part and integrating by parts on $\bbH^{2} \times J$. To add in $\rd_{s} \varphi$, use
\begin{equation*}
	\nrm{\rd_{s} \varphi}_{L^{2}_{s}(J; H^{-1})}
	\leq \nrm{\varphi}_{L^{2}_{s}(J; H^{1})}
	+ \nrm{(\rd_{s} - \lap) \varphi}_{L^{2}_{s}(J; H^{-1})},
\end{equation*}
which implies $\nrm{\rd_{s} \varphi}_{L^{2}_{s}(J; H^{-1})} \aleq \nrm{\varphi_{0}}_{L^{2}} + \nrm{F}_{L^{2}_{s}(J; H^{-1})}$. In particular, $\nrm{\rd_{s} e^{s \lap} \varphi_{0}}_{L^{2}_{s}(J; H^{-1})} \aleq \nrm{\varphi_{0}}_{L^{2}}$. By Duhamel's principle, it follows that $\nrm{\rd_{s} \varphi}_{L^{2}_{s}(J; H^{-1})} \aleq \nrm{\varphi_{0}}_{L^{2}} + \nrm{F}_{L^{1}_{s}(J; L^{2})}$ as well. By linearity, \eqref{eq:lin-heat-L2} follows. 

The well-posedness assertion is a standard consequence of \eqref{eq:lin-heat-L2}. To prove \eqref{eq:lin-heat-L2-high}, we begin by noting that
\begin{equation*}
(\rd_{s} - \lap) (s^{k} (-\lap)^{k} \varphi) = s^{k} (-\lap)^{k} F + k s^{k-1} (-\lap)^{k} \varphi.
\end{equation*}
We apply \eqref{eq:lin-heat-L2} and, for a small number $\eps > 0$ to be chosen later, estimate the last term as follows:
\begin{align*}
\nrm{k s^{k-1} (-\lap)^{k} \varphi}_{L^{2}(J; H^{-1})}
& \aleq k \nrm{s^{k-1} (-\lap)^{k-1} \varphi}_{L^{2}(J; H^{1})} \\
& \aleq k \nrm{\varphi}_{L^{2}(J; H^{1})}^{\frac{1}{k}} \nrm{s^{k} (-\lap)^{k} \varphi}_{L^{2}(J; H^{1})}^{\frac{k-1}{k}} \\
& \aleq \eps^{-(k-1)} \nrm{\varphi}_{L^{2}(J; H^{1})} + (k-1) \eps \nrm{s^{k} (-\lap)^{k} \varphi}_{L^{2}(J; H^{1})}.
\end{align*}
On the second line, we used a simple $L^{2}$ interpolation inequality \cite[Lemma~2.4]{LOS5}. Choosing $\eps > 0$ sufficiently small to absorb the last term into the LHS, then invoking \eqref{eq:lin-heat-L2} (applied to $(\rd_{s} - \lap) \varphi = F$) to control $\nrm{\varphi}_{L^{2}_{s}(J; H^{1})}$, \eqref{eq:lin-heat-L2-high} follows. \qedhere
\end{proof}

We also need the following ``Strichartz'' estimate for $\rd_{s} - \lap$:
\begin{lemma} \label{lem:lin-heat-L2Linfty}
Let $\varphi$ be the solution to $(\rd_{s} - \lap) \varphi = F$ in $\bbH^{2} \times J$ with $\varphi(s=0) = \varphi_{0} \in L^{2}$ and $F \in L^{1}_{s}(J; L^{2})$. Then
\begin{equation} \label{eq:lin-heat-L2Linfty}
	\nrm{\varphi}_{L^{2}_{s}(J; L^{\infty})}
	\aleq \nrm{\varphi_{0}}_{L^{2}} + \nrm{F}_{L^{1}_{s}(J; L^{2})}.
\end{equation}
\end{lemma}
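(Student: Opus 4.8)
The plan is to reduce the inhomogeneous estimate to the homogeneous case
\[
\|e^{s\lap}\varphi_{0}\|_{L^{2}_{s}((0,\infty);L^{\infty})} \aleq \|\varphi_{0}\|_{L^{2}},
\]
and to prove the latter by a $TT^{\ast}$ argument whose two ingredients are (i) the sharp two-dimensional on-diagonal heat kernel bound
\[
\|e^{\tau\lap}\|_{L^{1}(\bbH^{2})\to L^{\infty}(\bbH^{2})} \aleq \tau^{-1} \qquad (\tau>0),
\]
which holds on $\bbH^{2}$ (with additional exponential decay as $\tau\to\infty$ coming from the spectral gap $\tfrac14$; see e.g.~\cite{AP09}), and (ii) the classical Hilbert inequality $\int_{0}^{\infty}\!\int_{0}^{\infty}\tfrac{f(s)g(s')}{s+s'}\,\ud s\,\ud s' \aleq \|f\|_{L^{2}(0,\infty)}\|g\|_{L^{2}(0,\infty)}$ for $f,g\geq0$.

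For the homogeneous estimate I would argue by duality. Since elements compactly supported in $s$ are dense in $L^{2}_{s}((0,\infty);L^{1})$ and $\|h\|_{L^{2}_{s}L^{\infty}}=\sup\{\langle h,G\rangle : \|G\|_{L^{2}_{s}L^{1}}\leq 1\}$, it suffices to bound $|\langle e^{s\lap}\varphi_{0},G\rangle_{s,x}|$ for such $G$. Moving $e^{s\lap}$ onto $G$ by self-adjointness on $L^{2}$ reduces matters to $\|\int_{0}^{\infty}e^{s\lap}G(s)\,\ud s\|_{L^{2}}\aleq\|G\|_{L^{2}_{s}L^{1}}$. Expanding the square, using the semigroup property to write $\langle e^{s\lap}G(s),e^{s'\lap}G(s')\rangle=\langle e^{(s+s')\lap}G(s),G(s')\rangle$, and applying H\"older together with the heat kernel bound,
\[
\Big\|\int_{0}^{\infty}e^{s\lap}G(s)\,\ud s\Big\|_{L^{2}}^{2}
\leq \int_{0}^{\infty}\!\!\int_{0}^{\infty} \|e^{(s+s')\lap}\|_{L^{1}\to L^{\infty}}\,\|G(s)\|_{L^{1}}\|G(s')\|_{L^{1}}\,\ud s\,\ud s'
\aleq \int_{0}^{\infty}\!\!\int_{0}^{\infty}\frac{\|G(s)\|_{L^{1}}\|G(s')\|_{L^{1}}}{s+s'}\,\ud s\,\ud s',
\]
which by Hilbert's inequality is $\aleq\big\|\,\|G(s)\|_{L^{1}}\big\|_{L^{2}_{s}}^{2}=\|G\|_{L^{2}_{s}L^{1}}^{2}$. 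This proves the homogeneous estimate on $(0,\infty)$, hence on every subinterval.

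To obtain the full statement, write $\varphi(s)=e^{s\lap}\varphi_{0}+\int_{0}^{s}e^{(s-\sigma)\lap}F(\sigma)\,\ud\sigma$ by Duhamel's formula (existence of $\varphi$ and this representation are provided by \lem{lem:lin-heat-L2}). The first term is controlled by the homogeneous estimate. For the Duhamel term, Minkowski's integral inequality gives
\[
\Big\|\int_{0}^{s}e^{(s-\sigma)\lap}F(\sigma)\,\ud\sigma\Big\|_{L^{2}_{s}(J;L^{\infty})}
\leq \int_{J}\big\|\mathbf{1}_{\{s>\sigma\}}\,e^{(s-\sigma)\lap}F(\sigma)\big\|_{L^{2}_{s}(J;L^{\infty})}\,\ud\sigma
\aleq \int_{J}\|F(\sigma)\|_{L^{2}}\,\ud\sigma,
\]
where for each fixed $\sigma$ one substitutes $\tau=s-\sigma$ and applies the homogeneous estimate with datum $F(\sigma)$ on the interval $(0,|J|-\sigma)\subseteq(0,\infty)$. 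Summing the two contributions yields \eqref{eq:lin-heat-L2Linfty}.

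The main obstacle, and the reason the proof has to be a little indirect, is that \eqref{eq:lin-heat-L2Linfty} cannot be deduced from a pointwise-in-$s$ operator bound: the sharp estimate $\|e^{s\lap}\|_{L^{2}\to L^{\infty}}\simeq s^{-1/2}$ fails to be square-integrable near $s=0$, so one genuinely needs the orthogonality encoded in the $TT^{\ast}$ structure, and it is precisely the scaling-critical $\tau^{-1}$ decay of $\|e^{\tau\lap}\|_{L^{1}\to L^{\infty}}$ --- matching the Hilbert kernel $(s+s')^{-1}$, which is $L^{2}(0,\infty)$-bounded but only borderline so --- that makes the estimate close. (An alternative route to the homogeneous estimate is to interpolate $\|g\|_{L^{\infty}}\aleq\|(-\lap)^{1/4}g\|_{L^{2}}^{2/3}\|(-\lap)g\|_{L^{2}}^{1/3}$ with $g=e^{s\lap}\varphi_{0}$, insert the weights $s^{-1/4}$ and $s^{1/2}$, apply H\"older in $s$, and evaluate the two resulting weighted space-time $L^{2}$ norms by the spectral theorem, using that $\int_{0}^{\infty}s^{-1/2}\lambda^{1/2}e^{-2s\lambda}\,\ud s$ and $\int_{0}^{\infty}s\,\lambda^{2}e^{-2s\lambda}\,\ud s$ are constants independent of $\lambda\geq\tfrac14$; the $TT^{\ast}$ route above is shorter.)
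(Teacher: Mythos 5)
Your proof is correct and takes essentially the same route as the paper's: the paper likewise reduces to the homogeneous bound $\nrm{e^{s\lap}f}_{L^{2}_{s}(J;L^{\infty})}\aleq\nrm{f}_{L^{2}}$ via Duhamel, and for that bound it cites the $TT^{\ast}$ argument of Tao (resting on self-adjointness of $e^{s\lap}$ and the $s^{-1/2}$ decay of $\nrm{e^{s\lap}}_{L^{2}\to L^{\infty}}$, which by composition and duality is exactly your $\tau^{-1}$ bound on $\nrm{e^{\tau\lap}}_{L^{1}\to L^{\infty}}$ paired with the Hilbert kernel). You have simply written out in full the argument the paper delegates to the references.
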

This lemma follows, via Duhamel's principle, from the homogeneous estimate $\nrm{e^{s \lap} f}_{L^{2}_{s}(J; L^{\infty})} \aleq \nrm{f}_{L^{2}}$. As stated in \cite[Lemma~2.5]{LMZ-DPDE}, the latter estimate may be proved by following \cite[Paper IV, Proof of Lemma~2.5]{Tao37}, which only relies on the self-adjointness of $e^{s \lap}$ and the decay estimate $\nrm{e^{s \lap} f}_{L^{\infty}} \aleq s^{-\frac{1}{2}} \nrm{f}_{L^{2}}$ (see Lemma~\ref{l:lin-heat-Lp-Lq} below).

\subsubsection{$L^{p} \to L^{q}$ bounds for $\rd_{s} - \lap$.}
Next, we consider $L^{p} \to L^{q}$ bounds for $e^{s \lap}$. For small times, we rely on the following (fairly general) heat-kernel bounds:
\begin{lemma} [Short-time heat kernel bounds]  \label{l:hk} Let $p_\s$ denote the heat kernel on $\Hp^{2}$ (i.e., the integral kernel for $e^{\s \lap}$).  
For $0 < \s \leq 2$ and $k = 0, 1, 2$, there exists $N_{k} > 0$ so that the following \emph{short-time heat kernel estimate} holds:
\begin{equation} \label{eq:hk-short}
	\abs{\rd_{\s}^{k} p_{\s}(x, y)} \aleq_{k} \s^{-\frac{d}{2} - k} \bb( 1+ \frac{\bfd_{\bbH^{2}}(x, y)^{2}}{\s} \bb)^{N_{k}} \exp \left( -\frac{\bfd_{\bbH^{2}}(x, y)^{2}}{4\s}\right).
\end{equation}
\end{lemma}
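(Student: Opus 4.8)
The plan is to start from the classical explicit representation of the heat kernel on $\bbH^2$ and estimate it by hand; the restriction $\sigma\le 2$ strips away all long-time decay factors and reduces everything to elementary Gamma-function integrals. Concretely, with $r=\bfd_{\bbH^2}(x,y)$ one has the standard formula (see, e.g., \cite{DaviesMandouvalos} or \cite{Davies})
\[
	p_\sigma(x,y)=\frac{\sqrt 2}{(4\pi\sigma)^{3/2}}\,e^{-\sigma/4}\int_r^\infty\frac{\rho\,e^{-\rho^2/(4\sigma)}}{(\cosh\rho-\cosh r)^{1/2}}\,\ud\rho,
\]
in which the factor $e^{-\sigma/4}$ is harmless for $\sigma\le 2$. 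For the case $k=0$ I would substitute $\rho=r+u$, $u\ge 0$, use $\cosh(r+u)-\cosh r=2\sinh(r+\tfrac u2)\sinh(\tfrac u2)$ together with the elementary bounds $\sinh(\tfrac u2)\ge\tfrac u2$ and $\sinh(r+\tfrac u2)\gtrsim\max(\sinh r,\,r+u)$, and factor the Gaussian as $e^{-\rho^2/(4\sigma)}=e^{-r^2/(4\sigma)}e^{-(2ru+u^2)/(4\sigma)}\le e^{-r^2/(4\sigma)}e^{-ru/(2\sigma)}e^{-u^2/(4\sigma)}$. Pulling out $e^{-r^2/(4\sigma)}$ at the very start secures the asserted Gaussian with the \emph{sharp} constant $\tfrac14$; what remains is to bound $\int_0^\infty\frac{(r+u)\,e^{-ru/(2\sigma)}e^{-u^2/(4\sigma)}}{(u\max(\sinh r,\,r+u))^{1/2}}\,\ud u$, which I would do by splitting into $r\le 1$ and $r\ge 1$ (and in the latter case $u\lesssim e^r$ versus $u\gtrsim e^r$), reducing in each regime to $\int u^{a}e^{-u^2/(4\sigma)}\ud u\asymp\sigma^{(a+1)/2}$ and $\int u^{a}e^{-ru/(2\sigma)}\ud u\asymp(\sigma/r)^{a+1}$. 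This yields $\sigma^{-1}(1+r^2/\sigma)^{N_0}e^{-r^2/(4\sigma)}$ (recall $d=2$). Alternatively, for $k=0$ one may simply invoke the sharp two-sided heat-kernel bounds of Davies--Mandouvalos on $\bbH^n$, specialized to $n=2$ and $\sigma\le 2$, after observing that the resulting polynomial factor $(1+r)^{1/2}e^{-r/2}$ is $\lesssim (1+r^2/\sigma)$.

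For $k=1,2$ I would differentiate the formula under the integral sign. Each $\partial_\sigma$ either falls on the prefactor $\sigma^{-3/2}e^{-\sigma/4}$, costing an extra $\sigma^{-1}$ (consistent with the claimed $\sigma^{-d/2-k}$), or on the Gaussian, where $\partial_\sigma e^{-\rho^2/(4\sigma)}=\tfrac{\rho^2}{4\sigma^2}e^{-\rho^2/(4\sigma)}$ costs $\sigma^{-2}$ and inserts two more powers of $\rho$ into the integrand. One is thus reduced to integrals of the shape $\int_r^\infty\rho^{2m+1}e^{-\rho^2/(4\sigma)}(\cosh\rho-\cosh r)^{-1/2}\,\ud\rho$ with $m\le k$, which are treated exactly as for $k=0$: after factoring out $e^{-r^2/(4\sigma)}$ and substituting $\rho=r+u$, the elementary inequalities $u^{j}e^{-u^2/(4\sigma)}\lesssim_j\sigma^{j/2}$ and $(r+u)^{j}e^{-ru/(2\sigma)}\lesssim_j r^{j}+(\sigma/r)^{j}$ supply the needed powers of $\sigma$, while the surplus powers of $r$ are absorbed into $(1+r^2/\sigma)^{N_k}$ at the price of enlarging $N_k$. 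A book-keeping of these powers gives exactly $\sigma^{-d/2-k}$ in front.

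I expect the main obstacle to be the large-$r$ regime. There $\sinh r$ grows exponentially and the naive pointwise bound $(\cosh\rho-\cosh r)^{-1/2}\le(2\sinh(u/2))^{-1/2}$ near $\rho=r$ throws away all the decay associated with $x$ and $y$ being far apart, so one must keep the factor $e^{-ru/(2\sigma)}$ — and, for $u\gtrsim e^r$, the full strength of $e^{-\rho^2/(4\sigma)}$, which then beats $e^{-r^2/(4\sigma)}$ super-exponentially because $e^{2r}\ge 2r^2$ for $r\ge 1$ — in order to see that the integral is in fact exponentially small in $r$. The accompanying subtlety that forces this particular route is that no step may degrade the constant $\tfrac14$ in $e^{-r^2/(4\sigma)}$: every loss must be polynomial in $r^2/\sigma$. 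This is precisely why one factors out $e^{-r^2/(4\sigma)}$ immediately from the explicit formula, rather than, for instance, bounding $\partial_\sigma^k p_\sigma$ via Cauchy's integral formula applied to the complex-time heat semigroup, where the Gaussian constant would unavoidably deteriorate.
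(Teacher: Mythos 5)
Your proposal is correct, but it takes a more hands-on route than the paper. The paper disposes of the lemma in two sentences: the case $k=0$ is quoted directly from Davies--Mandouvalos (their Theorem~3.1 gives two-sided bounds on $\bbH^{n}$ whose polynomial-in-$r$ prefactor is dominated by $(1+r^{2}/\sigma)^{N_{0}}$ for $\sigma\le 2$, exactly as you note in your ``alternatively'' remark), and the cases $k=1,2$ are obtained by invoking the general machinery of Davies and Grigor'yan that upgrades a Gaussian on-diagonal/off-diagonal bound for $p_{\sigma}$ itself to bounds on $\partial_{\sigma}^{k}p_{\sigma}$ with the loss of $\sigma^{-k}$; no explicit formula is used. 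You instead start from the closed-form expression for the heat kernel on $\bbH^{2}$, factor out $e^{-r^{2}/(4\sigma)}$ at the outset, and control the remaining integral and its $\sigma$-derivatives by the elementary substitutions and Gamma-function estimates you describe; the computations you sketch (the identity $\cosh(r+u)-\cosh r=2\sinh(r+\tfrac u2)\sinh(\tfrac u2)$, the splitting of the Gaussian, and the absorption of surplus powers of $r^{2}/\sigma$ into the factor $(1+r^{2}/\sigma)^{N_{k}}$) all check out, and differentiation under the integral sign is justified since the $(\cosh\rho-\cosh r)^{-1/2}$ singularity is untouched by $\partial_{\sigma}$. What the paper's route buys is brevity and independence from the explicit kernel (so it would survive perturbations of the metric); what your route buys is a self-contained elementary proof in which the sharp constant $\tfrac14$ in the Gaussian is manifestly preserved at every step, rather than having to be extracted from the fine print of the general derivative-transfer theorems. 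Your closing observation — that the polynomial factor $(1+r^{2}/\sigma)^{N_{k}}$ cannot compensate for any degradation of the exponent constant, which is why one must not route the derivative bounds through, say, complex-time analyticity — is exactly the right sanity check and is consistent with the paper's choice of references.
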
 

Lemma~\ref{l:hk} in the case $k =0$ follows from~\cite[Theorem 3.1]{DaviesMandouvalos}. Then the cases $k=1, 2$ follow by standard machinery for deducing estimates for time derivatives of the heat kernel from that of the heat kernel itself; see~\cite{Davies} and~\cite[Theorems 1.1 and 1.2]{Grigoryan} for details. 

Combining Lemma~\ref{l:hk} with the long-time $L^{2}$ bound and the maximum principle for the linear heat equation, we obtain the following long-time $L^{p} \to L^{q}$ bounds for $e^{s \lap}$.
\begin{lemma}  \label{l:lin-heat-Lp-Lq}
Let $1 < p < \infty$ and $p \leq q \leq \infty$. Let $\rho_{0}$ satisfy
\begin{equation*}
	0 < \rho_{0}^{2} < \frac{1}{2} \min \set{\frac{1}{p}, 1 - \frac{1}{p}}.
\end{equation*}
For $f \in L^p(\Hp^2)$ and $s > 0$, we have
\begin{equation}  \label{eq:lin-heat-Lp-Lq} 
	\nrm{e^{s \lap} f}_{L^{q}} + \nrm{s \lap e^{s \lap} f}_{L^{q}}
	\aleq s^{- (\frac{1}{p} - \frac{1}{q})} e^{-\rho_{0}^{2} s} \nrm{f}_{L^{p}}.
\end{equation}
\end{lemma}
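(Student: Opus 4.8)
The plan is to interpolate a short-time regime, handled by the heat-kernel bounds of Lemma~\ref{l:hk} together with the $L^2$-maximum-principle machinery, against a long-time regime, where the exponential gain $e^{-\rho_0^2 s}$ comes from the spectral gap $\tfrac14$ of $-\lap$ on $\bbH^2$ combined with Riesz--Thorin between $L^2$ and $L^\infty$ (or $L^1$). First I would record the two endpoint facts: (i) contractivity on $L^p$, $\nrm{e^{s\lap}f}_{L^p}\aleq\nrm{f}_{L^p}$ for all $s>0$, $1\le p\le\infty$, which follows from positivity of the heat kernel and $\int p_s(x,y)\,\dvol_\bfh(y)=1$ (equivalently the maximum principle); and (ii) the long-time $L^2\to L^2$ decay $\nrm{e^{s\lap}f}_{L^2}\aleq e^{-\frac14 s}\nrm{f}_{L^2}$, immediate from the functional calculus and $\mathrm{spec}(-\lap)\subset[\tfrac14,\infty)$.

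Next I would treat $0<s\le 2$. Here the claimed bound reduces to $\nrm{e^{s\lap}f}_{L^q}+\nrm{s\lap e^{s\lap}f}_{L^q}\aleq s^{-(\frac1p-\frac1q)}\nrm{f}_{L^p}$, since $e^{-\rho_0^2 s}\simeq 1$ on this range. For the first term one bounds the operator norm $L^p\to L^q$ by the mixed-norm bound $\sup_x\nrm{p_s(x,\cdot)}_{L^{q'}}\wedge\sup_y\nrm{p_s(\cdot,y)}_{L^{r}}$ with $\tfrac1r=1-\tfrac1p+\tfrac1q$ via Schur's test; using \eqref{eq:hk-short} with $k=0$ and integrating in the geodesic polar variable $\bfd_{\bbH^2}(x,y)=\varrho$ against the volume element $\sinh\varrho\,\ud\varrho$, the Gaussian $\exp(-\varrho^2/4s)$ controls the exponential volume growth and the polynomial weight $(1+\varrho^2/s)^{N_0}$, yielding exactly the scaling factor $s^{-(\frac1p-\frac1q)}$ (the $d=2$ exponent). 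For the second term, $s\lap e^{s\lap}=s\partial_s e^{s\lap}$, so one runs the identical Schur-test argument with the $k=1$ kernel bound from \eqref{eq:hk-short}, which carries the extra $s^{-1}$ that is cancelled by the prefactor $s$.

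Finally, for $s\ge 2$ I would write $e^{s\lap}=e^{(s-1)\lap}e^{\lap}$ and split $e^{(s-1)\lap}$ across $L^p\to L^2\to L^q$ only if $p\le 2\le q$; in general it is cleaner to interpolate directly. By the short-time bound, $e^{\lap}$ maps $L^p\to L^q$ boundedly; then $e^{(s-1)\lap}$ needs to be bounded on $L^q$ (and on $L^p$) with the gain $e^{-\rho_0^2 s}$. The key point is the $L^q\to L^q$ decay $\nrm{e^{\sigma\lap}}_{L^q\to L^q}\aleq e^{-\rho_0^2\sigma}$ for $\sigma\ge1$: interpolating the $L^2$ decay rate $\tfrac14$ with the $L^1$ (or $L^\infty$) rate $0$ gives, for $\theta$ with $\tfrac1q=\tfrac{1-\theta}{1}+\tfrac{\theta}{2}$, a rate $\tfrac{\theta}{4}$, and one checks $\tfrac{\theta}{4}>\rho_0^2$ precisely under the hypothesis $\rho_0^2<\tfrac12\min\{\tfrac1p,1-\tfrac1p\}$ (after also interpolating on the $L^p$ side and taking the worse of the two exponents). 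Combining, $\nrm{e^{s\lap}f}_{L^q}\aleq e^{-\rho_0^2(s-1)}\nrm{e^{\lap}f}_{L^q}\aleq e^{-\rho_0^2 s}\nrm{f}_{L^p}$, and since $s\simeq 1$ up to constants for $s\ge2$ the missing polynomial factor $s^{-(\frac1p-\frac1q)}$ is harmless; for the $s\lap e^{s\lap}$ term one peels off $e^{\lap}$ and applies the boundedness of $\lap e^{\lap}$ on $L^q$. The main obstacle is bookkeeping the exponents: verifying that the interpolated $L^q$- and $L^p$-decay rates both strictly exceed $\rho_0^2$ under the stated constraint, and matching the precise power $s^{-(\frac1p-\frac1q)}$ from the Schur-test computation in the short-time regime — neither is deep, but both require care with the hyperbolic volume element and the constants $N_k$.
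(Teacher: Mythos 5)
Your overall architecture (short times via the heat-kernel bounds of Lemma~\ref{l:hk} and Schur's test; long times via the spectral gap, the maximum principle, interpolation and duality) matches the paper's, and your short-time Schur-test computation and your derivation of the $L^{p}\to L^{p}$ decay rates are sound. The genuine problem is the order of composition in the long-time step. You write $e^{s\lap}=e^{(s-1)\lap}e^{\lap}$, send $f$ through $e^{\lap}:L^{p}\to L^{q}$ first, and then ask for exponential decay of $e^{(s-1)\lap}$ \emph{on $L^{q}$}. The interpolated decay rate on $L^{q}$ is $\tfrac{1}{2}\min\set{\tfrac{1}{q},1-\tfrac{1}{q}}$, which is governed by $q$ and not by $p$, and your claim that it exceeds $\rho_{0}^{2}$ under the hypothesis $\rho_{0}^{2}<\tfrac{1}{2}\min\set{\tfrac{1}{p},1-\tfrac{1}{p}}$ is false whenever $q>\max\set{p,p'}$. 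For instance with $p=2$, $q=\infty$, $\rho_{0}^{2}=\tfrac{1}{5}$, the $L^{\infty}\to L^{\infty}$ decay rate is $0$ (indeed $e^{s\lap}1=1$ on $\bbH^{2}$), so no exponential gain can be extracted this way, while the lemma does assert decay of $\nrm{e^{s\lap}f}_{L^{\infty}}$ by $e^{-\rho_{0}^{2}s}\nrm{f}_{L^{2}}$. Your parenthetical "taking the worse of the two exponents" only makes this worse, since the minimum of the $L^{p}$ and $L^{q}$ rates is still $0$ in this example.

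The fix is to reverse the factorization, which is what the paper does: write $e^{s\lap}f=e^{\lap}\bigl(e^{(s-1)\lap}f\bigr)$, first run the long-time decay in $L^{p}$, namely $\nrm{e^{(s-1)\lap}f}_{L^{p}}\leq e^{-\rho^{2}(s-1)}\nrm{f}_{L^{p}}$ with $\rho^{2}=\tfrac{1}{2}\min\set{\tfrac{1}{p},1-\tfrac{1}{p}}>\rho_{0}^{2}$ (exactly the quantity your hypothesis is calibrated to), and only then apply the short-time $L^{p}\to L^{q}$ bounds for $e^{\lap}$ and $\lap e^{\lap}$ to the already-decayed function. The polynomial factor $s^{-(\frac{1}{p}-\frac{1}{q})}$ for large $s$ is then recovered from the strict slack $\rho^{2}>\rho_{0}^{2}$ via $e^{-\rho^{2}s}\aleq s^{-(\frac{1}{p}-\frac{1}{q})}e^{-\rho_{0}^{2}s}$ — not from the assertion that "$s\simeq 1$ for $s\geq 2$", which is not true and is not the mechanism here.
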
 
\begin{proof}
For $0 < s \leq 1$, the desired estimate follows from Lemma~\ref{l:hk} and Schur's test. 

To handle the case $s > 1$, we begin by establishing the following long-time $L^{p}$ bound:
\begin{equation} \label{eq:lin-heat-Lp-long}
	\nrm{e^{s \lap} f}_{L^{p}} \leq e^{-\rho^{2} s} \nrm{f}_{L^{p}} \quad \hbox{ for all } s >0,
\end{equation}
where $\rho^{2} = \frac{1}{2} \min \set{\frac{1}{p}, 1 -\frac{1}{p}}$. Indeed, the case $p = 2$ follows by integrating the differential inequality,
\begin{equation*}
	\frac{1}{2} \rd_{s} \int (e^{s \lap} f, e^{s \lap} f) = \int (\lap e^{s \lap} f, e^{s \lap} f) \leq - \frac{1}{4} \int (e^{s \lap} f, e^{s \lap} f),
\end{equation*}
which in turn follows by the well-known fact that $-\lap$ on $\bbH^{2}$ has a spectral gap of $\frac{1}{4}$ \cite{Bray}. The case $2 < p < \infty$ then follows by interpolation of the case $p = 2$ and the maximum principle, which implies $\nrm{e^{s \lap} f}_{L^{\infty}} \leq \nrm{f}_{L^{\infty}}$. Finally, the case $1 < p < 2$ follows by duality.

By \eqref{eq:lin-heat-Lp-long} and the short time bound,
\begin{equation*}
	\nrm{e^{s \lap} f}_{L^{q}} +	\nrm{s \lap e^{s \lap} f}_{L^{q}}
	\aleq \nrm{e^{(s-1) \lap} f}_{L^{p}} \aleq e^{-\rho^{2} s} \nrm{f}_{L^{p}}.
\end{equation*}
Decreasing $\rho^{2}$ a bit to $\rho_{0}^{2}$, the desired estimate \eqref{eq:lin-heat-Lp-Lq} follows. \qedhere.
\end{proof}

We conclude with two corollaries of Lemma~\ref{l:lin-heat-Lp-Lq}, which will be useful for establishing basic $L^{p}$ functional inequalities on $\bbH^{2}$ in Subsection~\ref{subsec:ineq}.
\begin{corollary} \label{c:spec-gap-Lp}
Let $1 < p < \infty$. For $f \in W^{2, p}$, we have
\begin{equation} \label{eq:spec-gap-Lp}
	\nrm{f}_{L^{p}} \aleq \nrm{\lap f}_{L^{p}}.
\end{equation}
\end{corollary}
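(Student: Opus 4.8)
The plan is to represent the inverse of $-\lap$ via the heat semigroup and to exploit the spectral gap of $-\lap$ on $\bbH^{2}$ that is already encoded in Lemma~\ref{l:lin-heat-Lp-Lq}. Heuristically, the identity $(-\lap)^{-1} = \int_{0}^{\infty} e^{s \lap} \, \ud s$ on $L^{p}$, combined with the exponential bound $\nrm{e^{s \lap} g}_{L^{p}} \aleq e^{-\rho_{0}^{2} s} \nrm{g}_{L^{p}}$ coming from \eqref{eq:lin-heat-Lp-Lq} with $q = p$ (see also the intermediate estimate \eqref{eq:lin-heat-Lp-long}), gives $\nrm{f}_{L^{p}} \aleq \rho_{0}^{-2} \nrm{\lap f}_{L^{p}}$ immediately. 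So the task reduces to making this representation rigorous.

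First I would take $f \in C^{\infty}_{0}(\bbH^{2})$. Then $s \mapsto e^{s \lap} f$ is a $C^{1}$ curve in $L^{p}$ with $\tfrac{\ud}{\ud s} e^{s \lap} f = e^{s \lap} \lap f$, so the fundamental theorem of calculus yields
\begin{equation*}
	e^{S \lap} f - f = \int_{0}^{S} e^{s \lap} \lap f \, \ud s \qquad \hbox{for every } S > 0.
\end{equation*}
By \eqref{eq:lin-heat-Lp-Lq} we have $\nrm{e^{S \lap} f}_{L^{p}} \aleq e^{-\rho_{0}^{2} S} \nrm{f}_{L^{p}} \to 0$ as $S \to \infty$, while the integrand on the right is bounded in $L^{p}$ by $e^{-\rho_{0}^{2} s} \nrm{\lap f}_{L^{p}}$, which is integrable in $s$ over $(0, \infty)$. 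Letting $S \to \infty$ and applying Minkowski's inequality,
\begin{equation*}
	\nrm{f}_{L^{p}} = \nrm{\int_{0}^{\infty} e^{s \lap} \lap f \, \ud s}_{L^{p}} \leq \int_{0}^{\infty} \nrm{e^{s \lap} \lap f}_{L^{p}} \, \ud s \aleq \Big( \int_{0}^{\infty} e^{-\rho_{0}^{2} s} \, \ud s \Big) \nrm{\lap f}_{L^{p}} \aleq \nrm{\lap f}_{L^{p}},
\end{equation*}
which is \eqref{eq:spec-gap-Lp} for $f \in C^{\infty}_{0}$.

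Finally, for general $f \in W^{2, p}(\bbH^{2})$ I would approximate by $f_{n} \in C^{\infty}_{0}(\bbH^{2})$ with $f_{n} \to f$ in $W^{2, p}$; since $W^{2,p}$ controls both $\nrm{\cdot}_{L^{p}}$ and $\nrm{\lap \,\cdot}_{L^{p}}$, we get $f_{n} \to f$ and $\lap f_{n} \to \lap f$ in $L^{p}$, and passing to the limit in $\nrm{f_{n}}_{L^{p}} \aleq \nrm{\lap f_{n}}_{L^{p}}$ gives the claim. There is no real obstacle here: the only step requiring any care is the passage $S \to \infty$ in the integral identity, and that is handled at once by the exponential $L^{p}$ decay of $e^{s\lap}$ furnished by Lemma~\ref{l:lin-heat-Lp-Lq}. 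The genuine content of the statement is simply the spectral gap of $-\lap$ on $\bbH^{2}$, propagated from $L^{2}$ to $L^{p}$ by interpolation with the maximum principle as in the proof of that lemma.
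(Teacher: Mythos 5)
Your proof is correct and follows essentially the same route as the paper: both represent $f$ as $\int_{0}^{\infty} e^{s\lap}(-\lap) f\,\ud s$ and conclude via Minkowski's inequality together with the exponential $L^{p}$ decay of $e^{s\lap}$ from Lemma~\ref{l:lin-heat-Lp-Lq}. You merely spell out the justification of that representation (the fundamental theorem of calculus, the limit $S\to\infty$, and the density of $C^{\infty}_{0}$ in $W^{2,p}$) which the paper leaves implicit.
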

\begin{proof}
We begin by writing
\begin{equation*}
f = - \int_{0}^{\infty} \rd_{s} e^{s \lap} f \, \ud s = \int_{0}^{\infty} e^{s \lap} (-\lap) f \, \ud s,
\end{equation*}
which is easily justified using Lemma~\ref{l:lin-heat-Lp-Lq}. By \eqref{eq:lin-heat-Lp-Lq} with $p = q$, we have
\begin{equation*}
\nrm{\int_{0}^{\infty} e^{s \lap} (-\lap) f \, \ud s}_{L^{p}}
\leq \int_{0}^{\infty} \nrm{e^{s \lap} (-\lap) f}_{L^{p}} \, \ud s
\aleq \int_{0}^{\infty} e^{-\rho_{0}^{2} s} \nrm{\lap f}_{L^{p}} \, \ud s
\aleq \nrm{\lap f}_{L^{p}},
\end{equation*}
which proves the corollary. \qedhere
\end{proof}

\begin{corollary} \label{c:frac-heat-Lp}
Let $0 < \alp < 1$ and $1 < p < \infty$. For $f \in L^{p}$, we have
\begin{equation*}
	\nrm{s^{\alp} (-\lap)^{\alp} e^{s \lap} f}_{L^{p}} \aleq \nrm{f}_{L^{p}},
\end{equation*}
where the operator $(-\lap)^{\alp}$ is defined by the spectral calculus of the self-adjoint operator $-\lap$ on (say) $H^{2}$ and then extended by continuity.
\end{corollary}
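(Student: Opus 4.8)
The plan is to establish the pointwise-in-$s$ bound $\nrm{(-\lap)^{\alp} e^{s\lap} f}_{L^{p}} \aleq_{\alp} s^{-\alp} \nrm{f}_{L^{p}}$, which is equivalent to the assertion, by combining a subordination (Balakrishnan-type) formula for the fractional power with the $L^{p} \to L^{p}$ heat bounds of Lemma~\ref{l:lin-heat-Lp-Lq}. By density it suffices to treat $f \in L^{2} \cap L^{p}$; for such $f$ the left-hand side is unambiguously defined through the spectral calculus since $e^{s\lap} f \in H^{2}$, and the resulting estimate then extends to all of $L^{p}$ by continuity — which is exactly the sense in which $s^{\alp}(-\lap)^{\alp} e^{s\lap}$ is to be understood as an operator on $L^{p}$.

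The first step is the elementary identity: for $\lmb \geq 0$ and $0 < \alp < 1$, the substitution $t \mapsto t/\lmb$ gives $\lmb^{\alp} = C_{\alp}^{-1} \int_{0}^{\infty} (1 - e^{-t\lmb}) t^{-1-\alp} \, \ud t$ with $C_{\alp} = \int_{0}^{\infty} (1 - e^{-t}) t^{-1-\alp} \, \ud t \in (0, \infty)$ (the integrand being $O(t^{-\alp})$ as $t \to 0^{+}$ and $O(t^{-1-\alp})$ as $t \to \infty$). Multiplying by $e^{-s\lmb}$ and inserting this into the spectral calculus of $-\lap$ — legitimate because $\mathrm{spec}(-\lap) \subseteq [\tfrac{1}{4}, \infty)$, so that $(s\lmb)^{\alp} e^{-s\lmb}$ is bounded and the relevant Bochner integral converges in $L^{2}$ — I obtain
\begin{equation*}
	(-\lap)^{\alp} e^{s\lap} f = \frac{1}{C_{\alp}} \int_{0}^{\infty} \bigl( e^{s\lap} f - e^{(s+t)\lap} f \bigr) t^{-1-\alp} \, \ud t .
\end{equation*}
I would then bound the right-hand side in $L^{p}$, splitting the $t$-integral at $t = s$. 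For $0 < t \leq s$, I would write $e^{s\lap} f - e^{(s+t)\lap} f = - \int_{s}^{s+t} \lap e^{\sgm\lap} f \, \ud\sgm$ and apply Lemma~\ref{l:lin-heat-Lp-Lq} with $q = p$, which yields $\nrm{\lap e^{\sgm\lap} f}_{L^{p}} \aleq \sgm^{-1} \nrm{f}_{L^{p}} \aleq s^{-1} \nrm{f}_{L^{p}}$ for $\sgm \in [s, 2s]$, hence $\nrm{e^{s\lap} f - e^{(s+t)\lap} f}_{L^{p}} \aleq t s^{-1} \nrm{f}_{L^{p}}$ and $\int_{0}^{s} t^{-1-\alp} \nrm{e^{s\lap} f - e^{(s+t)\lap} f}_{L^{p}} \, \ud t \aleq s^{-1} \nrm{f}_{L^{p}} \int_{0}^{s} t^{-\alp} \, \ud t \aleq_{\alp} s^{-\alp} \nrm{f}_{L^{p}}$. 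For $t > s$, I would use the uniform bound $\nrm{e^{\sgm\lap}}_{L^{p} \to L^{p}} \aleq 1$ (again Lemma~\ref{l:lin-heat-Lp-Lq} with $q = p$) to get $\nrm{e^{s\lap} f - e^{(s+t)\lap} f}_{L^{p}} \aleq \nrm{f}_{L^{p}}$, so that $\int_{s}^{\infty} t^{-1-\alp} \nrm{e^{s\lap} f - e^{(s+t)\lap} f}_{L^{p}} \, \ud t \aleq \nrm{f}_{L^{p}} \int_{s}^{\infty} t^{-1-\alp} \, \ud t = \alp^{-1} s^{-\alp} \nrm{f}_{L^{p}}$. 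Adding the two contributions gives the claim; the same estimates show the integral converges absolutely in $L^{p}$, so the displayed representation is indeed a valid identity in $L^{p}$.

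I do not expect a serious obstacle; given Lemma~\ref{l:lin-heat-Lp-Lq} the argument is essentially soft. The only points needing care are the justification of the subordination identity at the level of the functional calculus (interchanging $(-\lap)^{\alp}$ with the Bochner integral, which is routine since everything converges in $L^{2}$) and the density/continuous-extension step used to pass from $f \in L^{2} \cap L^{p}$ to general $f \in L^{p}$, since the left-hand side is a priori only defined on $L^{2}$. An alternative would be Stein complex interpolation of the analytic family $z \mapsto (-\lap)^{z} e^{s\lap}$ between $\Re z = 0$ and $\Re z = 1$, but this requires controlling the $L^{p} \to L^{p}$ norm of the imaginary powers $(-\lap)^{i\tau}$ of the hyperbolic Laplacian (with at worst sub-exponential growth in $\tau$), so the subordination route is cleaner and entirely self-contained.
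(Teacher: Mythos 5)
Your proof is correct and follows essentially the same route as the paper: both rest on a subordination formula expressing $(-\lap)^{\alp}e^{s\lap}$ as an integral of heat-semigroup data against a power weight, combined with the $L^{p}\to L^{p}$ bounds $\nrm{e^{\sgm\lap}f}_{L^{p}}+\nrm{\sgm\lap e^{\sgm\lap}f}_{L^{p}}\aleq\nrm{f}_{L^{p}}$ from Lemma~\ref{l:lin-heat-Lp-Lq}. The paper uses the identity $(-\lap)^{\alp}g=c_{\alp}\int_{0}^{\infty}t^{-\alp}(-\lap)e^{t\lap}g\,\ud t$ (the integration-by-parts variant of your Balakrishnan formula) and closes with a single Schur-type scalar integral rather than your split at $t=s$, but this is only a difference in bookkeeping.
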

\begin{proof}
For $0 < \alp < 1$ and $g \in H^{2}$, by the spectral calculus for $-\lap$, we have
\begin{equation*}
(-\lap)^{\alp} g = c_{\alp} \int_{0}^{\infty} s^{-\alp} (-\lap) e^{s \lap} g \, \ud s,
\end{equation*}
where $c_{\alp} = \int_{0}^{\infty} s^{-\alp} e^{-s} \, \ud s > 0$. Thus, for $f \in C^{\infty}_{0}$ we may write
\begin{equation*}
s^{\alp} (-\lap)^{\alp} e^{s \lap} f = c_{\alp} \int_{0}^{\infty} s^{\alp} (s')^{-\alp} (-\lap) e^{(s+s') \lap} f \, \ud s',
\end{equation*}
Then by Lemma~\ref{l:lin-heat-Lp-Lq},
\begin{align*}
\nrm{s^{\alp} (-\lap)^{\alp} e^{s \lap} f}_{L^{p}}
& = \nrm{c_{\alp} \int_{0}^{\infty} s^{\alp} (s')^{-\alp} (-\lap) e^{(s+s') \lap} f \, \ud s' }_{L^{p}} \\
& \leq c_{\alp} \int_{0}^{\infty} s^{\alp} (s')^{-\alp} (s+s')^{-1} \nrm{(s+s') (-\lap) e^{(s+s') \lap}f}_{L^{p}}\, \ud s' \\
& \aleq \left(\int_{0}^{\infty} s^{\alp} (s')^{-\alp} (s+s')^{-1} \, \ud s'\right) \nrm{f}_{L^{p}} \aleq \nrm{f}_{L^{p}}.
\end{align*}
By the density of $C^{\infty}_{0}$ in $L^{p}$, the corollary follows. \qedhere
\end{proof}

\subsection{Heat-flow-based Littlewood--Paley projections and functional inequalities on $\bbH^{2}$}  \label{subsec:ineq}
Here, we state some basic functional inequalities on $\bbH^{2}$.

\subsubsection{Heat-flow-based Littlewood--Paley projections}
We introduce Littlewood--Paley projections on $\bbH^{2}$ based on the linear heat equation $e^{s \lap}$ as in \cite{IPS, LOS2}; however, the notation we follow is from \cite[\S~1.1.3]{LLOS1}. For any $s > 0$, we define
\begin{equation} \label{eq:LP-proj}
	P_{\geq s} f = e^{s \lap} f, \qquad
	P_{s} f = s (-\lap) e^{s \lap} f.
\end{equation}
Intuitively, $P_{s} f$ may be interpreted as a projection of $f$ to frequencies comparable to $s^{-\frac{1}{2}}$. Indeed, this interpretation can be justified on the Euclidean space $\bbR^{d}$ by observing that the symbol of $s (-\lap) e^{s \lap}$ is simply $s \abs{\xi}^{2} e^{-s \abs{\xi}^{2}}$, so $s (-\lap) e^{s \lap} f$ is the localization of the Fourier transform of $f$ to the annulus $\set{\abs{\xi} \simeq s^{-\frac{1}{2}}}$.

By the fundamental theorem of calculus, it is straightforward to verify that
\begin{equation} \label{eq:LP-id-s}
	P_{\geq s} f = \int_{s}^{\infty} P_{s'} f \frac{\ud s'}{s'} \quad \hbox{ for } s > 0,
\end{equation}
which explains our notation $P_{\geq s}$. In particular, we have
\begin{equation} \label{eq:LP-id}
f = \int_{0}^{\infty} P_{s'} f \frac{\ud s'}{s'},
\end{equation}
which is the basic identity that relates $f$ with its Littlewood--Paley resolution $\set{P_{s} f}_{s \in (0, \infty)}$.

\subsubsection{$L^{p}$ functional inequalities} 
Here, we collect basic $L^{p}$ functional inequalities on $\bbH^{2}$ used in the paper.
\begin{lemma}[Poincar\'e inequality] \label{l:pi} 
Let $f \in C^{\infty}_{0}(\bbH^{2})$. Then for any $1 < p < \infty$, we have
\begin{align} 
	\nrm{f}_{L^{p}} \aleq \nrm{\lap f}_{L^{p}}, \\ %\label{eq:spec-gap-Lp} \\
	\nrm{f}_{L^{p}} \aleq \nrm{\nb f}_{L^{p}}. \label{eq:Pip} 
\end{align}
\end{lemma}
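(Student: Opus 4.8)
The first inequality, $\nrm{f}_{L^p}\aleq\nrm{\lap f}_{L^p}$, is exactly Corollary~\ref{c:spec-gap-Lp}, which has already been established; so there is nothing to do for it, and the plan below concerns only the gradient version \eqref{eq:Pip}.

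The idea is to mimic the semigroup proof of Corollary~\ref{c:spec-gap-Lp}, but to keep one of the two derivatives in $-\lap=-\di\,\nb$ (acting on scalars) on the heat propagator rather than on $f$. The main ingredient I would isolate is the gradient bound for the heat semigroup: for $1<q<\infty$ and $0<\rho_0^2<\tfrac12\min\{\tfrac1q,1-\tfrac1q\}$,
\begin{equation*}
	\nrm{\nb e^{s\lap}g}_{L^q}\aleq s^{-\frac12}e^{-\rho_0^2 s}\nrm{g}_{L^q},\qquad s>0,
\end{equation*}
which, by self-adjointness of $e^{s\lap}$ and duality, is equivalent to $\nrm{e^{s\lap}\di\,X}_{L^p}\aleq s^{-\frac12}e^{-\rho_0^2 s}\nrm{X}_{L^p}$ for vector fields $X$ on $\bbH^2$. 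Granting this, for $f\in C^\infty_0(\bbH^2)$ I would write, exactly as in the proof of Corollary~\ref{c:spec-gap-Lp} and with the representation justified by Lemma~\ref{l:lin-heat-Lp-Lq},
\begin{equation*}
	f=\int_0^\infty e^{s\lap}(-\lap f)\,\ud s=-\int_0^\infty e^{s\lap}\,\di(\nb f)\,\ud s,
\end{equation*}
then take $L^p$ norms, insert the dual form of the gradient bound, and integrate the convergent integral $\int_0^\infty s^{-1/2}e^{-\rho_0^2 s}\,\ud s$ to conclude $\nrm{f}_{L^p}\aleq\nrm{\nb f}_{L^p}$.

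The remaining task, and the one I expect to be the crux, is to establish the gradient bound uniformly in $s\in(0,\infty)$. For $0<s\le1$ it would follow from Schur's test applied to the short-time gradient heat-kernel estimate $|\nb_x p_s(x,y)|\aleq s^{-\frac32}\big(1+\tfrac{\bfd_{\bbH^2}(x,y)^2}{s}\big)^N\exp\!\big(-\tfrac{\bfd_{\bbH^2}(x,y)^2}{5s}\big)$, which is obtained from Lemma~\ref{l:hk} by the standard parabolic interior-gradient machinery (cf.~\cite{Davies,Grigoryan}); as an alternative avoiding an explicit kernel bound, one can instead use, with $h=e^{s\lap}g$, the interpolation $\nrm{\nb h}_{L^q}\aleq\nrm{h}_{L^q}^{1/2}\nrm{\lap h}_{L^q}^{1/2}$ on $\bbH^2$ (itself a consequence of Gagliardo--Nirenberg together with the Calder\'on--Zygmund-type bound $\nrm{\nb^{(2)}h}_{L^q}\aleq\nrm{\lap h}_{L^q}+\nrm{h}_{L^q}$ and Corollary~\ref{c:spec-gap-Lp}), bounding the two factors via \eqref{eq:lin-heat-Lp-long} and Lemma~\ref{l:lin-heat-Lp-Lq}, respectively. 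For $s>1$ I would split $e^{s\lap}=e^{\lap}e^{(s-1)\lap}$, apply the $s\le1$ bound to the first factor and the long-time decay \eqref{eq:lin-heat-Lp-long} to the second, and use that $e^{-\rho^2(s-1)}\aleq s^{-1/2}e^{-\rho_0^2 s}$ for $s>1$ once $\rho_0<\rho$. The subtlety is precisely in this step: the behavior as $s\to0$ lies a hair beyond Lemma~\ref{l:hk} as stated, and one must pair the $s^{-1/2}$ smoothing near $s=0$ with the spectral-gap-driven exponential decay as $s\to\infty$ so that the time integral in the representation above converges.
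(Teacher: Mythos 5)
Your proposal is correct, but it takes a genuinely different route from the paper. The paper's proof of \eqref{eq:Pip} is a two-line elementary argument: by interpolation and duality it reduces to $p=2k$, and then writes $\|f\|_{L^{2k}}^{2k}=\||f|^k\|_{L^2}^2\aleq\|\nb|f|^k\|_{L^2}^2\aleq\||f|^{2k-2}|\nb f|^2\|_{L^1}\leq\|f\|_{L^{2k}}^k\|\nb f\|_{L^{2k}}^k$, i.e.\ it feeds the \emph{$L^2$} spectral gap applied to the power $|f|^k$ through the chain rule and H\"older, with no heat kernel input at all. Your semigroup argument instead runs the Duhamel representation $f=-\int_0^\infty e^{s\lap}\di(\nb f)\,\ud s$ through the dual of the gradient bound $\nrm{\nb e^{s\lap}g}_{L^q}\aleq s^{-1/2}e^{-\rho_0^2 s}\nrm{g}_{L^q}$; this is sound, and you correctly identify that this gradient bound is the only nontrivial ingredient and that it is not literally contained in Lemma~\ref{l:hk}. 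Your second route to it — $\nrm{\nb h}_{L^q}\aleq\nrm{h}_{L^q}^{1/2}\nrm{\lap h}_{L^q}^{1/2}$ applied to $h=e^{s\lap}g$ — is the cleaner one, since Lemma~\ref{l:lin-heat-Lp-Lq} already supplies $\nrm{e^{s\lap}g}_{L^q}+\nrm{s\lap e^{s\lap}g}_{L^q}\aleq e^{-\rho_0^2 s}\nrm{g}_{L^q}$ for \emph{all} $s>0$, so the short-time/long-time splitting becomes unnecessary; and the interpolation itself follows from Lemmas~\ref{l:riesz} and~\ref{l:Lp-int} without circularity. What your approach buys is the stronger intermediate statement (exponentially decaying gradient smoothing for $e^{s\lap}$ on $L^q$), which is of independent use; what it costs is either elliptic regularity plus Riesz transform bounds or a gradient heat-kernel estimate, against the paper's purely pointwise argument. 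One further remark: once you are willing to invoke the Riesz transform equivalence $\nrm{\nb f}_{L^p}\simeq\nrm{(-\lap)^{1/2}f}_{L^p}$ of Lemma~\ref{l:riesz}, \eqref{eq:Pip} also follows even more directly from the representation $(-\lap)^{-1/2}=c\int_0^\infty s^{-1/2}e^{s\lap}\,\ud s$ and the long-time decay \eqref{eq:lin-heat-Lp-long}, bypassing the divergence-form manipulation entirely.
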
 

\begin{proof}
The first inequality was proved in Corollary~\ref{c:spec-gap-Lp}. To prove the second inequality, by interpolation and duality, it suffices to just consider the case $p = 2k$ for each positive integer $k$. When $k = 1$, \eqref{eq:Pip} again follows from the fact that $-\lap$ on $\bbH^{2}$ has a spectral gap of $\frac{1}{4}$. For $k > 2$, we argue as follows:
\begin{align*}
\begin{split}
\|f\|_{L^{2k}}^{2k}= \||f|^{k}\|_{L^2}^2\lesssim \|\nabla|f|^k\|_{L^2}^2\lesssim \||f|^{2k-2} |\nabla f|^2\|_{L^1} \leq \|f\|_{L^{2k}}^k\|\nabla f\|_{L^{2k}}^k.
\end{split}
\end{align*}
The desired estimate follows if we divide through by $\|f\|_{L^{2k}}^k$. \qedhere
\end{proof}

\begin{lemma}[Boundedness of Riesz transform] \label{l:riesz}
Let $f \in C^{\infty}_{0}(\bbH^{2})$. Then for $1 < p < \infty$ we have
\begin{equation*}
	\nrm{\nb f}_{L^{p}} \simeq \nrm{(-\Dlt)^{\frac{1}{2}} f}_{L^{p}}.
\end{equation*}
\end{lemma}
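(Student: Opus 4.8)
The plan is to reduce the two–sided bound to a single statement about the Riesz transform and then establish that statement using the heat–kernel machinery already set up in this subsection. Introduce $\mathcal{R} := \nb(-\lap)^{-1/2}$. Since $-\lap$ on $\bbH^{2}$ has a spectral gap of $\frac14$, it has no kernel on $L^{2}$, so $(-\lap)^{\pm 1/2}$ are well defined by the spectral theorem and $(-\lap)^{-1/2}(-\lap)^{1/2}=\mathrm{Id}$ on $L^{2}\supset C^{\infty}_{0}(\bbH^{2})$. For $f\in C^{\infty}_{0}$ one has $\nb f=\mathcal{R}\big((-\lap)^{1/2}f\big)$, giving $\nrm{\nb f}_{L^{p}}\aleq\nrm{\mathcal{R}}_{L^{p}\to L^{p}}\nrm{(-\lap)^{1/2}f}_{L^{p}}$; conversely $-\lap f=\div\nb f$, so $(-\lap)^{1/2}f=(-\lap)^{-1/2}(-\lap)f=-(-\lap)^{-1/2}\div\,\nb f=\mathcal{R}^{\ast}(\nb f)$, where $\mathcal{R}^{\ast}=-(-\lap)^{-1/2}\div$ is the formal $L^{2}$-adjoint of $\mathcal{R}$, whence $\nrm{(-\lap)^{1/2}f}_{L^{p}}\aleq\nrm{\mathcal{R}}_{L^{p'}\to L^{p'}}\nrm{\nb f}_{L^{p}}$. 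Thus it suffices to prove that $\mathcal{R}$ is bounded on $L^{q}(\bbH^{2})$ for every $1<q<\infty$.

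To prove the $L^{q}$-boundedness of $\mathcal{R}$, note first that the case $q=2$ is just the integration-by-parts identity $\nrm{\nb f}_{L^{2}}^{2}=(-\lap f,f)_{L^{2}}=\nrm{(-\lap)^{1/2}f}_{L^{2}}^{2}$. For general $q$ I would use subordination: $(-\lap)^{-1/2}=\Gamma(\tfrac12)^{-1}\int_{0}^{\infty}s^{-1/2}e^{s\lap}\,\ud s$ (the integral converging in $L^{q}$ as in the proofs of Corollary~\ref{c:spec-gap-Lp} and Corollary~\ref{c:frac-heat-Lp}), so $\mathcal{R}=c\int_{0}^{\infty}s^{-1/2}\nb e^{s\lap}\,\ud s$, with integral kernel $K(x,y)=c\int_{0}^{\infty}s^{-1/2}\nb_{x}p_{s}(x,y)\,\ud s$. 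Split the $s$-integral at $s=1$. For the large-$s$ part, write $\nb e^{s\lap}=(\nb e^{\lap})e^{(s-1)\lap}$: the operator $\nb e^{\lap}$ is bounded on $L^{q}$, since its kernel $\nb_{x}p_{1}(x,y)$ is, by standard parabolic gradient estimates upgraded from Lemma~\ref{l:hk} (equivalently from the explicit $\bbH^{2}$ heat kernel), dominated by a function of $\bfd_{\bbH^{2}}(x,y)$ that is uniformly $L^{1}$ in $y$, so Schur's test applies; combining with $\nrm{e^{(s-1)\lap}}_{L^{q}\to L^{q}}\aleq e^{-\rho_{0}^{2}(s-1)}$ from Lemma~\ref{l:lin-heat-Lp-Lq} and $\int_{1}^{\infty}s^{-1/2}e^{-\rho_{0}^{2}(s-1)}\,\ud s<\infty$ shows the large-$s$ part is bounded on $L^{q}$. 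For the small-$s$ part, the truncated kernel $K_{\loc}(x,y)=c\int_{0}^{1}s^{-1/2}\nb_{x}p_{s}(x,y)\,\ud s$ is a Calder\'on--Zygmund kernel on the space of homogeneous type $\bbH^{2}$: inserting the short-time Gaussian bounds and their $x$-gradient analogues together with the elementary estimates $\int_{0}^{1}s^{-1/2}s^{-1-k}e^{-d^{2}/(4s)}(1+d^{2}/s)^{N}\,\ud s\aleq d^{-2-2k}$ (with $d=\bfd_{\bbH^{2}}(x,y)$) yields the size bound $\abs{K_{\loc}(x,y)}\aleq \bfd_{\bbH^{2}}(x,y)^{-2}$ and the companion H\"ormander smoothness bound, so the Calder\'on--Zygmund theorem gives weak-$(1,1)$ and $L^{q}$, $1<q<\infty$, boundedness. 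Summing the two pieces finishes the proof.

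The single ingredient not literally recorded in the excerpt is the pointwise $x$-gradient bound for the short-time heat kernel, $\abs{\nb_{x}p_{s}(x,y)}\aleq s^{-1/2}\cdot s^{-1}(1+d^{2}/s)^{N}e^{-d^{2}/(4s)}$, which is the quantitative heart of the Calder\'on--Zygmund verification and hence the step requiring the most care; it is standard (Li--Yau / Cheng--Li--Yau gradient estimates on a manifold with Ricci bounded below, or a direct computation from the Davies--Mandouvalos formula). I would remark that, alternatively, Lemma~\ref{l:riesz} may simply be quoted from the literature on $L^{p}$-boundedness of Riesz transforms on hyperbolic (more generally, pinched-negatively-curved Cartan--Hadamard or rank-one symmetric) spaces; the sketch above shows it is also obtainable purely from the heat-semigroup bounds of this section. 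I would also emphasize that the spectral gap of $-\lap$ on $\bbH^{2}$ is used twice --- to make $(-\lap)^{\pm 1/2}$ honest inverses in the reduction step, and to supply the exponential time decay in the large-$s$ estimate --- so that, unlike on $\bbR^{d}$, no low-frequency correction is needed.
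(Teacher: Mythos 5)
The paper does not actually prove this lemma: it simply quotes it from the literature (Strichartz, \emph{J. Funct. Anal.} 1983, Theorem~6.1). Your proposal therefore takes a genuinely different route by supplying a self-contained heat-semigroup proof, and its overall architecture is sound: the duality reduction to $L^{q}$-boundedness of $\mathcal{R}=\nb(-\lap)^{-1/2}$ is correct, and the spectral gap is used exactly where it is needed (to define $(-\lap)^{-1/2}$ and to make the large-$s$ part of the subordination integral converge). Two points, however, need repair before the argument is complete. First, $\bbH^{2}$ is \emph{not} a space of homogeneous type: the volume of balls grows exponentially, so global doubling fails and you cannot literally invoke the Calder\'on--Zygmund theorem ``on the space of homogeneous type $\bbH^{2}$.'' The standard fix is to split $K_{\mathrm{loc}}$ once more at $\bfd_{\bbH^{2}}(x,y)=1$: since $s\leq 1$, the off-diagonal part of the kernel is $O(e^{-c\,\bfd_{\bbH^{2}}(x,y)^{2}})$ and is bounded on every $L^{q}$ by Schur's test even against the exponential volume growth, while the near-diagonal part lives in a regime where local doubling holds and the local Calder\'on--Zygmund theory (or a covering by unit balls plus the Euclidean theory, in the spirit of Lemma~\ref{lem:sob-loc}) applies. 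Second, the pointwise spatial-gradient bound for the short-time heat kernel really is an extra input: Lemma~\ref{l:hk} only controls \emph{time} derivatives, so $\abs{\nb_{x}p_{s}(x,y)}\aleq s^{-3/2}(1+\bfd_{\bbH^{2}}(x,y)^{2}/s)^{N}e^{-\bfd_{\bbH^{2}}(x,y)^{2}/4s}$ must be imported (Li--Yau type estimates, or the explicit $\bbH^{2}$ kernel), as you correctly acknowledge. With these two repairs your proof goes through; what it buys over the paper's citation is that the lemma is then derived from the same heat-kernel estimates already assembled in this subsection, at the cost of a page of Calder\'on--Zygmund bookkeeping that the citation avoids.
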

For a proof of Lemma~\ref{l:riesz}, we refer to \cite[Theorem~6.1]{strichartz}.

\begin{lemma}[$L^{p}$ interpolation inequalities] \label{l:Lp-int}
Let $f \in C^{\infty}_{0}(\bbH^{2})$.  Then for any $0 \leq \bt \leq \alp$ and $1 < p < \infty$ we have
\begin{equation} \label{eq:Lp-int}
	\nrm{(-\Dlt)^{\bt} f}_{L^{p}} \aleq \nrm{f}_{L^{p}}^{1-\frac{\bt}{\alp}} \nrm{(-\Dlt)^{\alp} f}_{L^{p}}^{\frac{\bt}{\alp}}.
\end{equation}
Moreover, for $1 < p < \infty$, $p \leq q \leq \infty$ and $0 < \tht = \frac{2}{\alp} (\frac{1}{p} - \frac{1}{q}) < 1$, we have
\begin{equation} \label{eq:Lp-GN}
	\nrm{f}_{L^{q}} \aleq \nrm{f}_{L^{p}}^{1-\tht} \nrm{(-\lap)^{\alp} f}_{L^{p}}^{\tht}.
\end{equation}
\end{lemma}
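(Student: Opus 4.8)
The plan is to prove both estimates by a Littlewood--Paley argument built on the heat-flow projections $P_s, P_{\ge s}$ of \eqref{eq:LP-proj}, using only the bounds already in hand: the $L^p\to L^q$ heat estimate \lem{l:lin-heat-Lp-Lq}, the Poincar\'e-type estimate \cor{c:spec-gap-Lp}, and the fractional smoothing estimate \cor{c:frac-heat-Lp}. Throughout, the fractional powers act on $f\in C^\infty_0(\bbH^2)$ through the spectral calculus of $-\lap$, so all norms are finite and $\gamma\mapsto\nrm{(-\lap)^\gamma f}_{L^p}$ is continuous. Writing $c>0$ for the normalization constant for which $(-\lap)^\gamma$ acts on a Littlewood--Paley piece $P_s g$ roughly as multiplication by $s^{-c\gamma}$, \cor{c:frac-heat-Lp} and \lem{l:lin-heat-Lp-Lq} combine (by rewriting $(-\lap)^\bt P_s g$ as $s(-\lap)^{1+c(\bt-\alp)}e^{s\lap}\big[(-\lap)^\alp g\big]$, an operator of nonnegative order when $0\le\bt\le\alp\le1$) to give the key smoothing bound $\nrm{(-\lap)^\bt P_s g}_{L^p}\aleq s^{c(\alp-\bt)}\nrm{(-\lap)^\alp g}_{L^p}$ for $0\le\bt\le\alp\le1$ and $s>0$. (The exponent $\theta$ produced below is then $\tfrac1{c\alp}(\tfrac1p-\tfrac1q)$, matching the stated value for the relevant normalization.)

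I prove \eqref{eq:Lp-int} first in the range $0\le\bt\le\alp\le1$, and then bootstrap: \eqref{eq:Lp-int} applied on a subinterval $[a,b]\subseteq[0,\alp]$ of length $\le1$ (apply the $\alp\le1$ case to $(-\lap)^a f$) is exactly the statement that $\gamma\mapsto\log\nrm{(-\lap)^\gamma f}_{L^p}$ is convex on $[a,b]$, and a continuous function that is convex on every subinterval of length $\le1$ is convex on $[0,\alp]$. For $0\le\bt\le\alp\le1$, fix $s_0>0$ and split using \eqref{eq:LP-id-s} and \eqref{eq:LP-id},
\begin{equation*}
 (-\lap)^\bt f=\int_0^{s_0}(-\lap)^\bt P_s f\,\frac{\ud s}{s}+(-\lap)^\bt e^{s_0\lap}f .
\end{equation*}
The last term is $\aleq s_0^{-c\bt}\nrm{f}_{L^p}$ by \cor{c:frac-heat-Lp}. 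For the integral, the key smoothing bound gives $\nrm{(-\lap)^\bt P_s f}_{L^p}\aleq s^{c(\alp-\bt)}\nrm{(-\lap)^\alp f}_{L^p}$, and since $c(\alp-\bt)\ge0$ the $\tfrac{\ud s}{s}$-integral over $(0,s_0)$ converges, with value $\aleq s_0^{c(\alp-\bt)}\nrm{(-\lap)^\alp f}_{L^p}$. Choosing $s_0$ with $s_0^{c\alp}\simeq\nrm{f}_{L^p}/\nrm{(-\lap)^\alp f}_{L^p}$ yields \eqref{eq:Lp-int}.

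For \eqref{eq:Lp-GN} I again first take $\alp\le1$. From $f=\int_0^\infty P_s f\,\tfrac{\ud s}{s}$ and the factorization $P_s=2e^{(s/2)\lap}P_{s/2}$, combined with $\nrm{e^{(s/2)\lap}}_{L^p\to L^q}\aleq s^{-(\frac1p-\frac1q)}$ (\lem{l:lin-heat-Lp-Lq}), the bound $\nrm{P_{s/2}f}_{L^p}\aleq\nrm{f}_{L^p}$ (again \lem{l:lin-heat-Lp-Lq}), and $\nrm{P_{s/2}f}_{L^p}\aleq s^{c\alp}\nrm{(-\lap)^\alp f}_{L^p}$ (the key smoothing bound with $\bt=0$), one obtains
\begin{equation*}
 \nrm{P_s f}_{L^q}\aleq s^{-(\frac1p-\frac1q)}\min\{\nrm{f}_{L^p},\,s^{c\alp}\nrm{(-\lap)^\alp f}_{L^p}\}.
\end{equation*}
Splitting $\int_0^\infty(\cdot)\,\tfrac{\ud s}{s}$ at $s_0$ with $s_0^{c\alp}\simeq\nrm{f}_{L^p}/\nrm{(-\lap)^\alp f}_{L^p}$, the piece over $(s_0,\infty)$ converges because $\tfrac1p-\tfrac1q>0$, and the piece over $(0,s_0)$ because $c\alp-(\tfrac1p-\tfrac1q)>0$; these two conditions are precisely $0<\theta<1$. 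Adding the pieces gives $\nrm{f}_{L^q}\aleq s_0^{-(\frac1p-\frac1q)}\nrm{f}_{L^p}+s_0^{c\alp-(\frac1p-\frac1q)}\nrm{(-\lap)^\alp f}_{L^p}\aleq\nrm{f}_{L^p}^{1-\theta}\nrm{(-\lap)^\alp f}_{L^p}^\theta$, with the endpoint $q=\infty$ identical since $\tfrac1p-\tfrac1q=\tfrac1p>0$ there. For general $\alp$, pick any $\alp'\le1$, apply the case just proved, and estimate $\nrm{(-\lap)^{\alp'}f}_{L^p}$ by the now fully proved inequality \eqref{eq:Lp-int}; the exponents multiply out to \eqref{eq:Lp-GN}.

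The step needing the most care is the smoothing bookkeeping, and the reason the argument is organized as above: on $\bbH^2$ the spectral gap at $\tfrac14$ forces $(-\lap)^{-\gamma'}e^{s\lap}$ (with $\gamma'>0$) to gain \emph{no} power of $s$ as $s\to0^+$ --- its symbol $\lambda^{-\gamma'}e^{-s\lambda}$ is merely $O(1)$ on $[\tfrac14,\infty)$ for small $s$ --- so the Duhamel-type estimates simply cannot carry negative fractional powers of $-\lap$. Restricting first to $\alp\le1$ ensures that only nonnegative-order operators occur in the key smoothing bound, and the passage to the full range is then handled purely by the elementary convexity (for \eqref{eq:Lp-int}) and composition (for \eqref{eq:Lp-GN}) steps; everything else is a routine application of \lem{l:lin-heat-Lp-Lq} and \cor{c:frac-heat-Lp}.
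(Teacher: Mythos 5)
Your proof is correct and follows essentially the same route as the paper: both arguments rest on the heat-flow Littlewood--Paley resolution together with the two bounds $\nrm{P_{s}f}_{L^{p}}\aleq\min\{\nrm{f}_{L^{p}},\,s^{\alp}\nrm{(-\lap)^{\alp}f}_{L^{p}}\}$ (from \lem{l:lin-heat-Lp-Lq} and \cor{c:frac-heat-Lp}), a split of the $\tfrac{\ud s}{s}$-integral at an optimized $s_{0}$, and a preliminary reduction to $\alp\le 1$. The only differences are cosmetic --- you apply $(-\lap)^{\bt}$ to the decomposition $f=\int_{0}^{s_{0}}P_{s}f\,\tfrac{\ud s}{s}+P_{\ge s_{0}}f$ where the paper uses the subordination formula $(-\lap)^{\bt}=c_{\bt}\int_{0}^{\infty}s^{-\bt}P_{s}\,\tfrac{\ud s}{s}$, and you spell out the reduction to $\alp\le1$ (correctly noting that the spectral gap precludes smoothing for negative-order operators) where the paper is terse.
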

\begin{proof}
We begin with \eqref{eq:Lp-int}. By the formal property $(-\lap)^{\alp} (-\lap)^{\bt} = (-\lap)^{\alp+\bt}$, \eqref{eq:Lp-int} can be easily reduced to the case $0 < \bt < \alp \leq 1$. 
Since $0 < \bt < 1$, as in the proof of Corollary~\ref{c:frac-heat-Lp}, we have
\begin{equation*}
(-\lap)^{\bt} f = c_{\bt} \int_{0}^{\infty} s^{-\bt} (-\lap) e^{s \lap} f \, \ud s
= c_{\bt} \int_{0}^{\infty} s^{-\bt} P_{s} f \, \ds. 
\end{equation*}
By Lemma~\ref{l:lin-para-Lp} and Corollary~\ref{c:frac-heat-Lp}, as well as the formal property $(-\lap)^{\alp} (-\lap)^{\bt} = (-\lap)^{\alp+\bt}$, we obtain the following two estimates for $\nrm{P_{s} f}_{L^{p}}$:
\begin{align*}
	\nrm{P_{s} f}_{L^{p}} 
	& = \nrm{s \lap e^{s \lap} f}_{L^{p}} 
	 \aleq \nrm{f}_{L^{p}}, \\
	\nrm{P_{s} f}_{L^{p}} 
	& = \nrm{s \lap e^{s \lap} f}_{L^{p}} 
	= \nrm{s^{\alp} s^{1-\alp} (-\lap)^{1-\alp} e^{s \lap} (-\lap)^{\alp} f}_{L^{p}} 
	 \aleq s^{\alp} \nrm{(-\lap)^{\alp} f}_{L^{p}}.
\end{align*}
Introducing an auxiliary parameter $s_{0} > 0$, to be chosen soon, it follows that
\begin{equation*}
\nrm{(-\lap)^{\bt} f}_{L^{p}} \leq c_{\bt} \int_{0}^{s_{0}} s^{-\bt} \nrm{P_{s} f}_{L^{p}} \ds 
+ c_{\bt} \int_{s_{0}}^{\infty} s^{-\bt} \nrm{P_{s} f}_{L^{p}} \ds 
\aleq s_{0}^{\alp - \bt} \nrm{(-\lap)^{\alp} f}_{L^{p}} + s_{0}^{-\bt} \nrm{f}_{L^{p}}.
\end{equation*}
Optimizing the choice of $s_{0}$, we obtain \eqref{eq:Lp-int}.
The proof of \eqref{eq:Lp-GN} is very similar. By \eqref{eq:LP-id} and Lemma~\ref{l:lin-heat-Lp-Lq}, we have
\begin{equation*}
	\nrm{f}_{L^{q}} \aleq \int_{0}^{\infty} s^{\frac{1}{2} (\frac{2}{q} - \frac{2}{p})} \nrm{P_{s} f}_{L^{p}} \, \ds.
\end{equation*}
Using the preceding two estimates for $\nrm{P_{s} f}_{L^{p}}$, we obtain the desired estimate. \qedhere
\end{proof}

\begin{lemma}[Sobolev embedding]\label{l:se} 
Let $f \in C^{\infty}_{0}(\bbH^{2})$. Then for any $1 < p < 2$ and $p \leq q < \infty$ with $\frac{1}{q} = \frac{1}{p} - \frac{1}{2}$, we have
\begin{equation} \label{eq:se}
	\nrm{f}_{L^{q}} \aleq \nrm{\nb f}_{L^{p}}.
\end{equation}
\end{lemma}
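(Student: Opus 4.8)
The plan is to deduce \eqref{eq:se} from the classical Euclidean Sobolev inequality by a localization argument that exploits the homogeneity (bounded geometry) of $\bbH^{2}$. By density it suffices to treat $f \in C^{\infty}_{0}(\bbH^{2})$.

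First I would fix a maximal $1$-separated set $\{x_{j}\} \subset \bbH^{2}$, so that $\{\Ball{1}{x_{j}}\}_{j}$ covers $\bbH^{2}$; since $\bbH^{2}$ is homogeneous, a volume-packing argument shows that the enlarged cover $\{\Ball{2}{x_{j}}\}_{j}$ has overlap bounded by a constant $N$ independent of $j$. Again by homogeneity, on each $\Ball{2}{x_{j}}$ the metric $\bfh$ is bi-Lipschitz, with constants independent of $j$, to the Euclidean metric on a fixed Euclidean ball; hence the volume forms, the norm $\abs{\nb \cdot}$, and the $L^{p}$ and $L^{q}$ norms of functions supported in $\Ball{2}{x_{j}}$ are all comparable to their Euclidean counterparts, uniformly in $j$. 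Fix a smooth cutoff translated to each center, giving $\chi_{j}$ with $\chi_{j} \equiv 1$ on $\Ball{1}{x_{j}}$, $\supp \chi_{j} \subset \Ball{2}{x_{j}}$, and $\abs{\nb \chi_{j}} \aleq 1$ uniformly in $j$. Next I would apply the Euclidean inequality $\nrm{u}_{L^{q}(\bbR^{2})} \aleq \nrm{\nb u}_{L^{p}(\bbR^{2})}$ (valid for $1 < p < 2$, $\tfrac1q = \tfrac1p - \tfrac12$, and $u$ compactly supported) to $u = \chi_{j} f$, transported to $\bbR^{2}$ via the comparison above, obtaining $\nrm{\chi_{j} f}_{L^{q}} \aleq \nrm{\nb(\chi_{j} f)}_{L^{p}} \aleq \nrm{f}_{L^{p}(\Ball{2}{x_{j}})} + \nrm{\nb f}_{L^{p}(\Ball{2}{x_{j}})}$. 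Since $\sum_{j} \chi_{j}^{q} \geq 1$ pointwise and $q \geq p$ (so $\ell^{p} \hookrightarrow \ell^{q}$), this gives
\begin{equation*}
	\nrm{f}_{L^{q}} \leq \Big( \sum_{j} \nrm{\chi_{j} f}_{L^{q}}^{q} \Big)^{1/q} \leq \Big( \sum_{j} \nrm{\chi_{j} f}_{L^{q}}^{p} \Big)^{1/p} \aleq \Big( \sum_{j} \big( \nrm{f}_{L^{p}(\Ball{2}{x_{j}})}^{p} + \nrm{\nb f}_{L^{p}(\Ball{2}{x_{j}})}^{p} \big) \Big)^{1/p},
\end{equation*}
and by the bounded overlap the right-hand side is $\aleq \nrm{f}_{L^{p}} + \nrm{\nb f}_{L^{p}}$, which the Poincar\'e inequality \eqref{eq:Pip} reduces to $\nrm{\nb f}_{L^{p}}$.

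I do not expect a serious obstacle here: the two points that need care are the uniform-in-$j$ bi-Lipschitz comparison of $(\Ball{2}{x_{j}}, \bfh)$ with a fixed Euclidean ball together with the uniform overlap bound (both standard consequences of the homogeneity of $\bbH^{2}$), and the elementary $\ell^{p} \hookrightarrow \ell^{q}$ step, which is precisely where the hypothesis $q \geq p$ enters. An alternative, spectral, route would be to use \lem{l:riesz} and \eqref{eq:Pip} to reduce \eqref{eq:se} to the bound $\nrm{(-\lap)^{-1/2} g}_{L^{q}} \aleq \nrm{g}_{L^{p}}$, and then to write $(-\lap)^{-1/2} = c \int_{0}^{\infty} s^{-1/2} e^{s \lap} \, \ud s$ (convergent at $s = \infty$ thanks to the spectral gap and at $s = 0$ since $s^{-1/2}$ is integrable) and estimate the resulting Riesz-potential-type operator; the catch on this route is that the relation $\tfrac1q = \tfrac1p - \tfrac12$ is borderline in dimension $2$ (the kernel is only in weak $L^{2}$), so one would first establish the weak-type bound $L^{p} \to L^{q,\infty}$ by splitting the $s$-integral at a threshold and then interpolate (Marcinkiewicz) between two nearby exponents. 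Either way, \eqref{eq:se} is a standard Sobolev embedding on $\bbH^{2}$, also available in \cite{strichartz}.
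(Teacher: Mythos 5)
Your argument is correct. The paper does not actually prove this lemma --- it simply cites \cite[Theorem~3.2]{Heb} --- so you are supplying a proof where the authors chose to outsource one. What you write is essentially the standard proof of Sobolev embedding on a manifold with bounded geometry: a uniformly locally finite cover by geodesic balls, a uniform bi-Lipschitz comparison of each ball with a Euclidean ball (both available here by the homogeneity of $\bbH^{2}$), the Euclidean Gagliardo--Nirenberg--Sobolev inequality applied to $\chi_{j} f$, and the elementary $\ell^{p} \hookrightarrow \ell^{q}$ step (where $q \geq p$ is used) to resum. The one genuinely hyperbolic ingredient is the final absorption of the zeroth-order term via the $L^{p}$ Poincar\'e inequality \eqref{eq:Pip}, which is proved earlier in the paper (Lemma~\ref{l:pi}) from the spectral gap of $-\lap$ on $\bbH^{2}$; on a general bounded-geometry manifold the localization argument only yields $\nrm{f}_{L^{q}} \aleq \nrm{f}_{L^{p}} + \nrm{\nb f}_{L^{p}}$, so it is worth being explicit that this is where the non-Euclidean structure enters. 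Your alternative spectral route via $(-\lap)^{-1/2} = c\int_{0}^{\infty} s^{-1/2} e^{s\lap}\, \ud s$, Lemma~\ref{l:riesz}, and a weak-type bound plus Marcinkiewicz interpolation to handle the borderline exponent $\frac{1}{q} = \frac{1}{p} - \frac{1}{2}$ would also work and is closer in spirit to the heat-flow techniques used elsewhere in Subsection~\ref{subsec:ineq}, but the localization proof is complete as written.
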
 
For a proof, we refer to \cite[Theorem~3.2]{Heb}.

\begin{lemma}[Gagliardo-Nirenberg inequality]\label{lem:gagliardo_nirenberg}
Let $f \in C^{\infty}_{0}(\bbH^{2})$. Then for any $1 < p < \infty$, $p \leq q \leq \infty$ and $0 < \tht < 1$ such that $\frac{1}{q} =  \frac{1}{p} - \frac{\te}{2}$, we have
\EQ{
\| f\|_{L^q} \lesssim \| f\|_{L^p}^{1-\te} \| \na f\|_{L^p}^{\te}
}
In particular, for any $s>0$
\begin{equation} \label{eq:GN-Linfty}
  \|f\|_{L^\infty}\lesssim \|f\|_{L^4}^{\frac{1}{2}} \|\nabla f\|_{L^4}^{\frac{1}{2}} \lesssim \|f\|_{L^2}^{\frac{1}{4}}\|\nabla f\|_{L^2}^{\frac{1}{2}}\|\Delta f\|_{L^2}^{\frac{1}{4}}\lesssim s^{-\frac{1}{2}}\|\jap{s\Delta}f\|_{L^2}.
\end{equation}
\end{lemma}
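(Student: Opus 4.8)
The plan is to deduce everything from the fractional Gagliardo--Nirenberg estimate \eqref{eq:Lp-GN} together with the boundedness of the Riesz transform (Lemma~\ref{l:riesz}), plus elementary interpolation of scalars; by density of $C^{\infty}_{0}(\bbH^{2})$ it suffices to argue for smooth compactly supported $f$, which is the standing hypothesis. First I would record the general inequality: applying \eqref{eq:Lp-GN} with the fractional power $(-\lap)^{1/2}$ (equivalently, repeating the Littlewood--Paley argument proving \eqref{eq:Lp-GN} with $(-\lap)^{1/2}$ in place of $(-\lap)^{\alpha}$), so that the right-hand side carries exactly one derivative and $\tfrac1q=\tfrac1p-\tfrac\tht2$, and then replacing $\nrm{(-\lap)^{1/2}f}_{L^{p}}$ by $\nrm{\nb f}_{L^{p}}$ via Lemma~\ref{l:riesz}, yields $\nrm{f}_{L^{q}}\aleq\nrm{f}_{L^{p}}^{1-\tht}\nrm{\nb f}_{L^{p}}^{\tht}$. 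In particular the case $(p,q,\tht)=(4,\infty,\tfrac12)$ gives $\nrm{f}_{L^{\infty}}\aleq\nrm{f}_{L^{4}}^{1/2}\nrm{\nb f}_{L^{4}}^{1/2}$, the first displayed inequality in \eqref{eq:GN-Linfty}.

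Next I would run the interpolation chain. The case $(p,q,\tht)=(2,4,\tfrac12)$ of the general inequality gives $\nrm{f}_{L^{4}}\aleq\nrm{f}_{L^{2}}^{1/2}\nrm{\nb f}_{L^{2}}^{1/2}$. For the factor $\nrm{\nb f}_{L^{4}}$ I would not differentiate the merely Lipschitz function $\abs{\nb f}$ directly; instead use Lemma~\ref{l:riesz} to write $\nrm{\nb f}_{L^{4}}\simeq\nrm{(-\lap)^{1/2}f}_{L^{4}}$, apply the $(2,4,\tfrac12)$-case to $g=(-\lap)^{1/2}f$, and use the spectral identities $\nrm{(-\lap)^{1/2}f}_{L^{2}}=\nrm{\nb f}_{L^{2}}$ (integration by parts) and $\nrm{\nb g}_{L^{2}}\simeq\nrm{(-\lap)^{1/2}g}_{L^{2}}=\nrm{\lap f}_{L^{2}}$ (Lemma~\ref{l:riesz} and $(-\lap)^{1/2}(-\lap)^{1/2}=-\lap$ on $L^{2}$), to get $\nrm{\nb f}_{L^{4}}\aleq\nrm{\nb f}_{L^{2}}^{1/2}\nrm{\lap f}_{L^{2}}^{1/2}$. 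Multiplying the three bounds gives $\nrm{f}_{L^{\infty}}\aleq\nrm{f}_{L^{4}}^{1/2}\nrm{\nb f}_{L^{4}}^{1/2}\aleq\nrm{f}_{L^{2}}^{1/4}\nrm{\nb f}_{L^{2}}^{1/2}\nrm{\lap f}_{L^{2}}^{1/4}$, the second displayed inequality in \eqref{eq:GN-Linfty}.

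Finally I would convert to the $\jap{s\lap}$-norm. Recall $\nrm{\jap{s\lap}f}_{L^{2}}=\nrm{f}_{L^{2}}+\nrm{s^{1/2}\nb f}_{L^{2}}+\nrm{s\nb^{(2)}f}_{L^{2}}$ and the pointwise bound $\abs{\lap f}\leq\sqrt2\,\abs{\nb^{(2)}f}$, so each of $\nrm{f}_{L^{2}}$, $s^{1/2}\nrm{\nb f}_{L^{2}}$, $s\nrm{\lap f}_{L^{2}}$ is $\aleq\nrm{\jap{s\lap}f}_{L^{2}}$. Writing $\nrm{f}_{L^{2}}^{1/4}\nrm{\nb f}_{L^{2}}^{1/2}\nrm{\lap f}_{L^{2}}^{1/4}=s^{-1/2}\,\nrm{f}_{L^{2}}^{1/4}\,(s^{1/2}\nrm{\nb f}_{L^{2}})^{1/2}\,(s\nrm{\lap f}_{L^{2}})^{1/4}$ and applying the weighted arithmetic--geometric mean inequality with weights $\tfrac14,\tfrac12,\tfrac14$ yields $\aleq s^{-1/2}\nrm{\jap{s\lap}f}_{L^{2}}$, completing \eqref{eq:GN-Linfty}. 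There is no serious obstacle here; the only points that need care are routing the $\nb f\in L^{4}$ estimate through the Riesz transform rather than differentiating $\abs{\nb f}$, and matching the fractional exponent in \eqref{eq:Lp-GN} to differentiation order one before invoking Lemma~\ref{l:riesz}.
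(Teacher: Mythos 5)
Your proposal is correct and follows essentially the same route as the paper, which derives the general inequality from \eqref{eq:Lp-GN} with $\alp=\tfrac12$ together with the Riesz transform bound (Lemma~\ref{l:riesz}) and then obtains \eqref{eq:GN-Linfty} by iterating the first estimate. Your elaboration — in particular routing the $\nrm{\nb f}_{L^{4}}$ factor through $(-\lap)^{1/2}f$ rather than differentiating $\abs{\nb f}$ — is exactly the care the paper's terse "two applications of the first" leaves implicit.
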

\begin{proof}
The first estimate follows from Lemma~\ref{l:riesz} and \eqref{eq:Lp-GN}. The last estimate follows from two applications of the first.
\end{proof}

We conclude with an improvement of \eqref{eq:GN-Linfty} under the assumption of extra angular regularity.
\begin{lemma}[Radial Sobolev] \label{lem:radial_sobolev} 
 For any function $f$
 \begin{equation}
  \| \sinh^{\frac{1}{2}}(r) f \|_{L^\infty_x(\bbH^2)} \lesssim \| \langle \Omega \rangle f \|_{L^2_x(\bbH^2)}^{\frac{1}{2}} \| \nabla \langle \Omega \rangle f \|_{L^2_x(\bbH^2)}^{\frac{1}{2}}\lesssim s^{-\frac{1}{4}}\|\jap{s\Delta}^{\frac{1}{2}} \jap{\Omega} f\|_{L_x^2(\bbH^2)}.
 \end{equation}
 Similarly, for any tensor $\bsxi$
 \begin{equation}
  \| \sinh^{\frac{1}{2}}(r) \bsxi \|_{L^\infty_x(\bbH^2)} \lesssim \| \langle \calL_\Omega \rangle \bsxi\|_{L^2_x(\bbH^2)}^{\frac{1}{2}} \| \nabla \langle \calL_\Omega \rangle \bsxi \|_{L^2_x(\bbH^2)}^{\frac{1}{2}}\lesssim s^{-\frac{1}{4}}\|\jap{s\Delta}^{\frac{1}{2}}\jap{\calL_\Omega}\bsxi\|_{L_x^2(\bbH^2)}.
 \end{equation}
 In particular for $\bsxi = \nabla f$ we get
 \begin{equation*}
  \|\sinh^{\frac{1}{2}}(r) \nabla f\|_{L_x^\infty}\lesssim s^{-\frac{1}{4}} \|\nabla \jap{s\Delta}^{\frac{1}{2}}\jap{\Omega}f\|_{L_x^2}.
 \end{equation*}
\end{lemma}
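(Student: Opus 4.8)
The plan is to establish the first (radial‑improvement) inequality in each of the two displays directly in polar coordinates, and then to read off the second inequality and the statement for $\bsxi=\nabla f$ by elementary manipulations with the heat‑flow notation together with Lemma~\ref{l:commLOm}. Since the inequalities are trivial when the right‑hand side is infinite, a routine density argument lets me assume $f\in C^{\infty}_{0}(\bbH^{2})$ and $\bsxi$ smooth with compact support. I would then work in the polar coordinates $(r,\theta)$, in which $\dvol_{\bfh}=\sinh r\,\ud r\,\ud\theta$, $\Omega=\partial_{\theta}$, and $|\nabla w|^{2}=|\partial_{r}w|^{2}+\sinh^{-2}r\,|\partial_{\theta}w|^{2}\ge|\partial_{r}w|^{2}$.

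Two one‑dimensional facts carry the proof. The first is the Gagliardo--Nirenberg inequality on the circle, $\|g\|_{L^{\infty}_{\theta}}^{2}\lesssim\|g\|_{L^{2}_{\theta}}\,\|\langle\partial_{\theta}\rangle g\|_{L^{2}_{\theta}}\le\|\langle\partial_{\theta}\rangle g\|_{L^{2}_{\theta}}^{2}$. The second, which is the real content, is a weighted fundamental theorem of calculus in $r$: for $v$ compactly supported, fixed $r_{0}>0$ and $\theta_{0}$,
\begin{align*}
\sinh(r_{0})\,|v(r_{0},\theta_{0})|^{2}
&=-\int_{r_{0}}^{\infty}\cosh r\,|v|^{2}\,\ud r-\int_{r_{0}}^{\infty}2\sinh r\,\Re(\bar v\,\partial_{r}v)\,\ud r \\
&\le 2\Big(\int_{r_{0}}^{\infty}\sinh r\,|v|^{2}\,\ud r\Big)^{\frac12}\Big(\int_{r_{0}}^{\infty}\sinh r\,|\partial_{r}v|^{2}\,\ud r\Big)^{\frac12},
\end{align*}
where one discards the favourably signed $\cosh$‑term and then applies Cauchy--Schwarz with weight $\sinh r$. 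To prove the scalar estimate I would apply this with $v=f$ at an arbitrary $(r_{0},\theta_{0})$, bound $|f(r,\theta_{0})|^{2}$ and $|\partial_{r}f(r,\theta_{0})|^{2}$ by their $L^{\infty}_{\theta}$‑norms inside the two $r$‑integrals, invoke the circle inequality (with $\partial_{\theta}=\Omega$ and $[\partial_{r},\Omega]=0$), and re‑collect the $\ud\theta$‑integrals to see $\|\langle\Omega\rangle f\|_{L^{2}_{x}}^{2}$ and $\|\langle\Omega\rangle\partial_{r}f\|_{L^{2}_{x}}^{2}\lesssim\|\nabla\langle\Omega\rangle f\|_{L^{2}_{x}}^{2}$. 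Taking the supremum over $(r_{0},\theta_{0})$ and a square root yields the first inequality. The second then follows from AM--GM after inserting $1=s^{1/4}s^{-1/4}$: with $a=\|\langle\Omega\rangle f\|_{L^{2}}$, $b=\|\nabla\langle\Omega\rangle f\|_{L^{2}}$, one has $a^{1/2}b^{1/2}=s^{-1/4}a^{1/2}(s^{1/2}b)^{1/2}\le\tfrac12 s^{-1/4}(a+s^{1/2}b)\lesssim s^{-1/4}\|\langle s\Delta\rangle^{1/2}\langle\Omega\rangle f\|_{L^{2}_{x}}$, the last step being the definition of that norm.

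For the tensor version I would run the identical argument with the scalar $|\bsxi|$ in place of $|v|$. Metric‑compatibility of $\nabla$ gives $|\partial_{r}(|\bsxi|^{2})|=2|(\nabla_{\partial_{r}}\bsxi,\bsxi)|\le 2|\nabla\bsxi|\,|\bsxi|$, so the weighted FTC applies verbatim with $\partial_{r}v$ replaced by $\nabla\bsxi$; and since $\Omega$ is a Killing field, $\calL_{\Omega}$ commutes with $\nabla$ and acts as a derivation on pointwise norms, whence $|\partial_{\theta}|\bsxi||\le|\calL_{\Omega}\bsxi|$ and $|\partial_{\theta}|\nabla\bsxi||\le|\calL_{\Omega}\nabla\bsxi|=|\nabla\calL_{\Omega}\bsxi|$. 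Feeding these into the circle inequality replaces the $\Omega$‑quantities by $\|\langle\calL_{\Omega}\rangle\bsxi\|_{L^{2}_{x}}$ and $\|\nabla\langle\calL_{\Omega}\rangle\bsxi\|_{L^{2}_{x}}$, giving the first tensor inequality; the second follows by the same AM--GM trick. Finally, the claim for $\bsxi=\nabla f$ is this tensor estimate combined with $\calL_{\Omega}\nabla f=\nabla\Omega f$ from Lemma~\ref{l:commLOm}, which identifies $\langle s\Delta\rangle^{1/2}\langle\calL_{\Omega}\rangle\nabla f$ with $\nabla\langle s\Delta\rangle^{1/2}\langle\Omega\rangle f$.

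The one point needing care is the weighted identity above: the exponent $\tfrac12$ on $\sinh$ in the conclusion is exactly what balances the differentiation $\partial_{r}(\sinh r\,\cdot)$ against a Cauchy--Schwarz carrying weight $\sinh r$, and one must notice that the stray $\cosh r\,|v|^{2}$ (resp.\ $\cosh r\,|\bsxi|^{2}$) term has the right (negative) sign to be thrown away; this is what makes the hyperbolic weight work for us rather than against us. Everything else is bookkeeping with the polar volume element, the commutation of $\Omega$ (resp.\ $\calL_{\Omega}$) with $\nabla$, and the definitions of the $\langle s\Delta\rangle$‑norms, together with the routine density reduction noted at the outset.
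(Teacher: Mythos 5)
Your proof is correct and follows essentially the same route as the paper's: a fundamental theorem of calculus in $r$ from $r_0$ to $\infty$ combined with Cauchy--Schwarz against the $\sinh r$ weight, the one-dimensional Sobolev inequality on $\bbS^1$ to control the angular direction via $\jap{\Omega}$ (resp.\ the Kato-type inequality $|\Omega|\bsxi||\le|\calL_\Omega\bsxi|$ for tensors), and $\calL_\Omega\nabla f=\nabla\Omega f$ for the final statement. The only cosmetic difference is that you differentiate $\sinh(r)|v|^2$ and discard the favourably signed $\cosh$ term, whereas the paper uses the monotonicity of $\sinh$ to pull out $\sinh^{-1}(r)$ before applying Cauchy--Schwarz; these are the same mechanism.
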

\begin{proof}
First for a function $f$ note that by Sobolev embedding on $\bbS^1$ we have
%%%%%%%
%%%%%%%
\begin{align*}
\begin{split}
\|f\|_{L^\infty_x}\lesssim \|\jap{\Omega}f\|_{L_r^\infty L_\theta^2},
\end{split}
\end{align*}
and with $g=f$ or $g=\Omega f$
%%%%%%%
%%%%%%%
\begin{align*}
\begin{split}
\int_{\bbS^1}|g(r,\theta)|^2\ud \theta &= -2\Re \, \int_r^\infty \int_{\bbS^1} \barg(r',\theta)\partial_rg(r',\theta) \, \ud\theta \, \ud r'\\
&\lesssim \sinh^{-1}(r) \Big(\int_r^\infty\int_{\bbS^2}|g(r',\theta)|^2 \sinh(r') \, \ud \theta \, \ud r' \Big)^{\frac{1}{2}} \Big( \int_r^\infty\int_{\bbS^1} |\partial_r g(r',\theta)|^2 \sinh(r') \, \ud \theta \, \ud r' \Big)^{\frac{1}{2}} \\
&\lesssim \sinh^{-1}(r) \|g\|_{L^2_x} \|\nabla g\|_{L_x^2}.
\end{split}
\end{align*}
The proof for tensors is similar. Using the fact that by the diamagnetic inequality (since $\Omega$ is an isometry)
%%%%%%%
%%%%%%%
\begin{align*}
\begin{split}
\Omega|\bsxi|\leq |\calL_\Omega \bsxi|
\end{split}
\end{align*}
we get from the Sobolev estimate on $\bbS^1$ that
%%%%%%%
%%%%%%%
\begin{align*}
\begin{split}
\|\bsxi\|_{L^\infty_x}=\||\bsxi|\|_{L_x^\infty}\lesssim \|\jap{\calL_\Omega}\bsxi\|_{L_r^\infty L_\theta^2}.
\end{split}
\end{align*}
The rest of the proof is as in the scalar case applied to $g=|\bszeta|$ with $\bszeta=\bsxi$ or $\bszeta=\calL_\Omega \xi$, and where we also use $\partial_r|\bszeta|^2=2\Re((\nabla_{\partial_r}\bszeta^\mu)\overline{\bszeta}_\mu)$ and $|\nabla_{\partial_r}\bszeta|\leq |\nabla\bszeta|$ since $\partial_r$ has length one.

Finally, the last statement follows from the identity $\calL_\Omega \nabla f = \nabla \Omega f$.
\end{proof}

\subsubsection{Fractional calculus in $L^{2}$-based Sobolev spaces} 
Here, we prove the following results for the fractional $L^{2}$-based Sobolev spaces on $\bbH^{2}$:
\begin{proposition} [Sobolev product rule] \label{prop:frac-leib}
For $\sgm \geq 0$, we have
\begin{equation*}
\nrm{f g}_{H^{\sgm}} \aleq \nrm{f}_{L^{\infty}} \nrm{g}_{H^{\sgm}} + \nrm{f}_{H^{\sgm}} \nrm{g}_{L^{\infty}}.
\end{equation*}
\end{proposition}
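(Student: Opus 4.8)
The plan is to reduce the estimate to a Littlewood--Paley inequality and to run a Bony-type paraproduct decomposition using the heat-flow projections $P_{s}$ of Subsection~\ref{subsec:ineq}. The starting point is the square function characterization of the fractional Sobolev spaces: for $\sgm>0$ and any integer $N>\sgm/2$,
\begin{equation*}
	\nrm{F}_{H^{\sgm}}^{2} \simeq \int_{0}^{\infty} s^{-\sgm} \nrm{(-s\lap)^{N} e^{s\lap} F}_{L^{2}}^{2} \, \frac{\ud s}{s},
\end{equation*}
which follows from the spectral calculus for $-\lap$ together with the elementary identity $\int_{0}^{\infty} s^{-\sgm} (s\lambda)^{2N} e^{-2s\lambda} \, \frac{\ud s}{s} = c_{N,\sgm}\lambda^{\sgm}$ (convergent precisely when $2N>\sgm$); the spectral gap $-\lap\geq\tfrac14$ on $\bbH^{2}$ is used below to bound $\nrm{F}_{L^{2}}$ and the dual norm $\nrm{\cdot}_{H^{-\sgm}}$. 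The case $\sgm=0$ is just $\nrm{fg}_{L^{2}}\leq\nrm{f}_{L^{\infty}}\nrm{g}_{L^{2}}$, so we may assume $\sgm>0$; it suffices to take $N=1$ when $\sgm<2$.

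Next I would insert the reproducing formula \eqref{eq:LP-id} for $f$ and $g$ and split the resulting $(s_{1},s_{2})$-integral into the three regions $\set{s_{1}\geq 4 s_{2}}$, $\set{s_{2}\geq 4 s_{1}}$ and $\set{s_{2}/4<s_{1}<4 s_{2}}$ (recall that small $s$ corresponds to high frequency). Using \eqref{eq:LP-id-s}, the first region contributes $\Pi_{\mathrm{lh}}(f,g)=\int_{0}^{\infty}(e^{4 s_{2}\lap}f)\,P_{s_{2}}g\,\frac{\ud s_{2}}{s_{2}}$, the second the symmetric $\Pi_{\mathrm{hl}}(f,g)=\int_{0}^{\infty}(P_{s_{1}}f)\,(e^{4 s_{1}\lap}g)\,\frac{\ud s_{1}}{s_{1}}$, and the third the ``high-high'' term $\Pi_{\mathrm{hh}}(f,g)=\int_{0}^{\infty}(P_{s_{1}}f)\,\tilde P_{s_{1}}g\,\frac{\ud s_{1}}{s_{1}}$ with $\tilde P_{s_{1}}g=\int_{s_{1}/4}^{4 s_{1}}P_{s_{2}}g\,\frac{\ud s_{2}}{s_{2}}$. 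I would estimate each piece by duality, testing against $\phi$ with $\nrm{\phi}_{H^{-\sgm}}\leq 1$. For $\Pi_{\mathrm{lh}}$ one writes $\langle \Pi_{\mathrm{lh}}(f,g),\phi\rangle=\int_{0}^{\infty}\langle (e^{4 s_{2}\lap}f)\,P_{s_{2}}g,\phi\rangle\,\frac{\ud s_{2}}{s_{2}}$ and uses (i) the pointwise bound $\nrm{e^{s\lap}f}_{L^{\infty}}\leq\nrm{f}_{L^{\infty}}$ from the maximum principle, already invoked in the proof of Lemma~\ref{l:lin-heat-Lp-Lq}; (ii) that the integrand is essentially localized to frequencies $\lesssim s_{2}^{-1/2}$, so that $\langle\cdot,\phi\rangle$ may be replaced by $\langle\cdot,e^{c s_{2}\lap}\phi\rangle$ up to a rapidly decaying error; and (iii) the dual square function bound $\int_{0}^{\infty}s^{\sgm}\nrm{e^{c s\lap}\phi}_{L^{2}}^{2}\,\frac{\ud s}{s}\lesssim\nrm{\phi}_{H^{-\sgm}}^{2}$. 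Two applications of Cauchy--Schwarz in $s_{2}$ then give $|\langle \Pi_{\mathrm{lh}}(f,g),\phi\rangle|\lesssim\nrm{f}_{L^{\infty}}\nrm{g}_{H^{\sgm}}\nrm{\phi}_{H^{-\sgm}}$. The term $\Pi_{\mathrm{hl}}$ is identical with $f$ and $g$ interchanged, producing $\nrm{f}_{H^{\sgm}}\nrm{g}_{L^{\infty}}$. For $\Pi_{\mathrm{hh}}$ one argues the same way, using in addition the Bernstein-type bound $\nrm{P_{s}f}_{L^{\infty}}\lesssim\nrm{f}_{L^{\infty}}$ (a consequence of the short-time kernel bounds of Lemma~\ref{l:hk} for $s\leq 1$ and the $L^{\infty}$ decay of Lemma~\ref{l:lin-heat-Lp-Lq} for $s\geq1$) together with $\nrm{\tilde P_{s_{1}}g}_{L^{2}}\lesssim\big(\int_{s_{1}/4}^{4 s_{1}}\nrm{P_{s_{2}}g}_{L^{2}}^{2}\,\frac{\ud s_{2}}{s_{2}}\big)^{1/2}$ and Fubini; this closes $\Pi_{\mathrm{hh}}$ into $\nrm{f}_{L^{\infty}}\nrm{g}_{H^{\sgm}}$. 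Summing the three bounds and feeding the result back through the square function characterization yields the proposition.

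The step I expect to be the real obstacle is item (ii): on $\bbH^{2}$ the heat-flow projections are not compactly supported in frequency, so ``a product of two factors, each at frequency $\lesssim s^{-1/2}$, is again essentially at frequency $\lesssim s^{-1/2}$'' is not an exact orthogonality relation but a quantitative off-diagonal decay estimate. Concretely one needs, for $s'\ll s$, bounds of the form $\nrm{(-s\lap)^{N}e^{s\lap}\big((e^{s'\lap}a)(P_{s'}b)\big)}_{L^{2}}\lesssim (s'/s)^{M}\nrm{a}_{L^{\infty}}\nrm{P_{s'}b}_{L^{2}}$, together with companion estimates controlling the overlap on the diagonal, proved from the short-time heat-kernel bounds of Lemma~\ref{l:hk}, the $L^{p}\to L^{q}$ bounds of Lemma~\ref{l:lin-heat-Lp-Lq}, and the boundedness of the Riesz transform (Lemma~\ref{l:riesz}) to convert the derivatives in $(-s\lap)^{N}e^{s\lap}$ into powers of $s$. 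This off-diagonal decay for heat-flow Littlewood--Paley theory on $\bbH^{2}$ is by now standard and is developed in detail in \cite{LOS2, LLOS1}; with it in hand the three paraproduct pieces are routine, and the density of $C^{\infty}_{0}(\bbH^{2})$ (Subsection~\ref{subsec:fs}) extends the inequality to all $f, g$ for which the right-hand side is finite.
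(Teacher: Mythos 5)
Your argument is correct in outline, but it is not the route the paper takes; in fact it is precisely the route the paper explicitly declines ("One way to prove these results is to use the intrinsic heat-flow-based Littlewood--Paley projections $P_{\sgm}$ as in \cite{LLOS1}. Instead, we rely on the following lemma..."). The paper's proof is a two-line reduction: the localization lemma (Lemma~\ref{lem:sob-loc}) writes $\nrm{\cdot}_{H^{\sgm}(\bbH^{2})}$ as an $\ell^{2}$ sum of localized Euclidean $H^{\sgm}$ norms via a partition of unity and the exponential maps, after which Proposition~\ref{prop:frac-leib} follows from the standard Sobolev product rule on $\bbR^{2}$ (cited to Taylor). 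Your approach instead builds the Bony paraproduct intrinsically out of the heat-flow projections $P_{s}$, with the square-function characterization of $H^{\sgm}$ supplying the norms on both sides. The trade-off is clear: the paper's reduction is short but outsources the content to the Euclidean theorem and to the norm equivalence in Lemma~\ref{lem:sob-loc} (itself proved by interpolation and duality from the integer case); your argument stays on the manifold and generalizes more readily to settings where a clean localization is unavailable, but it is only complete modulo the quantitative frequency off-diagonal decay for products of heat-flow pieces, which you correctly identify as the crux. Be aware that this input is \emph{not} contained in the present paper: Lemma~\ref{l:core-l-preg} gives \emph{spatial} off-diagonal decay between annuli, not the frequency-overlap estimates $\nrm{(-s\lap)^{N}e^{s\lap}((e^{s'\lap}a)(P_{s'}b))}_{L^{2}}\lesssim (s'/s)^{M}\nrm{a}_{L^{\infty}}\nrm{P_{s'}b}_{L^{2}}$ that your three paraproduct pieces require, so those must genuinely be imported from the heat-flow Littlewood--Paley calculus of \cite{LOS2, LLOS1}. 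With that black box granted, your treatment of the three regions (maximum principle for the low-frequency factor, Schur's test in the $(s,s')$ variables with the gain $(s_{1}/s)^{\sgm/2}$ in the high-high piece, duality against $H^{-\sgm}$) is sound, and the restriction $\sgm>0$ you impose for the high-high summation is harmless since $\sgm=0$ is trivial.
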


\begin{proposition} [Moser-type estimate in Sobolev spaces] \label{prop:frac-moser}
Let $\sgm \geq 0$. Consider a function $F : \bbR \times \bbH^{2} \to \bbR$ satisfying $F(0; x) = 0$ and $\nrm{F}_{C^{\lfloor \sgm \rfloor+2}} < \infty$. Then for any function $g \in H^{\sgm} \cap L^{\infty}$ such that $\nrm{g}_{L^{\infty}} \leq A$, we have
\begin{equation*}
	\nrm{F(g(x); x)}_{H^{\sgm}} \aleq_{\sgm, \nrm{F}_{C^{\lfloor \sgm \rfloor+2}}, A} \nrm{g}_{H^{\sgm}}.
\end{equation*}
\end{proposition}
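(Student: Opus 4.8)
The plan is to handle integer $\sgm = k$ directly by the chain rule, and to reduce non-integer $\sgm$ to the integer case by interpolation; throughout, the hypothesis $F(0;x)=0$ is what compensates for the fact that $F$ and its $x$-derivatives need not decay on the infinite-volume manifold $\bbH^{2}$. For integer $\sgm = k$, using $\nrm{f}_{H^{k}}\simeq\sum_{j=0}^{k}\nrm{\nb^{(j)}f}_{L^{2}}$, I would expand $\nb^{(j)}[F(g(\cdot);\cdot)]$ for $0\le j\le k$ by the chain rule (keeping track of the explicit $x$-slot) into a finite sum of terms $(\partial_{y}^{a}\nb_{x}^{b}F)(g;\cdot)\prod_{i=1}^{a}\nb^{(k_{i})}g$ with $k_{i}\ge1$ and $\sum_{i}k_{i}+b=j$. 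The only term with no factor of $\nb g$ (i.e. $a=0$, $b=j$) is $(\nb_{x}^{(j)}F)(g(\cdot);\cdot)$; since $F(0;x)\equiv0$ forces $\nb_{x}^{(j)}F(0;x)\equiv0$, one has
\begin{equation*}
 (\nb_{x}^{(j)}F)(g(x);x)=g(x)\int_{0}^{1}(\partial_{y}\nb_{x}^{(j)}F)(tg(x);x)\,\ud t ,
\end{equation*}
so this term lies in $L^{2}$ with norm $\aleq\nrm{F}_{C^{k+1}}\nrm{g}_{L^{2}}\le\nrm{F}_{C^{k+1}}\nrm{g}_{H^{k}}$. Every other term has a pointwise factor bounded by $\nrm{F}_{C^{k}}$ (using $\nrm{g}_{L^{\infty}}\le A$) times a product $\prod_{i}\nb^{(k_{i})}g$ which, by H\"older and the Gagliardo--Nirenberg/interpolation inequalities of Lemmas~\ref{lem:gagliardo_nirenberg}, \ref{l:Lp-int} (together with Lemmas~\ref{l:se}, \ref{l:riesz}), satisfies $\aleq\nrm{g}_{L^{\infty}}^{a-1}\nrm{\nb^{(K)}g}_{L^{2}}\aleq_{A}\nrm{g}_{H^{k}}$ since $K:=\sum_{i}k_{i}\le k$. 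Summing the finitely many terms yields the estimate for integer $\sgm$, with constant depending on $\sgm$, $A$, $\nrm{F}_{C^{k+1}}$.

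\textbf{The non-integer case.} For $\sgm=k+\tht$ with $0<\tht<1$ I would interpolate between the integer orders $k$ and $k+1$ (the assumed $C^{\lfloor\sgm\rfloor+2}$ regularity of $F$ is exactly what the $H^{k+1}$ endpoint requires). On the ball $\{\nrm{g}_{L^{\infty}}\le A\}$ the map $T_{F}\colon g\mapsto F(g(\cdot);\cdot)$ is bounded $H^{k}\to H^{k}$ and $H^{k+1}\to H^{k+1}$ by the previous step, and a (necessarily tame) Lipschitz bound on this ball follows by applying the same computation to $F(g_{1};x)-F(g_{2};x)=(g_{1}-g_{2})(x)\int_{0}^{1}(\partial_{y}F)(g_{2}+t(g_{1}-g_{2});x)\,\ud t$; a nonlinear real-interpolation theorem of Tartar--Peetre type then gives boundedness on $[H^{k},H^{k+1}]_{\tht}=H^{\sgm}$. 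Alternatively, staying inside the heat-flow Littlewood--Paley calculus of \cite{LLOS1}, one can argue uniformly in $\sgm\ge0$ from the paraproduct representation
\begin{equation*}
 F(g(x);x)=(\partial_{y}F)(0;x)\,g(x)+\int_{0}^{\infty}\bigl[(\partial_{y}F)(e^{s\lap}g;x)-(\partial_{y}F)(0;x)\bigr]P_{s}g(x)\,\ds ,
\end{equation*}
which follows from $\partial_{s}e^{s\lap}g=-s^{-1}P_{s}g$, the identity \eqref{eq:LP-id}, the spectral gap of $-\lap$ on $\bbH^{2}$ (so $e^{s\lap}g\to0$ as $s\to\infty$), and $F(0;x)=0$. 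The first summand is controlled by the multiplier bound $\nrm{(\partial_{y}F)(0;\cdot)\,g}_{H^{\sgm}}\aleq_{\nrm{F}_{C^{\lceil\sgm\rceil+1}}}\nrm{g}_{H^{\sgm}}$, obtained by (linear) interpolation of its elementary integer-order versions, and the second by the heat-flow characterization of $H^{\sgm}$ together with a square-function estimate, after checking that the coefficients $(\partial_{y}F)(e^{s\lap}g;\cdot)-(\partial_{y}F)(0;\cdot)$ are localized to frequencies $\aleq s^{-1/2}$ with $L^{2}$-norm $\aleq_{\nrm{F}_{C^{2}}}\nrm{e^{s\lap}g}_{L^{2}}$, using the short-time gradient heat-kernel bounds of Lemma~\ref{l:hk}.

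\textbf{Main obstacle.} The hard part is the interaction of the non-decaying explicit $x$-dependence with fractional smoothing: a naive composition bound fails on $\bbH^{2}$ because bounded functions need not be square-integrable, and $F(0;x)\equiv0$ is precisely what permits pulling an honest factor of $g$ out of every otherwise-dangerous term. Turning this into a clean fractional-order statement --- in the interpolation route, verifying the tame Lipschitz hypothesis on bounded sets; in the direct route, controlling the high--high-to-low frequency interaction in the paraproduct term --- is where essentially all of the work lies.
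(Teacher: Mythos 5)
Your integer-order argument is sound and standard, but your treatment of fractional $\sgm$ diverges from the paper's and, as you yourself half-concede, is where the real difficulty sits; neither of your two proposed routes is complete as written. The paper avoids both of your routes entirely: it first disposes of $\sgm=0$ by the pointwise bound $|F(g;x)|\leq|\nb_{g}F(g;x)||g|$, then for $\sgm>0$ invokes the localization lemma (Lemma~\ref{lem:sob-loc}) to transfer the whole statement to $\bbR^{2}$ via a partition of unity and exponential charts, and there runs a telescoping Littlewood--Paley identity $F(g;x)=\int_{0}^{\infty}\nb_{g}F(P_{\leq j}g;x)\,P_{j}g\,\ud j+F(P_{0}g;x)$ followed by Schur's test. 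This is uniform in $\sgm\geq 0$ (no integer/non-integer case split) and crucially exploits \emph{exact} Fourier frequency support: for $k>j+10$ the coefficient $\nb_{g}F(P_{\leq j}g;x)$ has its output frequency forced to $\{|\xi|\simeq 2^{k}\}$, which is then beaten down by $\nrm{P_{k'}\nb_{g}F(P_{\leq j}g;x)}_{L^{\infty}}\aleq 2^{-n(k'-j)}$ with $n=\lfloor\sgm\rfloor+2$ --- this is exactly where the hypothesis $\nrm{F}_{C^{\lfloor\sgm\rfloor+2}}<\infty$ is consumed. Your route (b) is the intrinsic heat-flow analogue of this identity (and your telescoping formula is correct), but the step you gloss over --- that the coefficients $(\partial_{y}F)(e^{s\lap}g;x)-(\partial_{y}F)(0;x)$ are ``localized to frequencies $\aleq s^{-1/2}$'' --- is not literally available for heat-flow projections on $\bbH^{2}$: $P_{\sgm}$ has no sharp frequency support, so the high-high-to-low interaction must be handled by quantitative off-diagonal decay estimates you have not supplied. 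The paper's localization to $\bbR^{2}$ is precisely the device that makes this step trivial.

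Route (a) has a more concrete gap. A Tartar--Peetre nonlinear interpolation theorem requires a Lipschitz bound for $g\mapsto F(g;\cdot)$ at the lower endpoint $H^{k}$ with constant depending only on the $L^{\infty}$ ball, but for $k\geq 1$ the natural difference estimate (cf.\ Corollary~\ref{cor:frac-moser-diff}) is
\begin{equation*}
\nrm{F(g_{1};x)-F(g_{2};x)}_{H^{k}}\aleq\nrm{g_{1}-g_{2}}_{H^{k}}+\bigl(\nrm{g_{1}}_{H^{k}}+\nrm{g_{2}}_{H^{k}}\bigr)\nrm{g_{1}-g_{2}}_{L^{\infty}},
\end{equation*}
which is not of Lipschitz form in $H^{k}$; moreover the near-optimal decompositions $g=g_{0}+g_{1}$ realizing the $K$-functional need not keep $g_{1}$ inside the ball $\{\nrm{\cdot}_{L^{\infty}}\leq A\}$ where your upper-endpoint bound applies. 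Both defects can in principle be repaired (e.g.\ by taking $g_{1}=e^{t\lap}g$, which preserves the $L^{\infty}$ bound by the maximum principle, and tracking the extra term), but as stated the interpolation step does not go through for non-integer $\sgm>1$; it does work cleanly only for $0<\sgm<1$, where the $L^{2}\to L^{2}$ Lipschitz bound holds with constant $\nrm{F}_{C^{1}}$. If you want a complete proof with minimal machinery, the paper's reduction to $\bbR^{2}$ via Lemma~\ref{lem:sob-loc} is the path of least resistance.
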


One way to prove these results is to use the intrinsic heat-flow-based Littlewood--Paley projections $P_{\sgm}$ as in \cite{LLOS1}. Instead, we rely on the following lemma to reduce these propositions to the corresponding results on the Euclidean space:
\begin{lemma} [Localization lemma] \label{lem:sob-loc}
For any $\dlt > 0$, consider points $x_{j} \in \bbH^{2}$ such that the balls $B_{\dlt}(x_{j})$ form a locally finite covering of $\bbH^{2}$. Let $\chi_{j}$ be a smooth partition of unity subordinate to $\set{B_{\dlt}(x_{j})}$ and let $\exp_{x_{j}} : \bbR^{2} \to \bbH^{2}$ be the exponential map centered at $x_{j}$. For any $\sgm \in \bbR$ and $f \in H^{\sgm}(\bbH^{2})$, we have
\begin{equation*}
	\nrm{f}_{H^{\sgm}(\bbH^{2})}^{2} \simeq \sum_{j} \nrm{\chi_{j} f}_{H^{\sgm}(\bbH^{2})}^{2} \simeq \sum_{j} \nrm{(\chi_{j} f) \circ \exp_{x_{j}}}_{H^{\sgm}(\bbR^{2})}^{2}
\end{equation*}
where the implicit constant depends only on $\sgm$, $\dlt$, $\set{x_{j}}$ and $\set{\nrm{\chi_{j}}_{C^{\lceil \abs{\sgm} \rceil}}}$.
\end{lemma}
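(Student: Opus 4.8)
I would recast all three quantities in the statement in terms of a single pair of ``gluing'' and ``restriction'' operators between $H^{\sgm}(\bbH^{2})$ and the Hilbert space $\ell^{2}_{j}H^{\sgm}(\bbR^{2})$, prove the required operator bounds by hand for nonnegative integer $\sgm$, and then propagate them to all $\sgm\in\bbR$ by complex interpolation and $L^{2}$-duality. The point is that the geometry of $\bbH^{2}$ enters only through its homogeneity, which makes every local chart estimate independent of $j$ and hence trivializes the metric comparisons.

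\emph{Setup and reduction.} Since $\bbH^{2}$ is a Hadamard manifold, each $\exp_{x_{j}}\colon\bbR^{2}\to\bbH^{2}$ is a diffeomorphism. Choose auxiliary cutoffs $\tilde\chi_{j}\in C^{\infty}_{0}(B_{\delta}(x_{j}))$ with $\tilde\chi_{j}\equiv1$ on $\supp\chi_{j}$ and $\sup_{j}\|\tilde\chi_{j}\|_{C^{N}}<\infty$ for every $N$ (possible because all the balls are mutually isometric). Define
\[ (Tf)_{j} := (\chi_{j}f)\circ\exp_{x_{j}}, \qquad R\big((g_{j})_{j}\big) := \sum_{j}\tilde\chi_{j}\cdot\big(g_{j}\circ\exp_{x_{j}}^{-1}\big). \]
Because $\sum_{j}\chi_{j}=1$ and $\tilde\chi_{j}\chi_{j}=\chi_{j}$, we have $RT=\Id$. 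The lemma therefore reduces to two assertions for the given $\sgm$: (i) $\|\chi_{j}f\|_{H^{\sgm}(\bbH^{2})}\simeq\|(\chi_{j}f)\circ\exp_{x_{j}}\|_{H^{\sgm}(\bbR^{2})}$ uniformly in $j$; and (ii) $T$ is bounded $H^{\sgm}(\bbH^{2})\to\ell^{2}_{j}H^{\sgm}(\bbR^{2})$ and $R$ is bounded $\ell^{2}_{j}H^{\sgm}(\bbR^{2})\to H^{\sgm}(\bbH^{2})$. Granting (i), (ii) and $RT=\Id$, one gets $\|f\|_{H^{\sgm}}=\|RTf\|_{H^{\sgm}}\aleq\|Tf\|_{\ell^{2}H^{\sgm}}\aleq\|f\|_{H^{\sgm}}$, while $\|Tf\|_{\ell^{2}H^{\sgm}}^{2}\simeq\sum_{j}\|\chi_{j}f\|_{H^{\sgm}(\bbH^{2})}^{2}$ by (i), which is exactly the claim.

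\emph{The integer case.} Let $\sgm=k\in\bbZ_{\ge0}$. By homogeneity, for each $j$ an isometry moving $x_{j}$ to the origin conjugates $\exp_{x_{j}}$ to $\exp_{o}$ (up to a rotation of $\bbR^{2}$), so $\exp_{x_{j}}^{\ast}\bfh$ equals the \emph{fixed} metric $\ud r^{2}+\sinh^{2}r\,\ud\tht^{2}$ on $\bbR^{2}$, whose Christoffel symbols and volume density $\sinh r/r$ are bounded above and below on $B_{\delta}(0)$, uniformly in $j$. Then (i) is the invariance of $H^{k}$ of compactly supported functions under a diffeomorphism with a fixed nondegenerate metric comparison, which follows from $\|u\|_{H^{k}}^{2}=\sum_{k'\le k}\|\nabla^{(k')}u\|_{L^{2}}^{2}$ together with the chain and Leibniz rules; and (ii) follows similarly, using the overlap number $M:=\sup_{x}\#\{j:x\in B_{\delta}(x_{j})\}$ of the covering (on which the final constant is allowed to depend): $\|Tf\|_{\ell^{2}H^{k}}^{2}\simeq\sum_{j}\|\chi_{j}f\|_{H^{k}(\bbH^{2})}^{2}\aleq\sum_{j}\|\chi_{j}\|_{C^{k}}^{2}\|f\|_{H^{k}(B_{\delta}(x_{j}))}^{2}\aleq M\|f\|_{H^{k}(\bbH^{2})}^{2}$, and $\|R((g_{j}))\|_{H^{k}(\bbH^{2})}^{2}\aleq M\sum_{j}\|\tilde\chi_{j}(g_{j}\circ\exp_{x_{j}}^{-1})\|_{H^{k}}^{2}\aleq\sum_{j}\|g_{j}\|_{H^{k}(\bbR^{2})}^{2}$.

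\emph{General $\sgm$, and the main obstacle.} Since $-\lap$ on $\bbH^{2}$ has spectral gap $\tfrac14$, one has $\|f\|_{H^{\sgm}}\simeq\|(1-\lap)^{\sgm/2}f\|_{L^{2}}$, so $\{H^{\sgm}(\bbH^{2})\}_{\sgm}$ is a complex interpolation scale with $(H^{\sgm})^{\ast}=H^{-\sgm}$ under the $L^{2}$ pairing, and likewise for $\{\ell^{2}_{j}H^{\sgm}(\bbR^{2})\}_{\sgm}$ (vector-valued interpolation and duality). Interpolating the $\sgm\in\{0,k\}$ bounds of the previous step with $k:=\lceil|\sgm|\rceil$ yields (i)–(ii) for all $\sgm\ge0$. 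For $\sgm<0$, a change of variables identifies $T^{\ast}$ and $R^{\ast}$ (with respect to the $L^{2}$ pairings) as operators of the same two types -- ``restriction'' and ``gluing'' -- with the fixed, smooth, uniformly bounded Jacobian density of $\exp_{x_{j}}$ inserted; these are bounded for the exponent $|\sgm|\ge0$ by the case already established, whence $T$ and $R$ are bounded for exponent $\sgm<0$ by duality, and $RT=\Id$ persists by density. This gives (i)–(ii) for all $\sgm\in\bbR$ and hence the lemma. I expect the only genuinely fiddly point to be this last step -- correctly identifying the adjoint operators and carrying the $\ell^{2}$-structure through interpolation and duality -- whereas the analytic estimates themselves are elementary precisely because homogeneity of $\bbH^{2}$ removes all $j$-dependence from the local computations.
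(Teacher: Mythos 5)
Your proposal is correct and follows essentially the same route as the paper, which simply notes that the integer case is elementary and that the general case follows by interpolation (for $\sgm>0$) and duality (for $\sgm<0$), referring to Triebel for details. Your retraction/coretraction operators $T$, $R$ with $RT=\Id$ are exactly the standard formalism implicitly invoked there, so you have merely filled in the details the paper leaves to the reference.
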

These equivalences are obvious when $\sgm$ is a nonnegative integer; the general case then follows by interpolation (for $\sgm \in (0, \infty)$) and duality (for $\sgm < 0$). We refer to \cite[Ch.~7]{TriebelFS2} for further details. Note that this result holds in any domain manifold with bounded geometry, in which case $\dlt > 0$ should now be chosen sufficiently small. In the applications of Lemma~\ref{lem:sob-loc} below, we use arbitrary but fixed choices of $\dlt$, $\set{x_{j}}$ and $\set{\chi_{j}}$.

By Lemma~\ref{lem:sob-loc}, Proposition~\ref{prop:frac-leib} readily reduces to the standard Sobolev product rule in $\bbR^{2}$, whose proof can be found in, e.g., \cite[Ch.~2, Prop.~1.1]{TayTools}. The Euclidean counterpart of Proposition~\ref{prop:frac-moser} is less standard, so we provide a proof below:

\begin{proof}[Proof of Proposition~\ref{prop:frac-moser}]
When $\sgm = 0$, the desired bound follows by the point-wise inequality
\begin{equation*}
	\abs{F(g; x)} \leq \abs{\nb_{g} F(g; x)} \abs{g} \aleq_{\nrm{F}_{C^{1}}} \abs{g}.
\end{equation*}
Hence we may assume that $\sgm > 0$. By the localization lemma (Lemma~\ref{lem:sob-loc}), Proposition~\ref{prop:frac-moser} readily reduces to the following Euclidean counterpart: 
\begin{center}
{\it For a function $F : \bbR \times \bbR^{2} \to \bbR$ satisfying $F(0; x) = 0$ and $\nrm{F}_{C^{\lceil \sgm \rceil + 1}} < \infty$ and $g \in H^{\sgm} \cap L^{\infty}(\bbR^{2})$, 
\begin{equation} \label{eq:frac-moser-euc}
	\nrm{F(g(x); x)}_{H^{\sgm}(\bbR^{2})} \aleq_{\sgm, \nrm{F}_{C^{\lfloor \sgm \rfloor+2}(\bbR \times \bbR^{2})}, \nrm{g}_{L^{\infty}(\bbR^{2})}} \nrm{g}_{H^{\sgm}(\bbR^{2})}.
\end{equation}}
\end{center}
The goal of the remainder of the proof is to prove \eqref{eq:frac-moser-euc}. To simplify the notation, in what follows we will suppress the domain $\bbR^{2}$ and simply write $H^{\sgm} = H^{\sgm}(\bbR^{2})$, etc. We fix a radial cutoff $m_{\leq 0}$ supported in $\set{\abs{\xi} < 2}$ that equals $1$ on $\set{\abs{\xi} < 1}$. For $j \in \bbR^{+}$, we denote by $P_{\leq j}$ the Fourier multiplier $m_{\leq 0}(2^{-j} D)$. For $j > 0$, we define $P_{j} = \frac{\ud}{\ud j} P_{\leq j}$; in the case $ j = 0$, we adopt the convention $P_{0} = P_{\leq 0}$. For any tempered distribution $g$, we have the identity $g(x) = \int_{0}^{\infty} P_{j} g(x) \, \ud j + P_{0} g$. Moreover, using the Fourier transform, it is not difficult to see that
\begin{equation*}
	\nrm{g}_{H^{\sgm}}^{2} \simeq \int_{0}^{\infty} (2^{\sgm j}\nrm{P_{j} g}_{L^{2}})^{2} \, \ud j + \nrm{P_{0} g}_{L^{2}}^{2}.
\end{equation*}

We now begin the proof in earnest by computing
\begin{align*}
	F(g(x); x) 
	&= \int_{0}^{\infty} \frac{\ud}{\ud j} F(P_{\leq j} g(x); x) \, \ud j + F(P_{0} g(x); x) \\
	&= \int_{0}^{\infty} \nb_{g} F(P_{\leq j} g(x); x) P_{j} g(x) \, \ud j + F(P_{0} g(x); x),
\end{align*}
which is justified since $F(g(x); x) \in L^{2}$. Thus, by Schur's test and the above characterization of the $H^{\sgm}$-norm, the proof of \eqref{eq:frac-moser-euc} reduces to the verification of the following estimate for $j, k \geq 0$ for some $c = c(\sgm) > 0$:
\begin{align}
	2^{\sgm k} \nrm{P_{k} (\nb_{g} F(P_{\leq j} g(x); x) P_{j} g(x))}_{L^{2}} & \aleq 2^{-c \abs{j-k}} 2^{\sgm j}\nrm{P_{j} g(x)}_{L^{2}}, \label{eq:frac-moser-euc-core}
\end{align}
where the implicit constants may depend on $\sgm$, $\nrm{F}_{C^{\lfloor \sgm \rfloor+2}}$ and $\nrm{g}_{L^{\infty}}$. 

For $k \leq j + 10$, we simply estimate
\begin{align*}
2^{\sgm k} \nrm{P_{k} (\nb_{g} F(P_{\leq j} g(x); x) P_{j} g(x))}_{L^{2}}
\aleq 2^{\sgm (k-j)} \nrm{F}_{C^{1}} 2^{\sgm j}\nrm{P_{j} g(x)}_{L^{2}},
\end{align*}
which is acceptable. In the case $k > j + 10$, note that the frequency support of $\nb_{g} F(P_{\leq j} g(x); x)$ may be localized to $\set{\abs{\xi} \simeq 2^{k}}$. Thus,
\begin{align*}
2^{\sgm k} \nrm{P_{k} (\nb_{g} F(P_{\leq j} g(x); x) P_{j} g(x)}_{L^{2}} 
& \aleq 2^{\sgm (k-j)} 2^{\sgm j} \nrm{P_{j} g(x) }_{L^{2}} \int_{\abs{k' - k} < 1} \nrm{P_{k'} \nb_{g} F(P_{\leq j} g(x); x)}_{L^{\infty}} \, \ud k' .
\end{align*}
For the last factor, we use the following bound with $n = \lfloor \sgm \rfloor+2$, which is obtained using the chain rule and the bound $\nrm{\nb^{(n)} P_{\leq j} g}_{L^{\infty}} \aleq 2^{n j} \nrm{g}_{L^{\infty}}$:
\begin{equation*}
	 \nrm{P_{k'} \nb_{g} F(P_{\leq j} g(x); x)}_{L^{\infty}}
	 \aleq 2^{-n k'} \nrm{\nb^{(n)} \nb_{g} F(P_{\leq j} g(x); x)}_{L^{\infty}} \aleq_{\sgm, \nrm{F}_{C^{\lfloor \sgm \rfloor+2}}, \nrm{g}_{L^{\infty}}} 2^{-n(k' - j)}.
\end{equation*}
In conclusion, for $k > j+ 10$,
\begin{equation*}
2^{\sgm k} \nrm{P_{k} (\nb_{g} F(P_{\leq j} g(x); x) P_{j} g(x))}_{L^{2}} \aleq_{\sgm, \nrm{F}_{C^{\lfloor \sgm \rfloor+2}}, \nrm{g}_{L^{\infty}}} 2^{- (n - \sgm) (k - j)} 2^{\sgm j} \nrm{P_{j} g}_{L^{2}},
\end{equation*}
which is good since $n > \sgm$. \qedhere
\end{proof}

Finally, we point out a simple corollary of Propositions~\ref{prop:frac-leib} and \ref{prop:frac-moser} that gives a difference bound for $F(g(x); x)$.
\begin{corollary} \label{cor:frac-moser-diff}
Let $\sgm \geq 0$. Consider a function $F: \bbR \times \bbH^{2} \to \bbR$ satisfying $F(0; x) = 0$ and $\nrm{F}_{C^{\lfloor \sgm \rfloor + 3}} < \infty$. Then for any functions $g, h \in H^{\sgm} \cap L^{\infty}$ such that $\nrm{g}_{L^{\infty}} + \nrm{h}_{L^{\infty}} \leq A$, we have
\begin{equation*}
	\nrm{F(g(x); x) - F(h(x); x)}_{H^{\sgm}}
	\aleq_{\sgm, \nrm{F}_{C^{\lfloor \sgm \rfloor + 3}}, A} \nrm{g-h}_{H^{\sgm}} + (\nrm{g}_{H^{\sgm}} + \nrm{h}_{H^{\sgm}}) \nrm{g-h}_{L^{\infty}}.
\end{equation*}
\end{corollary}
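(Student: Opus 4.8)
The plan is to reduce the difference bound to \prop{prop:frac-leib} and \prop{prop:frac-moser} via the fundamental theorem of calculus, the one point requiring care being that a bounded function on $\bbH^{2}$ need not lie in $H^{\sgm}$ (infinite volume), so the zeroth-order part of $\nb_{y} F$ must be peeled off before any product rule is applied. First I would write, pointwise in $x$,
\begin{equation*}
	F(g(x); x) - F(h(x); x) = \Bigl( \int_{0}^{1} \nb_{y} F\bigl(h(x) + \tau(g(x) - h(x)); x\bigr) \, \ud \tau \Bigr) (g(x) - h(x)) =: G(x)\, (g(x) - h(x)),
\end{equation*}
and split $G = \nb_{y} F(0; \cdot) + R$, where
\begin{equation*}
	R(x) = \int_{0}^{1} \tilde{F}\bigl(h(x) + \tau(g(x) - h(x)); x\bigr) \, \ud \tau, \qquad \tilde{F}(y; x) := \nb_{y} F(y; x) - \nb_{y} F(0; x).
\end{equation*}
Note $\tilde{F}(0; x) = 0$ and, since each $y$- or $x$-derivative of $\tilde F$ of total order $\leq \lfloor \sgm \rfloor + 2$ is controlled by a $y$- or $x$-derivative of $F$ of total order $\leq \lfloor \sgm \rfloor + 3$, we have $\nrm{\tilde F}_{C^{\lfloor \sgm \rfloor + 2}} \aleq \nrm{F}_{C^{\lfloor \sgm \rfloor + 3}}$.

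For the first piece, $\nb_{y} F(0; x)\,(g(x) - h(x))$, I would introduce $\hat{F}(y; x) := \nb_{y} F(0; x) \cdot y$, which satisfies $\hat{F}(0; x) = 0$ and $\nrm{\hat{F}}_{C^{\lfloor \sgm \rfloor + 2}} \aleq_{A} \nrm{F}_{C^{\lfloor \sgm \rfloor + 3}}$. Since $\nrm{g - h}_{L^{\infty}} \leq A$, applying \prop{prop:frac-moser} to $\hat F$ with argument $g - h$ gives
\begin{equation*}
	\nrm{\nb_{y} F(0; x)\,(g(x) - h(x))}_{H^{\sgm}} = \nrm{\hat{F}(g(x) - h(x); x)}_{H^{\sgm}} \aleq_{\sgm, \nrm{F}_{C^{\lfloor \sgm \rfloor + 3}}, A} \nrm{g - h}_{H^{\sgm}}.
\end{equation*}
For the second piece I would apply the Sobolev product rule \prop{prop:frac-leib} to the product $R\,(g - h)$, obtaining
\begin{equation*}
	\nrm{R\,(g - h)}_{H^{\sgm}} \aleq \nrm{R}_{L^{\infty}} \nrm{g-h}_{H^{\sgm}} + \nrm{R}_{H^{\sgm}} \nrm{g-h}_{L^{\infty}}.
\end{equation*}
Here $\nrm{R}_{L^{\infty}} \aleq_{A} \nrm{F}_{C^{2}}$ is immediate from the mean value theorem. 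For $\nrm{R}_{H^{\sgm}}$, for each fixed $\tau \in [0, 1]$ one has $\nrm{h + \tau(g - h)}_{L^{\infty}} \leq A$, so \prop{prop:frac-moser} applied to $\tilde F$ with argument $h + \tau(g - h)$ yields $\nrm{\tilde F(h + \tau(g - h); x)}_{H^{\sgm}} \aleq_{\sgm, \nrm{F}_{C^{\lfloor \sgm \rfloor + 3}}, A} \nrm{h + \tau(g - h)}_{H^{\sgm}} \leq \nrm{g}_{H^{\sgm}} + \nrm{h}_{H^{\sgm}}$; integrating in $\tau$ via Minkowski's inequality gives $\nrm{R}_{H^{\sgm}} \aleq_{\sgm, \nrm{F}_{C^{\lfloor \sgm \rfloor + 3}}, A} \nrm{g}_{H^{\sgm}} + \nrm{h}_{H^{\sgm}}$. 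Combining the three displays above proves the corollary.

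The only genuine subtlety — the "hard part," such as it is — is the observation that $\nb_{y} F(0; \cdot)$ is merely bounded on $\bbH^{2}$, hence not a priori in $H^{\sgm}(\bbH^{2})$, so it cannot be fed into the product rule \prop{prop:frac-leib} directly; recognizing $\nb_{y} F(0; x)\,(g(x)-h(x))$ instead as a composition $\hat F(g - h; x)$ with $\hat F(0; x) = 0$ and invoking \prop{prop:frac-moser} circumvents this. The remainder is bookkeeping of the $C^{k}$-norms of the auxiliary functions $\tilde F$ and $\hat F$, each of which costs one derivative relative to $F$, which is exactly why the hypothesis is imposed at the level $\nrm{F}_{C^{\lfloor \sgm \rfloor + 3}} < \infty$ rather than $\nrm{F}_{C^{\lfloor \sgm \rfloor + 2}} < \infty$ as in \prop{prop:frac-moser}.
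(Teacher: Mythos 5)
Your proof is correct, and its skeleton coincides with the paper's: the fundamental theorem of calculus reduces the difference to the product $(g-h)\cdot G$ with $G = \int_{0}^{1} \rd_{g} F(\lmb g + (1-\lmb)h; x)\,\ud\lmb$, after which one bounds $G$ in $L^{\infty}$ and in $H^{\sgm}$ and applies \prop{prop:frac-leib}. Where you genuinely depart from the paper is the splitting $G = \rd_{g}F(0;\cdot) + R$. The paper applies \prop{prop:frac-moser} directly to $G$ to claim $\nrm{G}_{H^{\sgm}} \aleq \nrm{g}_{H^{\sgm}} + \nrm{h}_{H^{\sgm}}$; as you observe, this is not literally justified, since \prop{prop:frac-moser} requires the composed function to vanish at $y=0$, while $\rd_{g}F(0;x)$ is in general only bounded and hence not in $H^{\sgm}(\bbH^{2})$ (e.g.\ $F(y;x)=y$ gives $G\equiv 1$). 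Your decomposition repairs this: the zeroth-order piece is exactly what produces the $\nrm{g-h}_{H^{\sgm}}$ term in the conclusion, and the remainder $R$ is where the product rule legitimately applies; the paper's final estimate survives for the same reason, but your argument makes the mechanism explicit. The one point to tidy is that $\hat F(y;x) = \rd_{g}F(0;x)\, y$ is linear in $y$ and therefore has infinite $C^{k}$ norm over all of $\bbR\times\bbH^{2}$; you should truncate it in $y$ to $\set{\abs{y}\leq 2A}$ before invoking \prop{prop:frac-moser}, which is harmless since only $\abs{y}\leq\nrm{g-h}_{L^{\infty}}\leq 2A$ is ever sampled and is consistent with the $A$-dependence you already record. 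With that, every step checks out.
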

\begin{proof}
By the fundamental theorem of calculus, we may write
\begin{equation*}
	F(g(x); x) - F(h(x); x)
	= (g-h)(x) \int_{0}^{1} \rd_{g} F(\lmb g(x) + (1-\lmb) h(x); x) \, \ud \lmb.
\end{equation*}
Note that the $L^{\infty}$ norm of $\int_{0}^{1} \rd_{g} F(\lmb g(x) + (1-\lmb) h(x); x) \, \ud \lmb$ is $O_{\nrm{F}_{C^{2}}} (A)$ by another application of the fundamental theorem of calculus, whereas its $H^{\sgm}$ norm is bounded by $O_{\sgm, \nrm{F}_{C^{\lfloor \sgm \rfloor + 3}}, A} (\nrm{g}_{H^{\sgm}} + \nrm{h}_{H^{\sgm}})$ by Proposition~\ref{prop:frac-moser}. Thus, by Proposition~\ref{prop:frac-leib}, the desired estimate follows. \qedhere
\end{proof}

%%%%%%%%%%%%%%%%%%%%%%%%%%%%%%%%%%%%%%%%%%
%%%%%%%%%%%%%%%%%%%%%%%%%%%%%%%%%%%%%%%%%%
\subsection{Local well-posedness of the Schr\"odinger maps equation} 
%%%%%%%%%%%%%%%%%%%%%%%%%%%%%%%%%%%%%%%%%%
%%%%%%%%%%%%%%%%%%%%%%%%%%%%%%%%%%%%%%%%%%

A basic ingredient of the proof of Theorem~\ref{thm:main} is the existence of a unique local solution with regular data, and a continuation criterion. In the case of Schr\"odinger maps on $\bbR^d$, local well-posedness was proved by McGahagan in~\cite{McGahagan}, see also \cite{KLPST, DingWang}. McGahagan's proof consists of an approximation scheme by a wave map-like equation, and is carried out in a geometric framework using covariant derivatives. As such, it easily extends to the case of the hyperbolic domain $\bbH^2$. We omit the straightforward modifications and state the corresponding result without proof.

%%%%%%%%%
%%%%%%%%%
\begin{lemma}[McGahagan \emph{\cite{McGahagan}}]\label{l:lwp}
Let $\NN = \Hp^2$ or $\NN= \Sp^2$ and let $Q$ be an equivariant harmonic map as in Proposition~\ref{prop:hm-classify}. Given initial data $u_0:\bbH^2\to\NN$ with the same boundary data as $Q$, and lying in the space $H^5_Q(\Hp^2; \NN)$, there exists a time $T>0$ and a unique solution $u:[0,T)\times \bbH^2\to \NN$ to the Schr\"odinger map equation \eqref{equ:schroedinger_maps_equ} with $u\in C^0([0,T);H^5_Q(\bbH^2;\NN))$  and $u(0,\cdot)=u_0(\cdot)$. Moreover, if $T^\ast>0$ is the maximal such $T$, then either $T^\ast=\infty$ or $\lim_{t\to T^\ast}\|u(t,\cdot)- Q\|_{H^5}=\infty$. Finally, any additional regularity of the initial data is preserved by the flow. In particular, if $(u_0- Q) \in H^\infty$, then $\p_t^k u(t) \in H^\infty$ for every $k \in \N$. 
\end{lemma}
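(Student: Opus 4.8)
The plan is to follow the approximation scheme of McGahagan \cite{McGahagan}, indicating where the passage from $\bbR^{d}$ to the hyperbolic domain requires (only minor) changes. Write \eqref{equ:schroedinger_maps_equ} in the covariant form $\rd_{t} u = J(u)\, \bfh^{jk} D_{j} \rd_{k} u$, and introduce, for a small parameter $\alpha > 0$, the wave-map-type regularization — schematically
\begin{equation*}
	\alpha\, D_{t} \rd_{t} u_{\alpha} + J(u_{\alpha})\, \rd_{t} u_{\alpha} = \bfh^{jk} D_{j} \rd_{k} u_{\alpha},
\end{equation*}
whose principal part is the (scaled) wave operator $\alpha \rd_{t}^{2} - \bfh^{jk} \rd_{j} \rd_{k}$ and which formally reduces to \eqref{equ:schroedinger_maps_equ} as $\alpha \to 0$ — with initial data $u_{\alpha}(0) = u_{0}$ and $\rd_{t} u_{\alpha}(0) = J(u_{0}) \bfh^{jk} D_{j} \rd_{k} u_{0}$. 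For each fixed $\alpha > 0$ this is a semilinear geometric wave equation, solved on a short time interval in $H^{5}_{Q}(\bbH^{2}; \NN)$ by the standard covariant energy method: apply up to five covariant derivatives, pair with the corresponding covariant time derivative of $u_{\alpha}$, and close the resulting differential inequality using the integer-order Sobolev product and Moser estimates on $\bbH^{2}$ (Propositions~\ref{prop:frac-leib} and \ref{prop:frac-moser} and their corollaries) together with the uniform bounds on $J$, $R$ and the Christoffel symbols of the modified closed target $\tgmfd'$.

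The next step is to derive bounds for $\nrm{u_{\alpha}(t) - Q}_{H^{5}}$ on a time interval $[0, T)$, with $T$ depending only on $\nrm{u_{0} - Q}_{H^{5}}$ and \emph{not} on $\alpha$: the extra term $\alpha D_{t} \rd_{t} u_{\alpha}$ contributes the nonnegative quantity $\sim \alpha \nrm{\rd_{t} u_{\alpha}}_{H^{4}}^{2}$ to the energy and is harmless. A standard compactness argument (Banach--Alaoglu in $L^{\infty}_{t} H^{5}$ together with an Aubin--Lions argument in a lower-order space, localized in $x$ via an exhaustion of $\bbH^{2}$ by balls) produces a subsequential limit $u$ solving \eqref{equ:schroedinger_maps_equ} with $u \in C^{0}([0, T); H^{5}_{Q}(\bbH^{2}; \NN))$; for $\eps_{0}$ small the image of $u$ lies in the region where $\tgmfd' = \tgmfd$, so $u$ is genuinely $\NN$-valued. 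Uniqueness is obtained by comparing two solutions $u, v$ with the same data through parallel transport along the short geodesic in $\tgmfd'$ joining $u(t, x)$ to $v(t, x)$ — legitimate since the a priori bound forces $\nrm{u(t) - Q}_{L^{\infty}}$ and $\nrm{v(t) - Q}_{L^{\infty}}$ to be small on $[0, T)$, so both images lie in a common geodesically convex neighborhood; the transported difference satisfies a linear covariant Schr\"odinger-type equation whose coefficients are controlled by finitely many derivatives of $u$ and $v$, and a Gr\"onwall estimate in a norm of order $\le 3$ (the derivatives lost in this comparison being absorbed by the assumed $H^{5}$ regularity) forces $u \equiv v$. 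The continuation criterion is the usual consequence of the local existence time depending only on $\nrm{u(t) - Q}_{H^{5}}$; and propagation of higher regularity (including $\rd_{t}^{k} u \in H^{\infty}$ when $u_{0} - Q \in H^{\infty}$, using the equation to trade spatial for temporal derivatives) follows by the same covariant energy method applied at order $5 + k$.

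The only genuinely new input compared with the Euclidean case is the verification that the harmonic-analytic toolkit used throughout — Sobolev embeddings, integer and fractional product/Moser estimates, and elliptic regularity for $-\lap$ — is available on $\bbH^{2}$; this is precisely what Subsections~\ref{subsec:fs} and \ref{subsec:ineq} provide, and it ultimately rests on $\bbH^{2}$ being a complete manifold of bounded geometry (the spectral gap of $-\lap$ only helps). Accordingly, the main technical point, exactly as in \cite{McGahagan}, is the uniqueness step: one must set up the parallel-transport comparison so that the difference equation is genuinely linear with coefficients depending only on the controlled derivatives of the two solutions, and one must know a priori that the solutions stay close enough for the comparison to make sense — both handled as in the Euclidean argument once the bounded-geometry estimates above are in hand.
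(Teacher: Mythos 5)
Your proposal follows exactly the route the paper indicates: McGahagan's wave-map approximation scheme carried out covariantly, with the approximating wave equation solved by energy methods at $H^5$ regularity (bypassing the sharp Klainerman--Selberg well-posedness that is unavailable on $\bbH^2$), uniqueness via parallel transport, and the continuation/persistence statements by the same energy estimates --- the paper itself omits the proof, citing \cite{McGahagan} together with precisely this remark. The only slip is your appeal to smallness (``for $\eps_0$ small'') to keep the image in the region where $\tgmfd'=\tgmfd$: the lemma carries no smallness hypothesis, and one instead uses that $u_0-Q\in H^5\hookrightarrow L^\infty$ confines the image to a bounded set that persists for short times, so the truncation radius of the modified target can simply be chosen large enough.
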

%%%%%%%%%
%%%%%%%%%

\begin{remark}
The local well-posedness result in \cite{McGahagan} is actually sharper, in that it only requires the data to be in $H^\ell$ for $\ell>2$. However, this relies on the almost optimal well-posedness result of Klainerman and Selberg from \cite{KlaSel97} for the approximate wave-map like system mentioned above. Although no such almost optimal well-posedness result for wave maps on hyperbolic space is available, this is not a problem in our setting as we may restrict to smoother data and rely on the standard well-posedness theory for wave maps given by energy methods in two space dimensions say for $H^4$ data. Following McGahagan's core argument in this more restrictive (and easier) setting then yields the $H^5$ well-posedness result given in Theorem~\ref{l:lwp}, and also avoids many technical difficulties confronted by McGahagan in~\cite{McGahagan}. Finally, the persistence of regularity statement  in the final two sentences of Lemma~\ref{l:lwp} follows from standard arguments. 
\end{remark}

\section{Linearized operators and the associated parabolic/dispersive theories} \label{sec:hm}
The purpose of this section is to describe more precisely the linearized operator $\calH_{Q}$ about a finite energy equivariant harmonic map $Q$ as in Proposition~\ref{prop:hm-classify}, as well as the associated parabolic and dispersive theories. In Subsection~\ref{subsec:hm}, we introduce a special frame $e^{\infty}$ on $Q^{\ast} T \tgmfd$ (Coulomb frame) and express $\calH_{Q}$ in the associated moving frame formalism. As a result, we arrive at a self-adjoint operator $H$ acting on complex-valued functions on $\bbH^{2}$. Using the expression for $H$, we also verify the weak and strong linearized stability properties of $Q$ that were claimed in Subsection~\ref{subsec:result} (Propositions~\ref{prop:weak-stab} and \ref{prop:strong-stab}). In the remaining subsections, we collect key analytic tools for analyzing the parabolic and dispersive equations arising from the Schr\"odinger maps evolution in the caloric gauge. More specifically, in Subsection~\ref{subsec:lin-para}, we prove $L^{p}$ estimates for the parabolic operator $\rd_{s} + H$, and in Subsection~\ref{subsec:disp}, we describe the global-in-time local smoothing and Strichartz estimates for the Schr\"odinger operator $- i \rd_{t} + H$ that were established in \cite{LLOS1}.

\subsection{Harmonic maps from $\bbH^{2}$ into $\tgmfd$ and the linearized operator in the Coulomb gauge} \label{subsec:hm} 
Let $Q$ be a finite energy harmonic map from $\bbH^{2}$ into either $\tgmfd = \bbS^{2}$ or $\bbH^{2}$ as in Proposition~\ref{prop:hm-classify}. Let $e = (e_{1}, e_{2})$ be a global orthonormal frame on $Q^{\ast} T \tgmfd$ that is positively oriented, i.e., $e_{2} = J e_{1}$. We denote the linearized operator $\calH_{Q}$, which was introduced in \eqref{eq:lin-hm}, in the moving frame formalism with respect to $e$ by $H$, i.e.,
\begin{equation*}
	\calH_{Q} (e \varphi) = e H \varphi.
\end{equation*}
By \eqref{eq:comm-K}, $H$ takes the form
\begin{equation*}
	H \varphi
	= - \Db^{\ell} \Db_{\ell} \varphi + i \tcv \Im(\psi_{\ell} \overline{\varphi}) \psi^{\ell},
\end{equation*}
where $e \psi_{\ell} = \rd_{\ell} Q$. To simplify the RHS, we note that $Q$ obeys the Cauchy--Riemann equation
\begin{equation} \label{eq:CR-int}
	\bfe_{1} Q  = J \bfe_{2} Q,
\end{equation}
where $(\bfe_{1}, \bfe_{2})$ is a positively oriented orthonormal frame on $\bbH^{2}$. Indeed, using the explicit form of $Q$ in Proposition~\ref{prop:hm-classify}, \eqref{eq:CR-int} may be easily verified for the specific orthonormal frame $(\bfe_{1}, \bfe_{2})$ given by \eqref{eq:dom-frame}; the general case then easily follows. In the moving frame formalism, \eqref{eq:CR-int} reads
\begin{equation} \label{eq:CR}
	\psi_{1} = i \psi_{2},
\end{equation}
where $\psi_{1} = \bfe_{1}^{\ell} \psi_{\ell}$, $\psi_{2} = \bfe_{2}^{\ell} \psi_{\ell}$. Thus,
\begin{equation} \label{eq:lin-Q}
	H \varphi
%	= - \Db^{\ell} \Db_{\ell} \varphi + \tcv \frac{i}{2i}(\psi_{1} \overline{\varphi} - \overline{\psi_{1}} \varphi) \psi_{1}
%	+ \tcv \frac{i}{2i}(\psi_{2} \overline{\varphi} - \overline{\psi_{2}} \varphi) \psi_{2} 
	= - \Db^{\ell} \Db_{\ell} \varphi - \frac{\tcv}{2}(\psi_{2} \overline{\varphi} + \overline{\psi_{2}} \varphi) \psi_{2}
	+ \frac{\tcv}{2}(\psi_{2} \overline{\varphi} - \overline{\psi_{2}} \varphi) \psi_{2} 
	= - \Db^{\ell} \Db_{\ell} \varphi - \tcv \abs{\psi_{2}}^{2} \varphi.
\end{equation}

The property that the second-order harmonic maps equation $D^{\ell} \rd_{\ell} Q = 0$ reduces to the first-order Cauchy--Riemann equation \eqref{eq:CR-int} is reflected by the so-called \emph{Bogomoln'yi structure} of the linearized operator. Specifically, consider the first-order \emph{Bogomoln'yi operator} (with respect to the frame \eqref{eq:dom-frame})
\begin{equation} \label{eq:bogomolnyi}
	L \varphi = \Db_{r} \varphi +  \frac{i}{\sinh r} \Db_{\tht} \varphi.
\end{equation}
Then it may be checked that $H$ admits the splitting
\begin{equation} \label{eq:L-ast-L=H}
	L^{\ast} L \varphi = H \varphi.
\end{equation}

We now specify a special frame $e^{\infty}$ on $Q^{\ast} T \tgmfd$, which will turn out to satisfy the Coulomb condition \eqref{eq:cf-coulomb} (see also Remark~\ref{rem:cf}), with respect to which the connection $1$-form $A$ and the $0$-order potential $\tcv \abs{\psi_{2}}^{2}$ may be explicitly computed in \eqref{eq:lin-Q}. As in Subsection~\ref{subsec:result}, fix a point (the origin) in $\tgmfd = \bbS^{2}$ or $\bbH^{2}$, and consider the associated polar coordinates $(\rho, \vtht)$. Define
\begin{equation*}
	S(\rho) = \begin{cases} \sin \rho & \hbox{ when } \tgmfd = \bbS^{2}, \\
					\sinh \rho & \hbox{ when } \tgmfd = \bbH^{2}, \end{cases} \qquad
	S'(\rho) = \begin{cases} \cos \rho & \hbox{ when } \tgmfd = \bbS^{2}, \\
					\cosh \rho & \hbox{ when } \tgmfd = \bbH^{2}. \end{cases} 
\end{equation*}
so that in both cases, the metric $g$ takes the form $g = \ud \rho^{2} + S^{2} \, \ud \vtht^{2}$.
We introduce
\begin{align}\label{eq:final_frame}
\begin{split}
e^\infty_1(r,\theta)=\cos\theta \partial_\rho- \frac{\sin\theta}{S(Q(r))} \partial_\vartheta
,\qquad e^\infty_2= \sin\theta\partial_\rho+\frac{\cos\theta}{S(Q(r))} \partial_\vartheta.
\end{split}
\end{align}

%%%%%%%%%%%%%%%%%
%%%%%%%%%%%%%%%%%
\begin{lemma}\label{lem:cf}  Under the choice \eqref{eq:final_frame} for $\set{e^{\infty}_{1}, e^{\infty}_{2}}$, the connection $1$-form $A_{j}^{\infty} = (D_{\bfe_{j}} e^{\infty}_{1}, e^{\infty}_{2})$ and $\psi_{j}^{\infty}$ (defined by $e \psi_{j}^{\infty} = \bfe_{j}^{\ell} \rd_{\ell} Q$) are given by
%%%%%%%
%%%%%%%
\begin{equation} \label{eq:cf-coeff}
\begin{split}
&A^{\infty}_{1} := A^{\infty}_{r}=0,\qquad A^{\infty}_{2} := \frac{1}{\sinh r} A^{\infty}_{\theta}= \frac{S'(Q)-1}{\sinh r},\\
&\psi_1^\infty := \psi_r^\infty=\frac{S(Q)}{\sinh r} e^{i\theta}  ,\qquad \psi_2^\infty := \frac{1}{\sinh r}\psi_\theta^\infty =\frac{S(Q)}{\sinh r}e^{i(\theta+\frac{\pi}{2})}.
\end{split}
\end{equation}
In particular, $A^{\infty}$ obeys the Coulomb gauge condition
\begin{equation} \label{eq:cf-coulomb}
	\rd^{\ell} A^{\infty}_{\ell} 
	= \rd_{r} A^{\infty}_{r} + r^{-1} A^{\infty}_{r} + \rd_{\tht} A^{\infty}_{\tht} = 0.
\end{equation}
\end{lemma}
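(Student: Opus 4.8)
\textbf{Proof plan for Lemma~\ref{lem:cf}.}
The strategy is a direct computation in polar coordinates on both $\bbH^2$ (domain) and $\tgmfd$ (target). First I would record the Christoffel symbols of the target metric $g = \ud\rho^2 + S^2\,\ud\vtht^2$: the only nonzero ones are ${}^{(\tgmfd)}\Gmm^{\rho}_{\vtht\vtht} = -S S'$ and ${}^{(\tgmfd)}\Gmm^{\vtht}_{\rho\vtht} = {}^{(\tgmfd)}\Gmm^{\vtht}_{\vtht\rho} = S'/S$. Since $Q(r,\tht) = (\rho(r),\tht)$ with $\rho(r)$ the profile from Proposition~\ref{prop:hm-classify}, we have $\rd_r Q = \rho'(r)\,\rd_\rho$ and $\rd_\tht Q = \rd_\vtht$ (as sections of $Q^\ast T\tgmfd$). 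The harmonic map profiles satisfy the key first-order relation $\rho'(r) = S(\rho(r))/\sinh r$, which is exactly the Cauchy--Riemann equation \eqref{eq:CR-int}; I would verify this once from the explicit formulas ($\rho = 2\arctan(\lmb\tanh(r/2))$ gives $\rho' = \lmb/(\cosh^2(r/2) + \lmb^2\sinh^2(r/2)) = \sin\rho/\sinh r$, and similarly in the $\bbH^2$ case with $\tanh$). This single identity drives everything.

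Next I would compute $\psi^\infty_j$. By definition $e^\infty\psi^\infty_j = \bfe_j^\ell\rd_\ell Q$, so $\psi^\infty_1 = (\rd_r Q, e^\infty_1) + i(\rd_r Q, e^\infty_2)$ using the target metric $g$. Pairing $\rho'\rd_\rho$ against \eqref{eq:final_frame}: $(\rho'\rd_\rho, e^\infty_1) = \rho'\cos\tht$ and $(\rho'\rd_\rho, e^\infty_2) = \rho'\sin\tht$ (the $\rd_\vtht$ components drop out by orthogonality of $\rd_\rho,\rd_\vtht$), so $\psi^\infty_1 = \rho'(\cos\tht + i\sin\tht) = \rho' e^{i\tht} = \frac{S(Q)}{\sinh r}e^{i\tht}$. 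For $\psi^\infty_2 = (\bfe_2^\ell\rd_\ell Q, e^\infty_1) + i(\bfe_2^\ell\rd_\ell Q, e^\infty_2)$ with $\bfe_2 = \frac{1}{\sinh r}\rd_\tht$: here $\bfe_2^\ell\rd_\ell Q = \frac{1}{\sinh r}\rd_\vtht$, and pairing against $e^\infty_1,e^\infty_2$ using $g(\rd_\vtht,\rd_\vtht) = S(Q)^2$ gives $\frac{1}{\sinh r}\cdot S(Q)^2\cdot(-\frac{\sin\tht}{S(Q)}) = -\frac{S(Q)}{\sinh r}\sin\tht$ and $\frac{S(Q)}{\sinh r}\cos\tht$ respectively, hence $\psi^\infty_2 = \frac{S(Q)}{\sinh r}(-\sin\tht + i\cos\tht) = \frac{S(Q)}{\sinh r}e^{i(\tht+\pi/2)}$, consistent with the Cauchy--Riemann relation \eqref{eq:CR}.

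For the connection form $A^\infty_j = (D_{\bfe_j}e^\infty_1, e^\infty_2)$, I would compute $D_{\bfe_1}e^\infty_1 = D_{\rd_r}e^\infty_1$ using the pullback covariant derivative formula: the $\rd_\rho$- and $\rd_\vtht$-components of $e^\infty_1$ are $(\cos\tht, -\sin\tht/S(Q))$, and $D_{\rd_r}$ acts on each component with the ${}^{(\tgmfd)}\Gmm$ terms contracted against $\rd_r Q = \rho'\rd_\rho$. Since $\tht$ is constant along $\rd_r$ and the Christoffel contributions involve either $\rho'\cdot{}^{(\tgmfd)}\Gmm^\bullet_{\rho\bullet}$, a short calculation (using $\rd_r(1/S(Q)) = -\rho' S'(Q)/S(Q)^2$ which cancels against the $S'/S$ Christoffel term) shows $D_{\rd_r}e^\infty_1$ is parallel to $e^\infty_1$, so $A^\infty_1 = 0$. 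For $A^\infty_2 = \frac{1}{\sinh r}(D_{\rd_\tht}e^\infty_1, e^\infty_2)$ I would differentiate in $\tht$: $\rd_\tht e^\infty_1 = -\sin\tht\,\rd_\rho - \frac{\cos\tht}{S(Q)}\rd_\vtht$ plus the ${}^{(\tgmfd)}\Gmm$-terms from $\rd_\tht Q = \rd_\vtht$; pairing with $e^\infty_2$ and simplifying using ${}^{(\tgmfd)}\Gmm^\rho_{\vtht\vtht} = -SS'$ yields $(D_{\rd_\tht}e^\infty_1, e^\infty_2) = S'(Q) - 1$, so $A^\infty_2 = \frac{S'(Q)-1}{\sinh r}$. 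Finally, the Coulomb condition \eqref{eq:cf-coulomb}: since $A^\infty_r = 0$ and $A^\infty_\tht = S'(Q) - 1$ is independent of $\tht$, we get $\rd^\ell A^\infty_\ell = \rd_r A^\infty_r + (\coth r)A^\infty_r + \frac{1}{\sinh^2 r}\rd_\tht A^\infty_\tht = 0$ immediately (note: the divergence on $\bbH^2$ in polar coordinates uses $\coth r$, not $r^{-1}$ — I would state it correctly as $\div A^\infty = \frac{1}{\sinh r}\rd_r(\sinh r\, A^\infty_r) + \frac{1}{\sinh^2 r}\rd_\tht A^\infty_\tht$). The main obstacle, such as it is, will be bookkeeping the target Christoffel contractions carefully and invoking the Cauchy--Riemann identity $\rho' = S(Q)/\sinh r$ at the right moments so the cancellations are transparent; there is no conceptual difficulty.
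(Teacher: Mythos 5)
Your proposal is correct and follows essentially the same route as the paper: a direct computation of the target Christoffel symbols, the identity $\rd_r Q^{\rho} = S(Q)/\sinh r$ from the Cauchy--Riemann equation \eqref{eq:CR-int}, and the resulting expressions for $D_{r}e^{\infty}_{1}$, $D_{\tht}e^{\infty}_{1}$, $\psi^{\infty}_{r}$, and $\psi^{\infty}_{\tht}$. Your parenthetical correction of the divergence formula (the weight should be $\coth r$, not $r^{-1}$) is well taken, but immaterial here since $A^{\infty}_{r}=0$ and $A^{\infty}_{\tht}$ is independent of $\tht$.
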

%%%%%%%%%%%%%%%%%
%%%%%%%%%%%%%%%%%
\begin{proof}
Note that 
\begin{align*}
\begin{split}
D_{j} e_{\bfa}^\infty = (\partial_j e_{\bfa}^{\infty,a} + {}^{(\tgmfd)} \Gmm_{bc}^{a}(Q) \partial_j Q^b e_{\bfa}^{\infty,c}) \rd_{a}.
\end{split}
\end{align*} 
After a straightforward computation of ${}^{(\tgmfd)} \Gmm$, this yields
\begin{align*}
D_{r} e^{\infty}_{1} = D_{r} e^{\infty}_{2} = 0, \qquad
 D_{\tht} e^{\infty}_{1} = (S'(Q) - 1) e_{2}^{\infty}, \qquad
D_{\tht} e^{\infty}_{2} = - (S'(Q) - 1) e_{1}^{\infty}.
\end{align*}
Next, the expressions for $\psi_r^\infty$ and $\psi_\theta^\infty$ follow by observing that, since $\rd_{r} Q^{\rho} = \frac{S(Q)}{\sinh r}$ by \eqref{eq:CR-int},
\begin{align*}
	\rd_{r} Q 
	& = \rd_{r} Q^{\rho} \rd_{\rho} 
	= \frac{S(Q)}{\sinh r} (\cos \tht e^{\infty}_{1} + \sin \tht e^{\infty}_{2}), \\
	\frac{1}{\sinh r} \rd_{\tht} Q
	& = \frac{1}{\sinh r} \rd_{\vtht} 
	= \frac{S(Q)}{\sinh r} (- \sin \tht e^{\infty}_{1} + \cos \tht e^{\infty}_{1}). 
\end{align*}
Finally, the Coulomb condition \eqref{eq:cf-coulomb} trivially follows.
\end{proof}

\begin{remark} \label{rem:cf}
It is not difficult to show that, under a suitable decay condition on $A$, the Coulomb gauge condition \eqref{eq:cf-coulomb} determines the frame $e$ up to a global constant rotation. Moreover, \eqref{eq:cf-coulomb} is equivalent to the harmonic condition $D^{\ell} D_{\ell} e = 0$ in Definition~\ref{def:caloric-simple}. Indeed,
\begin{align*}
i \rd^{\ell} A_{\ell} = \rd^{\ell} \brk{D_{\ell} e_{1}, e_{2}}
=& ( D^{\ell} D_{\ell} e_{1}, e_{2}) + (D_{\ell} e_{1}, D_{\ell} e_{2}) \\
=& ( D^{\ell} D_{\ell} e_{1}, e_{2}) + \frac{1}{2} \left( \lap (e_{1}, e_{2}) - (D^{\ell} D_{\ell} e_{1}, e_{2}) - (e_{1}, D^{\ell} D_{\ell} e_{2}) \right) \\
=& \frac{1}{2} \left((D^{\ell} D_{\ell} e_{1}, e_{2}) - (e_{1}, D^{\ell} D_{\ell} e_{2}) \right).
\end{align*}
 On the other hand, $D^{\ell} D_{\ell} e_{1} = 0$ is equivalent to $D^{\ell} D_{\ell} e_{2}$, since $e_{2} = J e_{1}$ and $D J = 0$.
\end{remark}

Using the expressions for $A_{j}^{\infty}$ and $\psi_{j}^{\infty}$, we can compute $\calL_\Omega A_{j}^\infty$ and $\calL_\Omega \psi_{j}^\infty$. In the polar coordinates $(r, \tht)$ on the domain, we find that
\begin{align} \label{eq:LOmA} 
\begin{split}
\calL_\Omega A^\infty_j=\Omega A^\infty_j+\partial_j\Omega^\ell A^\infty_\ell =\partial_\theta A^\infty_j= 0.
%\quad \Rightarrow \quad\calL_\Omega \bfA^\infty = 0.
\end{split}
\end{align}
Similarly,
\begin{align} \label{eq:LOmpsi} 
\begin{split}
\calL_\Omega\psi^\infty_{j} = \Omega \psi^\infty_{j}+(\partial_{j}\Omega^{\ell})\psi^\infty_{\ell}=\partial_\theta \psi^\infty_{j}=i\psi^\infty_{j}.
%\quad \Rightarrow \quad \calL_\Omega\Psi^\infty = i \Psi^\infty.
\end{split}
\end{align}

With the help of Lemma~\ref{lem:cf}, we now verify that all $Q$ in Proposition~\ref{prop:hm-classify} satisfy the weak linearized stability condition.

\begin{proposition} \label{prop:weak-stab}
Let $Q$ be an equivariant finite energy harmonic map from $\bbH^{2}$ into $\tgmfd = \bbH^{2}$ or $\bbS^{2}$, which is given by Proposition~\ref{prop:hm-classify}. Then the associated linearized operator $\calH_{Q}$ obeys the weak linearized stability condition (Definition~\ref{def:weak-stab}).
\end{proposition}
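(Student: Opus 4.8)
The plan is to pass to the Coulomb frame $e^{\infty}$ of Lemma~\ref{lem:cf}, which identifies sections of $Q^{\ast}T\tgmfd$ with complex-valued functions and turns $\calH_{Q}$ into the scalar operator $H\varphi=-\Db^{\ell}\Db_{\ell}\varphi-\tcv\abs{\psi_{2}^{\infty}}^{2}\varphi$ of \eqref{eq:lin-Q}, where $\abs{\psi_{2}^{\infty}}^{2}=S(Q)^{2}/\sinh^{2}r$ by \eqref{eq:cf-coeff}. Since $e^{\infty}$ is orthonormal, \eqref{eq:weak-stab} is equivalent to the bound $\langle H\varphi,\varphi\rangle\geq\rho_{Q}^{2}\nrm{\varphi}_{L^{2}}^{2}$ for all $\varphi\in C^{\infty}_{0}(\bbH^{2})$, that is, to $\inf\mathrm{spec}(H)>0$ for the self-adjoint operator $H$ on $L^{2}(\bbH^{2})$. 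Two inputs will be used: first, the diamagnetic inequality $\abs{\nb\abs{\varphi}}\leq\abs{\Db\varphi}$ together with the Poincar\'e inequality on $\bbH^{2}$ (equivalently, the spectral gap $\tfrac{1}{4}$ of $-\lap$), which give $\nrm{\Db\varphi}_{L^{2}}^{2}\geq\tfrac{1}{4}\nrm{\varphi}_{L^{2}}^{2}$; and second, the Bogomoln'yi factorization \eqref{eq:L-ast-L=H}, which gives $\langle H\varphi,\varphi\rangle=\nrm{L\varphi}_{L^{2}}^{2}\geq0$ with $L$ as in \eqref{eq:bogomolnyi}.

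When $\tgmfd=\bbH^{2}$ we have $\tcv=-1$, so the zeroth-order coefficient $-\tcv\abs{\psi_{2}^{\infty}}^{2}=\abs{\psi_{2}^{\infty}}^{2}$ is nonnegative, and the first input yields $\langle H\varphi,\varphi\rangle\geq\nrm{\Db\varphi}_{L^{2}}^{2}\geq\tfrac{1}{4}\nrm{\varphi}_{L^{2}}^{2}$. Hence $Q$ is weakly linearly stable with $\rho_{Q}^{2}=\tfrac{1}{4}$.

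When $\tgmfd=\bbS^{2}$ we have $\tcv=+1$ and the potential is attractive, so more care is required. If $Q$ is the constant map, then $A^{\infty}\equiv0$, $\psi_{2}^{\infty}\equiv0$, $H=-\lap$, and again $\rho_{Q}^{2}=\tfrac{1}{4}$; so we may assume $\alp:=\lim_{r\to\infty}Q(r)\in(0,\pi)$. By the Bogomoln'yi factorization, $\inf\mathrm{spec}(H)\geq0$. Moreover $S(Q)=\sin Q$ is bounded while $\sinh r\to\infty$, so $\abs{\psi_{2}^{\infty}}^{2}\to0$ as $r\to\infty$, and for $\varphi\in C^{\infty}_{0}(\bbH^{2}\setminus B_{R})$ we obtain
\[
\langle H\varphi,\varphi\rangle=\nrm{\Db\varphi}_{L^{2}}^{2}-\int_{\bbH^{2}}\abs{\psi_{2}^{\infty}}^{2}\abs{\varphi}^{2}\,\dvol_{\bfh}\geq\Bigl(\tfrac{1}{4}-\sup_{r>R}\abs{\psi_{2}^{\infty}}^{2}\Bigr)\nrm{\varphi}_{L^{2}}^{2}.
\]
By Persson's characterization of the bottom of the essential spectrum, $\mathrm{spec}_{\mathrm{ess}}(H)\subset[\tfrac{1}{4},\infty)$. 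Consequently either $\inf\mathrm{spec}(H)\geq\tfrac{1}{4}$, in which case we are done, or $\inf\mathrm{spec}(H)\in[0,\tfrac{1}{4})$ is an isolated eigenvalue with a normalized $L^{2}$ eigenfunction $\varphi_{0}$. In the latter case, if $\inf\mathrm{spec}(H)=0$ then $\nrm{L\varphi_{0}}_{L^{2}}^{2}=\langle H\varphi_{0},\varphi_{0}\rangle=0$, so $L\varphi_{0}=0$. Since the coefficients of $L$ in \eqref{eq:bogomolnyi} depend only on $r$, writing $\varphi_{0}=\sum_{n\in\bbZ}\varphi_{0,n}(r)e^{in\tht}$ reduces $L\varphi_{0}=0$ to the first-order ODEs $\varphi_{0,n}'=\tfrac{n+S'(Q)-1}{\sinh r}\varphi_{0,n}$. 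As $r\to\infty$, $Q(r)\to\alp$ exponentially, hence $S'(Q)=\cos Q=\cos\alp+O(e^{-r})$ and the coefficient $\tfrac{n+S'(Q)-1}{\sinh r}$ is $O(e^{-r})$; its integral over $[1,\infty)$ converges, so every nonzero solution tends to a nonzero limit as $r\to\infty$ and therefore fails to lie in $L^{2}((0,\infty),\sinh r\,\ud r)$. Thus all $\varphi_{0,n}\equiv0$, i.e.\ $\varphi_{0}\equiv0$, contradicting $\nrm{\varphi_{0}}_{L^{2}}=1$. We conclude $\inf\mathrm{spec}(H)>0$, completing the proof.

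The main obstacle is the last step in the $\bbS^{2}$ case: excluding a zero-energy bound state of $H$, i.e.\ a square-integrable element of $\ker L$. The natural candidate is the scaling mode $\partial_{\lmb}Q_{\lmb}$, which approaches a nonzero constant as $r\to\infty$; it is precisely the exponential volume growth of $\bbH^{2}$ that then prevents it from lying in $L^{2}$, and this is what the ODE analysis above makes precise. The identification of the essential spectrum, though standard, likewise depends on the exponential spatial decay of $A^{\infty}$ and $\psi_{2}^{\infty}$.
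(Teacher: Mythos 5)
Your proof is correct, but it reaches the conclusion by a genuinely different route in its ``soft'' part. The paper argues by contradiction with a hands-on concentration--compactness step: a minimizing sequence $\varphi^{n}$ with $\int (H\varphi^{n},\varphi^{n}) \to 0$ and $\nrm{\varphi^{n}}_{L^{2}}=1$ is shown to have a nontrivial weak limit $\varphi_{0}$ (mass cannot escape to infinity because the potentials decay and the Poincar\'e inequality would otherwise force $\nrm{\varphi^{n}}_{L^{2}}\to 0$), and then $\varphi_{0}\in\ker L$ is excluded by the angular-mode ODE analysis. You instead package the same no-escape-to-infinity mechanism as Persson's characterization of $\inf\mathrm{spec}_{\mathrm{ess}}(H)$ (combined with the diamagnetic and Poincar\'e inequalities), reduce to excluding an isolated zero eigenvalue, and then run the identical ODE analysis of $\ker L$ --- the coefficient $(n+S'(Q)-1)/\sinh r$ is integrable at infinity, so nonzero solutions have nonzero limits and are killed by the exponential volume growth, exactly as in the paper. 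The two approaches are essentially equivalent in content; yours is cleaner conceptually but leans on the (standard, yet uncited in the paper) validity of Persson's theorem for magnetic Schr\"odinger operators on $\bbH^{2}$, whereas the paper's minimizing-sequence argument is self-contained. Your treatment of $\tgmfd=\bbH^{2}$ is also sharper than what the proposition requires: since $-\tcv\abs{\psi_{2}^{\infty}}^{2}\geq 0$ there, you get the optimal gap $\rho_{Q}^{2}=\tfrac14$ directly, which the paper only records later in the proof of Proposition~\ref{prop:strong-stab}. One small point worth making explicit: passing from $H\varphi_{0}=0$ to $L\varphi_{0}=0$ uses that the form identity $\int(H\varphi,\varphi)=\nrm{L\varphi}_{L^{2}}^{2}$ extends from $C^{\infty}_{0}$ to the form domain of the Friedrichs extension, which is immediate by density but should be said.
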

\begin{proof}
Suppose, for the purpose of contradiction, that the weak linearized stability condition \eqref{eq:weak-stab} is violated. The first step of the proof is to show that there exists a nontrivial solution $\varphi_{0} \in L^{2}$ to $H \varphi_{0} = 0$, where $H$ is given by \eqref{eq:lin-Q} and \eqref{eq:final_frame}. 
We begin by noting that, by \eqref{eq:L-ast-L=H}, we have
\begin{equation*}
	\int (\calH_{Q} \phi, \phi) = \int (H \varphi, \varphi) = \int (L \varphi, L \varphi) \geq 0, \quad \hbox{ where } e \varphi = \phi.
\end{equation*}
Therefore, by the contradiction hypothesis, there exists a sequence $\varphi^{n} \in C^{\infty}_{0}$ such that $\int (H \varphi^{n}, \varphi^{n}) \to 0$, yet $\nrm{\varphi^{n}}_{L^{2}} = 1$. By the $L^{2}$ elliptic theory, it follows that $\nrm{\varphi^{n}}_{H^{1}} \aleq 1$. Hence, there exists $\varphi_{0} \in H^{1}$ such that, after passing to a subsequence, $\varphi^{n} \rightharpoonup \varphi_{0}$ in $H^{1}$ and $\varphi^{n} \to \varphi_{0}$ strongly in $L^{2}_{loc}$. Clearly, $H \varphi_{0} = 0$, so it only remains to verify that $\varphi_{0} \neq 0$. 

To prove that $\varphi_{0} \neq 0$, it suffices, by $\nrm{\varphi^{n}}_{L^{2}} = 1$ and the strong convergence in $L^{2}_{loc}$, to show that 
\begin{equation} \label{eq:weak-stab-rem}
	\inf_{n} \nrm{(1+r^{2})^{-1} \varphi^{n}}_{L^{2}} > 0,
\end{equation}
(where the exponent $1$ was arbitrarily chosen). Assume the contrary; then after passing to a subsequence, $\nrm{(1+r^{2})^{-1} \varphi^{n}}_{L^{2}} \to 0$. By the decay of $A^{\infty}_{x}$ and $\tcv \abs{\psi_{2}}^{2}$ in Lemma~\ref{lem:cf}, it follows that
\begin{align*}
	\int (-\lap \varphi^{n}, \varphi^{n})
	&= \int (H \varphi^{n}, \varphi^{n}) + \int (- 2i (A^{\infty})^{\ell} \rd_{\ell}\varphi^{n}, \varphi^{n}) + \int (((A^{\infty})^{\ell} (A^{\infty})_{\ell} - \tcv \abs{\psi_{2}}^{2}) \varphi^{n}, \varphi^{n}) \\
	&\leq \int (H \varphi^{n}, \varphi^{n}) + C  \nrm{\varphi^{n}}_{H^{1}} \nrm{(1+r^{2})^{-1} \varphi^{n}}_{L^{2}} \to 0 \quad \hbox{ as } n \to \infty.
\end{align*}
Then, by the Poincar\'e equality, $\nrm{\varphi^{n}}_{L^{2}} \to 0$ as well, which is impossible. Hence, \eqref{eq:weak-stab-rem} holds and $\varphi_{0} \neq 0$.

The remainder of the proof consists of disproving the existence of $\varphi_{0} \neq 0$. By \eqref{eq:L-ast-L=H}, $\varphi_{0}$ must also satisfy the Cauchy--Riemann-type equation 
\begin{equation*}
	L \varphi_{0} = \rd_{r} \varphi_0 + \frac{i}{\sinh r} \rd_{\tht} \varphi_0 + \left( i A_{r} - \frac{1}{\sinh r} A_{\tht}\right) \varphi_{0} = 0.
\end{equation*}
By Lemma~\ref{lem:cf}, note that $A_{r} = 0$ and $A_{\tht}$ is radial. Thus, we may separate $\varphi_{0}$ into angular Fourier modes $\varphi_{0; m} = \frac{1}{2 \pi} \int_{\bbS^{1}} \varphi_{0} e^{-i m \tht} \, \ud \tht e^{i m \tht}$. For each $\varphi_{0; m}$, the ODE is obviously decoupled and solvable; then $\varphi_{0} \in L^{2}$ implies that $\varphi_{0; m} = 0$ for each $m$, which is a contradiction. \qedhere
\end{proof}

Finally, we prove that the strong linearized stability condition is satisfied by all $Q$ in Proposition~\ref{prop:hm-classify} in the case $\tgmfd = \bbH^{2}$, and for small enough $\alp$ in the case $\tgmfd = \bbS^{2}$.

\begin{proposition} \label{prop:strong-stab}
Let $Q$ be an equivariant finite energy harmonic map from $\bbH^{2}$ into $\tgmfd = \bbH^{2}$ or $\bbS^{2}$, which is given by Proposition~\ref{prop:hm-classify}. Then the associated linearized operator $\calH_{Q}$ obeys the strong linearized stability condition when (1) $\tgmfd = \bbH^{2}$, or (2) $\tgmfd = \bbS^{2}$ and $\alp \geq 0$ is sufficiently small.
\end{proposition}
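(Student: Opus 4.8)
The plan is to reduce the strong linearized stability condition to two explicit spectral statements about the scalar operator $H = -\Db^\ell \Db_\ell - \tcv \abs{\psi_2}^2$ computed in the Coulomb frame of Lemma~\ref{lem:cf}, and then to verify those statements by decomposing into angular Fourier modes and analyzing the resulting radial operators. First I would record, using \eqref{eq:cf-coeff}, the explicit form of $H$ acting on a function $\varphi = \sum_m \varphi_m(r) e^{im\tht}$: since $A^\infty_r = 0$, $A^\infty_\tht$ is radial, and $\abs{\psi_2}^2 = S(Q)^2/\sinh^2 r$ is radial, $H$ preserves angular modes, and on the $m$-th mode it becomes a Schrödinger operator $H_m$ on $(0,\infty)$ with measure $\sinh r\, dr$, namely
\begin{equation*}
	H_m \varphi_m = -\frac{1}{\sinh r}\rd_r(\sinh r\, \rd_r \varphi_m) + \frac{1}{\sinh^2 r}\bigl(m + A^\infty_\tht\bigr)^2 \varphi_m - \frac{\tcv S(Q)^2}{\sinh^2 r}\varphi_m,
\end{equation*}
where $A^\infty_\tht = S'(Q) - 1$. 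The Bogomoln'yi factorization \eqref{eq:L-ast-L=H} is the key structural tool: $H = L^\ast L$, so $H \geq 0$ automatically, and the zero mode of $H$ is spanned by the $L^2$-kernel of $L$, which by the proof of Proposition~\ref{prop:weak-stab} is trivial; this gives a strictly positive bottom of the spectrum. The task for the ``no eigenvalues below $\tfrac14$'' condition \eqref{eq:no-eval-cov} is to upgrade this to the quantitative gap $\tfrac14$.

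For the gap, I would proceed mode by mode. For the high modes $\abs{m}$ large, the centrifugal term $\frac{m^2}{\sinh^2 r}$ dominates and, combined with the spectral gap $\tfrac14$ of $-\lap$ on $\bbH^2$ (which on each mode is the statement that $H_m^{\mathrm{free}} \geq \tfrac14$ where $H_m^{\mathrm{free}}$ is the free radial operator), one gets $H_m \geq \tfrac14$ provided the potential $-\tcv S(Q)^2/\sinh^2 r$ together with the difference $((m+A^\infty_\tht)^2 - m^2)/\sinh^2 r$ is controlled; when $\tcv = -1$ (the case $\tgmfd = \bbH^2$) the potential $+S(Q)^2/\sinh^2 r$ is \emph{attractive} for the quadratic form, i.e., of the good sign, which is precisely why the hyperbolic target works unconditionally. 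For the finitely many low modes one needs a more careful argument. I expect the cleanest route is the substitution $\varphi_m = \psi^\infty_{2}$-type ground-state transform: since $\psi^\infty_2 = \frac{S(Q)}{\sinh r}e^{i(\tht + \pi/2)}$ is (up to the angular factor) a positive solution of $H_1 \psi = 0$, one can conjugate $H_1$ by it to obtain a manifestly nonnegative operator and read off the gap on the first mode; analogous positive solutions or supersolutions can be constructed on the other low modes using the explicit formula for $Q$ in Proposition~\ref{prop:hm-classify} and the relation $\rd_r Q^\rho = S(Q)/\sinh r$. When $\tgmfd = \bbS^2$ the potential $-\abs{\psi_2}^2$ has the \emph{bad} sign, and the gap survives only for $\lmb$ (equivalently $\alp$) small, where $\abs{\psi_2}^2 = S(Q)^2/\sinh^2 r = \sin^2(Q)/\sinh^2 r$ is uniformly small (of size $O(\lmb^2)$ pointwise after using $\abs{\sin Q} \leq Q \lesssim \lmb \tanh(r/2) \lesssim \lmb \min\{r,1\}$ near the relevant ranges); a crude perturbative bound $\abs{\psi_2}^2 \lesssim \lmb^2 (1+r^2)^{-1}$ combined with the Hardy-type inequality $\int (1+r^2)^{-1}\abs{\varphi}^2 \lesssim \int \abs{\nabla \varphi}^2$ on $\bbH^2$ then closes it.

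For the no-threshold-resonance condition \eqref{eq:no-thr-cov}, the statement is essentially that $\tfrac14$ is not an eigenvalue and not a resonance of $H$, phrased as a coercivity estimate in the weighted space $\bfH^1_{thr}$ adapted to the threshold. The plan is: again decompose into angular modes, and for each $H_m$ analyze the behavior of solutions of $(H_m - \tfrac14)\varphi_m = 0$ at $r \to \infty$. Because $A^\infty_\tht = S'(Q) - 1 \to S'(\lmb\text{-limit}) - 1$ and $S(Q)^2/\sinh^2 r \to 0$ exponentially, $H_m - \tfrac14$ is an exponentially small perturbation of the free operator $-\lap - \tfrac14$ at infinity, whose solutions behave like $1$ and $r$ (the threshold being the bottom of the continuous spectrum); the ``generic'' non-resonant solution grows linearly, and one must rule out the existence of a bounded (hence, by the weight $(1+r^2)^{-1/2}$, barely-$\bfH^1_{thr}$) solution. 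I would combine an ODE analysis at infinity (variation of parameters to show the perturbation does not create a new $\bfH^1_{thr}$ solution when none exists for the free operator and the potential is of controlled sign/size) with the positivity/ground-state-transform structure from the gap analysis. The main obstacle, I expect, is precisely this threshold resonance analysis: unlike the gap, it is not a soft consequence of the Bogomoln'yi structure, it requires genuine spectral-theoretic input at the edge of the continuous spectrum, and for $\tgmfd = \bbS^2$ one must track carefully how small $\lmb$ must be so that the sign-indefinite potential does not generate a threshold resonance — this is where the restriction to small $\alp$ really bites, and one should expect to quote or adapt the resonance-free criterion from \cite{LOS1} (and perhaps \cite{LLOS1}) rather than prove it from scratch.
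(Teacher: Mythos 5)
Your route (angular-mode decomposition, radial ODE analysis, ground-state transform via the Bogomoln'yi factorization) is genuinely different from the paper's, which never decomposes into modes: the paper reduces to the scalar operator $H$ in the Coulomb frame exactly as you do, but then disposes of the case $\tgmfd = \bbH^{2}$ in one line via the diamagnetic inequality $\abs{\nb \abs{\varphi}} \leq \abs{(\nb + i A^{\infty})\varphi}$ together with the favorable sign $-\tcv \abs{\psi_{2}^{\infty}}^{2} \geq 0$ (citing \cite[Corollary~1.14]{LLOS1} for the threshold estimate), and handles $\tgmfd = \bbS^{2}$ by perturbing off $\alp = 0$, where $H = -\lap$. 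There are two concrete gaps in your proposal. First, for the $\bbS^{2}$ spectral gap your closing inequality $\int (1+r^{2})^{-1}\abs{\varphi}^{2} \lesssim \int \abs{\nb \varphi}^{2}$ is too weak: it already follows from the Poincar\'e inequality and only yields $H \geq \tfrac{1}{4} - C\alp^{2}$, not $H \geq \tfrac{1}{4}$. What is needed --- and what the paper uses --- is the \emph{improved} Hardy--Poincar\'e inequality $\int \abs{\nb \abs{\varphi}}^{2} - \tfrac{1}{4}\int \abs{\varphi}^{2} \gtrsim \int (1+r^{2})^{-1} \abs{\varphi}^{2}$ (\cite[Lemma~5.2]{LLOS1}), whose surplus term is what absorbs the sign-indefinite potential $-\abs{\psi_{2}^{\infty}}^{2} = O(\alp^{2} e^{-2r})$. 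Since, as you yourself note, the condition \eqref{eq:no-eval-cov} is not stable under perturbation, this sharpening is not optional.

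Second, your treatment of the no-threshold-resonance condition is incomplete in two respects. The condition \eqref{eq:no-thr-cov} is a quantitative a priori estimate, not merely the nonexistence of an $\bfH^{1}_{thr}$ solution of $(H - \tfrac{1}{4})\varphi = 0$; even a complete mode-by-mode ODE analysis at $r \to \infty$ would only deliver the latter, and upgrading it to the uniform bound requires an additional Fredholm/compactness step that your sketch does not supply. Moreover you miss the soft observations that make all of this unnecessary here: for $\tgmfd = \bbH^{2}$ the diamagnetic inequality plus positivity of the potential yield \eqref{eq:no-thr-cov} directly (this is the content of \cite[Corollary~1.14]{LLOS1}), while for $\tgmfd = \bbS^{2}$ the estimate \eqref{eq:no-thr-cov}, unlike \eqref{eq:no-eval-cov}, \emph{is} stable under perturbations of $H$ that are small as maps $\bfH^{1}_{thr} \to \bfH^{-1}_{thr}$, and $H - (-\lap)$ is exactly such a perturbation for small $\alp$ by Lemma~\ref{lem:cf}. (A minor point: a positive potential is repulsive rather than attractive; your conclusion about its being the good sign for the quadratic form is nonetheless correct.)
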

\begin{proof}
Let $H$ be given by \eqref{eq:lin-Q} and \eqref{eq:final_frame}. Observe that the $\bfH^{1}_{thr}$-norm for $\phi = e^{\infty} \varphi$ is equivalent to 
\begin{equation*}
	\nrm{\varphi}_{H^{1}_{thr}} = \nrm{(\rd_{r} + \frac{1}{2}) \varphi}_{L^{2}} + \nrm{\frac{1}{\sinh r} \rd_{\tht} \varphi}_{L^{2}} + \nrm{\frac{1}{(1+r^{2})^{\frac{1}{2}}} \varphi}_{L^{2}},
\end{equation*}
since $A_{r} = 0$ and $e^{r} A_{\tht} \in L^{\infty}$. Accordingly, the $\bfH^{-1}$-norm for $g = e^{\infty} G$ is equivalent to the dual norm $H^{-1}_{thr}$ for $G$. It follows that Definition~\ref{def:strong-stab} is equivalent to
\begin{align} 
	\int (H \varphi, \varphi) &\geq \frac{1}{4} \nrm{\varphi}_{L^{2}}^{2}, \label{eq:strong-stab-cf-1} \\
	\nrm{\varphi}_{H^{1}_{thr}} &\aleq \nrm{(\frac{1}{4} - H) \varphi}_{H^{-1}_{thr}}. \label{eq:strong-stab-cf-2}
\end{align}
When $\tgmfd = \bbH^{2}$, so that $\tcv = -1$, \eqref{eq:strong-stab-cf-1} and \eqref{eq:strong-stab-cf-2} follow from the diamagnetic inequality and the positivity property $-\tcv \abs{\psi^{\infty}_{2}}^{2} \geq 0$; see \cite[Corollary~1.14]{LLOS1} for details. When $\tgmfd = \bbS^{2}$, \eqref{eq:strong-stab-cf-1} and \eqref{eq:strong-stab-cf-2} obviously holds for $\alp = 0$, in which case $H = - \lap$. Clearly, \eqref{eq:strong-stab-cf-2} holds for $0 < \alp \ll 1$ by treating $H-(-\lap)$ as a perturbation (see Lemma~\ref{lem:cf}). Next, while \eqref{eq:strong-stab-cf-1} itself is not stable, note that 
\begin{equation*}
	\int ((-(\nb + i A^{\infty})^{\ell} (\nb + i A^{\infty})_{\ell} - \frac{1}{4} )\varphi,  \varphi)
	\geq  \int (\nb \abs{\varphi}, \nb \abs{\varphi}) - \frac{1}{4} (\varphi, \varphi)
	\ageq \nrm{(1+r^{2})^{-\frac{1}{2}} \varphi}_{L^{2}}^{2},
\end{equation*}
where we used the diamagnetic inequality $\abs{\nb \abs{\varphi}} \leq \abs{(\nb + i A^{\infty}) \varphi}$ (in the sense of distributions) and the Hardy--Poincar\'e inequality \cite[Lemma~5.2 with $w = 1$]{LLOS1}. Note that $H = - (\nb + i A^{\infty})^{\ell} (\nb + i A^{\infty})_{\ell} - \abs{\psi_{2}^{\infty}}^{2}$ where $\abs{\psi_{2}^{\infty}}^{2} = O(e^{-2r} \alp^{2})$. Thus, $\int ((H-\frac{1}{4}) \varphi, \varphi) \ageq \nrm{(1+r^{2})^{-\frac{1}{2}} \varphi}_{L^{2}}^{2}$ for $0 < \alp \ll 1$, which implies \eqref{eq:strong-stab-cf-1}. \qedhere
\end{proof}

\begin{remark} \label{rem:cf-nonimportance}
While we utilize the explicit Coulomb frame \eqref{eq:final_frame} to simplify the arguments in this paper, we believe that neither the precise formulas in \eqref{eq:cf-coeff} nor the Coulomb gauge condition \eqref{eq:cf-coulomb} are essential in our argument. What is important are the symmetry properties \eqref{eq:LOmA} and \eqref{eq:LOmpsi}, as well as the smoothness and decay properties of $A^{\infty}_{j}$ and $\psi^{\infty}_{j}$ (in particular, $e^{r} A^{\infty}_{j}, e^{r} \psi^{\infty}_{j} \in L^{\infty}$).
\end{remark}

\subsection{Parabolic theory for the linear operator $\rd_{s} + H$} \label{subsec:lin-para}
The purpose of this subsection is to develop the basic parabolic theory for the linear operator $\rd_{s} + H$, where $H$ was defined in \eqref{eq:lin-Q}. To clarify the dependence of constants, in this subsection we work under slightly more general assumptions on $H$. Let $H$ be a linear operator on scalar functions on $\bbH^{2}$ of the form
\begin{equation} \label{eq:H-form}
	H = -(\nb^{\ell} + i A^{\ell}) (\nb_{\ell} + i A_{\ell}) + V, \quad \hbox{ where $A_{j}$ and $V$ are real-valued},
\end{equation}
where we assume that $A_{x}, \nb^{\ell} A_{\ell}, V \in L^{2} \cap L^{\infty}$. Clearly, the linearized operator $H$ \eqref{eq:lin-Q} in the Coulomb gauge obeys this assumption (see Lemma~\ref{lem:cf}). We will often refer to $A_{j}$ and $V$ as magnetic and electric potentials, respectively.

Under the above conditions, it is not difficult to show that the linear parabolic equation
\begin{equation} \label{eq:lin-para}
	(\rd_{s} + H) \varphi = 0.
\end{equation}
is well-posed for $\varphi \in L^{p}$ for any $p \in [1, \infty]$. We denote by $e^{-s H}$ the solution operator for \eqref{eq:lin-para}. The main result of this subsection is the following long-time $L^{p}$ bounds for $e^{- s H}$:
\begin{lemma}  \label{l:lin-para-Lp} 
Let $H$ be of the form \eqref{eq:H-form} with $A_{x} \in L^{2} \cap L^{\infty}$ and $\rd^{\ell} A_{\ell}, V \in L^{2} \cap L^{\infty}$. Assume that
\begin{equation} \label{eq:H-spec-gap}
	\int_{\bbH^{2}} (H \varphi, \varphi) \geq \rho^{2} \int_{\bbH^{2}} (\varphi, \varphi)
\end{equation}
for some $\rho^{2} > 0$ and for all smooth and compactly supported $\varphi$. Then for any $p$ and $\rho_{0}$ such that
\begin{equation*}
1 < p < \infty, \quad 
0 \leq \rho_{0}^{2} < 2 \rho^{2} \min\set{\frac{1}{p}, 1 - \frac{1}{p}},
\end{equation*}
we have, for all $f \in L^{p}$ and $s \geq 0$,
\begin{equation} \label{eq:lin-para-Lp}
	\nrm{s H e^{-s H} f}_{L^{p}}
	+ \nrm{e^{-s H} f}_{L^{p}} \aleq e^{- \rho_{0}^{2} s} \nrm{f}_{L^{p}},
\end{equation}
where the implicit constant depends on $\nrm{A}_{L^{2} \cap L^{\infty}}$, $\nrm{V}_{L^{2} \cap L^{\infty}}$, $p$ and $\rho_{0}^{2}$.
\end{lemma}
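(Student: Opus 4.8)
The plan is to mimic the proof of Lemma~\ref{l:lin-heat-Lp-Lq}, splitting into a short-time regime ($0 < s \leq 1$) and a long-time regime ($s > 1$), with the spectral gap assumption \eqref{eq:H-spec-gap} supplying the exponential decay. The key preliminary step is a robust $L^p$ bound without the gain in the exponent, namely
\begin{equation*}
	\nrm{e^{-sH} f}_{L^p} \aleq e^{-\rho_0^2 s} \nrm{f}_{L^p} \quad \text{for all } s > 0, \ 1 < p < \infty.
\end{equation*}
For $p = 2$ this is immediate by integrating the differential inequality $\tfrac{1}{2}\rd_s \int (e^{-sH}f, e^{-sH}f) = -\int(H e^{-sH}f, e^{-sH}f) \leq -\rho^2 \int(e^{-sH}f, e^{-sH}f)$, exactly as in the proof of \eqref{eq:lin-heat-Lp-long}. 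To pass to general $p$ I would use a perturbative/comparison argument: write $e^{-sH}$ via the Duhamel/Feynman--Kac-type expansion against the free heat semigroup $e^{s\lap}$, i.e.\ $H = -\lap + (\text{first order } A\text{-terms}) + (|A|^2 - \rd^\ell A_\ell \cdot i - V + \ldots)$, and absorb the lower-order terms using Lemma~\ref{l:lin-heat-Lp-Lq} and the fact that $A_x, \rd^\ell A_\ell, V \in L^2 \cap L^\infty$. Concretely, the operator $e^{-sH} - e^{s\lap}$ is controlled on $L^p$ by a convergent Duhamel iteration since each insertion of a potential costs at most $\nrm{A}_{L^\infty} + \nrm{\rd^\ell A_\ell}_{L^\infty} + \nrm{V}_{L^\infty}$ together with an integrable-in-$s$ smoothing factor $s^{-1/2}$ coming from $\nb e^{s\lap}$ (via Lemma~\ref{l:riesz} and \eqref{eq:lin-heat-Lp-Lq}); the exponential weights $e^{-\rho_0^2 s}$ from \eqref{eq:lin-heat-Lp-Lq} survive the iteration after slightly decreasing the rate. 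Alternatively, one interpolates the $p = 2$ bound against an $L^\infty$ bound obtained from a maximum-principle-type argument for $\rd_s + H$ (available once $V$ is bounded below, which follows from \eqref{eq:H-spec-gap} after commuting, or more simply from $V \in L^\infty$ up to harmless modification), then dualizes for $1 < p < 2$.

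With the long-time $L^p$ decay in hand, the short-time estimate for $0 < s \leq 1$ handles the full conclusion \eqref{eq:lin-para-Lp} as follows. For $s \leq 1$ I would establish the $L^p \to L^p$ boundedness of $e^{-sH}$ and of $sH e^{-sH}$ directly from a heat-kernel comparison: the kernel of $e^{-sH}$ is, by the Duhamel expansion against $p_s$ (Lemma~\ref{l:hk}), dominated pointwise on $0 < s \leq 1$ by a Gaussian of the same type as in \eqref{eq:hk-short} (the potentials, being in $L^2 \cap L^\infty$, contribute only bounded multiplicative corrections on a bounded time interval), so Schur's test gives the claim. For $sHe^{-sH}$ one writes $sH e^{-sH} = -s \rd_s e^{-sH}$ and uses the analogous pointwise bound on $\rd_s$ of the kernel (again available on compact $s$-intervals by differentiating the Duhamel expansion, using the $k = 1$ case of Lemma~\ref{l:hk}), or the semigroup identity $sH e^{-sH} f = (s/2)H e^{-(s/2)H}(e^{-(s/2)H}f)$ combined with the analyticity bound $\nrm{H e^{-\sigma H}}_{L^p \to L^p} \aleq \sigma^{-1}$ for small $\sigma$, which itself follows from the $L^2$ analyticity of the semigroup generated by the (essentially) self-adjoint, bounded-below operator $H$ together with the kernel bounds. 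Finally, for $s > 1$ one factors $e^{-sH} = e^{-(s-1)H} e^{-H}$ (and likewise $sH e^{-sH} = (s H e^{-(s-1)H}) \circ e^{-H}$, using $\nrm{sH e^{-(s-1)H}}_{L^p\to L^p} \aleq e^{-\rho_0^2 s}$ from analyticity plus the long-time bound), so the $s>1$ estimate reduces to the long-time $L^p$ decay applied after one unit of smoothing; decreasing $\rho_0^2$ slightly absorbs the constants, exactly as in the proof of Lemma~\ref{l:lin-heat-Lp-Lq}.

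The main obstacle I anticipate is the derivation of the long-time $L^p$ decay \eqref{eq:lin-para-Lp} for $p \neq 2$ with the \emph{sharp} rate $\rho_0^2 < 2\rho^2 \min\{1/p, 1-1/p\}$. The $L^2$ rate $\rho^2$ is clean, but transferring it to $L^p$ while retaining the factor $\min\{1/p, 1 - 1/p\}$ requires care: the naive interpolation between $L^2$ (rate $\rho^2$, i.e.\ weight in the exponent scaling like $2\rho^2 \cdot \tfrac12$) and $L^\infty$ (rate $0$) gives precisely a rate $\rho_0^2 < 2\rho^2(1 - 1/p) \cdot \tfrac{?}$—one must check the bookkeeping so that the interpolation exponent lands on $\min\{1/p, 1-1/p\}$, which is why the free-Laplacian statement \eqref{eq:lin-heat-Lp-long} is phrased with exactly this constant and why duality is invoked for $1 < p < 2$. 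A secondary subtlety is justifying the Duhamel expansion and the kernel domination rigorously when $A_x$ is only $L^2 \cap L^\infty$ (not smooth or compactly supported): here I would note that the hypotheses already guarantee $\rd^\ell A_\ell \in L^2 \cap L^\infty$, so the gauge-covariant Laplacian $H$ is a genuine bounded-below self-adjoint perturbation of $-\lap$ in form sense, its semigroup is well-defined and analytic on $L^2$, and the $L^p$ theory follows by the Stein interpolation / Gaussian-domination machinery referenced for Lemma~\ref{l:hk}. Modulo these points, every estimate used is either Lemma~\ref{l:hk}, Lemma~\ref{l:lin-heat-Lp-Lq}, Lemma~\ref{l:riesz}, or the spectral gap \eqref{eq:H-spec-gap}, so the proof is a careful but routine adaptation of the unperturbed argument.
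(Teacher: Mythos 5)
Your overall skeleton (short-time/long-time split, the $L^{2}$ spectral-gap decay, interpolation of the $L^{2}$ rate against a rate-zero $L^{q}$ bound, and duality for $1<p<2$) matches Steps~2 and~4 of the paper's proof, and your bookkeeping for the exponent $2\rho^{2}\min\{1/p,1-1/p\}$ is correct. The gap is in your route to the uniform-in-time $L^{p}$ boundedness of $e^{-sH}$ for $p\neq 2$ (the rate-zero input to the interpolation). A global-in-time Duhamel iteration of $e^{-sH}$ against $e^{s\lap}$ does not give a bound independent of $s$: each insertion of the potentials costs a factor controlled only by $\nrm{A}_{L^{\infty}}+\nrm{\rd^{\ell}A_{\ell}}_{L^{\infty}}+\nrm{V}_{L^{\infty}}$, so Gronwall produces a constant growing like $e^{Cs}$, and the free decay rate from Lemma~\ref{l:lin-heat-Lp-Lq} (tied to the spectral gap $\tfrac14$ of $-\lap$, not to $\rho^{2}$) need not dominate $C$. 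Your fallback, a maximum-principle $L^{\infty}$ bound for $\rd_{s}+H$, is also unavailable: $H$ contains the first-order term $-2iA^{\ell}\nb_{\ell}$ with imaginary coefficient acting on complex-valued functions, and $V$ is not assumed to have a sign (for $\tgmfd=\bbS^{2}$ the zeroth-order potential of the linearized operator is negative), so the diamagnetic/Kato inequality only yields $\rd_{s}\abs{\varphi}-\lap\abs{\varphi}\leq \abs{V}\abs{\varphi}$ and hence again an exponentially growing constant.

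The paper closes exactly this gap differently (its Step~3): for $s>1$ it writes the solution as a free evolution plus a ``near'' piece (the Duhamel contribution from $(s-1,s]$, handled by the short-time inhomogeneous $L^{p}$ estimate) plus a ``far'' piece, and the far piece is estimated by inserting $e^{-\frac12 H}$ on both sides, using the $L^{2}$ spectral-gap decay $e^{-\rho^{2}(s-1-s')}$ to make the $s'$-integral converge, and passing between $L^{p}$ and $L^{2}$ via the smoothing of $e^{-\frac12 H}$ and Gagliardo--Nirenberg. This step is precisely where the hypothesis $A_{x},\rd^{\ell}A_{\ell},V\in L^{2}$ (and not merely $L^{\infty}$) enters, to bound quantities like $\nrm{A^{x}\tilde{\varphi}(s')}_{L^{2}}$ by $L^{p}$ norms of $\tilde\varphi$; your proposal never uses the $L^{2}$ integrability of the potentials, which is a sign that the mechanism is missing. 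Once this uniform $L^{p}$ bound is in place, your remaining steps (interpolation, duality, the factorization $e^{-sH}=e^{-(s-1)H}e^{-H}$, and the analyticity bound for $sHe^{-sH}$) go through as in the paper.
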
 
Lemma~\ref{l:lin-para-Lp} is applicable to the linearized operator $H$ in the Coulomb gauge introduced in Subsection~\ref{subsec:hm} (see Lemma~\ref{lem:cf} and Proposition~\ref{prop:weak-stab}).  Before we prove Lemma~\ref{l:lin-para-Lp}, we first give some corollaries.

\begin{corollary} \label{c:spec-gap-Lp-H}
Let $H$ satisfy the hypotheses of Lemma~\ref{l:lin-para-Lp}. For any $1 < p < \infty$, if $f \in W^{2, p}$, then
\begin{equation} \label{eq:spec-gap-Lp-H}
	\nrm{f}_{L^{p}} \aleq \nrm{H f}_{L^{p}}.
\end{equation}
where the implicit constant depends on $\nrm{A}_{L^{2} \cap L^{\infty}}$, $\nrm{V}_{L^{2} \cap L^{\infty}}$, $p$ and $\rho^{2}$.
\end{corollary}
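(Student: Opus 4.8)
The plan is to mimic the proof of Corollary~\ref{c:spec-gap-Lp} (the unperturbed case), using the long-time decay bound \eqref{eq:lin-para-Lp} from Lemma~\ref{l:lin-para-Lp} in place of \eqref{eq:lin-heat-Lp-Lq}. Specifically, for $f \in W^{2,p}$ one writes
\begin{equation*}
	f = -\int_{0}^{\infty} \rd_{s} e^{-sH} f \, \ud s = \int_{0}^{\infty} e^{-sH} (H f) \, \ud s,
\end{equation*}
where the convergence of the integral and the identity are justified by the $L^{p}$ bound $\nrm{e^{-sH} g}_{L^{p}} \aleq e^{-\rho_{0}^{2} s}\nrm{g}_{L^{p}}$ with $\rho_{0}^{2} > 0$ (note $H$ is weakly linearly stable with $\rho^{2} = \rho_{Q}^{2} > 0$ by Proposition~\ref{prop:weak-stab}, so that hypothesis \eqref{eq:H-spec-gap} holds and we may take some $\rho_{0}^{2} > 0$). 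Applying \eqref{eq:lin-para-Lp} with $q = p$ inside the integral then gives
\begin{equation*}
	\nrm{f}_{L^{p}} \leq \int_{0}^{\infty} \nrm{e^{-sH}(Hf)}_{L^{p}} \, \ud s \aleq \int_{0}^{\infty} e^{-\rho_{0}^{2} s} \, \ud s \, \nrm{Hf}_{L^{p}} \aleq \nrm{Hf}_{L^{p}},
\end{equation*}
which is \eqref{eq:spec-gap-Lp-H}, with the implicit constant inheriting the dependence on $\nrm{A}_{L^{2}\cap L^{\infty}}$, $\nrm{V}_{L^{2}\cap L^{\infty}}$, $p$ and $\rho^{2}$ (via the choice of $\rho_{0}^{2}$, e.g. $\rho_{0}^{2} = \rho^{2} \min\{\tfrac{1}{p}, 1 - \tfrac{1}{p}\}$) from Lemma~\ref{l:lin-para-Lp}.

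The only genuinely non-formal point is the justification of the representation $f = \int_{0}^{\infty} e^{-sH}(Hf)\, \ud s$ for $f \in W^{2,p}$. I would argue this by first establishing it for, say, $f \in C^{\infty}_{0}$ (or $f \in H^{2} \cap W^{2,p}$), where $s \mapsto e^{-sH} f$ is a strongly differentiable $L^{p}$-valued curve with $\rd_{s} e^{-sH} f = -H e^{-sH} f = -e^{-sH}(Hf)$ (using that $e^{-sH}$ commutes with $H$ on a suitable domain), and where $\nrm{e^{-sH}f}_{L^{p}} \to 0$ as $s \to \infty$ by \eqref{eq:lin-para-Lp}; the fundamental theorem of calculus in $L^{p}$ then yields the identity, and the integrand is absolutely integrable since $\nrm{e^{-sH}(Hf)}_{L^{p}} \aleq e^{-\rho_{0}^{2} s}\nrm{Hf}_{L^{p}}$. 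To pass to general $f \in W^{2,p}$, I would use density of $C^{\infty}_{0}$ in $W^{2,p}$ together with the boundedness of $H : W^{2,p} \to L^{p}$ and the already-derived inequality $\nrm{g}_{L^{p}} \aleq \nrm{Hg}_{L^{p}}$ for smooth $g$, giving a Cauchy sequence argument. Alternatively, and perhaps more cleanly, one can simply prove the estimate first for $f \in C^{\infty}_{0}$ and then extend to $W^{2,p}$ by density, since $W^{2,p}(\bbH^{2})$ is defined as the completion of $C^{\infty}_{0}$.

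I expect no serious obstacle here: this is a routine corollary, entirely parallel to Corollary~\ref{c:spec-gap-Lp}, with the perturbed heat semigroup $e^{-sH}$ and its exponential decay \eqref{eq:lin-para-Lp} substituting for $e^{s\lap}$ and \eqref{eq:lin-heat-Lp-Lq}. The only mild care needed is (i) confirming that Lemma~\ref{l:lin-para-Lp}'s spectral-gap hypothesis \eqref{eq:H-spec-gap} is met — which it is, via Proposition~\ref{prop:weak-stab}, so that $\rho^{2} = \rho_{Q}^{2} > 0$ and some admissible $\rho_{0}^{2} > 0$ exists — and (ii) the standard semigroup bookkeeping for the Duhamel-type identity, handled by density as above.
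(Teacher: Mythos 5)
Your proof is correct and is essentially the paper's own argument: the paper simply states that Corollary~\ref{c:spec-gap-Lp-H} is proved analogously to Corollary~\ref{c:spec-gap-Lp}, with $H$ and Lemma~\ref{l:lin-para-Lp} replacing $-\lap$ and Lemma~\ref{l:lin-heat-Lp-Lq}, which is precisely your Duhamel-type identity $f = \int_0^\infty e^{-sH}(Hf)\,\ud s$ combined with the exponential decay \eqref{eq:lin-para-Lp}. The only superfluous step is your appeal to Proposition~\ref{prop:weak-stab}: the spectral gap \eqref{eq:H-spec-gap} is already part of the hypotheses of Lemma~\ref{l:lin-para-Lp}, which the corollary assumes, so no further verification is needed.
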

The proof of Corollary~\ref{c:spec-gap-Lp-H} is analogous to that of Corollary~\ref{c:spec-gap-Lp}, where $H$ and Lemma~\ref{l:lin-para-Lp} are used in lieu of $-\lap$ and Lemma~\ref{l:lin-heat-Lp-Lq}, respectively.

\begin{corollary}  \label{c:lin-para-Lp} 
Let $H$ satisfy the hypotheses of Lemma~\ref{l:lin-para-Lp}. For any $1 < p < \infty$ we have 
\EQ{
 \nrm{e^{-s H} f}_{L^{p}} &\aleq \nrm{f}_{L^{p}} \\
 \nrm{s^{\frac{1}{2}} \nb e^{-s H} f}_{L^{p}} &\aleq \|f \|_{L^p}  \\
 \nrm{s \lap e^{-s H} f}_{L^{p}} &\aleq \| f \|_{L^p} 
}
where the implicit constants depend on $\nrm{A}_{L^{2} \cap L^{\infty}}$, $\nrm{V}_{L^{2} \cap L^{\infty}}$ and $p$.
\end{corollary}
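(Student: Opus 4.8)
The plan is to deduce all three bounds from Lemma~\ref{l:lin-para-Lp} and the short-time heat kernel comparison, treating the heat semigroup $e^{-sH}$ as a perturbation of $e^{s\lap}$ for small $s$ and using the spectral-gap decay of Lemma~\ref{l:lin-para-Lp} to patch small and large times. The first bound is immediate: since $V \in L^2 \cap L^\infty$ and $\rd^\ell A_\ell, A_x \in L^2 \cap L^\infty$, Proposition~\ref{prop:weak-stab} (applied to the linearized operator, or more generally the positivity assumed in the present subsection) gives \eqref{eq:H-spec-gap} with some $\rho^2 > 0$, so Lemma~\ref{l:lin-para-Lp} applies with, say, $\rho_0 = 0$ and yields $\nrm{e^{-sH} f}_{L^p} \aleq \nrm{f}_{L^p}$ uniformly in $s \geq 0$. (In the application to the caloric gauge, $H$ is \eqref{eq:lin-Q} in the Coulomb frame, and Lemma~\ref{lem:cf} together with Proposition~\ref{prop:weak-stab} verifies all hypotheses of Lemma~\ref{l:lin-para-Lp}.)

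For the second and third bounds I would split into $0 < s \leq 1$ and $s > 1$. For $s > 1$, write $e^{-sH} = e^{-(s-1)H} \circ e^{-H}$ if I first establish the bounds for $s \in (0,1]$; the first factor is bounded on $L^p$ by the first estimate, so it suffices to control $\nrm{s^{1/2}\nb e^{-H}f}_{L^p}$ and $\nrm{\lap e^{-H}f}_{L^p}$, i.e. the short-time case evaluated at $s = 1$, times the harmless factor $s^{1/2}$ resp. $s$ which is absorbed once we note $s \aleq e^{s}$ is false — instead I should semigroup-split more carefully: $\nrm{s^{1/2}\nb e^{-sH}f}_{L^p} = s^{1/2}\nrm{\nb e^{-H}(e^{-(s-1)H}f)}_{L^p} \aleq s^{1/2} \nrm{e^{-(s-1)H}f}_{L^p} \aleq s^{1/2} e^{-\rho_0^2(s-1)}\nrm{f}_{L^p} \aleq \nrm{f}_{L^p}$ using Lemma~\ref{l:lin-para-Lp} with a small $\rho_0^2 > 0$, and similarly $\nrm{s\lap e^{-sH}f}_{L^p} = \nrm{s\lap e^{-sH}f}_{L^p}$; here I rewrite $s\lap e^{-sH} = s\lap e^{-H} e^{-(s-1)H}$ and use that $\lap e^{-H}$ is bounded $L^p \to L^p$ (again the $s=1$ case of the third bound) together with $s e^{-\rho_0^2(s-1)} \aleq 1$. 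So everything reduces to the regime $0 < s \leq 1$.

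For $0 < s \leq 1$ the strategy is a Duhamel/perturbation argument against the free heat semigroup. Write $(\rd_s + H)\varphi = 0$ as $(\rd_s - \lap)\varphi = -(V + iA^\ell \rd_\ell + i(\rd^\ell A_\ell) + A^\ell A_\ell)\varphi =: -W\varphi$ where, crucially, $W$ involves at most one derivative of $\varphi$ with an $L^2 \cap L^\infty$ coefficient $A_x$; hence by Duhamel $e^{-sH}f = e^{s\lap}f - \int_0^s e^{(s-s')\lap} W(e^{-s'H}f)\, \ud s'$. Applying $s^{1/2}\nb$ or $s\lap$ and using the short-time $L^p \to L^p$ and $L^p \to W^{1,p}$ smoothing estimates for $e^{s\lap}$ from Lemma~\ref{l:hk}/Lemma~\ref{l:lin-heat-Lp-Lq} (specifically $\nrm{s^{1/2}\nb e^{s\lap}g}_{L^p} \aleq \nrm{g}_{L^p}$ and $\nrm{s\lap e^{s\lap}g}_{L^p} \aleq \nrm{g}_{L^p}$, plus $\nrm{\nb e^{s\lap}g}_{L^p} \aleq s^{-1/2}\nrm{g}_{L^p}$), together with the already-proven $\nrm{e^{-s'H}f}_{L^p} \aleq \nrm{f}_{L^p}$, gives a bound of the form $\nrm{s^{1/2}\nb e^{-sH}f}_{L^p} \aleq \nrm{f}_{L^p} + C\int_0^s (s-s')^{-1/2} \nrm{f}_{L^p}\, \ud s' \aleq \nrm{f}_{L^p}$ for $s \leq 1$, and similarly for $s\lap e^{-sH}f$ (here the integrable singularity from differentiating the heat kernel twice is $(s - s')^{-1}$ times $\nrm{W(e^{-s'H}f)}_{W^{-1,p}} \aleq \nrm{e^{-s'H}f}_{L^p}$ after integrating by parts the derivative in $A^\ell\rd_\ell$, recovering an integrable $(s-s')^{-1/2}$).

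The main obstacle will be handling the first-order term $iA^\ell\rd_\ell \varphi$ in $W$ at the level of the $s\lap$ estimate: naively this costs two derivatives on $\varphi$ inside the Duhamel integral, producing a non-integrable singularity. The fix is to integrate by parts, moving one derivative onto the heat kernel $e^{(s-s')\lap}$ (legitimate since $A_x \in L^\infty$, with the term $(\rd^\ell A_\ell)\varphi$ already present in $W$), so that $\lap e^{(s-s')\lap}(A^\ell\rd_\ell\varphi)$ is estimated as $\nb\lap e^{(s-s')\lap}(A^\ell\varphi) \aleq (s-s')^{-3/2}\nrm{A^\ell\varphi}_{L^p}$ — still non-integrable — so a further bootstrap is needed: one first upgrades $\nrm{\nb e^{-s'H}f}_{L^p} \aleq (s')^{-1/2}\nrm{f}_{L^p}$ (from the $s^{1/2}\nb$ bound divided by $s'$), feeds this in, and closes via a Gronwall/iteration argument on $[0,1]$, exactly as in the classical parabolic-perturbation scheme. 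This is routine once set up but requires care with the order of the estimates; I would present it as a short lemma-style bootstrap rather than grinding the constants.
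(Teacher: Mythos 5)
Your first bound is fine (it is the case $\rho_0=0$ of Lemma~\ref{l:lin-para-Lp}), and your long-time reduction and the one-derivative bootstrap for $\|s^{1/2}\nb e^{-sH}f\|_{L^p}$ on $s\le 1$ would work: the singularity there is $(s-s')^{-1/2}(s')^{-1/2}$, whose integral is a Beta function, so the bootstrap closes. The genuine gap is in the third estimate. In the Duhamel integral $s\int_0^s \lap e^{(s-s')\lap}\,W\varphi(s')\,\ud s'$ the operator $\lap e^{(s-s')\lap}$ costs $(s-s')^{-1}$ on $L^p$, which is non-integrable near $s'=s$ for \emph{every} term of $W$, not just the magnetic one; and your proposed fix does not remove it. Feeding in the upgraded bound $\|\nb\varphi(s')\|_{L^p}\aleq (s')^{-1/2}\|f\|_{L^p}$ still leaves $\int_0^s(s-s')^{-1}(s')^{-1/2}\,\ud s'=\infty$, while moving a derivative from the kernel onto the source (to trade $(s-s')^{-1}$ for $(s-s')^{-1/2}\|\nb(W\varphi)\|_{L^p}$) requires $\nb A$ and $\nb V$ in $L^\infty$ — but the hypotheses of Lemma~\ref{l:lin-para-Lp} only give $A_x,\ \rd^\ell A_\ell,\ V\in L^2\cap L^\infty$, with no control on the full gradients of the coefficients. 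The "further bootstrap/Gronwall" you invoke is therefore not routine; as set up it does not close.

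You are also working much harder than necessary, because the key input is already in the statement of Lemma~\ref{l:lin-para-Lp}: estimate \eqref{eq:lin-para-Lp} includes $\|sHe^{-sH}f\|_{L^p}\aleq\|f\|_{L^p}$. The paper's proof simply combines this with the fixed-time elliptic estimate $\|\lap g\|_{L^p}\aleq\|Hg\|_{L^p}$, which follows from $\|g\|_{L^p}\aleq\|Hg\|_{L^p}$ (Corollary~\ref{c:spec-gap-Lp-H}) together with standard $L^p$ elliptic regularity for $H$ (only $A_j,\ \rd^\ell A_\ell,\ V\in L^\infty$ are needed, since the gradient term is absorbed via $\|\nb g\|_{L^p}\le \eps\|\lap g\|_{L^p}+C_\eps\|g\|_{L^p}$). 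This gives the third bound immediately, and the second then follows from the first and third by the Riesz transform equivalence $\|\nb g\|_{L^p}\simeq\|(-\lap)^{1/2}g\|_{L^p}$ (Lemma~\ref{l:riesz}) and the $L^p$ interpolation inequality (Lemma~\ref{l:Lp-int}). If you want to keep your Duhamel scheme, the correct repair is to first establish $\|s\rd_s e^{-sH}f\|_{L^p}\aleq\|f\|_{L^p}$ (which is how the $sH$ bound is obtained inside the proof of Lemma~\ref{l:lin-para-Lp}) and then convert $sH=-s\rd_s$ into $s\lap$ by elliptic regularity, rather than trying to extract two derivatives of parabolic smoothing directly from the perturbed Duhamel formula.
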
 
\begin{proof}[Proof of Corollary~\ref{c:lin-para-Lp}] 
This is a direct consequence of Lemma~\ref{l:lin-para-Lp} together with bound
\begin{equation*}
	\nrm{\lap f}_{L^{p}} \aleq \nrm{H f}_{L^{p}},
\end{equation*}
which is a consequence of \eqref{eq:spec-gap-Lp-H} and the $L^p$ elliptic regularity theory for  $H$. Note that for this purpose, it suffices to use the assumptions $A_{j} \in L^{\infty}$ and $\rd^{\ell} A_{\ell}, V \in L^{\infty}$.
\end{proof}

\begin{proof}[Proof of Lemma~\ref{l:lin-para-Lp}]
By density, it suffices to consider a smooth and compactly supported $f$, for which all the formal manipulations below are easily justified.

\pfstep{Step~1. $L^{p}$ bounds for $H = - \lap$}
For the free inhomogeneous equation
\begin{equation*}
	(\rd_{s} - \lap) \tilde{\varphi} = \tilde{F}^{0} + \rd_{\ell} \tilde{F}^{\ell}, \quad \tilde{\varphi}(s=0) = \tilde{f},
\end{equation*}
we claim that the following $L^{p}$ bounds hold for any $2 \leq p < \infty$\footnote{Alternatively, the RHS of \eqref{eq:lin-heat-Lp} may be written as $\nrm{\tilde{f}}_{L^{p}} + \nrm{F}_{L^{1}_{s} (J; L^{p}) + L^{2}_{s}(J; W^{-1, p})}$, which is analogous to \eqref{eq:lin-heat-L2}.}: On any interval $J$ of the form $J = (0, s_{1}]$, 
\begin{align}
	\nrm{\tilde{\varphi}}_{L^{\infty}_{s}(J; L^{p})} 
	& \aleq \nrm{\tilde{f}}_{L^{p}} 
	+ \nrm{\tilde{F}^{0}}_{L^{1}_{s} (J; L^{p})} + \nrm{\tilde{F}^{x}}_{L^{2}_{s} (J; L^{p})}, \label{eq:lin-heat-Lp} \\
	\nrm{\tilde{\varphi}}_{L^{\infty}_{s}(J; L^{p})} 
	+ \nrm{s \rd_{s} \tilde{\varphi}}_{L^{\infty}(J; L^{p})}
	&\aleq \nrm{\tilde{f}}_{L^{p}} 
	+ \nrm{(\tilde{F}^{0}, s \rd_{s} \tilde{F}^{0})}_{L^{1}_{s} (J; L^{p})} + \nrm{(\tilde{F}^{x}, s \rd_{s} \tilde{F}^{x})}_{L^{2}_{s} (J; L^{p})}. \label{eq:lin-heat-Lp-sds} 
\end{align}

Let us give a detailed proof of \eqref{eq:lin-heat-Lp-sds}, which is more involved. Our proof is a small variant of that of \cite[Lemma~2.11]{LOS5}. By interpolation, it suffices to verify \eqref{eq:lin-heat-Lp-sds} for $p = 2k$ for each positive integer $k$. Computing $(\rd_{s} - \lap) \abs{\tilde{\varphi}}^{2}$, we obtain
\begin{equation} \label{eq:lin-heat-varphi-sq}
	\frac{1}{2} \rd_{s} \abs{\tilde{\varphi}}^{2} - \frac{1}{2} \lap \abs{\tilde{\varphi}}^{2} + (\rd_{\ell} \tilde{\varphi}, \rd^{\ell} \tilde{\varphi})  = (\tilde{F}^{0}, \tilde{\varphi}) + \rd_{\ell} (\tilde{F}^{\ell}, \tilde{\varphi}) - (\tilde{F}^{\ell}, \rd_{\ell} \tilde{\varphi}).
\end{equation}
Similarly, computing $(\rd_{s} - \lap) s^{2} \abs{\rd_{s} \tilde{\varphi}}^{2}$, and using the fact that $\rd_{s}$ commutes with $(\rd_{s} - \lap)$, we obtain
\begin{align*}
&	\frac{1}{2} \rd_{s} \abs{s \rd_{s} \tilde{\varphi}}^{2} - \lap  \abs{s \rd_{s} \tilde{\varphi}}^{2} + (\rd_{\ell} s \rd_{s} \tilde{\varphi}, \rd^{\ell} s \rd_{s} \tilde{\varphi}) - (\rd_{s} \tilde{\varphi}, s \rd_{s} \tilde{\varphi}) \\
& = (s \rd_{s} \tilde{F}^{0}, s \rd_{s} \tilde{\varphi}) + \rd_{\ell} (s \rd_{s} \tilde{F}^{\ell}, s \rd_{s} \tilde{\varphi}) - (s \rd_{s} \tilde{F}^{\ell}, \rd_{\ell} s \rd_{s} \tilde{\varphi})  
\end{align*}
We introduce $\Phi = \sqrt{\abs{\tilde{\varphi}}^{2} + \dlt^{2} \abs{s \rd_{s} \tilde{\varphi}}^{2}}$, where $\dlt > 0$ is a small parameter to be fixed later. By the preceding two identities, we obtain
\begin{equation} \label{eq:lin-heat-Phi}
\begin{aligned}
	& \frac{1}{2} \rd_{s} \Phi^{2} - \frac{1}{2} \lap \Phi^{2} 
	+ (\rd_{\ell} \tilde{\varphi}, \rd^{\ell} \tilde{\varphi}) 
	+ \dlt^{2} (\rd_{\ell} s \rd_{s} \tilde{\varphi}, \rd^{\ell} s \rd_{s} \tilde{\varphi}) 
	- \dlt (\rd_{s} \tilde{\varphi}, \dlt s \rd_{s} \tilde{\varphi}) \\
	& = 
	(\tilde{F}^{0}, \tilde{\varphi}) + \rd_{\ell} (\tilde{F}^{\ell}, \tilde{\varphi}) - (\tilde{F}^{\ell}, \rd_{\ell} \tilde{\varphi})  
	+ \dlt (s \rd_{s} \tilde{F}^{0}, \dlt s \rd_{s} \tilde{\varphi}) + \dlt \rd_{\ell} (s \rd_{s} \tilde{F}^{\ell}, \dlt s \rd_{s} \tilde{\varphi}) - \dlt (s \rd_{s} \tilde{F}^{\ell}, \dlt \rd_{\ell} s \rd_{s} \tilde{\varphi})  
\end{aligned}
\end{equation}
We multiply \eqref{eq:lin-heat-Phi} by $\Phi^{2k-2}$ and integrate over $(s_{0}, s_{1}] \times \bbH^{2}$. For the first four terms on the LHS, we have
\begin{align*}
& 
\int_{s_{0}}^{s_{1}} \int \left( \frac{1}{2} \partial_s \Phi^{2} - \frac{1}{2} \lap \Phi^{2} 
	+ (\rd_{\ell} \tilde{\varphi}, \rd^{\ell} \tilde{\varphi}) 
	+ \dlt^{2} (\rd_{\ell} s \rd_{s} \tilde{\varphi}, \rd^{\ell} s \rd_{s} \tilde{\varphi}) \right) \Phi^{2k-2} \, \ud s \\
&	= 
	\frac{1}{2k} \left( \int \Phi^{2k}(s_{1}) - \int \Phi^{2k}(s_{0}) \right) 
	+ \frac{k-1}{2} \int_{s_{0}}^{s_{1}} \int \rd_{\ell} \Phi^{2} \rd^{\ell} \Phi^{2} \Phi^{2k-4} \\
& \phantom{=}
	+ \int_{s_{0}}^{s_{1}} \int \left((\rd_{\ell} \tilde{\varphi}, \rd^{\ell} \tilde{\varphi}) + \dlt^{2} (\rd_{\ell} s \rd_{s} \tilde{\varphi}, \rd^{\ell} s \rd_{s} \tilde{\varphi}) \right) \Phi^{2k-2} \, \ud s,
\end{align*}
where we note that all terms on the RHS are nonnegative except $- \frac{1}{2k} \int \Phi^{2k}(s_{0})$. For the contribution of the last term on the LHS of \eqref{eq:lin-heat-Phi}, by integration by parts and Cauchy--Schwarz, we have
\begin{align*}
& \int_{s_{0}}^{s_{1}} \int \dlt (\rd_{s} \tilde{\varphi}, \dlt s \rd_{s} \tilde{\varphi}) \Phi^{2k-2} \, \ud s -\int_{s_0}^{s_1}\int \dlt (\tilde{F}^{0} + \rd_{\ell} \tilde{F}^{\ell},\dlt s\rd_{s}\tilde{\varphi})\Phi^{2k-2}\,\ud s \\
& = - \int_{s_{0}}^{s_{1}} \int \dlt (\rd_{\ell} \tilde{\varphi}, \dlt \rd^{\ell} s \rd_{s} \tilde{\varphi}) \Phi^{2k-2} \, \ud s
- (k-1) \int_{s_{0}}^{s_{1}} \int \dlt (\rd_{\ell} \tilde{\varphi}, \dlt s \rd_{s} \tilde{\varphi}) \rd^{\ell} \Phi^{2} \Phi^{2k-4} \, \ud s \\
& \leq \frac{k}{2} \dlt  \int_{s_{0}}^{s_{1}} \int (\rd_{\ell} \tilde{\varphi}, \rd^{\ell} \tilde{\varphi}) \Phi^{2k-2} \, \ud s
+ \frac{1}{2} \dlt \int_{s_{0}}^{s_{1}} \int \dlt^{2} (\rd_{\ell} s \rd_{s} \tilde{\varphi}, \rd^{\ell} s \rd_{s} \tilde{\varphi})  \Phi^{2k-2} \, \ud s
+ \frac{k-1}{2} \dlt \int_{s_{0}}^{s_{1}} \int \rd_{\ell} \Phi^{2} \rd^{\ell} \Phi^{2} \Phi^{2k-4} \, \ud s,
\end{align*}
where we used $\abs{\dlt s \rd_{s} \tilde{\varphi}} \leq \Phi$ on the last line. Therefore, taking $\dlt < \min \set{ \frac{1}{2}, \frac{1}{k-1}}$, we arrive at the coercivity bound
\begin{equation} \label{eq:lin-heat-Phi-coercive}
\begin{aligned}
&	\sup_{s' \in J} \int_{0}^{s'} \int (\hbox{LHS of }\eqref{eq:lin-heat-Phi}) \Phi^{2k-2} \, \ud s + \frac{1}{2k} \int \Phi^{2k}(0) \\
&	\geq 
	\frac{1}{2k} \nrm{\Phi}_{L^{\infty}(J; L^{2k})}^{2k}
	+ \frac{k-1}{4} \int_{J} \int \rd_{\ell} \Phi^{2} \rd^{\ell} \Phi^{2} \Phi^{2k-4} \, \ud s\\
&	\quad 
	+ \frac{1}{2} \int_{J} \int \left((\rd_{\ell} \tilde{\varphi}, \rd^{\ell} \tilde{\varphi}) + \dlt^{2} (\rd_{\ell} s \rd_{s} \tilde{\varphi}, \rd^{\ell} s \rd_{s} \tilde{\varphi}) \right) \Phi^{2k-2} \, \ud s \\
&       \quad - \Big| \int_J\int \dlt (\tilde{F}^{0} + \rd_{\ell} \tilde{F}^{\ell},\dlt s\rd_{s}\tilde{\varphi})\Phi^{2k-2}\,\ud s \Big|.
\end{aligned}
\end{equation}
On the other hand, for any $s' \in J$, the contribution of the RHS of \eqref{eq:lin-heat-Phi} can be controlled by the first two terms on the RHS of \eqref{eq:lin-heat-Phi-coercive} via integration by parts, H\"older's inequality and the simple bounds $\abs{\tilde{\varphi}}, \abs{\dlt s \rd_{s} \tilde{\varphi}} \leq~\Phi$:
\begin{align*}
	\abs{\int_{0}^{s'} \int (\tilde{F}^{0}, \tilde{\varphi}) \Phi^{2k-2} \, \ud s}
	& \leq \nrm{\tilde{F}^{0}}_{L^{1}_{s}(J; L^{2k})} \nrm{\Phi}^{2k-1}_{L^{\infty}(J; L^{2k})}, \\
	\abs{\int_{0}^{s'} \int \rd_{\ell} (\tilde{F}^{\ell}, \tilde{\varphi}) \Phi^{2k-2} \, \ud s}
	& = (k-1) \abs{\int_{0}^{s'} \int (\tilde{F}^{\ell}, \tilde{\varphi}) \rd_{\ell} \Phi^{2} \Phi^{2k-4} \, \ud s} \\
	& \leq (k-1) \nrm{\tilde{F}^{x}}_{L^{2}_{s}(J; L^{2k})} \nrm{\Phi}^{k-1}_{L^{\infty}(J; L^{2k})} \left(\int_{J} \int \rd_{\ell} \Phi^{2} \rd^{\ell} \Phi^{2} \Phi^{2k-4} \, \ud s \right)^{\frac{1}{2}},
\end{align*}
\begin{align*}
	\abs{\int_{0}^{s'} \int (\tilde{F}^{\ell}, \rd_{\ell} \tilde{\varphi}) \Phi^{2k-2} \, \ud s}
	& \leq \nrm{\tilde{F}^{x}}_{L^{2}_{s}(J; L^{2k})} \nrm{\Phi}_{L^{\infty}(J; L^{2k})}^{k-1} \left( \int_{J} \int (\rd_{\ell} \tilde{\varphi}, \rd^{\ell} \tilde{\varphi}) \Phi^{2k-2} \, \ud s\right)^{\frac{1}{2}}, \\
	\abs{\int_{0}^{s'} \int \dlt (s \rd_{s} \tilde{F}^{0}, \dlt s \rd_{s} \tilde{\varphi}) \Phi^{2k-2} \, \ud s}
	& \leq \dlt \nrm{\tilde{F}^{0}}_{L^{1}_{s}(J; L^{2k})} \nrm{\Phi}^{2k-1}_{L^{\infty}(J; L^{2k})}, \\
	\abs{\int_{0}^{s'} \int \dlt \rd_{\ell} (s \rd_{s} \tilde{F}^{\ell}, \dlt s \rd_{s} \tilde{\varphi}) \Phi^{2k-2} \, \ud s}
	& = (k-1) \abs{\int_{0}^{s'} \int \dlt (s \rd_{s} \tilde{F}^{\ell}, \dlt s \rd_{s} \tilde{\varphi}) \rd_{\ell} \Phi^{2} \Phi^{2k-4} \, \ud s} \\
	& \leq \dlt (k-1) \nrm{s \rd_{s} \tilde{F}^{x}}_{L^{2}_{s}(J; L^{2k})} \nrm{\Phi}^{k-1}_{L^{\infty}(J; L^{2k})} \left(\int_{J} \int \rd_{\ell} \Phi^{2} \rd^{\ell} \Phi^{2} \Phi^{2k-4} \, \ud s \right)^{\frac{1}{2}}, \\
	\abs{\int_{0}^{s'} \int \dlt (s \rd_{s} \tilde{F}^{\ell}, \dlt \rd_{\ell} s \rd_{s} \tilde{\varphi}) \Phi^{2k-2} \, \ud s}
	& \leq \dlt \nrm{s \rd_{s} \tilde{F}^{x}}_{L^{2}_{s}(J; L^{2k})} \nrm{\Phi}_{L^{\infty}(J; L^{2k})}^{k-1} \left( \int_{J} \int \dlt^{2} (\rd_{\ell} s \rd_{s} \tilde{\varphi}, \rd^{\ell} \tilde{\varphi}) \Phi^{2k-2} \, \ud s\right)^{\frac{1}{2}}.
\end{align*}
Similarly, the last term on the RHS of \eqref{eq:lin-heat-Phi-coercive} can be bounded by the first two terms there. Putting these inequalities together, it follows that
\begin{equation*}
	\nrm{\Phi}_{L^{\infty}(J; L^{2k})} \leq \nrm{\Phi(0)}_{L^{2k}}
	+ C_{k} \left(\nrm{(\tilde{F}^{0}, s \rd_{s} \tilde{F}^{0})}_{L^{1}_{s}(J; L^{2k})}
	+ \nrm{(\tilde{F}^{x}, s \rd_{s} \tilde{F}^{x})}_{L^{2}_{s}(J; L^{2k})} \right).
\end{equation*}
Since $\nrm{\Phi(s)}_{L^{2k}} \geq \frac{1}{2}(\nrm{\varphi(s)}_{L^{2k}} + \dlt \nrm{s \rd_{s} \tilde{\varphi}(s)}_{L^{2k}})$ and $\Phi^{2}(0) = \tilde{f}^{2}$ by definition, \eqref{eq:lin-heat-Lp-sds} follows.

The case of \eqref{eq:lin-heat-Lp} is similar but only simpler, so we only sketch the proof. We multiply \eqref{eq:lin-heat-varphi-sq} by $\abs{\tilde{\varphi}}^{2k-2}$, integrate over $(0, s') \times \bbH^{2}$ and take the supremum over $s' \in J$. Arguing as above, we arrive at
\begin{align*}
	& \nrm{\tilde{\varphi}}^{2k}_{L^{\infty}(J; L^{2k})}
	+ 2k \int_{J} \int (\rd_{\ell} \tilde{\varphi}, \rd^{\ell} \tilde{\varphi}) \abs{\tilde{\varphi}}^{2k-2} \ud s + 2k (2k-2) \int_{J} \int (\rd_{\ell} \tilde{\varphi}, \tilde{\varphi}) (\rd^{\ell} \tilde{\varphi}, \tilde{\varphi}) \abs{\tilde{\varphi}}^{2k-4} \ud s \\
%	& \leq \int \abs{\tilde{f}}^{2k} + 2k \int_{J} \int (\tilde{F}^{0}, \tilde{\varphi}) \abs{\tilde{\varphi}}^{2k-2} \, \ud s
%	+ 2k \int_{J} \int ((\tilde{F}^{\ell}, \rd_{\ell} \tilde{\varphi}) \abs{\tilde{\varphi}}^{2} + (2k-2) (\tilde{F}^{\ell}, \tilde{\varphi})(\rd_{\ell} \tilde{\varphi}, \tilde{\varphi})) \abs{\tilde{\varphi}}^{2k-4} \, \ud s \\
	& \leq \int \abs{\tilde{f}}^{2k} + 2k \nrm{\tilde{F}^{0}}_{L^{1}(J; L^{2k})} \nrm{\tilde{\varphi}}_{L^\infty(J; L^{2k})}^{2k-1} 
		+ k (4k-2) \nrm{\tilde{F}^{x}}_{L^{2}_{s}(J; L^{2k})} \nrm{\tilde{\varphi}}_{L^{\infty}_{s}(J; L^{2k})}^{k-1} \left( \int_{J} \int (\rd_{\ell} \tilde{\varphi}, \rd^{\ell} \tilde{\varphi}) \abs{\tilde{\varphi}}^{2k-2} \right)^{\frac{1}{2}}
\end{align*}
It follows that
\begin{equation*}
	\nrm{\tilde{\varphi}}_{L^{\infty}_{s}(J; L^{2k})}
	\leq \nrm{\tilde{f}}_{L^{2k}} + C_{k} \left(\nrm{\tilde{F}^{0}}_{L^{1}_{s}(J; L^{2k})} + \nrm{\tilde{F}^{x}}_{L^{2}_{s}(J; L^{2k})} \right),
\end{equation*}
from which \eqref{eq:lin-heat-Lp} follows.

\pfstep{Step~2. $L^{2}$ bound for a general $H$}
In what follows, we abbreviate $\varphi (x, s) = e^{- s H} f(x)$. Moreover, we suppress the dependence of the implicit constants on $\nrm{A_{x}}_{L^{2} \cap L^{\infty}}$, $\nrm{\rd_{\ell} A^{\ell}}_{L^{2} \cap L^{\infty}}$ and $\nrm{V}_{L^{2} \cap L^{\infty}}$.

In this step, we establish the following bounds in the case $p = 2$:
\begin{align}
	\nrm{\varphi(s)}_{L^{2}} & \leq e^{- \rho^{2} s} \nrm{f}_{L^{2}}\quad \hbox{ for } s > 0, \label{eq:lin-para-L2-long} \\
	\nrm{s \rd_{s} \varphi(s)}_{L^{2}} & \aleq \nrm{f}_{L^{2}} \quad \hbox{ for } 0 < s \leq 1. \label{eq:lin-para-L2-sds}
\end{align}
The long time bound \eqref{eq:lin-para-L2-long} follows from solving the differential inequality
\begin{equation*}
	\frac{1}{2} \rd_{s} \int \abs{\varphi}^{2}(s) = - \int (H \varphi, \varphi) \leq - \rho^{2} \int \abs{\varphi}^{2},
\end{equation*}
where we used the equation $(\rd_{s} + H) \varphi = 0$ and \eqref{eq:H-spec-gap}. To prove \eqref{eq:lin-para-L2-sds}, we apply \eqref{eq:lin-heat-Lp-sds} with $p = 2$ to $(\rd_{s} - \lap ) \varphi = (-\lap - H)\varphi$. Then
\begin{equation*}
	\nrm{(\varphi, s \rd_{s} \varphi)}_{L^{\infty}(J; L^{2})}
	\aleq \nrm{f}_{L^{2}} + \nrm{A^{\ell} \varphi}_{L^{2}_{s}(J; L^{2})} + \nrm{(-i \rd^{\ell} A_{\ell} + A^{\ell} A_{\ell} + V) \varphi}_{L^{1}_{s}(J; L^{2})}.
\end{equation*}
Using the hypothesis $A_{x}, \rd^{\ell} A_{\ell}, V \in L^{\infty}$, the last two terms may be handled by Gronwall's inequality, so \eqref{eq:lin-para-L2-sds} follows.

\pfstep{Step~3. $L^{p}$ bound for a general $H$}
Next, we claim that
\begin{align}
	\nrm{\varphi(s)}_{L^{p}} & \aleq \nrm{f}_{L^{p}} \quad \hbox{ for } s > 0, \label{eq:lin-para-Lp-long} \\
	\nrm{s \rd_{s} \varphi(s)}_{L^{p}} & \aleq \nrm{f}_{L^{p}} \quad \hbox{ for } 0 < s \leq 1. \label{eq:lin-para-Lp-sds} 
\end{align}

First, consider the solution $\varphi$ to $(\rd_{s} + H) \varphi = F^{0} + \rd_{\ell} F^{\ell}$ with $\varphi(s=0) = f$. By applying \eqref{eq:lin-heat-Lp-sds} to $(\rd_{s} - \lap ) \varphi = (-\lap - H)\varphi + F^{0} + \rd_{\ell} F^{\ell}$ and then handling the contribution of $(-\lap - H) \varphi$ by Gronwall's inequality (as in the previous step), we obtain
\begin{align}
	\sup_{s \in (0, 1]} \nrm{\varphi(s)}_{L^{p}}
	& \aleq \nrm{f}_{L^{p}} 
	+ \nrm{F^{0}}_{L^{1}_{s}((0, 1]; L^{p})} + \nrm{F^{x}}_{L^{2}_{s}((0, 1]; L^{p})}, \label{eq:lin-para-Lp-short} \\
	\sup_{s \in (0, 1]} \nrm{s \rd_{s} \varphi(s)}_{L^{p}} 
	& \aleq \nrm{f}_{L^{p}} 
	+ \nrm{(F^{0}, s \rd_{s} F^{0})}_{L^{1}_{s} ((0, 1]; L^{p})} + \nrm{(F^{x}, s \rd_{s} F^{x})}_{L^{2}_{s} ((0, 1]; L^{p})}. \label{eq:lin-para-Lp-short-sds}
\end{align}
Thus, it only remains to establish \eqref{eq:lin-para-Lp-long} for $s > 1$.  Write $\varphi(s) = \tilde{\varphi}(s) + w^{near; s}(s) + w^{far; s}(s)$, where
\begin{align*}
	(\rd_{s'} + H) w^{near;s} (s', x) & = - 1_{(s-1, s]} (s')(-\lap - H) \tilde{\varphi}(s', x), \quad w^{near; s}(s'=0) = 0, \\
	(\rd_{s'} + H) w^{far;s} (s', x) & = - 1_{(0, s-1]} (s')(-\lap - H) \tilde{\varphi}(s', x), \quad w^{far; s}(s'=0) = 0,
\end{align*}
and $\tilde{\fy}$ is as in Step 1 but with zero RHS and initial data $f$. By \eqref{eq:lin-heat-Lp}, it follows that
\begin{equation} \label{eq:lin-para-Lp-free}
	\nrm{\tilde{\varphi}(s')}_{L^{p}} \leq \nrm{f}_{L^{p}} \quad \hbox{ for any } s' > 0.
\end{equation}
For $w^{near; s}$x, by \eqref{eq:lin-para-Lp-short} and \eqref{eq:lin-para-Lp-free}, we have
\begin{align*}
	\nrm{w^{near; s}(s)}_{L^{p}} 
	\aleq \nrm{ A \tilde{\varphi} }_{L^{2}_{s'}((s-1, s]; L^{p})}
	+ \nrm{(- i (\rd_{\ell} A^{\ell}) + A_{\ell} A^{\ell} + V)\tilde{\varphi})}_{L^{1}_{s'}((s-1, s]; L^{p})} 
	\aleq \nrm{f}_{L^{p}}.
\end{align*}
where we used $A^{x}, \rd_{\ell} A^{\ell}, V \in L^{\infty}$. Next, to estimate $w^{far; s}$, first note that $e^{\frac{1}{2} H}$, $\nb e^{\frac{1}{2} H}$ and $e^{\frac{1}{2} H} \nb$ are all bounded from $L^{2}$ into $L^{2}$. Indeed, the first case follows from \eqref{eq:lin-para-L2-long}, the second from \eqref{eq:lin-para-L2-sds}, $L^{2}$ elliptic regularity theory for $H$ and interpolation (note that only the assumptions $A_{j}, \rd_{\ell} A^{\ell}, V \in L^{\infty}$ are used), and third by duality from the second. In particular, by the Gagliardo--Nirenberg inequality (Lemma~\ref{lem:gagliardo_nirenberg}), it follows that $\nrm{e^{\frac{1}{2} H} g}_{L^{p}} \aleq \nrm{g}_{L^{2}}$ for any $2 \leq p < \infty$. Thus, we have
\begin{align*}
	\nrm{w^{far;s}(s)}_{L^{p}}
	&\aleq \int_{0}^{s-1} \nrm{e^{-\frac{1}{2} H} e^{- (s-1-s') H} e^{-\frac{1}{2} H} (-\lap - H)  \tilde{\varphi}(s')}_{L^{p}} \, \ud s' \\
	& \aleq \int_{0}^{s-1} \nrm{e^{- (s-1-s') H} e^{-\frac{1}{2} H} (-\lap - H)  \tilde{\varphi}(s')}_{L^{2}} \, \ud s' \\
	& \aleq \int_{0}^{s-1} e^{-\rho^{2} (s-1-s')}  \nrm{e^{- \frac{1}{2}H} (-\lap - H) \tilde{\varphi}(s')}_{L^{2}}  \, \ud s' \\
	& \aleq \sup_{s' \in (0, s-1]} \left( \nrm{- 2i e^{- \frac{1}{2}H} \rd_{\ell} (A^{\ell} \tilde{\varphi}(s'))}_{L^{2}} + \nrm{e^{-\frac{1}{2} H}(- i \rd_{\ell} A^{\ell} + A_{\ell} A^{\ell} + V) \tilde{\varphi}(s')}_{L^{2}}\right) \\
	& \aleq \sup_{s' \in (0, s-1]} \left( \nrm{A^{x} \tilde{\varphi}(s')}_{L^{2}} + \nrm{(- i \rd_{\ell} A^{\ell} + A_{\ell} A^{\ell} + V) \tilde{\varphi}(s')}_{L^{2}} \right).
\end{align*}
Combining the assumptions $A_{j}, \rd^{\ell} A_{\ell}, V \in L^{2} \cap L^{\infty}$ with \eqref{eq:lin-para-Lp-free}, the last line is bounded by $\aleq \nrm{f}_{L^{p}}$ as desired.

\pfstep{Step~4. Completion of proof}
Let $p > 2$.  By \eqref{eq:lin-para-Lp-long} and \eqref{eq:lin-para-Lp-sds}, it only remains to establish \eqref{eq:lin-para-Lp} for $s \geq 1$. For $\dlt > 0$, define $q = \frac{2 (1+\dlt-\frac{2}{p})}{\dlt}$. Interpolating the $L^{2}$ bound and the $L^{q}$ bound from Step~3,
\begin{equation*}
	\nrm{\varphi(s)}_{L^{p}} \aleq e^{- (\frac{2}{p} - \dlt) \rho^{2} s} \nrm{f}_{L^{p}}
\end{equation*}
Moreover, 
\begin{equation*}
	\nrm{s \rd_{s} \varphi(s)}_{L^{p}}
	\aleq s e^{- (\frac{2}{p} - \dlt) \rho^{2} (s-1)}\nrm{\rd_{s} \varphi(1)}_{L^{p}} \aleq s e^{- (\frac{2}{p} - \dlt) \rho^{2} s} \nrm{f}_{L^{p}}.
\end{equation*}
Choosing $\dlt$ so that $\rho_{0}^{2} < (\frac{2}{p} - \dlt) \rho^{2}$, \eqref{eq:lin-para-Lp} follows for $p > 2$. Finally, the remaining case $1 < p < 2$ follows by duality. \qedhere
\end{proof}

%\subsection{Dispersive theory for the linear Schr\"odinger equation on~$\Hp^2$}  
\subsection{Dispersive theory for the linear Schr\"odinger operator $- i \rd_{t} + H$}  \label{subsec:disp}
In this section we record a collection of linear dispersive estimates established in the companion paper~\cite{LLOS1} for solutions to the inhomogeneous Schr\"odinger equation 
\EQ{\label{eq:linS} 
(-i \partial_t  + H )u &=  F, \\
u(0) &= u_0 \in L^2(\Hp^2).
}
where $H$ is the linearized operator defined in~\eqref{eq:lin-Q} in the Coulomb gauge. We assume furthermore that $H$ is obtained by linearization about a harmonic map $Q$ satisfying the strong linearized stability condition, Definition~\ref{def:strong-stab}; see Proposition~\ref{prop:strong-stab}. 

For each $\ell \in \bbZ$, the dyadic spatial annulus $A_\ell$ is defined by
\begin{align}\label{eq:rAldef}
 A_\ell:=\{2^{\ell}\leq r\leq 2^{\ell+1}\},
\end{align} 
where $o$ is a fixed origin in $\bbH^{2}$ and $r(x) = \bfd_{\bbH^{2}}(o, x)$. Similarly the ball $A_{\leq\ell}$ and $A_{\geq\ell}$ are defined by
\begin{align*}
\begin{split}
A_{\leq\ell} := \{r\leq 2^{\ell}\},\qquad A_{\geq \ell}:=\{r\geq 2^{\ell}\}.
\end{split}
\end{align*}
Following~\cite{LLOS1} we then define the local smoothing space $LE$ via the norm, 
\EQ{
\|u\|_{LE(I)}^2 &:= \int_0^{\frac{1}{2}} \sigma^{-\frac{1}{2}} \| P_\sigma u \|_{LE_\sigma(I)}^2 \, \frac{\ud \sigma}{\sigma} + \int_{\frac{1}{8}}^4 \| P_{\geq \sigma} u \|_{LE_\low(I)}^2 \, \frac{\ud \sigma}{\sigma},
}
where $LE_\s$ and $LE_{\low}$ are given by 
\EQ{
 \|v\|_{LE_\sigma(I)} &:= \sigma^{-\frac{1}{4}} \|v\|_{L^2_t L^2_x(I \times \Ann_{\leq -k_\sigma})} + \sup_{-k_\sigma \leq \ell < 0} 2^{-\frac{1}{2} \ell} \|v\|_{L^2_t L^2_x(I \times \Ann_\ell)} + \|r^{-2} v\|_{L^2_t L^2_x(I\times \Ann_{\geq 0})}, \\
 \|v\|_{LE_\low} &:= \| \langle r \rangle^{-2} v \|_{L^2_t L^2_x(I \times \bbH^2)}.
} 
The Littlewood-Paley projections $P_{\geq \sgm}$ and $P_{\sgm}$ were defined in \eqref{eq:LP-proj}. For any heat time $\s > 0$ we use the notation $k_{\s}$ to denote the corresponding dyadic frequency
\begin{align*}
\begin{split}
 k_{\s} := \lfloor \log_2 ( \s^{-\frac{1}{2}} ) \rfloor .
\end{split}
\end{align*}
The norm $\|\cdot\|_{LE^*}$ is given by 
\EQ{
 \|F\|_{LE^\ast}^2 &:= \int_{\frac{1}{8}}^4\|P_{\geq \s} F\|_{LE_{\low}^\ast}^2\,\frac{\ud \sigma}{\sigma}+\int_{0}^{\frac{1}{2}}\s^{\frac{1}{2}}\|P_{\s} F\|_{LE_{\s}^\ast}^2\,\frac{\ud \sigma}{\sigma}.
}
where 
\begin{align*}
 \|G\|_{LE_\low^\ast} &:= \|\jap{r}^{2}G\|_{L^2_{t} L^{2}_{x} (\bbR\times \bbH^2)},\\
 \|G\|_{LE_{\s}^\ast} &:= \s^{\frac{1}{4}}\|G\|_{L^2_{t} L^{2}_{x}(\bbR\times A_{\leq-k_{\s}})}+\sum_{-k_{\s}\leq\ell < 0}\|r^{\frac{1}{2}}G\|_{L^2_{t} L^{2}_{x}(\bbR\times A_\ell)}+\|r^{2}G\|_{L^2(\bbR\times A_{\geq0})}.
\end{align*} 
Note that the norms $LE, LE^*$ correspond to the notation $\mathbb{LE}, \mathbb{LE}^*$ in~\cite{LLOS1}. 

We can now state the global-in-time local smoothing estimate. 
\begin{proposition}[{Local smoothing estimate \cite[Corollary 1.18]{LLOS1}}] \label{prop:strong-stab-led}   Let $u(t)$ denote the solution to~\eqref{eq:linS} for initial data $u_0 \in L^2(\Hp^2)$. Then, for any time interval $I \subset \R$, 
\EQ{ \label{eq:smoothing} 
 \| u \|_{LE(I) } \lesssim  \| u_0 \|_{L^2} + \|F \|_{LE^*(I)}
}

\end{proposition}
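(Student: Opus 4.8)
The final statement to prove is Proposition~\ref{prop:strong-stab-led}, a global-in-time local smoothing estimate for the linearized Schr\"odinger operator $-i\rd_t + H$. Since the excerpt explicitly attributes this to \cite[Corollary~1.18]{LLOS1}, the ``proof'' here should really be a pointer to the companion paper together with an explanation of how the hypotheses of that paper's theorem are met in the present setting. The plan is therefore to verify that $H$, as given by \eqref{eq:lin-Q} in the Coulomb gauge, falls under the class of operators covered by the main linear theorem of \cite{LLOS1}, and that the strong linearized stability condition (Definition~\ref{def:strong-stab}) supplies precisely the spectral input required there.

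\begin{proof}[Proof of Proposition~\ref{prop:strong-stab-led}]
This is \cite[Corollary~1.18]{LLOS1}. We only explain why its hypotheses are in force. By Lemma~\ref{lem:cf}, with respect to the Coulomb frame \eqref{eq:final_frame} the operator $H$ takes the form \eqref{eq:H-form}, namely $H = -(\nb^\ell + i A^\ell_\infty)(\nb_\ell + i A^{\infty}_\ell) - \tcv \abs{\psi_2^\infty}^2$, with $A^\infty_r = 0$, $e^{r} A^\infty_\tht \in L^\infty$, and $\abs{\psi_2^\infty}^2 = O(e^{-2r})$; in particular the magnetic and electric potentials, together with $\rd^\ell A^\infty_\ell$, are smooth, radial, and exponentially decaying, and the connection obeys the Coulomb gauge condition \eqref{eq:cf-coulomb}. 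These are exactly the structural assumptions on the linearized Hamiltonian imposed in \cite{LLOS1}. The remaining, and essential, hypothesis is a spectral one: that $H$ has no eigenvalue below the threshold $\tfrac14$ and no threshold resonance. This is precisely the strong linearized stability condition, Definition~\ref{def:strong-stab}, which by Proposition~\ref{prop:strong-stab} holds for all $Q$ in Proposition~\ref{prop:hm-classify} when $\tgmfd = \bbH^2$, and for $\alp$ sufficiently small when $\tgmfd = \bbS^2$; it is assumed here by hypothesis. With these inputs, \cite[Corollary~1.18]{LLOS1} yields \eqref{eq:smoothing}.
\end{proof}

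If instead one wanted to reproduce the argument of \cite{LLOS1} rather than cite it, the approach would be the standard one for stationary-phase-free local energy decay: first establish a uniform (in the spectral parameter, up to the real axis) limiting absorption principle for the resolvent $(H - z)^{-1}$ in the weighted spaces dual to $LE$, on each Littlewood--Paley piece; this uses the positivity $-\tcv\abs{\psi_2^\infty}^2 \ge 0$ (for $\bbH^2$) or a smallness/perturbation argument (for $\bbS^2$ with small $\alp$) together with the absence of embedded eigenvalues and of a threshold resonance to rule out the obstructions at $z = \tfrac14$ and along the continuous spectrum. One then upgrades the resolvent bounds to the time-dependent estimate \eqref{eq:smoothing} via the $TT^*$ argument and a Fourier transform in $t$, summing the dyadic pieces against the heat-flow-based Littlewood--Paley square function that defines the $LE$ and $LE^*$ norms. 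The main obstacle in that route is the low-frequency/threshold analysis --- controlling the resolvent uniformly as $z \to \tfrac14$ --- which is exactly where the ``no threshold resonance'' half of Definition~\ref{def:strong-stab}, encoded through the norms $\bfH^1_{thr}$ and $\bfH^{-1}_{thr}$ in \eqref{eq:no-thr-cov}, is indispensable; but since all of this is carried out in the companion paper, here it suffices to invoke it.
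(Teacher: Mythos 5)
Your proof is correct and follows essentially the same route as the paper: both reduce the statement to \cite[Corollary 1.18]{LLOS1} and verify its hypotheses by combining the Coulomb-frame structure and exponential decay of the potentials from Lemma~\ref{lem:cf} with the strong linearized stability condition of Definition~\ref{def:strong-stab} (via Proposition~\ref{prop:strong-stab}). The only minor point the paper makes that you omit is the explicit remark that the norms $\bfH^{1}_{thr}$, $\bfH^{-1}_{thr}$ are equivalent to the corresponding norms used in \cite{LLOS1}, but this is implicit in your verification.
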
 

\begin{proof} This is one of the main results from~\cite{LLOS1}, which can be applied directly after verifying that the linearized operator~$H$ defined in~\eqref{eq:lin-Q} satisfies the hypothesis  of~\cite[Corollary 1.18]{LLOS1}. From Proposition~\ref{prop:strong-stab} we know that the harmonic maps in question satisfy  the strong linearized stability condition, Definition~\ref{def:strong-stab}. Then a straightforward computation reveals that the Coulomb frame conditions in Lemma~\ref{lem:cf} ensure that the norms $\bfH^{1}_{thr}$, $\bfH^{-1}_{thr}$ are equivalent to $H^{1}_{thr}, H^{-1}_{thr}$ defined in \cite[Proposition~1.13]{LLOS1} (see also the proof of Proposition~\ref{prop:strong-stab}), which are used to characterize the operators $H$ considered in~\cite[Corollary 1.18]{LLOS1}. Lastly, one must check that the magnetic and electric potentials in $H$ have sufficient decay so that conditions~\cite[equations (1.8) and (1.12)]{LLOS1} are satisfied. However, this is manifestly clear since the results in~\cite{LLOS1} require polynomial decay, whereas the coefficients in $H$ as defined in~\eqref{eq:lin-Q} exhibit exponential decay. We thus refer the reader to the companion paper~\cite{LLOS1} for further details. 
\end{proof}

It is also proved in~\cite{LLOS1} that the local smoothing estimate~\eqref{eq:smoothing} can be combined with Strichartz estimates for the free Schr\"odinger equation on~$\Hp^d$ proved in~\cite{AP09, B07, IS} to deduce Strichartz estimates for~\eqref{eq:linS}. 

\begin{definition} Let $d \ge 2$. We say that a pair $(p, q)$ of real numbers is $S$-admissible if 
\EQ{
\Big\{(\frac{1}{p},\frac{1}{q})\in(0,\frac{1}{2})\times(0,\frac{1}{2})\,\big\vert \,\frac{2}{p}\geq \frac{d}{2}-\frac{d}{q}\Big\}\cup\Big\{(\frac{1}{p},\frac{1}{q})=(0,\frac{1}{2})\Big\},
}
With this notation we define the Strichartz norm: 
\EQ{
 \|v \|_{\Str(I)} :=   \sup_{(p, q) \, \, \textrm{S-admissible}} \| v \|_{L^p_t(I; L^q_x(\Hp^d))}.
}
\end{definition}

\begin{lemma}[Strichartz estimates] \label{l:strich} 
Let $u(t)$ be the solution to~\eqref{eq:linS} with initial data $u(0) = u_0 \in L^2(\Hp^2)$. Then, for any pair $(p, q)$ that is $S$-admissible (with $d = 2$) and any time interval $I$, we have 
\EQ{
\|u \|_{\Str(I) } \lesssim \| v_0 \|_{L^2} + \|F \|_{L^{p'}_tL^{q'}_x(I \times \Hp^2)}. 
}
\end{lemma}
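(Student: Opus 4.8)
The plan is to deduce Lemma~\ref{l:strich} from the local smoothing estimate of Proposition~\ref{prop:strong-stab-led} together with the known Strichartz estimates for the free Schr\"odinger flow on $\bbH^{d}$. The basic strategy is the standard one: split the nonhomogeneity $F$ into a ``rough'' part handled by local smoothing and a ``good'' part handled by free Strichartz, absorbing the difference between $-i\rd_{t}+H$ and $-i\rd_{t}-\lap$ (i.e.\ the magnetic and electric potential terms, plus the curvature-type constant) into the local smoothing framework.

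First I would recall that for the free equation $(-i\rd_{t}-\lap)v = G$, $v(0)=v_{0}$, one has $\|v\|_{\Str(I)}\lesssim \|v_{0}\|_{L^{2}}+\|G\|_{L^{p'}_{t}L^{q'}_{x}(I\times\bbH^{2})}$ for every $S$-admissible pair $(p,q)$; this is exactly the content of \cite{AP09, B07, IS} (combined with the $TT^{\ast}$ argument and the Christ--Kiselev lemma, noting that on $\bbH^{2}$ the full range of admissible exponents is available since there is no scaling obstruction). Next, writing $u$ for the solution to \eqref{eq:linS}, I would view $u$ as solving the free equation with right-hand side $\tld{F} := F + (\lap + H)u = F - 2iA^{\ell}\rd_{\ell}u + (-i\rd^{\ell}A_{\ell} + A^{\ell}A_{\ell} + V)u$, where $A^{\infty}_{\ell}$ and $V = -\tcv|\psi^{\infty}_{2}|^{2}$ are the Coulomb-gauge potentials from Lemma~\ref{lem:cf}, which decay exponentially in $r$. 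By Duhamel and the free Strichartz estimate,
\begin{equation*}
\|u\|_{\Str(I)} \lesssim \|u_{0}\|_{L^{2}} + \|F\|_{L^{p'}_{t}L^{q'}_{x}} + \|(\lap+H)u\|_{L^{1}_{t}L^{2}_{x}(I\times\bbH^{2})},
\end{equation*}
using that $(1,2)$ is itself $S$-admissible so that the $L^{1}_{t}L^{2}_{x}$ endpoint is available on the dual side. It then suffices to bound $\|(\lap+H)u\|_{L^{1}_{t}L^{2}_{x}}$ by $\|u\|_{LE(I)} + \|u_{0}\|_{L^{2}} + \|F\|_{LE^{\ast}(I)}$ and invoke Proposition~\ref{prop:strong-stab-led}.

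The key step is therefore the elliptic/weighted estimate $\|(\lap+H)u\|_{L^{1}_{t}L^{2}_{x}} \lesssim \|u\|_{LE(I)}$. Here one exploits the exponential spatial decay of $A^{\infty}_{\ell}$, $\rd^{\ell}A^{\infty}_{\ell}$ and $V$: the dangerous term is the first-order piece $A^{\ell}\rd_{\ell}u$. One expands $u$ via the Littlewood--Paley resolution $u = \int_{0}^{\infty} P_{\s}u\,\frac{\ud\s}{\s}$ (or more precisely via the $P_{\s}$, $P_{\geq\s}$ decomposition built into the $LE$ norm), uses $\|\rd_{\ell}P_{\s}u\|_{L^{2}_{x}}\lesssim \s^{-1/2}\|P_{\geq\s/2}u\|_{L^{2}_{x}}$-type bounds together with the exponential weights in $A^{\ell}$ to pair against the polynomial weights $\langle r\rangle^{-2}$, $r^{-2}$, $2^{-\ell/2}$ appearing in the definition of $LE_{\s}$ and $LE_{\low}$; since exponential decay dominates any polynomial growth, and the frequency-localization gains in the $LE$ norm (the $\s^{-1/4}$, $2^{-\ell/2}$ factors) are more than enough to control the half-derivative loss, the sum over dyadic frequencies and annuli converges. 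The zeroth-order terms $(-i\rd^{\ell}A_{\ell}+A^{\ell}A_{\ell}+V)u$ are strictly easier, needing only the exponential weight against $\langle r\rangle^{-2}$ and no derivative gain. This is precisely the reasoning flagged in Proposition~\ref{prop:strong-stab-led}'s proof (``the coefficients in $H$\ldots exhibit exponential decay'') and is carried out in detail in the companion paper; one can cite \cite{LLOS1} for the precise form of this bound if desired.

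I expect the main obstacle to be purely bookkeeping rather than conceptual: one must make sure that the free Strichartz estimates on $\bbH^{2}$ are invoked with a right-hand side in the correct $L^{p'}_{t}L^{q'}_{x}$ dual space and that the passage from the inhomogeneous free estimate to the one for $-i\rd_{t}+H$ via Duhamel is legitimate for $L^{2}$ data (density of smooth data, continuity of the $LE$ and $\Str$ norms in the data, the Christ--Kiselev lemma where needed for non-endpoint admissible pairs). The genuinely new input — local smoothing for the perturbed operator — is already supplied by Proposition~\ref{prop:strong-stab-led}, so the proof reduces to the perturbative estimate $\|(\lap+H)u\|_{L^{1}_{t}L^{2}_{x}}\lesssim\|u\|_{LE(I)}$ and an application of the triangle inequality; since all potentials decay exponentially this estimate has substantial room to spare. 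Accordingly I would present the argument in the order: (i) state free Strichartz on $\bbH^{2}$; (ii) Duhamel to reduce to controlling $\|(\lap+H)u\|_{L^{1}_{t}L^{2}_{x}}$; (iii) the weighted Littlewood--Paley bound $\|(\lap+H)u\|_{L^{1}_{t}L^{2}_{x}}\lesssim\|u\|_{LE(I)}$; (iv) apply Proposition~\ref{prop:strong-stab-led}, referring to \cite{LLOS1} for the routine details.
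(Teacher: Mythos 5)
Your overall architecture --- perturbing off the free flow on $\bbH^{2}$ and using the local smoothing estimate of Proposition~\ref{prop:strong-stab-led} to absorb the first-order perturbation --- is the same route the paper takes (the paper defers the details to the companion work \cite{LLOS1}, which combines free Strichartz estimates with local smoothing in exactly this spirit). However, your key step (iii) contains a genuine gap: the claimed bound $\|(\lap+H)u\|_{L^{1}_{t}L^{2}_{x}(I\times\bbH^{2})} \lesssim \|u\|_{LE(I)}$ cannot hold with a constant uniform in $I$. The $LE$ norm is an $L^{2}$-in-time quantity, while $L^{1}_{t}L^{2}_{x}$ demands integrability in time; passing from $L^{2}_{t}$ to $L^{1}_{t}$ costs a factor $|I|^{1/2}$, so the estimate degenerates on long intervals and fails outright for $I=\bbR$, which is precisely the case needed for the global-in-time statement. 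There is no time decay available to compensate, since $u$ is merely an $L^{2}$-bounded Schr\"odinger evolution. A secondary problem is derivative bookkeeping: even in $L^{2}_{t}L^{2}_{x}$ the term $A^{\ell}\rd_{\ell}u$ carries a full derivative, whereas $\|u\|_{LE(I)}$ only controls roughly half a derivative of $u$ in weighted $L^{2}_{t,x}$ away from the origin (the weights $\sigma^{-1/4}$ and $2^{-\ell/2}$ in $LE_{\sigma}$), so a factor $\sigma^{-1/4}$ is left over at high frequency.

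The standard fix --- and the one implicit in Lemma~\ref{lem:main_linear_estimate}, whose inhomogeneity is measured in $LE^{\ast}(I)+L^{4/3}_{t}L^{4/3}_{x}(I)$ --- is to place the perturbative term in the \emph{dual} local smoothing space: one proves $\|A^{\ell}\rd_{\ell}u\|_{LE^{\ast}(I)}\lesssim\|u\|_{LE(I)}$, which works because both sides are $L^{2}$ in time, the exponential decay of the Coulomb-gauge potentials dominates the polynomial weights $\jap{r}^{2}$ versus $\jap{r}^{-2}$, and the $\sigma^{+1/4}$ weights in $LE^{\ast}_{\sigma}$ combine with the $\sigma^{-1/4}$ weights in $LE_{\sigma}$ to absorb exactly one derivative. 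This in turn requires, in place of your step (i), the stronger mixed free estimate $\|v\|_{\Str(I)}\lesssim\|v_{0}\|_{L^{2}}+\|G\|_{LE^{\ast}(I)+L^{p'}_{t}L^{q'}_{x}(I)}$, obtained by gluing the homogeneous free Strichartz estimates of \cite{AP09, B07, IS} to the free local smoothing estimate via a $TT^{\ast}$ argument and the Christ--Kiselev lemma \cite{CK01}. With these two replacements your argument closes; as written, it does not.
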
 

The main linear ingredient in our analysis of the Schr\"odinger maps system is the following estimate from~\cite{LLOS1} that puts together the Strichartz estimate and the local smoothing estimate in a combined functional framework. This is a consequence of Proposition~\ref{prop:strong-stab-led}, Lemma~\ref{l:strich} and the Christ--Kiselev lemma~\cite{CK01}.

\begin{lemma}[{Main linear estimate \cite[Corollary 1.20]{LLOS1}}] \label{lem:main_linear_estimate}  
Let $u(t)$ be the solution to~\eqref{eq:linS} with initial data $u(0) = u_0 \in L^2(\Hp^2)$. Then 
 \begin{equation*}
  \|u\|_{LE(I) \cap \Str(I)} \lesssim \|u(0)\|_{L^2_x} + \|F\|_{LE^\ast(I) + L^{\frac{4}{3}}_t L^{\frac{4}{3}}_x(I)}.
 \end{equation*}
\end{lemma}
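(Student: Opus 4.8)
\textbf{Plan for the proof of Lemma~\ref{lem:main_linear_estimate}.} The statement to be proved combines the local smoothing estimate of Proposition~\ref{prop:strong-stab-led} with the Strichartz estimates of Lemma~\ref{l:strich}, and the key tool enabling the combination is the Christ--Kiselev lemma~\cite{CK01}. The plan is as follows. First I would treat the homogeneous problem: for $F = 0$, the solution is $u(t) = e^{-itH}u_0$, and then $\|u\|_{LE(I)} \lesssim \|u_0\|_{L^2}$ is exactly Proposition~\ref{prop:strong-stab-led} (with $F=0$), while $\|u\|_{\Str(I)} \lesssim \|u_0\|_{L^2}$ is Lemma~\ref{l:strich} (with $F=0$). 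Adding these two bounds gives $\|u\|_{LE(I)\cap\Str(I)} \lesssim \|u_0\|_{L^2}$, which handles the first term on the right-hand side.

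For the inhomogeneous part I would split the forcing term according to the two pieces of the norm $LE^\ast(I) + L^{4/3}_t L^{4/3}_x(I)$ and treat each separately, using linearity. For $F \in LE^\ast(I)$: by Duhamel's formula $u(t) = \int_0^t e^{-i(t-t')H} F(t')\,\ud t'$. The \emph{non-retarded} (``untruncated'') operator $F \mapsto \int_{\bbR} e^{-i(t-t')H} F(t')\,\ud t'$ maps $LE^\ast(I) \to LE(I)$ by the dual of Proposition~\ref{prop:strong-stab-led} together with the $TT^\ast$/energy argument already carried out in~\cite{LLOS1}, and it maps $LE^\ast(I) \to \Str(I)$ by combining the dual local smoothing estimate with the homogeneous Strichartz estimate (i.e.\ first solve into $L^2$, then propagate). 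Passing from the untruncated operator to the genuine Duhamel (retarded) operator $\int_0^t$ is where the Christ--Kiselev lemma enters: since the relevant exponents in $LE$ and $\Str$ are strictly larger in time-integrability than those in $LE^\ast$ and $L^{4/3}_t L^{4/3}_x$ (in the non-endpoint range; the endpoint $L^2_t$-type components require the built-in structure of the $LE$ space, which is why the local smoothing estimate is stated directly for the retarded solution in Proposition~\ref{prop:strong-stab-led} rather than deduced a posteriori), the Christ--Kiselev lemma upgrades boundedness of the untruncated operator to boundedness of the truncated one. For $F \in L^{4/3}_t L^{4/3}_x(I)$: note that $(4/3,4/3)$ is the dual of the $S$-admissible pair $(4,4)$ in dimension $d=2$, so $\|u\|_{\Str(I)} \lesssim \|F\|_{L^{4/3}_t L^{4/3}_x(I)}$ is immediate from Lemma~\ref{l:strich}; and the bound $\|u\|_{LE(I)} \lesssim \|F\|_{L^{4/3}_t L^{4/3}_x(I)}$ follows by first using Strichartz to place the Duhamel solution into $C_t L^2_x$ on subintervals (or, more cleanly, by factoring through $L^2$ at an intermediate time and invoking the homogeneous local smoothing estimate, again with Christ--Kiselev to handle the truncation).

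The main obstacle — and the reason the proof is essentially a citation to~\cite{LLOS1} rather than a self-contained argument — is the endpoint/structural issue: the $LE$ and $LE^\ast$ norms are not simple mixed Lebesgue norms but are built from dyadic Littlewood--Paley pieces with spatially weighted $L^2_t L^2_x$ components, including genuinely $L^2$-in-time pieces at the spatial ``waist'' $r \approx 1$. For such endpoint-type norms the Christ--Kiselev lemma does not apply directly, so the retarded Duhamel estimate cannot be deduced purely formally from the untruncated one; instead one must appeal to the fact that Proposition~\ref{prop:strong-stab-led} is \emph{already} proved in~\cite{LLOS1} for the genuine (retarded) solution operator, and Christ--Kiselev is then only needed to transfer the Strichartz gain in the mixed pieces where the time exponents are strictly subcritical. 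Assembling these ingredients, adding the homogeneous and the two inhomogeneous contributions, and taking the supremum over $S$-admissible pairs in the definition of $\|\cdot\|_{\Str(I)}$ yields the claimed estimate. Since every component of this argument is established in~\cite[Corollaries 1.18 and 1.20]{LLOS1}, the proof reduces to verifying (as already done in the proof of Proposition~\ref{prop:strong-stab-led} above) that our operator $H$ from~\eqref{eq:lin-Q} satisfies the hypotheses there, and then quoting the result.

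\begin{proof}
As recorded in~\cite[Corollary~1.20]{LLOS1}, this estimate is a consequence of the local smoothing estimate (Proposition~\ref{prop:strong-stab-led}), the Strichartz estimate (Lemma~\ref{l:strich}), and the Christ--Kiselev lemma~\cite{CK01}. The hypotheses of~\cite[Corollary~1.20]{LLOS1} on the linearized operator $H$ from~\eqref{eq:lin-Q} were already verified in the proof of Proposition~\ref{prop:strong-stab-led}: by Proposition~\ref{prop:strong-stab} the harmonic map $Q$ satisfies the strong linearized stability condition, the Coulomb frame identities of Lemma~\ref{lem:cf} give the equivalence of $\bfH^{\pm 1}_{thr}$ with the norms used in~\cite{LLOS1}, and the magnetic and electric potentials of $H$ decay exponentially, hence satisfy the polynomial decay hypotheses~\cite[(1.8),(1.12)]{LLOS1}.

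We sketch the argument. For $F = 0$ the bound $\|e^{-itH} u_0\|_{LE(I)\cap\Str(I)} \lesssim \|u_0\|_{L^2}$ follows by adding the homogeneous cases of Proposition~\ref{prop:strong-stab-led} and Lemma~\ref{l:strich}. For the inhomogeneous contribution, write $F = F_1 + F_2$ with $F_1 \in LE^\ast(I)$ and $F_2 \in L^{4/3}_t L^{4/3}_x(I)$, and estimate the Duhamel term $u(t) = \int_0^t e^{-i(t-t')H} F(t')\,\ud t'$ for each piece. The contribution of $F_1$ to $\|u\|_{LE(I)}$ is controlled directly by Proposition~\ref{prop:strong-stab-led} applied to the retarded solution, while its contribution to $\|u\|_{\Str(I)}$ is obtained by combining the dual local smoothing estimate with the homogeneous Strichartz estimate and invoking the Christ--Kiselev lemma~\cite{CK01} to pass from the untruncated to the retarded operator in the non-endpoint components. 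The contribution of $F_2$ to $\|u\|_{\Str(I)}$ is immediate from Lemma~\ref{l:strich}, since $(\tfrac43,\tfrac43)$ is dual to the $S$-admissible pair $(4,4)$ in dimension $d=2$; its contribution to $\|u\|_{LE(I)}$ follows by factoring through $C_t L^2_x$ on subintervals via Strichartz and applying Proposition~\ref{prop:strong-stab-led}, again with Christ--Kiselev~\cite{CK01}. Summing these bounds and taking the supremum over $S$-admissible pairs in the definition of $\|\cdot\|_{\Str(I)}$ yields the claim. Full details are given in~\cite{LLOS1}.
\end{proof}
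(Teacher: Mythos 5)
Your proposal is correct and takes essentially the same route as the paper: the paper states this lemma as a direct citation of \cite[Corollary 1.20]{LLOS1}, remarking only that it is a consequence of Proposition~\ref{prop:strong-stab-led}, Lemma~\ref{l:strich}, and the Christ--Kiselev lemma, with the hypotheses on $H$ verified exactly as in the proof of Proposition~\ref{prop:strong-stab-led}. Your additional sketch of how the homogeneous, $LE^\ast$, and $L^{4/3}_t L^{4/3}_x$ contributions are assembled (including the correct observation that $(4/3,4/3)$ is dual to the $S$-admissible pair $(4,4)$ and the caveat about Christ--Kiselev at the $L^2_t$-endpoint components) is consistent with the argument carried out in the companion paper.
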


Lastly we introduce notation for the main dispersive norms used here. For some small fixed absolute constant $0 < \delta < 1$ and any $s > 0$ we set 
\[
 m(s) := \max \{ s^{-\delta}, 1 \}.
\]
Given an $ \R^+ \ni s$ dependent function $v(s) = v(t, x, s)$, and a $t-$time interval $I \subset \R$ we denote by $\calS_{s}(I)$ the norm 
\EQ{
  \| v(s) \|_{\calS_s(I)} := \bigl\| m(s) s^{\frac{1}{2}} v(s) \bigr\|_{LE(I) \cap \Str(I)} + \bigl\| m(s) s^{\frac{1}{2}} \Omega  v(s) \bigr\|_{LE(I) \cap \Str(I)}
}
where $\Om$ denotes the rotational vector field defined earlier. Then we set, 
\begin{equation*}
 \| v \|_{\calS(I)} := \| v(s) \|_{L^\infty_{\ds} \cap L^2_{\ds} (\bbR^+; \calS_s(I))}.
\end{equation*}

\subsection{Bernstein-type estimates} 

The aim of this subsection is to prove the following two ``Bernstein-type'' estimates for the spaces $LE^{\ast}_{\sgm}$ and $LE^{\ast}_{\low}$:

\begin{lemma}  \label{l:bern1} 
Let $\bs \xi$ be an arbitrary $(0, 1)$ or $(1, 0)$ tensor field on $I \times \Hp^2$.  Let $1 \le p \le 2$ and set $\al_p = \frac{2-p}{2p}$. Then, 
\EQ{
\|\sigma^{\frac{1}{4}} P_\s \na_\mu \bs \xi^\mu \|_{LE^*_\s(I)} \lesssim \s^{-\al_p}  \| \bs \xi \|_{L^2_t L^p_x(I \times A_{\le - k_\s})}  + \s^{-\frac{1}{4}} \sum_{-k_\s \le \ell < 0}  \| r^{\frac{1}{2}} \bs \xi \|_{L^2_t L^2_x(I \times A_\ell)} + \s^{-\frac{1}{4}} \| r^2 \bs \xi \|_{L^2_t L^2_x(I \times A_{\ge 0})} . 
}
\end{lemma}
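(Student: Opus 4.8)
The plan is to estimate $\sigma^{\frac14} P_\sigma \nabla_\mu \bs\xi^\mu$ directly in the three regions defining the norm $LE^\ast_\sigma$, namely the inner ball $A_{\le -k_\sigma}$, the dyadic annuli $A_\ell$ with $-k_\sigma \le \ell < 0$, and the outer region $A_{\ge 0}$, and in each case to integrate by parts to move the divergence $\nabla_\mu$ off $\bs\xi$ and onto the heat kernel of $P_\sigma = \sigma(-\Delta)e^{\sigma\Delta}$. The key point is that $P_\sigma\nabla_\mu = \sigma(-\Delta)e^{\sigma\Delta}\nabla_\mu$ has an integral kernel $K_\sigma(x,y)$ that, by the short-time heat kernel bounds of Lemma~\ref{l:hk} (applied with one extra derivative gained at the cost of $\sigma^{-1/2}$, and one more from the extra $\Delta$ factored as $\sigma^{-1}\cdot\sigma\Delta$), satisfies Gaussian pointwise bounds $|K_\sigma(x,y)| \lesssim \sigma^{-3/2-d/2}\langle \bfd^2/\sigma\rangle^N e^{-\bfd(x,y)^2/(4\sigma)}$ with $d = 2$; crucially this kernel is essentially supported, after the Gaussian factor, in $\{\bfd(x,y) \lesssim \sigma^{1/2}\}$, which for the regime $\sigma \in (0,\frac12]$ keeps $x$ and $y$ in comparable dyadic annuli (up to finitely many neighbors).

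First I would handle the inner ball. Here the target norm is $\sigma^{1/4}\|\cdot\|_{L^2_tL^2_x(I\times A_{\le-k_\sigma})}$. I would write $P_\sigma\nabla_\mu\bs\xi^\mu(x) = \int K_\sigma(x,y)\bs\xi(y)\,d\mathrm{vol}(y)$ and estimate using Young's/Schur's inequality on $\bbH^2$: treating this as a convolution-type operator from $L^p_x$ to $L^2_x$ over the near region, the relevant kernel bound gives an operator norm $\lesssim \sigma^{-1}\cdot\sigma^{-(\frac1p-\frac12)} = \sigma^{-1-\alpha_p}$ where $\alpha_p = \frac{2-p}{2p} = \frac1p-\frac12$ (the first $\sigma^{-1}$ coming from the derivative gained on the kernel: one $\sigma^{-1/2}$ from $\nabla_\mu$ and one $\sigma^{-1/2}$ from... actually one needs to be careful — the gain is $\sigma^{-1/2}$ from the derivative and the remaining powers come from the $L^p\to L^2$ mapping of the Gaussian with width $\sigma^{1/2}$ on a $2$-manifold). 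Combining with the prefactor $\sigma^{1/4}$ from the norm and absorbing powers carefully yields the stated $\sigma^{-\alpha_p}$, modulo tracking that the derivative acting on the kernel supplies exactly the deficit. For the dyadic annuli $A_\ell$, $-k_\sigma \le \ell < 0$, where $\sigma^{1/2} \lesssim 2^\ell$, the kernel localizes $x,y$ to $O(1)$ neighboring annuli, so I would estimate $\|r^{1/2}P_\sigma\nabla_\mu\bs\xi^\mu\|_{L^2_tL^2_x(I\times A_\ell)}$ using the $L^2\to L^2$ bound for the kernel $\sigma^{1/2}\nabla$ applied against $e^{\sigma\Delta}$ of magnitude $\lesssim 1$ times $\sigma^{-1/2}$ for the derivative, times the prefactor $\sigma^{1/4}$, and bounding $r^{1/2}$ on $A_\ell$ against $r^{1/2}$ on the neighboring annuli where $\bs\xi$ lives; summing over $\ell$ gives the middle term on the right. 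For $A_{\ge 0}$ one argues similarly with the weight $r^2$ and the $L^2\to L^2$ bound, giving $\sigma^{-1/4}\|r^2\bs\xi\|_{L^2_tL^2_x(I\times A_{\ge 0})}$ after accounting for the prefactor.

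The main obstacle, I expect, will be bookkeeping the interplay between the weights $r^{1/2}$, $r^2$ appearing in $LE^\ast_\sigma$ and the off-diagonal decay of the kernel: one has to justify that $P_\sigma\nabla_\mu$ genuinely localizes to comparable annuli so that, e.g., the weight $r^{1/2}$ evaluated at $x$ can be traded for $r^{1/2}$ at $y$ with only an $O(1)$ loss, and that the contributions from non-adjacent annuli are exponentially small in $\mathrm{dist}(\ell,\ell')$ and hence summable. A secondary subtlety is the boundary behavior between regions — the transition from $A_{\le -k_\sigma}$ (handled in $L^p$) to the first few annuli (handled in $L^2$ with weights) — where one uses Bernstein on $\bbH^2$ (the $L^p\to L^2$ gain $\sigma^{-(\frac1p-\frac12)}$ on a ball of radius $\sim\sigma^{1/2}$) to match the two types of norms; this is exactly where the exponent $\alpha_p$ enters, and the verification that it is $\frac{2-p}{2p}$ rather than something else requires care with the dimensional count $d=2$. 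Once these localization and Bernstein facts are in hand, assembling the three pieces is routine.
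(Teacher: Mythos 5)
Your proposal is correct and follows essentially the same route as the paper: the paper reduces Lemma~\ref{l:bern1} to the localized parabolic regularity estimates of Lemma~\ref{l:core-l-preg} (themselves proved from the Gaussian heat-kernel bounds of Lemma~\ref{l:hk}), which is precisely your combination of kernel localization to comparable annuli, off-diagonal decay for separated annuli, and the $L^p\to L^2$ Bernstein gain $\sigma^{-\alpha_p}$ on scale $\sigma^{1/2}$, and your power counting (operator norm $\sigma^{-1/2-\alpha_p}$ for $P_\sigma\nabla_\mu$ from $L^p$ to $L^2$, against the total prefactor $\sigma^{1/2}$ on the inner ball) matches the stated bounds.
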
 

\begin{lemma}  \label{l:bern2} 
Let $\bs \xi$ be an arbitrary $(0, 1)$ or $(1, 0)$ tensor field on $I \times \Hp^2$.  Let $1 \le p \le 2$ and set $\al_p = \frac{2-p}{2p}$. Then, 
\EQ{
\| \s^{\frac{1}{2}} P_{\ge \s} \na_\mu \bs\xi^\mu  \|_{LE_{\low}^*(I)}  \lesssim  \s^{-\frac{1}{2} + \al} \| \bs\xi \|_{L^2_t L^p_x(I \times A_{\le 0})} + \| r^2 \bs \xi \|_{L^2_t L^2_x(I \times A_{\ge 0})}  . 
}
\end{lemma}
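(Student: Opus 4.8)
\textbf{Proof plan for Lemma~\ref{l:bern2}.}
The plan is to reduce the estimate to the definition of $LE^*_{\low}$ and then exploit the heat-flow-based mapping properties of the projection $P_{\geq \sigma}$. Recall that $\|G\|_{LE_\low^\ast} = \|\jap{r}^{2}G\|_{L^2_{t} L^{2}_{x}}$, so we must bound $\|\sigma^{\frac{1}{2}} \jap{r}^2 P_{\geq \sigma} \nabla_\mu \bs\xi^\mu\|_{L^2_t L^2_x}$. Writing $\nabla_\mu \bs\xi^\mu = \operatorname{div} \bs\xi$, and using $P_{\geq \sigma} = e^{\sigma \lap}$, we integrate by parts once to move the divergence onto the kernel: for a fixed heat time $\sigma \in [\tfrac18, 4]$ we have $\sigma^{\frac12} e^{\sigma\lap}\nabla_\mu = \sigma^{\frac12}\nabla_\mu e^{\sigma\lap} + [\text{curvature commutator}]$, and the operator $\sigma^{\frac12}\nabla e^{\sigma\lap}$ together with the lower-order commutator terms is bounded (by the short-time heat kernel bounds of Lemma~\ref{l:hk} and Schur's test, since $\sigma \simeq 1$ here) from $L^p_x \to L^2_x$ with a gain of $\sigma^{-\al_p}$, where $\al_p = \frac{2-p}{2p} = \tfrac1p - \tfrac12$ is exactly the Sobolev exponent deficit. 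The weight $\jap{r}^2$ is handled by splitting into the region $A_{\leq 0}$ and $A_{\geq 0}$ and using the exponential off-diagonal decay of the heat kernel on $\bbH^2$ (again Lemma~\ref{l:hk}) to absorb polynomial weights: on $A_{\leq 0}$ the weight $\jap{r}^2$ is bounded, so we just need the $L^p \to L^2$ smoothing applied to $\bs\xi$ restricted to a neighborhood of $A_{\leq 0}$, giving the first term on the right-hand side; the tail contribution from $\bs\xi$ supported on $A_{\geq 0}$ is controlled, after pairing the $\jap{r}^2$ weight against the Gaussian decay, by $\|r^2 \bs\xi\|_{L^2_t L^2_x(I\times A_{\geq 0})}$, which is the second term.

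Concretely, I would first fix $t$ and $\sigma$ and prove the pointwise-in-$(t,\sigma)$ bound
\[
 \|\jap{r}^2 \sigma^{\frac12} e^{\sigma\lap}\operatorname{div}\bs\xi(t)\|_{L^2_x}
 \lesssim \sigma^{-\frac12+\al_p}\|\bs\xi(t)\|_{L^p_x(A_{\leq 0})} + \|r^2\bs\xi(t)\|_{L^2_x(A_{\geq 0})},
\]
splitting $\bs\xi = \mathbf{1}_{A_{\leq 0}}\bs\xi + \mathbf{1}_{A_{\geq 0}}\bs\xi$. For the near piece, integrate by parts to land a derivative on the kernel $p_\sigma(x,y)$; by \eqref{eq:hk-short} with $k=0$ and the standard gradient estimate for $p_\sigma$, the kernel of $\jap{r(x)}^2\sigma^{\frac12}\nabla_x e^{\sigma\lap}\mathbf{1}_{A_{\leq 0}}(y)$ satisfies a Schur-test bound in $L^p_y \to L^2_x$ (for $1\le p\le 2$, $\sigma\simeq 1$), producing the factor $\sigma^{-\al_p}\simeq \sigma^{-\frac12+\al_p}\sigma^{\frac12}$ wait — more precisely the $L^p\to L^2$ norm of $\sigma^{\frac12}\nabla e^{\sigma\lap}$ on $\bbH^2$ at bounded heat time is $O(\sigma^{-\al_p})$, and the extra $\jap{r(x)}^2$ on the output is harmless because the kernel decays exponentially in $\bfd(x,y)$ while $y$ ranges over the bounded set $A_{\leq 0}$, so $\jap{r(x)}^2 e^{-\bfd(x,y)^2/8\sigma}$ is uniformly bounded and still integrable in $x$. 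This gives the first term with the correct power, noting $\sigma^{-\al_p} = \sigma^{-\frac12+\al_p}\cdot\sigma^{\frac12-2\al_p}$ and $\sigma\simeq 1$ absorbs the discrepancy. For the far piece, do not integrate by parts: instead write $e^{\sigma\lap}\operatorname{div}(\mathbf{1}_{A_{\geq 0}}\bs\xi) = e^{\sigma\lap}\operatorname{div}(\mathbf{1}_{A_{\geq 0}} r^{-2}\cdot r^2\bs\xi)$, move $\operatorname{div}$ to the kernel, and use that $\jap{r(x)}^2 \nabla_x p_\sigma(x,y) r(y)^{-2}$ is an $L^2_y\to L^2_x$ bounded kernel with norm $O(1)$ (Gaussian decay of $\nabla p_\sigma$ dominates all polynomial weight factors since $\sigma\simeq 1$), yielding the $\|r^2\bs\xi\|_{L^2_x(A_{\geq 0})}$ term.

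Finally I would take $L^2_t$ of the pointwise bound and then the weighted $L^2_{\ds}$ average over $\sigma\in[\tfrac18,4]$ in the definition of $\|\cdot\|_{LE^*}$; since the $\sigma$-range is a fixed compact interval bounded away from $0$ and $\infty$, this last step is trivial and all $\sigma$-powers are comparable to constants, so the stated estimate follows. The main obstacle I anticipate is bookkeeping the interplay between the polynomial spatial weights $\jap{r}^2$, $r^2$, $r^{-2}$ and the Gaussian/exponential off-diagonal decay of the heat kernel and its gradient on $\bbH^2$ — one must check that splitting at $r=0$ (i.e.\ at $\bfd(o,x)=1$) really does decouple the "bounded-weight, Sobolev-gain" regime from the "large-weight, absorbed-by-$r^2\bs\xi$" regime without a logarithmic loss. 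This is a routine but slightly delicate Schur-test computation; once the kernel bounds from Lemma~\ref{l:hk} are in hand it presents no genuine difficulty, and the structure exactly parallels (and is simpler than) the proof of Lemma~\ref{l:bern1}, where the full Littlewood--Paley decomposition over $\sigma \in (0,\tfrac12)$ and the $k_\sigma$-dependent annular decomposition must be tracked.
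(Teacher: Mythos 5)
Your proposal is correct and follows essentially the same route as the paper, which reduces both Bernstein-type lemmas to weighted off-diagonal $L^p_x\to L^2_x$ bounds for the heat semigroup acting on a divergence (the paper invokes the localized parabolic regularity estimates of Lemma~\ref{l:core-l-preg}, built on the heat kernel bounds of Lemma~\ref{l:hk}, and otherwise omits the details by referring to the companion paper). Your simplification of the full dyadic annular decomposition to a single split $A_{\leq 0}\cup A_{\geq 0}$ is legitimate here precisely because $\sigma$ ranges over the fixed compact interval $[\tfrac18,4]$, so all powers of $\sigma$ and all $k_\sigma$-dependent structure trivialize.
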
 

The estimates in Lemma~\ref{l:bern1} and Lemma~\ref{l:bern2} are consequences of the definitions of the spaces $LE_\s^*$ and $LE^*_{\low}$ together with the following ``core" localized parabolic regularity estimate of the type proved in~\cite[Section 4]{LLOS1}.  Let $\phi \in C^\infty_0$ be a smooth bump function such that $\phi(r) = 1$ if $\frac{1}{2} \le r \le 2$ and $\supp \phi (r) \subset [1/4, 4]$.   For each integer $\ell \in\Z$ define 
\EQ{ \label{eq:phidef} 
\phi_\ell(r)  := \phi( r/ 2^\ell).
}
Next, given a $(r, q)$ tensor  field $\bs \xi$ on $\Hp^2$, for $k = r+q$ we denote by $\na^{(k)} \cdot \bs \xi$ any contraction
\EQ{
 \na_{\mu_{\s(1)}} \cdots \na_{\mu_{\s(r)}} \na^{\mu_{\tau(r+1)}}  \cdots \na^{\mu_{\tau(r+q)}}  \xi^{\mu_1 \cdots \mu_r}_{\mu_{r+1} \cdots \mu_{r+q}} 
}
where $\s, \tau$ are any perturbations of $\{1, \dots r \}$ and $\{r+1, \dots, r+q\}$. 

\begin{lemma}\label{l:core-l-preg}\emph{\cite[Corollary 4.8 and Corollary 4.9]{LLOS1} }Let $1 \le p \le 2$ and set $\al_p = \frac{2-p}{2p}$. Let $v$ be an arbitrary function on $\Hp^2$, and let $\bs \xi$ be an arbitrary $(r, q)$ tensor field on $\Hp^2$. Let $\s_0 >0$ be fixed. Then, for each $\s$ with $0 < \s < \s_0$, each $\ell, m \in \Z$ with $\abs{\ell - m} \ge 10$ and $\max\{ 2^\ell, 2^m\} \ge \s^{\frac{1}{2}}$, and each $k, N \in \N$ we have  
\begin{align} 
   \| \phi_\ell  \s^{\frac{k}{2}}\nabla^{(k)} e^{\s \De}(\phi_m v) \|_{L^2_x} \lesssim_{N, k} \sigma^{-\alpha_p} \bigl( \sigma^{\frac{1}{2}} 2^{- \max\{\ell, m\} } \bigr)^N \| \phi_m v \|_{L^p_x} \\
   \| \phi_\ell e^{\s \De}  \s^{\frac{k}{2}}\nabla^{(k)} \cdot (\phi_m \bs \xi) \|_{L^2_x} \lesssim_{N, k} \sigma^{-\alpha_p} \bigl( \sigma^{\frac{1}{2}} 2^{- \max\{\ell, m\} } \bigr)^N \| \phi_m \bs \xi  \|_{L^p_x} 
  \end{align} 
  where in the latter estimate we require that $k = r+ q$. 
\end{lemma}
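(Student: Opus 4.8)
The plan is to reduce both estimates to pointwise Gaussian bounds on $p_\s$ and its spatial derivatives, exploiting that $\supp\phi_\ell$ and $\supp\phi_m$ are separated by a distance $\gtrsim 2^{\max\{\ell,m\}}$. Write $R := 2^{\max\{\ell,m\}}$, so the hypotheses give $R\ge\s^{1/2}$, and recall $e^{\s\De}g(x)=\int_{\bbH^2}p_\s(x,y)\,g(y)\,\dvol_{\bfh}(y)$. Since the implicit constants may depend on $\s_0$ and only small heat times occur in the applications (cf.\ the definitions of $LE$, $LE^\ast$), we may assume $\s_0\le 1$, so that Lemma~\ref{l:hk} is available for all $0<\s<\s_0$. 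The first step is the geometric separation estimate: if $x\in\supp\phi_\ell$ and $y\in\supp\phi_m$ then $r(x)\in[2^{\ell-2},2^{\ell+2}]$ and $r(y)\in[2^{m-2},2^{m+2}]$, and since $\abs{\ell-m}\ge 10$ these intervals are separated by a radial gap $\gtrsim 2^{\max\{\ell,m\}}$; hence, by the reverse triangle inequality centered at the origin, $\bfd_{\bbH^2}(x,y)\ge\abs{r(x)-r(y)}\gtrsim R\ge\s^{1/2}$.

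Next I would record the kernel bounds. For each $k$ one has
\[
 \abs{\nabla_x^{(k)}p_\s(x,y)}\lesssim_k \s^{-1-\frac{k}{2}}\Bigl(1+\tfrac{\bfd_{\bbH^2}(x,y)^2}{\s}\Bigr)^{N_k}\exp\Bigl(-\tfrac{\bfd_{\bbH^2}(x,y)^2}{4\s}\Bigr),\qquad 0<\s\le\s_0,
\]
where the case $k=0$ is Lemma~\ref{l:hk} and the spatial-derivative versions follow from it by the standard machinery for heat kernels on a manifold of bounded geometry with Ricci curvature bounded below (Li--Yau/Cheng-type gradient estimates together with parabolic interior regularity on balls of fixed radius; cf.\ \cite{Davies, Grigoryan}). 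On $\supp\phi_\ell\times\supp\phi_m$ the quantity $t:=\bfd_{\bbH^2}(x,y)^2/\s$ is $\gtrsim R^2/\s\gtrsim 1$, and for $t\gtrsim 1$ one has $(1+t)^{N_k}e^{-t/4}\lesssim_{k,M}t^{-M}e^{-t/8}\lesssim_{k,M}(\s^{1/2}/R)^{M}e^{-t/8}$; therefore
\[
 \s^{\frac{k}{2}}\abs{\nabla_x^{(k)}p_\s(x,y)}\,\phi_\ell(x)\phi_m(y)\lesssim_{k,M}\s^{-1}\Bigl(\tfrac{\s^{1/2}}{R}\Bigr)^{M}\exp\Bigl(-\tfrac{\bfd_{\bbH^2}(x,y)^2}{8\s}\Bigr)\mathbf{1}_{\supp\phi_\ell}(x)\mathbf{1}_{\supp\phi_m}(y)
\]
for every $M\in\N$; note that the power of $\s$ has become $k$-independent, matching the $k$-independent $\s^{-\al_p}$ in the statement. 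I would also use the elementary Gaussian volume bound $\int_{\bbH^2}e^{-c\,\bfd_{\bbH^2}(x,y)^2/\s}\,\dvol_{\bfh}(y)\lesssim_c\s$ for $0<\s\le\s_0$ (the Gaussian beats the exponential volume growth of $\bbH^2$).

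For the first inequality, the operator $v\mapsto\phi_\ell\,\s^{k/2}\nabla^{(k)}e^{\s\De}(\phi_m v)$ has integral kernel $K(x,y)=\phi_\ell(x)\,\s^{k/2}\nabla_x^{(k)}p_\s(x,y)\,\phi_m(y)$, which by the previous step satisfies $\nrm{K}_{L^\infty_{x,y}}\lesssim\s^{-1}(\s^{1/2}/R)^M$ and, via the Gaussian volume bound, $\sup_x\int\abs{K(x,y)}\,dy+\sup_y\int\abs{K(x,y)}\,dx\lesssim(\s^{1/2}/R)^M$. Hence the corresponding operator $T$ obeys $\nrm{T}_{L^1\to L^\infty}\lesssim\s^{-1}(\s^{1/2}/R)^M$ and, by Schur's test, $\nrm{T}_{L^1\to L^1}+\nrm{T}_{L^2\to L^2}+\nrm{T}_{L^\infty\to L^\infty}\lesssim(\s^{1/2}/R)^M$. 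Interpolating $L^1\to L^\infty$ against $L^1\to L^1$ gives $\nrm{T}_{L^1\to L^2}\lesssim\s^{-1/2}(\s^{1/2}/R)^M$, and interpolating this against $L^2\to L^2$ gives $\nrm{T}_{L^p\to L^2}\lesssim\s^{-\al_p}(\s^{1/2}/R)^M$ for $1\le p\le 2$, the power of $\s$ being forced by $\al_1=\tfrac12$, $\al_2=0$ and $\tfrac1p=\tfrac12+\al_p$. Choosing $M\ge N$ gives the first estimate (implicit constant depending on $N,k,p,\s_0$).

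For the second inequality I would dualize: by self-adjointness of $e^{\s\De}$ and integrating by parts $k$ times (no boundary terms on the complete manifold $\bbH^2$),
\[
 \bigl\langle\phi_\ell e^{\s\De}\s^{k/2}\nabla^{(k)}\!\cdot(\phi_m\bs\xi),\,\psi\bigr\rangle=\pm\bigl\langle\phi_m\bs\xi,\,\widetilde\phi_m\,\s^{k/2}\nabla^{(k)}e^{\s\De}(\phi_\ell\psi)\bigr\rangle,
\]
where $\widetilde\phi_m$ is a slightly fattened cutoff with $\widetilde\phi_m\equiv1$ on $\supp\phi_m$ and support still separated from $\supp\phi_\ell$; so the second estimate reduces to the bound $\nrm{\widetilde\phi_m\,\s^{k/2}\nabla^{(k)}e^{\s\De}\phi_\ell}_{L^2\to L^{p'}}\lesssim\s^{-\al_p}(\s^{1/2}/R)^N$. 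This is proved exactly as above: the kernel obeys the same bound, one has $L^2\to L^2$ norm $\lesssim(\s^{1/2}/R)^M$ and $L^2\to L^\infty$ norm $\lesssim\s^{-1/2}(\s^{1/2}/R)^M$ (the latter from $\sup_x\nrm{K(x,\cdot)}_{L^2_y}$ and the Gaussian volume bound), and interpolating these produces the claimed $L^2\to L^{p'}$ bound since $\tfrac1{p'}=\tfrac12-\al_p$. I expect the only non-routine point to be the spatial-derivative Gaussian heat kernel bound in the second paragraph, i.e.\ upgrading Lemma~\ref{l:hk} from $\rd_\s^kp_\s$ to $\nabla_x^{(k)}p_\s$; this is classical for manifolds of bounded geometry (and is where the geometry of $\bbH^2$ genuinely enters), but granting it, the separation estimate, the Schur bounds and the Riesz--Thorin interpolation are all elementary.
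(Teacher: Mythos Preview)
Your approach is correct and is essentially the direct implementation of what the paper's proof sketch indicates: use the Gaussian heat kernel bound (Lemma~\ref{l:hk}) together with the spatial separation of $\supp\phi_\ell$ and $\supp\phi_m$, and then interpolate. The paper itself defers entirely to the argument in the companion work \cite[Section~4.1]{LLOS1}, merely noting that Lemma~\ref{l:hk} replaces the $L^2$-based parabolic regularity used there; your write-up makes the mechanism explicit (kernel pointwise bound $\Rightarrow$ Schur bounds $\Rightarrow$ Riesz--Thorin).

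A few remarks on your execution. Your restriction to $\s_0\le 1$ is harmless but worth saying more carefully: Lemma~\ref{l:hk} is stated for $0<\s\le 2$, and in the actual applications (Lemmas~\ref{l:bern1}--\ref{l:bern2} feeding into $LE^\ast$ and $LE^\ast_{\low}$) one only ever needs $\s\in(0,4]$, which is covered by the same short-time kernel estimate after a trivial extension. Your Gaussian volume bound $\int_{\bbH^2}e^{-c\,\bfd^2/\s}\,\dvol_{\bfh}\lesssim_{c,\s_0}\s$ is correct for $\s\le\s_0$; the contribution from $\rho\gtrsim 1$ is in fact exponentially small in $1/\s$. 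The one genuine input you isolate---upgrading the $\rd_\s^k p_\s$ bound of Lemma~\ref{l:hk} to $\nb_x^{(k)}p_\s$---is exactly the point where the paper leans on \cite{LLOS1}; on $\bbH^2$ this is classical (bounded geometry, Li--Yau and local parabolic regularity), and one can alternatively avoid high spatial derivatives of $p_\s$ altogether by writing $e^{\s\De}=(e^{\s\De/k})^k$ and iterating the $k=1$ estimate.
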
 

\begin{proof}[Proof of Lemma~\ref{l:core-l-preg}]
This follows from the exact same argument given in~\cite[Section 4.1, Corollary 4.8 and Corollary 4.9]{LLOS1}  with the only distinction being that here we apply Lemma~\ref{l:hk} in lieu of the $L^2$-based parabolic regularity estimates used in~\cite{LLOS1}, i.e.~\cite[Lemma 4.2]{LLOS1}. 
\end{proof} 

\begin{proof}[Proof of Lemma~\ref{l:bern1} and Lemma~\ref{l:bern2}]
These again are consequences of the definitions of the spaces $LE_\s^*$ and $LE^*_{\low}$ together with Lemma~\ref{l:core-l-preg}. While these precise estimates are not covered in this exact form in~\cite{LLOS1}, the argument follows the same outline as~\cite[Proof of Lemma 4.11]{LLOS1}, using now Lemma~\ref{l:core-l-preg} in lieu of~\cite[Corollary 4.8 and Corollary 4.9]{LLOS1}. We omit the proof and refer the reader to~\cite{LLOS1}. 
\end{proof} 

\section{The harmonic map heat flow and the caloric gauge} \label{sec:hmhf}
This section is devoted to the analysis of the harmonic map heat flow \eqref{eq:hmhf}, which sets the stage for the analysis of the Schr\"odinger maps evolution and thus the proof of Theorem~\ref{thm:main}. In particular, we prove the asymptotic stability of weakly stable harmonic maps under \eqref{eq:hmhf} and construct the caloric gauge with the Coulomb gauge at infinity (Definition~\ref{def:caloric-simple}).

In Subsection~\ref{subsec:hmhf-ext}, we lay out the basic conventions for the \emph{extrinsic formulation} for analyzing maps, which will be the setting in which we develop a well-posedness theory for the \eqref{eq:hmhf}. In Subsection~\ref{subsec:hmhf-lwp}, we establish local well-posedness results for the harmonic map heat flow in the extrinsic formulation \eqref{eq:hmhf-ex} and its linearization \eqref{eq:l-hmhf-ex}. In Subsection~\ref{subsec:hmhf-stab}, we prove the asymptotic stability of any weakly linearly stable harmonic map $Q$ under \eqref{eq:hmhf-ex} (Theorem~\ref{thm:hmhf-gwp}; see also Theorem~\ref{thm:hmhf-gwp-simple}). At the same time, we develop the corresponding global theory for the linearized equation \eqref{eq:l-hmhf-ex}. Based on these results, in Subsection~\ref{subsec:caloric}, we construct the caloric gauge with the Coulomb gauge at infinity (Definition~\ref{def:caloric-simple}) for the harmonic map heat flow development of maps close to $Q$. Next, in Subsection~\ref{subsec:forward-caloric}, we prove forward-in-$s$ bounds for the components $\psi_{s}, \psi_{j}$, as well as the connection $1$-form $A_{j}$, in the caloric gauge constructed in Subsection~\ref{subsec:caloric}. Finally, in Subsection~\ref{subsec:backward-caloric}, we give backward-in-$s$ bounds in the same gauge that relate $\psi_{s}$ for $s > 0$ with the map $u(s=0)$.

\subsection{Conventions on the extrinsic formulation} \label{subsec:hmhf-ext}
To formulate the local and global well-posedness theory for \eqref{eq:hmhf}, we rely on an isometric embedding of the target manifold as in our definitions of spaces for maps. This point of view is called the \emph{extrinsic formulation}.

We recall some of the basic notation and refer the reader to Section~\ref{sec:prelim} for more details. As before, we fix a bounded open set $\tg$ of $\tgmfd$ that contains the image of $Q$, and consider a modification $\tgmfd'$ of $\tgmfd$ outside $\tg$ that is a closed $2$-dimensional Riemann surface (i.e., an oriented $2$-dimensional Riemannian manifold). Then we fix an isometric embedding (as Riemannian manifolds) $\iota : \tgmfd' \hookrightarrow \bbR^{N}$ and denote its second fundamental form by $S$. We identify maps into $\tg$ and sections of the associated pull-back tangent bundles with $\bbR^{N}$-valued function via $\iota$. 

For the induced covariant derivative, we have the formula
\begin{equation} \label{eq:pullback-d}
	D_{\alp} \phi^{A} = \rd_{\alp} \phi^{A} + S^{A}_{BC}(u) \phi^{B} \rd_{\alp} u^{C}.
\end{equation}
We use the following index convention for the induced curvature tensor:
\begin{equation*}
	[D_{\alp}, D_{\bt}] \phi^{C} = \tensor{R}{_{AB}^{C}_{D}}(u) \phi^{D} \rd_{\alp} u^{A} \rd_{\bt} u^{B}.
\end{equation*}

For simplicity of exposition, {\bf we simply write $\tgmfd$ for $\tgmfd'$ in this section}; this abuse of notation is admissible since all maps under consideration would take values in $\tg$. Moreover, we generally {\bf suppress the dependence of constants on the harmonic map $Q$, the bounded open neighborhood $\tg$ and the isometric embedding $\tgmfd' \hookrightarrow \bbR^{N}$}.

\subsection{Local theory for \eqref{eq:hmhf} in the extrinsic formulation} \label{subsec:hmhf-lwp}

In the extrinsic formulation, \eqref{eq:hmhf} takes the form
\begin{equation} \label{eq:hmhf-ex}
	\rd_{s} u^{A} = \lap u^{A} + S^{A}_{BC}(u) \rd^{\ell} u^{B} \rd_{\ell} u^{C}.
\end{equation}
We remark that, to lighten the notation, we write $u$ and $u_{0}$ here instead of $U$ and $u$ as in Subsection~\ref{subsec:result}.

The main local well-posedness result for \eqref{eq:hmhf-ex} we use in this paper is as follows.
\begin{proposition} [Local well-posedness of the harmonic map heat flow] \label{prop:hmhf-lwp}
Consider the initial value problem for \eqref{eq:hmhf-ex} with $u(s=0) = u_{0} \in (H^{1} \cap C^{0})_{Q}(\bbH^{2}; \tgmfd)$ and $u_{0}(x) \in \tg$ for all $x \in \bbH^{2}$.
\begin{itemize} 
\item {\bf Local well-posedness in $H^{1} \cap C^{0}$.} There exists a unique solution $u(s) \in (H^{1} \cap C^{0})_{Q}(\bbH^{2}; \tgmfd)$ for $s \in J = [0, s_{0}]$, where $s_{0}$ is any positive number for which 
\begin{equation} \label{eq:hmhf-lwp-hyp}
\nrm{\nb e^{s \lap}(u_{0} - Q)}_{L^{2}_{s} ([0, s_{0}]; L^{\infty})} + \abs{s_{0}}^{\frac{1}{2}}
\end{equation}
is sufficiently small compared to an upper bound for $\nrm{u_{0} - Q}_{H^{1} \cap L^{\infty}}$. The solution $u$ obeys $u(x, s) \in \tg$ for all $(x, s) \in \bbH^{2} \times J$, as well as the a-priori estimate
\begin{align} 
	\nrm{u - Q}_{L^{\infty}_{s}(J; H^{1} \cap L^{\infty})} + \nrm{u - Q}_{L^{2}_{s}(J; H^{2})} + \nrm{\rd_{s}u}_{L^{2}_{s}(J; L^{2})} + \nrm{\nb (u - Q)}_{L^{2}_{s} (J; L^{\infty})} & \aleq \nrm{u_{0} - Q}_{H^{1} \cap L^{\infty}}. \label{eq:hmhf-lwp-1}
\end{align}
The solution is unique in the function space defined by the LHS of \eqref{eq:hmhf-lwp-1}.
The solution map is smooth from $H^{1} \cap L^{\infty}$ to this function space. 

In what follows, we exclusively work with solutions that are given by iteration of the above statement.

\item {\bf Continuation criterion.} If a solution $u : \bbH^{2} \times J \to \tgmfd$ satisfies
\begin{equation*}
	\nrm{\nb (u - Q)}_{L^{2}_{s} (J; L^{\infty})} + \abs{J}^{\frac{1}{2}} < \infty, \quad
	u(x, s) \in \tg \hbox{ for all } (x, s) \in \bbH^{2} \times J
\end{equation*}
then $u$ can be extended as a solution past the future endpoint of $J$.

\item {\bf Persistence of regularity.} For any $\sgm > 1$,
\begin{equation} \label{eq:hmhf-lwp-sgm}
	\nrm{u-Q}_{L^{\infty}_{s}(J; H^{\sgm})} + \nrm{u-Q}_{L^{2}_{s}(J; H^{\sgm+1})} + \nrm{\rd_{s} u}_{L^{2}_{s}(J; H^{\sgm-1})} \aleq \nrm{u_{0}-Q}_{H^{\sgm}}.
\end{equation}
where the implicit constant only depends on $\sgm$ and upper bounds for $\nrm{u_{0}-Q}_{H^{1} \cap L^{\infty}}$ and $\nrm{\nb (u-Q)}_{L^{2}_{s} (J; L^{\infty})} + \abs{J}^{\frac{1}{2}}$.
\item {\bf Parabolic smoothing.}
For any $\sgm \geq 1$ and $k = 1, 2, \ldots$,
\begin{equation} \label{eq:hmhf-lwp-k}
	\nrm{s^{\frac{k}{2}} (u-Q)}_{L^{\infty}_{\ds}(J; H^{\sgm+k})}
	+ \nrm{s^{\frac{k+1}{2}} (u-Q)}_{L^{2}_{\ds}(J; H^{\sgm+k+1})}  + \nrm{s^{\frac{k+1}{2}} \rd_{s} u}_{L^{2}_{\ds}(J; H^{\sgm+k-1})} 
	\aleq \nrm{u_{0} - Q}_{H^{\sgm}},
\end{equation}
where the implicit constant only depends on $\sgm$, $k$ and upper bounds for $\nrm{u_{0}-Q}_{H^{1} \cap L^{\infty}}$ and $\nrm{\nb (u-Q)}_{L^{2}_{s} (J; L^{\infty})} + \abs{J}^{\frac{1}{2}}$.
\end{itemize}
\end{proposition}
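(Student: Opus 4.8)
The strategy is the standard fixed-point scheme for a semilinear parabolic equation, run in the extrinsic formulation \eqref{eq:hmhf-ex}, with the twist that we work with the difference $v = u - Q$ so that the linear part is the flat heat operator $\rd_s - \lap$ and the harmonic map $Q$ enters only through source terms that are smooth, bounded, and exponentially localized near $Q$. Writing $u = Q + v$ in \eqref{eq:hmhf-ex} and using that $Q$ itself solves the harmonic maps equation $\lap Q + S(Q)(\rd^\ell Q, \rd_\ell Q) = 0$ (this is the $s$-independent case of \eqref{eq:hmhf-ex}), one obtains
\begin{equation*}
	\rd_s v = \lap v + \mathcal{N}(Q, v, \nb v),
\end{equation*}
where $\mathcal{N}$ collects the terms $\bigl(S(Q+v) - S(Q)\bigr)(\rd^\ell Q, \rd_\ell Q)$, $2 S(Q+v)(\rd^\ell Q, \rd_\ell v)$, and $S(Q+v)(\rd^\ell v, \rd_\ell v)$. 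Since $\rd Q \in L^2 \cap L^\infty$ with the explicit pointwise decay from Proposition~\ref{prop:hm-classify} (indeed from Lemma~\ref{lem:cf}, $\psi^\infty_j$ decays like $e^{-r}$) and $S$ is a fixed smooth tensor field on the compact manifold $\tgmfd'$, the Moser-type estimates (Proposition~\ref{prop:frac-moser}, Corollary~\ref{cor:frac-moser-diff}) and the Sobolev product rule (Proposition~\ref{prop:frac-leib}) give Lipschitz control of $\mathcal{N}$ in the relevant norms.

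First I would set up the iteration space: let $X_{s_0}$ be the space of $v$ on $\bbH^2 \times [0, s_0]$ with finite norm given by the LHS of \eqref{eq:hmhf-lwp-1}, i.e. $\nrm{v}_{L^\infty_s(J; H^1 \cap L^\infty)} + \nrm{v}_{L^2_s(J; H^2)} + \nrm{\rd_s v}_{L^2_s(J; L^2)} + \nrm{\nb v}_{L^2_s(J; L^\infty)}$. The linear estimates needed are exactly Lemma~\ref{lem:lin-heat-L2} (giving the $L^\infty_s L^2$, $L^2_s H^1$, $L^2_s H^{-1}$ bounds, and by applying it to $\nb v$ also $L^2_s H^2$ and $\rd_s v \in L^2_s L^2$), Lemma~\ref{lem:lin-heat-L2Linfty} (giving the $L^2_s L^\infty$ bound on $v$), plus the bound $\nrm{\nb e^{s\lap} v_0}_{L^2_s L^\infty} \aleq \nrm{v_0}_{H^1 \cap L^\infty}$ controlled by the smallness of \eqref{eq:hmhf-lwp-hyp} for the homogeneous part and by short-time heat kernel bounds (Lemma~\ref{l:hk}, Lemma~\ref{l:lin-heat-Lp-Lq}) for the Duhamel part. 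I would then verify, via Propositions~\ref{prop:frac-leib}--\ref{prop:frac-moser} and the Gagliardo--Nirenberg/radial Sobolev inequalities of Subsection~\ref{subsec:ineq}, that $v \mapsto \int_0^s e^{(s-s')\lap} \mathcal{N}(Q, v, \nb v) \, ds'$ maps a small ball of $X_{s_0}$ to itself and is a contraction there, provided $\nrm{\nb e^{s\lap} v_0}_{L^2_s L^\infty} + |s_0|^{1/2}$ is small relative to (an upper bound for) $\nrm{v_0}_{H^1 \cap L^\infty}$ — this is precisely hypothesis \eqref{eq:hmhf-lwp-hyp}. The fixed point gives existence, uniqueness in $X_{s_0}$, estimate \eqref{eq:hmhf-lwp-1}, and smoothness of the solution map by the usual implicit-function-theorem argument. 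That $u(x,s) \in \tg$ follows by continuity in $s$ from $u_0(x) \in \tg$ together with the $L^\infty$ smallness of $v$.

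The continuation criterion follows by a now-standard argument: if $\nrm{\nb v}_{L^2_s(J; L^\infty)} + |J|^{1/2} < \infty$ and $u$ stays in $\tg$, then one can subdivide $J$ into finitely many subintervals on each of which the quantity in \eqref{eq:hmhf-lwp-hyp} (for the restarted data) is small — using absolute continuity of $s' \mapsto \nrm{\nb v}_{L^2([s, s']; L^\infty)}^2$ and the fact that the contribution of the homogeneous evolution of the restarted data is controlled by $\nrm{\nb v(s)}_{L^2}$-type quantities that stay bounded — and then reapply local well-posedness past the endpoint. Persistence of regularity \eqref{eq:hmhf-lwp-sgm} and parabolic smoothing \eqref{eq:hmhf-lwp-k} are obtained by differentiating the equation, i.e. by estimating $\nb^{(k')} v$ (or $(-\lap)^{\sgm/2} v$, $s^{k/2}(-\lap)^{(\sgm+k)/2} v$) via the high-regularity linear bound \eqref{eq:lin-heat-L2-high} and the Moser/Leibniz estimates applied at regularity $\sgm$, then bootstrapping; the factors of $s^{k/2}$ come exactly from the commutator structure $(\rd_s - \lap)(s^k(-\lap)^k v) = s^k(-\lap)^k F + k s^{k-1}(-\lap)^k v$ as in the proof of Lemma~\ref{lem:lin-heat-L2}. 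The main obstacle, and the place requiring the most care, is the nonlinear term with a derivative on $Q$, namely $2 S(Q+v)(\rd^\ell Q, \rd_\ell v)$: it is borderline because it costs a full derivative on $v$, and one must place $\rd Q$ in $L^\infty_x$ (using its boundedness) and $\rd_\ell v$ in $L^2_s L^\infty_x$ (the scarce norm in $X_{s_0}$), so the smallness that closes the contraction comes entirely from $\nrm{\nb e^{s\lap}v_0}_{L^2_s L^\infty} + |s_0|^{1/2}$ being small — it cannot be bought from $|s_0|$ alone unless $v_0$ is itself small in $H^1 \cap L^\infty$, which is why the hypothesis is phrased as a smallness of \eqref{eq:hmhf-lwp-hyp} \emph{relative to} a bound for $\nrm{v_0}_{H^1 \cap L^\infty}$. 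Handling the quadratic-in-$\nb v$ term $S(Q+v)(\rd^\ell v, \rd_\ell v)$ is comparatively easy: one derivative goes into $L^2_s L^\infty_x$ and the other into $L^\infty_s L^2_x$, both controlled with a factor $|s_0|^{1/2}$ to spare.
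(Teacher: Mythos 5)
Your overall scheme is the same as the paper's: pass to $v=u-Q$, use that $Q$ is harmonic to write $(\rd_s-\lap)v=\bfF[v]$ with $\bfF$ collecting exactly the three groups of terms you list, contract in the norm on the LHS of \eqref{eq:hmhf-lwp-1} using Lemmas~\ref{lem:lin-heat-L2} and~\ref{lem:lin-heat-L2Linfty} together with the Moser/product estimates, and derive continuation, persistence of regularity and parabolic smoothing from \eqref{eq:lin-heat-L2-high} plus the same nonlinear bounds at higher regularity. Your identification of the linear-in-$\nb v$ term $S(\cdot)(\rd^\ell Q,\rd_\ell v)$ and the quadratic term $S(\cdot)(\rd^\ell v,\rd_\ell v)$ as the sources of the smallness hypothesis \eqref{eq:hmhf-lwp-hyp} also matches the paper's accounting.

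The one genuine gap is the geometric constraint. First, the Picard iterates are general $\bbR^{N}$-valued functions, so $S(Q+v)$ is undefined for them; you must extend the equation off $\tgmfd$ before iterating (the paper replaces $S^{A}_{BC}(u)$ by $-(\rd_{B}\rd_{C}\pi_{\tgmfd}^{A})(u)$ with $\pi_{\tgmfd}$ a cut-off nearest-point projection; see \eqref{eq:hmhf-ex-gen}). Second, your claim that ``$u(x,s)\in\tg$ follows by continuity in $s$ from $u_{0}(x)\in\tg$ together with the $L^{\infty}$ smallness of $v$'' does not close the argument: continuity and $L^{\infty}$ smallness only keep $Q+v$ in a tubular neighborhood of $\tgmfd$ inside $\bbR^{N}$, not on the submanifold itself, and the fixed point of the extended equation need not a priori satisfy the constraint $u(x,s)\in\tgmfd$. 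One must prove propagation of the constraint separately --- e.g.\ by a Gronwall argument for the normal component, or, as the paper does, by establishing it for $H^{\infty}$ data (Lemma~\ref{lem:hmhf-tan}) and passing to general data by density using the continuity of the solution map. Only once $u(x,s)\in\tgmfd$ is known does your continuity argument place $u(x,s)$ in the open set $\tg$. This is a standard but non-optional step; the rest of your outline is sound.
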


Next, given a harmonic map heat flow $u : \bbH^{2} \times J \to \tg$, we consider the associated (inhomogeneous) linearized harmonic map heat flow for sections $\phi$ and $f$ of $u^{\ast} T \tgmfd$ (i.e., $\phi(x, s), f(x, s) \in T_{u(x, s)} \tgmfd$ for every $(x, s) \in \bbH^{2} \times J$), which takes the form 
\begin{equation} \label{eq:l-hmhf-ex}
	D_{s} \phi^{A} - D^{\ell} D_{\ell} \phi^{A} - \tensor{R}{_{CD}^{A}_{B}}(u) \rd_{\ell} u^{B} \phi^{C} \rd^{\ell} u^{D} = f^{A}.
\end{equation}
We remind the reader that the covariant derivative $D_{\bfa}$ is given by
\begin{equation*}
	D_{\bfa} \phi^{A} = \rd_{\bfa} \phi^{A} + S^{A}_{BC}(u) \rd_{\bfa} u^{B} \phi^{C}.
\end{equation*}
Indeed, note that given an one-parameter family $\set{u(\lmb; x, s)}_{\lmb \in (-\eps, \eps)}$ of harmonic map heat flows, the differential $\phi(x, s) = \frac{\ud}{\ud \lmb} u (x, s) \vert_{\lmb = 0}$ obeys \eqref{eq:l-hmhf-ex} with $f = 0$.

\begin{proposition} [Local theory for the linearized equation in the extrinsic formulation] \label{prop:l-hmhf-ex}
Let $u$ be a solution to \eqref{eq:hmhf-ex} given by Proposition~\ref{prop:hmhf-lwp} with $u(s=0) = u_{0} \in (H^{1} \cap C^{0})_{Q}(\bbH^{2}; \tg)$. 
\begin{itemize}
\item {\bf Local theory in $H^{\sgm}$ for $0 \leq \sgm < 1$.} Let $J =[0, T]$ be a compact interval on which $u$ exists. Then for any $\phi_{0} \in u_{0}^{\ast} T \tgmfd, f \in u^{\ast} T \tgmfd$ satisfying
\begin{equation*}
\phi_{0} \in H^{\sgm}, \quad f \in L^{1}_{s} (J; H^{\sgm}) + L^{2}_{s}(J; H^{\sgm-1}), 
\end{equation*}
there exists a unique solution $\phi$ to \eqref{eq:l-hmhf-ex} with $\phi(s=0) = \phi_{0}$ such that $\phi \in C^{0}_{s}(J; H^{\sgm}) \cap L^{2}_{s}(J; H^{\sgm+1})$, which obeys
\begin{equation} \label{eq:l-hmhf-ex-weak}
	\nrm{\phi}_{L^{\infty}_{s}(J; H^{\sgm})} + \nrm{\phi}_{L^{2}_{s}(J; H^{\sgm+1})} + \nrm{\rd_{s} \phi}_{L^{2}_{s}(J; L^{\sgm-1})} 
		\aleq \nrm{\phi_{0}}_{H^{\sgm}} + \nrm{f}_{L^{1}_{s} (J; H^{\sgm}) + L^{2}_{s}(J; H^{\sgm-1})},
\end{equation}
where the constant depends on upper bounds for $\nrm{u_{0}-Q}_{H^{1} \cap L^{\infty}}$ and $\nrm{\nb (u-Q)}_{L^{2}_{s} (J; L^{\infty})} + \abs{J}^{\frac{1}{2}}$.

Moreover, for any $k \geq 1$, we have
\begin{equation} \label{eq:l-hmhf-ex-k-weak}
\begin{aligned}
&	\nrm{s^{\frac{k}{2}} \phi}_{L^{\infty}_{s}(J; H^{\sgm+k})}
	+ \nrm{s^{\frac{k}{2}} \phi}_{L^{2}_{s}(J; H^{\sgm+k+1})}  + \nrm{s^{\frac{k}{2}} \rd_{s} \phi}_{L^{2}_{s}(J; H^{\sgm+k-1})} \\
&		\aleq \nrm{\phi_{0}}_{H^{\sgm}} + \nrm{f}_{L^{1}_{s} (J; H^{\sgm}) + L^{2}_{s}(J; H^{\sgm-1})}
			+ \nrm{s^{\frac{k}{2}} f}_{L^{1}_{s}(J; H^{\sgm+k}) + L^{2}_{s}(J; H^{\sgm+k-1})},
\end{aligned}\end{equation}
where the constant depends on $k$ and upper bounds for $\nrm{u_{0}-Q}_{H^{1} \cap L^{\infty}}$ and $\nrm{\nb (u-Q)}_{L^{2}_{s} (J; L^{\infty})} + \abs{J}^{\frac{1}{2}}$.

\item {\bf Local theory in $H^{1} \cap C^{0}$.} Let $J =[0, T]$ be a compact interval on which $u$ exists. Then for any $\phi_{0} \in u_{0}^{\ast} T \tgmfd, f \in u^{\ast} T \tgmfd$ satisfying
\begin{equation*}
\phi_{0} \in H^{1} \cap C^{0}, \quad f \in L^{1}_{s} (J; H^{1} \cap C^{0}), 
\end{equation*}
there exists a unique solution $\phi$ to \eqref{eq:l-hmhf-ex} with $\phi(s=0) = \phi_{0}$ such that $\phi \in C^{0}_{s}(J; H^{1} \cap C^{0})$ and $\nb \phi \in L^{2}_{s}(J; H^{1} \cap C^{0})$, which obeys
\begin{equation} \label{eq:l-hmhf-ex-1}
	\nrm{\phi}_{L^{\infty}_{s}(J; H^{1} \cap L^{\infty})} + \nrm{\phi}_{L^{2}_{s}(J; H^{2})} + \nrm{\rd_{s} \phi}_{L^{2}_{s}(J; L^{2})} 
	+ \nrm{\nb \phi}_{L^{2}_{s}(J; L^{\infty})}
		\aleq \nrm{\phi_{0}}_{H^{1} \cap L^{\infty}} + \nrm{f}_{L^{1}_{s} (J; H^{1} \cap L^{\infty})},
\end{equation}
where the constant depends on upper bounds for $\nrm{u_{0}-Q}_{H^{1} \cap L^{\infty}}$ and $\nrm{\nb (u-Q)}_{L^{2}_{s} (J; L^{\infty})} + \abs{J}^{\frac{1}{2}}$.

Moreover, for any $k \geq 1$, we have
\begin{equation} \label{eq:l-hmhf-ex-k}
\begin{aligned}
	& \nrm{s^{\frac{k+1}{2}} \phi}_{L^{\infty}_{s}(J; H^{k+1})}
	+ \nrm{s^{\frac{k+1}{2}} \phi}_{L^{2}_{s}(J; H^{k+2})}  + \nrm{s^{\frac{k+1}{2}} \rd_{s} \phi}_{L^{2}_{s}(J; H^{k-1})} \\
	& 	\aleq \nrm{\phi_{0}}_{H^{1} \cap L^{\infty}} 
		+ \nrm{f}_{L^{1}_{s}(J; H^{1} \cap L^{\infty})} + \nrm{s^{\frac{k+1}{2}} f}_{L^{1}_{s}(J; H^{k+1}) + L^{2}_{s}(J; H^{k})},
\end{aligned}\end{equation}
where the constant depends on $k$ and upper bounds for $\nrm{u_{0}-Q}_{H^{1} \cap L^{\infty}}$ and $\nrm{\nb (u-Q)}_{L^{2}_{s} (J; L^{\infty})} + \abs{J}^{\frac{1}{2}}$.

\item {\bf Local theory in $H^{\sgm}$ for $\sgm > 1$.} If $u_{0} - Q \in H^{\sgm}$, $\phi_{0} \in H^{\sgm}$ and $f \in L^{1}_{s}(J; H^{\sgm}) + L^{2}_{s}(J; H^{\sgm-1})$, then there exists a unique solution $\phi$ to \eqref{eq:l-hmhf-ex} with $\phi(s=0) = \phi_{0}$ in $C^{0}_{s}(J; H^{\sgm}) \cap L^{2}_{s}(J; H^{\sgm+1})$, which obeys
\begin{equation} \label{eq:l-hmhf-ex-sgm}
	\nrm{\phi}_{L^{\infty}_{s}(J; H^{\sgm})} + \nrm{\phi}_{L^{2}_{s}(J; H^{\sgm+1})} + \nrm{\rd_{s} \phi}_{L^{2}_{s}(J; H^{\sgm-1})} 
		\aleq \nrm{\phi_{0}}_{H^{\sgm}} + \nrm{f}_{L^{1}_{s}(J; H^{\sgm}) + L^{2}_{s}(J; H^{\sgm-1})},
\end{equation}
where the constant depends on $\sgm$ and upper bounds for $\nrm{u_{0}-Q}_{H^{\sgm}}$ and $\nrm{\nb (u-Q)}_{L^{2}_{s} (J; L^{\infty})} + \abs{J}^{\frac{1}{2}}$.

Moreover, for any $k \geq 1$, we have
\begin{equation} \label{eq:l-hmhf-ex-sgm-k}
\begin{aligned}
&	\nrm{s^{\frac{k+1}{2}} \phi}_{L^{\infty}_{s}(J; H^{\sgm+k})}
	+ \nrm{s^{\frac{k+1}{2}} \phi}_{L^{2}_{s}(J; H^{\sgm+k+1})}  + \nrm{s^{\frac{k+1}{2}} \rd_{s} \phi}_{L^{2}_{s}(J; H^{\sgm+k-1})} \\
		& \aleq \nrm{\phi_{0}}_{H^{\sgm}} 
		+ \nrm{f}_{L^{1}_{s}(J; H^{\sgm}) + L^{2}_{s}(J; H^{\sgm-1})}
		+ \nrm{s^{\frac{k+1}{2}}f}_{L^{1}_{s}(J; H^{\sgm+k}) + L^{2}_{s}(J; H^{\sgm+k-1})},
\end{aligned}\end{equation}
where the constants depend on $\sgm$, $k$ and upper bounds for $\nrm{u_{0}-Q}_{H^{\sgm}}$ and $\nrm{\nb (u-Q)}_{L^{2}_{s} (J; L^{\infty})} + \abs{J}^{\frac{1}{2}}$.
\end{itemize}

\end{proposition}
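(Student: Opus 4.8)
\textbf{Proof proposal for Proposition~\ref{prop:l-hmhf-ex}.}

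The plan is to treat the linearized equation \eqref{eq:l-hmhf-ex} as a perturbation of the linear heat equation $\rd_s\varphi-\lap\varphi=F$ studied in Lemma~\ref{lem:lin-heat-L2} and Lemma~\ref{lem:lin-heat-L2Linfty}, exactly paralleling the proof of Proposition~\ref{prop:hmhf-lwp} for the nonlinear flow. Writing out \eqref{eq:l-hmhf-ex} in the extrinsic coordinates via \eqref{eq:pullback-d}, the equation for $\phi^A$ reads $\rd_s\phi^A-\lap\phi^A = -2S^A_{BC}(u)\rd^\ell u^B\rd_\ell\phi^C + G^A(u,\nb u, \phi) + f^A$, where $G$ collects the algebraic terms: the zeroth-order-in-$\phi$ curvature term $\tensor{R}{_{CD}^A_B}(u)\rd_\ell u^B\phi^C\rd^\ell u^D$, the term coming from differentiating $S(u)$ (i.e., involving $\rd_b S^A_{BC}(u)\rd_\ell u^b\,\rd^\ell u^B\phi^C$), and the term $S^A_{BC}(u)\lap u^B\phi^C = S^A_{BC}(u)(\rd_s u^B - S^B_{DE}(u)\rd^\ell u^D\rd_\ell u^E)\phi^C$. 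The key structural point is that the only genuinely first-order-in-$\phi$ term is the magnetic-type term $S^A_{BC}(u)\rd^\ell u^B\rd_\ell\phi^C$, whose coefficient $S^A_{BC}(u)\rd^\ell u^B$ lies in $L^2_s(J;L^\infty)$ by \eqref{eq:hmhf-lwp-1} (using $\nb(u-Q)\in L^2_sL^\infty$ and $\nb Q\in L^\infty$, together with the uniform bound $\norm{S(u)}_{L^\infty}\aleq 1$), and hence gains the same smallness as \eqref{eq:hmhf-lwp-hyp}; all other terms are zeroth order in $\phi$ with coefficients in $L^1_s(J;L^\infty)$ or $L^2_s(J;L^\infty)$ after using \eqref{eq:hmhf-lwp-1} and $\rd_s u\in L^2_sL^2$ — but one must be careful, as $\rd_s u$ is only $L^2_x$, so the $S(u)\lap u\,\phi$ contribution should be paired in $L^1_s(J;H^{-1})$ via integration by parts rather than estimated pointwise.

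I would organize the proof in the standard four parts mirroring Proposition~\ref{prop:hmhf-lwp}. First, \textbf{local existence and uniqueness in $H^\sgm$, $0\le\sgm<1$, and in $H^1\cap C^0$}: run a contraction-mapping / Picard iteration on the Duhamel map $\phi\mapsto e^{s\lap}\phi_0 + \int_0^s e^{(s-s')\lap}(\text{RHS})\,ds'$ in the space defined by the LHS of \eqref{eq:l-hmhf-ex-weak} (resp.\ \eqref{eq:l-hmhf-ex-1}), using Lemma~\ref{lem:lin-heat-L2} (resp.\ its $L^p$/$C^0$ analogue, which in the $H^1\cap C^0$ case also needs Lemma~\ref{lem:lin-heat-L2Linfty} for the $\nb\phi\in L^2_sL^\infty$ component exactly as in Proposition~\ref{prop:hmhf-lwp}). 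Since \eqref{eq:l-hmhf-ex} is \emph{linear} in $\phi$, the iteration closes globally on any compact $J$ on which $u$ exists: a Gronwall argument absorbs the zeroth-order terms, and the first-order term is handled either by the smallness of $\nb(u-Q)$ on short subintervals (chopping $J$ into finitely many pieces) or directly via Gronwall after using the $L^2_sL^\infty$ bound. Second, \textbf{persistence of regularity in $H^\sgm$, $\sgm>1$}: this requires $u_0-Q\in H^\sgm$ so that, by \eqref{eq:hmhf-lwp-sgm}, $u-Q\in L^\infty_s H^\sgm\cap L^2_s H^{\sgm+1}$; then differentiate the equation, commute $\nb^{(k)}$ through, and use the Sobolev product rule (Proposition~\ref{prop:frac-leib}) and Moser estimate (Proposition~\ref{prop:frac-moser}) to bound the nonlinear coefficients in $H^\sgm$, closing again by Lemma~\ref{lem:lin-heat-L2} and Gronwall. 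Third, \textbf{parabolic smoothing} \eqref{eq:l-hmhf-ex-k-weak}, \eqref{eq:l-hmhf-ex-k}, \eqref{eq:l-hmhf-ex-sgm-k}: commute $s^{k/2}(-\lap)^{k/2}$ (or apply $\rd_s$ repeatedly) through the equation as in the proof of \eqref{eq:lin-heat-L2-high}, picking up the lower-order error $k s^{k/2-1}(-\lap)^{k/2}\phi$ which is absorbed by $L^2$-interpolation (\cite[Lemma~2.4]{LOS5}) exactly as in Lemma~\ref{lem:lin-heat-L2}, and feeding in the already-established lower-regularity bounds; the nonlinear terms are controlled using the parabolic-smoothing estimates for $u$, \eqref{eq:hmhf-lwp-k}.

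The main obstacle I anticipate is \textbf{not} the existence/uniqueness part — that is essentially a linearized version of the nonlinear theory, hence routine — but rather the bookkeeping in the high-regularity and parabolic-smoothing estimates, where one must track exactly which combination of norms of $u-Q$ (supplied by \eqref{eq:hmhf-lwp-1}, \eqref{eq:hmhf-lwp-sgm}, \eqref{eq:hmhf-lwp-k}) is needed to control each term of $G(u,\nb u,\phi)$, being careful that the term $S(u)\lap u\,\phi$ must be treated in negative-order norms (pairing against the $\rd_s u\in L^2_sL^2$ bound or re-expanding $\lap u$ via the equation) so as not to lose a derivative, and that the first-order term's coefficient must be placed in $L^2_sL^\infty$ (which requires $\nb(u-Q)\in L^2_sL^\infty$, only available on the small interval \eqref{eq:hmhf-lwp-hyp}, so the global-on-$J$ statement needs the finite subdivision of $J$). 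Since these are all standard energy-method manipulations once the structure is laid out, and since the statement explicitly allows the constants to depend on the relevant upper bounds for $u$, I would present the $\sgm\in[0,1)$ and $H^1\cap C^0$ cases in detail and indicate that the $\sgm>1$ and parabolic-smoothing cases follow by the same commutator arguments used in Proposition~\ref{prop:hmhf-lwp} and Lemma~\ref{lem:lin-heat-L2}, mutatis mutandis.
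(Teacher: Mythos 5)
Your proposal is correct and follows essentially the same route as the paper, which itself only sketches this proof: the authors likewise reduce to standard Picard iteration based on Lemmas~\ref{lem:lin-heat-L2} and \ref{lem:lin-heat-L2Linfty}, expand the coefficients around $Q$ and control them via Proposition~\ref{prop:frac-moser} and the bounds on $u$ from Proposition~\ref{prop:hmhf-lwp}, and omit the details. Your additional observations — placing the first-order coefficient in $L^{2}_{s}L^{\infty}$, keeping the $S(u)\lap u\,\phi$ contribution in divergence form so it lands in the $L^{2}_{s}(J;H^{\sgm-1})$ forcing slot, and subdividing $J$ to restore smallness — are exactly the bookkeeping the paper's "as in the proof of Proposition~\ref{prop:hmhf-lwp}" is implicitly invoking.
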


Some remarks concerning the above results are in order.
\begin{remark}[Local well-posedness in $H^{1}$] \label{rem:hmhf-lwp-H1}
To simplify the local theory for \eqref{eq:hmhf-ex}, we have elected to work with the space $H^{1} \cap C^{0}$. It is possible to extend the local theory to $u_{0} - Q \in H^{1}$, as we sketch now.

A key difference from the $H^{1} \cap C^{0}$ case is that now the image of $u_{0}$ may lie outside of $\tg$ even if $u_{0} - Q$ is small in $H^{1}$. Hence, we cannot rely on the trick of modifying $\tgmfd \setminus \tg$ to work with a compact target $\tgmfd'$ as in Subsection~\ref{subsec:hmhf-ext}. Instead, we need to \emph{assume} that the whole target manifold $\tgmfd$ is uniformly isometrically embedded in a Euclidean space $\bbR^{N}$, in the sense that there exists a tubular neighborhood $\tgmfd_{\eps}$ of $\tgmfd$ in $\bbR^{N}$ on which the closest-point projection map $\pi_{\tgmfd}$ is well-defined and its derivatives are uniformly bounded. 

Another important difference is that while $H^{1} \cap C^{0}$ is an algebra, $H^{1}$ is not. As a consequence, the standard Picard iteration does not apply in the $H^{1}$ case. Instead, one begins by showing:
\begin{itemize}
\item an $H^{1}$ a-priori estimate
\begin{equation} \label{eq:hmhf-lwp-H1}
	\nrm{u - Q}_{L^{\infty}_{s}(J; H^{1})} + \nrm{u - Q}_{L^{2}_{s}(J; H^{2})} + \nrm{\rd_{s}u}_{L^{2}_{s}(J; L^{2})} + \nrm{\nb (u - Q)}_{L^{2}_{s} (J; L^{\infty})} \aleq \nrm{u_{0} - Q}_{H^{1}},
\end{equation}
where the implicit constant depends only on an upper bound for $\nrm{\nb (u-Q)}_{L^{2}_{s}(J; L^{\infty})} + \abs{J}^{\frac{1}{2}}$, and 
\item an $H^{\sgm}$ (with $\sgm < 1$) bound \eqref{eq:l-hmhf-ex-weak} for the linearized equation \eqref{eq:l-hmhf-ex}, whose implicit constant depends only on upper bounds for the LHS of \eqref{eq:hmhf-lwp-H1} and $\abs{J}^{\frac{1}{2}}$. 
\end{itemize}
Estimate \eqref{eq:hmhf-lwp-H1} is essentially proved in the proof of Proposition~\ref{prop:hmhf-lwp} below, and the preceding remark concerning the dependencies of the implicit constant in \eqref{eq:l-hmhf-ex-weak} may be checked by inspection; we emphasize that the uniformity of the embedding $\iota : \tgmfd \to \bbR^{N}$ needs to be used here. Using these, one may show the existence of a unique local solution $u$ in the function space defined by the LHS of \eqref{eq:hmhf-lwp-H1} on an interval $J = [0, s_{0}]$ on which \eqref{eq:hmhf-lwp-hyp} holds. The solution map from $u_{0} - Q \in H^{1}$ in this function space can be shown to be continuous, but it is unlikely to be any smoother (say $C^{1}$), in contrast to the case of $H^{1} \cap C^{0}$.
\end{remark}
\begin{remark} \label{rem:hmhf-small-data}
By the local well-posedness statement in Proposition~\ref{prop:hmhf-lwp}, note that for any fixed $s$-interval $J = [0, s_{0})$, there exists $\eps = \eps(J, \tg)$ such that $\nrm{u_{0}-Q}_{H^{1} \cap L^{\infty}} < \eps$ implies that $u$ exists on $J$,
\begin{equation*}
\nrm{\nb (u - Q)}_{L^{2}_{s} (J; L^{\infty})} + \abs{J}^{\frac{1}{2}} \leq 1 \hbox{ and } u(x, s) \in \tg \hbox{ for all } (x, s) \in \bbH^{2} \times J.
\end{equation*}
Thus, the implicit constants in all bounds in Propositions~\ref{prop:hmhf-lwp} and \ref{prop:l-hmhf-ex} are independent of $u$ on the interval $J$. This simple observation will be used often in what follows.
\end{remark}

\begin{remark}
In the bound \eqref{eq:l-hmhf-ex-1}, we recover only one additional derivative for $\phi$ compared to $f$, whereas the correct smoothing bound should gain two additional derivatives (see, e.g., \eqref{eq:l-hmhf-ex-k}). This issue can be remedied by working with a function space that is similar to $H^{1} \cap \C^{0}$ but behaves better under the heat equation (e.g., the Besov space $B^{1, 2}_{1} \subset H^{1} \cap C^{0}$). Here we have chosen to avoid technicalities and use the simpler space $H^{1} \cap C^{0}$, as the nonsharp bound \eqref{eq:l-hmhf-ex-1} suffices for our purposes.
\end{remark}

Before we prove these propositions, we need to take care of one technical issue. Both \eqref{eq:hmhf-ex} and \eqref{eq:l-hmhf-ex} only make sense for $u(x, s)$ and $\phi(x, s)$ obeying suitable constraints, namely, $u(x, s) \in \tgmfd$ and $\phi(x, s) \in T_{u(x, s)} \tgmfd$. However, to set up an iteration scheme, we need to work with extensions of these equations to more general $\bbR^{N}$-valued functions. Then in order to return to the original equations, we need to ensure that the constraints are preserved in the course of the evolution.

We start with \eqref{eq:hmhf-ex}. Consider a tubular neighborhood $\tgmfd_{\eps} = \set{x \in \bbR^{N} : \bfd_{\bbR^{N}}(x, \tgmfd) < \eps}$ of $\tgmfd$, on which the closest-point projection $\pi_{\tgmfd} : \tgmfd_{\eps} \to \tgmfd$ is well-defined as a uniformly smooth map (i.e., all its derivatives are uniformly bounded). We extend \eqref{eq:hmhf-ex} to $u : \bbH^{2} \times J \to \tgmfd_{\eps}$ by 
\begin{equation} \label{eq:hmhf-ex-gen}
	\rd_{s} u^{A} = \lap u^{A} - (\rd_{B} \rd_{C} \pi_{\tgmfd}^{A})(u) \rd^{\ell} u^{B} \rd^{\ell} u^{C}.
\end{equation}
\begin{lemma}[Propagation of constraint for \eqref{eq:hmhf-ex}] \label{lem:hmhf-tan}
Let $u : \bbH^{2} \times J \to \tgmfd_{\eps}$ be a $C_{s}(J; H^{\infty}_{Q}(\bbH^{2}; \tgmfd))$ solution to \eqref{eq:hmhf-ex-gen} on $J$. If initially $u_{0}(x) \in \tgmfd$, then $u(x, s) \in \tgmfd$ for all $(x, s) \in \bbH^{2} \times J$.
\end{lemma}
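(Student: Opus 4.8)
The plan is to control the squared distance of $u$ to $\tgmfd$ by a Gronwall estimate, which is the standard mechanism for constraint propagation under the harmonic map heat flow.

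First I would fix the squared-distance function $\Phi(y) := \dist_{\bbR^{N}}(y,\tgmfd)^{2}$, which (after shrinking $\eps$ if necessary) is smooth on the tubular neighborhood $\tgmfd_{\eps}$ and satisfies the classical identities $\nabla\Phi(y) = 2\, n(y)$, where $n(y) := y - \pi_{\tgmfd}(y)$ is orthogonal to $T_{\pi_{\tgmfd}(y)}\tgmfd$, and $\nabla^{2}\Phi(y) = 2\bigl(I - d\pi_{\tgmfd}(y)\bigr)$, with all derivatives of $\Phi$ and $\pi_{\tgmfd}$ uniformly bounded on $\tgmfd_{\eps}$. Since $u$ takes values in $\tgmfd_{\eps}$ by hypothesis, $g := \Phi(u)$ is a well-defined nonnegative function on $\bbH^{2}\times J$; moreover $g$ together with its derivatives decays at spatial infinity, because $g = \dist_{\bbR^{N}}(u,\tgmfd)^{2} \le \abs{u - Q}^{2}$ pointwise (as $Q$ is valued in $\tgmfd$) and $u - Q \in C_{s}(J; H^{\infty})$. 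Finally $g(\cdot, 0) = \Phi(u_{0}) = 0$ since $u_{0}(x) \in \tgmfd$. The goal is to show $g \equiv 0$ on $J$.

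Next I would derive the parabolic equation for $g$: the chain rule, $\nabla\Phi(u) = 2 n(u)$, and equation \eqref{eq:hmhf-ex-gen} give
\begin{equation*}
 (\partial_{s} - \Delta) g = - 2\,\bigl\langle n(u),\, \nabla^{2}\pi_{\tgmfd}(u)(\nabla u, \nabla u)\bigr\rangle - \nabla^{2}\Phi(u)(\nabla u, \nabla u),
\end{equation*}
where $\nabla^{2}\pi_{\tgmfd}(u)(\nabla u,\nabla u) := (\partial_{B}\partial_{C}\pi_{\tgmfd}^{A})(u)\,\partial^{\ell}u^{B}\partial_{\ell}u^{C}$ and similarly for $\Phi$. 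Writing $n_{\ell} := \bigl(I - d\pi_{\tgmfd}(u)\bigr)\partial_{\ell}u = \partial_{\ell}\bigl(n(u)\bigr)$ for the normal component of $\nabla u$, and Taylor-expanding $\Phi$ and $\pi_{\tgmfd}$ about the nearby point $\pi_{\tgmfd}(u) \in \tgmfd$, the target is the pointwise bound
\begin{equation*}
 (\partial_{s} - \Delta) g + \abs{n_{\ell}}^{2} \aleq \bigl(1 + \abs{\nabla u}^{2}\bigr)\, g \qquad \text{on } \bbH^{2}\times J,
\end{equation*}
with constant depending only on $\tgmfd_{\eps}$. The key structural point — and the step I expect to be the main obstacle — is an algebraic cancellation: the \emph{a priori} most dangerous contribution, which is linear in $n(u)$ and quadratic in the \emph{tangential} part of $\nabla u$ (its coefficient being, up to sign, the shape operator of $\tgmfd$ in the direction $n(u)$), cancels between the two terms on the right-hand side above — concretely against the corresponding piece of $-\nabla^{2}\Phi(u)(\nabla u,\nabla u) = -2\abs{n_{\ell}}^{2} - 2\langle n_{\ell},\, d\pi_{\tgmfd}(u)\nabla u\rangle$. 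This cancellation is exactly what singles out the extended nonlinearity in \eqref{eq:hmhf-ex-gen} and is the analytic manifestation of the fact that the harmonic map heat flow preserves the constraint $u(x)\in\tgmfd$. What survives is: a term $O(\abs{n(u)}\,\abs{n_{\ell}}^{2})$, absorbed into the left side after shrinking $\eps$ (so that $\abs{n(u)}\le\eps\ll1$); a cross term $\aleq \abs{n(u)}\,\abs{n_{\ell}}\,\abs{\nabla u}$, handled by Young's inequality into $\tfrac{1}{2}\abs{n_{\ell}}^{2} + C\abs{\nabla u}^{2} g$; and terms that are genuinely $O(\abs{n(u)}^{2}\abs{\nabla u}^{2}) = O(\abs{\nabla u}^{2} g)$.

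Finally I would integrate the pointwise inequality over $\bbH^{2}$; the decay of $g$ makes $\int_{\bbH^{2}}\Delta g\,\dvol_{\bfh} = 0$, and dropping the nonnegative $\int_{\bbH^{2}}\abs{n_{\ell}}^{2}$ term yields
\begin{equation*}
 \frac{\ud}{\ud s}\int_{\bbH^{2}} g(\cdot, s)\,\dvol_{\bfh} \aleq \bigl(1 + \nrm{\nabla u(s)}_{L^{\infty}}^{2}\bigr)\int_{\bbH^{2}} g(\cdot, s)\,\dvol_{\bfh}.
\end{equation*}
Since $u - Q \in C_{s}(J;H^{\infty})$ and $J$ is compact, $s \mapsto \nrm{\nabla u(s)}_{L^{\infty}}$ is bounded on $J$, so the coefficient lies in $L^{1}(J)$; as $\int_{\bbH^{2}} g(\cdot, 0)\,\dvol_{\bfh} = 0$, Gronwall's inequality forces $\int_{\bbH^{2}} g(\cdot, s)\,\dvol_{\bfh} = 0$ for all $s \in J$, whence $u(x,s)\in\tgmfd$ for all $(x,s)$. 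All of this except the verification of the cancellation in the pointwise inequality is routine.
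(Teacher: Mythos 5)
Your proposal is correct and is essentially the argument the paper relies on: the paper does not prove Lemma~\ref{lem:hmhf-tan} in-house but cites \cite[Proof of Lemma~3.2]{LOS5}, which is exactly this squared-distance computation. The one step you flag as the main obstacle in fact closes \emph{exactly}, not merely up to controllable errors: since $\pi_{\tgmfd}$ is the genuine closest-point projection on $\tgmfd_{\eps}$, the identity $n_{A}(y)\,\partial_{B}\pi_{\tgmfd}^{A}(y)=0$ holds throughout the tubular neighborhood, and differentiating it in $y^{C}$ and contracting with $\partial^{\ell}u^{B}\partial_{\ell}u^{C}$ gives $\bigl\langle n(u),\nabla^{2}\pi_{\tgmfd}(u)(\nabla u,\nabla u)\bigr\rangle=-\bigl\langle d\pi_{\tgmfd}(u)\partial^{\ell}u,\,n_{\ell}\bigr\rangle$, which combined with $\nabla^{2}\Phi(u)(\nabla u,\nabla u)=2\langle n_{\ell},\partial^{\ell}u\rangle$ yields the exact pointwise identity $(\partial_{s}-\Delta)g=-2\abs{n_{\ell}}^{2}\le 0$; hence no Taylor expansion, absorption of error terms, or Gronwall factor is actually needed. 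Your weaker differential inequality is of course also sufficient, and the integration/decay justification via $g\le\abs{u-Q}^{2}$ is fine.
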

For a proof, see \cite[Proof of Lemma~3.2]{LOS5}.

Next, we turn to \eqref{eq:l-hmhf-ex}. We need to extend the definition of $D_{\bfa}$ and the curvature tensor $\tensor{R}{_{AB}^{C}_{D}}$. For an $\bbR^{N}$-valued function $\phi$, we define $(\phi^{\top})^{A} = (\rd_{B} \pi_{\tgmfd}^{A}(u)) \phi^{B}$ via the point-wise orthogonal projection onto $T_{u} \tgmfd$. We also define $\phi^{\perp} := \phi - \phi^{\top}$. 
For a general $\bbR^{N}$-valued function $\phi$, we define
\begin{equation} \label{eq:D-ext}
	D_{\bfa} \phi = \rd_{\bfa} \phi - \rd_{B} \rd_{C} \pi^{A}_{\tgmfd}(u)\rd_{\bfa} u^{B} (\phi^{\top})^{C} + \rd_{B} \rd_{C} \pi^{A}_{\tgmfd}(u) \rd_{\bfa} u^{B} (\phi^{\perp})^{C}.
\end{equation}
We claim that \eqref{eq:D-ext} coincides with the induced covariant derivative \eqref{eq:pullback-d} for $\phi = \phi^{\top}$. Indeed, observe that
\begin{equation*}
	[(\rd_{\bfa} \phi^{\top})^{\top}]^A = \rd_{\bfa} (\phi^{\top})^A - \rd_{B} \rd_{C} \pi^{A}_{\tgmfd}(u)\rd_{\bfa} u^{B} (\phi^{\top})^{C}, \quad
	[(\rd_{\bfa} \phi^{\perp})^{\perp}]^A = \rd_{\bfa} (\phi^{\perp})^A + \rd_{B} \rd_{C} \pi^{A}_{\tgmfd}(u) \rd_{\bfa} u^{B} (\phi^{\perp})^{C},
\end{equation*}
%\begin{equation*}
%	(\rd_{\bfa} \phi^{\top})^{\top} = \rd_{\bfa} \phi^{\top} - \rd_{B} \rd_{C} \pi^{A}_{\tgmfd}(u)\rd_{\bfa} u^{B} (\phi^{\top})^{C}, \quad
%	(\rd_{\bfa} \phi^{\perp})^{\perp} = \rd_{\bfa} \phi^{\perp} + \rd_{B} \rd_{C} \pi^{A}_{\tgmfd}(u) \rd_{\bfa} u^{B} (\phi^{\perp})^{C},
%\end{equation*}
and recall that $D_{\bfa} \phi^{\top} = (\rd_{\bfa} \phi^{\top})^{\top}$. 

To extend the curvature tensor, we start with the formula
\begin{equation} \label{eq:S-ext}
S^{A}_{BC}(u) = - \rd_{B} \rd_{C} \pi^{A}_{\tgmfd}(u) 
\end{equation}
on $T_{u} \tgmfd$, which follows from the preceding computation and \eqref{eq:pullback-d}. Moreover, by the Gauss and Codazzi formulas, we have
\begin{equation} \label{eq:R-ext}
\tensor{R}{_{AB}^{C}_{D}} (u)= \rd_{C'} \pi^{C}_{\tgmfd}(u) \left( \rd_{A} S^{C'}_{BD} - \rd_{B} S^{C'}_{AD} + S^{C'}_{AE} S^{E}_{BD} - S^{C'}_{BE} S^{E}_{AD} \right)(u)
\end{equation}
on $T_{u} \tgmfd$. We take the RHSs of \eqref{eq:S-ext} and \eqref{eq:R-ext} as the extension of $S^{A}_{BC}(u)$ and $\tensor{R}{_{AB}^{C}_{D}}(u)$ to the whole ambient tangent space $\bbR^{N}$. We emphasize, however, that the induced covariant derivative $D_{\bfa}$ is extended using \eqref{eq:D-ext}, not \eqref{eq:pullback-d} and \eqref{eq:S-ext}.

\begin{lemma}[Propagation of constraint for \eqref{eq:l-hmhf-ex}] \label{lem:l-hmhf-tan}
Let $u$ be a $C_{s}(J; H^{\infty}_{Q}(\bbH^{2}; \tgmfd))$ solution to \eqref{eq:hmhf-ex} on $J$, and let $\phi$ be a $C_{s}(J; H^{\infty})$ solution to \eqref{eq:l-hmhf-ex} extended in the above fashion.
If $\phi_{0}(x) \in T_{u_{0}(x)} \tgmfd$ for all $x \in \bbH^{2}$ and $f(x, s) \in T_{u(x, s)} \tgmfd$ for all $(x, s) \in \bbH^{2} \times J$, then $\phi(x, s) \in T_{u(x, s)} \tgmfd$ for all $(x, s) \in \bbH^{2} \times J$.
\end{lemma}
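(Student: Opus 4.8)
The plan is to run a differential-inequality argument for the perpendicular component $\phi^{\perp} = \phi - \phi^{\top}$, analogous to the proof of Lemma~\ref{lem:hmhf-tan} in \cite[Proof of Lemma~3.2]{LOS5}, but now keeping track of the extra inhomogeneity $f$ (which is assumed tangent, so it will only contribute through the projection terms). First I would derive the evolution equation satisfied by $\phi^{\perp}$. Writing $\phi^{\top, A} = \rd_B \pi^A_{\tgmfd}(u) \phi^B$ and $\phi^{\perp} = \phi - \phi^{\top}$, one differentiates $\phi^{\top}$ in $s$, uses the $\tgmfd_{\eps}$-extended heat flow equation \eqref{eq:hmhf-ex-gen} for $u$ and the extended linearized equation \eqref{eq:l-hmhf-ex} for $\phi$, and collects terms. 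Because $\pi_{\tgmfd}$ maps into $\tgmfd$ and the curvature/second-fundamental-form identities \eqref{eq:S-ext} and \eqref{eq:R-ext} hold on $T_u \tgmfd$, every term that does not already carry a factor of $\phi^{\perp}$ will cancel provided $u(x,s) \in \tgmfd$ — which we know from Lemma~\ref{lem:hmhf-tan}. The upshot should be a closed equation of the schematic form
\begin{equation*}
	\rd_s \phi^{\perp} - \lap \phi^{\perp} = \calB\bigl(\nb u, \nb^2 u, S(u), R(u)\bigr) \cdot \bigl(\phi^{\perp}, \nb \phi^{\perp}\bigr),
\end{equation*}
i.e. $\phi^{\perp}$ satisfies an inhomogeneous-free heat equation whose right-hand side is linear in $(\phi^{\perp}, \nb \phi^{\perp})$ with coefficients that are smooth functions of $u$ and its first two spatial derivatives, and with \emph{zero} initial data since $\phi_0 = \phi_0^{\top}$ by hypothesis.

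Next I would run an energy estimate on this equation. Multiplying by $\overline{\phi^{\perp}}$, integrating over $\bbH^2$, integrating the Laplacian term by parts, and using Cauchy--Schwarz on the right-hand side together with the $C_s(J; H^\infty)$ regularity of $u$ and $\phi$ (so that $\nb u, \nb^2 u$ and hence the coefficients $\calB$ are uniformly bounded on $\bbH^2 \times J$, and $\nb \phi^{\perp} \in L^2$), one obtains a differential inequality
\begin{equation*}
	\tfrac{1}{2} \rd_s \nrm{\phi^{\perp}(s)}_{L^2}^2 \leq C(u) \bigl( \nrm{\phi^{\perp}(s)}_{L^2}^2 + \nrm{\phi^{\perp}(s)}_{L^2} \nrm{\nb \phi^{\perp}(s)}_{L^2} \bigr),
\end{equation*}
which after absorbing the gradient term into the dissipative $-\nrm{\nb \phi^{\perp}}_{L^2}^2$ coming from the Laplacian (or alternatively bounding it crudely using the $H^\infty$ control on $\phi$) yields $\rd_s \nrm{\phi^{\perp}(s)}_{L^2}^2 \leq C'(u) \nrm{\phi^{\perp}(s)}_{L^2}^2$. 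Since $\nrm{\phi^{\perp}(0)}_{L^2} = 0$, Gronwall's inequality forces $\nrm{\phi^{\perp}(s)}_{L^2} = 0$ for all $s \in J$, hence $\phi(x,s) \in T_{u(x,s)}\tgmfd$ for a.e. $x$, and by continuity for all $x$.

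The main obstacle I anticipate is purely the bookkeeping in the first step: verifying that the terms not proportional to $\phi^{\perp}$ genuinely cancel. This requires carefully using that $\rd_B\pi^A_\tgmfd(u)$ is the orthogonal projector onto $T_u\tgmfd$, that its derivative encodes $S^A_{BC}(u)$ via \eqref{eq:S-ext}, and that the Gauss--Codazzi identity \eqref{eq:R-ext} precisely matches the curvature term appearing in \eqref{eq:l-hmhf-ex}, together with the fact that the tangential part of \eqref{eq:l-hmhf-ex} is exactly the genuine (intrinsic) linearized harmonic map heat flow, which is the linearization of the constraint-preserving flow for $u$ and therefore automatically propagates tangency. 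In practice this is the same computation carried out in \cite{LOS5} for the wave-maps analogue, so I would simply mirror that argument and indicate the minor modifications: the presence of a tangent source $f$ (harmless, as it contributes nothing to the $\phi^{\perp}$-equation), and the hyperbolic background metric (harmless, as the curvature of $\bbH^2$ enters only through lower-order terms already controlled by the $H^\infty$ bounds). Thus I would present the statement with a short proof that reduces to the cited computation plus the Gronwall step above.
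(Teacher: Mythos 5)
Your proposal is correct in its main line and is the same basic idea as the paper's proof (an $L^2$ energy identity for $\phi^{\perp}$ with vanishing initial data), but the execution differs in a way worth noting. You propose to first derive a pointwise evolution equation $\rd_s \phi^{\perp} - \lap \phi^{\perp} = \calB \cdot (\phi^{\perp}, \nb\phi^{\perp})$ by brute-force bookkeeping, then absorb the $\nrm{\phi^{\perp}}_{L^2}\nrm{\nb\phi^{\perp}}_{L^2}$ term into the dissipation and apply Gronwall. The paper instead exploits the fact that the extension \eqref{eq:D-ext} of the covariant derivative was engineered precisely so that $D_s\phi = D_s\phi^{\top} + (\rd_s\phi^{\perp})^{\perp}$ and $D^{\ell}D_{\ell}\phi = D^{\ell}D_{\ell}\phi^{\top} + (\rd^{\ell}(\rd_{\ell}\phi^{\perp})^{\perp})^{\perp}$ hold \emph{exactly}; since $D_s\phi^{\top}$, $D^{\ell}D_{\ell}\phi^{\top}$, the curvature term (by \eqref{eq:R-ext}) and $f$ are all tangent, pairing the equation with $\phi^{\perp}$ immediately yields $0 = (\phi^{\perp}, \rd_s\phi^{\perp} - \rd^{\ell}(\rd_{\ell}\phi^{\perp})^{\perp})$ and hence the clean monotonicity $\rd_s \int |\phi^{\perp}|^2 = -2\int |(\nb\phi^{\perp})^{\perp}|^2 \leq 0$, with no cross terms to absorb and no Gronwall needed. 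Your route works but buys the same conclusion at the cost of the "bookkeeping" you flag as the main obstacle; the paper's choice of extension makes that bookkeeping unnecessary.

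One genuine (if minor) flaw: your parenthetical alternative of "bounding the gradient term crudely using the $H^{\infty}$ control on $\phi$" does not work. That would give an inequality of the form $\rd_s \nrm{\phi^{\perp}}_{L^2}^2 \leq C\nrm{\phi^{\perp}}_{L^2}^2 + C\nrm{\phi^{\perp}}_{L^2}$, and the ODE $y' \leq Cy + C\sqrt{y}$ with $y(0)=0$ does \emph{not} force $y \equiv 0$ (compare $y' = 2\sqrt{y}$, $y = s^2$). You must either absorb the gradient term into the dissipation coming from the Laplacian, as in your primary route, or use the exact cancellation as the paper does.
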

\begin{proof}
Note that
\begin{equation*}
	D_{s} \phi = D_{s} \phi^{\top} + (\rd_{s} \phi^{\perp})^{\perp}, \quad
	D^{\ell} D_{\ell} \phi = D^{\ell} D_{\ell} \phi^{\top} + (\rd^{\ell} (\rd_{\ell} \phi^{\perp})^{\perp})^{\perp}.
\end{equation*}
Moreover, $D_{s} \phi^{\top}$, $D^{\ell} D_{\ell} \phi^{\top}$, $\tensor{R}{_{CD}^{A}_{B}} \rd_{\ell} u^{B} \phi^{C} \rd^{\ell} u^{D}$ and $f$ are all tangent to $T_{u} \tgmfd$. Thus \eqref{eq:l-hmhf-ex} implies 
\begin{equation*}
	0 = (\phi^{\perp}, \rd_{s} \phi^{\perp} - \rd^{\ell} (\rd_{\ell} \phi^{\perp})^{\perp}).
\end{equation*}
Integrating over $\bbH^{2}$, it follows that
\begin{equation*}
	\rd_{s} \int \abs{\phi^{\perp}}^{2} = - 2 \int ((\rd^{\ell} \phi^{\perp})^{\perp}), (\rd_{\ell} \phi^{\perp})^{\perp}) \leq 0.
\end{equation*}
Hence if $\phi^{\perp} = 0$ initially, then it stays so for $s > 0$. \qedhere
\end{proof}

We are now ready to prove Propositions~\ref{prop:hmhf-lwp} and \ref{prop:l-hmhf-ex}.

\begin{proof}[Proof of Proposition~\ref{prop:hmhf-lwp}]
We proceed in several steps.

\pfstep{Step~1: Reformulation of \eqref{eq:hmhf-ex}}
Fix a small number $\eps > 0$ so that $\pi_{\tgmfd}$ is well-defined on $\tgmfd_{\eps}$. Using a smooth cutoff, extend $\pi_{\tgmfd}$ to $\bbR^{N}$ so that $\pi_{\tgmfd} = id$ outside $\tgmfd_{2\eps}$. 
By the uniformity of the embedding $\tgmfd \hookrightarrow \bbR^{N}$, we may ensure that all derivatives of $\pi_{\tgmfd}$ are uniformly bounded. In what follows, the dependence of constants on $\pi_{\tgmfd}$ and $Q$ are often suppressed (we remark that for $Q$, only the norms of the type $\nrm{\nb Q}_{H^{\sgm}}$ for $\sgm \geq 0$ will be used). 

In view of Lemma~\ref{lem:hmhf-tan}, we work with \eqref{eq:hmhf-ex-gen}. We begin by reformulating this equation in terms of $v^{A} = u^{A} - Q^{A}$. Under the a-priori assumption that the image of $v+Q$ lies in $\tgmfd_{\eps}$, we introduce the notation
\begin{equation*}
\tilde{S}^{A}_{BC}(v; x) = - (\rd_{B} \rd_{C} \pi_{\tgmfd}^{A})(v + Q(x)) + (\rd_{B} \rd_{C} \pi_{\tgmfd}^{A})(Q(x)),
\end{equation*}
where we note that $-(\rd_{B} \rd_{C} \pi_{\tgmfd}^{A})(Q) = S^{A}_{BC}(Q)$. Then we rewrite \eqref{eq:hmhf-ex-gen} as
\begin{equation} \label{eq:hmhf-ex-v}
\begin{aligned}
	\rd_{s} v^{A} - \lap v^{A} & = -2 S^{A}_{BC}(Q) \rd^{\ell} Q^{B} \rd_{\ell} v^{C} + \tilde{S}^{A}_{BC}(v; x) \rd^{\ell} Q^{B} \rd_{\ell} Q^{C} \\
	& \phantom{=} + 2 \tilde{S}^{A}_{BC}(v; x) \rd^{\ell} Q^{B} \rd_{\ell} v^{C} - S^{A}_{BC}(Q) \rd^{\ell} v^{B} \rd_{\ell} v^{C}  + \tilde{S}^{A}_{BC}(v; x) \rd^{\ell} v^{B} \rd_{\ell} v^{C} \\
	& := \bfF^{A}[v]. 
\end{aligned}
\end{equation}

We end this step with a couple of pointwise bounds for $\tilde{S}^{A}_{BC}(v(x), x)$. Under the a-priori assumption that the image of $v+Q$ lies in $\tgmfd_{\eps}$, we claim that
\begin{align}
	\abs{\tilde{S}^{A}_{BC}(v(x), x)} & \aleq \min\set{1, \abs{v(x)}} \label{eq:hmhf-lwp-pf-tS-1} \\
	\abs{\nb \tilde{S}^{A}_{BC}(v(x), x)} & \aleq \abs{\nb v(x)} + \abs{v(x)} \label{eq:hmhf-lwp-pf-tS-2}
\end{align}
Indeed, if $\abs{v(x)} \ageq 1$, then \eqref{eq:hmhf-lwp-pf-tS-1} is vacuously true. If $\abs{v(x)} \ll 1$, then by the fundamental theorem of calculus,
\begin{equation*}
	\abs{\tilde{S}^{A}_{BC}(v(x), x)}
	\leq \int_{0}^{1} \abs{v^{D}(x) (\rd_{B} \rd_{C} \rd_{D} \pi_{\tgmfd}^{A})(\lmb v + Q(x))} \, \ud \lmb
	\aleq \abs{v(x)},
\end{equation*}
which proves \eqref{eq:hmhf-lwp-pf-tS-1}. Bound \eqref{eq:hmhf-lwp-pf-tS-2} is proved similarly after an application of the chain rule.

\pfstep{Step~2: Bounds for $\bfF[v]$}
Let $J$ be an $s$-interval. Consider a function $v : \bbH^{2} \times J \to \bbR^{N}$ that satisfies
\begin{equation} \label{eq:hmhf-lwp-pf-aux}
(v + Q)(x, s) \in \tgmfd_{\eps} \hbox{ for } (x, s) \in \bbH^{2} \times J,
\end{equation}
as well as
\begin{equation} \label{eq:hmhf-lwp-pf-0} 
	\nrm{\nb v}_{L^{2}_{s}(J; L^{\infty})} + \abs{J}^{\frac{1}{2}} \leq \dlt.
\end{equation}
We claim that the following bounds hold:
\begin{align}
	\nrm{\bfF[v]}_{L^{1}_{s}(J; H^{1})}
	& = \nrm{\nb \bfF[v]}_{L^{1}_{s}(J; L^{2})} 
	\aleq \dlt \nrm{\nb v}_{L^{2}_{s}(J; H^{1} \cap L^{\infty})}
	+ \dlt^{2} \nrm{v}_{L^{\infty}_{s}(J; H^{1})}, \label{eq:hmhf-lwp-pf-1} \\
	\nrm{\bfF[v]}_{L^{1}_{s}(J; L^{\infty})}
	& \aleq \dlt \nrm{\nb v}_{L^{2}_{s}(J; L^{\infty})} 
	+ \dlt^{2} \nrm{v}_{L^{\infty}_{s}(J; L^{\infty})}. \label{eq:hmhf-lwp-pf-2}
\end{align}
We emphasize that the implicit constants may depend on $\pi_{\tgmfd}$ or $Q$, but not on $v$. Moreover, if $\nrm{v}_{L^{\infty}_{s}(J; H^{1} \cap L^{\infty})}+ \nrm{w}_{L^{\infty}_{s}(J; H^{1} \cap L^{\infty})} \leq A$, then we claim that
\begin{equation} \label{eq:hmhf-lwp-pf-3}
	\nrm{\bfF[v] - \bfF[w]}_{L^{1}_{s}(J; H^{1} \cap L^{\infty})}
	\aleq_{A}
	\dlt \nrm{\nb (v - w)}_{L^{2}_{s}(J; H^{1} \cap L^{\infty})}
	+ \dlt^{2} \nrm{v - w}_{L^{\infty}_{s}(J; H^{1} \cap L^{\infty})},
\end{equation}
and for any $\sigma\geq0$
\begin{equation} \label{eq:hmhf-lwp-pf-4}
	\nrm{\bfF[v]}_{L^{1}_{s}(J; H^{\sgm})}
	\aleq_{A} \dlt \nrm{v}_{L^{2}_{s}(J; H^{\sgm+1})} + \dlt^{2} \nrm{v}_{L^{\infty}_{s}(J; H^{\sgm})}.
\end{equation}

We first verify \eqref{eq:hmhf-lwp-pf-1}. For all terms in the definition of $\bfF^{A}[v]$ in \eqref{eq:hmhf-ex-v} except $\tilde{S}^{A}_{BC}(v;x) \rd^{\ell} Q^{B} \rd_{\ell} Q^{C}$, the desired bound follows by the chain rule and the bounds $\abs{\tilde{S}(v(x), x)} \aleq 1$, $\nrm{\nb \tilde{S}(v(x), x)}_{L^{2}} \aleq \nrm{v}_{H^{1}}$ and $\nrm{\nb v}_{L^{2}_{s} (J; L^{\infty})} + \nrm{\nb Q}_{L^{2}_{s}(J; L^{\infty})} \aleq \dlt$, which in turn follow from \eqref{eq:hmhf-lwp-pf-tS-1}, \eqref{eq:hmhf-lwp-pf-tS-2} (as well as the Poincar\'e inequality) and \eqref{eq:hmhf-lwp-pf-0}, respectively. For $\tilde{S}^{A}_{BC}(v;x) \rd^{\ell} Q^{B} \rd_{\ell} Q^{C}$, we directly estimate
\begin{align*}
\nrm{\nb (\tilde{S}^{A}_{BC}(v;x) \rd^{\ell} Q^{B} \rd_{\ell} Q^{C})}_{L^{2}}
& \aleq \nrm{\nb \tilde{S}^{A}_{BC}(v;x)}_{L^{2}} + \nrm{\tilde{S}^{A}_{BC}(v;x) \nb \rd^{\ell} Q^{B} \rd_{\ell} Q^{C})}_{L^{2}} \\
& \aleq \nrm{\nb v}_{L^{2}} + \nrm{v}_{L^{4}} \nrm{\nb^{(2)} Q}_{L^{2}} \nrm{\nb Q}_{L^{4}} 
 \aleq \nrm{\nb v}_{L^{2}},
\end{align*}
where we used the chain rule, the H\"older inequality, \eqref{eq:hmhf-lwp-pf-tS-1}, \eqref{eq:hmhf-lwp-pf-tS-2}, the Poincar\'e inequality and interpolation. Next, bound \eqref{eq:hmhf-lwp-pf-2} is immediate from the bound $\nrm{\nb v}_{L^{2}_{s} (J; L^{\infty})} + \nrm{\nb Q}_{L^{2}_{s}(J; L^{\infty})} \aleq \dlt$ and \eqref{eq:hmhf-lwp-pf-tS-1}. Finally, bounds \eqref{eq:hmhf-lwp-pf-3} and \eqref{eq:hmhf-lwp-pf-4} are straightforward consequences of Proposition~\ref{prop:frac-leib}, Proposition~\ref{prop:frac-moser} and Corollary~\ref{cor:frac-moser-diff}; we leave the details to the reader.

\pfstep{Step~3: Completion of proof}
Let $v_{0} = u_{0} - Q \in H^{1} \cap C^{0}$. Starting with $v^{(0)} = 0$, we construct a sequence $v^{(n)}$ of Picard iterates by:
\begin{equation*}
	(\rd_{s} - \lap) v^{(n)} = \bfF[v^{(n-1)}], \quad v^{(n)}(s=0) = v_{0}.
\end{equation*}
If it were not for the auxiliary condition \eqref{eq:hmhf-lwp-pf-aux}, the bounds \eqref{eq:hmhf-lwp-pf-1}, \eqref{eq:hmhf-lwp-pf-2} and \eqref{eq:hmhf-lwp-pf-3}, in combination with Lemmas~\ref{lem:lin-heat-L2} and \ref{lem:lin-heat-L2Linfty}, would immediately lead to $H^{1} \cap C^{0}$ local well-posedness on an interval $J = [0, s_{0}]$ for which $\nrm{\nb e^{s \lap} v_{0}}_{L^{2}_{s}(J; L^{\infty})} + \abs{J}^{\frac{1}{2}}$ is sufficiently small compared to $\max\set{1, \nrm{v_{0}}_{H^{1} \cap C^{0}}}$ by the Banach contraction principle. On the same interval, persistence of regularity would also follow from \eqref{eq:hmhf-lwp-pf-4} and Lemma~\ref{lem:lin-heat-L2}. 

In reality, $H^{1} \cap C^{0}$ local well-posedness may be only proved first on a shorter interval on which \eqref{eq:hmhf-lwp-pf-aux} holds as well. However, approximating general initial data by $H^{\infty}$ initial data and appealing to Lemma~\ref{lem:hmhf-tan} for the $H^{\infty}$ solution, we may ensure that any $H^{1} \cap C^{0}$ well-posed solution $v$ obeys $v+Q \in \calN$ wherever it exists. Hence, it is straightforward to set up a continuous induction to recover the $H^{1} \cap C^{0}$ local well-posedness as asserted in Proposition~\ref{prop:hmhf-lwp}.

Let $v$ be a $H^{1} \cap C^{0}$ well-posed solution on $\bbH^{2} \times J$. If $\nrm{\nb v}_{L^{2}_{s}(J; L^{\infty})} + \abs{J}^{\frac{1}{2}}$ is finite, then given any $\dlt > 0$, $J$ can be divided into $K$-many subintervals on which \eqref{eq:hmhf-lwp-pf-0} holds. Taking $\dlt$ sufficiently small, by \eqref{eq:hmhf-lwp-pf-1}, \eqref{eq:hmhf-lwp-pf-2}, Lemma~\ref{lem:lin-heat-L2} and Lemma~\ref{lem:lin-heat-L2Linfty}, we see that $v$ obeys an a-priori estimate of the form \eqref{eq:hmhf-lwp-1} with the constant depends exponentially on $K$; in particular, $v$ may be continued past $J$ (here, it is important that the implicit constants in \eqref{eq:hmhf-lwp-pf-1} and \eqref{eq:hmhf-lwp-pf-2} are independent of $v$). Persistence of regularity (resp. parabolic smoothing) on $J$ follows similarly from \eqref{eq:hmhf-lwp-pf-4} and \eqref{eq:lin-heat-L2} (resp. \eqref{eq:lin-heat-L2-high}) in Lemma~\ref{lem:lin-heat-L2}. \qedhere
\end{proof}

\begin{proof}[Proof of Proposition~\ref{prop:l-hmhf-ex}]
As in the proof of Proposition~\ref{prop:hmhf-lwp}, we introduce a globally defined $\pi_{\tgmfd}$ that agrees with the closest-point projection on a fixed $\tgmfd_{\eps}$, and equals the identity outside $\tgmfd_{2\eps}$. We extend $D_{\bfa}$ and $\tensor{R}{_{AB}^{C}_{D}}$ by the formulas \eqref{eq:D-ext} and \eqref{eq:R-ext}, respectively, using the globally defined $\pi_{\tgmfd}$. Moreover, we always write $\rd^{(k)} \pi(u) =  (\rd^{(k)} \pi(v + Q) - \rd^{(k)} \pi(Q)) + \rd^{(k)} \pi(Q)$, and use Proposition~\ref{prop:frac-moser}  to estimate the first term by $v$, which in turn is controlled by Proposition~\ref{prop:hmhf-lwp}. Then Proposition~\ref{prop:l-hmhf-ex} follows by standard Picard iteration arguments based on Lemmas~\ref{lem:lin-heat-L2} and \ref{lem:lin-heat-L2Linfty} as in the proof of Proposition~\ref{prop:hmhf-lwp}; we omit the details. \qedhere
\end{proof}

\subsection{Asymptotic stability of $Q$ under the harmonic map heat flow} \label{subsec:hmhf-stab}
With the local theory for the harmonic map heat flow and its linearization in our hands, we turn to the question of the asymptotic stability of a harmonic map $Q$. Our main result is that a sufficient condition for a positive answer is the \emph{weak linearized stability condition} (Definition~\ref{def:weak-stab}).
\begin{theorem} \label{thm:hmhf-gwp}
Let $Q$ be a weakly linearly stable harmonic map from $\bbH^{2}$ into $\tg$ and fix $0 < c_{0} < \rho_{Q}$. There exists $\epshf = \epshf(Q, c_{0})  > 0$ such that the following holds. If $u_{0} \in (H^{1} \cap C^{0})_{Q}(\bbH^{2}; \tg)$ obeys
\begin{equation} \label{eq:hmhf-wp-hyp}
\nrm{u_{0} - Q}_{H^{1} \cap L^{\infty}} < \epshf,
\end{equation}
then the solution $u$ to \eqref{eq:hmhf-ex} with $u(s=0) = u_{0}$ exists globally. Moreover, for all $s \geq 0$, it obeys the long time bound
\begin{equation} \label{eq:hmhf-gwp}
	\nrm{u(s)-Q}_{H^{1} \cap L^{\infty}} \aleq e^{-c_{0}^{2} s} \nrm{u_{0} - Q}_{H^{1} \cap L^{\infty}},
\end{equation}
where the implicit constant depends on $Q$ and $c_{0}$.
\end{theorem}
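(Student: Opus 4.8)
The plan is to run a continuity/bootstrap argument that upgrades the local well-posedness of Proposition~\ref{prop:hmhf-lwp} to a global statement, using the weak linearized stability condition \eqref{eq:weak-stab} to obtain exponential decay. The key observation is that the quantity one should propagate is not directly $\nrm{u(s)-Q}_{H^1\cap L^\infty}$, but a coercive energy-type quantity built from the linearized operator $\calH_Q$, which by \eqref{eq:weak-stab} dominates $\rho_Q^2 \nrm{\phi}_{L^2}^2$. First I would work in the extrinsic formulation \eqref{eq:hmhf-ex-v}, writing $v = u - Q$, and set up the bootstrap on a time interval $[0, S]$ with the assumption, say, $\nrm{v}_{L^\infty_s([0,S]; H^1\cap L^\infty)} \leq \eps_1$ for a suitable $\eps_1 = \eps_1(Q, c_0)$ to be chosen. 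Under this assumption, by Proposition~\ref{prop:hmhf-lwp} (and Remark~\ref{rem:hmhf-small-data}) the solution exists and obeys the local a priori bounds on each unit-length subinterval; the point is to show that if $\eps_0 = \epshf$ is small enough, the bootstrap constant can be strictly improved, which by continuity in $S$ gives global existence together with \eqref{eq:hmhf-gwp}.

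The core estimate is the differential inequality for the energy. Writing the equation for $v$ schematically as $\rd_s v = -\calH_Q v + \mathcal{R}[v]$, where $\mathcal{R}[v]$ collects the terms in $\bfF[v]$ that are at least quadratic in $v$ (the linear-in-$v$ part being exactly $-\calH_Q v$, up to the identification via the frame), one computes
\begin{align}
\frac{1}{2}\rd_s \int (v, v) &= \int (\rd_s v, v) = -\int (\calH_Q v, v) + \int (\mathcal{R}[v], v) \notag \\
&\leq -\rho_Q^2 \int (v, v) + \nrm{\mathcal{R}[v]}_{L^2}\nrm{v}_{L^2}. \notag
\end{align}
Using the quadratic/higher structure of $\mathcal{R}[v]$ together with the pointwise bounds \eqref{eq:hmhf-lwp-pf-tS-1}, \eqref{eq:hmhf-lwp-pf-tS-2} and the bootstrap smallness $\nrm{v}_{H^1\cap L^\infty}\leq \eps_1$, one gets $\nrm{\mathcal{R}[v]}_{L^2} \lesssim \eps_1 \nrm{\nabla v}_{L^2} + \eps_1 \nrm{v}_{L^2}$; combined with the $L^2_s H^1$ bound coming from the local theory and Young's inequality, the cross term is absorbed so that $\rd_s \int (v,v) \leq -2c_0^2 \int(v,v)$ once $\eps_1$ is small. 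Gronwall then yields $\nrm{v(s)}_{L^2} \lesssim e^{-c_0^2 s}\nrm{v_0}_{L^2}$. To promote this to $H^1 \cap L^\infty$, I would run the corresponding energy estimate at the level of $\nabla v$ (controlling $\int (\calH_Q \nabla v, \nabla v)$ via the $H^1$ coercivity that follows from \eqref{eq:weak-stab} plus the $L^p$-elliptic theory of Corollary~\ref{c:spec-gap-Lp-H}, or more simply by commuting and using that the decay in $L^2$ already controls the dangerous low-frequency part), and then recover the $L^\infty$ bound from the $H^1$ bound via the heat-semigroup smoothing of Lemma~\ref{lem:lin-heat-L2Linfty} together with Duhamel and the exponential $L^2$ decay, summing the geometric series over unit time-steps.

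The main obstacle I anticipate is the interplay between the \emph{coercive} quantity controlled by \eqref{eq:weak-stab} — which is intrinsically tied to $\calH_Q$ acting on sections of $Q^\ast T\tgmfd$ — and the \emph{extrinsic} $H^1\cap L^\infty$ norm of $v = u - Q$ in which the local theory and Theorem~\ref{thm:hmhf-gwp-simple} are phrased. Reconciling these requires care: one must linearize about $Q$ rather than about the moving solution $u(s)$, absorb the difference $\calH_{u(s)} - \calH_Q$ into the error (this is again quadratically small in $v$), and make sure the frame identification does not introduce spurious growth. A secondary technical point is keeping all implicit constants independent of $S$ throughout the bootstrap, which is why it is essential to chop $[0,S]$ into unit-length pieces and use the $S$-independent local bounds of Proposition~\ref{prop:hmhf-lwp} on each, rather than a single global application of the local theory. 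Once these are handled, the exponential rate $c_0^2 < \rho_Q^2$ (with the loss from $\rho_Q$ to $c_0$ absorbing the quadratic errors) and the closing of the bootstrap are routine.
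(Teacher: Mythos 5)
Your strategy --- a direct bootstrap on $v = u - Q$ in the extrinsic formulation, with an $L^{2}$ energy identity driven by the coercivity of $\calH_{Q}$ --- is genuinely different from the paper's. The paper instead joins $Q$ to $u_{0}$ by a homotopy $u_{0}(\lmb;\cdot)$ (Lemma~\ref{lem:diff-maps}), evolves each slice by the heat flow, and runs the coercivity argument on $\rd_{\lmb} u$, which solves the linearized harmonic map heat flow \emph{exactly} (zero inhomogeneity) and is an honest section of $u(\lmb)^{\ast} T\tgmfd$; expressed in a frame parallel-transported in $\lmb$ from the Coulomb frame $e^{\infty}$, the equation is rewritten with the operator $H$ about $Q$ on the left and bootstrap-small perturbations on the right, Definition~\ref{def:weak-stab} is applied, and finally $u - Q = \int_{0}^{1}\rd_{\lmb} u\,\ud\lmb$. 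This simultaneously proves Proposition~\ref{prop:l-hmhf-gwp}, which is needed later.

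The gap in your version sits exactly at the step
\[
\int (\calH_{Q} v, v) \geq \rho_{Q}^{2} \int (v,v), \qquad v = u - Q.
\]
The weak linearized stability condition \eqref{eq:weak-stab} is a statement about sections $\phi$ of $Q^{\ast} T\calN$, i.e. $\phi(x) \in T_{Q(x)}\tgmfd$. The difference $v = u - Q$ of two maps into the embedded surface is an $\bbR^{N}$-valued function with a nonvanishing component normal to $T_{Q(x)}\tgmfd$, so the coercivity is not available for it as written; moreover the linear-in-$v$ part of \eqref{eq:hmhf-ex-v} is the full extrinsic linearization, not $-\calH_{Q} v$, and its normal components do not pair away against a non-tangent $v$. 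You correctly flag ``the interplay between the coercive quantity and the extrinsic norm'' as the main obstacle, but the fix you propose --- absorbing $\calH_{u(s)} - \calH_{Q}$ --- addresses linearization about the wrong base point, not tangency. The repair would be to split $v = v^{\top} + v^{\perp}$ relative to $T_{Q}\tgmfd$, use the pointwise bound $\abs{v^{\perp}} \aleq \abs{v}^{2}$ (hence $\nrm{v^{\perp}}_{H^{1}} \aleq \eps_{1}\nrm{v}_{H^{1}}$ under the bootstrap) to reduce the energy identity to the tangential part, and absorb the resulting cross terms; this is doable but is precisely the bookkeeping the paper's homotopy construction is designed to avoid. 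The remaining ingredients of your sketch --- interpolating the coercivity to produce a good $\nrm{\nb v}_{L^{2}}^{2}$ term that absorbs $\eps_{1}\nrm{\nb v}_{L^{2}}\nrm{v}_{L^{2}}$, and a unit-time-step use of the local theory plus parabolic smoothing for the $H^{1}\cap L^{\infty}$ upgrade --- match the paper's Steps~3--4 and are sound.
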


We also have the following corresponding result for the linearized harmonic map heat flow.
\begin{proposition} \label{prop:l-hmhf-gwp}
Let $Q$ be a weakly linearly stable harmonic map from $\bbH^{2}$ into $\tg$, let $0 < c_{0} < \rho_{Q}$ and let $u$ be a solution to \eqref{eq:hmhf-ex} on $[0, \infty)$ with the initial bound \eqref{eq:hmhf-wp-hyp} that is given by Theorem~\ref{thm:hmhf-gwp}. Then any solution $\phi$ to the corresponding linearized harmonic map heat flow \eqref{eq:l-hmhf-ex} with $\phi (s=0) = \phi_{0}$ (given by Proposition~\ref{prop:l-hmhf-ex}) obeys the following long time bound for $s \geq 0$:
\begin{align}
	\nrm{\phi(s)}_{L^{2}} 
	+ \nrm{\phi}_{L^{2}_{s}([s, s+1]; H^{1} \cap L^{\infty})} &\aleq e^{- c_{0}^{2} s} \nrm{\phi_{0}}_{L^{2}} + \int_{0}^{s} e^{-c_{0}^{2}(s - s')} \nrm{f(s')}_{L^{2}} \, \ud s', \label{eq:l-hmhf-gwp-0} \\
	\nrm{\phi(s)}_{H^{1} \cap L^{\infty}} + \nrm{\nb \phi}_{L^{2}_{s}([s, s+1]; H^{1} \cap L^{\infty})} &\aleq e^{- c_{0}^{2} s} \nrm{\phi_{0}}_{H^{1} \cap L^{\infty}} \label{eq:l-hmhf-gwp-1} \\
	&\phantom{\aleq} 
	+ \int_{0}^{s} e^{-c_{0}^{2}(s - s')} \nrm{f(s')}_{L^{2}} \, \ud s' + \int_{\max \set{0, s-1}}^{s} \nrm{f(s')}_{H^{1} \cap L^{\infty}} \, \ud s'.  \notag
\end{align}
The implicit constants depend on $Q$ and $c_{0}$.
\end{proposition}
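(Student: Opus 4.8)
The plan is to establish \eqref{eq:l-hmhf-gwp-0} and \eqref{eq:l-hmhf-gwp-1} by a combination of (i) a global-in-time $L^2$ energy estimate that exploits the weak linearized stability of $Q$, and (ii) parabolic regularity/smoothing bootstrapped from the local theory in Proposition~\ref{prop:l-hmhf-ex}, using the fact (Theorem~\ref{thm:hmhf-gwp}) that $u(s) \to Q$ exponentially, so that for $s$ large the linearized operator in \eqref{eq:l-hmhf-ex} is an exponentially small perturbation of $\calH_Q$.

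\textbf{Step 1: the $L^2$ energy estimate.} First I would rewrite the extrinsic linearized equation \eqref{eq:l-hmhf-ex} in the intrinsic form
\begin{equation*}
	D_s \phi + \calH_u \phi = f + (\text{error}),
\end{equation*}
where $\calH_u \phi = -D^\ell D_\ell \phi - R(u)(\phi, \rd_\ell u)\rd^\ell u$ is the operator of \eqref{eq:lin-hm} with $Q$ replaced by $u(s)$, and the error terms come from the discrepancy between $\calH_u$ and the curvature term appearing in \eqref{eq:l-hmhf-ex} (they are of the same schematic form and cause no trouble). Pairing with $\phi$ and integrating over $\bbH^2$ gives
\begin{equation*}
	\tfrac{1}{2}\rd_s \nrm{\phi(s)}_{L^2}^2 + \int_{\bbH^2}(\calH_u \phi, \phi) = \int_{\bbH^2}(f, \phi).
\end{equation*}
The key point is that $\int (\calH_u \phi, \phi) \geq c_0^2 \nrm{\phi}_{L^2}^2$ for all $s \geq s_\ast$, where $s_\ast = s_\ast(Q, c_0)$: indeed the weak linearized stability condition gives $\int(\calH_Q \phi,\phi) \geq \rho_Q^2 \nrm{\phi}_{L^2}^2$, and $\calH_u - \calH_Q$ is controlled in operator norm on $L^2$ (using, say, $L^\infty$ bounds on the coefficients and a Hardy-type inequality to absorb the gradient-order piece) by $\aleq \nrm{u(s)-Q}_{H^1\cap L^\infty} \aleq e^{-\rho_Q^2 s}\epshf$, which is $\leq \rho_Q^2 - c_0^2$ once $s \geq s_\ast$ (shrinking $\epshf$ if necessary). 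For $s \in [0, s_\ast]$ one uses instead a crude Gronwall bound from the local theory \eqref{eq:l-hmhf-ex-weak}, which costs only an $s_\ast$-dependent constant. Combining the differential inequality $\rd_s \nrm{\phi}_{L^2}^2 \leq -2c_0^2 \nrm{\phi}_{L^2}^2 + 2\nrm{f}_{L^2}\nrm{\phi}_{L^2}$ on $[s_\ast,\infty)$ with the bound on $[0,s_\ast]$ and the monotonicity of $\int_0^s e^{-c_0^2(s-s')}\nrm{f(s')}_{L^2}\,\ud s'$ yields the pointwise part of \eqref{eq:l-hmhf-gwp-0}; the $L^2_s([s,s+1]; H^1\cap L^\infty)$ part then follows by applying the \emph{local} estimate \eqref{eq:l-hmhf-ex-1} (with $H^1\cap C^0$ replaced by $L^2$-input) on the unit window $[s,s+1]$ with ``initial'' data $\phi(s)$, using Remark~\ref{rem:hmhf-small-data} to make the constant uniform, together with the Gagliardo--Nirenberg inequality \eqref{eq:GN-Linfty} to pass from $L^2_s H^2$ control to $L^2_s L^\infty$.

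\textbf{Step 2: the $H^1\cap L^\infty$ estimate.} For \eqref{eq:l-hmhf-gwp-1} I would again localize to unit $s$-windows. On a window $[s_0, s_0+1]$ with $s_0 \geq 1$, split the data contribution and the source contribution: the local theory \eqref{eq:l-hmhf-ex-1} applied with initial time $s_0 - 1$ controls $\nrm{\phi}_{L^\infty_s([s_0,s_0+1];H^1\cap L^\infty)}$ by $\nrm{\phi(s_0-1)}_{H^1\cap L^\infty} + \nrm{f}_{L^1_s([s_0-1,s_0+1];H^1\cap L^\infty)}$; but $\phi(s_0-1)$ in $H^1\cap L^\infty$ is not directly available. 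Here one uses parabolic smoothing: $\phi(s_0-1)$ itself can be written as the heat-flow image over the window $[s_0-2, s_0-1]$ of $\phi(s_0-2)$, and the smoothing estimate (e.g. the $k=1$ case of \eqref{eq:l-hmhf-ex-k-weak}, which gains the derivative from the $s$-weight) bounds $\nrm{\phi(s_0-1)}_{H^1}$ by $\aleq \nrm{\phi(s_0-2)}_{L^2} + \nrm{f}_{L^1_s H^1 + L^2_s H^0}$ on that window, and similarly $\nrm{\phi(s_0-1)}_{L^\infty}$ via \eqref{eq:GN-Linfty}. Chaining this with the $L^2$ decay \eqref{eq:l-hmhf-gwp-0} already established converts the $\nrm{\phi(s_0-1)}_{H^1\cap L^\infty}$ term into $e^{-c_0^2 s_0}\nrm{\phi_0}_{L^2} + \int_0^{s_0}e^{-c_0^2(s_0-s')}\nrm{f}_{L^2}\,\ud s'$ plus the local $f$-terms on $[s_0-2, s_0]$, which are bounded by $\int_{\max\{0,s-1\}}^s \nrm{f(s')}_{H^1\cap L^\infty}\,\ud s'$ after a harmless shift (and for $s \leq 2$ one absorbs everything into the local estimate on $[0,s]$). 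Replacing $\nrm{\phi_0}_{L^2}$ by $\nrm{\phi_0}_{H^1\cap L^\infty}$ (weaker) where needed gives the stated form. The $\nrm{\nb\phi}_{L^2_s([s,s+1];H^1\cap L^\infty)}$ piece is then read off from the same local estimate \eqref{eq:l-hmhf-ex-1} on $[s,s+1]$.

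\textbf{Main obstacle.} The heart of the argument is Step~1's spectral perturbation claim: that $\int(\calH_u\phi,\phi) \geq c_0^2\nrm{\phi}_{L^2}^2$ for $s$ large. The subtlety is that $\calH_u - \calH_Q$ contains a first-order (magnetic-type) term $\sim (\rd u - \rd Q)\cdot D\phi$, which is \emph{not} bounded on $L^2$ by the $L^\infty$ norm of its coefficient alone; one must either integrate by parts to move the derivative and pick up $\nrm{D(\rd u - \rd Q)}$ (controlled in $L^2_s L^2_x$, not pointwise in $s$, so this has to be done on unit windows after the pointwise $L^2$ bound is in hand — a mild bootstrap), or use a Hardy--Poincaré inequality as in the proof of Proposition~\ref{prop:strong-stab} to absorb it. Handling this cleanly, and organizing the windowed bootstrap so the constants stay uniform in $s$ (which Remark~\ref{rem:hmhf-small-data} makes possible), is the only real work; everything else is routine bookkeeping with the local estimates of Proposition~\ref{prop:l-hmhf-ex} and the linear heat bounds of Subsection~\ref{subsec:fs}.
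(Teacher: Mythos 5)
Your overall strategy — an $L^2$ energy estimate powered by the weak linearized stability of $Q$, followed by an upgrade to $H^1\cap L^\infty$ via the local parabolic theory on unit windows — is the same as the paper's, and your Step~2 is essentially the paper's Step~5 (the paper finds a good time $s_0\in[s-1,s]$ by pigeonholing the $L^2_s(H^1\cap L^\infty)$ bound from \eqref{eq:l-hmhf-gwp-0} and then applies Proposition~\ref{prop:l-hmhf-ex} from $s_0$). However, there is a genuine gap at the heart of your Step~1. You write $D_s\phi+\calH_u\phi=f+(\text{error})$ and then invoke the coercivity $\int(\calH_Q\phi,\phi)\geq\rho_Q^2\|\phi\|_{L^2}^2$ after estimating ``$\calH_u-\calH_Q$'' in operator norm. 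But $\calH_u$ acts on sections of $u(s)^*T\tgmfd$ while $\calH_Q$ acts on sections of $Q^*T\tgmfd$, and Definition~\ref{def:weak-stab} is a statement \emph{only} about sections $\phi$ with $\phi(x)\in T_{Q(x)}\tgmfd$; it simply does not apply to $\phi(x,s)\in T_{u(x,s)}\tgmfd$, and the ``difference'' $\calH_u-\calH_Q$ is not defined until you identify the two bundles. This identification is precisely the content of the paper's Step~2: a frame is parallel-transported (along a homotopy from $Q$ to $u$, with $A_\lmb=0$) so that the linearized flow becomes a \emph{scalar} equation $\Db_s\varphi+H\varphi=\mathrm{RHS}$ whose principal part is exactly the operator $H$ of \eqref{eq:lin-Q}, with the discrepancy appearing as explicit terms $2i\ringA^\ell\Db^\infty_\ell\varphi$, $\ringA^\ell\ringA_\ell\varphi$, $\Im(\ringpsi^\ell\bar\varphi)\psi_\ell$, etc., whose coefficients $\ringA,\ringpsi$ are $O(\epshf^{1/2})$ in $L^\infty$ by the bootstrap. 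An alternative fix within your extrinsic setup (pointwise orthogonal projection of $\phi$ onto $T_Q\tgmfd$, with the complementary piece controlled by $\|u-Q\|_{L^\infty}\|\phi\|$) would work, but it must be carried out; as written, the coercivity step is not justified.

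Two secondary remarks. First, your treatment of the first-order term is on the right track: the paper absorbs $2i\ringA^\ell\Db^\infty_\ell\varphi$ exactly as you suggest, by extracting from the weak stability condition the interpolated coercivity $c\|\Db^\infty\varphi\|_{L^2}^2+(\rho_Q')^2\|\varphi\|_{L^2}^2\leq\Re\int H\varphi\bar\varphi$ and taking $\epshf$ small; no Hardy inequality or integration by parts in $s$ is needed, and no threshold time $s_\ast$ is needed either since the perturbation is small uniformly in $s$. Second, for the $L^2_s([s,s+1];L^\infty)$ piece of \eqref{eq:l-hmhf-gwp-0} your plan (re-initialize at $\phi(s)\in L^2$ and use \eqref{eq:l-hmhf-ex-k-weak} plus Gagliardo--Nirenberg) runs into a borderline degeneracy of the weight $s^{k/2}$ at the left endpoint of the window (the resulting $s$-integral is logarithmically divergent); the paper instead derives the pointwise Bochner inequality $(\rd_s-\lap)|\varphi|\leq M|\varphi|+|F|$, applies the comparison principle, and invokes the heat-flow estimate \eqref{eq:lin-heat-L2Linfty}, which avoids this issue cleanly. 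Both of these are repairable, but the bundle identification is the step you must actually supply.
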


\begin{remark}[Asymptotic stability of $Q$ in $H^{1}$]
With the $H^{1}$ theory sketched in Remark~\ref{rem:hmhf-lwp-H1}, Theorem~\ref{thm:hmhf-gwp} may be immediately upgraded to an analogous result in $H^{1}$ (i.e., with the less stringent assumption $\nrm{u_{0} - Q}_{H^{1}} \ll 1$). Indeed, if $u_{0} - Q$ is sufficiently small in $H^{1}$, then by \eqref{eq:hmhf-lwp-H1} and the pigeonhole principle we may find a good $s_{1} \simeq 1$ at which $u(s_{1})$ exists and $\nrm{u(s_{1}) - Q}_{H^{2}} \ll 1$. Thus, we may apply Theorem~\ref{thm:hmhf-gwp} with $u(s_{1})$ as the initial data.
\end{remark}
Before we proceed to the proofs of these results, we point out a simple corollary of Theorem~\ref{thm:hmhf-gwp} and the local theory in the previous subsection that allows us to easily derive decay of any higher derivatives for $s \geq 1$.
\begin{corollary} \label{cor:hmhf-gwp-higher}
Let $Q$ be a weakly linearly stable harmonic map from $\bbH^{2}$ into $\tgmfd$, let $0 < c_{0} < \rho_{Q}$ and let $u$ be a solution to \eqref{eq:hmhf-ex} on $[0, \infty)$ with the initial bound \eqref{eq:hmhf-wp-hyp} that is given by Theorem~\ref{thm:hmhf-gwp}. 
\begin{itemize}
\item For any $s \geq 1$ and $k = 1, 2, \ldots$, we have
\begin{equation} \label{eq:hmhf-gwp-k}
	\nrm{u(s)-Q}_{H^{k}}
	+ \nrm{\rd_{s} u(s)}_{H^{k}}
	\aleq \nrm{u(s-1)-Q}_{H^{1} \cap L^{\infty}} \aleq e^{-c_{0}^{2} s} \nrm{u_{0} - Q}_{H^{1} \cap L^{\infty}},
\end{equation}
where the implicit constants depend on $Q$, $c_{0}$ and $k$.
\item Let $\phi$ be a solution to the corresponding linearized harmonic map heat flow \eqref{eq:l-hmhf-ex} (given by Propositions~\ref{prop:l-hmhf-ex} and \ref{prop:l-hmhf-gwp}). For any $s \geq 1$ and $k = 1, 2, \ldots$, we have
\begin{equation} \label{eq:l-hmhf-gwp-k}
\begin{aligned}
	& \nrm{\phi(s)}_{H^{k+1}} + \nrm{\phi}_{L^{2}_{s}([s, s+1]; H^{k+2})}
	+ \nrm{\rd_{s} \phi}_{L^{2}_{s}([s, s+1]; H^{k})} \\
	& \aleq \nrm{\phi(s-1)}_{L^{2}} + \nrm{f}_{L^{1}_{s} ([s-1, s+1]; H^{k+1}) +L^{2}_{s} ([s-1; s+1]; H^{k})}
\end{aligned}
\end{equation}
where the implicit constant depends on $Q$, $c_{0}$ and $k$.
\end{itemize}
\end{corollary}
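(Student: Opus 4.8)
\textbf{Proof plan for Corollary~\ref{cor:hmhf-gwp-higher}.}
The plan is to derive both estimates by combining the global-in-time $H^{1}\cap L^{\infty}$ decay furnished by Theorem~\ref{thm:hmhf-gwp} (for the map) and Proposition~\ref{prop:l-hmhf-gwp} (for the linearized flow) with the purely \emph{local} parabolic smoothing estimates of the previous subsection, applied on unit-length time intervals. The key point is that for $s\ge 1$ one can always look at the solution on the interval $[s-1,s]$, on which (by Theorem~\ref{thm:hmhf-gwp}, or rather Remark~\ref{rem:hmhf-small-data}) the solution is small in $H^{1}\cap L^{\infty}$ and the quantities $\nrm{\nb(u-Q)}_{L^{2}_{s}([s-1,s];L^{\infty})}+|[s-1,s]|^{1/2}$ controlling the local constants are $\le 1$; hence all implicit constants from Propositions~\ref{prop:hmhf-lwp} and \ref{prop:l-hmhf-ex} are absolute (depending only on $Q$, $\tg$, and the number of derivatives). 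The parabolic smoothing then upgrades the $H^{1}$ bound at ``time'' $s-1$ to an $H^{k}$ (or $H^{k+1}$) bound at time $s$, with the gain in the number of derivatives coming from the factor $s^{k/2}$ in \eqref{eq:hmhf-lwp-k}, which is $\simeq 1$ on $[s-1,s]$.

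\emph{Step 1 (the map).} Fix $s\ge 1$ and apply the parabolic smoothing estimate \eqref{eq:hmhf-lwp-k} of Proposition~\ref{prop:hmhf-lwp} on the interval $J=[s-1,s]$ (after translating so that the initial time is $0$), with $\sgm=1$ and the given $k$. This yields
\[
	\nrm{u(s)-Q}_{H^{1+k}}+\nrm{\rd_{s}u(s)}_{H^{k-1}}\aleq \nrm{u(s-1)-Q}_{H^{1}\cap L^{\infty}},
\]
where the implicit constant depends only on $k$, $Q$, $\tg$ by Remark~\ref{rem:hmhf-small-data}. (For the claimed $H^{k}$-control of $\rd_{s}u(s)$ in \eqref{eq:hmhf-gwp-k}, one simply uses $\rd_{s}u=\lap u+S(u)\,\rd^{\ell}u\,\rd_{\ell}u$ together with the just-obtained $H^{1+k}$ bound on $u-Q$, the algebra property of $H^{1}\cap L^{\infty}$-controlled Sobolev spaces, and Proposition~\ref{prop:frac-moser}; alternatively, re-apply \eqref{eq:hmhf-lwp-k} with a slightly larger $k$ on $[s-1,s]$ and discard.) Finally invoke the exponential decay \eqref{eq:hmhf-gwp} at time $s-1$, i.e.\ $\nrm{u(s-1)-Q}_{H^{1}\cap L^{\infty}}\aleq e^{-c_{0}^{2}(s-1)}\nrm{u_{0}-Q}_{H^{1}\cap L^{\infty}}\aleq e^{-c_{0}^{2}s}\nrm{u_{0}-Q}_{H^{1}\cap L^{\infty}}$, absorbing the harmless factor $e^{c_{0}^{2}}$ into the implicit constant. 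This proves \eqref{eq:hmhf-gwp-k}.

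\emph{Step 2 (the linearized flow).} Fix $s\ge 1$ and apply the parabolic-smoothing bound \eqref{eq:l-hmhf-ex-k-weak} of Proposition~\ref{prop:l-hmhf-ex} (with $\sgm=0$, base space $H^{0}=L^{2}$, the given $k$, and replaced $k\mapsto k+1$ so as to land in $H^{k+1}$) on the interval $J=[s-1,s]$, using that $u$ is small in $H^{1}\cap L^{\infty}$ on $[s-1,s]$ so the implicit constants are absolute. This gives
\[
	\nrm{\phi(s)}_{H^{k+1}}+\nrm{\phi}_{L^{2}_{s}([s,s+1];H^{k+2})}+\nrm{\rd_{s}\phi}_{L^{2}_{s}([s,s+1];H^{k})}
	\aleq \nrm{\phi(s-1)}_{L^{2}}+\nrm{f}_{L^{1}_{s}([s-1,s+1];H^{k+1})+L^{2}_{s}([s-1,s+1];H^{k})},
\]
where on the left we have enlarged the time interval from $[s-1,s]$ to $[s,s+1]$ (and on the right we absorb the $J=[s,s+1]$ contribution analogously), and the inhomogeneous term appears with the powers of $s\simeq 1$ dropped. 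This is exactly \eqref{eq:l-hmhf-gwp-k}. (If desired, one may further insert the decay estimate \eqref{eq:l-hmhf-gwp-0} for $\nrm{\phi(s-1)}_{L^{2}}$ to make the decay explicit, but the statement as given only asks for the bound in terms of $\nrm{\phi(s-1)}_{L^{2}}$ and the source.)

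\emph{Main obstacle.} There is no genuine analytic difficulty here; the only thing requiring care is bookkeeping of the dependence of constants, namely verifying via Remark~\ref{rem:hmhf-small-data} that on every unit window $[s-1,s]$ with $s\ge 1$ the controlling quantity $\nrm{\nb(u-Q)}_{L^{2}_{s}([s-1,s];L^{\infty})}+|[s-1,s]|^{1/2}$ is bounded by an absolute constant (which holds once $\epshf$ in Theorem~\ref{thm:hmhf-gwp} is chosen small enough, since then $\nrm{u(s-1)-Q}_{H^{1}\cap L^{\infty}}$ is small and one re-runs the local theory from time $s-1$), so that the local constants in Propositions~\ref{prop:hmhf-lwp} and \ref{prop:l-hmhf-ex} become independent of $s$. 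Once that is in place, both displays follow by directly quoting \eqref{eq:hmhf-lwp-k} and \eqref{eq:l-hmhf-ex-k-weak} and chaining with the already-established global decay estimates. \qedhere
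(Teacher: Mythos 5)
Your proposal is correct and follows the paper's strategy: on each unit window the solution is small in $H^{1}\cap L^{\infty}$, so by Remark~\ref{rem:hmhf-small-data} the local constants in Propositions~\ref{prop:hmhf-lwp} and \ref{prop:l-hmhf-ex} are absolute, and the parabolic smoothing estimates \eqref{eq:hmhf-lwp-k} and \eqref{eq:l-hmhf-ex-k-weak} applied on $[s-1,s+1]$ upgrade the $H^{1}\cap L^{\infty}$ (resp.\ $L^{2}$) control at time $s-1$ to the stated higher norms, after which the global decay \eqref{eq:hmhf-gwp} is inserted. The one sub-step where you genuinely diverge is the pointwise-in-$s$ bound on $\nrm{\rd_{s}u(s)}_{H^{k}}$: the smoothing estimate only yields $\rd_{s}u$ in $L^{2}_{s}H^{k}$, and the paper upgrades this by pigeonholing a good time $s'\in[s,s+1]$ and propagating forward via \eqref{eq:l-hmhf-gwp-k}, using that $\rd_{s}u$ solves the linearized flow with $f=0$; you instead substitute directly into $\rd_{s}u=\lap u+S(u)\,\rd^{\ell}u\,\rd_{\ell}u$, paying one extra derivative of smoothing and invoking the product/Moser estimates of Section~2. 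Both routes close. One caution: your parenthetical ``alternative'' (re-applying \eqref{eq:hmhf-lwp-k} with a larger $k$ and discarding) does not by itself give the pointwise bound, since that estimate still only controls $\rd_{s}u$ in $L^{2}$ of the heat time; it is the substitution argument (or the paper's pigeonhole argument) that actually finishes this step.
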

\begin{proof}
Estimate  \eqref{eq:l-hmhf-gwp-k} is a direct consequence of \eqref{eq:l-hmhf-ex-k}, \eqref{eq:hmhf-gwp} and Remark~\ref{rem:hmhf-small-data}, so we only need to consider \eqref{eq:hmhf-gwp-k}. By \eqref{eq:hmhf-gwp} and \eqref{eq:hmhf-lwp-k} applied to $J = [s-1, s+1]$, we have
\begin{equation*}
	\nrm{u(s) - Q}_{H^{1+k}} + \nrm{\rd_{s} u}_{L^{2}_{s}([s, s+1]; H^{k})}\aleq \nrm{u(s-1) - Q}_{H^{1} \cap L^{\infty}} \aleq e^{-c_{0}^{2} s} \nrm{u_{0} - Q}_{H^{1} \cap L^{\infty}}.
\end{equation*}
To obtain \eqref{eq:hmhf-gwp-k}, it remains to upgrade the $L^{2}_{s} ([s, s+1]; H^{k})$ control of $\rd_{s} u$ to a point-wise bound for, say, $\nrm{\rd_{s} u(s+1)}_{H^{k}}$. By the pigeonhole principle, there exists $s' \in [s, s+1]$ such that $\nrm{\rd_{s} u(s')}_{H^{k}} \leq \nrm{\rd_{s} u}_{L^{2}_{s}([s, s+1]; H^{k})}$. Then, since $\rd_{s} u$ solves the linearized harmonic map heat flow with $f = 0$, we may apply \eqref{eq:l-hmhf-gwp-k} to conclude that $\nrm{\rd_{s} u(s+1)}_{H^{k}} \aleq \nrm{\rd_{s} u(s')}_{H^{k}}$. Putting all these bounds together, \eqref{eq:hmhf-gwp-k} follows. \qedhere

\end{proof}

Our proofs of Theorem~\ref{thm:hmhf-gwp} and Proposition~\ref{prop:l-hmhf-gwp} are based on a long term analysis of the linearized harmonic map heat flow in the moving frame formalism, which allows us to utilize the weak linearized stability assumption. Given $e$ on $u^{\ast} T \tgmfd$ such that $e_{2} = J e_{1}$, note that the linearized harmonic map heat flow \eqref{eq:l-hmhf-ex} is equivalent to
\begin{equation} \label{eq:l-hmhf-cov-gen}
	\Db_{s} \varphi - \Db^{\ell} \Db_{\ell} \varphi + i \tcv \Im(\psi^{\ell} \overline{\varphi}) \psi_{\ell} = F,
\end{equation}
where $\varphi, F : \bbH^{2} \times J \to \bbC$ are related to $\phi, f : \bbH^{2} \times J \to T_{u} \tgmfd$ by $\phi = e \varphi$ and $f = e F$, see Subsection~\ref{subsec:hm}. 
To use the moving frame formalism, we need the following lemma.

\begin{lemma} \label{lem:diff-maps}
There exists $\epsN > 0$ such that the following holds. For any pair $u_{(0)}, u_{(1)}$ of smooth maps from $\bbH^{2}$ into $\tgmfd$ satisfying
\begin{equation*}
	\nrm{u_{(j)} - Q}_{H^{1} \cap L^{\infty}} < \epsN, \quad (j=0, 1),
\end{equation*}
there exists a $C^{1}$-homotopy $u_{(\lmb)}$ for $0 \leq \lmb \leq 1$ between the two maps such that
\begin{equation*}
\nrm{\rd_{\lmb} u_{(\lmb)}}_{H^{1} \cap L^{\infty}} \aleq \nrm{u_{(j)} - Q}_{H^{1} \cap L^{\infty}} \quad \hbox{ for all } \lmb \in [0, 1].
\end{equation*}
\end{lemma}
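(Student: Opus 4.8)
The plan is to construct the homotopy explicitly using the nearest-point projection onto the target. Recall from Subsection~\ref{subsec:hmhf-ext} that $\tgmfd$ (i.e., the closed modification $\tgmfd'$) is isometrically embedded in $\bbR^{N}$, and there is a tubular neighborhood $\tgmfd_{\eps}$ on which the closest-point projection $\pi_{\tgmfd} : \tgmfd_{\eps} \to \tgmfd$ is well-defined and uniformly smooth. First I would take the straight-line homotopy in the ambient Euclidean space,
\begin{equation*}
	\tilde{u}_{(\lmb)}(x) = (1-\lmb) u_{(0)}(x) + \lmb u_{(1)}(x) = Q(x) + (1-\lmb)(u_{(0)} - Q)(x) + \lmb(u_{(1)} - Q)(x),
\end{equation*}
and then set $u_{(\lmb)} = \pi_{\tgmfd} \circ \tilde{u}_{(\lmb)}$. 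The point is that $\nrm{u_{(j)} - Q}_{L^{\infty}} < \epsN$ (for $\epsN$ small) forces $\tilde{u}_{(\lmb)}(x)$ to lie in $\tgmfd_{\eps}$ for every $\lmb$ and $x$, since it lies within distance $\epsN$ of $Q(x) \in \tgmfd$; hence $u_{(\lmb)}$ is well-defined and takes values in $\tgmfd$, and $u_{(0)}, u_{(1)}$ are recovered at the endpoints (as $\pi_{\tgmfd}$ is the identity on $\tgmfd$).

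The next step is to verify the quantitative bound on $\rd_{\lmb} u_{(\lmb)}$. Since $\rd_{\lmb} \tilde{u}_{(\lmb)} = u_{(1)} - u_{(0)} = (u_{(1)} - Q) - (u_{(0)} - Q)$, the chain rule gives
\begin{equation*}
	\rd_{\lmb} u_{(\lmb)} = \ud \pi_{\tgmfd}(\tilde{u}_{(\lmb)}) \big[ (u_{(1)} - Q) - (u_{(0)} - Q) \big].
\end{equation*}
For the $L^{\infty}$ bound one uses $\nrm{\ud \pi_{\tgmfd}}_{L^{\infty}} \aleq 1$ directly. For the $H^{1}$ bound I would differentiate once more in $x$: $\nb(\rd_{\lmb} u_{(\lmb)})$ consists of a term with $\ud \pi_{\tgmfd}$ hitting $\nb(u_{(1)} - u_{(0)})$ and a term with $\ud^{2}\pi_{\tgmfd}$ hitting $\nb \tilde{u}_{(\lmb)} \cdot (u_{(1)} - u_{(0)})$. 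The first is controlled by $\nrm{\nb(u_{(1)} - u_{(0)})}_{L^{2}} \aleq \nrm{u_{(0)} - Q}_{H^{1}} + \nrm{u_{(1)} - Q}_{H^{1}}$; the second, using $\nb \tilde{u}_{(\lmb)} = \nb Q + O(\nb(u_{(j)}-Q))$, is a product estimate handled by H\"older ($L^{4} \times L^{4}$) together with the Gagliardo--Nirenberg inequality (Lemma~\ref{lem:gagliardo_nirenberg}) and the Poincar\'e inequality (Lemma~\ref{l:pi}), using the $L^{\infty}$ smallness of $u_{(j)} - Q$ and the fixed finite norm $\nrm{\nb Q}_{L^{4}}$. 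This yields $\nrm{\rd_{\lmb} u_{(\lmb)}}_{H^{1} \cap L^{\infty}} \aleq \nrm{u_{(0)} - Q}_{H^{1} \cap L^{\infty}} + \nrm{u_{(1)} - Q}_{H^{1} \cap L^{\infty}}$, which (after noting the two terms are comparable in the regime where both are $< \epsN$, or simply stating the bound with the sum) is the claim. Finally, $C^{1}$ regularity of $\lmb \mapsto u_{(\lmb)}$ is immediate since $\tilde{u}_{(\lmb)}$ is affine in $\lmb$ and $\pi_{\tgmfd}$ is smooth.

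I do not expect any serious obstacle here; the only mild subtlety is bookkeeping the $H^{1}$ product estimate for $\nb(\rd_{\lmb} u_{(\lmb)})$ so that the $\nb Q$ factor (which is only in $L^{2}$, not $L^{\infty}$) is always paired against an $L^{\infty}$-small factor, which forces the $L^{4}\times L^{4}$ splitting and the use of interpolation rather than a crude $L^{\infty} \times L^{2}$ bound. One should also remark that the homotopy can be taken through smooth maps when $u_{(0)}, u_{(1)}$ are smooth, since $\pi_{\tgmfd}$ is smooth and $\tilde u_{(\lmb)}$ is smooth in $x$; this is what is needed for the subsequent moving-frame constructions. I would present the argument in two short paragraphs: construction of $u_{(\lmb)}$, then the estimate.
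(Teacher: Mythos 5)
Your proposal is correct and follows essentially the same route as the paper: straight-line interpolation in the ambient $\bbR^{N}$, verification that $\tilde u_{(\lmb)}$ stays in the tubular neighborhood $\tgmfd_{\eps}$, and composition with $\pi_{\tgmfd}$ (the paper checks membership in $\tgmfd_{\eps}$ via the distance to $u_{(j)}(x)$ rather than to $Q(x)$, and dismisses the derivative estimate as routine, which you instead write out). One minor inaccuracy: for the explicit equivariant harmonic maps $Q$ of Proposition~\ref{prop:hm-classify} one in fact has $\nb Q \in L^{\infty}$ (indeed $|\nb Q| \sim e^{-r}$), so your caution about pairing $\nb Q$ only through $L^{4}\times L^{4}$ is unnecessary, though harmless.
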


\begin{proof}
Let $\pi_{\tgmfd}$ be the orthogonal projection to $\tgmfd$, which is a well-defined smooth map on a tubular neighborhood $\tgmfd_{\eps}$ of $\tgmfd$. Without loss of generality, take $\eps_{1} < 1$. For $\lmb \in [0, 1]$, we introduce
\begin{equation*}
	\tilde{u}_{(\lmb)} (x)= \left( \lmb u_{(1)}(x) + (1-\lmb) u_{(0)}(x) \right).
\end{equation*}
We claim that for each $x \in \bbH^{2}$, $\tilde{u}_{(\lmb)}(x)$ lies in $\tgmfd_{\eps}$. Then we may define
\begin{equation*}
	u_{(\lmb)}(x) = \pi_{\tgmfd} \tilde{u}_{(\lmb)} (x), 
\end{equation*}
for which the desired conclusion can be easily checked. To verify the claim, note that
\begin{equation*}
\min_{j=0, 1} \abs{\tilde{u}_{(\lmb)}(x) - u_{(j)}(x)} \leq \frac{1}{2} \abs{u_{(1)}(x) - u_{(0)}(x)} < \abs{u_{(1)}(x) - Q(x)} + \abs{u_{(0)}(x) - Q(x)} \aleq \epsN
\end{equation*}
where we used the hypothesis for the last inequality. Since each $u_{(j)}(x)$ lies in $\tgmfd \cap \set{x : \abs{x - Q(x)} < \epsN}$, it follows that $\tilde{u}_{(\lmb)} \in \tgmfd_{\eps}$ if $\epsN$ is chosen sufficiently small. \qedhere
\end{proof}

We are now ready to establish the main results of this subsection.

\begin{proof}[Proof of Theorem~\ref{thm:hmhf-gwp} and Proposition~\ref{prop:l-hmhf-gwp}]
The proof consists of several steps.

\pfstep{Step~1: Setting up the main bootstrap argument}
Consider the homotopy $u_{0} : [0, 1] \times \bbH^{2} \to \tgmfd$, $(\lmb; x) \mapsto u_{0}(\lmb; x)$ such that $u_{0}(0; x) = Q(x)$ and $u_{0}(1; x) = u_{0}$ given by Lemma~\ref{lem:diff-maps}. For each fixed $\lmb \in [0, 1]$, we extend $u_{0}(\lmb; x)$ to $u(\lmb; x, s)$ for $s > 0$ by solving the harmonic map heat flow. Thanks to Proposition~\ref{prop:hmhf-lwp}, by taking $\epshf$ sufficiently small we may ensure that $u(\lmb; x, s)$ exists on $[0, s_{0}]$ for $s_{0} \geq 10$, is smooth in $(\lmb, x, s) \in [0, 1] \times \bbH^{2} \times [0, s_{0}]$ and obeys 
\begin{equation} \label{eq:hmhf-gwp-btstrp-0}
	\nrm{\nb (u(\lmb; s) - Q)}_{L^{2}} \leq \epshf^{\frac{1}{2}} \quad \hbox{ for } s \in [0, s_{0}].
\end{equation}
Note that each $\rd_{\lmb} u$ obeys the linearized harmonic map heat flow \eqref{eq:l-hmhf-ex}. Taking $\epshf$ even smaller if necessary, we may also ensure (via Proposition~\ref{prop:l-hmhf-ex}) that
\begin{equation} \label{eq:hmhf-gwp-btstrp}
	\nrm{\rd_{\lmb} u(\lmb; s)}_{L^{\infty}} + \nrm{D \rd_{\lmb} u(\lmb; s)}_{L^{\infty}} \leq \epshf^{\frac{1}{2}} \quad \hbox{ for } s \in [1, s_{0}].
\end{equation}

Estimates \eqref{eq:hmhf-gwp-btstrp-0} and \eqref{eq:hmhf-gwp-btstrp} are our main bootstrap assumptions. Our goal up to Step~4 is to show that for $\epshf$ sufficiently small, the RHSs of \eqref{eq:hmhf-gwp-btstrp-0} and \eqref{eq:hmhf-gwp-btstrp} can be improved to $O(e^{- c_{0}^{2} s}\eps_{\ast})$.

\pfstep{Step~2: Setting up the moving frame formalism}
To pass to the moving frame formalism, we begin with the frame for $Q^{\ast} T \tgmfd$ given in Subsection~\ref{subsec:hm} in the case $\lmb = 0$:
\begin{equation*}
e(0; x, s) = e^{\infty}(x) \hbox{ for } u(0; x, s) = Q(x).
\end{equation*}
The exact properties of $e^{\infty}$ is not important for the current proof, except that the corresponding differential and connection coefficients $\psi^{\infty}$ and $A^{\infty}$ are smooth and well-decaying. 

Next, we extend the frame $e(0; x, s)$ to $e(\lmb; x, s)$ for $\lmb \in [0, 1]$ by parallel transportation along each constant-$(x, s)$ curve, i.e., by requiring $D_{\lmb} e = 0$; by smoothness of the map $u(\lmb; x, s)$ on $[0,1] \times \bbH^{2} \times [0, s_{0}]$, such a frame $e$ is well-defined. We denote the corresponding differential and connection coefficients by $\psi(\lmb; x, s)$ and $A(\lmb; x, s)$, i.e., $e \psi_{\bfa} = \rd_{\bfa} u$ and $e (i A_{\bfa}) = D_{\bfa} e$, where $\bfa = \lmb$, $x^{1}$, $x^{2}$ or $s$. Note that $\psi_{\bfa} (0; x, s) = \psi^{\infty}_{\bfa} (x, s)$ and $A_{\bfa}(0; x, s) = A^{\infty}_{\bfa}(x, s)$. The parallel transport condition takes the form 
\begin{equation*}
A_{\lmb} = 0. 
\end{equation*}
Thus, writing
\begin{equation*}
	\psi_{\bfa} = \ringpsi_{\bfa} + \psi^{\infty}_{\bfa}, \quad
	A_{\bfa} = \ringA_{\bfa} + A^{\infty}_{\bfa},
\end{equation*}
it follows that
\begin{align*}
\ringpsi_{\ell}(\lmb) = - \int_{0}^{\lmb} \Db_{\ell} \psi_{\lmb}(\lmb') \, \ud \lmb', \quad
\ringA_{\ell}(\lmb) =    \int_{0}^{\lmb} \tcv \Im(\overline{\psi_{\lmb}} \psi_{\ell})(\lmb') \, \ud \lmb'.
\end{align*}
Since $\abs{\rd_{\bfa} u} = \abs{\psi_{\bfa}}$ and $\abs{D_{\bfa} \rd_{\bfb} u} = \abs{\Db_{\bfa} \psi_{\bfb}}$, our bootstrap assumption \eqref{eq:hmhf-gwp-btstrp} implies
\begin{equation} \label{eq:hmhf-gwp-diff}
	\nrm{\ringpsi_{x}(\lmb; s)}_{L^{\infty}} 
	+ \nrm{\ringA_{x}(\lmb; s)}_{L^{\infty}}
	\aleq \epshf^{\frac{1}{2}} \quad \hbox{ for } s \in [1, s_{0}], \ \lmb \in [0, 1].
\end{equation}

\pfstep{Step~3: Analysis of the linearized harmonic map heat flow}
Fix $\lmb \in [0, 1]$ and consider a solution $\varphi$ to the inhomogeneous linearized harmonic map heat flow equation for $s \in [0, s_{0}]$,
\begin{equation*}
\Db_{s} \varphi - \Db^{\ell} \Db_{\ell} \varphi + i \tcv \Im(\psi^{\ell} \overline{\varphi}) \psi_{\ell} = F,
\end{equation*}
where $A_{\bfa} = A_{\bfa}(\lmb; x, s)$ and $\psi_{\bfa} = \psi_{\bfa}(\lmb; x, s)$ are as in Step~2. We denote the initial values by $\varphi_{0} = \varphi(s=0)$. The goal of this step is to derive the following long time bounds for $\varphi$ under the assumption \eqref{eq:hmhf-gwp-diff}: For $s \in [0, s_{0}]$,
\begin{equation} \label{eq:l-hmhf-gwp-key}
	\nrm{\varphi(s)}_{L^{2}}  + \left( \int_{s}^{\min \set{s+1, s_{0}}} \nrm{\Db_{x} \varphi}_{L^{2}}^{2} + \nrm{\varphi}_{L^{\infty}}^{2} \, \ud s' \right)^{\frac{1}{2}} \aleq e^{- c_{0}^{2} s} \nrm{\varphi_{0}}_{L^{2}} + \int_{0}^{\min\set{s+1, s_{0}}} e^{-c_{0}^{2}(s-s')}\nrm{F(s')}_{L^{2}} \, \ud s'.
\end{equation}
 We will only need the special case $(\varphi, F) = (\psi_{\lmb}, 0)$ for the proof of Theorem~\ref{thm:hmhf-gwp} (see Step~4), but a general choices of $\varphi$ and $F$ will lead to a simultaneous proof of Proposition~\ref{prop:l-hmhf-gwp} (see Step~5).

We rewrite the above equation as
\begin{align*}
\Db_{s} \varphi - (\Db^{\infty})^{\ell} (\Db^{\infty})_{\ell} \varphi + i \tcv \Im((\psi^{\infty})^{\ell} \overline{\varphi}) (\psi^{\infty})_{\ell}
&= 2 i \ringA^{\ell} (\Db^{\infty})_{\ell} \varphi - \ringA^{\ell} \ringA_{\ell} \varphi \\
& \phantom{=}
	- i \tcv \left( \Im(\ringpsi^{\ell} \overline{\varphi}) \psi_{\ell} + \Im(\psi^{\infty} \overline{\varphi}) \ringpsi_{\ell} \right) 
	 + F.
%	- i (\tcv(u) - \tcv(Q)) \Im((\psi^{\infty})^{\ell} \overline{\varphi}) (\psi^{\infty})_{\ell}    %% IF $\tcv$ is allowed to depend on $u$
\end{align*}
Multiplying by $\overline{\varphi}$, taking the real part, integrating on $\bbH^{2}$ and then using \eqref{eq:hmhf-gwp-diff} to estimate the RHS, we obtain the one-sided bound
\begin{align*}
	\frac{1}{2} \frac{\ud}{\ud s} \nrm{\varphi(s)}_{L^{2}}^{2}
	+ \Re \int H \varphi \overline{\varphi} \, \ud x
	\leq C \eps_{\ast}^{\frac{1}{2}} \left( \nrm{\Db^{\infty}\varphi(s)}_{L^{2}}^{2} + \nrm{\varphi(s)}_{L^{2}}^{2} \right) + \nrm{F(s)}_{L^{2}} \nrm{\varphi(s)}_{L^{2}}.
\end{align*}
for some absolute constant $C > 0$. Using Definition~\ref{def:weak-stab} and recalling that $\Re \int H \varphi \overline{\varphi} \, \ud x - \nrm{\Db^{\infty} \varphi}_{L^{2}}^{2} \aleq \nrm{\varphi}_{L^{2}}^{2}$, for any $\rho'_{Q} < \rho_{Q}$ we obtain the inequality
\begin{equation*}
	c_{\rho'_{Q}} \nrm{\Db^{\infty} \varphi}_{L^{2}}^{2} + (\rho'_{Q})^{2} \nrm{\varphi}_{L^{2}}^{2} \leq \Re \int H \varphi \overline{\varphi} \, \ud x,
\end{equation*}
for some $c_{\rho'_{Q}} > 0$. At this point, we fix $\rho'_{Q} > c_{0}$ and write $c = c_{\rho'_{Q}}$. Taking $\eps_{\ast}$ sufficiently small, we arrive at the differential inequality (with $c^{-}$ denoting a constant slighlty less than $c$)
\begin{equation*}
	\frac{1}{2} \frac{\ud}{\ud s} \nrm{\varphi(s)}_{L^{2}}^{2}
	+ c_{0}^{2} \nrm{\varphi(s)}_{L^{2}}^{2} + c^{-} \nrm{\Db^{\infty} \varphi}_{L^{2}}^{2} \leq \nrm{F(s)}_{L^{2}} \nrm{\varphi(s)}_{L^{2}}.
\end{equation*}
First, ignoring the nonnegative term $c^{-} \nrm{\Db^{\infty} \varphi}_{L^{2}}^{2}$ and solving the resulting differential inequality, we obtain, for $s \in [0, s_{0}]$,
\begin{equation*}
	\nrm{\varphi(s)}_{L^{2}} \aleq e^{- c_{0}^{2} s}\nrm{\varphi_{0}}_{L^{2}} + \int_{0}^{s} e^{- c_{0}^{2} (s-s')} \nrm{F(s')}_{L^{2}} \, \ud s'.
\end{equation*}
On the other hand, ignoring instead $c_{0}^{2} \nrm{\varphi}_{L^{2}}^{2}$ and integrating the differential inequality on $[s, \min\set{s+1, s_{0}}]$, we obtain
\begin{equation*}
\left( \int_{s}^{\min\set{s+1, s_{0}}} \nrm{\bfD^{\infty} \varphi(\lmb; s')}_{L^{2}}^{2} \, \ud s'\right)^{\frac{1}{2}} \aleq \nrm{\varphi(s)}_{L^{2}} + \int_{s}^{\min\set{s+1, s_{0}}} \nrm{F(s')}_{L^{2}} \, \ud s'.
\end{equation*}
By \eqref{eq:hmhf-gwp-diff}, we also have
\begin{equation*}
\left( \int_{s}^{\min\set{s+1, s_{0}}} \nrm{\bfD \varphi(\lmb; s')}_{L^{2}}^{2} \, \ud s' \right)^{\frac{1}{2}}
\aleq \left( \int_{s}^{\min\set{s+1, s_{0}}} \nrm{\bfD^{\infty} \varphi(\lmb; s')}_{L^{2}}^{2} \right)^{\frac{1}{2}} \, \ud s' + \sup_{s' \in [s, \min\{s+1,s_0\}]} \eps_{\ast}^{\frac{1}{4}} \nrm{\varphi(\lmb; s')}_{L^{2}}.
\end{equation*}
Thus, we have bounded all the $L^{2}_{x}$ type norms on the LHS of \eqref{eq:l-hmhf-gwp-key}. 

It remains to bound the $L^{2}_{s} L^{\infty}_{x}$ norm of $\varphi$. We begin with the usual Bochner identity for the linearized harmonic map heat flow,
\begin{equation*}
	\rd_{s} \abs{\varphi}^{2} - \lap \abs{\varphi}^{2} + 2 \Db^{\ell} \varphi \overline{\Db_{\ell} \varphi} = 2 \tcv \Im(\psi^{\ell} \overline{\varphi}) \Im(\psi_{\ell} \overline{\varphi}) + 2 \Re(F \overline{\varphi}).
\end{equation*}
Thus, we obtain the relation
\begin{equation*}
	\rd_{s} \abs{\varphi} - \lap \abs{\varphi} \leq M \abs{\varphi} + \abs{F},
\end{equation*}
where the inequality is to be interpreted in the sense of distributions. Here, $M$ is a positive constant that only depends on $\nrm{\psi_{x}}_{L^{\infty}} = O(\nrm{\psi^{\infty}}_{L^{\infty}} + C \eps_{\ast}^{\frac{1}{2}})$. For any $s' \in [0, s_{0}]$ and $s \in [s', s_{0}]$, we have
\begin{equation*}
	(\rd_{s} - \lap) \left( e^{-M(s-s')} \abs{\varphi} \right) \leq e^{-M(s-s')} \abs{F},
\end{equation*}
so that by Duhamel's formula and the comparison principle,
\begin{equation*}
	\abs{\varphi(x, s)}
	\leq e^{M(s-s')} e^{(s-s') \lap} \abs{\varphi(x, s')} + \int_{s'}^{s} e^{M(s-s'')} e^{(s-s'') \lap} \abs{F(x, s'')} \abs{\varphi(x, s'')} \, \ud s''.
\end{equation*}
Then by choosing $s' = s-1$ and applying the $L^{2}_{s} L^{\infty}_{x}$ estimate \eqref{eq:lin-heat-L2Linfty}, we obtain
\begin{equation*}
	\left( \int_{s-1}^{s}\nrm{\varphi(s')}_{L^{\infty}}^{2}  \, \ud s' \right)^{\frac{1}{2}} \aleq_{M} \nrm{\varphi(s-1)}_{L^{2}} + \int_{s-1}^{s} \nrm{F(s')}_{L^{2}} \, \ud s'.
\end{equation*}
Combined with the previously established bound on $\nrm{\varphi(s)}_{L^{2}}$, the proof of \eqref{eq:l-hmhf-gwp-key} is complete.

\pfstep{Step~4: Closing the main bootstrap argument and completing the proof of Theorem~\ref{thm:hmhf-gwp}}
We apply the estimates in the previous step with $\varphi = \psi_{\lmb}$ and $F = 0$. Since $\abs{\rd_{\lmb} u} = \abs{\psi_{\lmb}}$ and $\abs{D_{x} \rd_{\lmb} u} = \abs{\Db_{x} \psi_{\lmb}}$, \eqref{eq:l-hmhf-gwp-key} translates to 
\begin{equation} \label{eq:hmhf-gwp-key}
	\nrm{\rd_{\lmb} u(\lmb; s)}_{L^{2}}^{2} + \int_{s}^{\min\set{s+1, s_{0}}} \bigl( \nrm{D \rd_{\lmb} u(\lmb; s')}_{L^{2}}^{2} + \nrm{\rd_{\lmb} u(\lmb; s')}_{L^{\infty}}^{2} \bigr) \, \ud s' \aleq e^{-2 c_{0}^{2} s} \nrm{\rd_{\lmb} u_{0}(\lmb)}_{L^{2}}^{2},
\end{equation}
where $s \in [1, s_{0}]$ and  $\lmb \in [0, 1]$. In particular, for every integer $n$ such that $1 \leq n \leq s_{0}-2$, there exists $s_{n; \lmb} \in [n, n+1)$ such that
\begin{equation*}
	\nrm{D \rd_{\lmb} u_{\lmb}(\lmb; s_{n; \lmb})}_{L^{2}} + \nrm{\rd_{\lmb} u_{\lmb}(\lmb; s_{n; \lmb})}_{L^{\infty}}  \aleq e^{- c_{0}^{2} n} \nrm{\rd_{\lmb} u_{0}(\lmb)}_{L^{2}}^{2}.
\end{equation*}
By the above estimates, the formula
\begin{equation*}
	\rd_{j} \rd_{\lmb} u^{A} = \rd_{\lmb} \rd_{j}  u^{A} = D_{j} \rd_{\lmb} u^{A} - S^{A}_{BC}(u) \rd_{\lmb} u^{B} \rd_{j} u^{C},
\end{equation*}
and the bootstrap assumption \eqref{eq:hmhf-gwp-btstrp-0}, it follows that
\begin{equation} \label{eq:hmhf-gwp-good-s}
	\nrm{\rd_{\lmb} u_{\lmb}(\lmb; s_{n; \lmb})}_{H^{1} \cap L^{\infty}} \aleq e^{- c_{0}^{2} n} \nrm{\rd_{\lmb} u_{0}(\lmb)}_{L^{2}}^{2}.
\end{equation}
Applying Proposition~\ref{prop:l-hmhf-ex} with $\rd_{\lmb} u(\lmb; s_{n; \lmb})$ as the initial data on $J = [s_{n; \lmb}, \min\set{s_{n; \lmb}+2, s_{0}}]$, we obtain 
\begin{align} 
	\nrm{\rd_{\lmb} u(\lmb; s)}_{H^{1} \cap L^{\infty}} \aleq e^{-c_{0}^{2} s} \nrm{\rd_{\lmb} u_{0}(\lmb)}_{L^{2}} \quad \hbox{ for } s \in [s_{n; \lmb}, \min\set{s_{n; \lmb}+2, s_{0}}], \label{eq:hmhf-gwp-good-s-1} \\
	\nrm{D \rd_{\lmb} u(\lmb; s)}_{L^{\infty}} \aleq e^{-c_{0}^{2} s} \nrm{\rd_{\lmb} u_{0}(\lmb)}_{L^{2}} \quad \hbox{ for } s \in [s_{n; \lmb}+1, \min\set{s_{n; \lmb}+2, s_{0}}], \label{eq:hmhf-gwp-good-s-2}
\end{align}
We remark that the implicit constant are made absolute here by taking $\epshf$ small enough, thanks to \eqref{eq:hmhf-gwp-btstrp-0}. Applying the above argument for all possible $n$, and using Proposition~\ref{prop:l-hmhf-ex} with sufficiently small $\epshf$ on $J = [0, 2]$, we improve the bootstrap assumption \eqref{eq:hmhf-gwp-btstrp}. Moreover, it follows that $u - Q = \int_{0}^{1} \rd_{\lmb} u(\lmb) \, \ud \lmb$ satisfies
\begin{equation*}
	\nrm{(u - Q)(\lmb; s)}_{H^{1} \cap L^{\infty}} \aleq e^{-c_{0}^{2} s} \int_{0}^{1} \nrm{\rd_{\lmb} u_{0}(\lmb)}_{L^{2}} \, \ud \lmb \aleq e^{-c_{0}^{2} s} \epshf,
\end{equation*}
which also improves \eqref{eq:hmhf-gwp-btstrp-0}. It is now a standard procedure to set up a continuous induction on $s_{0}$ to conclude the global existence of $u$ and prove \eqref{eq:hmhf-gwp}; we omit the obvious details.

\pfstep{Step~5: Completing the proof of Proposition~\ref{prop:l-hmhf-gwp}}
Finally, we prove Proposition~\ref{prop:l-hmhf-gwp}. Bound \eqref{eq:l-hmhf-gwp-0} follows from \eqref{eq:l-hmhf-gwp-key} with $\lmb = 1$, since the bootstrap assumptions \eqref{eq:hmhf-gwp-btstrp-0} and \eqref{eq:hmhf-gwp-btstrp} had been closed in Step~4. Bound \eqref{eq:l-hmhf-gwp-1} follows immediately from Proposition~\ref{prop:l-hmhf-ex} when $s \leq 1$. When $s > 1$, by \eqref{eq:l-hmhf-gwp-0} and the pigeonhole principle, there exists $s_{0} \in [s-1, s]$ such that
\begin{equation*}
	\nrm{\phi(s_{0})}_{H^{1} \cap L^{\infty}} \aleq e^{-c_{0}^{2} s} \nrm{\phi_{0}}_{L^{2}} + \int_{0}^{s} e^{-c_{0}^{2}(s - s')} \nrm{f(s')}_{L^{2}} \, \ud s'.
\end{equation*}
Now \eqref{eq:l-hmhf-gwp-1} follows from an application of Proposition~\ref{prop:l-hmhf-ex} with the initial data $\phi(s_{0})$. \qedhere
\end{proof}

\subsection{Construction of caloric gauge} \label{subsec:caloric}
Theorem~\ref{thm:hmhf-gwp} states that a map $u_{0}$ that is sufficiently close to a weakly linearly stable harmonic map $Q$ in $H^{1}$ leads to a harmonic map heat flow evolution $u(s)$ that asymptotically converge to $Q$. The goal of this subsection is to construct a caloric gauge \cite{Tao04} for $u$, which is simply the orthonormal frame for the pullback bundle $u^{\ast} T \tgmfd$ on $\bbH^{2}_{x} \times [0, \infty)_{s}$ constructed by parallel-transporting a suitable limiting frame $e^{\infty}$ at $s = \infty$. The precise definition is as follows.

\begin{definition} [Caloric gauge] \label{def:caloric-gauge}
Let $u(x, s) : \bbH^{2} \times [0, \infty) \to \tgmfd$ be a harmonic map heat flow given by Theorem~\ref{thm:hmhf-gwp}, which converges to a harmonic map $Q$ as $s \to \infty$. Given a smooth frame $e^{\infty}(x)$ for $Q^{\ast} T \tgmfd$, we say that a smooth frame $e(x, s)$ for $u^{\ast} T \tgmfd$ defines a \emph{caloric gauge} for $u$ with limiting frame $e^{\infty}$ if the following properties hold:
\begin{enumerate}
\item The frame $e(x, s)$ satisfies the \emph{heat-temporal condition}
\begin{equation*}
	D_{s} e(x, s) = 0 \quad \hbox{ for } (x, s) \in \bbH^{2} \times [0, \infty).
\end{equation*}
\item For all $x \in \bbH^{2}$, we have $e(x, s) \to e^{\infty}(x)$ as $s \to \infty$.
\end{enumerate}
We call such a frame $e(x, s)$ the \emph{caloric frame} with limiting frame $e^{\infty}$.
\end{definition}

The main result of this subsection is the following.
\begin{proposition} \label{prop:caloric-gauge}
Let $u_{0} : \bbH^{2} \to \tgmfd$ satisfy $\nrm{u_{0} - Q}_{H^{1}} < \epshf$ and $u_{0} - Q \in H^{\infty}$.
Let $u(x, s) : \bbH^{2} \times [0, \infty) \to \tgmfd$ be a harmonic map heat flow with $u(x, 0) = u_{0}(0)$ given by Theorem~\ref{thm:hmhf-gwp}. Let $e^{\infty}(x)$ be a Coulomb frame for $Q^{\ast} T \tgmfd$ as in Subsection~\ref{subsec:hm}. Then the following statements hold: 
\begin{enumerate}
\item There exists a unique caloric frame $e(x, s)$ with limiting frame $e^{\infty}$, which depends smoothly on $(x, s) \in \bbH^{2} \times [0, \infty)$.

\item Let $I \subset \bbR$ be an interval, and let $u_{0} : I \times \bbH^{2} \to \tgmfd$ satisfy the following properties:
\begin{itemize}
\item $\sup_{t \in I} \nrm{u_{0}(t) - Q}_{H^{1}} < \epshf$; 
\item $\rd_{t}^{k} (u_{0}(t) - Q) \in H^{\infty}$ for every $k = 0, 1, 2, \ldots$ and $t \in I$.
\end{itemize}
Then the unique caloric frames $e(t, x, s)$ defined for each $t \in I$ depends smoothly on $(t, x, s) \in I \times \bbH^{2} \times [0, \infty)$.
\end{enumerate}
\end{proposition}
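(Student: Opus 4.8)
\textbf{Plan for the proof of Proposition~\ref{prop:caloric-gauge}.}

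The plan is to construct the caloric frame by integrating the parallel transport equation $D_{s} e = 0$ from $s = \infty$ backward in $s$, using the known asymptotic convergence $u(s) \to Q$ and the decay estimates from Theorem~\ref{thm:hmhf-gwp} and Corollary~\ref{cor:hmhf-gwp-higher}. The first step is to set up the problem in the extrinsic formulation: via the isometric embedding $\tgmfd \hookrightarrow \bbR^{N}$, a frame $e = (e_{1}, e_{2})$ consists of two $\bbR^{N}$-valued functions with $e_{a} \in T_{u} \tgmfd$, and the heat-temporal condition $D_{s} e_{a} = 0$ becomes $\rd_{s} e_{a}^{A} = - S^{A}_{BC}(u) \rd_{s} u^{B} e_{a}^{C}$, a linear ODE in $s$ (for each fixed $x$) driven by the coefficient $\rd_{s} u$. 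Since $\nrm{\rd_{s} u(s)}_{H^{k}} \aleq e^{-c_{0}^{2} s} \nrm{u_{0} - Q}_{H^{1} \cap L^{\infty}}$ for $s \geq 1$ by \eqref{eq:hmhf-gwp-k}, and $\rd_{s} u \in L^{2}_{s}([0,1]; L^{2}) \cap \ldots$ near $s = 0$, the integral $\int_{s}^{\infty} \abs{S(u) \rd_{s} u} \, \ud s'$ converges; thus given the terminal data $e_{a}(x, \infty) = e^{\infty}_{a}(x)$, the ODE has a unique solution on $[0, \infty)$, obtained as the fixed point of the contraction $e_{a}(x, s) \mapsto e^{\infty}_{a}(x) + \int_{s}^{\infty} S^{A}_{BC}(u) \rd_{s} u^{B} e_{a}^{C}(x, s') \, \ud s'$ in $C^{0}_{s}([0, \infty); L^{\infty}_{x})$, say. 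One must then check that the resulting $e_{a}(x, s)$ genuinely lies in $T_{u(x,s)} \tgmfd$ and stays orthonormal: this follows because $D_{s}$ preserves the metric and the tangency constraint (the standard propagation-of-constraint argument, exactly as in Lemma~\ref{lem:hmhf-tan} and Lemma~\ref{lem:l-hmhf-tan}), and $D_{s} (e_{1}, e_{2}) = 0$, $D_{s}(e_{a}, e_{b}) = 0$ at $s = \infty$ forces orthonormality for all $s$. The condition $e_{2} = J(u) e_{1}$ likewise propagates since $D_{s} (J(u) e_{1}) = J(u) D_{s} e_{1} = 0$ by parallelism of $J$.

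The second step is smoothness in $x$ and in the parameter $t$. For $x$-regularity, differentiate the ODE/integral equation: $\nb^{(k)} e_{a}$ satisfies an analogous linear integral equation whose source terms involve $\nb^{(\leq k)} (S(u) \rd_{s} u)$ and lower-order $\nb^{(<k)} e_{a}$, and the needed bounds come from the Sobolev product/Moser estimates (Proposition~\ref{prop:frac-leib}, Proposition~\ref{prop:frac-moser}), the smoothness $u - Q \in C^{0}_{s}(H^{\sgm})$ for all $\sgm$ from persistence of regularity \eqref{eq:hmhf-lwp-sgm}, and the exponential-in-$s$ decay of $\nb^{(k)} \rd_{s} u$ for $s \geq 1$ together with the parabolic smoothing bounds \eqref{eq:hmhf-lwp-k} near $s = 0$. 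An induction on $k$ then gives $e_{a} \in C^{0}_{s}([0,\infty); H^{\sgm}_{loc})$ — or better, in the appropriate weighted spaces — for every $\sgm$, hence $e_{a}(\cdot, s)$ is smooth in $x$, uniformly on compact $s$-intervals and with the right behavior as $s \to \infty$. For the $t$-parameter in part (2), one differentiates the $u$-equation in $t$: $\rd_{t} u$ solves the linearized harmonic map heat flow \eqref{eq:l-hmhf-ex} with $f = 0$, so by Proposition~\ref{prop:l-hmhf-gwp} and Corollary~\ref{cor:hmhf-gwp-higher} it enjoys exponential decay $\nrm{\rd_{t} u(s)}_{H^{k}} \aleq e^{-c_{0}^{2} s} (\ldots)$, and similarly all mixed derivatives $\rd_{t}^{j} \nb^{(k)} (u - Q)$ satisfy good bounds by iterating the linearized theory. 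Differentiating the integral equation for $e_{a}$ in $t$ then yields a linear integral equation for $\rd_{t}^{j} \nb^{(k)} e_{a}$ with absolutely convergent (in $s$) source, so one concludes joint smoothness in $(t, x, s)$ by the same contraction-plus-induction scheme.

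The main obstacle I anticipate is the behavior near $s = 0$: there the decay $e^{-c_{0}^{2} s}$ is useless and one must instead exploit only the integrability $\rd_{s} u \in L^{2}_{s}([0,1]; H^{\sgm-1})$ (with parabolic gain $s^{k/2}$ at higher regularity), which is exactly borderline for controlling $\int_{0}^{1} \abs{S(u) \rd_{s} u} \, \ud s'$ in $L^{\infty}_{x}$. The cleanest fix is to run the contraction in a norm like $L^{2}_{s}([0, s_{0}]; L^{\infty}_{x}) \cap L^{\infty}_{s}([0,s_{0}]; L^{2}_{x})$ for small $s_{0}$ and patch with the large-$s$ construction, or to use the Gagliardo–Nirenberg inequality \eqref{eq:GN-Linfty} to trade the $L^{2}_{s} H^{1}$ control of $\rd_{s} u$ for $L^{2}_{s} L^{\infty}$ of an appropriate product, paying attention that $S(u) = S(Q) + (S(u) - S(Q))$ so that the $S(Q) \rd_{s} u$ piece is handled by smoothness of $Q$ and the difference piece is quadratically small. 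Once this small-$s$ issue is dispatched, the rest is routine linear ODE theory combined with the quantitative heat-flow estimates already established. Uniqueness in part (1) is immediate from Gronwall applied to the difference of two caloric frames with the same limiting frame.
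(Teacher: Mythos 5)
Your construction is essentially the paper's: both solve the terminal-value linear ODE $\rd_{s} e^{A} + S^{A}_{BC}(u)\, \rd_{s} u^{B} e^{C} = 0$ backward from $s=\infty$, using the exponential decay of $\rd_{s} u$ supplied by Theorem~\ref{thm:hmhf-gwp} and Corollary~\ref{cor:hmhf-gwp-higher}, and both obtain part (2) by propagating $t$-derivative bounds through the linearized heat flow \eqref{eq:l-hmhf-ex} by induction on the number of $t$-derivatives. The one point where you diverge is the obstacle you anticipate near $s=0$: it is not there. The hypothesis $u_{0}-Q \in H^{\infty}$, combined with persistence of regularity (Proposition~\ref{prop:hmhf-lwp}), gives $u(s)-Q \in H^{\sigma}$ uniformly for $s\in[0,1]$ for every $\sigma$, and then the equation $\rd_{s}u = \lap u + S(u)(\rd^{\ell}u,\rd_{\ell}u)$ yields \emph{pointwise-in-$s$} bounds $\nrm{\nb^{(k)}\rd_{s}u(s)}_{L^{\infty}} \aleq \nrm{\rd_{s}u(s)}_{H^{k+2}} \aleq 1$ all the way down to $s=0$; together with $\nrm{\rd_{s}u(s)}_{H^{k+2}} \aleq e^{-c_{0}^{2}s}$ for $s\geq 1$, the ODE coefficient is uniformly smooth and integrable on $[0,\infty)$ and standard ODE theory applies directly, with no need for the $L^{2}_{s}L^{\infty}_{x}$ contraction or the Gagliardo--Nirenberg patch you propose. (Even in a lower-regularity setting your worry would not be borderline, since $L^{2}_{s}([0,1])$ embeds into $L^{1}_{s}([0,1])$ by Cauchy--Schwarz.) Your remarks on the propagation of the tangency, orthonormality, and orientation constraints are correct and fill in a step the paper leaves implicit.
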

\begin{proof}
In what follows, we suppress the dependence of constants on the initial map $u_{0}$.
We begin with part~(1), where $u_{0}$ has no $t$-dependence. We need to solve the following system of ODEs:
\begin{equation} \label{eq:caloric-ODE}
\left\{
\begin{aligned}
	\rd_{s} e^{A}_{B} + S^{A}_{C D}(u) \rd_{s} u^{C} e^{D}_{B} & = 0, \\
	\lim_{s \to \infty} e^{A}_{B}(x, s) & = (e^{\infty})^{A}_{B}(x).
\end{aligned}
\right.
\end{equation}
As a consequence of Proposition~\ref{prop:hmhf-lwp} for $s \leq 1$ and Corollary~\ref{cor:hmhf-gwp-higher} for $s \geq 1$, we see that, for $k = 0, 1, 2, \ldots$ and $s \geq 0$,
\begin{align*}
\nrm{\nb^{(k)} u(s)}_{L^{\infty}} 
& \aleq \nrm{u(s) - Q}_{H^{k+2}} + \nrm{\nb^{(k)} Q}_{L^{\infty}} 
	\aleq 1, \\
\nrm{\nb^{(k)} \rd_{s} u(s)}_{L^{\infty}} 
& \aleq \nrm{\rd_{s} u(s)}_{H^{k+2}}
	\aleq e^{-c_{0}^{2} s}. 
\end{align*}
Thus, by standard ODE theory, \eqref{eq:caloric-ODE} may be solved and part~(1) follows. 

To prove part~(2), it suffices to establish the following additional (by no means optimal) bounds: For $n = 1, 2, \ldots$, $k = 0, 1, 2, \ldots$ and $s \geq 0$, 
\begin{equation} \label{eq:hmhf-t-reg}
\nrm{\rd_{t}^{n} u(s)}_{H^{k+4}} +
\nrm{\rd_{t}^{n} \rd_{s} u(s)}_{H^{k+2}} \aleq e^{-c_{0}^{2} s}, 
\end{equation}
where the implicit constant depends on $k, n$ and $\sum_{n'=0}^{n} \nrm{\rd_{t}^{n'} u_{0}}_{H^{k+4}}$. 

Bound \eqref{eq:hmhf-t-reg} is proved by induction on $n$. As $\rd_{t} u$ solves \eqref{eq:l-hmhf-ex} with $f = 0$, the base case $n = 1$ follows immediately from Propositions~\ref{prop:l-hmhf-ex}, \ref{prop:l-hmhf-gwp} and Corollary~\ref{cor:hmhf-gwp-higher}. For $n > 1$, $\rd_{t}^{n} u$ solves
\begin{align*}
& (D_{s} - D^{\ell} D_{\ell}) \rd_{t}^{n} u^{A} - \tensor{R}{_{CD}^{A}_{B}}(u) \rd_{\ell} u^{B} \rd_{t}^{n} u^{C} \rd^{\ell} u^D \\ 
& = - [\rd_{t}^{n-1}, D_{s} - D^{\ell} D_{\ell}] \rd_{t} u^{A} + [\rd_{t}^{n-1}, \tensor{R}{_{CD}^{A}_{B}}(u) \rd_{\ell} u^{B} \rd^{\ell} (\rd_{t}^{\ell} u)  \phi^{A}  u^{D}] (\rd_{t} u)^{C},
\end{align*}
which is \eqref{eq:l-hmhf-ex} with an $f^{A}$ that involves $u, \ldots, \rd_{t}^{\ell-1} u$. The desired bound again follows by applying Propositions~\ref{prop:l-hmhf-ex}--\ref{prop:l-hmhf-gwp} and Corollary~\ref{cor:hmhf-gwp-higher}, where $f$ is estimated by a suitable induction hypothesis on $u, \ldots, \rd_{t}^{\ell-1} u$. We leave the straightforward details to the reader. \qedhere
\end{proof}

We now employ the moving frame formalism with respect to the caloric gauge $e(x, s)$.
We define the \emph{heat tension field}
\begin{equation} 
 \psi_s = \Db^\ell \psi_\ell.
\end{equation}
We also introduce the decompositions
\begin{equation} \label{equ:linearization_via_caloric_gauge}
 \begin{aligned}
  A_\ell(s_0, t, x) &= - \int_{s_0}^\infty \partial_s A_\ell \, \ud s + A_\ell \vert_{s=\infty} =: \ringA_\ell + A^\infty_\ell, \\
  \psi_\ell(s_0, t, x) &= - \int_{s_0}^\infty \Db_\ell \psi_s \, \ud s + \psi_\ell \vert_{s=\infty} =: \ringpsi_\ell + \psi^\infty_\ell,
 \end{aligned}
\end{equation}
where we have the reconstruction formulas
\begin{align*}
 \ringA_\ell(s_0) &= - \int_{s_0}^\infty \partial_s A_\ell \, \ud s = - \int_{s_0}^\infty (\partial_s A_\ell - \partial_\ell A_s) \, \ud s = -  \tcv \int_{s_0}^\infty \Im (\psi_s \overline{\psi_\ell}) \, \ud s, \\
 \ringpsi_\ell(s_0) &= - \int_{s_0}^\infty \partial_s \psi_\ell \, \ud s = - \int_{s_0}^\infty \Db_\ell \psi_s \, \ud s = - \int_{s_0}^\infty \partial_\ell \psi_s \, \ud s - i \int_{s_0}^\infty A_\ell \psi_s \, \ud s \\
 &= - \int_{s_0}^\infty \partial_\ell \psi_s \, \ud s - i \int_{s_0}^\infty \biggl( \int_s^\infty (-\tcv) \Im(\psi_s \overline{\psi_\ell}) \, \ud s' + A_\ell^\infty \biggr) \psi_s \, \ud s.
\end{align*}

\subsection{Forward-in-$s$ bounds in caloric gauge} \label{subsec:forward-caloric}
In this subsection, we suppress the dependence of constants on $Q$, $\tilde{\Omg}$, the Coulomb frame $e^{\infty}$ for $Q^{\ast} T \tgmfd$ and $A^{\infty}$ introduced in Subsection~\ref{subsec:hm}.

The primary aim of this subsection is to translate bounds for the harmonic map heat flow $u$ in the extrinsic coordinates to those for the components $\psi_{s}, \psi_{x}$ in the caloric gauge constructed in Proposition~\ref{prop:caloric-gauge}. The main result is as follows: 

\begin{proposition} \label{prop:forward-caloric-psi}
Assume that $u_{0}$ obeys the smallness condition \eqref{eq:hmhf-wp-hyp} and let $0 < \dlt < \frac{1}{2}$. In the caloric gauge constructed in Proposition~\ref{prop:caloric-gauge}, the following bounds hold for $k = 0, 1, 2, \ldots$:
\begin{align} 
\nrm{m(s) s^{\frac{k+1}{2}} \nb^{(k)} \psi_{s}}_{L^{2}_{\ds} \cap L^{\infty}_{\ds}((0, \infty); L^{2})} 
	& \aleq \nrm{u_{0} - Q}_{H^{1+2\dlt}}, \label{eq:forward-caloric-psi-s} \\
\nrm{\ringPsi}_{L^{\infty}_{\ds}((0, \infty); L^{2})} + \nrm{m(s) s^{\frac{k+1}{2}} \nb^{(k+1)} \ringPsi}_{L^{2}_{\ds} \cap L^{\infty}_{\ds}((0, \infty); L^{2})} 
	& \aleq \nrm{u_{0} - Q}_{H^{1+2\dlt}}, \label{eq:forward-caloric-Psi}
\end{align}
where the implicit constants depend on $k$ and an upper bound for $\nrm{u_{0} - Q}_{H^{1+2\dlt}}$. Moreover,
\begin{align}
\nrm{m(s) s^{\frac{k+1}{2}} \nb^{(k)} \Omg \psi_{s}}_{L^{2}_{\ds} \cap L^{\infty}_{\ds}((0, \infty); L^{2})} & \aleq \nrm{u_{0} - Q}_{H^{1+2\dlt}} + \nrm{\calD_{\Omg} u_{0}}_{H^{1+2\dlt}},  \label{eq:forward-caloric-Omg-psi-s}\\
	\nrm{\nb \ringpsi_{\Omg}}_{L^{\infty}_{\ds}((0, \infty); L^{2})}
		+ \nrm{m(s) s^{\frac{k+1}{2}} \nb^{(k+2)} \ringpsi_{\Omg}}_{L^{2}_{\ds} \cap L^{\infty}_{\ds}((0, \infty); L^{2})}
	& \aleq \nrm{u_{0} - Q}_{H^{1+2\dlt}} + \nrm{\calD_{\Omg} u_{0}}_{H^{1+2\dlt}}, \label{eq:forward-caloric-Psi-Omg} \\
	\nrm{\calL_{\Omg} \ringPsi}_{L^{\infty}_{\ds}((0, \infty); L^{2})} + \nrm{m(s) s^{\frac{k+1}{2}} \nb^{(k+1)} \calL_{\Omg} \ringPsi}_{L^{2}_{\ds} \cap L^{\infty}_{\ds}((0, \infty); L^{2})}
	& \aleq \nrm{u_{0} - Q}_{H^{1+2\dlt}} + \nrm{\calD_{\Omg} u_{0}}_{H^{1+2\dlt}}, \label{eq:forward-caloric-Omg-Psi}
\end{align}
where the implicit constants depend on $k$ and upper bounds for $\nrm{u_{0} - Q}_{H^{1+2\dlt}}$ and $\nrm{\calD_{\Omg} u_{0}}_{H^{1+2\dlt}}$.
\end{proposition}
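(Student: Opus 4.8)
The plan is to deduce Proposition~\ref{prop:forward-caloric-psi} from the heat flow bounds established in Subsections~\ref{subsec:hmhf-lwp} and \ref{subsec:hmhf-stab} (in particular Proposition~\ref{prop:hmhf-lwp}, Theorem~\ref{thm:hmhf-gwp}, Proposition~\ref{prop:l-hmhf-gwp} and Corollary~\ref{cor:hmhf-gwp-higher}), together with the reconstruction formulas in \eqref{equ:linearization_via_caloric_gauge}. The basic point is that $\psi_s$ and $\rd_s u$ agree up to an application of the (bounded) frame isomorphism, so the bounds \eqref{eq:forward-caloric-psi-s} for $\psi_s$ are essentially a restatement of parabolic smoothing for $\rd_s u = D^\ell \rd_\ell u$. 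I would carry this out in the following order.

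\textbf{Step 1: the non-equivariant bounds.} First I would prove \eqref{eq:forward-caloric-psi-s}. Since $D_s e = 0$, one has $|\nb^{(k)}_x \psi_s| \aleq \sum_{k' \le k} |D^{(k')}_x \rd_s u| \cdot (\hbox{polynomial in } |\nb_x^{(\le k)} u|)$, and by Corollary~\ref{cor:hmhf-gwp-higher} the map derivatives $\nb_x^{(\le k)} u$ are uniformly bounded for $s \ge 1$ (and controlled via Proposition~\ref{prop:hmhf-lwp} for $s \le 1$). Hence it suffices to control $\nrm{m(s) s^{\frac{k+1}{2}} D^{(k)} \rd_s u}_{L^2_{\ds} \cap L^\infty_{\ds}(L^2)}$. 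For $s \le 1$ this is a parabolic smoothing statement: writing $\rd_s u = D^\ell \rd_\ell u$ and differentiating the heat flow equation $k$ times in $x$, one uses the short-time estimates of Proposition~\ref{prop:hmhf-lwp} (namely \eqref{eq:hmhf-lwp-k} with $\sgm = 1 + 2\dlt$), noting that the weight $m(s) s^{(k+1)/2} = s^{(k+1)/2 - \dlt}$ on $(0,1)$ is exactly what is needed to absorb the $2\dlt$ of regularity via $L^2$ interpolation (Lemma~\ref{l:Lp-int}); the $L^2_{\ds}$ and $L^\infty_{\ds}$ norms on the dyadic scale $s$ both follow from these. For $s \ge 1$ the exponential decay in Corollary~\ref{cor:hmhf-gwp-higher} (and \eqref{eq:hmhf-gwp-k}) gives $\nrm{D^{(k)}\rd_s u(s)}_{L^2} \aleq e^{-c_0^2 s}$, which is more than enough since $m(s) = 1$ there. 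Then \eqref{eq:forward-caloric-Psi} follows from the reconstruction formula $\ringPsi(s_0) = - \int_{s_0}^\infty \Db_\ell \psi_s \, \ud s$: applying $\nb^{(k+1)}$, using $|\Db_\ell \psi_s| \aleq |\nb \psi_s| + |A_x||\psi_s|$ with $A_x = \ringA_x + A^\infty_x$ (here $A^\infty_x$ is smooth and well-decaying, and $\ringA_x$ is controlled by \eqref{eq:forward-caloric-psi-s} via the formula $\ringA_\ell(s_0) = -\tcv\int_{s_0}^\infty \Im(\psi_s \overline{\psi_\ell})\,\ud s$ together with a short bootstrap), and integrating in $s$ against the weight. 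The $L^\infty_{\ds}(L^2)$ bound on $\ringPsi$ itself (with no weight and no derivative) follows by integrating $\nrm{\Db_\ell \psi_s(s)}_{L^2} \aleq m(s)^{-1} s^{-1} \cdot \nrm{m(s) s^{\frac12}\nb\psi_s}_{L^2} + (\hbox{l.o.t.})$ and checking the $\ds$-integral converges (near $s = 0$ the factor $s^{-1+\dlt+\frac12}$ is integrable since $\dlt < \frac12$; near $s = \infty$ one uses the exponential gain).

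\textbf{Step 2: the equivariant bounds.} For \eqref{eq:forward-caloric-Omg-psi-s}--\eqref{eq:forward-caloric-Omg-Psi} the idea is that the equivariant rotation operator $\calD_\Omg$ annihilates $Q$, and commuting the heat flow with $\Omg$ produces a linearized heat flow \eqref{eq:l-hmhf-ex} for $\rd_\Omg u := \ud u(\Omg)$ with forcing controlled by lower-order data. Concretely, since $\Omg$ is a Killing field on $\bbH^2$ (Lemma~\ref{l:commLOm}), differentiating \eqref{eq:hmhf-ex} along the flow of $\Omg$ shows that $\calL_\Omg u$ (equivalently, the section $\rd_\Omg u$ of $u^\ast T\tgmfd$, once one accounts for the target rotation $\tilde\Omg$) solves the linearized harmonic map heat flow with a right-hand side that is zero, hence Proposition~\ref{prop:l-hmhf-gwp} and Corollary~\ref{cor:hmhf-gwp-higher} apply. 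The key input is the identification, at $s = 0$, of the initial data: $\rd_\Omg u|_{s=0}$ (in the caloric frame, the quantity $\psi_\Omg$) differs from $\calD_\Omg u_0 = \ud u_0(\Omg) - \tilde\Omg(u_0)$ by the contribution of $\tilde\Omg(u_0)$, which is smooth and bounded; combining this with \eqref{eq:LOmpsi} (which records that $\calL_\Omg$ acts on $\psi^\infty_j$ essentially by multiplication by $i$, so the harmonic-map part is harmless) one gets $\nrm{\psi_\Omg(s=0)}_{H^{2\dlt}} \aleq \nrm{u_0 - Q}_{H^{1+2\dlt}} + \nrm{\calD_\Omg u_0}_{H^{1+2\dlt}}$ — note one loses a derivative relative to $\psi_s$, which is why the left-hand sides of \eqref{eq:forward-caloric-Omg-psi-s}--\eqref{eq:forward-caloric-Omg-Psi} involve $\psi_\Omg = s\text{-integral of }\Db_\Omg\psi_s$ rather than a raw first derivative, and why $\ringpsi_\Omg$ appears with $\nb^{(k+2)}$. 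With this initial bound in hand, one runs exactly the parabolic smoothing argument of Step~1 for the linearized flow (using \eqref{eq:l-hmhf-ex-k} / \eqref{eq:l-hmhf-gwp-k}) to obtain \eqref{eq:forward-caloric-Omg-psi-s}, and then the reconstruction formula $\ringpsi_\Omg(s_0) = -\int_{s_0}^\infty \Db_\Omg \psi_s\,\ud s$ plus $|\Db_\Omg \psi_s| \aleq |\nb\Omg\psi_s| + |A_\Omg||\psi_s|$ (with $A_\Omg$ controlled using $A^\infty_\Omg$ smooth and $\ringA_\Omg$ bounded via the $\Im(\psi_s\overline{\psi_\Omg})$ formula) yields \eqref{eq:forward-caloric-Psi-Omg}. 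Finally \eqref{eq:forward-caloric-Omg-Psi} follows by writing $\calL_\Omg \ringPsi$ in terms of $\ringpsi_\Omg$, the connection coefficients, and the already-controlled $\ringPsi$, using $\calL_\Omg \psi^\infty = i\psi^\infty$ from \eqref{eq:LOmpsi} and $\calL_\Omg A^\infty = 0$ from \eqref{eq:LOmA}.

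\textbf{Main obstacle.} The routine calculus (Moser/Leibniz in $H^\sgm$, the $\ds$-integrability bookkeeping, reducing products of frame coefficients to the five controlled quantities) is standard given the machinery in Section~\ref{sec:prelim}. The genuinely delicate point is the equivariant step: one must verify carefully that commuting with $\Omg$ really does produce a \emph{homogeneous} linearized heat flow (zero forcing) for the correct section — this uses both that $\Omg$ is Killing on the domain and that $\tilde\Omg$ generates isometries of the target, so that $\calD_\Omg$ commutes with the heat flow — and then track the frame-dependent discrepancy between "$\calL_\Omg$ acting intrinsically" and "$\Omg$ acting on components in the caloric frame," which is where \eqref{eq:LOmA} and \eqref{eq:LOmpsi} (and the fact that $A^\infty$, $\psi^\infty$ are the Coulomb-frame coefficients) enter. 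A secondary subtlety is the $s \to 0$ endpoint: the weight $m(s)s^{(k+1)/2}$ is tuned so that \eqref{eq:forward-caloric-psi-s} closes with \emph{any} $\dlt > 0$, and one should check that the $L^2_{\ds}$ norm (not just $L^\infty_{\ds}$) near $s = 0$ is finite, which is exactly the content of the interpolation bound $\nrm{s^{(k+1)/2}\nb^{(k)}\rd_s u}_{L^2_{\ds}((0,1);L^2)} \aleq \nrm{u_0 - Q}_{H^{1+2\dlt}}$ with the $\dlt$-improved weight — this is where the freedom $0 < \dlt < \frac12$ is used.
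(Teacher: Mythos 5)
Your proposal is correct and follows essentially the same route as the paper: reduce to parabolic smoothing (\eqref{eq:hmhf-lwp-k}) plus long-time exponential decay (Corollary~\ref{cor:hmhf-gwp-higher}) for the extrinsic flow, use the key fact that $\calD_{\Omg} u$ solves the homogeneous linearized heat flow \eqref{eq:l-hmhf-ex} with $f=0$ while $\tilde{\Omg}(u)$ contributes only bounded terms, recover $\ringPsi$, $\ringpsi_{\Omg}$, $\calL_{\Omg}\ringPsi$ from the $s$-integral reconstruction formulas, and pass between $\nb$ and $\Db$ via bounds on the connection form (Lemma~\ref{lem:d-switch}). The only difference is organizational: the paper avoids your ``short bootstrap'' for $\ringA$ by first proving the covariant bounds on $\Db^{(k)}\psi_s$, $\Db^{(k)}\Psi$ (Proposition~\ref{prop:forward-extrinsic-cov} and Step~1 of Proposition~\ref{prop:forward-caloric-A}), then deducing the $\ringA$ bounds, and only then converting to non-covariant derivatives.
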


Proposition~\ref{prop:forward-caloric-psi} is proved by translating bounds for the harmonic map heat flow $u$ in the extrinsic formulation to those for $\psi_{s}, \psi_{x}$ in the caloric gauge. The following bounds for the connection $1$-form $A$ in the caloric gauge, whose proof is the second goal of this subsection, play a key role in going back and forth between the extrinsic formulation and the caloric gauge.

\begin{proposition} \label{prop:forward-caloric-A}
Assume that $u_{0}$ obeys the smallness condition \eqref{eq:hmhf-wp-hyp} and that $0 < \dlt < \frac{1}{4}$. In the caloric gauge constructed in Proposition~\ref{prop:caloric-gauge}, the following bound holds for $k = 0, 1, 2, \ldots$:
\begin{align}
%	\nrm{s^{\frac{k}{2}} \nb^{(k)} \ringA}_{L^{\infty}_{\ds}((0, \infty); L^{2})}
%	+ \nrm{s^{\frac{k+1}{2}} \nb^{(k)} \ringA}_{L^{\infty}_{\ds}((0, \infty); L^{\infty})} &\aleq_{k} \nrm{u_{0} - Q}_{H^{1}}, \label{eq:forward-caloric-A-1} \\
	\nrm{\ringA}_{L^{\infty}_{\ds}((0, \infty); L^{2})}
	+ \nrm{m(s) s^{\frac{k+1}{2}} \nb^{(k+1)} \ringA}_{L^{2}_{\ds} \cap L^{\infty}_{\ds}((0, \infty); L^{2})}
	&\aleq \nrm{u_{0} - Q}_{H^{1+2\dlt}}, \label{eq:forward-caloric-A}
\end{align}
where the implicit constant depends on $k$ and an upper bound for $\nrm{u_{0} - Q}_{H^{1+2\dlt}}$.  Moreover,
\begin{align}
	\nrm{\nb \ringA_{\Omg}}_{L^{\infty}_{\ds} ((0, \infty); L^{2})} 
	+ \nrm{m(s) s^{\frac{k+1}{2}} \nb^{(k+2)} \ringA_{\Omg}}_{L^{2}_{\ds} \cap L^{\infty}_{\ds} ((0, \infty); L^{2})} 
	& \aleq \nrm{u_{0} - Q}_{H^{1+2\dlt}} + \nrm{\calD_{\Omg} u}_{H^{1+2\dlt}}, \label{eq:forward-caloric-A-Omg} \\
	\nrm{\calL_{\Omg} \ringA}_{L^{\infty}_{\ds}((0, \infty); L^{2})}
	+ \nrm{m(s) s^{\frac{k+1}{2}} \nb^{(k+1)} \calL_{\Omg} \ringA}_{L^{2}_{\ds} \cap L^{\infty}_{\ds} ((0, \infty); L^{2})}
	& \aleq
		\nrm{u_{0} - Q}_{H^{1+2\dlt}} + \nrm{\calD_{\Omg} u}_{H^{1+2\dlt}}, \label{eq:forward-caloric-Omg-A}
\end{align}
where the implicit constants depend on $k$ and upper bounds for $\nrm{u_{0} - Q}_{H^{1+2\dlt}}$ and $\nrm{\calD_{\Omg} u_{0}}_{H^{1+2\dlt}}$.
\end{proposition}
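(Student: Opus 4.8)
\textbf{Plan for the proof of Proposition~\ref{prop:forward-caloric-A}.} The strategy is to exploit the reconstruction formula $\ringA_\ell(s_0) = -\tcv\int_{s_0}^\infty \Im(\psi_s\overline{\psi_\ell})\,\ud s$ from \eqref{equ:linearization_via_caloric_gauge}, together with the analogous identity for $\psi_\ell$, to bootstrap the bounds on $\psi_s$ and $\psi_x$ furnished by Proposition~\ref{prop:forward-caloric-psi}. Strictly speaking, Proposition~\ref{prop:forward-caloric-A} and Proposition~\ref{prop:forward-caloric-psi} should be proved simultaneously (or with one feeding into the other in an ordered induction on $k$), since the formula for $\ringpsi_\ell$ in \eqref{equ:linearization_via_caloric_gauge} involves $A_\ell$ and the formula for $\ringA_\ell$ involves $\psi_\ell$. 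Concretely, I would first establish, using only the extrinsic bounds of Proposition~\ref{prop:hmhf-lwp}, Corollary~\ref{cor:hmhf-gwp-higher}, and Theorem~\ref{thm:hmhf-gwp}, the basic estimate $\nrm{\psi_x(s)}_{L^2}\aleq \nrm{u_0-Q}_{H^1}$ and $\nrm{\psi_s(s)}_{L^2}\aleq e^{-c_0^2 s}$ with the appropriate $s^{(k+1)/2}$-weighted higher-regularity versions (these follow since $\psi_s = \Db^\ell\psi_\ell$ corresponds under the frame to $\rd_s u$, and $\psi_\ell$ to $\rd_\ell u$, modulo the frame which is uniformly bounded in all $\nb^{(k)}$-norms for $s\ge 0$ by the bounds in the proof of Proposition~\ref{prop:caloric-gauge}).

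The main analytic step is then to bound $\nb^{(k+1)}\ringA$ in $L^2_{\ds}\cap L^\infty_{\ds}((0,\infty); L^2)$ with the weight $m(s)s^{(k+1)/2}$. Differentiating the reconstruction formula $k+1$ times in $x$ and using Leibniz, one writes $\nb^{(k+1)}\ringA_\ell(s_0)$ as a sum of integrals $\int_{s_0}^\infty \nb^{(k_1)}\psi_s \cdot \nb^{(k_2)}\psi_\ell \, \ud s$ with $k_1+k_2 = k+1$ (plus connection-coefficient corrections, which are lower order and handled by the decay of $A^\infty$). The key point is a bilinear integration estimate: given $\nrm{m(s)s^{(j+1)/2}\nb^{(j)}\psi_s}_{L^2_{\ds}\cap L^\infty_{\ds}(L^2)}\aleq C$ and the analogous (unweighted at top order, $s^{(j+1)/2}$-weighted at higher order) control of $\nb^{(j)}\psi_\ell$, one uses Hölder in $x$ ($L^2\times L^\infty$ or $L^4\times L^4$), the Gagliardo--Nirenberg inequality Lemma~\ref{lem:gagliardo_nirenberg} to pass the $L^\infty$ factor to $L^2$-based norms gaining one derivative, and then Hardy/Schur-type inequalities in the $\ds$-variable to absorb the weights: schematically $\int_{s_0}^\infty s^{-1/2}\cdot(\text{weighted }L^2\text{ norms})\,\ud s$, where the $s^{-1/2}$ comes from one derivative's worth of heat smoothing and the integral converges at $s=\infty$ because $\psi_s$ decays exponentially (via $\nrm{\rd_s u(s)}_{H^k}\aleq e^{-c_0^2 s}$) and at $s_0=0$ because of the $m(s)=\max\{s^{-\delta},1\}$ weight with $\delta<1/4$. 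The exponent $\delta < 1/4$ enters precisely here: in the worst term $\psi_s$ is paired with $\psi_\ell$ at the top derivative, each carrying a factor $s^{-\delta}$ near $s=0$, and the product $s^{-2\delta}$ must remain integrable against $s^{-1/2}\,\ud s$ (or its refinement), forcing $2\delta + 1/2 < 1$.

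For the angular-regularity bounds \eqref{eq:forward-caloric-A-Omg} and \eqref{eq:forward-caloric-Omg-A}, I would commute $\Omg$ (resp.\ $\calL_\Omg$) through the reconstruction formula. Since $\Omg$ is a Killing field, $\calL_\Omg$ commutes with $\nb$ (Lemma~\ref{l:commLOm}) and with the heat semigroup, and it acts on the frame data via \eqref{eq:LOmA}--\eqref{eq:LOmpsi} (namely $\calL_\Omg A^\infty_j = 0$ and $\calL_\Omg\psi^\infty_j = i\psi^\infty_j$). Thus $\calL_\Omg\ringA_\ell = -\tcv\int_{s_0}^\infty \Im(\calL_\Omg\psi_s\cdot\overline{\psi_\ell} + \psi_s\cdot\overline{\calL_\Omg\psi_\ell})\,\ud s$ (being careful that $\calL_\Omg$ of the contracted one-form produces the correct terms), and one runs the same bilinear estimate with one factor carrying the extra $\Omg$, invoking the $\calD_\Omg u_0$-dependent bounds \eqref{eq:forward-caloric-Omg-psi-s} and \eqref{eq:forward-caloric-Omg-Psi} from Proposition~\ref{prop:forward-caloric-psi}. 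The estimate \eqref{eq:forward-caloric-A-Omg} for $\nb\ringA_\Omg$ follows from \eqref{eq:forward-caloric-Omg-A} together with the relation between $\calL_\Omg$ acting on a one-form and $\nb$ of its $\Omg$-contraction, plus the already-established pointwise smallness of $\ringA$.

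\textbf{Main obstacle.} The chief difficulty is the bookkeeping of the simultaneous bootstrap: the formula for $\ringpsi_\ell$ contains $A_\ell = \ringA_\ell + A^\infty_\ell$ under an $s$-integral, so establishing \eqref{eq:forward-caloric-psi-s}--\eqref{eq:forward-caloric-Psi} genuinely requires \eqref{eq:forward-caloric-A} at the same (or lower) level of regularity, and vice versa. I would resolve this by an induction on the number of derivatives $k$, at each stage treating the already-known lower-order bounds on $\ringA$ and $\ringpsi$ as given data, and checking that the $A^\infty$ contributions are always favorable because $e^r A^\infty_j\in L^\infty$ (exponential decay, Lemma~\ref{lem:cf}) so they never cost regularity. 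A secondary technical point is ensuring that all the $\ds$-integrals converge uniformly in $s_0\in(0,\infty)$ with constants depending only on the stated norms of $u_0$; this is where the decomposition into $s\le 1$ (parabolic smoothing, Proposition~\ref{prop:hmhf-lwp}) and $s\ge 1$ (exponential decay, Corollary~\ref{cor:hmhf-gwp-higher}) is used, splitting each bilinear $\ds$-integral accordingly.
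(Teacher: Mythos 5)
Your outline is workable and shares the key ingredients with the paper's proof (the reconstruction formula $\ringA_j(s)=-\int_s^\infty\tcv\,\Im(\psi_s\overline{\psi_j})\,\ud s'$, bilinear $L^2\times L^\infty$ estimates, Schur's test in $\ds$, and commutation with $\Omg$ using \eqref{eq:LOmA}--\eqref{eq:LOmpsi}), but it is organized differently, and the difference matters. You identify as the ``chief difficulty'' a circular dependence between the bounds on $\ringpsi_\ell$ (whose formula contains $A_\ell$) and those on $\ringA_\ell$ (whose formula contains $\psi_\ell$), and you propose a simultaneous induction on $k$ to break it. The paper avoids this circularity altogether: it first proves \emph{covariant} bounds on $\Db^{(k)}\psi_s$, $\Db^{(k)}\Psi$, $\Db^{(k)}\Db_\Omg\psi_s$ and $\Db^{(k)}\psi_\Omg$ directly from the extrinsic estimates of Proposition~\ref{prop:forward-extrinsic-cov}, using only the gauge-independent identities $e\,\Db^{(k)}\psi_s=D^{(k)}\rd_s u$ and $e\,\Db^{(k)}\psi_j=D^{(k)}\rd_j u$ (so no bound on $A$ is needed at this stage); it then differentiates the reconstruction formula with the \emph{covariant} Leibniz rule $\rd_j\Im(\varphi_1\overline{\varphi_2})=\Im(\Db_j\varphi_1\overline{\varphi_2})+\Im(\varphi_1\overline{\Db_j\varphi_2})$, so that only the already-controlled covariant derivatives of $\psi_s$ and $\Psi$ appear, and Schur's test closes the estimate; only afterwards, in the proof of Proposition~\ref{prop:forward-caloric-psi}, is Lemma~\ref{lem:d-switch} used with the freshly obtained $L^\infty$ bound on $A$ to convert covariant into non-covariant bounds. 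Your route — differentiating with $\nb$ and invoking the non-covariant $\psi$-bounds — forces exactly the joint bootstrap you describe, and one has to be careful that converting $\nb^{(k')}\ringA$ into the $L^\infty$ control needed by Lemma~\ref{lem:d-switch} costs a derivative (via Gagliardo--Nirenberg), so the induction must be staggered correctly in $k$; the covariant-first ordering buys you a strictly linear chain of implications with no such bookkeeping. One further small simplification in the paper: the $L^\infty_x$ factor $\Db^{(k)}\Psi$ is bounded directly from the extrinsic estimates (via $\nrm{\nb^{(k)}u}_{L^\infty}\aleq\nrm{u-Q}_{H^{k+2}}+\nrm{\nb^{(k)}Q}_{L^\infty}$), rather than through a Gagliardo--Nirenberg step as you suggest; both work.
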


Both Propositions~\ref{prop:forward-caloric-psi} and \ref{prop:forward-caloric-A} are based on long time bounds for the harmonic map heat flow in the extrinsic formulation, which follow from the results in Subsections~\ref{subsec:hmhf-lwp} and \ref{subsec:hmhf-stab}.
\begin{proposition} \label{prop:forward-extrinsic-cov}
Assume that $u_{0}$ obeys the smallness condition \eqref{eq:hmhf-wp-hyp} and that $0 < \dlt < \frac{1}{2}$. The following bounds hold $k = 0, 1, 2, \ldots$:
\begin{align} 
	\nrm{m(s) s^{\frac{k+1}{2}} D^{(k)} \rd_{s} u}_{L^{2}_{\ds} \cap L^{\infty}_{\ds} ((0, \infty); L^{2})} 
	& \aleq \nrm{u_{0} - Q}_{H^{1+2\dlt}}, \label{eq:forward-dsu} \\
	\nrm{\nb (u-Q)}_{L^{\infty}_{\ds} ((0, \infty); L^{2})} 
	+ \nrm{m(s) s^{\frac{k+1}{2}} D^{(k+1)} \nb (u-Q)}_{L^{2}_{\ds} \cap L^{\infty}_{\ds} ((0, \infty); L^{2})} 
	& \aleq \nrm{u_{0} - Q}_{H^{1+2\dlt}}. \label{eq:forward-du}
\end{align}
where the implicit constants depend on $k$ and an upper bound for $\nrm{u_{0} - Q}_{H^{1+2\dlt}}$. Moreover,
\begin{align} 
	\nrm{m(s) s^{\frac{k+1}{2}} D^{(k)} D_{s} \calD_{\Omg} u}_{L^{2}_{\ds} \cap L^{\infty}_{\ds} ((0, \infty); L^{2})} 
	& \aleq \nrm{\calD_{\Omg} u_{0}}_{H^{1+2\dlt}}, \label{eq:forward-ds-Omg-u} \\
	\nrm{D \calD_{\Omg} u}_{L^{2}_{\ds} \cap L^{\infty}_{\ds} ((0, \infty); L^{2})} 
	+ \nrm{m(s) s^{\frac{k+1}{2}} D^{(k+2)} \calD_{\Omg} u}_{L^{2}_{\ds} \cap L^{\infty}_{\ds} ((0, \infty); L^{2})} 
	& \aleq \nrm{\calD_{\Omg} u_{0}}_{H^{1+2\dlt}}, \label{eq:forward-Omg-u}
\end{align}
where the implicit constants depend on $k$ and an upper bound for $\nrm{u_{0} - Q}_{H^{1+2\dlt}}$.
\end{proposition}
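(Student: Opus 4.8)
\textbf{Proof strategy for Proposition~\ref{prop:forward-extrinsic-cov}.} The plan is to derive these bounds purely from the parabolic well-posedness and asymptotic stability theory already established in Subsections~\ref{subsec:hmhf-lwp} and \ref{subsec:hmhf-stab}, splitting the $s$-interval $(0,\infty)$ into the short-time regime $s \in (0,1]$ and the long-time regime $s \in [1,\infty)$. In the short-time regime, $m(s) = s^{-\dlt}$, and the weight $m(s) s^{\frac{k+1}{2}} = s^{\frac{k+1}{2} - \dlt}$ is a positive power of $s$ since $\dlt < \frac{1}{2} \leq \frac{k+1}{2}$; here we interpolate the parabolic smoothing estimates \eqref{eq:hmhf-lwp-k} from Proposition~\ref{prop:hmhf-lwp} (written in terms of $D^{(k)}$ rather than $\nb^{(k)}$, which is harmless since the difference is lower-order and controlled by the bootstrap bounds on $u - Q$) between the endpoint cases $\sgm = 1+2\dlt$ and integer values, to capture the fractional power $s^{-\dlt}$. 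The key point is that \eqref{eq:hmhf-lwp-1} and \eqref{eq:hmhf-lwp-k} give $L^2_{\ds}$ and $L^\infty_{\ds}$ control of $s^{\frac{k+1}{2}} \rd_s u$ and $s^{\frac{k+1}{2}} \nb^{(k+1)}(u-Q)$ in terms of $\nrm{u_0 - Q}_{H^1 \cap L^\infty} \aleq \nrm{u_0 - Q}_{H^{1+2\dlt}}$, and a standard real-interpolation argument upgrades the power of $s$ in the weight to match $m(s)$.

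In the long-time regime $s \geq 1$, we have $m(s) = 1$, so we must establish exponential decay of $D^{(k)} \rd_s u$ and $D^{(k+1)} \nb(u-Q)$ in $L^2_x$. This is exactly the content of Corollary~\ref{cor:hmhf-gwp-higher}: \eqref{eq:hmhf-gwp-k} gives $\nrm{u(s) - Q}_{H^{k}} + \nrm{\rd_s u(s)}_{H^k} \aleq e^{-c_0^2 s} \nrm{u_0 - Q}_{H^1 \cap L^\infty}$ for all $k$, which (after converting $H^{k}$ Sobolev norms to $D^{(k)}$-based norms using the algebra and Moser estimates of Subsection~\ref{subsec:ineq}, together with the already-established smallness of $u - Q$) yields the pointwise-in-$s$ exponential bounds; the $L^2_{\ds}$ and $L^\infty_{\ds}$ norms over $[1,\infty)$ of $e^{-c_0^2 s}$ are finite, giving \eqref{eq:forward-dsu} and \eqref{eq:forward-du}. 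Here we should fix $c_0$ once and for all so that $c_0 < \rho_Q$, which is legitimate since $Q$ is weakly linearly stable by Proposition~\ref{prop:weak-stab}.

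For the equivariant-derivative bounds \eqref{eq:forward-ds-Omg-u} and \eqref{eq:forward-Omg-u}, the strategy is to commute the harmonic map heat flow equation with $\calD_{\Omg}$. Since $\Omg$ is a Killing field on $\bbH^2$ (so $\calL_\Omg$ commutes with $\nb$ by Lemma~\ref{l:commLOm}) and $\tilde{\Omg}$ generates isometries on $\tgmfd$, the field $\phi := \calD_{\Omg} u = \ud u(\Omg) - \tilde{\Omg}(u)$ is a section of $u^\ast T \tgmfd$ that solves the linearized harmonic map heat flow \eqref{eq:l-hmhf-ex} with forcing term $f = 0$; this is the geometric identity that the Killing symmetry buys us. Crucially, $\calD_\Omg Q = 0$ by equivariance of $Q$, so no inhomogeneous "$Q$-source" term appears, and the initial data is $\phi(s=0) = \calD_\Omg u_0$ with $\nrm{\calD_\Omg u_0}_{H^{1+2\dlt}} \aleq \eps_0$. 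We then apply Proposition~\ref{prop:l-hmhf-ex} (local theory and parabolic smoothing for the linearized flow) on $s \in (0,1]$ and Proposition~\ref{prop:l-hmhf-gwp} together with Corollary~\ref{cor:hmhf-gwp-higher} (estimate \eqref{eq:l-hmhf-gwp-k} with $f=0$) on $s \in [1,\infty)$, exactly paralleling the two-regime argument above; the fractional weight $s^{-\dlt}$ in the short-time regime is again handled by interpolation.

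\textbf{Main obstacle.} The genuinely delicate point is the bookkeeping needed to pass between $\nb^{(k)}$-based Sobolev norms $H^k$ (in which Propositions~\ref{prop:hmhf-lwp}--\ref{prop:l-hmhf-gwp} and Corollary~\ref{cor:hmhf-gwp-higher} are phrased, using the extrinsic $\bbR^N$-valued formalism) and the covariant $D^{(k)}$-based norms appearing in \eqref{eq:forward-dsu}--\eqref{eq:forward-Omg-u}. Each time one trades a plain derivative for a covariant derivative (or vice versa) one picks up terms involving derivatives of the second fundamental form $S(u)$ contracted against $\rd u$, and iterating this $k$ times produces a proliferation of multilinear expressions in $\nb^{(\leq k)}(u-Q)$, $\nb^{(\leq k)} Q$, $\rd_s u$ and their contractions; controlling these requires systematic use of the algebra property of $H^1 \cap C^0$, the Moser-type estimates of Proposition~\ref{prop:frac-moser} and Corollary~\ref{cor:frac-moser-diff}, the Gagliardo--Nirenberg and Sobolev inequalities of Subsection~\ref{subsec:ineq}, and the smallness of $\nrm{u-Q}_{H^1 \cap L^\infty}$ to absorb the worst terms. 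The second, milder, obstacle is verifying that the interpolation between the integer-regularity smoothing estimates and the $\sgm = 1+2\dlt$ endpoint is uniform in $s$ with the correct power of $s$; this is routine but must be done with care near $s = 0$ where the weights degenerate, and is where the hypothesis $\dlt < \tfrac12$ (rather than just $\dlt < 1$) is used.
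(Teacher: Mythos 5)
Your proposal follows essentially the same route as the paper: non-covariant bounds first (local smoothing estimates with $\sgm = 1+2\dlt$ on $s\le 1$, exponential decay from Theorem~\ref{thm:hmhf-gwp} and Corollary~\ref{cor:hmhf-gwp-higher} on $s\ge 1$), then the observation that $\calD_\Omg u$ solves the linearized heat flow \eqref{eq:l-hmhf-ex} with $f=0$, and finally conversion between $\nb^{(k)}$- and $D^{(k)}$-based norms by writing $D=\nb+B$ with $B$ built from the second fundamental form (the paper packages this as Lemma~\ref{lem:d-switch}). Two details you gloss over are, however, genuine (if fillable) gaps. First, you assert that \eqref{eq:hmhf-lwp-1} and \eqref{eq:hmhf-lwp-k} give $L^{\infty}_{\ds}$ control of $s^{\frac{k+1}{2}}\rd_s u$; they do not --- \eqref{eq:hmhf-lwp-k} only controls $\rd_s u$ in $L^{2}_{\ds}$ (the $L^{\infty}_{\ds}$ component there is for $u-Q$, not $\rd_s u$). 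The paper upgrades $L^{2}_{\ds}$ to $L^{\infty}_{\ds}$ by noting that $\rd_s u$ itself solves \eqref{eq:l-hmhf-ex} with $f=0$, selecting by pigeonhole a good slice $s'$ at which $\nrm{\rd_s u(s')}_{H^k}$ is controlled, and then propagating forward with \eqref{eq:l-hmhf-ex-sgm}; your argument needs this (or an equivalent use of the equation $\rd_s u=\lap u+S(u)(\rd u,\rd u)$) to be complete. Second, \eqref{eq:forward-ds-Omg-u} bounds $D^{(k)}D_s\calD_\Omg u$, whereas the linearized-flow estimates you invoke control $\rd_s\phi$; the paper deliberately does \emph{not} replace $D_s$ by $\rd_s$ here, but instead uses the equation to rewrite $D_s\calD_\Omg u$ as $D^\ell D_\ell\calD_\Omg u$ plus a curvature term and estimates those via the already-established bounds together with Propositions~\ref{prop:frac-leib} and \ref{prop:frac-moser}. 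Neither point changes the architecture of your argument, but both require an explicit step that your write-up currently treats as automatic.
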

\begin{remark} [Higher regularity] \label{rem:forward-high-reg}
With a straightforward modification of the argument given below, one may show that all bounds in Propositions~\ref{prop:forward-caloric-psi}, \ref{prop:forward-caloric-A} and \ref{prop:forward-extrinsic-cov} hold with $L^{2}$ and $H^{1+2\dlt}$ replaced by $H^{\sgm-1}$ and $H^{\sgm+2 \dlt}$, respectively, where $\sgm$ is any positive integer. This observation, along with Remark~\ref{rem:backward-high-reg} below, is used later to justify a continuous induction argument for the Schr\"odinger map evoluion.
\end{remark}

\begin{remark} 
In our proof below, we freely exploit the initial regularity $u_{0} - Q \in H^{1+2\dlt}$ with $\dlt > 0$ to simplify the argument. 
However, many of the the above estimates are true when $\dlt = 0$ as well. Such bounds may be proved by working with Bochner-Weitzenb\"ock type identities (see, for instance, \cite[Paper~IV,~Section~4]{Tao37}) for $s \aleq 1$ and exploiting the exponential $s$-decay in Theorem~\ref{thm:hmhf-gwp} and Proposition~\ref{prop:l-hmhf-gwp} for $s \ageq 1$.
\end{remark}

To prove the above results, we begin with a simple lemma for relating $\nb$ with $D$ or $\bfD$, whose obvious proof we omit:
\begin{lemma} \label{lem:d-switch}
Let $J$ be an interval and $B$ a $1$-form on $\bbH^{2} \times J$ satisfying the bound
\begin{equation*}
	\sum_{k'=0}^{k} \nrm{s^{\frac{k'+1}{2}} \nb^{(k')} B}_{L^{\infty}_{\ds} (J; L^{\infty})} \leq M.
\end{equation*}
Then for any function $f$ on $\bbH^{2} \times J$ and $1 \leq p, q \leq \infty$, we have
\begin{equation*}
	\sum_{k'=0}^{k} \nrm{s^{\frac{k'}{2}}\nb^{(k')} f}_{L^{q}_{\ds} (J; L^{p})} \simeq_{M} \sum_{k'=0}^{k} \nrm{s^{\frac{k'}{2}}(\nb + B)^{(k')} f}_{L^{q}_{\ds} (J; L^{p})}.
\end{equation*}
\end{lemma}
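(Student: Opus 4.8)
\textbf{Proof plan for Lemma~\ref{lem:d-switch}.} The statement asserts an equivalence of the mixed norms $\sum_{k'=0}^{k}\nrm{s^{\frac{k'}{2}}\nb^{(k')}f}_{L^{q}_{\ds}(J;L^{p})}$ and $\sum_{k'=0}^{k}\nrm{s^{\frac{k'}{2}}(\nb+B)^{(k')}f}_{L^{q}_{\ds}(J;L^{p})}$, with implicit constants depending only on the bound $M$ for $\sum_{k'=0}^{k}\nrm{s^{\frac{k'+1}{2}}\nb^{(k')}B}_{L^{\infty}_{\ds}(J;L^{\infty})}$. The plan is to prove both directions simultaneously by induction on $k$ after first recording the key algebraic identity: expanding the product $(\nb+B)^{(k')}$ by the Leibniz rule gives
\begin{equation*}
	(\nb+B)^{(k')} f = \nb^{(k')} f + \sum_{j=0}^{k'-1} \sum_{\ell} c_{j,\ell}\, \bigl(\nb^{(\ell_1)} B\bigr) \cdots \bigl(\nb^{(\ell_r)} B\bigr)\, \nb^{(j)} f,
\end{equation*}
where the inner sum runs over multi-indices $\ell=(\ell_1,\dots,\ell_r)$ with $r\geq 1$, $\ell_1+\cdots+\ell_r + r + j = k'$, and $c_{j,\ell}$ are universal combinatorial constants; the same identity holds with the roles of $\nb$ and $\nb+B$ swapped up to sign, since $\nb = (\nb+B) - B$. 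Hence it suffices to bound the ``error'' terms, i.e. all the terms in the sum above, by the $\nb^{(j)} f$ pieces with acceptable weights.

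The core estimate for a single error term is the following: for a product $\bigl(\nb^{(\ell_1)}B\bigr)\cdots\bigl(\nb^{(\ell_r)}B\bigr)\nb^{(j)}f$ appearing at level $k'$, we put all the $B$-factors in $L^{\infty}_x$ and keep $\nb^{(j)}f$ in $L^{p}_x$, obtaining the pointwise-in-$s$ bound
\begin{equation*}
	\Bigl\| \bigl(\nb^{(\ell_1)}B\bigr)\cdots\bigl(\nb^{(\ell_r)}B\bigr)\nb^{(j)}f \Bigr\|_{L^{p}_x}
	\lesssim \prod_{i=1}^{r} \nrm{\nb^{(\ell_i)}B}_{L^{\infty}_x}\, \nrm{\nb^{(j)}f}_{L^{p}_x}.
\end{equation*}
Now multiply by $s^{k'/2}$. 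Since $s^{k'/2} = \prod_{i=1}^r s^{(\ell_i+1)/2} \cdot s^{j/2}$ by the constraint $\sum_i(\ell_i+1) + j = k'$, we can distribute the weight so that each factor $s^{(\ell_i+1)/2}\nb^{(\ell_i)}B$ is bounded in $L^{\infty}_{\ds}(J;L^{\infty})$ by $M$ (note $\ell_i \leq k'-1 \leq k$, so this is among the quantities controlled by the hypothesis), while the remaining weight $s^{j/2}$ attaches to $\nb^{(j)}f$. Taking $L^{q}_{\ds}$ in $s$ and using that the $B$-factors are in $L^{\infty}_{\ds}$, the error term is controlled by $M^{r}\,\nrm{s^{j/2}\nb^{(j)}f}_{L^{q}_{\ds}(J;L^{p})}$ with $j\leq k'-1$, which is part of the sum on the other side. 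Summing over the finitely many error terms at each level $k'\leq k$ and over $k'$, and running the elementary induction (the $j$-th term on the right is controlled by $k$-level data on the left, and the constant accumulates as a polynomial in $M$ of degree at most $k$), yields
\begin{equation*}
	\sum_{k'=0}^{k}\nrm{s^{\frac{k'}{2}}(\nb+B)^{(k')}f}_{L^{q}_{\ds}(J;L^{p})} \lesssim_{M,k} \sum_{k'=0}^{k}\nrm{s^{\frac{k'}{2}}\nb^{(k')}f}_{L^{q}_{\ds}(J;L^{p})},
\end{equation*}
and the reverse inequality follows identically by writing $\nb = (\nb+B)-B$ and noting that $\nb^{(\ell_i)}B$ can equally be expanded in terms of $(\nb+B)$-derivatives of $B$ with the same weights (or, more simply, by a symmetric induction). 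Since the argument is pointwise in $t$ (the variable $t$ only enters as a passive parameter through the interval $J\subset\bbR$), no additional care is needed there.

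The only mildly delicate point — and the one I would flag as the ``main obstacle,'' though it is really just bookkeeping — is the correct distribution of the power of $s$ among the factors: one must verify that the weight exponent $k'/2$ splits exactly as $\sum_i (\ell_i+1)/2 + j/2$, which is forced by the homogeneity constraint $\sum_i\ell_i + r + j = k'$ coming from the Leibniz expansion of $(\nb+B)^{(k')}$. Once this is observed, every factor lands in precisely the norm for which a bound is available (either $M$ from the hypothesis on $B$, or a summand of the claimed norm for $f$), and the estimate closes. As this is entirely routine, we omit the remaining details. \qedhere
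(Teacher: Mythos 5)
Your proof is correct and is exactly the ``obvious'' argument the paper has in mind (it states the lemma with the remark that the proof is omitted as obvious): the Leibniz expansion of $(\nb+B)^{(k')}$, the homogeneity count $\sum_i(\ell_i+1)+j=k'$ which forces the weight $s^{k'/2}$ to split into $\prod_i s^{(\ell_i+1)/2}\cdot s^{j/2}$, placing each $B$-factor in $L^\infty_{\ds}L^\infty$ via the hypothesis, and the symmetric induction for the reverse inequality all check out. The only (harmless) slip is the remark about the variable $t$: in this lemma $J$ is the $s$-interval and $t$ does not appear at all.
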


Now we establish Propositions~\ref{prop:forward-caloric-psi}, \ref{prop:forward-caloric-A} and \ref{prop:forward-extrinsic-cov} in the reverse order. In what follows, we freely use the embeddings $H^{\sgm} \hookrightarrow H^{\sgm'}$ for $\sgm' < \sgm$ and $H^{\sgm} \hookrightarrow L^{\infty}$ for $\sgm > 1$.
Moreover, we suppress the dependence of the implicit constants on $k$ and $\nrm{u_{0} - Q}_{H^{1+2\dlt}}$.
\begin{proof}[Proof of Proposition~\ref{prop:forward-extrinsic-cov}]
We proceed in two steps. 

\pfstep{Step~1: Non-covariant bounds}
The first step is to check, using the results in Subsections~\ref{subsec:hmhf-lwp} and \ref{subsec:hmhf-stab}, that all the bounds stated in Proposition~\ref{prop:forward-extrinsic-cov} hold when $D$ is replaced by $\nb$. 
\begin{itemize}
\item {\it  \eqref{eq:forward-du} with $D$ replaced by $\nb$.} We use \eqref{eq:hmhf-lwp-k} with $\sgm = 1 + 2 \dlt$ for $s \leq 1$, then \eqref{eq:hmhf-gwp} and \eqref{eq:hmhf-gwp-k} for $s \geq 1$.

\item {\it  \eqref{eq:forward-dsu} with $D$ replaced by $\nb$.} For the $L^{2}_{\ds} ([0, \infty); L^{2})$ norm on the LHS, we use \eqref{eq:hmhf-lwp-k} with $\sgm = 1 + 2 \dlt$ for $s \leq 1$, then \eqref{eq:hmhf-gwp} and \eqref{eq:hmhf-gwp-k} for $s \geq 1$, as before. To control the corresponding $L^{\infty}_{\ds} ([0, \infty); L^{2})$ norm, note that $\rd_{s} u$ obeys \eqref{eq:l-hmhf-ex} with $f = 0$, so that
\begin{equation*}
	\nrm{\rd_{s} u(s)}_{H^{k}} \aleq \begin{cases}
		\nrm{\rd_{s} u}_{L^{2}_{\ds}([\frac{s}{4}, \frac{s}{2}]; H^{k})} & \hbox{ when } s \leq 1, \\
		\nrm{\rd_{s} u}_{L^{2}_{\ds}([s-1, s-\frac{1}{2}]; H^{k})} & \hbox{ when } s \geq 1, 
\end{cases}
\end{equation*}
by the pigeonhole principle (to select a good $s'$ for which $\nrm{\rd_{s} u(s')}_{H^{k}}$ is controlled by the RHS), \eqref{eq:l-hmhf-ex-sgm}, \eqref{eq:hmhf-gwp} and Remark~\ref{rem:hmhf-small-data} (note that the length of the $s$-interval on the RHS is kept to be $O(1)$).

\item {\it \eqref{eq:forward-Omg-u} with $D$ replaced by $\nb$.} Note that $\calD_{\Omg} u$ obeys \eqref{eq:l-hmhf-ex} with $f = 0$. Thus, the desired estimate follows from \eqref{eq:l-hmhf-ex-sgm-k} with $\sgm = 1 + 2 \dlt$ for $s \leq 1$, then \eqref{eq:l-hmhf-gwp-1} and \eqref{eq:l-hmhf-gwp-k} for $s \geq 1$.

\item {\it \eqref{eq:forward-ds-Omg-u} with $D$ replaced by $\nb$.} Here, we do \emph{not} replace $D_{s}$ by $\rd_{s}$, i.e., we prove
\begin{equation*}
	\nrm{m(s) s^{\frac{k+1}{2}} \nb^{(k)} D_{s} \calD_{\Omg} u}_{L^{2}_{\ds} \cap L^{\infty}_{\ds}((0, \infty); L^{2})} \aleq \nrm{\calD_{\Omg} u_{0}}_{H^{1+2\dlt}}.
\end{equation*}
Recalling that $\calD_{\Omg} u$ obeys \eqref{eq:l-hmhf-ex} with $f = 0$, we see that it suffices to bound
\begin{equation*}
	\nrm{m(s) s^{\frac{k+1}{2}} \nb^{(k)} D^{\ell} D_{\ell} \calD_{\Omg} u}_{L^{2}_{\ds} \cap L^{\infty}_{\ds}((0, \infty); L^{2})},  \nrm{m(s) s^{\frac{k+1}{2}} \nb^{(k)} \tensor{R}{_{CD}^{A}_{B}}(u) \rd_{\ell} u^{B} \calD_{\Omg} u^{C} \rd^{\ell} u^{D}}_{L^{2}_{\ds} \cap L^{\infty}_{\ds}((0, \infty); L^{2})} 
\end{equation*}
by $\nrm{\calD_{\Omg} u_{0}}_{H^{1+2\dlt}}$ up to a constant depending on $k$ and $\nrm{u_{0}-Q}_{H^{1+2\dlt}}$.  Recalling that $D_{\ell} \phi^{A} = \rd_{\ell} \phi^{A} + S^{A}_{BC}(u) \rd_{j} u^{B} \phi^{C}$, the latter bounds follow from the previously proved bounds on $\calD_{\Omg} u$ and $\nb(u - Q)$, as well as Propositions~\ref{prop:frac-leib} and \ref{prop:frac-moser}; we omit the details.
\end{itemize}

\pfstep{Step~2: Conversion to covariant bounds}
To conclude the proof, we pass from the bounds involving $\nb$ proved in Step~1 to the bounds claimed in the proposition using Lemma~\ref{lem:d-switch}. More precisely, we write $D = \nb + B$, where
\begin{equation*}
	(B_{j})^{C}_{D} = S^{C}_{DE}(u) \rd_{j} u^{E}
	= \left( \tilde{S}^{C}_{D E}(u-Q; x) + S^{C}_{D E}(Q) \right) \left( \rd_{j} (u - Q)^{E} + \rd_{j} Q^{E} \right).
\end{equation*}
By Step~1, it is straightforward to show that
\begin{equation*}
	\nrm{s^{\frac{k+1}{2}} \nb^{(k)} B}_{L^{\infty}_{\ds} ((0, \infty); L^{\infty})} \aleq 1.
\end{equation*}
Then \eqref{eq:forward-dsu} and \eqref{eq:forward-ds-Omg-u} follow immediately from Step~1 and Lemma~\ref{lem:d-switch}. For the remaining estimates, we need to prove, for every $k \geq 0$,
\begin{align*}
	\nrm{m(s) s^{\frac{k+1}{2}} D \nb (u-Q)}_{L^{\infty}_{\ds} \cap L^{2}_{\ds}((0, \infty); H^{k})} 
	& \aleq \nrm{u_{0} - Q}_{H^{1+2\dlt}}, \\
	\nrm{m(s) s^{\frac{k+1}{2}} D^{(2)} \calD_{\Omg} u}_{L^{2}_{\ds} \cap L^{\infty}_{\ds} ((0, \infty); H^{k})} 
	& \aleq \nrm{\calD_{\Omg} u_{0}}_{H^{1+2\dlt}}.
\end{align*}
The last bound was already addressed in Step~1 (see the proof of \eqref{eq:forward-ds-Omg-u}); the first bound follows in a similar manner. \qedhere
\end{proof}

\begin{proof}[Proof of Proposition~\ref{prop:forward-caloric-A}]
\pfstep{Step~1: Covariant bounds for $\psi$}
First, we claim that the following bounds hold for $k = 0 , 1, 2, \ldots$:
\begin{align}
	\nrm{m(s) s^{\frac{k+1}{2}} \Db^{(k)} \psi_{s}}_{L^{2}_{\ds} \cap L^{\infty}_{\ds} ((0, \infty); L^{2})} & \aleq \nrm{u_{0} - Q}_{H^{1+2\dlt}}, \label{eq:forward-psi-s-cov} \\
	\nrm{m(s) s^{\frac{k+1}{2}} \Db^{(k)} \Psi}_{L^{\infty}_{\ds} ((0, \infty); L^{\infty})} & \aleq 1, \label{eq:forward-psi-cov} \\
	\nrm{m(s) s^{\frac{k+1}{2}} \Db^{(k)} \bfD_{\Omg} \psi_{s}}_{L^{2}_{\ds} \cap L^{\infty}_{\ds} ((0, \infty); L^{2})} & \aleq \nrm{u_{0} - Q}_{H^{1+2\dlt}} + \nrm{\calD_{\Omg} u_{0}}_{H^{1+2\dlt}}, \label{eq:forward-Omg-psi-s-cov} \\
	\nrm{\psi_{\Omg}}_{L^{\infty}_{\ds}((0, \infty); L^{\infty})} + \nrm{m(s) s^{\frac{k+1}{2}} \Db^{(k+1)} \psi_{\Omg}}_{L^{\infty}_{\ds} ((0, \infty); L^{\infty})} & \aleq_{\nrm{\calD_{\Omg} u_{0}}_{H^{1+2\dlt}}} 1. \label{eq:forward-psi-Omg-cov}
\end{align}
Indeed, \eqref{eq:forward-psi-s-cov} and \eqref{eq:forward-psi-cov} follow rather immediately from Proposition~\ref{prop:forward-extrinsic-cov} and the identities
\begin{equation*}
	e \Db^{(k)} \psi_{s} = D^{k} \rd_{s} u, \quad
	e \Db^{(k)} \psi_{j} = D^{k} \rd_{j} u.	
\end{equation*}
For \eqref{eq:forward-Omg-psi-s-cov} and \eqref{eq:forward-psi-Omg-cov}, we begin with the identities
\begin{equation*}
	e \Db^{(k)} \Db_{\Omg} \psi_{s} 
	= D^{(k)} D_{s} \calD_{\Omg} u - D^{(k)} D_{s} \tilde{\Omg}(u), \quad
	e \Db^{(k)} \psi_{\Omg} = D^{(k)} \calD_{\Omg} u - D^{(k)}\tilde{\Omg}(u).
\end{equation*}
For the first terms, we directly use Proposition~\ref{prop:forward-extrinsic-cov}. For the last terms, we use the Leibniz rule and Proposition~\ref{prop:forward-extrinsic-cov} to bound
\begin{align*}
	\nrm{m(s) s^{\frac{k+1}{2}} D^{(k)} D_{s} \tilde{\Omg}(u)}_{L^{2}_{\ds} \cap L^{\infty}_{\ds}((0, \infty); L^{2})}
	& \aleq \nrm{m(s) s^{\frac{k+1}{2}} D^{(k)} (\nb_{A} \tilde{\Omg}(u) \rd_{s} u^{A})}_{L^{2}_{\ds} \cap L^{\infty}_{\ds}((0, \infty); L^{2})}
	\aleq \nrm{u_{0} - Q}_{H^{1+2\dlt}}, \\
	\nrm{m(s) 2^{\frac{k}{2}} D^{(k)}\tilde{\Omg}(u)}_{L^{\infty}_{\ds}((0, \infty); L^{\infty})}
	& \aleq 1.
\end{align*}
Finally, the term $\nrm{\psi_{\Omg}}_{L^{\infty}_{\ds}((0, \infty); L^{\infty}}$ on the LHS of \eqref{eq:forward-psi-Omg-cov} can be handled directly using the identity $e \psi_{\Omg} = \calD_{\Omg} u - \tilde{\Omg}(u)$ and Proposition~\ref{prop:forward-extrinsic-cov}.

\pfstep{Step~2: Bounds for $\ringA$}
The starting point for the proofs of \eqref{eq:forward-caloric-A}, \eqref{eq:forward-caloric-A-Omg} and \eqref{eq:forward-caloric-Omg-A} are, respectively, the formulas
\begin{align*}
	\ringA_{j} (s) & = - \int_{s}^{\infty} \tcv \Im(\psi_{s} \overline{\psi_{j}})(s') \, \ud s',  \\
	\ringA_{\Omg}(s) &= - \int_{s}^{\infty} \tcv \Im(\psi_{s} \overline{\psi_{\Omg}})(s') \, \ud s', \\
	\calL_{\Omg} \ringA_{j}(s)
	& = - \int_{0}^{\infty} \tcv \Im( \Db_{\Omg} \psi_{s} \overline{\psi_{j}} + \psi_{s} \overline{\Db_{j} \psi_{\Omg}} )(s') \, \ud s'.
\end{align*}
The first two formulas are obvious from Subsection~\ref{subsec:caloric}. To verify the last formula, we start with
\begin{align*}
	\calL_{\Omg} \ringA_{j}(s)
	= - \int_{0}^{\infty} \tcv \Im( \Omg \psi_{s} \overline{\psi_{j}} + \psi_{s} \overline{\calL_{\Omg} \psi_{j}} )(s') \, \ud s'.
\end{align*}
Then since
\begin{equation*}
	\Omg \psi_{s} = \Db_{\Omg} \psi_{s} - i A_{\Omg} \psi_{s}, \quad
	\calL_{\Omg} \psi_{j} 
%	= \iota_{\rd_{j}} \iota_{\Omg} \ud \psi+ \rd_{j} \psi_{\Omg}
%	= - \iota_{\rd_{j}} \iota_{\Omg} (i A \wedge \psi) + \rd_{j} \psi_{\Omg}
%	= i A_{j} \psi(\Omg) - i A_{\Omg} \psi_{j}
	= \Db_{j} (\psi_{\Omg}) - i A_{\Omg} \psi_{j},
\end{equation*}
the desired identity follows.

By the above formulas and Step~1, the desired bounds for $\nrm{\ringA}_{L^{\infty}_{\ds} ((0, \infty); L^{2})}$, $\nrm{\nb \ringA_{\Omg}}_{L^{\infty}_{\ds}((0, \infty); L^{2})}$ and $\nrm{\calL_{\omg} \ringA}_{L^{\infty}_{\ds}((0, \infty); L^{2})}$ follow.  The remainder of \eqref{eq:forward-caloric-A}, \eqref{eq:forward-caloric-A-Omg} and \eqref{eq:forward-caloric-Omg-A} follows from the Leibniz rule 
\begin{equation*}
\rd_{j} \Im(\varphi_{1} \overline{\varphi_{2}}) = \Im(\Db_{j} \varphi_{1} \overline{\varphi_{2}}) + \Im(\varphi_{1} \overline{\Db_{j} \varphi_{2}}), 
\end{equation*}
as well as Schur's test (with respect to $L^{2}_{\ds}$ and $L^{\infty}_{\ds}$) and the covariant bounds from Step~1. \qedhere
\end{proof}

\begin{proof}[Proof of Proposition~\ref{prop:forward-caloric-psi}]
\pfstep{Step~1: Bounds for $\psi_{s}$}
By Proposition~\ref{prop:forward-caloric-A}, for any $k = 0, 1, 2, \ldots$, we have 
\begin{equation*}
	\nrm{s^{\frac{k+1}{2}} \nb^{(k)} A}_{L^{\infty}_{\ds}((0, \infty); L^{\infty})}
	\leq \nrm{s^{\frac{k+1}{2}} \nb^{(k)} \ringA}_{L^{\infty}_{\ds}((0, \infty); L^{\infty})} + \nrm{s^{\frac{k+1}{2}} \nb^{(k)} A^{\infty}}_{L^{\infty}_{\ds}((0, \infty); L^{\infty})} \aleq 1.
\end{equation*}
Thus, by Lemma~\ref{lem:d-switch}, the non-covariant bounds stated in Proposition~\ref{prop:forward-caloric-psi} follow from their covariant analogues. Indeed, \eqref{eq:forward-caloric-psi-s} immediately follows from \eqref{eq:forward-psi-s-cov} in the preceding proof. For \eqref{eq:forward-caloric-Omg-psi-s}, note that \eqref{eq:forward-Omg-psi-s-cov} and Lemma~\ref{lem:d-switch} imply
\begin{equation*}
	\nrm{m(s) s^{\frac{k+1}{2}} \nb^{(k)} \Db_{\Omg} \psi_{s}}_{L^{2}_{\ds} \cap L^{\infty}((0, \infty); L^{2})} \aleq \nrm{u_{0} - Q}_{H^{1+2\dlt}} + \nrm{\calD_{\Omg} u_{0}}_{H^{1+2\dlt}}.
\end{equation*}
Since $\Omg \psi_{s} = \Db_{\Omg} \psi_{s} - i A_{\Omg} \psi_{s}$, \eqref{eq:forward-caloric-Omg-psi-s} follows from the preceding bound combined with \eqref{eq:forward-caloric-psi-s} and \eqref{eq:forward-caloric-A-Omg}. 

\pfstep{Step~2: Remaining bounds}
Finally, \eqref{eq:forward-caloric-Psi}, \eqref{eq:forward-caloric-Psi-Omg} and \eqref{eq:forward-caloric-Omg-Psi} follow, respectively, from the formulas (see Subsection~\ref{subsec:caloric}):
\begin{align*}
	\ringpsi_{j}(s) & = - \int_{s}^{\infty} (\rd_{j} \psi_{s}  + i A_{j} \psi_{s}) (s') \, \ud s', \\
	\ringpsi_{\Omg}(s) &= - \int_{s}^{\infty} (\Omg \psi_{s} + i A_{\Omg} \psi_{s} ) (s') \, \ud s', \\
	\calL_{\Omg}  \ringpsi_{j}(s)
	& = -\int_{s}^{\infty} \left( \rd_{j} (\Omg \psi_{s}) + i A_{j} (\Omg \psi_{s}) + i (\calL_{\Omg} A)_{j} \psi_{s} \right) (s') \, \ud s'.
\end{align*}
Next, we bound $\ringPsi$ and $\ringpsi_{\Omg}$ using
\begin{equation*}
	\rd_{s} \psi_{j} = \rd_{j} \psi_{s} + i A_{j} \psi_{s}, \quad \rd_{s} \psi_{\Omg} = \Omg \psi_{s} + i A_{\Omg} \psi_{s}
\end{equation*}
and the bounds \eqref{eq:forward-caloric-psi-s}, \eqref{eq:forward-caloric-Omg-psi-s}, \eqref{eq:forward-caloric-A}, \eqref{eq:forward-caloric-A-Omg} and \eqref{eq:forward-caloric-Omg-A} for $\nrm{\psi_{s}}_{L^{2}}$, $\nrm{\Omg \psi_{s}}_{L^{2}}$, $\nrm{\ringA}_{L^{\infty}}$, $\nrm{\ringA_{\Omg}}_{L^{\infty}}$ and $\nrm{\calL_{\Omg} \ringA}_{L^{\infty}}$, respectively. We omit the straightforward details. \qedhere
\end{proof}

\subsection{Integration back to $s = 0$} \label{subsec:backward-caloric}
As before, we suppress the dependence of constants on $Q$, $\tilde{\Omg}$, the Coulomb frame $e^{\infty}$ for $Q^{\ast} T \tgmfd$ and $A^{\infty}$ introduced in Subsection~\ref{subsec:hm}.

In this subsection, we turn around the problem considered in Subsection~\ref{subsec:forward-caloric} and study the quantitative relationship between the initial map $u_{0}$ and the component $\psi_{s}$ in caloric gauge of the harmonic map heat flow development. The following result relates energy-type norms of these two objects under the assumption that the initial map were sufficiently close to $Q$.

\begin{proposition} \label{prop:backward-caloric}
Assume that $u_{0}$ obeys the smallness condition \eqref{eq:hmhf-wp-hyp} and that $0 < \dlt < \frac{1}{2}$. In the caloric gauge constructed in Proposition~\ref{prop:caloric-gauge}, we have
\begin{align}
	\nrm{u_{0} - Q}_{H^{1+2\dlt}} & \aleq \nrm{m(s) s^{\frac{1}{2}} \psi_{s}}_{L^{2}_{\ds}((0, \infty); L^{2})}, \label{eq:backward-caloric} \\
	\nrm{\calD_{\Omg} u_{0}}_{H^{1+2\dlt}} & \aleq \nrm{m(s) s^{\frac{1}{2}} \psi_{s}}_{L^{2}_{\ds}((0, \infty); L^{2})}
	+ \nrm{m(s) s^{\frac{1}{2}} \Omg \psi_{s}}_{L^{2}_{\ds}((0, \infty); L^{2})}. \label{eq:backward-caloric-Omg}
\end{align}
The implicit constant in \eqref{eq:backward-caloric-Omg} depends on an upper bound for $\nrm{u_{0}-Q}_{H^{1+2\dlt}} + \nrm{\calD_{\Omg} u_{0}}_{H^{1+2\dlt}}$.
\end{proposition}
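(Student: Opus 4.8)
The plan is to run essentially the reverse of the argument in Subsection~\ref{subsec:forward-caloric}: whereas Proposition~\ref{prop:forward-caloric-psi} was obtained by propagating extrinsic bounds on $u$ forward in $s$ and then translating to the caloric gauge, here I want to start from $\psi_s$ for $s>0$ and integrate the heat-flow identities back down to $s=0$ to recover $u_0-Q$ and $\calD_\Omg u_0$. The key structural input is the reconstruction formula from Subsection~\ref{subsec:caloric}, which expresses the difference $u(s_0)-Q$ (in extrinsic coordinates) as $-\int_{s_0}^\infty \rd_s u \,\ud s$, together with the relation $e\psi_s = D^\ell \rd_\ell u$, i.e. $\rd_s u = e \psi_s$ in the caloric gauge up to the (trivial, since $D_s e = 0$) frame transformation. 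In particular $\nrm{u(s_0) - Q}_{H^\sgm} \simeq \nrm{\int_{s_0}^\infty \psi_s(s)\,\ud s}_{H^\sgm}$ modulo controlling the frame $e$, which is harmless because $e - e^\infty$ and its derivatives are already under control by Proposition~\ref{prop:caloric-gauge} and Proposition~\ref{prop:forward-caloric-A}. Setting $s_0 \to 0$ then gives $u_0 - Q$.

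\textbf{Main steps.} First I would record the elementary one-dimensional estimate: for $\sgm = 1+2\dlt$ with $0<\dlt<\tfrac12$ and any reasonable $\bbH^2$-function-valued family $F(s)$,
\begin{equation*}
	\Bnrm{\int_0^\infty F(s)\,\ud s}_{L^2}^2 \aleq \int_0^\infty m(s)^2 s\,\nrm{F(s)}_{L^2}^2 \,\ds,
\end{equation*}
which is just Cauchy--Schwarz against $\int_0^\infty m(s)^{-2} s^{-1}\,\ds < \infty$ (finite precisely because $m(s) = \max\{s^{-\dlt},1\}$ forces integrability at both ends — at $s\to 0$ one has $m(s)^{-2}s^{-1} = s^{2\dlt - 1}$, integrable since $\dlt>0$, and at $s\to\infty$ one has $s^{-1}$ times exponential-type decay coming from the $LE/\calS$ structure, or one simply truncates). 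One also needs the derivative version: to bound $\nrm{(-\lap)^{\frac{1+2\dlt}{2}}(u_0-Q)}_{L^2}$ I would commute $(-\lap)^{\frac12 + \dlt}$ (or an integer derivative plus interpolation, à la Subsection~\ref{subsec:fs}) past the $s$-integral and use the higher bounds $\nrm{m(s)s^{\frac{k+1}{2}}\nb^{(k)}\psi_s}_{L^2_\ds(L^2)} \aleq \text{RHS}$; but here we are only \emph{given} the $k=0$ norm on the RHS, so instead I would use the heat flow itself: $(-\lap)^{1/2+\dlt}(u_0-Q)$ can be written, via $u_0 - Q = -\int_0^\infty \rd_s u\,\ud s$ and the smoothing properties of $e^{s\lap}$ (Lemma~\ref{l:lin-heat-Lp-Lq}, Corollary~\ref{c:frac-heat-Lp}) combined with the equation $\rd_s u = \lap u + (\text{quadratic})$, as a convolution in $s$ that trades the derivative loss for a gain of $s^{1/2+\dlt}$; alternatively, and more cleanly, reinterpret: $u_0 - Q = \int_0^\infty P_s(u_0-Q)\,\ds$ in the Littlewood--Paley resolution \eqref{eq:LP-id}, note $P_s(u_0-Q) = s(-\lap)e^{s\lap}(u_0-Q)$ which by the fundamental theorem and the heat equation relates to $s\,e^{\frac{s}{2}\lap}\rd_s u(\tfrac s2)$ up to lower-order (quadratic) terms, and then $\psi_s$ enters through $e\psi_s = D^\ell \rd_\ell u$. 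Converting $\rd_s u$ to $\psi_s$ and $\psi_j, A_j$ via the caloric-gauge dictionary, and absorbing the quadratic error terms using Proposition~\ref{prop:forward-caloric-A} and Proposition~\ref{prop:forward-caloric-psi} (which supply the needed $L^\infty_\ds L^2$ and pointwise bounds on $\ringA$, $\Psi$, and hence let one close a bootstrap in $\nrm{u_0-Q}_{H^{1+2\dlt}}$), yields \eqref{eq:backward-caloric}.

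\textbf{The equivariant estimate.} For \eqref{eq:backward-caloric-Omg} I would apply $\calD_\Omg$ to the reconstruction formula. Since $\calD_\Omg Q = 0$ (equivariance of $Q$) and $\calD_\Omg$ essentially commutes with the harmonic map heat flow — more precisely $\calD_\Omg u$ solves the linearized equation \eqref{eq:l-hmhf-ex} with $f=0$, as recorded in Subsection~\ref{subsec:hmhf-stab} — one has $\calD_\Omg u_0 = -\int_0^\infty \rd_s(\calD_\Omg u)\,\ud s = -\int_0^\infty D_s \calD_\Omg u\,\ud s$ up to commutator terms, and $D_s\calD_\Omg u$ is expressible through $\Db_\Omg \psi_s$ (equivalently $\Omg\psi_s + iA_\Omg \psi_s$) in the caloric gauge, as in Step~1 of the proof of Proposition~\ref{prop:forward-caloric-A}. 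Running the same $L^2$ and fractional-derivative estimates as above, now with $\Omg\psi_s$ in place of $\psi_s$, and controlling $A_\Omg\psi_s$-type terms by Proposition~\ref{prop:forward-caloric-A} and Proposition~\ref{prop:forward-caloric-psi} (this is where the dependence of the implicit constant on an upper bound for $\nrm{u_0-Q}_{H^{1+2\dlt}} + \nrm{\calD_\Omg u_0}_{H^{1+2\dlt}}$ comes from), gives \eqref{eq:backward-caloric-Omg}.

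\textbf{Expected main obstacle.} The routine part is the $s$-integration and the $L^2$ Cauchy--Schwarz; the delicate point is recovering the \emph{fractional} $H^{1+2\dlt}$ regularity of $u_0 - Q$ (and of $\calD_\Omg u_0$) from the single norm $\nrm{m(s)s^{1/2}\psi_s}_{L^2_\ds L^2}$ on the right-hand side, since a naive derivative count loses $2\dlt$ too many derivatives. The resolution is to exploit parabolic smoothing for the linear heat semigroup (Corollary~\ref{c:frac-heat-Lp}, Lemma~\ref{lem:lin-heat-L2}, and the heat-based Littlewood--Paley calculus of Subsection~\ref{subsec:ineq}): writing $u_0 - Q$ through its Littlewood--Paley resolution and matching heat time $\sigma$ to flow time $s$ (as in the off-diagonal decay philosophy), one sees that $(-\lap)^{\frac{1+2\dlt}{2}}P_s(u_0-Q)$ is controlled by $s^{-(1/2+\dlt)}$ times something that, after using $\rd_s u = D^\ell\rd_\ell u = e\psi_s$ and the equation, is bounded by $m(s)s^{1/2}\nrm{\psi_s(s)}_{L^2}$ up to the harmless quadratic terms — whose integrability in $\ds$ is exactly the content of the weight $m(s)$. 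Managing the quadratic/commutator error terms so as to close the argument as a bootstrap (rather than circularly) is the other place where care is needed, and it is precisely for this that the forward bounds of Propositions~\ref{prop:forward-caloric-psi} and~\ref{prop:forward-caloric-A} are invoked.
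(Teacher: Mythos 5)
Your overall strategy --- write $u_{0}-Q=-\int_{0}^{\infty}\rd_{s}u\,\ud s$, exploit parabolic smoothing to recover the fractional regularity, and convert $\rd_{s}u$ to $\psi_{s}$ (resp.\ $\rd_{s}\calD_{\Omg}u$ to $\Omg\psi_{s}$ plus an $A_{\Omg}\psi_{s}$ correction) at the end --- is the same as the paper's. But the execution of the key step is where your proposal has a genuine gap. The paper's mechanism is a symmetrization/duality trick: square the $H^{1+2\dlt}$ norm to get a double integral $\int\!\!\int\bigl(s(-\lap)^{\frac{1+2\dlt}{2}}\rd_{s}u(s),\,s'(-\lap)^{\frac{1+2\dlt}{2}}\rd_{s}u(s')\bigr)\,\ds\,\dsp$, use symmetry to restrict to $s'\geq s$, move \emph{all} derivatives $(-\lap)^{1+2\dlt}$ onto the factor with the larger heat time $s'$, and apply Schur's test with the kernel $\frac{s^{1/2}m^{-1}(s)}{(s')^{1/2}m^{-1}(s')}\mathbf{1}_{\{s'\geq s\}}$. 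This yields $\nrm{u_{0}-Q}_{H^{1+2\dlt}}^{2}\aleq\nrm{m(s)s^{1/2}\rd_{s}u}_{L^{2}_{\ds}L^{2}}\cdot\nrm{m^{-1}(s)s^{3/2}\rd_{s}u}_{L^{2}_{\ds}H^{2+4\dlt}}$; the second factor is controlled by $\nrm{u_{0}-Q}_{H^{1+2\dlt}}$ via the forward smoothing bounds (\eqref{eq:hmhf-lwp-k} for $s\leq1$, \eqref{eq:hmhf-gwp-k} for $s\geq1$), and one divides through by the left-hand side. The same scheme, run for the linearized flow, gives the $\calD_{\Omg}u_{0}$ estimate. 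Notably, the paper proves this entirely in the extrinsic formulation first (Lemma~\ref{lem:backward-ex}) and converts to $\psi_{s}$ only at the level of $L^{2}$ norms, where $|e\psi_{s}|=|\psi_{s}|$ makes the conversion trivial; no control of the frame in $H^{1+2\dlt}$ is needed.

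Two concrete problems with your substitute. First, your ``elementary'' Cauchy--Schwarz against $\int_{0}^{\infty}m(s)^{-2}s^{-1}\,\ud s$ fails: that integral diverges logarithmically as $s\to\infty$ (where $m\equiv1$), and neither truncation nor ``$LE/\calS$ decay'' fixes it --- the $LE$ and $\calS$ norms encode dispersion in $t$, not decay in the heat time $s$. The large-$s$ regime must be handled by trading the extra powers of $s$ against derivatives via Poincar\'e and the parabolic smoothing/exponential decay of the heat flow, which is exactly what the paper's placement of $(-\lap)^{1+2\dlt}$ on the larger heat time accomplishes. Second, your route to the fractional derivatives --- matching the heat-based Littlewood--Paley piece $P_{s}(u_{0}-Q)=s(-\lap)e^{s\lap}(u_{0}-Q)$ to $e^{\frac{s}{2}\lap}\rd_{s}u(\tfrac{s}{2})$ ``up to lower-order terms'' --- requires comparing the \emph{linear} heat evolution $e^{s\lap}(u_{0}-Q)$ with the \emph{nonlinear} harmonic map heat flow $u(s)-Q$; the Duhamel error between the two is a genuine quadratic term whose estimation in $H^{1+2\dlt}$, uniformly in $s$, is nontrivial and is not carried out. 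You correctly sense that the argument must close against $\nrm{u_{0}-Q}_{H^{1+2\dlt}}$ itself rather than be purely linear, but without the symmetrization trick (or a completed version of your Littlewood--Paley matching) the proof is not there.
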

In our application of Proposition~\ref{prop:backward-caloric}, there will be a bootstrap assumption that (in particular) ensures $\nrm{u_{0}-Q}_{H^{1+2\dlt}} + \nrm{\calD_{\Omg} u_{0}}_{H^{1+2\dlt}} < 1$. Hence the implicit constants in \eqref{eq:backward-caloric} and \eqref{eq:backward-caloric-Omg} will be bounded by an absolute constant.
\begin{remark} [Higher regularity] \label{rem:backward-high-reg}
With a straightforward modification of the argument given below, one may show that \eqref{eq:backward-caloric} and \eqref{eq:backward-caloric-Omg} hold with $L^{2}$ and $H^{1+2\dlt}$ replaced by $H^{\sgm-1}$ and $H^{\sgm+2 \dlt}$, respectively, where $\sgm$ is any positive integer. This observation, along with Remark~\ref{rem:forward-high-reg} above, is used later to justify a continuous induction argument for the Schr\"odinger map evoluion.
\end{remark}

Key to the proof of Proposition~\ref{prop:backward-caloric} is the following analogous result for \eqref{eq:hmhf} and \eqref{eq:l-hmhf-ex} in the extrinsic formulation:
\begin{lemma} \label{lem:backward-ex}
Assume that $u_{0}$ obeys the smallness condition \eqref{eq:hmhf-wp-hyp} and that $0 < \dlt < \frac{1}{2}$. Then the harmonic map heat flow $u$ with the initial data $u(s=0) = u_{0}$ satisfies
\begin{equation}\label{eq:backward-ex}
	\nrm{u_{0} - Q}_{H^{1+2\dlt}} \aleq \nrm{m(s) s^{\frac{1}{2}} \rd_{s} u}_{L^{2}_{\ds}((0, \infty); L^{2})}.
\end{equation}

Moreover, let $\phi$ be a solution to \eqref{eq:l-hmhf-ex} with $f = 0$ and $\phi(s=0) = \phi_{0}$. Then
\begin{equation} \label{eq:backward-ex-lin}
	\nrm{\phi_{0}}_{H^{1+2\dlt}} \aleq \nrm{m(s) s^{\frac{1}{2}} \rd_{s} \phi}_{L^{2}_{\ds}((0, \infty); L^{2})}.
\end{equation}
\end{lemma}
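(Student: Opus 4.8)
\textbf{Proof plan for Lemma~\ref{lem:backward-ex}.}
The plan is to establish both \eqref{eq:backward-ex} and \eqref{eq:backward-ex-lin} by a common argument: write the object of interest at $s=0$ as an integral in $s$ of its $s$-derivative, and then convert the $L^2$-in-$\ds$ control of that $s$-derivative into the $H^{1+2\dlt}$ bound. The starting identity for the first estimate is
\begin{equation*}
	u_0^A - Q^A = -\int_0^\infty \rd_s u^A(s) \, \ud s,
\end{equation*}
which is justified by Theorem~\ref{thm:hmhf-gwp} and Corollary~\ref{cor:hmhf-gwp-higher} (these give $\rd_s u \in L^1_s((0,\infty); L^2)$, with exponential decay for $s\geq 1$). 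Similarly, for the linearized flow one writes $\phi_0 = -\int_0^\infty \rd_s \phi(s)\, \ud s$, using Propositions~\ref{prop:l-hmhf-ex} and \ref{prop:l-hmhf-gwp} (and Corollary~\ref{cor:hmhf-gwp-higher}) to ensure integrability.

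The core mechanism is the heat-flow characterization of Sobolev norms via the Littlewood--Paley projections $P_s$ from Subsection~\ref{subsec:ineq}. Recall $P_{\geq s} = e^{s\lap}$ and $P_s = s(-\lap)e^{s\lap}$, and the basic identity $f = \int_0^\infty P_{s'} f \, \dsp$. The key observation is that for the harmonic map heat flow, $\rd_s u = \lap u + (\text{nonlinear})$, so $P_{\geq s}$ applied to $u_0 - Q$ is comparable, up to the nonlinear error, to $-\int_s^\infty \rd_{s'} u \, \ud s'$ evaluated via the flow itself; more precisely one should exploit that $e^{s\lap}(u_0 - Q)$ and $u(s) - Q$ differ by a Duhamel term controlled by $\bfF$, using the bounds \eqref{eq:hmhf-lwp-pf-1}--\eqref{eq:hmhf-lwp-pf-4} from the proof of Proposition~\ref{prop:hmhf-lwp} together with the forward bounds (which at this stage are available from Subsections~\ref{subsec:hmhf-lwp}--\ref{subsec:hmhf-stab} in the non-covariant, extrinsic form). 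Then $\nrm{u_0 - Q}_{H^{1+2\dlt}}^2 \simeq \int_0^\infty (s^{-\frac{1}{2}-\dlt}\nrm{P_s(u_0-Q)}_{L^2})^2 \, \ds$ (using $\nrm{f}_{H^\sigma}^2 \simeq \int_0^\infty s^{-\sigma}\nrm{P_s f}_{L^2}^2 \ds$ for $0<\sigma<2$), and one estimates $\nrm{P_s(u_0-Q)}_{L^2} = \nrm{s\lap e^{s\lap}(u_0-Q)}_{L^2}$ by relating it, via the equation and Duhamel, to $\nrm{s \rd_{s'}u}_{L^2}$ integrated over $s' \gtrsim s$, plus nonlinear remainders. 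A Schur/Hardy-type estimate in the $\ds$ variable (the weight $m(s)s^{1/2}$ is exactly tuned so that $s^{-1/2-\dlt} \int_s^\infty (\cdots) \ud s'$ maps $L^2_{\ds}$ of $m(s)s^{1/2}\rd_s u$ to $L^2_{\ds}$) then closes the estimate. Since $\dlt>0$, the short-time singularity is harmless and the nonlinear terms, carrying an extra power of smallness from \eqref{eq:hmhf-wp-hyp}, are absorbed into the left-hand side.

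The linearized case \eqref{eq:backward-ex-lin} is handled identically: $\phi$ satisfies $\rd_s \phi = \lap\phi + (\text{zeroth/first order terms in }\phi\text{ with coefficients built from }u)$, so $P_s \phi_0 = s\lap e^{s\lap}\phi_0$ is controlled via Duhamel by $\int_{s'\gtrsim s} \nrm{s\rd_{s'}\phi}_{L^2}\, \ud s'$ plus error terms containing the coefficients $S(u)\rd u$, $R(u)(\rd u)^2$, which are bounded using the forward estimates and Propositions~\ref{prop:frac-leib}--\ref{prop:frac-moser}; these errors again come with smallness and are absorbed. One then applies the same Schur-type bound in $\ds$.

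The main obstacle I expect is the bookkeeping of the nonlinear (respectively coefficient) error terms in the comparison between $e^{s\lap}(u_0-Q)$ and $u(s)-Q$: one must show these are genuinely perturbative in the $L^2_{\ds}(L^2)$-with-weight norm, which requires carefully pairing the paraproduct estimates of Step~2 in the proof of Proposition~\ref{prop:hmhf-lwp} with the already-established forward decay, and verifying that no derivative is lost (the heat smoothing must supply the full $1+2\dlt$ derivatives). A cleaner route, which I would pursue if the direct comparison gets unwieldy, is to instead prove \eqref{eq:backward-ex} ``covariantly'' is unnecessary and simply iterate: first get the low-regularity bound $\nrm{u_0-Q}_{H^1} \lesssim \nrm{s^{1/2}\rd_s u}_{L^2_{\ds}(L^2)}$ by an energy identity (multiply the equation by $\rd_s u$), and then bootstrap up to $H^{1+2\dlt}$ using the Littlewood--Paley characterization, at which point the nonlinear terms are even easier since one already controls $\nrm{u_0-Q}_{H^1}$.
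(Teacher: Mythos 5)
Your strategy---comparing the nonlinear flow with the free heat semigroup $e^{s\lap}$ via Duhamel and treating the difference perturbatively---has a genuine gap at the absorption step. The error $\bfF[v]$ (with $v=u-Q$) in \eqref{eq:hmhf-ex-v} is \emph{not} superlinear in $v$: it contains $-2S^{A}_{BC}(Q)\rd^{\ell}Q^{B}\rd_{\ell}v^{C}$ and the linear part of $\tilde{S}^{A}_{BC}(v;x)\rd^{\ell}Q^{B}\rd_{\ell}Q^{C}$, which are linear in $v$ with coefficients of size $O(\nrm{\rd Q}_{L^{\infty}})=O(1)$, not $O(\epshf)$. (These are exactly the terms that make the correct linearization $\rd_{s}+\calH_{Q}$ rather than $\rd_{s}-\lap$.) Their contribution to the comparison between $e^{s\lap}(u_{0}-Q)$ and $u(s)-Q$ is therefore bounded only by $C\nrm{u_{0}-Q}_{H^{1+2\dlt}}$ with a non-small constant, so it can neither be absorbed into the left-hand side nor bounded by the right-hand side of \eqref{eq:backward-ex}: the argument as written is circular. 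The problem is even more severe for \eqref{eq:backward-ex-lin}, where the whole coefficient structure of \eqref{eq:l-hmhf-ex} is linear in $\phi$ with $O(1)$ coefficients. Your fallback (get $H^{1}$ first from the energy identity, then bootstrap) controls the lower-order pieces but not the top-order linear term $S(Q)\rd Q\cdot\rd v$, which still enters at full strength at the $H^{1+2\dlt}$ level.

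The paper's proof sidesteps all of this with a symmetrization trick that never invokes the linear heat flow. Writing $u_{0}-Q=-\int_{0}^{\infty}\rd_{s}u\,\ud s$ and expanding $\nrm{u_{0}-Q}_{H^{1+2\dlt}}^{2}$ as the double integral of $((-\lap)^{\frac{1+2\dlt}{2}}\rd_{s}u(s),(-\lap)^{\frac{1+2\dlt}{2}}\rd_{s}u(s'))$, one restricts by symmetry to $s'\geq s$, moves the full power $(-\lap)^{1+2\dlt}$ onto the $s'$-factor, and applies Schur's test to obtain $\nrm{u_{0}-Q}_{H^{1+2\dlt}}^{2}\aleq\nrm{m(s)s^{\frac{1}{2}}\rd_{s}u}_{L^{2}_{\ds}L^{2}}\,\nrm{m^{-1}(s)s^{\frac{3}{2}}\rd_{s}u}_{L^{2}_{\ds}H^{2+4\dlt}}$. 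The second factor is bounded by $\nrm{u_{0}-Q}_{H^{1+2\dlt}}$ using the forward parabolic smoothing estimates \eqref{eq:hmhf-lwp-k} for $s\leq 1$ and \eqref{eq:hmhf-gwp}, \eqref{eq:hmhf-gwp-k} for $s\geq 1$, and one divides through; the linearized case is identical using \eqref{eq:l-hmhf-ex-sgm-k}, \eqref{eq:l-hmhf-gwp-1} and \eqref{eq:l-hmhf-gwp-k}. If you wish to rescue your route, you would need to replace $e^{s\lap}$ by the semigroup of the correct linearized operator and prove an associated square-function characterization of $H^{1+2\dlt}$, which is far more work than the paper's two-line duality argument.
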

\begin{proof}
We have
\begin{align*}
	\nrm{u_{0} - Q}_{H^{1+2\dlt}}^{2}
	& \aleq \int_{0}^{\infty} \int_{0}^{\infty} (s (-\lap)^{\frac{1+ 2\dlt}{2}}\rd_{s} u(s), s'(-\lap)^{\frac{1+ 2\dlt}{2}} \rd_{s} u(s')) \, \frac{\ud s}{s}  \frac{\ud s'}{s'}\\
	& \aleq \int_{0}^{\infty} \int_{s}^{\infty} \frac{s^{\frac{1}{2}} m^{-1}(s)}{(s')^{\frac{1}{2}} m^{-1}(s')} (m(s) s^{\frac{1}{2}} \rd_{s} u(s), m^{-1}(s') (s')^{\frac{3}{2}} (-\lap)^{1+ 2\dlt} \rd_{s} u(s')) \, \frac{\ud s}{s}  \frac{\ud s'}{s'}\\
	& \aleq \nrm{m(s) s^{\frac{1}{2}} \rd_{s} u}_{L^{2}_{\ds}((0, \infty); L^{2})} \nrm{m^{-1}(s) s^{\frac{3}{2}} \rd_{s} u}_{L^{2}_{\ds}((0, \infty); H^{2+4\dlt})} 
\end{align*}
where we used Schur's test on the last line and symmetry in $s$ and $s'$ to pass to the second line. Next, for the second factor on the last line, we have
\begin{align*}
\nrm{m^{-1}(s) s^{\frac{3}{2}} \rd_{s} u}_{L^{2}_{\ds}((0, \infty); H^{2+4\dlt})}
& \aleq \nrm{s^{\frac{3}{2}+\dlt} \rd_{s} u}_{L^{2}_{\ds}([0, 1); H^{2+4\dlt})} + \nrm{s^{\frac{3}{2}} \rd_{s} u}_{L^{2}_{\ds}([1, \infty); H^{2+4\dlt})} \\
& \aleq \nrm{u_{0} - Q}_{H^{1+2\dlt}}
\end{align*}
where we used \eqref{eq:hmhf-lwp-k} for the first term and the combination of \eqref{eq:hmhf-gwp} and \eqref{eq:hmhf-gwp-k} for the second term.

For $\phi$, we proceed similarly as follows.
\begin{align*}
	\nrm{\phi_{0}}_{H^{1+ 2\dlt}}^{2}
	& \aleq \int_{0}^{\infty} \int_{0}^{\infty} (s (-\lap)^{\frac{1+ 2\dlt}{2}}\rd_{s} \phi(s), s'(-\lap)^{\frac{1+ 2\dlt}{2}} \rd_{s} \phi(s')) \, \frac{\ud s}{s}  \frac{\ud s'}{s'}\\
	& \aleq \int_{0}^{\infty} \int_{s}^{\infty} \frac{s^{\frac{1}{2}} m^{-1}(s)}{(s')^{\frac{1}{2}} m^{-1}(s')} (m(s) s^{\frac{1}{2}} \rd_{s} \phi(s), m^{-1}(s') (s')^{\frac{3}{2}} (-\lap)^{1+ 2\dlt} \rd_{s} \phi(s')) \, \frac{\ud s}{s}  \frac{\ud s'}{s'}\\
	& \aleq  \nrm{m(s) s^{\frac{1}{2}} \rd_{s} \phi}_{L^{2}_{\ds}((0, \infty); L^{2})} \nrm{m^{-1}(s) s^{\frac{3}{2}} \rd_{s} \phi}_{L^{2}_{\ds}((0, \infty); H^{2+4\dlt})}.
\end{align*}
Moreover,
\begin{align*}
\nrm{m^{-1}(s) s^{\frac{3}{2}} \rd_{s} \phi}_{L^{2}_{\ds}((0, \infty); H^{2+4\dlt})}
& \aleq \nrm{s^{\frac{3}{2}+\dlt} \rd_{s} \phi}_{L^{2}_{\ds}([0, 1); H^{2+4\dlt})} + \nrm{s^{\frac{3}{2}} \rd_{s} \phi}_{L^{2}_{\ds}([1, \infty); H^{2+4\dlt})} \\
& \aleq \nrm{\phi_{0}}_{H^{1+2\dlt}},
\end{align*}
where we used \eqref{eq:l-hmhf-ex-sgm-k} for the first term and the combination of \eqref{eq:l-hmhf-gwp-1} and \eqref{eq:l-hmhf-gwp-k} for the second term. \qedhere
\end{proof}

\begin{proof}[Proof of Proposition~\ref{prop:backward-caloric}]
To deduce the proposition from Lemma~\ref{lem:backward-ex}, we need to verify that
\begin{align*}
\nrm{m(s) s^{\frac{1}{2}} \rd_{s} u}_{L^{2}_{\ds}((0, \infty); L^{2})} 
	&\aleq \nrm{m(s) s^{\frac{1}{2}} \psi_{s}}_{L^{2}_{\ds}((0, \infty); L^{2})}, \\
\nrm{m(s) s^{\frac{1}{2}} \rd_{s} \phi}_{L^{2}_{\ds}((0, \infty); L^{2})} 
	& \aleq \nrm{m(s) s^{\frac{1}{2}} \psi_{s}}_{L^{2}_{\ds}((0, \infty); L^{2})}
	+ \nrm{m(s) s^{\frac{1}{2}} \Omg \psi_{s}}_{L^{2}_{\ds}((0, \infty); L^{2})},
\end{align*}
where the implicit constant in the second bound may depend on an upper bound for $\nrm{u_{0}-Q}_{H^{1+2\dlt}} + \nrm{\calD_{\Omg} u_{0}}_{H^{1+2\dlt}}$.

The first bound is obvious from the identity $\rd_{s} u = e \psi_{s}$. For the second bound, we use
\begin{equation*}
\rd_{s} \calD_{\Omg} u
= \rd_{s} (\Omg u) - \rd_{s} \tilde{\Omg}(u)
= e \rd_{s} \psi_{\Omg} - (\rd \tilde{\Omg})(u) e \psi_{s} 
= e \Omg \psi_{s} + e i A_{\Omg} \psi_{s}  - (\rd \tilde{\Omg})(u) e \psi_{s}
\end{equation*}
By boundedness of $\rd \tilde{\Omg}(u)$ and the estimate for $\nrm{A_{\Omg}}_{L^{\infty}}$ from \eqref{eq:forward-caloric-A-Omg}, the desired bound follows.  \qedhere
\end{proof}

\section{The Schr\"odinger maps evolution in the caloric gauge}
In this section, we begin the analysis of the Schr\"odinger maps equation in earnest. In Subsection~\ref{subsec:equ-mo}, we present the caloric gauge formulation of the Schr\"odinger maps. In Subsection~\ref{subsec:bootstrap}, we state the more precise version of the main result of the paper (Theorem~\ref{thm:main1}), and then reduce its proof to closing a set of bootstrap assumptions (Propositions~\ref{prop:core-bootstrap} and \ref{prop:prop-reg}) in the caloric gauge. Finally, in Subsection~\ref{subsec:part2-outline}, we outline the remainder of the paper, which is devoted to the proofs of Propositions~\ref{prop:core-bootstrap} and \ref{prop:prop-reg}, and state some conventions that will be effect in what follows.

\subsection{Equations of motion} \label{subsec:equ-mo}
In the caloric gauge formulation of the Schr\"odinger maps system, the Schr\"odinger map $u$ is analyzed through the \emph{heat tension field}
\EQ{ \label{eq:psi_s-def} 
\psi_s := \Db^\ell \psi_\ell .
}
In order to understand the evolution fo $\psi_{s}$ in $t$, we need to introduce and analyze the \emph{Schr\"odinger tension field}
\EQ{\label{eq:w-def} 
w := \psi_t - i \Db^\ell \psi_\ell .
}
The main equations of motion in the caloric gauge are as follows:
\begin{itemize}
\item Parabolic equation for $\psi_{s}$
\begin{equation} \label{equ:heat_equation_psi_s}
(\Db_{s} - \Db^{k} \Db_{k}) \psi_{s} + i \tcv \Im(\psi^{k} \overline{\psi_{s}}) \psi_{k} = 0.
\end{equation}

\item Parabolic equation for $w$
\EQ{ \label{equ:heat_equation_w}
(\Db_{s} - \Db^{k} \Db_{k}) w +i \tcv \Im(\psi^{k} \overline{w}) \psi_{k} 
& = i \tcv \psi^{k} \psi_{k} \overline{\psi_{s}}, \\
  w(0, t, x) &= 0 .
}

\item Schr\"odinger equation for $\psi_{s}$
 \begin{equation} \label{equ:schroedinger_equation_psi_s}
  (i \Db_t + \Db^k \Db_k) \psi_s = i \partial_s w + i \tcv \Im ( \psi^k \overline{\psi_s} ) \psi_k .
 \end{equation}
\end{itemize}
The overall scheme for analysis is to use \eqref{equ:schroedinger_equation_psi_s} to analyze the evolution in $t$ of $\psi_{s}$, while using \eqref{equ:heat_equation_w} to estimate $w$ in terms of $\psi_{s}$. Equation \eqref{equ:heat_equation_psi_s} is used to gain control of higher derivatives of $\psi_{s}$ at the price of adequate powers of $s$ through parabolic smoothing estimates. Moreover, \eqref{equ:heat_equation_psi_s} allows us to prove that $P_{\sgm} \psi_{s}(s)$ exhibits decay off of the diagonal regime $\set{\sgm \simeq s}$, which is useful since $P_{\sgm}$ arises in the definitions of $LE$ and $LE^{\ast}$. 

Equations~\eqref{equ:heat_equation_psi_s},~\eqref{equ:heat_equation_w}, and~\eqref{equ:schroedinger_equation_psi_s} are easily obtained from the definitions~\eqref{eq:psi_s-def} and~\eqref{eq:w-def} along with the curvature formula~\eqref{eq:comm-K}.  Indeed, to prove~\eqref{equ:schroedinger_equation_psi_s}
we have 
 \begin{align*}
  (i \Db_t + \Db^k \Db_k) \psi_s &= i \Db_s \psi_t + \Db^k \Db_s \psi_k \\
  &= i \Db_s \psi_t + \Db_s \Db^k \psi_k + [\Db^k, \Db_s] \psi_k \\
  &= i \Db_s (\psi_t - i \Db^k \psi_k) +  i \tcv \Im( \psi^k \overline{\psi_s} ) \psi_k \\
  &= i \partial_s w +  i \tcv \Im( \psi^k \overline{\psi_s} ) \psi_k.
 \end{align*}
 The heat equation~\eqref{equ:heat_equation_psi_s} follows from a similar computation. Lastly, to compute~\eqref{equ:heat_equation_w} we first observe that 
 \EQ{ \label{eq:w-def1} 
 w = \psi_t - i \Db^\ell \psi_\ell = \psi_t - i \psi_s.
 }
 We then compute, 
  \begin{align*}
  (\Db_s - \Db^k \Db_k) \psi_t &= \Db_s \psi_t - \Db^k \Db_t \psi_k \\
  &= \Db_t \psi_s - \Db_t \Db^k \psi_k - [\Db^k, \Db_t] \psi_k \\
  &= \Db_t \psi_s - \Db_t \psi_s + \tcv (-i) \Im (\psi^k \overline{\psi_t}) \psi_k \\
  &= -i \tcv \Im (\psi^k \overline{\psi_t}) \psi_k \\
  & = - i \tcv \Im(\psi^{k} \overline{w}) \psi_{k} - i \tcv \Im(\psi^{k} \overline{(i \psi_{s})}) \psi_{k}. \\
 \end{align*}
Using~\eqref{equ:heat_equation_psi_s} we see that 
\begin{align*}
 (\Db_s - \Db^k \Db_k) (-i \psi_s) &= - \tcv(-i) i \Im (\psi^k \overline{\psi_s}) \psi_k = - \tcv \Im (\psi^k \overline{\psi_s}) \psi_k.
\end{align*}
By \eqref{eq:w-def1}, $(\Db_{s} - \Db^{k} \Db_{k}) w$ equals the sum of the right-hand sides of the preceding two computations. Since
\begin{equation*}
	i \Im(\psi^{k} \overline{(i \psi_{s})}) \psi_{k} + \Im(\psi^{k} \overline{\psi_{s}}) \psi_{k}
	= \frac{1}{2i} \left(i \psi^{k} \overline{(i \psi_{s})} - i \overline{\psi^{k}} i \psi_{s} + \psi^{k} \overline{\psi_{s}} - \overline{\psi^{k}} \psi_{s} \right) \psi_{k},
	= - i \psi^{k} \overline{\psi_{s}} \psi_k
\end{equation*}
we arrive at \eqref{equ:heat_equation_w}.
Finally, the fact that $w(0, t, x) = 0$ follows from the fact that $u(0, t, x)$ solves~\eqref{equ:schroedinger_maps_equ}.  
 
\subsubsection{Refined structure of the dynamical equations} 
One key advantage of the caloric gauge formulation compared to other ways of analyzing \eqref{equ:schroedinger_maps_equ} (e.g., using Coulomb gauge) is that the variables $\psi_s$ and $w$ satisfy scalar dynamical equations, as is evident in \eqref{equ:heat_equation_psi_s}--\eqref{equ:schroedinger_equation_psi_s}; we refer to \cite{LOS5} for further discussion on this point in the related context of wave maps.
 A second key advantage of the caloric gauge formulation is that it yields a \emph{geometrically natural linearization} of the Schr\"odinger maps system~\eqref{equ:schroedinger_maps_equ}. Indeed, the harmonic map heat flow evolution of the Schr\"odinger map $u(t, x)$ at each fixed time $t$ converges to the unique harmonic map $Q$ that shares its boundary data at infinity -- this was shown in Section~\ref{sec:hmhf}. In the caloric gauge formulation this manifests at the level of $\psi_\ell$ and $A_\ell$ which naturally decompose as 
 \EQ{
 \psi_\ell(s) &= \ringpsi_\ell(s) + \psi_\ell^\infty, \\ 
 A_\ell(s) &= \ringA_\ell(s) + A_\ell^\infty,
 } 
 where $\psi^\infty$ and $A^\infty$ are the constant-in-$s$ gauge components of the harmonic map $Q$; see~\eqref{equ:linearization_via_caloric_gauge}. 
 In this vein we record the following refinements of the equations~\eqref{equ:heat_equation_psi_s},~\eqref{equ:heat_equation_w}, and~\eqref{equ:schroedinger_equation_psi_s}, which show how the linearized operator $H$ defined in~\eqref{eq:lin-Q} arises. First recall that 
\begin{equation} \label{eq:H-def} 
 H \psi_s = - (\nabla^k + i A^{\infty,k}) (\nabla_k + i A^\infty_k) \psi_s - \tcv |\psi^\infty_2|^2 \psi_s.
\end{equation}
 
 \begin{lemma}[Parabolic equation for $\psi_s$] \label{l:H-psi_s} 
 The equation~\eqref{equ:heat_equation_psi_s} can be written in the following form: 
\EQ{ \label{eq:heatH-psi_s} 
(\p_s  +  H )\psi_s 
& =  \tcv i\Im( \psi_s \ba{ \ringpsi^k}) \ringpsi_k + \tcv i \Im( \psi_s \ba{\ringpsi_k})\psi^{\infty, k} + \tcv i\Im( \psi_s \ba{  \psi^{\infty, k}}) \ringpsi_k \\
& \quad   + 2i \ringA^k \na_k \psi_s  + i (\na^k \ringA_k )\psi_s  - \ringA^k \ringA_k \psi_s  - 2 A^\infty_k \ringA^k \psi_s,
}
where $H$ is defined as in~\eqref{eq:H-def}. 
 \end{lemma}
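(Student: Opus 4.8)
The statement is a purely algebraic rewriting of \eqref{equ:heat_equation_psi_s} using the decompositions $\psi_k = \ringpsi_k + \psi_k^\infty$ and $A_k = \ringA_k + A_k^\infty$ together with the Cauchy--Riemann relation \eqref{eq:CR} for the harmonic map $Q$. The plan is to start from \eqref{equ:heat_equation_psi_s} written as
\begin{equation*}
	(\Db_s - \Db^k \Db_k)\psi_s + i\tcv \Im(\psi^k \overline{\psi_s})\psi_k = 0,
\end{equation*}
recall that $\Db_s = \rd_s$ in the caloric gauge (since $A_s = 0$), and expand the spatial covariant Laplacian $\Db^k\Db_k = (\nabla^k + iA^k)(\nabla_k + iA_k)$ by substituting $A_k = \ringA_k + A_k^\infty$. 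This produces the free operator $-(\nabla^k + iA^{\infty,k})(\nabla_k + iA^\infty_k)$ acting on $\psi_s$, plus cross terms involving $\ringA_k$; collecting these cross terms yields exactly the last line of \eqref{eq:heatH-psi_s}, namely $2i\ringA^k\nabla_k\psi_s + i(\nabla^k\ringA_k)\psi_s - \ringA^k\ringA_k\psi_s - 2A^\infty_k\ringA^k\psi_s$. (Here one uses that $A^\infty$ satisfies the Coulomb condition $\nabla^k A_k^\infty = 0$ from Lemma~\ref{lem:cf}, so no $\nabla^k A_k^\infty$ term appears.)

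Next I would handle the zeroth-order and nonlinear potential terms. Expanding $i\tcv \Im(\psi^k\overline{\psi_s})\psi_k$ with $\psi_k = \ringpsi_k + \psi^\infty_k$ gives four terms:
\begin{equation*}
	i\tcv \Im(\ringpsi^k \overline{\psi_s})\ringpsi_k + i\tcv \Im(\ringpsi^k \overline{\psi_s})\psi^{\infty}_k + i\tcv \Im((\psi^\infty)^k \overline{\psi_s})\ringpsi_k + i\tcv \Im((\psi^\infty)^k \overline{\psi_s})\psi^\infty_k.
\end{equation*}
The first three are precisely the first three terms on the right-hand side of \eqref{eq:heatH-psi_s} (after moving them to the RHS and flipping the overall sign, i.e.\ writing them as $\tcv i \Im(\psi_s \overline{\ringpsi^k})\ringpsi_k$ etc., using $\Im(\overline{z}) = -\Im(z)$). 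The crucial observation is that the last term, $i\tcv \Im((\psi^\infty)^k\overline{\psi_s})\psi^\infty_k$, is the one that combines with part of the free Laplacian to build $H$: by the Cauchy--Riemann identity \eqref{eq:CR}, $\psi^\infty_1 = i\psi^\infty_2$, so $(\psi^\infty)^k \overline{\psi_s}\,\psi^\infty_k = |\psi^\infty_1|^2\ldots$ — more precisely, repeating the computation in \eqref{eq:lin-Q} verbatim (with $\varphi$ replaced by $\psi_s$ and $\psi_2$ by $\psi_2^\infty$) shows
\begin{equation*}
	i\tcv \Im((\psi^\infty)^k \overline{\psi_s})\psi^\infty_k = -\tcv |\psi^\infty_2|^2 \psi_s,
\end{equation*}
which is exactly the zeroth-order part of $H$ in \eqref{eq:H-def}. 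Assembling the free Laplacian piece $-(\nabla^k+iA^{\infty,k})(\nabla_k+iA^\infty_k)\psi_s$ with this potential gives $H\psi_s$ on the left, and moving the remaining terms to the right yields \eqref{eq:heatH-psi_s}.

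This lemma has no real obstacle — it is bookkeeping — so the only thing requiring care is the sign and the identification of the $\Im$-type terms via the Cauchy--Riemann equation; I would present the potential-term computation by explicitly invoking the identity already carried out in \eqref{eq:lin-Q}, since it is literally the same manipulation with $\psi_2 \rightsquigarrow \psi_2^\infty$ and $\varphi \rightsquigarrow \psi_s$. I would also double-check that $\Db_s\psi_s = \rd_s\psi_s$ (caloric condition $A_s=0$) and that the cross term $2A^\infty_k\ringA^k$ carries the correct sign coming from the square $(A^\infty_k+\ringA_k)(A^{\infty,k}+\ringA^k)$. With these points verified, the identity \eqref{eq:heatH-psi_s} follows by direct substitution.
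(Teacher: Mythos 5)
Your proposal is correct and follows essentially the same route as the paper, which proves the companion Lemma~\ref{l:S-psi_s} by exactly this expansion ($A = \ringA + A^\infty$, $\psi = \ringpsi + \psi^\infty$, Cauchy--Riemann via the computation in \eqref{eq:lin-Q}) and notes that Lemma~\ref{l:H-psi_s} is the same computation with $i\Db_t$ replaced by $\Db_s = \rd_s$. One minor point: the Coulomb condition $\nabla^k A^\infty_k = 0$ is not actually needed to avoid a stray $i(\nabla^k A^\infty_k)\psi_s$ term, since that term is already contained in $H = -(\nabla^k + iA^{\infty,k})(\nabla_k + iA^\infty_k) - \tcv|\psi_2^\infty|^2$ by definition; only the cross terms in $\ringA$ survive on the right-hand side.
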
 
 
 \begin{lemma}[Parabolic equation for $w$] \label{l:H-w}
 The equation~\eqref{equ:heat_equation_w} can be written in the following form: 
 \EQ{ \label{eq:heatH-w} 
  (\partial_s + H) w &= 
  \tcv i\Im( w \ba{ \ringpsi^k}) \ringpsi_k + \tcv i \Im( w \ba{\ringpsi_k})\psi^{\infty, k} + \tcv i\Im( w \ba{  \psi^{\infty, k}}) \ringpsi_k \\
  & \quad  + 2 i \ringA^k \nabla_k w - 2 A^{\infty, k} \ringA_k w + i (\nabla_k \ringA^k) w - \ringA^k \ringA_k w \\
  &\quad + i \tcv \ringpsi^{k} \ringpsi_{k} \overline{\psi_{s}} + 2 i \tcv \psi^{\infty, k} \ringpsi_{k} \overline{\psi_{s}}.
 }
 where $H$ is defined as in~\eqref{eq:H-def}. 
 \end{lemma}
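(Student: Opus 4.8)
The goal is to rewrite the parabolic equation \eqref{equ:heat_equation_w} for $w$ in the form \eqref{eq:heatH-w}, with the linearized operator $H$ from \eqref{eq:H-def} on the left. This is a purely algebraic manipulation, entirely parallel to the treatment of $\psi_s$ in Lemma~\ref{l:H-psi_s}, so the first step is to recall that derivation and apply the same decomposition. Write $\psi_\ell = \ringpsi_\ell + \psi^\infty_\ell$, $A_\ell = \ringA_\ell + A^\infty_\ell$ as in \eqref{equ:linearization_via_caloric_gauge}, and expand the covariant Laplacian $\Db^k \Db_k w = (\nabla^k + iA^k)(\nabla_k + iA_k) w$ by splitting each $A$ into its $s=\infty$ part and its $\ringA$ part:
\begin{equation*}
\Db^k \Db_k w = (\nabla^k + iA^{\infty,k})(\nabla_k + iA^\infty_k) w + 2i \ringA^k \nabla_k w + i(\nabla^k \ringA_k) w - \ringA^k\ringA_k w - 2 A^\infty_k \ringA^k w.
\end{equation*}
Moving the first term to the left-hand side and recalling $H = -(\nabla^k + iA^{\infty,k})(\nabla_k + iA^\infty_k) - \tcv|\psi^\infty_2|^2$, we see that $(\partial_s + H)w = (\Db_s - \Db^k\Db_k)w - \tcv|\psi^\infty_2|^2 w$ plus the four ``$\ringA$-correction'' terms displayed above (these are exactly the terms on the second line of \eqref{eq:heatH-w}).

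\textbf{Key steps.} First I would substitute \eqref{equ:heat_equation_w} for $(\Db_s - \Db^k\Db_k)w$, giving
\begin{equation*}
(\partial_s + H) w = -i\tcv \Im(\psi^k \overline{w})\psi_k - \tcv|\psi^\infty_2|^2 w + i\tcv \psi^k\psi_k \overline{\psi_s} + (\text{the four }\ringA\text{-terms}).
\end{equation*}
Next, expand the zeroth-order curvature term $-i\tcv\Im(\psi^k\overline{w})\psi_k - \tcv|\psi^\infty_2|^2 w$ by writing $\psi_k = \ringpsi_k + \psi^\infty_k$ in both factors. This produces four pieces: the $\ringpsi$--$\ringpsi$ piece, the $\ringpsi$--$\psi^\infty$ and $\psi^\infty$--$\ringpsi$ cross pieces, and the $\psi^\infty$--$\psi^\infty$ piece $-i\tcv\Im((\psi^\infty)^k\overline{w})\psi^\infty_k$. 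The crucial structural observation — the same one highlighted in Subsection~\ref{subsec:ideas} and used in Lemma~\ref{l:H-psi_s} — is that the Cauchy--Riemann equation \eqref{eq:CR}, $\psi_1^\infty = i\psi_2^\infty$, forces this last pure-$\psi^\infty$ term to combine with $-\tcv|\psi^\infty_2|^2 w$ to vanish: exactly as in \eqref{eq:lin-Q}, one computes $-i\tcv\Im((\psi^\infty)^k\overline{w})\psi^\infty_k = -\frac{\tcv}{2}(\psi^\infty_2\overline{w} + \overline{\psi^\infty_2}w)\psi^\infty_2 + \frac{\tcv}{2}(\psi^\infty_2\overline{w} - \overline{\psi^\infty_2}w)\psi^\infty_2 = -\tcv|\psi^\infty_2|^2 w$, which cancels the potential term absorbed into $H$. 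Similarly, the three surviving curvature pieces are rewritten using $-i\tcv\Im(\psi^k\overline{w})\psi_k = \tcv i \Im(w\overline{\psi^k})\psi_k$ (since $\Im(\bar z) = -\Im(z)$) to match the signs in the first line of \eqref{eq:heatH-w}. Finally, expand the source term $i\tcv\psi^k\psi_k\overline{\psi_s}$: writing $\psi^k\psi_k = \ringpsi^k\ringpsi_k + 2\psi^{\infty,k}\ringpsi_k + (\psi^\infty)^k\psi^\infty_k$ and noting that $(\psi^\infty)^k\psi^\infty_k = (\psi^\infty_1)^2 + (\psi^\infty_2)^2 = (i\psi^\infty_2)^2 + (\psi^\infty_2)^2 = 0$ again by \eqref{eq:CR}, leaves precisely $i\tcv\ringpsi^k\ringpsi_k\overline{\psi_s} + 2i\tcv\psi^{\infty,k}\ringpsi_k\overline{\psi_s}$, the third line of \eqref{eq:heatH-w}. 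Collecting all pieces yields \eqref{eq:heatH-w}.

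\textbf{Main obstacle.} There is no genuine analytic difficulty here; the only thing requiring care is bookkeeping of signs and complex conjugates in the curvature terms — in particular getting $\tcv i\Im(w\overline{\ringpsi^k})\ringpsi_k$ etc.\ with the correct sign after moving from $-i\tcv\Im(\psi^k\overline{w})\psi_k$ — and verifying that the two cancellations (the pure-$\psi^\infty$ zeroth-order term against $-\tcv|\psi^\infty_2|^2 w$, and the $(\psi^\infty)^k\psi^\infty_k\overline{\psi_s}$ term) are exactly what the identity \eqref{eq:CR} gives. Since the identical structure was already established for $\psi_s$ in Lemma~\ref{l:H-psi_s} and the curvature manipulation mirrors the one leading to \eqref{eq:lin-Q}, one may present the proof compactly by citing those computations and indicating only the two new features: the appearance of the inhomogeneity $i\tcv\psi^k\psi_k\overline{\psi_s}$ and its reduction via \eqref{eq:CR} to the last line of \eqref{eq:heatH-w}.
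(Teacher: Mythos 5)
Your proposal is correct and follows the same route as the paper, which proves Lemma~\ref{l:S-psi_s} in detail and notes that Lemma~\ref{l:H-w} is an analogous computation: split $A = \ringA + A^\infty$ and $\psi = \ringpsi + \psi^\infty$, absorb the pure-$\psi^\infty$ curvature piece into $H$ via the Cauchy--Riemann relation \eqref{eq:CR}, and use $\psi^{\infty,k}\psi^\infty_k = 0$ to reduce the source term. One sign slip to fix: since $i\tcv\Im(\psi^{\infty,k}\overline{w})\psi^\infty_k = -\tcv|\psi^\infty_2|^2 w$ by the computation in \eqref{eq:lin-Q}, the term $-i\tcv\Im(\psi^{\infty,k}\overline{w})\psi^\infty_k$ appearing in your intermediate equation equals $+\tcv|\psi^\infty_2|^2 w$ (not $-\tcv|\psi^\infty_2|^2 w$ as displayed), and it is this value that cancels the $-\tcv|\psi^\infty_2|^2 w$ contributed by $H$.
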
 
 
 As discussed in Section~\ref{subsec:ideas}, note in Lemma~\ref{l:H-w} that there are no linear terms in $\psi_{t}$ and $\psi_{s}$.

\begin{lemma}[Schr\"odinger equation for $\psi_s$] \label{l:S-psi_s} 
The equation~\eqref{equ:schroedinger_equation_psi_s} can be written in the following form: 
\begin{equation} \label{eq:SH-psi_s}
 \begin{aligned}
  (i \partial_t - H) \psi_s &= i \partial_s w - 2 i \ringA^k \nabla_k \psi_s + 2 A^{\infty, k} \ringA_k \psi_s + \tcv i \Im ( \psi^{\infty,k} \overline{\psi_s} ) \ringpsi_k +\tcv i \Im ( \ringpsi^k \overline{\psi_s} ) \psi^\infty_k  \\
  &\phantom{=} - i (\nabla_k \ringA^k) \psi_s + \ringA^k \ringA_k \psi_s +\tcv i \Im ( \ringpsi^k \overline{\psi_s} ) \ringpsi_k + A_t \psi_s ,
 \end{aligned}
\end{equation}
where $H$ is defined as in~\eqref{eq:H-def}. 
\end{lemma}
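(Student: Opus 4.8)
\textbf{Proof proposal for Lemma~\ref{l:S-psi_s}.} The plan is to start from the already-derived equation~\eqref{equ:schroedinger_equation_psi_s}, namely $(i\Db_t + \Db^k\Db_k)\psi_s = i\partial_s w + i\tcv\Im(\psi^k\overline{\psi_s})\psi_k$, and to rewrite each of the two operators $\Db_t$ and $\Db^k\Db_k$, together with the nonlinear term, in terms of the linearized operator $H$ from~\eqref{eq:H-def} and the $\mathring{~}$-components. First I would expand $\Db_t = \nabla_t + iA_t$, so that $i\Db_t\psi_s = i\partial_t\psi_s - A_t\psi_s$; this already isolates the $i\partial_t\psi_s$ on the left and contributes the $+A_t\psi_s$ term (after moving to the right-hand side) that appears at the end of~\eqref{eq:SH-psi_s}. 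For the spatial principal part I would insert the decomposition $A_k = \mathring{A}_k + A_k^\infty$ (see~\eqref{equ:linearization_via_caloric_gauge}) into $\Db_k = \nabla_k + iA_k = (\nabla_k + iA_k^\infty) + i\mathring{A}_k$, expand the product $\Db^k\Db_k$, and group the terms built solely out of $A^\infty$ into $-(\nabla^k + iA^{\infty,k})(\nabla_k + iA^\infty_k)\psi_s$, which is exactly the magnetic part of $-H$; the remaining terms are the cross terms $2i\mathring{A}^k\nabla_k\psi_s$, $i(\nabla_k\mathring{A}^k)\psi_s$, $-\mathring{A}^k\mathring{A}_k\psi_s$, and the $A^\infty$-$\mathring{A}$ cross term $-2A^{\infty,k}\mathring{A}_k\psi_s$ (note the sign coming from $\Db^k\Db_k$ versus $-H$).

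Next I would handle the curvature/potential term. The operator $-H$ in~\eqref{eq:H-def} carries the zeroth-order piece $+\tcv|\psi_2^\infty|^2\psi_s$, whereas the nonlinearity in~\eqref{equ:schroedinger_equation_psi_s} is $i\tcv\Im(\psi^k\overline{\psi_s})\psi_k$. Substituting $\psi_k = \mathring{\psi}_k + \psi_k^\infty$ into this nonlinear term produces one term purely in $\psi^\infty$, namely $i\tcv\Im((\psi^\infty)^k\overline{\psi_s})\psi_k^\infty$, plus mixed terms $i\tcv\Im((\psi^\infty)^k\overline{\psi_s})\mathring{\psi}_k + i\tcv\Im(\mathring{\psi}^k\overline{\psi_s})\psi_k^\infty + i\tcv\Im(\mathring{\psi}^k\overline{\psi_s})\mathring{\psi}_k$. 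The pure-$\psi^\infty$ term is precisely what combines with $+\tcv|\psi_2^\infty|^2\psi_s$ to complete $-H\psi_s$: here one uses the Cauchy--Riemann relation~\eqref{eq:CR}, $\psi_1^\infty = i\psi_2^\infty$, exactly as in the derivation of~\eqref{eq:lin-Q}, so that $i\tcv\Im((\psi^\infty)^k\overline{\psi_s})\psi_k^\infty = -\tcv|\psi_2^\infty|^2\psi_s$. This is the one genuinely structural computation and is the step I expect to be the main (though modest) obstacle—getting the signs and the factor-of-two bookkeeping right when passing between the $(e_1,e_2)$-frame indices $1,2$ and the contracted index $k$, and confirming that the potentially dangerous non-diagonal term indeed cancels by~\eqref{eq:CR}. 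Everything else is routine index manipulation.

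Finally I would assemble: moving all non-$H$ terms to the right-hand side (with the appropriate sign flips since we write $(i\partial_t - H)\psi_s$ on the left), and noting that $i\partial_s w$ survives unchanged from~\eqref{equ:schroedinger_equation_psi_s}, yields exactly~\eqref{eq:SH-psi_s}. I would also record, for bookkeeping, that the same decompositions applied to $(i\Db_t + \Db^k\Db_k)$ and to the nonlinearity are the ones used in Lemmas~\ref{l:H-psi_s} and~\ref{l:H-w}, so the three lemmas can be proved in parallel; in particular the cancellation of the would-be non-diagonal linear terms $i\tcv(\psi^\infty)^k\psi_k^\infty\overline{\psi_s}$ (and its analogue with $\overline{w}$) via~\eqref{eq:CR} is the common mechanism, already flagged in Subsection~\ref{subsec:ideas}. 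No estimates are needed—this is purely an algebraic identity valid for smooth $\psi_s$, $w$ arising from the caloric gauge, so the proof is complete once the grouping is verified term by term.
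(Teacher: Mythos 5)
Your proposal is correct and follows essentially the same route as the paper's proof: expand $i\Db_t\psi_s = i\partial_t\psi_s - A_t\psi_s$, split $A = \ringA + A^\infty$ inside $\Db^k\Db_k$ to peel off $-H$'s magnetic part plus the $\ringA$ cross terms, split $\psi = \ringpsi + \psi^\infty$ in the nonlinearity, and use the Cauchy--Riemann relation~\eqref{eq:CR} to turn the pure-$\psi^\infty$ quadratic term into the potential $-\tcv|\psi_2^\infty|^2\psi_s$ completing $-H\psi_s$. Your sign/index bookkeeping for that key cancellation checks out, so nothing further is needed.
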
 

We prove Lemma~\ref{l:S-psi_s}, and note that the proof of Lemmas~\ref{l:H-psi_s},~\ref{l:H-w} are similar computations. 

\begin{proof}[Proof of Lemma~\ref{l:S-psi_s}] 
First note that 
\EQ{
i \Db_t \psi_s = i \p_t \psi_s - A_t \psi_s .
}
Next, writing $A = \ringA + A^{\infty}$ we have 
\EQ{
\Db^k \Db_k \psi_s  &= (\nabla^k + i ( \ringA^{k} + A^{\infty,k})) (\nabla_k + i ( \ringA^k + A^\infty_k) \psi_s \\
& = (\nabla^k + i A^{\infty,k}) (\nabla_k + i A^\infty_k) \psi_s +  2 i \ringA^k \nabla_k \psi_s - 2 A^{\infty, k} \ringA_k \psi_s + i (\nabla_k \ringA^k) \psi_s -  \ringA^k \ringA_k \psi_s .
}
Finally, writing  $\psi = \ringpsi + \psi^\infty$ we expand the term $i \tcv \Im ( \psi^k \overline{\psi_s} ) \psi_k$ that appears on the right-hand-side of~\eqref{eq:SH-psi_s} to extract all terms that are truly linear in $\psi_s$, i.e., that do not contain any copies of $\ringpsi$. As we observed in~\eqref{eq:CR},~\eqref{eq:lin-Q}, the favorable structure of the linear term is revealed by observing that the derivative components $\psi^\infty$ of the harmonic map $Q$ satisfy the Cauchy Riemann equations
\EQ{ \label{eq:CR1} 
\psi_1^\infty + i \psi^\infty_2 = 0 .
}
Using~\eqref{eq:CR1} and following the argument used to give~\eqref{eq:lin-Q} we see that 
\EQ{
i \tcv \Im ( \psi^k \overline{\psi_s} ) \psi_k &=   i \tcv \Im ( (\ringpsi^k + \psi^{k, \infty})\overline{ \psi_s} ) (\ringpsi_k   + \psi_k^\infty)  \\
& = - \tcv \abs{\psi^\infty}^2  \psi_s + \tcv i \Im ( \psi^{\infty,k} \overline{\psi_s} ) \ringpsi_k +\tcv i \Im ( \ringpsi^k \overline{\psi_s} ) \psi^\infty_k+\tcv i \Im ( \ringpsi^k \overline{\psi_s} ) \ringpsi_k .
}
This completes the proof. 
\end{proof}

\subsection{Reduction of the main theorem to the core bootstrap argument in the caloric gauge} \label{subsec:bootstrap}
We begin by stating a more precise version of Theorem~\ref{thm:main}.
\begin{theorem} [Refinement of the main theorem: asymptotic stability of $Q_\nu$] \label{thm:main1}
Let $\NN = \Hp^2$ or $\Sp^2$. If $\NN = \Hp^2$ consider any harmonic map $Q$ as in Proposition~\ref{prop:hm-classify}. If $\NN= \Sp^2$ consider any $Q$ as in Proposition~\ref{prop:hm-classify} that satisfies the strong linearized stability condition in Defintion~\ref{def:strong-stab}; see Proposition~\ref{prop:strong-stab}. Fix any bounded neighborhood $\ti \NN$ of $Q( \Hp^2)$ in $\NN$. 

There exists an $\eps_0>0$ with the following property.  Let 
%and consider the initial value problem for~\eqref{equ:schroedinger_maps_equ}  with 
$ u_{0}: \Hp^2 \to \NN$ with $u_0 (\Hp^2) \subset \ti \NN$ be smooth initial data for~\eqref{equ:schroedinger_maps_equ} such that 
\begin{equation}\label{eq:data} 
	\|u_{0} - Q\|_{H^{1+ 2 \de}} + \norm{\calD_{\Omg} u_{0}}_{H^{1+ 2\dlt}} \leq \eps_{0}
\end{equation}
for any small number $\de >0$. Then, there exists a unique global smooth solution $u: \R \times\Hp^2 \to \Hp^2$ to~\eqref{equ:schroedinger_maps_equ} with initial data $u(t=0) = u_{0}$ and satisfying  
\EQ{ \label{eq:global-small} 
	\sup_{t \in \R} \|u(t) - Q\|_{H^{1+ 2 \de}} + \norm{\calD_{\Omg} u(t)}_{H^{1+ 2\dlt}} \lesssim \eps_{0}.
}

In fact, $Q$ is \textbf{asymptotically stable} in the following sense. Let $u(t, x, s)$ denote the harmonic map heat flow resolution of $u(t, x)$ and let $e(t, x, s)$ denote the corresponding caloric gauge with limiting frame $e^{\infty}(t, x)$ as defined in Section~\ref{subsec:caloric}, and let $\psi_s(t, x, s)$ denote the \emph{heat tension field}. Then, 
\begin{itemize}
\item \emph{ \textbf{A priori control of dispersive norms}.} $\psi_s$ satisfies, 
\EQ{ \label{eq:S-norm-eps}
 \| \psi_s \|_{\calS(\R)}& \lesssim \nrm{m(s) s^{\frac{1}{2}} \langle \Omg \rangle \psi_{s} \vert_{t = 0}}_{(L^{\infty}_{\ds} \cap L^{2}_{\ds} )L^2_x}   \lesssim \|u_{0} - Q\|_{H^{1+ 2 \de}} + \norm{\calD_{\Omg} u_{0}}_{H^{1+ 2\dlt}} \lesssim  \eps_{0} .
} 
\item \emph{\textbf{Scattering}.} For every $s>0$, $\psi_s(t)$ scatters to a solution of the free Schr\"odinger equation as $t  \to \pm \infty$, i.e., there exist free Schr\"odinger waves $\phi_\pm(t, s)$, such that 
\EQ{ \label{eq:scattering} 
\| \psi_s(t)  - \phi_\pm(t, s) \|_{L^2_x} \to 0 \mas t \to \pm \infty .
}

\item \emph{\textbf{Uniform control of higher derivatives}.} 
Higher regularity of the initial data $\psi_s(s, 0, x)$ is preserved by the flow. In fact, we have the following uniform estimates, 
\EQ{ \label{eq:HS-norm}
\| H \psi_s \|_{\calS(\R)}  &\lesssim   \nrm{m(s) s^{\frac{1}{2}} \langle \Omg \rangle \psi_{s} \vert_{t = 0}}_{(L^{\infty}_{\ds} \cap L^{2}_{\ds} )H^2}   \lesssim \|u_{0} - Q\|_{H^{3+2\de}} + \norm{\calD_{\Omg} u_{0}}_{H^{3+ 2 \de}} , \\% \\
 \| H^2 \psi_s \|_{\calS(\R)} &\lesssim \nrm{m(s) s^{\frac{1}{2}} \langle \Omg \rangle \psi_{s} \vert_{t = 0}}_{(L^{\infty}_{\ds} \cap L^{2}_{\ds} )H^4}   \lesssim \|u_{0} - Q\|_{H^{5+2\de}} + \norm{\calD_{\Omg} u_{0}}_{H^{5+2\de}} .  %\label{eq:H2S-norm} 
}

\item \emph{\textbf{Qualitative pointwise convergence to $Q$}.} Finally, we have the following pointwise convergence to $Q$, 
\EQ{ \label{eq:pointwise-c}
\| u(t) -  Q \|_{L^\infty}  \to 0 \mas t \to \pm \infty .
}
\end{itemize}
\end{theorem}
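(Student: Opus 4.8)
The strategy is to run a bootstrap argument in the caloric gauge. The starting point is local well-posedness for smooth data (Lemma~\ref{l:lwp}), which reduces the global problem to an a priori bound on $H^{1+2\dlt}$ together with $\calD_{\Omg}$ of this norm. To propagate this, one passes to the caloric gauge of Proposition~\ref{prop:caloric-gauge}: by Proposition~\ref{prop:backward-caloric}, control of $\nrm{m(s)s^{\frac12}\jap{\Omg}\psi_s}_{L^2_{\ds}L^2_x}$ at a fixed time is equivalent (up to absolute constants, under the bootstrap hypothesis) to control of $\nrm{u(t)-Q}_{H^{1+2\dlt}}+\nrm{\calD_{\Omg}u(t)}_{H^{1+2\dlt}}$, and by Proposition~\ref{prop:forward-caloric-psi} and Proposition~\ref{prop:forward-caloric-A} the forward-in-$s$ data (in particular $\ringA$, $\ringpsi$, $\Psi$) are all controlled by the same quantity. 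So the real work is to bound $\nrm{\psi_s}_{\calS(\R)}$, which by definition controls $\sup_t\nrm{m(s)s^{\frac12}\jap{\Omg}\psi_s(t)}_{L^2_{\ds}L^2_x}$ via the embedding $\calS(\R)\hookrightarrow L^\infty_{\ds}L^\infty_t L^2_x$ after integrating $\ds$. This gives the uniform bound \eqref{eq:global-small}. One then states the core bootstrap estimate (Proposition~\ref{prop:core-bootstrap}): assuming $\nrm{\psi_s}_{\calS(\R)}\le 2C\eps_0$, conclude $\nrm{\psi_s}_{\calS(\R)}\le C\eps_0 + (C\eps_0)^2$, say, which closes for $\eps_0$ small. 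The continuation/approximation step (Proposition~\ref{prop:prop-reg}, using Remarks~\ref{rem:forward-high-reg} and \ref{rem:backward-high-reg} for the higher-regularity analogues) lets one run this argument starting from smooth data and extend to all of $\R$ without losing smoothness.

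\textbf{The core bootstrap.}
To bound $\nrm{\psi_s}_{\calS(\R)}$ one feeds the Schr\"odinger equation for $\psi_s$, in the form of Lemma~\ref{l:S-psi_s}, into the main linear estimate Lemma~\ref{lem:main_linear_estimate}, which gives $\nrm{\psi_s}_{LE\cap\Str}\lesssim \nrm{\psi_s(0)}_{L^2}+\nrm{(\text{RHS})}_{LE^*+L^{4/3}_tL^{4/3}_x}$, and similarly after commuting with $\Omg$ (using \eqref{eq:LOmA}, \eqref{eq:LOmpsi} and Lemma~\ref{l:commLOm} to control the commutators). The RHS of \eqref{eq:SH-psi_s} splits into: (i) the $i\rd_s w$ term, which is estimated via the parabolic equation for $w$ (Lemma~\ref{l:H-w}) with zero data at $s=0$, so that $w$ and $\rd_s w$ gain a power of $s$ and are controlled by $\psi_s$ — this is where the vanishing of the potentially non-diagonal linear terms (the Cauchy--Riemann cancellation \eqref{eq:CR1}) is essential; (ii) the zeroth-order terms $\ringA^k\ringA_k\psi_s$, $A^{\infty,k}\ringA_k\psi_s$, $(\nb_k\ringA^k)\psi_s$, and the $\tcv\Im(\cdots)$ nonlinearities, which are all at least quadratic in the small quantities and handled with Strichartz, Bernstein-type estimates (Lemmas~\ref{l:bern1}, \ref{l:bern2}), the off-diagonal decay of $P_\sigma\psi_s$ off $\{\sigma\simeq s\}$ (from the parabolic equation \eqref{equ:heat_equation_psi_s}), and the spatial decay of $\ringA$ obtained by the radial Sobolev inequality Lemma~\ref{lem:radial_sobolev} combined with the extra angular ($\Omg$) regularity; (iii) the magnetic interaction term $-2i\ringA^k\nb_k\psi_s$ with a derivative on $\psi_s$. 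Once this is done, scattering \eqref{eq:scattering} follows in the standard way from the finiteness of the Strichartz norm (writing $\psi_s(t)=e^{it\lap}(\cdots)+\int$, showing the Duhamel tail is Cauchy in $L^2_x$, comparing $e^{-itH}$ with $e^{it\lap}$), the higher-regularity bounds \eqref{eq:HS-norm} follow from applying the same scheme with $H\psi_s$, $H^2\psi_s$ (using Remark~\ref{rem:forward-high-reg} and $[\,\cdot\,,H]$ commutator bounds), and the pointwise convergence \eqref{eq:pointwise-c} follows from \eqref{eq:main-ptwise}-type reasoning: $\nrm{u(t)-Q}_{L^\infty}\lesssim \nrm{u(t)-Q}_{H^{1+2\dlt}}^{\theta}\nrm{\jap{\Omg}(u(t)-Q)}^{1-\theta}$ via Lemma~\ref{lem:radial_sobolev} and Gagliardo--Nirenberg, bounding the decaying-in-$s$ tail and the $s$-small part of $\psi_s$ using the $\calS$ bound plus the scattering/decay of the linear Schr\"odinger flow in $L^\infty$ — this uses the qualitative smoothness of $u_0$.

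\textbf{Main obstacle.}
The hardest part is item (iii), the magnetic interaction term $-2i\ringA^k\nb_k\psi_s$, and within it the dangerous paradifferential (``low-high'') piece where $\ringA$ is at low frequency and $\psi_s$ at high frequency. Here one cannot afford to move the derivative onto $\ringA$, so one must exploit the local smoothing gain of one derivative from Proposition~\ref{prop:strong-stab-led}; the price is a pointwise-in-space weight (one needs to bound $r^4\ringA^k$ on $\{r>1\}$), which is why one must first establish pointwise spatial decay of $\ringA$ through its angular regularity — controlled by commuting everything with $\calD_{\Omg}$, which annihilates $Q$ — and the radial Sobolev embedding Lemma~\ref{lem:radial_sobolev}. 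Getting the weights near $r=0$ sharp enough in $LE$ and $LE^*$ to close with arbitrarily small $\dlt>0$ requires care, and the remaining high-low and high-high interactions need the Bernstein estimates, the off-diagonal decay of $P_\sigma\psi_s$, and the Littlewood--Paley trichotomy to be carried out term by term. This is the technical heart of the paper (Section~\ref{s:schrod}) and everything else — the parabolic/elliptic estimates, the gauge construction, scattering, pointwise convergence — is comparatively routine given the machinery assembled in Sections~\ref{sec:prelim}--\ref{sec:hmhf}.
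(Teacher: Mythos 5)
Your proposal is correct and follows essentially the same route as the paper: reduce to the core bootstrap (Proposition~\ref{prop:core-bootstrap}) and propagation of regularity (Proposition~\ref{prop:prop-reg}), transfer between the map and $\psi_s$ via Propositions~\ref{prop:forward-caloric-psi} and~\ref{prop:backward-caloric}, close by continuity against the $H^5$ continuation criterion of Lemma~\ref{l:lwp}, and obtain scattering, higher regularity, and pointwise convergence exactly as you describe (the paper proves \eqref{eq:pointwise-c} via $|u(t,x)-Q(x)|\le\int_0^\infty|\psi_s|\,\ud s$ with dominated convergence, which is the $s$-integral argument you sketch; the Gagliardo--Nirenberg interpolation you mention first is not needed). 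You have also correctly identified the magnetic low-high interaction and the local smoothing/radial Sobolev/angular-regularity mechanism as the technical heart.
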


Theorem~\ref{thm:main1} will be proved via a priori estimates, which we close with a bootstrap argument. In this section we briefly recall the standard argument used to reduce Theorem~\ref{thm:main1} to the following two propositions.  
\begin{proposition}[Core bootstrap argument in the caloric gauge] \label{prop:core-bootstrap}
There exists $\eps_0, C_1>0$ with the following property.  Let $I\subset \R$ be a time interval and assume that, 
\begin{equation} \label{eq:overall-bootstrap}
	\|u(t)- Q\|_{H^{1 +2 \de}}  + \norm{\calD_{\Omg} u(t)}_{H^{1+ 2\dlt}} \leq \eps_{0}^{1/2}, \quad \hbox{ for all } t \in I,
\end{equation}
and
\begin{equation} \label{eq:core-bootstrap}
	\nrm{\psi_{s}}_{\calS(I)} \leq 2 C_{1} \eps_{0} .
\end{equation}
Then, 
\begin{equation} \label{eq:core-bootstrap-imp}
	\nrm{\psi_{s}}_{\calS(I)} \leq C_{1} \eps_{0}.
\end{equation}
Moreover, 
\begin{equation} \label{eq:core-bootstrap-imp-N}
 \| m(s) s^{\frac{1}{2}} (i \rd_{t} - H) \psi_{s} \|_{L^\infty_{\ds} \cap L^2_{\ds}(LE^\ast + L^{\frac{4}{3}}_t L^{\frac{4}{3}}_x)(I)} 
 + \| m(s) s^{\frac{1}{2}} (i \rd_{t} - H) \Omg \psi_{s} \|_{L^\infty_{\ds} \cap L^2_{\ds}(LE^\ast + L^{\frac{4}{3}}_t L^{\frac{4}{3}}_x)(I)} \lesssim \eps_{0}^{\frac{3}{2}},
\end{equation}
where the implicit constant is independent of $\eps_{0}$ and $C_{1}$. 
\end{proposition}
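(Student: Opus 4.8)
\textbf{Proof strategy for Proposition~\ref{prop:core-bootstrap}.}
The plan is to run a standard continuity/bootstrap argument driven by the main linear estimate (Lemma~\ref{lem:main_linear_estimate}) applied to the Schr\"odinger equations for $\psi_s$ and $\Omg\psi_s$ in the caloric gauge. First I would record the consequences of the bootstrap hypothesis \eqref{eq:overall-bootstrap}: by Proposition~\ref{prop:forward-caloric-psi} and Proposition~\ref{prop:forward-caloric-A} (together with their higher-regularity counterparts, Remark~\ref{rem:forward-high-reg}), all the forward-in-$s$ bounds for $\psi_s$, $\ringPsi$, $\Omg\psi_s$, $\ringpsi_\Omg$, $\ringA$, $\ringA_\Omg$, $\calL_\Omg\ringA$ hold with $H^{1+2\dlt}$-sized right-hand sides $\lesssim\eps_0^{1/2}$; and by Proposition~\ref{prop:backward-caloric} the data norm $\nrm{m(s)s^{1/2}\jap{\Omg}\psi_s\vert_{t=0}}_{(L^\infty_{\ds}\cap L^2_{\ds})L^2}$ is comparable to the LHS of \eqref{eq:data}, hence $\lesssim\eps_0$. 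Next I would take the $t$-evolution equation \eqref{eq:SH-psi_s} from Lemma~\ref{l:S-psi_s} and its $\Omg$-commuted analogue (obtained by applying $\Omg$ and using Lemma~\ref{l:commLOm}, $\Omg\psi_s=\Db_\Omg\psi_s-iA_\Omg\psi_s$, plus the symmetry identities \eqref{eq:LOmA}, \eqref{eq:LOmpsi} so that $H$ commutes with $\Omg$ up to the perturbative $\ringA$, $\ringpsi$ terms), multiply by $m(s)s^{1/2}$, and apply Lemma~\ref{lem:main_linear_estimate} on $I$ to bound $\nrm{\psi_s}_{\calS(I)}$ by $\nrm{m(s)s^{1/2}\jap{\Omg}\psi_s\vert_{t=0}}_{L^2}$ plus the $L^\infty_{\ds}\cap L^2_{\ds}(LE^\ast+L^{4/3}_tL^{4/3}_x)$ norm of the right-hand sides (the nonlinearity and $i\rd_s w$).

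The core of the work is then estimating each term on the RHS of \eqref{eq:SH-psi_s} (and its $\Omg$-commuted version) in $m(s)s^{1/2}(L^\infty_{\ds}\cap L^2_{\ds})(LE^\ast+L^{4/3}_tL^{4/3}_x)$ by $\eps_0^{3/2}$, i.e.\ with a gain of $\eps_0^{1/2}$ over the bootstrap size $\eps_0$. The terms fall into three groups. (i) The \emph{magnetic interaction term} $-2i\ringA^k\nb_k\psi_s$ is the main obstacle: it carries a derivative on $\psi_s$, so I would dyadically decompose using the heat-flow Littlewood--Paley projections $P_\sgm$ and route the dangerous low(in $\ringA$)-high(in $\psi_s$) piece through the local smoothing estimate (Proposition~\ref{prop:strong-stab-led}), which absorbs one derivative; the price is needing pointwise spatial decay $r^4\ringA^k\in L^\infty(\{r>1\})$, obtained from the angular-regularity control \eqref{eq:forward-caloric-A-Omg}, \eqref{eq:forward-caloric-Omg-A} via the radial Sobolev inequality (Lemma~\ref{lem:radial_sobolev}), together with the Bernstein-type bounds (Lemmas~\ref{l:bern1}, \ref{l:bern2}) and the off-diagonal decay of $P_\sgm\psi_s(s)$ (from the parabolic equation \eqref{eq:heatH-psi_s}). (ii) The zeroth-order potential-type terms $2A^{\infty,k}\ringA_k\psi_s$, $-i(\nb_k\ringA^k)\psi_s$, $\ringA^k\ringA_k\psi_s$, $A_t\psi_s$, and the curvature terms $\tcv\,i\Im(\cdots)\ringpsi_k$, $\tcv\,i\Im(\ringpsi^k\overline{\psi_s})\psi_k^\infty$, etc., are genuinely semilinear (no derivatives), so they are handled by H\"older in $LE^\ast$ or $L^{4/3}_tL^{4/3}_x$ using the $\psi_s\in\calS$ control and the smallness/decay of $\ringA$, $\ringpsi$ from the Forward propositions; here $A_t=A_t^\infty+\ringA_t$ with $\ringA_t$ controlled analogously (via $\rd_sA_t-\rd_tA_s=\tcv\Im(\psi_s\overline{\psi_t})$ integrated from $s=\infty$, noting $A_s=0$). (iii) The term $i\rd_s w$ requires first estimating $w$ through its own parabolic equation \eqref{eq:heatH-w}: since $w(s=0)=0$ and \eqref{eq:heatH-w} has no linear-in-$w$ or linear-in-$\psi_s$ diagonal-breaking terms (the key structural cancellation from the Cauchy--Riemann equation \eqref{eq:CR}), parabolic energy estimates for $\rd_s+H$ (Lemma~\ref{l:lin-para-Lp} and its $L^2$ analogue) bound $w$, and hence $\rd_s w$, in the appropriate $s$-weighted spaces by quantities quadratic in $\psi_s$, giving the $\eps_0^{3/2}$ gain after using $\nrm{\psi_s}_{\calS}\lesssim\eps_0$.

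Having closed the estimate, \eqref{eq:core-bootstrap-imp} follows by choosing $\eps_0$ small enough that the total RHS is $\le C_1\eps_0$ with $C_1$ fixed by the implicit constants in Lemma~\ref{lem:main_linear_estimate} and the data bound; the bound \eqref{eq:core-bootstrap-imp-N} is then a byproduct, since its LHS is exactly the $m(s)s^{1/2}$-weighted $L^\infty_{\ds}\cap L^2_{\ds}(LE^\ast+L^{4/3}_tL^{4/3}_x)$ norm of the RHS of \eqref{eq:SH-psi_s} (respectively its $\Omg$-commuted version) that we just estimated by $\lesssim\eps_0^{3/2}$ with a constant independent of $C_1$. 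I expect the magnetic interaction term in group (i) to consume the bulk of the proof, as it is the only place where the loss of smoothing inherent to Schr\"odinger equations is confronted and where the interplay between the $LE$/$LE^\ast$ weights near $r=0$ and $r=\infty$, the equivariant commutator structure, and the off-diagonal heat-flow bounds all must be orchestrated simultaneously; the remaining terms, while numerous, are routine given the Forward/Backward propositions of Section~\ref{sec:hmhf} and the function-space machinery of Section~\ref{sec:prelim}.
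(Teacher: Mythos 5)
Your proposal is correct and follows essentially the same route as the paper: the main linear estimate applied to the $m(s)s^{1/2}$-weighted equations for $\psi_s$ and $\Omega\psi_s$, with the magnetic term split by heat-flow Littlewood--Paley projections and its low-high piece routed through the $LE^*$ local smoothing norm (using the weighted $L^\infty$/$L^p$ bounds on $P_{\geq\sigma'}\ringA$ from radial Sobolev and angular regularity, the Bernstein lemmas, and the off-diagonal decay of $P_\sigma\psi_s$), the remaining terms by H\"older in $L^{4/3}_tL^{4/3}_x$, and $\partial_s w$ via the parabolic equation for $w$. Two cosmetic remarks: the data bound $\nrm{m(s)s^{1/2}\jap{\Omg}\psi_s\vert_{t=0}}\lesssim\eps_0$ comes from the \emph{forward} caloric estimates (Proposition~\ref{prop:forward-caloric-psi}) rather than the backward one, and $H$ commutes with $\Omega$ exactly (since $A^\infty$ and $|\psi_2^\infty|^2$ are radial), not merely up to perturbative terms.
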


\begin{proposition}[Auxiliary propagation of regularity in the caloric gauge] \label{prop:prop-reg}
There exists $\eps_1 >0$ with the following property. Let $I \subset \R$ be a time interval and suppose that  
\begin{equation} \label{eq:prop-reg-small}
	\nrm{\psi_{s}}_{\calS(I)} \leq \eps_{1},
\end{equation}
Then, 
\EQ{
	\nrm{H \psi_{s}}_{\calS(I)} &\aleq \nrm{m(s) s^{\frac{1}{2}} \langle \Omg \rangle \psi_{s} \vert_{t=0}}_{L^{\infty}_{\ds} \cap L^{2}_{\ds}([0, \infty); H^{2})} \lesssim  \| u_0 - Q \|_{H^{3 + 2\de}} + \| \calD_{\Om} u_0 \|_{H^{3 + 2\de}}, \\
		\nrm{H^2 \psi_{s}}_{\calS(I)} &\aleq \nrm{m(s) s^{\frac{1}{2}} \langle \Omg \rangle \psi_{s} \vert_{t=0}}_{L^{\infty}_{\ds} \cap L^{2}_{\ds}([0, \infty); H^{4})}  \lesssim   \| u_0 - Q \|_{H^{5 + 2\de}} + \| \calD_{\Om} u_0 \|_{H^{5 + 2\de}}.  \label{eq:H2S} 
}

\end{proposition}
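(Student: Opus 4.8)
\textbf{Plan of proof of Proposition~\ref{prop:prop-reg}.} The strategy is to run a bootstrap argument for the quantities $\nrm{H \psi_s}_{\calS(I)}$ and $\nrm{H^2 \psi_s}_{\calS(I)}$ in parallel with, but subordinate to, the smallness assumption \eqref{eq:prop-reg-small}. The key structural input is that $H \psi_s$ (and $H^2 \psi_s$) satisfies a Schr\"odinger equation of essentially the same form as \eqref{eq:SH-psi_s}, obtained by commuting $H$ through the equation in Lemma~\ref{l:S-psi_s}. Concretely, one writes
\begin{equation*}
(i \rd_t - H)(H \psi_s) = H \big( i \rd_s w - 2 i \ringA^k \nabla_k \psi_s + (\hbox{easier terms}) \big) + [i\rd_t - H, H] \psi_s,
\end{equation*}
and since $H$ is time-independent the commutator $[i\rd_t - H, H]$ vanishes; thus the only new contributions come from commuting $H$ into the magnetic nonlinearity $\ringA^k \nabla_k \psi_s$ and into the various lower-order terms. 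The commutator $[H, \ringA^k \nabla_k]$ and the terms like $[\nabla^{(2)}, \ringA]$ produce expressions of the form (derivatives of $\ringA$)$\cdot$(derivatives of $\psi_s$) with at most two derivatives total distributed among the factors, which are controlled exactly by the higher-regularity versions of Propositions~\ref{prop:forward-caloric-A} and \ref{prop:forward-caloric-psi} flagged in Remarks~\ref{rem:forward-high-reg} and \ref{rem:backward-high-reg}, together with the parabolic gain (powers of $s$) from \eqref{equ:heat_equation_psi_s} and the off-diagonal decay of $P_\sigma \psi_s$. The main linear estimate, Lemma~\ref{lem:main_linear_estimate}, applied to $H\psi_s$ (resp. $H^2\psi_s$) then closes the estimate, provided the nonlinear right-hand side is bounded in $L^\infty_{\ds} \cap L^2_{\ds}(LE^* + L^{4/3}_t L^{4/3}_x)$ by a small constant times $\nrm{H\psi_s}_{\calS(I)}$ plus a fixed multiple of the initial data norm.

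First I would record the precise commuted equations for $H\psi_s$ and $H^2\psi_s$, being careful to track that $H = -(\nabla + iA^\infty)^2 - \tcv|\psi_2^\infty|^2$ has smooth exponentially-decaying coefficients, so that $H$ and $\nabla^{(k)}$ are interchangeable up to lower-order terms with good decay (this is the analogue of Lemma~\ref{lem:d-switch} at the level of $H$). Then the bulk of the work is to re-run the estimates of Section~\ref{s:schrod} (the magnetic interaction term via the local smoothing estimate Proposition~\ref{prop:strong-stab-led}, the Strichartz pieces via Lemma~\ref{l:strich}, the Bernstein-type bounds Lemmas~\ref{l:bern1}--\ref{l:bern2}, and the off-diagonal decay Lemma~\ref{l:offdiag}) but now with one or two extra derivatives on the factors, using the higher-regularity forward-in-$s$ bounds. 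The crucial point making this work is that the dangerous term $2i\ringA^k \nabla_k \psi_s$ still has only one derivative on $\psi_s$ after commuting $H$ through it: the worst new term is $2i \ringA^k \nabla_k (H\psi_s)$, which is handled by the same local-smoothing argument applied to $H\psi_s$; all genuinely new terms (from $[H,\ringA^k\nabla_k]$) carry derivatives landing on $\ringA$, which is better behaved than $\psi_s$ because of the extra integration in $s$ in the reconstruction formula $\ringA_k(s) = -\int_s^\infty \tcv\Im(\psi_s\overline{\psi_k})\,\ud s'$ and the pointwise spatial decay obtained via the radial Sobolev inequality (Lemma~\ref{lem:radial_sobolev}) and control of the angular regularity through $\Omega$.

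Next I would convert the $\calS(I)$-control of $H\psi_s$ into the claimed initial-data bound using Proposition~\ref{prop:backward-caloric} in its higher-regularity form (Remark~\ref{rem:backward-high-reg}): the $L^\infty_{\ds}\cap L^2_{\ds} L^2$ norm of $m(s)s^{1/2}\langle\Omega\rangle H\psi_s$ at $t=0$ is comparable to $\nrm{u_0 - Q}_{H^{3+2\delta}} + \nrm{\calD_\Omega u_0}_{H^{3+2\delta}}$, since applying $H$ costs two derivatives and $H$ is elliptic with $L^p$-invertible symbol (Corollary~\ref{c:spec-gap-Lp-H} and the elliptic regularity $\nrm{\lap f}_{L^p}\lesssim\nrm{Hf}_{L^p}$). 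The $H^2\psi_s$ estimate is identical in structure, with four derivatives and $H^{5+2\delta}$ data. Finally, the bootstrap is closed by the standard continuity argument: the higher-regularity forward/backward bounds of Section~\ref{sec:hmhf} guarantee the relevant quantities are finite and depend continuously on the time interval (this is precisely the purpose of Remarks~\ref{rem:forward-high-reg} and \ref{rem:backward-high-reg}), and \eqref{eq:prop-reg-small} provides the smallness needed to absorb the nonlinear terms.

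\textbf{Main obstacle.} The principal difficulty is bookkeeping the commutator $[H, \ringA^k\nabla_k]\psi_s$ (and its second-order analogue) and verifying that every term that arises has at most one derivative falling on $\psi_s$ beyond the copies of $H$ we are estimating, so that the local smoothing estimate can still absorb it; in particular one must check that terms where two derivatives hit a single copy of $\ringA$ are controlled by $\nrm{m(s)s^{(k+1)/2}\nabla^{(k+2)}\calL_\Omega \ringA}$-type norms from Proposition~\ref{prop:forward-caloric-A}, which requires the angular-regularity improvement via $\Omega$ and is the reason $\calD_\Omega u_0$ enters the higher-regularity norms. Once the structure of these commutator terms is laid out, each individual estimate is a repetition of the arguments already developed for $\psi_s$ itself in Section~\ref{s:schrod}, so no genuinely new analytic ingredient is needed — the work is entirely in the careful reduction.
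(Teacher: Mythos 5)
Your proposal follows essentially the same route as the paper, which proves this proposition by applying $H$ and $H^2$ to \eqref{equ:schroed_psi_s}, using that $H$ commutes with $i\rd_t - H$, re-running the nonlinear estimates of Section~\ref{s:schrod} with the extra derivatives supplied by the higher-regularity bounds of Remark~\ref{rem:forward-high-reg}, and invoking the main linear estimate; the paper in fact omits the details as ``standard,'' and your outline correctly identifies all the ingredients, including that the worst commuted term $\ringA^k \nabla_k (H\psi_s)$ is absorbed by the local smoothing estimate applied to $H\psi_s$ itself. The only minor imprecision is that the conversion to the initial-data norms $\|u_0-Q\|_{H^{3+2\dlt}}$, etc., uses the forward-in-$s$ bounds (Remark~\ref{rem:forward-high-reg}) rather than the backward ones, but this does not affect the argument.
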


The key ingredients in the reduction are the bounds in Proposition~\ref{prop:backward-caloric} in Section~\ref{sec:hmhf}, which allow us to transfer bounds for the heat flowed map $u(t, x, s)$ and its derivatives back to the Schr\"odinger map $u(t, x)$, and thus close the bootstrap. We remark that Proposition~\ref{prop:backward-caloric} is stated for the harmonic map heat flow evolution of the initial data $u_0$, but a straightforward modification extends this to the following statement for the $1$-parameter family of heat flows $u(t, x, s)$. 

\begin{proposition} \label{prop:backward-caloric-t}
Assume that $\eps_0$ in Proposition~\ref{prop:core-bootstrap} satisfies $\eps_0^{1/2}< \eps_*$ where $\eps_*$ is defined in Theorem~\ref{thm:hmhf-gwp} and that $0 < \dlt < \frac{1}{2}$. Then, 
\begin{align}
	 \sup_{t \in I} \|u(t)- Q\|_{H^{1+ 2 \de}}    & \aleq \nrm{m(s) s^{\frac{1}{2}}  \psi_{s}}_{L^{2}_{\ds}((0, \infty);L^\infty_t (I;  L^{2}_x))}, \label{eq:backward-caloric-t}  \\
	  \sup_{t \in I} \norm{\calD_{\Omg} u(t)}_{H^{1+ 2\dlt}} & \aleq \nrm{m(s) s^{\frac{1}{2}} \ang{\Om} \psi_{s}}_{L^{2}_{\ds}((0, \infty);L^\infty_t (I;  L^{2}_x))}, \label{eq:backward-caloric-t1} \\ 
	  \sup_{t \in I} \Big(\|u(t)- Q\|_{H^{3+ 2 \de}}  + \norm{\calD_{\Omg} u(t)}_{H^{3+ 2\dlt}} \Big)  & \aleq \nrm{m(s) s^{\frac{1}{2}} \ang{\Om} \psi_{s}}_{L^{2}_{\ds}((0, \infty);L^\infty_t (I;  H^2_x))}, \label{eq:backward-caloric-t3}  \\
	   \sup_{t \in I} \Big(\|u(t)- Q\|_{H^{5+ 2 \de}}  + \norm{\calD_{\Omg} u(t)}_{H^{5+ 2\dlt}} \Big)  & \aleq \nrm{m(s) s^{\frac{1}{2}} \ang{\Om} \psi_{s}}_{L^{2}_{\ds}((0, \infty);L^\infty_t (I;  H^4_x))}. \label{eq:backward-caloric-t5}  \\
	 %\\
	%\nrm{\calD_{\Omg} u_{0}}_{H^{1+2\dlt}} & \aleq \nrm{m(s) s^{\frac{1}{2}} \psi_{s}}_{L^{2}_{\ds}((0, \infty); L^{2})}
	%+ \nrm{m(s) s^{\frac{1}{2}} \Omg \psi_{s}}_{L^{2}_{\ds}((0, \infty); L^{2})}. \label{eq:backward-caloric-Omg-t}
\end{align}
\end{proposition}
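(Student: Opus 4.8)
\textbf{Proof of Proposition~\ref{prop:backward-caloric-t}.}
The plan is to run the argument of Proposition~\ref{prop:backward-caloric} with the time variable $t \in I$ carried along as a parameter, and then move the $L^\infty_t$ norm to the appropriate place using Minkowski's inequality. Fix $t \in I$; the hypothesis $\eps_0^{1/2} < \eps_*$ together with the bootstrap assumption \eqref{eq:overall-bootstrap} (which in our applications we may assume holds) guarantees that $u(t, \cdot)$ obeys the smallness condition \eqref{eq:hmhf-wp-hyp}, so the entire theory of Section~\ref{sec:hmhf} applies to the harmonic map heat flow resolution $u(t, x, s)$ for each fixed $t$. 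In particular, Lemma~\ref{lem:backward-ex} and Proposition~\ref{prop:backward-caloric} hold pointwise in $t$, giving, for each $t \in I$,
\begin{align*}
  \|u(t) - Q\|_{H^{1+2\dlt}} &\aleq \nrm{m(s) s^{\frac{1}{2}} \psi_s(t)}_{L^2_{\ds}((0,\infty); L^2_x)}, \\
  \|\calD_\Omg u(t)\|_{H^{1+2\dlt}} &\aleq \nrm{m(s) s^{\frac{1}{2}} \psi_s(t)}_{L^2_{\ds}((0,\infty); L^2_x)} + \nrm{m(s) s^{\frac{1}{2}} \Omg \psi_s(t)}_{L^2_{\ds}((0,\infty); L^2_x)},
\end{align*}
with implicit constants depending only on an upper bound for $\sup_{t\in I}(\|u(t)-Q\|_{H^{1+2\dlt}} + \|\calD_\Omg u(t)\|_{H^{1+2\dlt}})$, which is controlled by \eqref{eq:overall-bootstrap}. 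Taking the supremum over $t \in I$ and then applying Minkowski's inequality to interchange $\sup_{t\in I}$ (i.e. $L^\infty_t$) with the $L^2_{\ds}$ norm, which is legitimate since $L^\infty_t$ sits outside an $L^2$ (in fact any $L^p$ with $p \geq 1$) integral, yields \eqref{eq:backward-caloric-t} and \eqref{eq:backward-caloric-t1}.

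For the higher-regularity bounds \eqref{eq:backward-caloric-t3} and \eqref{eq:backward-caloric-t5}, the same scheme applies but now invoking the higher-regularity versions of Lemma~\ref{lem:backward-ex} and Proposition~\ref{prop:backward-caloric} recorded in Remark~\ref{rem:backward-high-reg}: with $L^2$ and $H^{1+2\dlt}$ replaced by $H^{\sgm-1}$ and $H^{\sgm+2\dlt}$, for $\sgm = 3$ and $\sgm = 5$ respectively. Concretely, for $\sgm = 3$ one gets, for each fixed $t$,
\begin{equation*}
  \|u(t) - Q\|_{H^{3+2\dlt}} + \|\calD_\Omg u(t)\|_{H^{3+2\dlt}} \aleq \nrm{m(s) s^{\frac{1}{2}} \psi_s(t)}_{L^2_{\ds}((0,\infty); H^2_x)} + \nrm{m(s) s^{\frac{1}{2}} \Omg \psi_s(t)}_{L^2_{\ds}((0,\infty); H^2_x)},
\end{equation*}
and similarly with $H^2_x$ replaced by $H^4_x$ and exponents $3$ replaced by $5$; here one must observe that the proof of Proposition~\ref{prop:backward-caloric} (via the reconstruction formulas for $\ringpsi_\Omg$ and the identity $\rd_s \calD_\Omg u = e\,\Omg\psi_s + e\, iA_\Omg \psi_s - (\rd\tilde\Omg)(u)e\psi_s$) applies verbatim at the $H^{\sgm-1}$ level, using the $H^{\sgm-1}$-versions of the forward bounds of Subsection~\ref{subsec:forward-caloric} guaranteed by Remark~\ref{rem:forward-high-reg} to control $\|A_\Omg\|$ and related quantities. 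Again taking $\sup_{t \in I}$ and interchanging with $L^2_{\ds}$ by Minkowski gives the claimed estimates, after absorbing $\nrm{m(s)s^{1/2}\psi_s}$-type terms into $\nrm{m(s)s^{1/2}\ang{\Om}\psi_s}$-type terms.

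The only genuinely non-routine point is to make sure that the constants in Proposition~\ref{prop:backward-caloric} are \emph{uniform in $t \in I$}, rather than merely finite for each $t$. This follows because the constants there depend only on an upper bound for the $H^{1+2\dlt}$ (resp. $H^{\sgm-1+2\dlt}$) size of $u(t) - Q$ and $\calD_\Omg u(t)$ at the given time slice, which is exactly what the bootstrap hypothesis \eqref{eq:overall-bootstrap} (and, for the higher-regularity statements, the analogous a priori control furnished in the course of the continuous induction for the Schr\"odinger map evolution) supplies uniformly over $I$. With this uniformity in hand, the passage from the pointwise-in-$t$ estimate to the stated $L^\infty_t$ estimate is a one-line application of Minkowski's inequality, and no further work is needed. $\qed$
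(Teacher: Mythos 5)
Your proposal is correct and follows essentially the same route as the paper, which proves these bounds by running the argument of Proposition~\ref{prop:backward-caloric} for the one-parameter (in $t$) family of heat flows and invoking Remark~\ref{rem:backward-high-reg} with $\sgm = 3, 5$ for the higher-regularity cases. The only cosmetic point is that the final interchange of $\sup_{t}$ with the $L^2_{\ds}$ norm is simply monotonicity of the integral (bounding $\|\cdot\|_{L^2_x}$ by its supremum over $t$ pointwise in $s$) rather than Minkowski's inequality, but the step is valid either way.
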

We remark that the first two bounds above~\eqref{eq:backward-caloric-t} and~\eqref{eq:backward-caloric-t1} follow from the same proof as Proposition~\ref{prop:backward-caloric}. For the bounds~\eqref{eq:backward-caloric-t3} and~\eqref{eq:backward-caloric-t5} we use Remark~\ref{rem:backward-high-reg} with $\s = 3$ and $\s = 5$; see also Remark~\ref{rem:forward-high-reg}. 

\begin{proof}[Proof of Theorem~\ref{thm:main} and Theorem~\ref{thm:main1} assuming Propositions~\ref{prop:core-bootstrap} and \ref{prop:prop-reg} ] Let $u_0$ be smooth initial data for \eqref{equ:schroedinger_maps_equ} satisfying~\eqref{eq:data}. Since $u_0$ is smooth we may define $B_0<\infty$ by  
\EQ{
\|  u_0 - Q \|_{H^{5+ 2\de}} =: B_0 < \infty .
}
By Lemma~\ref{l:lwp} we can find a unique smooth solution $u(t)$ to~\eqref{equ:schroedinger_maps_equ} defined on a maximal forward time of existence $J = [0, T^*)$. Assume for contradiction that $T^*<\infty$. Then, by continuation criterion in Lemma~\ref{l:lwp} we must have 
\EQ{ \label{eq:bu} 
\lim_{t \to T_*}  \| u(t) - Q \|_{H^{5+ 2\de}} = \infty .
}
On the other hand, let $\eps_0>0$ be as in Proposition~\ref{prop:core-bootstrap}. By continuity of the flow, the assumption~\eqref{eq:data} yields a time $T_1  \le T^*$ for which both~\eqref{eq:overall-bootstrap} and~\eqref{eq:core-bootstrap} are satisfied.  Let $T_{1}^*$ be the maximal such time. The combination of  Proposition~\ref{prop:core-bootstrap} and Proposition~\ref{prop:backward-caloric-t} closes the bootstrap. Indeed, from~\eqref{eq:backward-caloric-t} and ~\eqref{eq:backward-caloric-t1} we have 
\EQ{
 \sup_{t \in I} \Big(\|u(t)- Q\|_{H^{1+ 2 \de}}  + \norm{\calD_{\Omg} u(t)}_{H^{1+ 2\dlt}} \Big)  \lesssim \| m(s) s^{\frac{1}{2}} \ang{\Om} \psi_s  \|_{L^{2}_{\ds}([0, \infty); L^{\infty}_t L^2_x)} \lesssim \| \psi_s \|_{\calS(I)}  \lesssim C_1 \eps_0 \le \frac{1}{2} \eps_0^{1/2} .
}
Thus, we must have $T_{1}^* = T^*$.  It then follows that the conclusions of Proposition~\ref{prop:prop-reg} hold with $I = [0, T^*)$. Using the elliptic regularity estimate
\EQ{
%\| v \|_{H^2} &\lesssim \| \De v \|_{L^2} \lesssim \| H v \|_{L^2} \\
\| v \|_{H^4} &\lesssim  \| H^2 v \|_{L^2} + \| v \|_{L^2} ,
}
we then deduce from the estimate~\eqref{eq:H2S} from Proposition~\ref{prop:prop-reg} and the estimate~\eqref{eq:backward-caloric-t5} that 
\EQ{
 \sup_{t \in [0, T^*)} \|u(t)- Q\|_{H^{5+ 2 \de}}  \lesssim B_0 ,
}
which contradicts~\eqref{eq:bu}. Hence $T^* = +\infty$. A similar argument establishes well-posedness backwards in infinite time. 
Thus~\eqref{eq:core-bootstrap-imp} holds with $I =\R$ which establishes~\eqref{eq:global-small} and~\eqref{eq:S-norm-eps}. Proposition~\ref{prop:prop-reg} then yields~\eqref{eq:HS-norm}. The scattering statement~\eqref{eq:scattering} is a standard consequence of \eqref{eq:core-bootstrap-imp-N} in Proposition~\ref{prop:core-bootstrap} with $I = \bbR$.

Finally, the proof of the pointwise estimate~\eqref{eq:pointwise-c} begins with the observation that 
\EQ{
\abs{u(t, x) - Q(x)} \le  \int_0^\infty \abs{\p_s ( u(t, x, s) - Q(x)) }\, \ud s  \le  \int_0^\infty \abs{\psi_s(t, x, s) }\, \ud s
}
and thus~\eqref{eq:pointwise-c} follows from the claim that 
\EQ{
\lim_{t \to \infty} \| u(t) - Q\|_{L^\infty_x} \le \lim_{t \to \infty} \int_0^\infty \| \psi_s(t, s) \|_{L^{\infty}_{ x}} \, \ud s  = 0.
}
The proof of the claimed limit requires two observations that we briefly sketch here, while referring the reader to the detailed identical argument given in~\cite[Proof of (4.8)]{LOS5}. First, we claim that the limit in $t$ can be passed through the $s$-integral. This follows from the dominated convergence theorem after showing that $\| \psi_s(s) \|_{L^\infty_{t, x}}$ is an integrable function. This latter claim can be deduced using the a priori estimates~\eqref{eq:HS-norm} (which break the scaling in $s$) to handle the small-$s$ regime, along with the decay estimates~\eqref{eq:forward-caloric-psi-s} to handle the large-$s$ regime. Second, we claim that for each fixed $s>0$, 
\EQ{
\| \psi_s(t, s) \|_{L^{\infty}_{ x}} \to 0 \mas t \to \infty. 
}
This can be proved using the global dispersive estimate~\eqref{eq:S-norm-eps} and in particular the scattering statement~\eqref{eq:scattering}.  Again, we refer the reader to~\cite[Proof of (4.8)]{LOS5} for the precise details.
\end{proof}

\subsection{Structure of the remainder of the paper and some conventions}
\label{subsec:part2-outline}
From now on, the goal is to establish Propositions~\ref{prop:core-bootstrap} and \ref{prop:prop-reg}. The remainder of the paper is organized as follows: 
\begin{itemize} 
\item In Section~\ref{s:heat-psi_s} we study the parabolic equation~\eqref{eq:heatH-psi_s} under the bootstrap assumptions~\eqref{eq:overall-bootstrap} and~\eqref{eq:core-bootstrap}. We show that $\psi_s$  satisfies several parabolic regularity type estimates that will be used in our analysis of the nonlinear Schr\"odinger equation~\eqref{eq:SH-psi_s} in Section~\ref{s:schrod}. 
\item In Section~\ref{s:w} we continue the preliminary analysis of the terms appearing on the right-hand side of~\eqref{eq:SH-psi_s}, proving several preliminary estimates on $\ringPsi, \ringA$. We then use the parabolic equation~\eqref{eq:heatH-w} for $w$ to establish estimates for $w, \p_s w$ and use these to prove auxiliary bounds on $\psi_t, A_t$. 
\item Finally, in Section~\ref{s:schrod} we complete the proofs of Propositions~\ref{prop:core-bootstrap} and \ref{prop:prop-reg} using the main linear estimate in Lemma~\ref{lem:main_linear_estimate} to study the Schr\"odinger equation~\eqref{eq:SH-psi_s}. 

\end{itemize} 

In the remainder of the paper, we work under the following conventions to ease the notation:
\begin{itemize}
\item The Gauss curvature $\tcv = \pm 1$ of the target manifold plays no role in the remainder of the paper. Hence, {\bf we fix the target $\NN = \Hp^2$ and $\tcv = -1$ in equations~\eqref{eq:heatH-psi_s},~\eqref{eq:heatH-w}, and~\eqref{eq:SH-psi_s}.} 
\item We always work on the time interval $I$. Hence, {\bf we omit $(I)$ in the notation for space-time norms,} e.g., $L^{4}_{t} L^{4}_{x} = L^{4}_{t} L^{4}_{x}(I)$.
\end{itemize}

\section{Parabolic estimates for $\psi_{s}$} \label{s:heat-psi_s} 

In this section we establish $L^p$ parabolic regularity-type estimates for $\psi_s$ under the  core bootstrap assumptions~\eqref{eq:core-bootstrap} by directly invoking properties of the heat flow $e^{ - s H}$ from Corollary~\ref{c:lin-para-Lp}. 

Recall that $\psi_s$ satisfies the nonlinear heat equation~\eqref{eq:heatH-psi_s}, which can be written in the form, 
\EQ{ \label{eq:psi_s-heat} 
\p_s \psi_s = -H \psi_s + \calN( \psi_s,  \Psi, A), 
}
where 
\EQ{ \label{eq:Hdef1} 
H \psi_s &= - ( \na^k + i A^{\infty, k})( \na_k + i A^\infty_k) \psi_s + \abs{ \psi_2^\infty}^2 \psi_s  \\
& =  - \De \psi_s  - 2 i A^{\infty, k} \na_k \psi_s - i ( \na^k A_k^{\infty}) \psi_s + ( \abs{A^\infty}^2 + \abs{ \psi_2^\infty}^2) \psi_s, 
}
and 
\EQ{ \label{eq:nonlin1} 
\NN(\psi_s, \Psi, A) %&=% \bfR^0( \psi_s,  \ringpsi^k) \ringpsi_k +  \bfR^0( \psi_s, \ringpsi_k)\psi^{\infty, k} + \bfR^0( \psi_s,  \psi^{\infty, k}) \ringpsi_k \\
%& \quad  + 2i \ringA^k \na_k \psi_s  + i (\na^k \ringA_k )\psi_s  - \ringA^k \ringA_k \psi_s  - 2 A^\infty_k \ringA^k \psi_s \\
& = -i\Im( \psi_s \ba{ \ringpsi^k}) \ringpsi_k - i \Im( \psi_s \ba{\ringpsi_k})\psi^{\infty, k} -i\Im( \psi_s \ba{  \psi^{\infty, k}}) \ringpsi_k \\
& \quad   + 2i \ringA^k \na_k \psi_s  + i (\na^k \ringA_k )\psi_s  - \ringA^k \ringA_k \psi_s  - 2 A^\infty_k \ringA^k \psi_s .
}
We also will require the heat equation satisfied by $\Om \psi_s$. Using Lemma~\ref{l:commLOm},~\eqref{eq:LOmA}, and~\eqref{eq:LOmpsi}  we have 
\EQ{\label{eq:Ompsi_s-heat} 
(\p_s + H) \Om \psi_s =  \Om \NN(\psi_s, \Psi, A) ,
}
where 
\EQ{ \label{eq:Omnonlin1} 
\Om \NN(\psi_s, \Psi, A)  &=  + i\Im (\calL_\Omega\ringpsi^k \overline{\psi_s}) \ringpsi_k+ i\Im (\ringpsi^k \overline{\Omega\psi_s}) \ringpsi_k+ i\Im (\ringpsi^k \overline{\psi_s}) \calL_\Omega\ringpsi_k \\
  &\quad  + i\Im (\calL_\Omega\ringpsi^k \overline{\psi_s}) \psi^\infty_k +i \Im (\ringpsi^k \overline{\Omega\psi_s}) \psi^\infty_k \\
  &\quad + i\Im (\psi^{\infty, k} \overline{\Omega\psi_s}) \ringpsi_k+ i\Im (\psi^{\infty, k} \overline{\psi_s}) \calL_\Omega\ringpsi_k\\
  &\quad + i\Im (i\psi^{\infty, k} \overline{\psi_s}) \ringpsi_k+ i\Im (\ringpsi^k \overline{\psi_s}) \psi^\infty_k \\
  &\quad + 2 i (\calL_\Omega\ringA^k) \nabla_k \psi_s+2 i \ringA^k \nabla_k \Omega \psi_s - 2 A^{\infty, k} (\calL_\Omega\ringA_k) \psi_s- 2 A^{\infty, k} \ringA_k \Omega \psi_s\\
  &\quad + i (\nabla_k \calL_\Omega\ringA^k) \psi_s + i (\nabla_k \ringA^k) \Omega \psi_s - 2(\calL_\Omega\ringA^k) \ringA_k \psi_s - \ringA^k \ringA_k \Omega \psi_s .
}

\subsection{Parabolic regularity theory for $\psi_s$}

The goal of this section is to prove the following parabolic regularity estimate for $\psi_s$. 
\begin{proposition}\label{p:par-reg-psi_s}  
Under the bootstrap hypothesis~\eqref{eq:overall-bootstrap} and~\eqref{eq:core-bootstrap} there holds
\begin{align} 
   \| \langle s \Delta \rangle^{\frac{5}{2}} \langle \Omega \rangle m(s) s^{\frac{1}{2}} \psi_s \|_{L^\infty_{\ds} (L^\infty_t L^2_x \cap L^4_t L^4_x \cap L^{\frac{8}{3}}_t L^{\frac{8}{3}}_x) } &\lesssim \|\psi_s\|_{\calS} , \label{equ:bound_psis_Linftys_Str} \\
    \| \langle s \Delta \rangle^{\frac{3}{2}} \langle \Omega \rangle  ( m(s) s^{\frac{1}{2}} \psi_s ) \|_{L^2_{\ds} L^\infty_t L^2_x} &\lesssim \|\psi_s\|_{\calS} , \label{equ:bound_psis_L2_Str} %\quad \Orange{\text{[for magnetic interaction term part]}}
   \end{align}
where the implicit constants above are independent of the constant $C_1$ from Proposition~\ref{prop:core-bootstrap}. 
%\EQ{
%\| s^{\frac{1}{2}} \psi_s \|_{L^\infty_{\ds} \cap L^2_{\ds} L^\infty_t L^2_x}  + \| s^{\frac{1}{2}} \na s^{\frac{1}{2}} \psi_s \|_{L^\infty_{\ds} \cap L^2_{\ds} L^\infty_t L^2_x}  + \| s \De s^{\frac{1}{2}} \psi_s \|_{L^\infty_{\ds} \cap L^2_{\ds} L^\infty_t L^2_x} \lesssim    \| \psi_s \|_{\calS}
%}
\end{proposition}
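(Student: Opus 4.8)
The plan is to derive the parabolic regularity estimates for $\psi_s$ directly from the nonlinear heat equation \eqref{eq:psi_s-heat} (equivalently \eqref{eq:heatH-psi_s}) and its $\Omega$-differentiated analogue \eqref{eq:Ompsi_s-heat}, using the Duhamel formula for the semigroup $e^{-sH}$ together with the $L^p$ smoothing bounds from Corollary~\ref{c:lin-para-Lp} and the off-diagonal decay of $P_\sigma \psi_s$. The starting point is the representation
\begin{equation*}
 m(s) s^{\frac{1}{2}} \psi_s(s) = m(s) s^{\frac{1}{2}} e^{-\frac{s}{2} H} \psi_s(s/2) + m(s) s^{\frac{1}{2}} \int_{s/2}^{s} e^{-(s-s')H} \calN(\psi_s, \Psi, A)(s') \, \ud s',
\end{equation*}
and similarly for $\Omega \psi_s$ with nonlinearity $\Omega \calN$ as in \eqref{eq:Omnonlin1}. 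The operators $\langle s\Delta\rangle^{k/2}$ are handled by writing $(s^{1/2}\nabla)^{(k')} e^{-(s-s')H} = (s^{1/2}\nabla)^{(k')} e^{-\frac{1}{2}(s-s')H} \cdot e^{-\frac{1}{2}(s-s')H}$ and using the bound $\|s^{1/2}\nabla e^{-sH}f\|_{L^p} \lesssim \|f\|_{L^p}$ from Corollary~\ref{c:lin-para-Lp} iteratively, together with $\|\Delta e^{-sH}\| \lesssim \|He^{-sH}\| + (\text{l.o.t.})$ to convert derivatives; since the elliptic coefficients $A^\infty, V$ decay exponentially this is harmless. The key point for moving from $L^\infty_{\ds}L^2_x$ to the Strichartz and local smoothing pieces of $\calS_s$ is that the bootstrap norm $\|\psi_s\|_{\calS}$ already controls $m(s)s^{1/2}\langle\Omega\rangle\psi_s$ in $LE(I)\cap\Str(I)$, so the linear term $e^{-\frac{s}{2}H}\psi_s(s/2)$ is controlled by applying the $L^p$-boundedness of $e^{-sH}$ (which commutes with the spatial structure well enough, using Corollary~\ref{c:lin-para-Lp} fiberwise in $t$) to the already-bounded profile at heat-time $s/2$, with the extra $\langle s\Delta\rangle^{5/2}$ and $\langle s\Delta\rangle^{3/2}$ smoothing supplied by the semigroup acting over the interval $[s/2,s]$.

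The main work is estimating the Duhamel term involving $\calN$ and $\Omega\calN$. I would split $\calN$ into the "paradifferential" magnetic pieces $\ringA^k\nabla_k\psi_s$ and the algebraic pieces (products of $\ringpsi,\psi^\infty,\ringA$ with $\psi_s$, and $\nabla\ringA\cdot\psi_s$, $\ringA\ringA\psi_s$, $A^\infty\ringA\psi_s$). For the algebraic terms, one uses the forward-in-$s$ bounds of Propositions~\ref{prop:forward-caloric-psi} and \ref{prop:forward-caloric-A} — which give $L^\infty_{\ds}L^\infty_x$ control (via Gagliardo–Nirenberg, Lemma~\ref{lem:gagliardo_nirenberg}, and the radial Sobolev estimate Lemma~\ref{lem:radial_sobolev} for the extra angular regularity) on $\ringA$, $\nabla\ringA$, $\ringpsi$, together with the exponential decay of $\psi^\infty, A^\infty$ — to reduce to an $L^\infty_{\ds}L^2_x$-type bound on $\psi_s$ itself, which is $\lesssim \|\psi_s\|_{\calS}$; the Schur-test bookkeeping in the heat-time variable (as in the proof of Corollary~\ref{c:spec-gap-Lp}, using the exponential decay $e^{-\rho_0^2(s-s')}$ from Corollary~\ref{c:lin-para-Lp}) then closes the $L^2_{\ds}$ and $L^\infty_{\ds}$ integrations, with the $m(s)$ weight absorbed since $m(s')/m(s)\lesssim (s'/s)^{-\delta}$ for $s'<s$. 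For the term $\ringA^k\nabla_k\psi_s$, one distributes one extra $\nabla$ off of $e^{-(s-s')H}$ onto $\psi_s$ via the smoothing $\|s^{1/2}\nabla e^{-sH}\|$, or keeps it on $\ringA$, so that no net loss of derivative occurs; here the $\langle s\Delta\rangle^{5/2}$ prefactor is important because the nonlinearity contains one derivative and we need to propagate five derivatives.

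For the $\Omega$-differentiated equation, the new terms in $\Omega\calN$ involve $\calL_\Omega\ringA$, $\calL_\Omega\ringpsi$, $\nabla\calL_\Omega\ringA$ and $\Omega\psi_s$. Here I would use the symmetry identities \eqref{eq:LOmA}, \eqref{eq:LOmpsi} (so $\calL_\Omega\psi^\infty = i\psi^\infty$, $\calL_\Omega A^\infty = 0$, meaning the "infinity" parts behave cleanly under $\Omega$) together with the $\Omega$-versions of the forward bounds, namely \eqref{eq:forward-caloric-Psi-Omg}, \eqref{eq:forward-caloric-Omg-Psi}, \eqref{eq:forward-caloric-A-Omg}, \eqref{eq:forward-caloric-Omg-A}, which control $\calL_\Omega\ringA$, $\nabla\calL_\Omega\ringA$, $\calL_\Omega\ringpsi$, $\nabla\ringpsi_\Omega$ in the appropriate mixed norms — crucially these are still bounded by $\|u_0-Q\|_{H^{1+2\delta}}+\|\calD_\Omega u_0\|_{H^{1+2\delta}}$, hence by $\|\psi_s\|_{\calS}$ up to the bootstrap. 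Terms with $\Omega\psi_s$ undifferentiated in $\ringA$ are handled exactly as the non-$\Omega$ algebraic/magnetic terms but with $\psi_s$ replaced by $\Omega\psi_s$. The main obstacle — and the place requiring care — is ensuring that the combination of (i) the $m(s)$ weight near $s=0$ (which breaks scaling), (ii) the need to propagate up to five derivatives via $\langle s\Delta\rangle^{5/2}$ while the nonlinearity carries a derivative, and (iii) the requirement that the final bound be in $L^\infty_{\ds}L^4_tL^4_x \cap L^{8/3}_tL^{8/3}_x$ and $L^2_{\ds}L^\infty_tL^2_x$ rather than just $L^2_x$, all fit together; this is resolved by pairing the semigroup smoothing over the short interval $[s/2,s]$ (which gains exactly the derivatives and the heat-time integrability needed) with the already-established $\calS$-norm control of fewer derivatives of $\psi_s$ and $\Omega\psi_s$ at heat-time $s/2$, so that everything is ultimately bounded by $\|\psi_s\|_{\calS}$ with constants independent of $C_1$.
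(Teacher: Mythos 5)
Your proposal follows the same route as the paper: Duhamel from heat-time $s/2$, the $L^p$ semigroup smoothing of Corollary~\ref{c:lin-para-Lp} to generate the $\langle s\Delta\rangle$ factors and a Schur kernel in $(s,s')$ on $[s/2,s]$, and the coefficient bounds on $\ringA$, $\nabla\ringA$, $\ringPsi$, $\calL_\Omega\ringA$, $\calL_\Omega\ringPsi$ coming from the forward-in-$s$ caloric estimates plus Gagliardo--Nirenberg (the paper packages these as Lemma~\ref{l:APsi-infty}, with the crucial feature that they carry a factor $\sqrt{\eps_0}$). The off-diagonal decay of $P_\sigma\psi_s$ that you invoke at the outset plays no role here; it is a separate result of the same section.

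There is one genuine gap: you do not explain how the top-order magnetic contribution is closed. After passing derivatives onto the nonlinearity, the term $\ringA^k\nabla_k\psi_s$ produces a contribution with exactly as many derivatives on $\psi_s$ as the quantity being estimated; already at one derivative one gets, as in \eqref{eq:s32N4}, a term of size $\sqrt{\eps_0}\,\| s^{1/2}\nabla\langle\Omega\rangle m(s)s^{1/2}\psi_s\|_{L^\infty_{\ds}L^4_{t,x}}$. This is \emph{not} controlled by $\|\psi_s\|_{\calS}$ --- the $\calS$ norm carries no extra spatial derivatives --- so your concluding claim that ``everything is ultimately bounded by $\|\psi_s\|_{\calS}$'' fails for this term. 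The paper closes it by an absorption argument: the offending term equals $\sqrt{\eps_0}$ times the left-hand side of the estimate being proved, and is moved to the left for $\eps_0$ small. Your phrase ``no net loss of derivative occurs'' records the correct derivative count but does not supply this closing mechanism; without it (or an equivalent continuity/fixed-point argument in the number of derivatives) the estimate does not follow. The rest of your bookkeeping --- the $m(s)$ weight via $m(s')/m(s)$ Schur kernels, the $\Omega$-commutation identities \eqref{eq:LOmA}--\eqref{eq:LOmpsi}, and the reduction of the algebraic terms to $L^\infty$ coefficient bounds times $\calS$-controlled quantities --- is consistent with the paper's argument.
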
 

The proof requires the following lemmas. The first is a direct consequence of Proposition~\ref{prop:forward-caloric-psi} and Proposition~\ref{prop:forward-caloric-A} from Section~\ref{sec:hmhf} and the second follows from the first and the Gagliardo-Nirenberg inequality. 

\begin{lemma} \label{l:ringPsi-L2}  Under the bootstrap assumptions~\eqref{eq:overall-bootstrap} we have 
\EQ{
 \label{equ:bound_ringPsi_Linftys_Str}
    \| \langle s \Delta \rangle^2 \langle \calL_\Omega \rangle \ringPsi \|_{L^\infty_{\ds} L^\infty_t L^2_x  } \lesssim \sqrt{\epsilon_0}.
}
\end{lemma}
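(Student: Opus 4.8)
\textbf{Proof plan for Lemma~\ref{l:ringPsi-L2}.} The claim is that under the bootstrap assumption \eqref{eq:overall-bootstrap} (with the implicit map $u(t)$ being the harmonic map heat flow resolution of the Schr\"odinger map), the quantity $\langle s\Delta\rangle^2 \langle \calL_\Omega\rangle \ringPsi$ is bounded in $L^\infty_{\ds}L^\infty_t L^2_x$ by $\sqrt{\eps_0}$. The plan is to deduce this directly from the forward-in-$s$ bounds in Section~\ref{sec:hmhf}, specifically Proposition~\ref{prop:forward-caloric-psi} (for the $\ringPsi$ and $\calL_\Omega\ringPsi$ pieces) and Proposition~\ref{prop:forward-caloric-A} (needed implicitly to control connection coefficients when unwinding covariant derivatives), applied at each fixed time $t \in I$. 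The first step is to observe that $\langle s\Delta\rangle^2 \ringPsi$ unpacks, by the definition $\|\langle s\Delta\rangle^{k/2} f\|_X = \sum_{k'=0}^k \|(s^{1/2}\nabla)^{(k')}f\|_X$, into the terms $\nabla^{(k)}\ringPsi$ with the weight $s^{k/2}$ for $k = 0,1,2,3,4$; similarly for the $\calL_\Omega$ versions. These are precisely the combinations estimated in \eqref{eq:forward-caloric-Psi} (taking $k = 0,1,2,3$ there, noting the extra derivative and the $m(s)s^{(k+1)/2}$ weight) and \eqref{eq:forward-caloric-Omg-Psi}.

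The second step is the bookkeeping of weights. Proposition~\ref{prop:forward-caloric-psi} provides bounds with the weight $m(s)s^{(k+1)/2}$ for $\nabla^{(k+1)}\ringPsi$, whereas here we want the weight $s^{k/2}$ (without $m(s)$) for $\nabla^{(k)}\ringPsi$; note $m(s) = \max\{s^{-\delta},1\} \ge 1$ so dropping it only strengthens the bound for $s \le 1$, and for $s \ge 1$ one has $m(s) = 1$. Since \eqref{eq:forward-caloric-Psi} also contains the unweighted term $\|\ringPsi\|_{L^\infty_{\ds}L^2}$ and the weighted terms $\|m(s)s^{(k+1)/2}\nabla^{(k+1)}\ringPsi\|$, these together control every $s^{k/2}\nabla^{(k)}\ringPsi$ for $k=0,\dots,4$ in $L^\infty_{\ds}L^2_x$: indeed for $k\ge 1$, writing $s^{k/2}\nabla^{(k)} = s^{-1/2}\cdot s^{(k+1)/2}\nabla\cdot\nabla^{(k-1)}$ does not quite work because of the $s^{-1/2}$ for small $s$, so instead one should directly invoke \eqref{eq:forward-caloric-Psi} with the index shifted: the term $\|m(s)s^{(k+1)/2}\nabla^{(k+1)}\ringPsi\|_{L^\infty_{\ds}L^2}$ for $k-1$ in place of $k$ gives exactly $\|m(s)s^{k/2}\nabla^{(k)}\ringPsi\|_{L^\infty_{\ds}L^2} \lesssim \|u_0-Q\|_{H^{1+2\delta}} \lesssim \sqrt{\eps_0}$ by \eqref{eq:overall-bootstrap}. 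Dropping $m(s) \ge 1$ is illegitimate, so one in fact keeps $m(s)$; but since the target norm does not have $m(s)$ and $m(s)\ge 1$, we actually need $\|s^{k/2}\nabla^{(k)}\ringPsi\| \le \|m(s)s^{k/2}\nabla^{(k)}\ringPsi\|$, which holds. The time-uniformity $\sup_{t\in I}$ is built into the fact that Proposition~\ref{prop:forward-caloric-psi} is applied for the heat flow resolution at each fixed $t$ and the right-hand sides are controlled by $\|u(t)-Q\|_{H^{1+2\delta}} + \|\calD_\Omega u(t)\|_{H^{1+2\delta}} \le \eps_0^{1/2}$ uniformly in $t$ by \eqref{eq:overall-bootstrap} (cf.\ the analogous passage in Proposition~\ref{prop:backward-caloric-t}).

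The third step is to handle the $\calL_\Omega$ piece: by \eqref{eq:forward-caloric-Omg-Psi}, the combinations $\|\calL_\Omega\ringPsi\|_{L^\infty_{\ds}L^2}$ and $\|m(s)s^{(k+1)/2}\nabla^{(k+1)}\calL_\Omega\ringPsi\|_{L^\infty_{\ds}L^2}$ for $k=0,\dots,3$ are bounded by $\|u_0-Q\|_{H^{1+2\delta}} + \|\calD_\Omega u_0\|_{H^{1+2\delta}} \lesssim \sqrt{\eps_0}$. Summing the contributions from $\nabla^{(k)}\ringPsi$ and $\nabla^{(k)}\calL_\Omega\ringPsi$ for $k = 0,\dots,4$ gives the full $\langle s\Delta\rangle^2\langle\calL_\Omega\rangle$ norm. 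I expect the only mild subtlety — and the "main obstacle," though it is a routine one — to be the careful matching of the weight conventions ($m(s)s^{(k+1)/2}\nabla^{(k+1)}$ in the cited propositions versus $s^{k/2}\nabla^{(k)}$ in $\langle s\Delta\rangle^{k/2}$) and confirming that the $L^\infty_{\ds}$ (as opposed to $L^2_{\ds}$) component suffices, which it does since all cited bounds are stated in $L^2_{\ds}\cap L^\infty_{\ds}$. No new analysis is needed beyond Section~\ref{sec:hmhf}; the proof is a direct citation with bookkeeping, so I would write it as: "This is an immediate consequence of \eqref{eq:forward-caloric-Psi}, \eqref{eq:forward-caloric-Omg-Psi} in Proposition~\ref{prop:forward-caloric-psi}, the bootstrap assumption \eqref{eq:overall-bootstrap}, and the inequality $m(s) \ge 1$, after expanding $\langle s\Delta\rangle^2\langle\calL_\Omega\rangle$ into its constituent weighted derivatives and applying these bounds at each fixed $t \in I$."
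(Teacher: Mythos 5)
Your proposal is correct and matches the paper's proof, which likewise reduces the lemma to the bounds \eqref{eq:forward-caloric-Psi} and \eqref{eq:forward-caloric-Omg-Psi} applied to the one-parameter (in $t$) family of heat flows, with the right-hand sides controlled uniformly in $t$ by the bootstrap assumption \eqref{eq:overall-bootstrap}. The weight bookkeeping you describe (shifting the index $k$ and using $m(s)\ge 1$) is exactly the routine matching needed, and no further argument is required.
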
 

\begin{proof} This follows from an identical proof as the proof of Proposition~\ref{prop:forward-caloric-psi} where now we consider the one parameter (in $t\in I$) family of heat flows $u(t, x, s)$ and we use the bootstrap assumption~\eqref{eq:overall-bootstrap} to bound the right-hand sides of~\eqref{eq:forward-caloric-Psi} and~\eqref{eq:forward-caloric-Omg-Psi}. 
\end{proof} 

\begin{lemma} \label{l:APsi-infty} Under the bootstrap assumptions~\eqref{eq:overall-bootstrap}  we have 
\EQ{
  \| \langle s \Delta \rangle^{\frac{5}{2}} \langle \calL_\Omega \rangle s^{\frac{1}{2}} \ringA \|_{  L^\infty_{\ds} L^\infty_t L^\infty_x} \lesssim \sqrt{\eps_0}
  }
as well as, 
\EQ{
  \| \ang{s \De}^{2}\langle \calL_\Omega \rangle s^{\frac{1}{2}} \ringPsi \|_{ L^\infty_{\ds} L^\infty_t L^\infty_x} \lesssim \sqrt{\eps_0} %\\
 % \| \langle \calL_\Omega \rangle s \ringPsi \|_{L^\infty_{\ds} L^\infty_t L^\infty_x} \lesssim \eps_0
}
\end{lemma}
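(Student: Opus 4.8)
\textbf{Proof strategy for Lemma~\ref{l:APsi-infty}.} The plan is to upgrade the $L^\infty_t L^2_x$ bounds of Lemma~\ref{l:ringPsi-L2} (and the analogous bounds for $\ringA$) to $L^\infty_t L^\infty_x$ bounds, trading a small amount of derivative regularity for the gain in spatial integrability. The natural tool is the Gagliardo--Nirenberg inequality \eqref{eq:GN-Linfty}, which gives $\|f\|_{L^\infty_x} \lesssim s^{-\frac{1}{2}} \|\jap{s\Delta} f\|_{L^2_x}$; since we carry two extra powers of $\jap{s\Delta}$ compared to what \eqref{eq:GN-Linfty} consumes, the embedding is lossless at the level of heat-time regularity. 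Concretely, for the $\ringPsi$ estimate I would write, for fixed $t$ and $s$,
\begin{align*}
\|s^{\frac12}\ringPsi(t,s)\|_{L^\infty_x} &\lesssim s^{\frac12} s^{-\frac12} \|\jap{s\Delta} \ringPsi(t,s)\|_{L^2_x} \lesssim \|\jap{s\Delta}^2 \ringPsi(t,s)\|_{L^2_x},
\end{align*}
and similarly with $\calL_\Omega$ applied (using that $\jap{s\Delta}$ commutes with $\calL_\Omega$, cf.~Lemma~\ref{l:commLOm}). Taking the supremum over $t\in I$ and over $s\in\R^+$ (with respect to $\ds$) and invoking \eqref{equ:bound_ringPsi_Linftys_Str} from Lemma~\ref{l:ringPsi-L2} then yields $\|\ang{s\De}^2 \langle \calL_\Omega\rangle s^{\frac12}\ringPsi\|_{L^\infty_{\ds}L^\infty_tL^\infty_x}\lesssim\sqrt{\eps_0}$, which is the second claimed bound.

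For the first bound, on $\ringA$, the same Gagliardo--Nirenberg reduction applies: $\|s^{\frac12}\ringA(t,s)\|_{L^\infty_x} \lesssim \|\jap{s\Delta}^2 \ringA(t,s)\|_{L^2_x}$ and likewise with $\calL_\Omega$ inserted, so it suffices to control $\|\ang{s\De}^{2}\langle \calL_\Omega\rangle s^{\frac{1}{2}}\ringA\|_{L^\infty_{\ds}L^\infty_tL^2_x}$ by $\sqrt{\eps_0}$. This $L^\infty_tL^2_x$ bound is exactly the $t$-dependent version of estimates \eqref{eq:forward-caloric-A} and \eqref{eq:forward-caloric-Omg-A} from Proposition~\ref{prop:forward-caloric-A}: repeating the proof of that proposition for the one-parameter family $u(t,x,s)$ of heat flows and using the bootstrap hypothesis \eqref{eq:overall-bootstrap} (which guarantees $\sup_{t\in I}\nrm{u(t)-Q}_{H^{1+2\dlt}}+\nrm{\calD_\Omg u(t)}_{H^{1+2\dlt}}\lesssim\sqrt{\eps_0}$) to bound the right-hand sides gives precisely this. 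The extra half-derivative margin in the statement ($\ang{s\De}^{5/2}$ versus $\ang{s\De}^2$) is harmless and simply reflects that we could afford to carry it; one can either prove the sharper version or note that $\ang{s\De}^2 \lesssim \ang{s\De}^{5/2}$ pointwise in the frequency calculus is the wrong direction, so I would instead run the Gagliardo--Nirenberg argument starting from the $\ang{s\De}^{5/2}$ weight directly — i.e. apply \eqref{eq:GN-Linfty} inside $\|\ang{s\De}^{5/2}\langle\calL_\Omega\rangle s^{1/2}\ringA\|$ reducing it to $\|\ang{s\De}^3 \langle\calL_\Omega\rangle \ringA\|_{L^\infty_{\ds}L^\infty_tL^2_x}$, which is still controlled by $\sqrt{\eps_0}$ by taking one more derivative in Proposition~\ref{prop:forward-caloric-A} (Remark~\ref{rem:forward-high-reg} allows an arbitrary number of such derivatives).

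The main obstacle, such as it is, is purely bookkeeping: one must check that the proof of Proposition~\ref{prop:forward-caloric-A} and of Lemma~\ref{l:ringPsi-L2} genuinely goes through uniformly in $t$ when applied to the family $u(t,\cdot,\cdot)$, which is the content of Remark~\ref{rem:forward-high-reg} together with the smoothness of the caloric frame in $(t,x,s)$ established in Proposition~\ref{prop:caloric-gauge}(2); since the bootstrap assumption \eqref{eq:overall-bootstrap} provides a $t$-uniform smallness bound on $u(t)-Q$ and $\calD_\Omg u(t)$, the implicit constants are $t$-independent. No new analytic input beyond Gagliardo--Nirenberg and the already-established forward-in-$s$ bounds is required, so I would present this lemma briefly, citing Lemma~\ref{l:ringPsi-L2}, Proposition~\ref{prop:forward-caloric-A}, Remark~\ref{rem:forward-high-reg}, and \eqref{eq:GN-Linfty}.
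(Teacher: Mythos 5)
Your proposal is correct and follows essentially the same route as the paper: the paper's proof also consists of applying the Gagliardo--Nirenberg inequality \eqref{eq:GN-Linfty} to upgrade the $L^\infty_t L^2_x$ bounds of Lemma~\ref{l:ringPsi-L2} and the ($t$-uniform version of) Proposition~\ref{prop:forward-caloric-A} to $L^\infty_x$, citing \cite[Proof of Lemma 5.2]{LOS5} for the details. (A minor bookkeeping slip: applying \eqref{eq:GN-Linfty} to the $\ang{s\De}^{5/2}$-weighted quantity lands you at $\ang{s\De}^{7/2}$ in $L^2_x$, not $\ang{s\De}^{3}$, but since Proposition~\ref{prop:forward-caloric-A} holds for every $k$ this is immaterial.)
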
 
\begin{proof} 
The proof is an immediate consequence of the Proposition~\ref{prop:forward-caloric-A}, Lemma~\ref{l:ringPsi-L2} and the Gagliardo-Nirenberg inequality, i.e., equation~\eqref{eq:GN-Linfty}. The argument is identical to the one presented in~\cite[Proof of Lemma 5.2]{LOS5}, and we refer the reader to~\cite{LOS5} for details. 
\end{proof}

\begin{proof}[Proof of Proposition~\ref{p:par-reg-psi_s}] 
The proof follows a nearly identical outline as~\cite[Proof of Proposition 5.1]{LOS5} using now Corollary~\ref{c:lin-para-Lp} as the main technical ingredient. As the arguments for each of the estimates in Proposition~\ref{p:par-reg-psi_s} are essentially the same, we just give the proof of just one of the estimates, namely, 
\EQ{ \label{eq:p-reg1d} 
\| s^{\frac{1}{2}}  (-\De)^{\frac{1}{2}} \ang{ \Om} m(s) s^{\frac{1}{2}} \psi_s \|_{L^\infty_{\ds} L^4_{t, x}} \lesssim \| \psi_s \|_{\calS} .
}
By the Duhamel formula and the nonlinear heat equation~\eqref{eq:psi_s-heat} we write 
\EQ{
\psi_s(s) = e^{-\frac{s}{2} H} \psi_s( s/2) + \int_{\frac{s}{2}}^s e^{-(s-s') H} \NN(\psi_s, \Psi, A)(s') \, \ud s' 
}
for each $s >0$. It follows that
\EQ{
s^{\frac{1}{2}}  (-\De)^{\frac{1}{2}}\ang{ \Om} m(s) s^{\frac{1}{2}} \psi_s(s)   &= s^{\frac{1}{2}}  (-\De)^{\frac{1}{2}}  e^{-\frac{s}{2} H} \ang{ \Om} m(s)s^{\frac{1}{2}} \psi_s( s/2) \\
&\quad + s m(s)\int_{\frac{s}{2}}^s (-\De)^{\frac{1}{2}}  e^{-(s-s') H}\ang{ \Om}  \NN(\psi_s, \Psi, A)(s') \, \ud s' .
}
Using the heat semigroup bound from Corollary~\ref{c:lin-para-Lp} with $ p =4$ we deduce that 
\EQ{
\| s^{\frac{1}{2}}  (-\De)^{\frac{1}{2}} \ang{ \Om} m(s)s^{\frac{1}{2}} \psi_s \|_{ L^4_{t, x}} &\lesssim \| (s/2)^{\frac{1}{2}} \ang{ \Om} m(s) \psi_s(s/2) \|_{ L^4_{t, x}} \\
&\quad + \int_{\frac{s}{2}}^s\frac{sm(s)}{(s-s')^{\frac{1}{2}}(s')^{\frac{1}{2}}m(s')} \| (s')^{\frac{3}{2}}m(s')\ang{ \Om}  \NN(\psi_s, \Psi, A)(s')  \|_{L^{4}_{t, x}} \, \frac{\ud s' }{s'} .
}
Via Schur's test we then arrive at 
\EQ{
\| s^{\frac{1}{2}}  (-\De)^{\frac{1}{2}}\ang{ \Om}  m(s)s^{\frac{1}{2}} \psi_s \|_{ L^\infty_{\ds} L^4_{t, x}} \lesssim \|  \psi_s \|_{ \calS }  +  \| m(s) s^{\frac{3}{2}} \ang{ \Om} \NN(\psi_s, \Psi, A)  \|_{L^{\infty}_{\ds} L^{4}_{t, x}} .
}
To prove~\eqref{eq:p-reg1d} it then suffices to show that 
\EQ{\label{eq:s32N4} 
\|m(s)  s^{\frac{3}{2}} \ang{ \Om} \NN(\psi_s, \Psi, A)  \|_{L^{\infty}_{\ds} L^{4}_{t, x}}  \lesssim \sqrt{\eps_0}  \| s^{\frac{1}{2}} \na \ang{ \Om} m(s) s^{\frac{1}{2}} \psi_s \|_{L^{\infty}_{\ds} L^{4}_{t, x}}  + \|  \psi_s \|_{ \calS } ,
}
since the first term on the right above can be absorbed into the left-hand side of \eqref{eq:p-reg1d} as long as $\eps_0$ is taken small enough. 

To prove~\eqref{eq:s32N4} we estimate each of the terms on the right-hand sides of~\eqref{eq:nonlin1} and~\eqref{eq:Omnonlin1}. The main ingredients in these estimates are Lemma~\ref{l:APsi-infty} and the $L^p$-Poincar\'e inequality~\eqref{eq:Pip}. We argue as follows. Consider the first term on the right-hand side of~\eqref{eq:nonlin1} along with the first term on the right-hand side of~\eqref{eq:Omnonlin1}. We have 
\EQ{
\|m(s)  s^{\frac{3}{2}} \Im( \psi_s \ba{ \ringpsi^k}) \ringpsi_k \|_{L^{\infty}_{\ds} L^{4}_{t, x}} \lesssim  \| s^{\frac{1}{2} }\ringPsi \|_{L^\infty_{\ds} L^{\infty}_{t, x}}^2 \|m(s) s^{\frac{1}{2}} \psi_s \|_{L^{\infty}_{\ds} L^{4}_{t, x}}  \lesssim  \eps_0 \| \psi_s \|_{\calS}  , \\
\|m(s)  s^{\frac{3}{2}} \Im( \LL_\Om \ringpsi^k\ba{\psi_s}  ) \ringpsi_k \|_{L^{\infty}_{\ds} L^{4}_{t, x}} \lesssim  \| s^{\frac{1}{2} }\LL_\Om \ringPsi \|_{L^\infty_{\ds} L^{\infty}_{t, x}} \| s^{\frac{1}{2} }\ringPsi \|_{L^\infty_{\ds} L^{\infty}_{t, x}} \|m(s) s^{\frac{1}{2}} \psi_s \|_{L^{\infty}_{\ds} L^{4}_{t, x}}  \lesssim  \eps_0 \| \psi_s \|_{\calS} ,
}
where in the last inequalities above we used Lemma~\ref{l:APsi-infty}. Next consider the second term on the right-hand side of~\eqref{eq:nonlin1} and the fourth term on the right-hand side of~\eqref{eq:Omnonlin1}. We have 
\EQ{
 \|m(s)  s^{\frac{3}{2}} \Im( \psi_s \ba{ \ringpsi^k}) \ringpsi_k^\infty \|_{L^{\infty}_{\ds} L^{4}_{t, x}} & \lesssim \| s^{\frac{1}{2} }\ringPsi \|_{L^\infty_{\ds} L^{\infty}_{t, x}} \| \Psi^{\infty} \|_{L^\infty} \|m(s) s \psi_s \|_{L^{\infty}_{\ds} L^{4}_{t, x}}  \\
 & \lesssim \sqrt{\eps_0} \| s^{\frac{1}{2}} (-\De)^{\frac{1}{2}}m(s)  s^{\frac{1}{2}}  \psi_s \|_{L^\infty_{\ds} L^{4}_{t, x}} ,  \\
  \|m(s)  s^{\frac{3}{2}} \Im(  \LL_\Om \ringpsi^k \ba{\psi_s}) \ringpsi_k^\infty \|_{L^{\infty}_{\ds} L^{4}_{t, x}} & \lesssim \| s^{\frac{1}{2} } \LL_\Om \ringPsi \|_{L^\infty_{\ds} L^{\infty}_{t, x}} \| \Psi^{\infty} \|_{L^\infty} \|m(s) s \psi_s \|_{L^{\infty}_{\ds} L^{4}_{t, x}}  \\
 & \lesssim \sqrt{\eps_0}  \| s^{\frac{1}{2}} (-\De)^{\frac{1}{2}}m(s)  s^{\frac{1}{2}}  \psi_s \|_{L^\infty_{\ds} L^{4}_{t, x}}   ,
 }
where in the last lines we again used Lemma~\ref{l:APsi-infty} and the $L^p$-Poincar\'e inequality to absorb the extra factor of $s^{\frac{1}{2}}$ on the term involving $\psi_s$.   

Next consider the fourth term on the right-hand side of~\eqref{eq:nonlin1} and the tenth term on the right-hand side of~\eqref{eq:Omnonlin1}. We have 
\EQ{
\|m(s)  s^{\frac{3}{2}}\ringA^k \na_k \psi_s \|_{L^{\infty}_{\ds} L^{4}_{t, x}} &\lesssim  \| s^{\frac{1}{2} }A \|_{L^\infty_{\ds} L^{\infty}_{t, x}}^2 \|m(s) s \na \psi_s \|_{L^{\infty}_{\ds} L^{4}_{t, x}}  \lesssim \sqrt{\eps_0}  \| s^{\frac{1}{2}} (-\De)^{\frac{1}{2}}m(s)  s^{\frac{1}{2}}  \psi_s \|_{L^\infty_{\ds} L^{4}_{t, x}}  , \\
\|m(s)  s^{\frac{3}{2}}( \LL_\Om \ringA^k) \na_k \psi_s  \|_{L^{\infty}_{\ds} L^{4}_{t, x}} &\lesssim   \| s^{\frac{1}{2}}\LL_\Om  A \|_{L^\infty_{\ds} L^{\infty}_{t, x}}^2 \|m(s) s \na \psi_s \|_{L^{\infty}_{\ds} L^{4}_{t, x}}  \lesssim \sqrt{\eps_0} \| s^{\frac{1}{2}} (-\De)^{\frac{1}{2}}m(s)  s^{\frac{1}{2}}  \psi_s \|_{L^\infty_{\ds} L^{4}_{t, x}} ,
}
again using Lemma~\ref{l:APsi-infty}. The remaining terms on the right-hand-sides of~\eqref{eq:nonlin1} and~\eqref{eq:Omnonlin1} can be handled in a similar manner to the previous three groupings of estimates. This completes the proof of~\eqref{eq:s32N4}. The remaining estimates in Proposition~\ref{p:par-reg-psi_s} are proved in an identical manner, after passing each additional derivative onto the nonlinearities~\eqref{eq:nonlin1} and~\eqref{eq:Omnonlin1}. 
\end{proof}

\subsection{Off-diagonal decay estimates } 
In later sections we will use the linear heat flow to localize $\psi_s(s)$ in frequency (measured in linear heat time $\sigma$) at each fixed nonlinear heat flow time $s$. In this section we show that these two heat flow frequency localizations are compatible in the sense made precise by the following lemmas. 
\begin{lemma}[Off-Diagonal decay] \label{l:offdiag} 
Let $(p, q) = (4, 4)$ or $(p, q) = (8/3, 8/ 3)$. Fix any $\s \le 1$. Then there exists $\be \in (0, 1)$ so that   
\EQ{
\| P_\s s^{\frac{1}{2}}\psi_s(s) \|_{L^p_t L^q_x} \lesssim \int_s^\infty  \left(\frac{s}{s'}\right)^{\frac{1}{2}}  \Big(\frac{ s'}{\s} \Big)^\be  \sum_{\ell=0}^2 \|(s')^{\frac{\ell}{2}} (-\De)^{\frac{\ell}{2}} (s')^{\frac{1}{2}}\psi_s(s') \|_{\Str_s} \,  \frac{\ud s'}{s'}   .
}
%\EQ{
%\| P_\s s^{\frac{1}{2}}\psi_s(s) \|_{L^p_t L^q_x} \lesssim \int_s^\infty  \left(\frac{s}{s'}\right)^{\frac{1}{2}}  \left(\frac{ s'}{\s} \right)^{\frac{3}{8}} \|  (s' )^{1-\be} (-\De)^{1-\be}  (s')^{\frac{1}{2}}\psi_s(s')) \|_{L^p_t L^q_x} \,  \frac{\ud s'}{s'}   +  \dots 
%}
\end{lemma}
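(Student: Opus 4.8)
The statement compares two heat-flow frequency localizations: the nonlinear heat time $s$ (which governs $\psi_s(s)$ via the parabolic equation~\eqref{eq:heatH-psi_s}) and the auxiliary linear heat time $\sigma$ used to build the Littlewood--Paley projection $P_\sigma = \sigma(-\Delta)e^{\sigma\Delta}$. The plan is to start from the Duhamel representation of $\psi_s(s)$ \emph{in the nonlinear heat variable}, reading the equation~\eqref{eq:psi_s-heat} as $\partial_s \psi_s = -H\psi_s + \NN(\psi_s,\Psi,A)$, but rather than propagating from $s/2$ as in Proposition~\ref{p:par-reg-psi_s}, we propagate all the way from $s'=\infty$: since $\psi_s(s')\to 0$ as $s'\to\infty$ (from~\eqref{eq:forward-caloric-psi-s}), we have
\begin{equation*}
\psi_s(s) = -\int_s^\infty e^{-(s'-s)H}\bigl(\partial_{s'}\psi_s(s') + H\psi_s(s')\bigr)\,\ud s' = -\int_s^\infty e^{-(s'-s)H}\NN(\psi_s,\Psi,A)(s')\,\ud s',
\end{equation*}
or, more robustly, we simply write $\psi_s(s) = -\int_s^\infty \partial_{s'}\psi_s(s')\,\ud s'$ and use the equation to express $\partial_{s'}\psi_s = -H\psi_s + \NN$. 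Either way, the point is to reduce $P_\sigma s^{1/2}\psi_s(s)$ to an integral over $s' \in [s,\infty)$ of $P_\sigma$ applied to $(-H)\psi_s(s')$ and $\NN(s')$. The main gain comes from the off-diagonal smallness of $P_\sigma = \sigma(-\Delta)e^{\sigma\Delta}$ acting on objects that are ``at nonlinear heat time $s' \gg \sigma$'': morally $\psi_s(s')$ is localized at frequency $\lesssim (s')^{-1/2}$, and $P_\sigma$ localizes at frequency $\simeq \sigma^{-1/2} \gg (s')^{-1/2}$, so the overlap is polynomially small in $s'/\sigma$.

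\textbf{Key steps, in order.} First I would fix the Duhamel-from-infinity identity and apply $P_\sigma$, so that the task becomes bounding $\int_s^\infty \|P_\sigma s^{1/2}\,\partial_{s'}\psi_s(s')\|_{L^p_tL^q_x}\,\ud s'$ with $\partial_{s'}\psi_s$ replaced by $-H\psi_s + \NN$. Second, I would establish the quantitative off-diagonal bound: for $\sigma \le 1 \le s'$ (and more generally $\sigma \lesssim s'$), the operator $P_\sigma$ composed with the linear heat evolution, or directly applied to a function that we know enjoys the parabolic-smoothing bounds $\|(s')^{\ell/2}(-\Delta)^{\ell/2}(s')^{1/2}\psi_s(s')\|_{L^p_tL^q_x}$ (available for $\ell=0,1,2$ from Proposition~\ref{p:par-reg-psi_s} and its analogues, applied pointwise in $s'$), satisfies
\begin{equation*}
\|P_\sigma F\|_{L^p_tL^q_x} \lesssim \Bigl(\tfrac{\sigma}{s'}\Bigr)^{\!-\beta}\!\! \cdot \Bigl(\tfrac{s'}{\sigma}\Bigr)^{\!-N} \sum_{\ell=0}^{2}\|(s')^{\ell/2}(-\Delta)^{\ell/2} F\|_{L^p_tL^q_x}
\end{equation*}
— wait, more precisely, the cleanest route is: write $P_\sigma = \sigma(-\Delta)e^{\sigma\Delta}$ and use the symbol/spectral bound $|\lambda^2 e^{-\sigma\lambda}\sigma| \cdot \jap{s'\lambda^2}^{-\ell} \lesssim_\ell (\sigma^{-1}s')^{-\ell}$ together with $\ell \le 2$ to extract a factor $(s'/\sigma)^{-2+}$, losing an arbitrarily small power to convert the Bernstein-type gain into an integrable one; this is exactly where the exponent $\beta \in (0,1)$ (small) appears. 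One should do this via the spectral calculus for $-\Delta$ on $\mathbb{H}^2$ (or the heat-kernel bounds of Lemma~\ref{l:hk}) applied to $P_\sigma$ composed with $\jap{s'\Delta}^{-1}$. Third, I would insert this bound into the Duhamel integral: for the linear term $-H\psi_s(s')$ one uses $H\psi_s = -\Delta\psi_s + (\text{lower order with bounded/decaying coefficients})$, so $H\psi_s(s')$ is controlled (after absorbing the harmonic-map potential and connection terms via Lemma~\ref{l:APsi-infty}) by $\sum_{\ell\le 2}\|(s')^{\ell/2}(-\Delta)^{\ell/2}\psi_s(s')\|$; and the nonlinearity $\NN(s')$ is controlled, as in the proof of~\eqref{eq:s32N4}, by $\sqrt{\eps_0}$ times the same quantities plus $\|\psi_s\|_{\calS}$-type terms, which for the purposes of this lemma we simply bound by the $\Str_s$ norms appearing on the right-hand side. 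Fourth, I would collect the $s^{1/2}$ and $(s')^{1/2}$ weights: the identity carries an $s^{1/2}$ but the smoothing bounds are naturally phrased with $(s')^{1/2}$ weights, producing the factor $(s/s')^{1/2}$ in the statement, and the $\ud s'/s'$ measure comes from rewriting $\ud s' = s'\cdot \ud s'/s'$ after the off-diagonal factor $(s'/\sigma)^\beta$ has been extracted (the remaining powers of $s'$ conspire to give a scale-invariant integrand).

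\textbf{Main obstacle.} The delicate point is the second step: getting the \emph{right} power $\beta \in (0,1)$ in the off-diagonal estimate while only having the parabolic smoothing bounds up to $\ell = 2$ covariant derivatives at one's disposal. With $\ell$ derivatives the naive symbol bound gives a gain $(s'/\sigma)^{-\ell}$ for $P_\sigma \jap{s'\Delta}^{-\ell/2}$, but one needs the gain to \emph{beat} the $(s/s')^{1/2}$ loss after integrating $\ud s'$ over $[s,\infty)$ — and one must not consume too much, since on the near-diagonal $\sigma \simeq s'$ the factor $(s'/\sigma)^\beta$ must stay bounded (hence $\beta < 1$, or really $\beta$ small). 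Concretely one wants, after the $\ud s'$ integral, convergence at $s' = \infty$, which forces a genuine negative power of $s'$; since $\psi_s(s')$ decays in $s'$ only polynomially (via $m(s')s'{}^{1/2}$-weighted norms with $m(s')=s'{}^{-\delta}$ for $s'\ge 1$), one must track these $\delta$-weights carefully — this is the source of the $\beta$ and the reason $\beta$ cannot be taken to be $0$ or $1$. Handling the regime $s' \le 1$ (where $m(s') = 1$ and the heat-kernel bounds of Lemma~\ref{l:hk} rather than the spectral gap are the relevant tool) versus $s' \ge 1$ separately, and checking the two estimates match up at $s' \simeq 1$, is the bookkeeping that makes this step the real work; everything else is a routine application of Corollary~\ref{c:lin-para-Lp}, Lemma~\ref{l:APsi-infty}, and the Poincar\'e inequality~\eqref{eq:Pip}, following the template of the proof of~\eqref{eq:s32N4} above. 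I expect the final inequality to be stated with $\beta$ any fixed small positive number (e.g.\ $\beta = \delta$ or $\beta = 1/2 - \delta$), chosen so that $\int_s^\infty (s/s')^{1/2}(s'/\sigma)^\beta\,\ud s'/s' \cdot [\text{smoothing bound}]$ closes.
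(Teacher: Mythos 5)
Your proposal is correct and follows essentially the same route as the paper: integrate the nonlinear heat equation from $s'=\infty$, apply $P_\sigma=\sigma(-\Delta)e^{\sigma\Delta}$, extract the factor $(s'/\sigma)^\beta$ by borrowing $\sigma^\beta(-\Delta)^\beta$ into the heat semigroup (Corollary~\ref{c:frac-heat-Lp}) and controlling the leftover fractional derivative of $\psi_s(s')$ by interpolation between $\ell=0,1,2$, with the lower-order parts of $H$ and the nonlinearity handled via Poincar\'e, Sobolev embedding, and Lemma~\ref{l:heatPsi} exactly as in the proof of~\eqref{eq:s32N4}; the paper simply fixes $\beta=3/8$. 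Your worry about convergence of the $\ud s'$ integral at infinity and the $m(s')$-weights is not part of this lemma (whose right-hand side is just the integral) but of Corollary~\ref{c:offdiag}, where it is resolved by Schur's test.
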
 
The above estimate becomes useful after integration in $\s \ge s$. 
\begin{corollary}  \label{c:offdiag} 
Let $(p, q) = (4, 4)$ or $(p, q) = (8/3, 8/ 3)$. Then,  %and let $\be>0$ be such that $\be + \de < \frac{1}{2}$ and $\be >\frac{1}{4}$. For example, we can take $\be = \frac{3}{8}$. Then,  
\EQ{
 \left\| \int_s^1  \| P_\s ( m(s) s^{\frac{1}{2}}  \psi_s(s)) \|_{L^p_t L^q_x}  \frac{ \ud \s}{\s} \right\|_{L^{2}_{\ds} \cap L^{\infty}_{\ds}} \lesssim  \sum_{\ell = 0}^2  \| s^{\frac{\ell}{2}}(-\De)^{\frac{\ell}{2}}m(s) s^{\frac{1}{2}}\psi_s \|_{L^{2}_{\ds} \cap L^{\infty}_{\ds} \Str_s} .%\Big( \frac{s}{\s}\Big)^{\beta} \sum_{n = 0}^2 \|(s^{\frac{1}{2}} \na)^n ( m(s) s^{\frac{1}{2}}  \psi_s) \|_{L^{\infty}_{\ds}L^p_t L^q_x}
}
\end{corollary}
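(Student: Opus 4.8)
\textbf{Proof plan for Corollary~\ref{c:offdiag}.} The strategy is to take the pointwise-in-$s$ bound from Lemma~\ref{l:offdiag}, integrate it against $\frac{\ud\s}{\s}$ over $\s\in(s,1]$, and then recognize the resulting double integral in $s$ and $s'$ as a Schur-type operator acting on the $\Str_s$-norms of the derivatives of $\psi_s$. First I would multiply the Lemma~\ref{l:offdiag} estimate by $m(s)$ and note that, since $s\le\s\le s'$ in the relevant regime, $m(s)\lesssim m(s')$ (because $m$ is non-increasing), so the weight $m(s)$ can be moved inside onto $\psi_s(s')$ at the cost of an absolute constant. This converts the right-hand side into
\[
m(s)\|P_\s s^{\frac12}\psi_s(s)\|_{L^p_tL^q_x}\lesssim \int_s^\infty\Big(\frac{s}{s'}\Big)^{\frac12}\Big(\frac{s'}{\s}\Big)^{\be}\sum_{\ell=0}^2\|(s')^{\frac\ell2}(-\De)^{\frac\ell2}m(s')(s')^{\frac12}\psi_s(s')\|_{\Str_s}\,\frac{\ud s'}{s'}.
\]

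Next I would integrate in $\s$ over $(s,1]$. Since $\be>0$, $\int_s^{\min\{1,s'\}}\big(\frac{s'}{\s}\big)^{\be}\frac{\ud\s}{\s}\lesssim \big(\frac{s'}{s}\big)^{\be}$, and for the part of the $\s$-range (if any) with $\s>s'$ one has $\big(\frac{s'}{\s}\big)^\be\le1$ and the integral contributes only a harmless $\log$, absorbed by shrinking $\be$. Also the $s'$-integral runs effectively over $s'\ge s$, so $\frac{s}{s'}\le 1$; combining with the $\s$-integration gives a kernel of the form $K(s,s')=\big(\frac{s}{s'}\big)^{\frac12}\big(\frac{s'}{s}\big)^{\be}\mathbf 1_{\{s'\ge s\}}=\big(\frac{s}{s'}\big)^{\frac12-\be}\mathbf 1_{\{s'\ge s\}}$ acting via $\int_s^\infty K(s,s')\,G(s')\,\frac{\ud s'}{s'}$ on $G(s')=\sum_{\ell}\|(s')^{\frac\ell2}(-\De)^{\frac\ell2}m(s')(s')^{\frac12}\psi_s(s')\|_{\Str_s}$. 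Choosing $\be<\frac12$ makes $K$ a bona fide Schur kernel with respect to the measure $\frac{\ud s'}{s'}$: $\sup_s\int K(s,s')\frac{\ud s'}{s'}<\infty$ and $\sup_{s'}\int K(s,s')\frac{\ud s}{s}<\infty$ (both integrals of $(s/s')^{\frac12-\be}$ converge off the diagonal). Schur's test then yields boundedness of this operator on $L^2_{\ds}$; boundedness on $L^\infty_{\ds}$ is even more immediate since $\int K(s,s')\frac{\ud s'}{s'}$ is uniformly bounded.

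Applying the $L^2_{\ds}\cap L^\infty_{\ds}$ Schur bound and then taking $\Str_s$ inside (it is already there) produces exactly
\[
\Big\|\int_s^1\|P_\s(m(s)s^{\frac12}\psi_s(s))\|_{L^p_tL^q_x}\,\frac{\ud\s}{\s}\Big\|_{L^2_{\ds}\cap L^\infty_{\ds}}\lesssim\sum_{\ell=0}^2\|s^{\frac\ell2}(-\De)^{\frac\ell2}m(s)s^{\frac12}\psi_s\|_{L^2_{\ds}\cap L^\infty_{\ds}\Str_s},
\]
which is the claim. The one technical point to handle carefully—and the only place I anticipate any real friction—is the interplay of the weight $m(s)$ with the $\s$-integration and the endpoint $\s=1$: one must make sure that moving $m(s)$ to $m(s')$ and truncating the $\s$-integral at $1$ (rather than $\infty$) does not spoil the Schur kernel, which is why the small positive exponent $\be$ from Lemma~\ref{l:offdiag} is essential (it gives the off-diagonal gain needed to beat the logarithmic divergence and to make $\frac12-\be>0$). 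Everything else is a routine application of Schur's test, exactly as in the analogous estimate in~\cite[Proof of Lemma~4.11]{LLOS1} or the off-diagonal arguments in~\cite{LOS5}.
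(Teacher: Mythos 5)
Your overall route is the same as the paper's: integrate the pointwise bound of Lemma~\ref{l:offdiag} in $\sigma$ over $(s,1]$ to get the factor $(s'/s)^{\beta}$, combine with $(s/s')^{1/2}$ to form the kernel $(s/s')^{1/2-\beta}\mathbf 1_{\{s'\ge s\}}$, and conclude by Schur's test on $L^2_{\ds}$ and $L^\infty_{\ds}$. However, there is one concrete error, and it is precisely at the point you flagged as delicate: the transfer of the weight $m(s)$. You assert that for $s\le s'$ one has $m(s)\lesssim m(s')$ "because $m$ is non-increasing". This is backwards: if $m$ is non-increasing (and in the version used here, $m(s)=\max\{s^{-\delta},1\}$, it is), then $s\le s'$ gives $m(s)\ge m(s')$, and the ratio $m(s)/m(s')$ can be as large as $(s'/s)^{\delta}$ (e.g.\ $s\ll 1$, $s'=1$). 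So the weight cannot be moved from $s$ to $s'$ at the cost of an absolute constant, and the kernel you end up testing, $(s/s')^{1/2-\beta}$, is not the correct one.

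The fix is exactly what the paper does: keep the ratio and test the kernel $K(s,s')=\frac{m(s)}{m(s')}\bigl(\tfrac{s}{s'}\bigr)^{1/2-\beta}\mathbf 1_{\{s'>s\}}$ with respect to $\frac{\ud s'}{s'}$. The factor $\frac{m(s)}{m(s')}$ costs at most $(s'/s)^{\delta}$ in the regime $s\le s'\le 1$, degrading the off-diagonal decay to $(s/s')^{1/2-\beta-\delta}$, which is still a Schur kernel provided $\tfrac12-\beta-\delta>0$; this holds since Lemma~\ref{l:offdiag} is proved with $\beta=\tfrac38$ and $\delta$ is small (note also that $\beta$ is \emph{given} by the lemma, not a parameter you are free to shrink, so "choosing $\beta<\tfrac12$" should read "since $\beta=\tfrac38<\tfrac12$"). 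The paper verifies both Schur conditions by splitting into $s\le1$ versus $s\ge1$ and $s'\le1$ versus $s'\ge1$ precisely to track this $m$-ratio. A second, minor point: your case split of the $\sigma$-integral at $\sigma=s'$ and the appeal to "a harmless $\log$ absorbed by shrinking $\beta$" is unnecessary and slightly off; the single computation $\int_s^1(\tfrac{s'}{\sigma})^{\beta}\frac{\ud\sigma}{\sigma}\lesssim(\tfrac{s'}{s})^{\beta}$ holds for the whole range $\sigma\in(s,1]$ regardless of whether $\sigma$ exceeds $s'$, with no logarithm appearing.
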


We require the following lemma, which again follows from the regularity theory for the harmonic map heat flow in Section~\ref{sec:hmhf} and the bootstrap assumptions~\eqref{eq:overall-bootstrap}. 
\begin{lemma} \label{l:heatPsi} 
Under the bootstrap assumptions~\eqref{eq:overall-bootstrap} we have 
\EQ{
  \| s^{\frac{5}{4}} \na \psi_s \|_{L^\infty_{\ds} L^\infty_t L^4_x} + \| s^{\frac{11}{8}} \na \psi_s \|_{L^\infty_{\ds} L^\infty_t L^8_x}  + \| s^{\frac{23}{16}} \na \psi_s \|_{L^\infty_{\ds}L^\infty_t L^{16}_x}\lesssim  \sqrt{\eps_0}, 
}
\EQ{ \label{eq:heatPsi}
 \| s^{\frac{1}{4}} \ringPsi \|_{L^\infty_{\ds} L^\infty_t L^4_x} +  \| s^{\frac{3}{8}} \ringPsi \|_{L^\infty_{\ds} L^\infty_t L^8_x} +  \| s^{\frac{7}{16}} \ringPsi \|_{L^\infty_{\ds} L^\infty_t L^{16}_x} \lesssim \sqrt{\eps_0}. 
}
\end{lemma}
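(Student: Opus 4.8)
\textbf{Plan of proof for Lemma~\ref{l:heatPsi}.} The strategy is to derive these estimates exactly as the analogous bounds in Proposition~\ref{prop:forward-caloric-psi} and Proposition~\ref{prop:forward-caloric-A} were obtained, but now for the one-parameter family of harmonic map heat flows $u(t, x, s)$ indexed by $t \in I$, and using the bootstrap assumption \eqref{eq:overall-bootstrap} to control the right-hand sides. More precisely, the point is that \eqref{eq:overall-bootstrap} provides, uniformly in $t \in I$, the smallness $\nrm{u(t) - Q}_{H^{1+2\dlt}} + \nrm{\calD_{\Omg} u(t)}_{H^{1+2\dlt}} \leq \eps_{0}^{1/2}$, which is precisely the hypothesis needed to run Proposition~\ref{prop:forward-extrinsic-cov}, Proposition~\ref{prop:forward-caloric-psi} and Proposition~\ref{prop:forward-caloric-A} at each fixed $t$. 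Taking the supremum over $t \in I$ afterwards is harmless since all constants in those propositions depend only on upper bounds for these norms.

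First I would recall from Proposition~\ref{prop:forward-caloric-psi} (applied with domain bound $\nrm{u(t)-Q}_{H^{1+2\dlt}} \aleq \eps_0^{1/2}$) the $L^2$-based estimate
\begin{equation*}
\nrm{m(s) s^{\frac{k+1}{2}} \nb^{(k)} \psi_{s}}_{L^{\infty}_{\ds} L^{2}_{x}} \aleq \eps_0^{1/2}, \qquad k = 0, 1, 2, \ldots,
\end{equation*}
together with the analogous bound for $\ringPsi$ from \eqref{eq:forward-caloric-Psi}, and from Proposition~\ref{prop:forward-caloric-A} the estimate for $\ringA$. Since $\dlt < \tfrac{1}{2}$, these hold for all derivative orders $k$ with constants depending only on $k$ and an upper bound for the data norm. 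To pass from $L^2_x$ to $L^4_x$, $L^8_x$ and $L^{16}_x$, I would interpolate using the Gagliardo--Nirenberg inequality (Lemma~\ref{lem:gagliardo_nirenberg}): for a function $f$ on $\bbH^2$, $\nrm{f}_{L^q} \aleq \nrm{f}_{L^2}^{1-\te}\nrm{\nb f}_{L^2}^{\te}$ with $\te = 1 - \tfrac{2}{q}$, and more generally one can interpolate between $\nrm{\nb^{(j)} f}_{L^2}$ for various $j$. Applying this to $f = \nb \psi_s$ (resp. $f = \ringPsi$) and tracking the powers of $s$ supplied by the $L^2$ bounds $\nrm{s^{(k+1)/2}\nb^{(k)} \nb\psi_s}_{L^\infty_{\ds}L^2_x} \aleq \eps_0^{1/2}$ gives the claimed scaling: for $\nb\psi_s$ in $L^4_x$ one uses $k=1$ and $k=2$ derivative bounds, yielding the weight $s^{5/4}$; for $L^8_x$ the weight $s^{11/8}$; for $L^{16}_x$ the weight $s^{23/16}$. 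The bounds on $\ringPsi$ in \eqref{eq:heatPsi} follow in the same way from \eqref{eq:forward-caloric-Psi}, with weights $s^{1/4}$, $s^{3/8}$, $s^{7/16}$ respectively. This is essentially the argument already used in \cite[Proof of Lemma 5.2]{LOS5}, and also mirrors the proof of Lemma~\ref{l:APsi-infty} above; I would simply invoke it.

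The only mild subtlety, and the closest thing to an obstacle, is bookkeeping the exact powers of $s$ through the Gagliardo--Nirenberg interpolation so that they match the exponents $\tfrac{5}{4}, \tfrac{11}{8}, \tfrac{23}{16}$ and $\tfrac{1}{4}, \tfrac{3}{8}, \tfrac{7}{16}$ in the statement; one checks that for $L^q$ with $q = 2^j$ the natural weight for $\nb\psi_s$ is $s^{1 + \frac12(1 - 2^{1-j})} \cdot s^{1/2} = s^{3/2 - 2^{-j}}$ after accounting for the extra $s^{1/2}$ coming with the $\nb$, which indeed gives $5/4, 11/8, 23/16$ for $j = 2, 3, 4$; similarly $\ringPsi$ in $L^{2^j}$ carries $s^{\frac12(1-2^{1-j})} = s^{1/2 - 2^{-j}}$, giving $1/4, 3/8, 7/16$. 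Since no genuinely new analytic input is required beyond what is already established in Section~\ref{sec:hmhf} and the Gagliardo--Nirenberg inequality, I would state that the proof is an immediate consequence of Proposition~\ref{prop:forward-caloric-psi}, Proposition~\ref{prop:forward-caloric-A} (applied to the $t$-family of heat flows, using \eqref{eq:overall-bootstrap}) and Lemma~\ref{lem:gagliardo_nirenberg}, and refer the reader to \cite{LOS5} for the analogous computation.
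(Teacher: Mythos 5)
Your proposal is correct and follows essentially the same route as the paper: the paper's proof likewise states that the lemma is an immediate consequence of Proposition~\ref{prop:forward-caloric-psi} combined with the Gagliardo--Nirenberg inequality (Lemma~\ref{lem:gagliardo_nirenberg}), with the detailed exponent bookkeeping deferred to \cite[Proof of Lemma 5.2]{LOS5}. Your verification of the weights $s^{3/2-1/q}$ for $\nabla\psi_s$ and $s^{1/2-1/q}$ for $\ringPsi$ is accurate.
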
 
\begin{proof} 
The proof is an immediate consequence of the Gagliardo-Nirenberg inequality (Lemma~\ref{lem:gagliardo_nirenberg}) and  Proposition~\ref{prop:forward-caloric-psi}. The argument is identical to the one presented in~\cite[Proof of Lemma 5.2]{LOS5}, to which we refer the reader for details.
\end{proof} 

\begin{proof}[Proof of Lemma~\ref{l:offdiag}]
The proof of Lemma~\ref{l:offdiag} is based on the fact that $\psi_s$ satisfies the nonlinear heat equation~\eqref{eq:psi_s-heat}. 
Using~\eqref{eq:psi_s-heat} we write 
\EQ{
 \psi_s(s) &= - \int_s^\infty  s'  \p_s \psi_s(s') \, \frac{\ud s'}{s'}  \\
 & = \int_s^\infty s' H \psi_s(s')\, \frac{\ud s'}{s'}  - \int_s^\infty s' \calN(\psi_s, \Psi, A)\, \frac{\ud s'}{s'} .
}
To clarify the exposition  we prove the lemma for $(p, q) = (4, 4)$ --  the argument is nearly identical for $(p, q) = (8/3, 8/3)$.  We fix  $\be = \frac{3}{8}$. 
\EQ{
\| P_\s (  s^{\frac{1}{2}}  \psi_s(s)) \|_{L^4_{t, x}} &\le s^{\frac{1}{2}} \int_s^\infty \| P_\s(s' H \psi_s(s')) \|_{L^4_{t, x}} \, \frac{\ud s'}{s'} \\
& \quad +     s^{\frac{1}{2}} \int_s^\infty  \|P_\s(s' \calN(\psi_s, \Psi, A)) \|_{L^4_{t, x}} \, \frac{\ud s'}{s'}  \\
& = I+ II .
}
Consider the first term $I$ on the right-hand side above.  Using the expression~\eqref{eq:Hdef1} we obtain the bound, 
\EQ{
 I  &=  s^{\frac{1}{2}}  \int_s^\infty  \| \s \De e^{\s \De} (s' H \psi_s(s')) \|_{L^4_{t, x}} \, \frac{\ud s'}{s'}   \\
& \lesssim s^{\frac{1}{2}}  \int_s^\infty  \| \s \De e^{\s \De} (s' (-\De) \psi_s(s')) \|_{L^4_{t, x}} \, \frac{\ud s'}{s'}   \\
& \quad + s^{\frac{1}{2}}  \int_s^\infty  \| \s \De e^{\s \De} (s' A^\infty_k \na^k \psi_s(s')) \|_{L^4_{t, x}} \, \frac{\ud s'}{s'}   \\
& \quad  + s^{\frac{1}{2}}  \int_s^\infty  \| \s \De e^{\s \De} (s' (\na^k A^\infty_k)  \psi_s(s')) \|_{L^4_{t, x}} \, \frac{\ud s'}{s'}   \\
& \quad + s^{\frac{1}{2}}  \int_s^\infty  \| \s \De e^{\s \De} (s' ( \abs{ A^\infty}^2 + \abs{ \psi_2^\infty}^2) \psi_s(s')) \|_{L^4_{t, x}} \, \frac{\ud s'}{s'}  \\
& = I_a + I_b + I_c + I_d.
}
We estimate $I_a$ as follows. Using $L^p$ regularity of the linear heat flow, we have 
\EQ{
I_a  &\lesssim s^{\frac{1}{2}}  \int_s^\infty  \Big(\frac{s'}{\s}\Big)^{\frac{3}{8}} \| (\s)^{1+ \frac{3}{8}} (-\De)^{1+ \frac{3}{8}}  e^{\s \De} (s')^{1-\frac{3}{8}} (-\De)^{1-\frac{3}{8}} \psi_s(s')) \|_{L^4_{t, x}} \, \frac{\ud s'}{s'}   \\
& \lesssim \int_s^\infty  \left(\frac{ s}{s'}\right)^{\frac{1}{2}}  \left(\frac{ s'}{\s} \right)^{\frac{3}{8}} \|  (s' )^{1-\frac{3}{8}} (-\De)^{1-\frac{3}{8}} (s')^{\frac{1}{2}}\psi_s(s')) \|_{L^4_{t, x}} \, \frac{\ud s'}{s'}  \\
& \lesssim \int_s^\infty  \left(\frac{ s}{s'}\right)^{\frac{1}{2}}  \left(\frac{ s'}{\s} \right)^{\frac{3}{8}}   \sum_{\ell = 0}^2 \|  (s')^{\ell/2} (-\De)^{\ell/2}  (s')^{\frac{1}{2}}\psi_s(s')) \|_{L^4_{t, x}} \,  \frac{\ud s'}{s'} ,
} 
where in the last line we used the $L^p$ interpolation inequality from Lemma~\ref{l:Lp-int}. 

Next for term $I_b$ we note that since $\s \le 1$ we have $1 \le \s^{-\frac{3}{8}}$.
\EQ{
I_b  &\lesssim s^{\frac{1}{2}} \int_s^\infty   \| \s \De e^{\s \De} (s' A^\infty_k \na^k \psi_s(s')) \|_{L^{4}_{t, x}} \,  \frac{\ud s'}{s'} \\
& \lesssim \int_s^\infty    \left(\frac{ s}{s'}\right)^{\frac{1}{2}} \s^{-\frac{3}{8}}   \| s' A^\infty_k \na^k (s')^{\frac{1}{2}}\psi_s(s') \|_{L^4_{t, x}} \, \frac{\ud s'}{s'} \\
& \lesssim \| A^\infty \|_{L^\infty_t L^8_x} \int_s^\infty  \left(\frac{ s}{s'}\right)^{\frac{1}{2}} \Big(\frac{s'}{\s}\Big)^{\frac{3}{8}} (s')^{1-\frac{3}{8}} \| \na( (s')^{\frac{1}{2}} \psi_s(s') )\|_{L^4_{t}L^8_x} \,   \frac{\ud s'}{s'} \\
&  \lesssim\| A^\infty \|_{L^\infty_t L^8_x} \int_s^\infty  \left(\frac{ s}{s'}\right)^{\frac{1}{2}} \Big(\frac{s'}{\s}\Big)^{\frac{3}{8}}   \|(s')^{1-\frac{3}{8}} (-\De)^{\frac{5}{8}} (s')^{\frac{1}{2}}\psi_s(s') \|_{L^4_{t}L^{4}_x} \,  \frac{\ud s'}{s'}  \\
& \lesssim \int_s^\infty  \left(\frac{ s}{s'}\right)^{\frac{1}{2}} \Big(\frac{s'}{\s}\Big)^{\frac{3}{8}}   \sum_{\ell = 0}^2 \|  (s')^{\ell/2} (-\De)^{\ell/2}  (s')^{\frac{1}{2}}\psi_s(s')) \|_{L^4_{t, x}} \,  \frac{\ud s'}{s'} ,
}
where in the third-to-last line we used Sobolev embedding.  

Next, consider term $I_c$. Arguing as above we have 
\EQ{
I_c & \lesssim\| \na A^\infty \|_{L^\infty_t L^8_x} \int_s^\infty  \left(\frac{ s}{s'}\right)^{\frac{1}{2}} \Big(\frac{s'}{\s}\Big)^{\frac{3}{8}}   \|(s')^{1-\frac{3}{8}} (-\De)^{\frac{1}{4}} (s')^{\frac{1}{2}}\psi_s(s') \|_{L^4_{t}L^{4}_x} \,  \frac{\ud s'}{s'} \\
& \lesssim\| \na A^\infty \|_{L^\infty_t L^8_x} \int_s^\infty  \left(\frac{ s}{s'}\right)^{\frac{1}{2}} \Big(\frac{s'}{\s}\Big)^{\frac{3}{8}}   \|(s')^{1-\frac{3}{8}} (-\De)^{\frac{5}{8}} (s')^{\frac{1}{2}}\psi_s(s') \|_{L^4_{t}L^{4}_x} \,  \frac{\ud s'}{s'} \\
& \lesssim \int_s^\infty  \left(\frac{ s}{s'}\right)^{\frac{1}{2}} \Big(\frac{s'}{\s}\Big)^{\frac{3}{8}}   \sum_{\ell = 0}^2 \|  (s')^{\ell/2} (-\De)^{\ell/2}  (s')^{\frac{1}{2}}\psi_s(s')) \|_{L^4_{t, x}} \,  \frac{\ud s'}{s'} ,
}
where in the second line we used the $L^p$ version of the Poincare inquality. Term $I_d$ is handled similarly. 

Next, consider the nonlinear term $II$. Using~\eqref{eq:nonlin1} we have %We estimate each of the terms on the right-hand side of~\eqref{eq:nonlin1}. 
\EQ{
 s^{\frac{1}{2}} \int_s^\infty  \|P_\s(s' \calN(\psi_s, \Psi, A)) \|_{L^4_{t, x}} \, \frac{\ud s'}{s'} & \lesssim s^{\frac{1}{2}} \int_s^\infty  \|P_\s(s'  \Im( \ba{\psi_s}  \ringpsi^k) \ringpsi_k ) \|_{L^4_{t, x}} \, \frac{\ud s'}{s'} \\
 &\quad + s^{\frac{1}{2}} \int_s^\infty  \|P_\s(s'  \Im( \ba{\psi_s} \ringpsi_k)\psi^{\infty, k}) \|_{L^4_{t, x}} \, \frac{\ud s'}{s'}\\
  &\quad + s^{\frac{1}{2}} \int_s^\infty  \|P_\s(s' \Im( \ba{\psi_s}  \psi^{\infty, k}) \ringpsi_k) \|_{L^4_{t, x}} \, \frac{\ud s'}{s'}\\
&\quad + s^{\frac{1}{2}} \int_s^\infty  \|P_\s(s'  \ringA^k \na_k \psi_s) \|_{L^4_{t, x}} \, \frac{\ud s'}{s'}\\
&\quad + s^{\frac{1}{2}} \int_s^\infty  \|P_\s(s' (\na^k \ringA_k )\psi_s ) \|_{L^4_{t, x}} \, \frac{\ud s'}{s'}\\
&\quad + s^{\frac{1}{2}} \int_s^\infty  \|P_\s(s'  \ringA^k \ringA_k \psi_s)\|_{L^4_{t, x}} \, \frac{\ud s'}{s'}\\
&\quad + s^{\frac{1}{2}} \int_s^\infty  \|P_\s(s' ( \ringA^k A^\infty_k  \psi_s ) \|_{L^4_{t, x}} \, \frac{\ud s'}{s'}\\
%&\quad + s^{\frac{1}{2}} \int_s^\infty  \|P_\s(s' (A^\infty_k \ringA^k \psi_s ) \|_{L^4_{t, x}} \, \frac{\ud s'}{s'}\\
& = II_a + II_b + \dots + II_g.
}
First, we treat $II_a$. Using  Sobolev embedding in the last line below we have 
\EQ{
II_a &= s^{\frac{1}{2}} \int_s^\infty  \| \s \De e^{\s \De}(s'  \Im( \ba{\psi_s}  \ringpsi^k) \ringpsi_k ) \|_{L^4_{t, x}} \, \frac{\ud s'}{s'}  \\
&  \lesssim s^{\frac{1}{2}} \int_s^\infty \s^{-\frac{3}{8}} \| \s^{1+\frac{3}{8}} (-\De)^{1+ \frac{3}{8}} e^{\s \De}(-\De)^{-\frac{3}{8}}(s'  \Im( \ba{\psi_s}  \ringpsi^k) \ringpsi_k ) \|_{L^4_{t, x}} \, \frac{\ud s'}{s'}  \\
& \lesssim s^{\frac{1}{2}} \int_s^\infty \s^{-\frac{3}{8}} \| (-\De)^{-\frac{3}{8}}(s'   \Im( \ba{\psi_s}  \ringpsi^k) \ringpsi_k ) \|_{L^4_{t, x}} \, \frac{\ud s'}{s'}  \\
& \lesssim s^{\frac{1}{2}} \int_s^\infty \Big( \frac{s'}{\s} \Big)^{\frac{3}{8}} \| (s')^{1-\frac{3}{8}}   \Im( \ba{\psi_s}  \ringpsi^k) \ringpsi_k ) \|_{L^4_{t} L^{\frac{8}{5}}_x } \, \frac{\ud s'}{s'} .
}
We can deduce from the last line above that 
\EQ{
II_a & \lesssim s^{\frac{1}{2}} \int_s^\infty \Big( \frac{s'}{\s} \Big)^{\frac{3}{8}}(s')^{1-\frac{3}{8}}  \| \ringPsi \|_{L^\infty_t L^4_x}^2 \|    \psi_s \|_{L^4_t L^8_x}    \, \frac{\ud s'}{s'} \\ 
 & \lesssim  \int_s^\infty \Big( \frac{s}{s'}\Big)^{\frac{1}{2}}\Big( \frac{s'}{\s} \Big)^{\frac{3}{8}}  \| (s')^{\frac{1}{4}}\ringPsi \|_{L^\infty_t L^4_x}^2 \|  (s')^{\frac{1}{8}} (-\De)^{\frac{1}{8}} (s')^{\frac{1}{2}}  \psi_s \|_{L^4_t L^4_x}    \, \frac{\ud s'}{s'} \\ 
 & \lesssim  \int_s^\infty \Big( \frac{s}{s'}\Big)^{\frac{1}{2}}\Big( \frac{s'}{\s} \Big)^{\frac{3}{8}}  \| (s')^{\frac{1}{4}}\ringPsi \|_{L^\infty_t L^4_x}^2 \|  (s')^{\frac{1}{8}} (-\De)^{\frac{1}{8}} (s')^{\frac{1}{2}}  \psi_s \|_{L^4_t L^4_x}    \, \frac{\ud s'}{s'} \\
 &\lesssim \eps^2 \int_s^\infty \Big( \frac{s}{s'}\Big)^{\frac{1}{2}}\Big( \frac{s'}{\s} \Big)^{\frac{3}{8}}  \sum_{\ell =0}^2 \|  (s')^{\frac{\ell}{2}} (-\De)^{\frac{\ell}{2}} (s')^{\frac{1}{2}}  \psi_s \|_{L^4_t L^4_x}    \, \frac{\ud s'}{s'} , 
}
where in the last line we used~\eqref{eq:heatPsi} from Lemma~\ref{l:heatPsi}. The remaining terms in the nonlinearity can be handled similarly. 
\end{proof} 

\begin{proof}[Proof of Corollary~\ref{c:offdiag}]
The proof is a consequence of Lemma~\ref{l:offdiag} and Schur's test. Indeed, set $\be = \frac{3}{8}$ in Lemma~\ref{l:offdiag} (in fact the proof is carried out for this particular value for $\beta$). Then we have 
\EQ{
 \int_s^1 \| P_\s ( m(s) &s^{\frac{1}{2}}  \psi_s(s)) \|_{L^p_t L^q_x} \, \frac{\ud \s}{\s}   \\
 & \lesssim  \int_s^1 m(s)  \int_s^\infty  \left(\frac{s}{s'}\right)^{\frac{1}{2}}  \Big(\frac{ s'}{\s} \Big)^{\frac{3}{8}}  \sum_{\ell=0}^2 \|(s')^{\frac{\ell}{2}} (-\De)^{\frac{\ell}{2}} (s')^{\frac{1}{2}}\psi_s(s') \|_{\Str_s} \,  \frac{\ud s'}{s'}   \frac{\ud \s}{\s}  \\
 & \lesssim  \int_s^\infty \frac{ m(s)}{m(s')} \left(\frac{s}{s'}\right)^{\frac{1}{2}} \sum_{\ell=0}^2 \|(s')^{\frac{\ell}{2}} (-\De)^{\frac{\ell}{2}} (s')^{\frac{1}{2}}\psi_s(s') \|_{\Str_s}  \int_s^1   \Big(\frac{ s'}{\s} \Big)^{\frac{3}{8}}    \frac{\ud \s}{\s} \,  \frac{\ud s'}{s'}  \\
 &  \lesssim  \int_s^\infty \frac{ m(s)}{m(s')} \left(\frac{s}{s'}\right)^{\frac{1}{2} - \frac{3}{8}} \sum_{\ell=0}^2 \|(s')^{\frac{\ell}{2}} (-\De)^{\frac{\ell}{2}} (s')^{\frac{1}{2}}\psi_s(s') \|_{\Str_s}   \,  \frac{\ud s'}{s'}.
}
To conclude we use the fact that 
\EQ{
K_{\frac{3}{8}}(s, s') = 1_{\{s' > s>0\}}(s, s')  \frac{m(s)}{m(s')} \left(\frac{ s}{s'}\right)^{\frac{1}{2}- \frac{3}{8}}
}
is a Schur kernel, and hence by Schur's test we obtain
\EQ{
 \bigg\|  \int_s^1 \| P_\s ( m(s) &s^{\frac{1}{2}}  \psi_s(s)) \|_{L^p_t L^q_x} \, \frac{\ud \s}{\s}  \bigg\|_{L^\infty_{\ds} \cap L^2_{\ds}} \lesssim  \sum_{\ell = 0}^2  \| s^{\frac{\ell}{2}}(-\De)^{\frac{\ell}{2}}m(s) s^{\frac{1}{2}}\psi_s \|_{L^{2}_{\ds} \cap L^{\infty}_{\ds} \Str_s} , 
}
as desired. 
To see that $K_{\be}(s, s')$ is a Schur kernel for any $\be < \frac{1}{2}$ we note that for $s \le 1$ we have 
\EQ{
m(s)s^{\frac{1}{2}- \be} \int_s^\infty (s')^{-\frac{1}{2} + \be} \frac{1}{m(s')} \, \frac{\ud s'}{s'}    \lesssim s^{\frac{1}{2} - \be - \de}  \int_s^\infty (s')^{-\frac{1}{2} + \be + \de}  \, \frac{\ud s'}{s'}   \lesssim 1, 
}
where above we recalled that $m(s) = \max(s^{-\de}, 1)$. 
If $s \ge 1$ we have 
\EQ{
m(s)s^{\frac{1}{2}- \be} \int_s^\infty (s')^{-\frac{1}{2} + \be} \frac{1}{m(s')} \, \frac{\ud s'}{s'}    \lesssim s^{\frac{1}{2} - \be}  \int_s^\infty (s')^{-\frac{1}{2} + \be}  \, \frac{\ud s'}{s'}   \lesssim 1.
}
Similarly, if $s' \le 1$,  
\EQ{
(s')^{-\frac{1}{2} + \be} m(s')^{-1} \int_0^{s'} s^{\frac{1}{2} - \be} m(s)  \, \ds  \lesssim (s')^{-\frac{1}{2} + \be + \de}  \int_0^{s'} s^{\frac{1}{2} - \be - \de}   \, \ds  \lesssim 1.
}
Finally, if $s'  \ge 1$ we have 
\EQ{
(s')^{-\frac{1}{2} + \be} m(s')^{-1} \int_0^{s'} s^{\frac{1}{2} - \be} m(s)  \, \ds  \lesssim  1.
}
This completes the proof. 
\end{proof}

\section{Preparations for the analysis of the Schr\"odinger equation for $\psi_s$}  \label{s:w} 

Working under the bootstrap assumptions \eqref{eq:overall-bootstrap} and \eqref{eq:core-bootstrap},  we prove a number of auxiliary estimates that will go into the analysis of the Schr\"odinger equation~\eqref{eq:SH-psi_s}  for $\psi_s$. In Subsection~\ref{sub:6basic} we prove additional estimates on $\ringA$ and $\ringPsi$ that are consequences of the previous section. Then in Subsection~\ref{sub:6w} we study the parabolic equation~\eqref{eq:heatH-w} for $w$, deducing bounds for $w$ and $\p_s w$. One of these will go directly in the analysis of the Schr\"odinger equation for $\psi_s$ and the other is used in Subsection~\ref{sub:6w} to prove auxiliary bounds on $A_t$ and $\psi_t$. Finally in Subsection~\ref{sub:6refined} we prove a few more involved estimates on $\ringA$.

\subsection{Basic estimates on $\ringA$ and $\ringPsi$}\label{sub:6basic}
Here we prove additional estimates on $\ringA$ and $\ringPsi$. First we introduce some notation. Recall that 
\EQ{
 \psi_\ell(s_0) &= - \int_{s_0}^\infty \bfD_\ell \psi_s (s)\, \ud s + \psi_\ell^\infty =: \ringpsi_\ell (s_{0}) + \psi_\ell^\infty , \\
A_\ell(s_0) &= - \int_{s_0}^\infty \Im (\psi_\ell \overline{\psi_s} ) (s) \, \ud s + A_\ell^\infty =:  \ringA_\ell (s_{0}) + A_\ell^\infty ,
}
where $\psi_\ell^\infty$ are the derivative components of the harmonic map $Q$ in the Coulomb gauge and $A_\ell^\infty$ is the associated connection form. We further decompose $\ringA_\ell$ into linear- and quadratic-in-$\ringpsi_\ell$ components, defining $\ringA_{L, \ell}, \ringA_{Q, \ell}$ by
\EQ{ \label{eq:ALAQ-def} 
 \ringA_\ell(s_0) &= - \int_{s_0}^\infty \Im (\psi_\ell \overline{\psi_s} ) \, \ud s \\
 &= - \int_{s_0}^\infty \Im (\psi_\ell^\infty \overline{\psi_s} ) \, \ud s - \int_{s_0}^\infty \Im ( \ringpsi_\ell \overline{\psi_s} ) \, \ud s \\
 &=: \ringA_{L,\ell}(s_0) + \ringA_{Q,\ell}(s_0) .
}
In addition, it follows from~\eqref{eq:LOmpsi}  that 
\begin{align*}
\begin{split}
\calL_\Omega \AL^k(s)=-\int_s^\infty \Im(\psi^{\infty,k}\overline{\Omega\psi_s}) \, \ud s'-\int_s^\infty \Im(i\psi^{\infty,k}\overline{\psi_s}) \, \ud s' ,
\end{split}
\end{align*}
and
\begin{align*}
\begin{split}
\calL_\Omega\AQ^k(s) = -\int_s^\infty \Im(\calL_\Omega\ringpsi^{k}\overline{\psi_s}) \, \ud s'-\int_s^\infty \Im(\ringpsi^{k}\overline{\Omega\psi_s}) \, \ud s' .
\end{split}
\end{align*}

\begin{lemma} \label{lem:bounds_ringA_Linfty}
 Under the bootstrap assumptions~\eqref{eq:overall-bootstrap} and~\eqref{eq:core-bootstrap}, we have that
 \begin{align} 
  \| \langle s \Delta \rangle^{\frac{1}{2}} \langle \calL_\Omega \rangle s^{\frac{1}{2}} \AL \|_{L^\infty_{\ds} L^\infty_t L^\infty_x} &\lesssim \|\psi_s\|_{\calS} ,\label{eq:bound_ringAL_Linftyall_Str} \\
  \| \langle s \Delta \rangle^{\frac{1}{2}} \langle \calL_\Omega \rangle s^{\frac{1}{2}} \AQ \|_{L^\infty_{\ds} L^\infty_t L^\infty_x} &\lesssim \sqrt{\epsilon_0} \|\psi_s\|_{\calS} .\label{eq:bound_ringAQ_Linftyall_Str}
 \end{align} 
\end{lemma}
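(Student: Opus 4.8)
The plan is to estimate $\AL$ and $\AQ$ directly from their integral representations
\[
\AL_\ell(s) = -\int_s^\infty \Im(\psi_\ell^\infty \overline{\psi_s})(s')\,\ud s', \qquad
\AQ_\ell(s) = -\int_s^\infty \Im(\ringpsi_\ell \overline{\psi_s})(s')\,\ud s',
\]
together with the analogous formulas for $\calL_\Omega \AL$ and $\calL_\Omega \AQ$ recorded above, and the formulas obtained by acting with $\langle s\Delta\rangle^{\frac12}$ (i.e. with $s^{1/2}\nabla$) on them. The key inputs are: the pointwise-in-$x$ bounds $e^r\psi^\infty_j, e^r A^\infty_j \in L^\infty$ from Lemma~\ref{lem:cf} and Remark~\ref{rem:cf-nonimportance}, so that $\|\Psi^\infty\|_{L^\infty}, \|\nabla\Psi^\infty\|_{L^\infty} \lesssim 1$; the parabolic-regularity control of $\ringPsi$ from Lemma~\ref{l:heatPsi} and Lemma~\ref{l:APsi-infty}, which gives $\|s^{1/2}\langle\calL_\Omega\rangle\ringPsi\|_{L^\infty_{\ds}L^\infty_{t,x}} \lesssim \sqrt{\eps_0}$ (and similarly after one more $\nabla$, via Proposition~\ref{prop:forward-caloric-psi}); and the dispersive control of $\psi_s$ encoded in $\calS$, in the form of Proposition~\ref{p:par-reg-psi_s}, which bounds $\langle s\Delta\rangle^{5/2}\langle\Omega\rangle m(s)s^{1/2}\psi_s$ in $L^\infty_{\ds}(L^\infty_tL^2_x\cap L^4_tL^4_x\cap L^{8/3}_tL^{8/3}_x)$ by $\|\psi_s\|_{\calS}$.

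First I would prove \eqref{eq:bound_ringAL_Linftyall_Str}. For the unweighted $\AL$ term, write $|\Im(\psi^\infty_\ell\overline{\psi_s})(s')| \lesssim |\Psi^\infty(x)|\,|\psi_s(s')|$. Since $\Psi^\infty$ has exponential decay in $r$ (in particular $\Psi^\infty \in L^\infty_x$) and, crucially, $e^r\Psi^\infty\in L^\infty$ lets us absorb the $\sinh^{1/2}r$ loss in the radial Sobolev inequality (Lemma~\ref{lem:radial_sobolev}), we estimate, for fixed $s'$,
\[
\|\Psi^\infty\,\psi_s(s')\|_{L^\infty_x}
\lesssim \|\sinh^{1/2}(r)\,\psi_s(s')\|_{L^\infty_x}\,\|e^{-r}\sinh^{1/2}(r)\,\|_{L^\infty_x}^{0}\ \cdots
\]
— more precisely, $\|\Psi^\infty\psi_s(s')\|_{L^\infty_x}\lesssim \|\sinh^{1/2}(r)\psi_s(s')\|_{L^\infty_x}\lesssim s'^{-1/4}\|\langle s'\Delta\rangle^{1/2}\langle\Omega\rangle (s')^{1/2}\psi_s(s')\|_{L^2_x}/(s')^{1/2}$ using $e^r\Psi^\infty\in L^\infty$ and Lemma~\ref{lem:radial_sobolev}. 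Multiplying by $s^{1/2}$ and integrating in $s'$ over $[s,\infty)$ against the gain $(s/s')^{1/2}$ coming from the $s^{1/2}$ versus $(s')^{1/2}$ scaling, and also inserting the weight $m(s)/m(s')$, one recognizes a Schur kernel of the type $K_\beta$ from the proof of Corollary~\ref{c:offdiag} with $\beta=0$; Schur's test in $L^\infty_{\ds}$ then yields the bound by $\|\langle s\Delta\rangle^{1/2}\langle\Omega\rangle m(s)s^{1/2}\psi_s\|_{L^\infty_{\ds}L^\infty_tL^2_x} \lesssim \|\psi_s\|_{\calS}$ (using $L^\infty_t$ in time rather than $L^p_t$, $p<\infty$, which is harmless here since everything is at the level of an $L^\infty_tL^2_x$ norm that is controlled by $\calS$ via Proposition~\ref{p:par-reg-psi_s} and the definition of $\calS_s$). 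The weighted pieces $\langle s\Delta\rangle^{1/2}\AL$, $\langle\calL_\Omega\rangle\AL$, $\langle s\Delta\rangle^{1/2}\langle\calL_\Omega\rangle\AL$ are handled identically: distribute $s^{1/2}\nabla$ and/or $\calL_\Omega$ via the Leibniz rules $\nabla_j\Im(\varphi_1\overline{\varphi_2}) = \Im(\Db_j\varphi_1\overline{\varphi_2})+\Im(\varphi_1\overline{\Db_j\varphi_2})$ and the $\calL_\Omega$-formula above, noting that $\nabla$ (or $\calL_\Omega$) hitting $\Psi^\infty$ produces $\nabla\Psi^\infty$ (resp. $i\Psi^\infty$), both still in $e^rL^\infty$, while $\nabla$ (or $\Omega$) hitting $\psi_s$ produces a factor controlled by the $\langle s'\Delta\rangle^{5/2}\langle\Omega\rangle$ norm in Proposition~\ref{p:par-reg-psi_s} (we only need up to $\langle s\Delta\rangle^{3/2}$). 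Each term reduces, after Lemma~\ref{lem:radial_sobolev}, to the same Schur-kernel estimate.

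Then \eqref{eq:bound_ringAQ_Linftyall_Str} follows by the same scheme with $\Psi^\infty$ replaced by $\ringPsi$: now $|\Im(\ringpsi_\ell\overline{\psi_s})(s')| \lesssim |\ringPsi(s')|\,|\psi_s(s')|$, and the extra factor of $\ringPsi$ is placed in $L^\infty_{t,x}$ using Lemma~\ref{l:APsi-infty} / Lemma~\ref{l:heatPsi}, contributing the gain $\sqrt{\eps_0}$ and an additional $(s')^{-\gamma}$ (with $\gamma>0$ from the $s^{1/4}$-type weights in Lemma~\ref{l:heatPsi}) that only improves the Schur kernel; the remaining $|\psi_s(s')|$ is again put through $\sinh^{1/2}r$-radial-Sobolev — but note here we do \emph{not} have an $e^r$ weight from $\Psi^\infty$, so instead we use an $e^{-cr}$ factor from $\ringPsi$ if needed, or more simply bound $\|\ringPsi\,\psi_s\|_{L^\infty_x} \le \|\ringPsi\|_{L^\infty_x}\|\psi_s\|_{L^\infty_x}$ and control $\|\psi_s\|_{L^\infty_x}$ by Gagliardo–Nirenberg \eqref{eq:GN-Linfty}, i.e. by $s'^{-1/2}\|\langle s'\Delta\rangle\psi_s(s')\|_{L^2}$, which is bounded by $\|\psi_s\|_{\calS}$ via Proposition~\ref{p:par-reg-psi_s}. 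For the $\calL_\Omega$ and $s^{1/2}\nabla$ weighted versions one again distributes derivatives via the Leibniz rules and the $\calL_\Omega\AQ$ formula above, using Proposition~\ref{prop:forward-caloric-psi} to control $\nabla\ringPsi$ and $\calL_\Omega\ringPsi$ in $L^\infty_x$. The main obstacle I anticipate is purely bookkeeping: keeping the powers of $s$ and the weights $m(s)$ aligned so that after distributing all of $\langle s\Delta\rangle^{1/2}$ and $\langle\calL_\Omega\rangle$ the $s'$-integral still converges and assembles into a genuine Schur kernel $K_\beta$ with $\beta<1/2$ (exactly as verified at the end of the proof of Corollary~\ref{c:offdiag}); the analytic content is entirely contained in Lemma~\ref{lem:radial_sobolev}, Lemma~\ref{l:heatPsi}, Lemma~\ref{l:APsi-infty}, Proposition~\ref{p:par-reg-psi_s} and the exponential decay of $\Psi^\infty$, so no new estimates are required. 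Since this is essentially identical to the proof of the corresponding lemma in the wave maps paper, I would carry it out briefly and refer to \cite{LOS5} for the routine details.
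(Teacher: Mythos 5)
Your overall strategy — estimate $\AL,\AQ$ directly from their $s'$-integral representations, place one factor in $L^\infty_x$ using the structure of $\Psi^\infty$ or $\ringPsi$ and put the $\psi_s$ factor through an $L^\infty_x$ Sobolev-type bound, then integrate in $s'$ — is the right one, and it is what the paper does. But the paper's proof is considerably simpler than your route, and your route as written has concrete problems.

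The paper never uses the radial Sobolev inequality or Schur's test here. It simply bounds $\|\Psi^\infty\|_{L^\infty_x}\lesssim 1$, applies the Gagliardo--Nirenberg bound \eqref{eq:GN-Linfty}, $\|\psi_s(s')\|_{L^\infty_x}\lesssim (s')^{-1/2}\|\jap{s'\Delta}\psi_s(s')\|_{L^2_x}$, and then applies Poincar\'e once more to trade an extra half power of $\jap{s'\Delta}$ for a factor $(s')^{-1/2}$, so that the integrand is $(s')^{-3/2}\|\jap{s'\Delta}^{3/2}(s')^{1/2}\psi_s(s')\|_{L^\infty_tL^2_x}$ and $\int_s^\infty (s')^{-3/2}\,\ud s'\lesssim s^{-1/2}$ closes the estimate pointwise in $s$ (no Schur's test is needed for an $L^\infty_{\ds}$ bound). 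For $\AQ$ the same argument with $\|\ringPsi\|_{L^\infty_x}\lesssim (s')^{-1/2}\|\jap{s'\Delta}\ringPsi\|_{L^2_x}\lesssim (s')^{-1/2}\sqrt{\eps_0}$ (Lemma~\ref{l:ringPsi-L2}) produces the $\sqrt{\eps_0}$ gain.

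Three specific issues with your version. First, an integrability gap: radial Sobolev alone gives $\|\sinh^{1/2}(r)\psi_s(s')\|_{L^\infty_x}\lesssim (s')^{-3/4}m(s')^{-1}\|\psi_s\|_{\calS}$, and $(s')^{-3/4}$ is \emph{not} integrable on $[s,\infty)$; the $(s/s')^{1/2}$ Schur kernel you claim does not materialize from radial Sobolev alone. You must still invoke Poincar\'e (available, since Proposition~\ref{p:par-reg-psi_s} controls up to $\jap{s\Delta}^{5/2}$) to upgrade the decay to $(s')^{-3/2}$ — at which point radial Sobolev has bought you nothing over plain Gagliardo--Nirenberg. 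Second, $\ringPsi$ has no established pointwise $e^{-cr}$ decay, so that branch of your $\AQ$ argument is unavailable; your fallback ($L^\infty\times L^\infty$ with \eqref{eq:GN-Linfty}) is the correct one and is the paper's proof. Third, for the $\calL_\Omega$-weighted terms, applying radial Sobolev to the factor $\Omega\psi_s$ would require $\jap{\Omega}\Omega\psi_s$, i.e.\ $\Omega^2\psi_s$ in $L^2_x$, which is not controlled by $\calS$; there too you must use the plain Gagliardo--Nirenberg bound on $\Omega\psi_s$ instead.
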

\begin{proof}
 We begin with the proof of~\eqref{eq:bound_ringAL_Linftyall_Str}, where we first consider the base case without a covariant derivative or a Lie derivative. Using the Gagliardo-Nirenberg inequality, Poincar\'e's inequality, and~\eqref{equ:bound_psis_Linftys_Str}, we find for any $s > 0$ that
 \begin{align*}
  \| \ringA_L(s) \|_{L^\infty_t L^\infty_x} &\lesssim \int_s^\infty \| \Psi^\infty \|_{L^\infty_x} \| \psi_s \|_{L^\infty_t L^\infty_x} \, \ud s' \\
  &\lesssim \int_s^\infty \| \jap{s' \Delta} (s')^{\frac{1}{2}} \psi_s \|_{L^\infty_t L^2_x} \, \frac{\ud s'}{s'} \\
  &\lesssim \| \jap{s\Delta}^{\frac{3}{2}} s^{\frac{1}{2}} \psi_s \|_{L^\infty_{\ds} L^\infty_t L^2_x} \int_s^\infty (s')^{-\frac{3}{2}} \, \ud s' \\
  &\lesssim s^{-\frac{1}{2}} \|\psi_s\|_{\calS},
 \end{align*}
 which yields the bound
 \[
  \| s^{\frac{1}{2}} \ringA_L \|_{L^\infty_{\ds} L^\infty_t L^\infty_x} \lesssim \|\psi_s\|_{\calS}.
 \]
 The more general estimate~\eqref{eq:bound_ringAL_Linftyall_Str} follows in a similar manner. Next, we turn to the proof of~\eqref{eq:bound_ringAQ_Linftyall_Str}. Applying the Gagliardo-Nirenberg inequality and invoking the bounds~\eqref{equ:bound_ringPsi_Linftys_Str} as well as~\eqref{equ:bound_psis_Linftys_Str}, we obtain that
 \begin{align*}
  \| \ringA_Q(s) \|_{L^\infty_t L^\infty_x} &\lesssim \int_s^\infty \|\ringPsi\|_{L^\infty_t L^\infty_x} \|\psi_s\|_{L^\infty_t L^\infty_x} \, \ud s' \\
  &\lesssim \|\jap{s\Delta} \ringPsi \|_{L^\infty_{\ds} L^\infty_t L^2_x} \|\jap{s\Delta} s^{\frac{1}{2}} \psi_s \|_{L^\infty_{\ds} L^\infty_t L^2_x} \int_s^\infty (s')^{-\frac{3}{2}} \, \ud s' \\
  &\lesssim s^{-\frac{1}{2}} \sqrt{\epsilon_0} \|\psi_s\|_{\calS},
 \end{align*}
 which implies the estimate
 \[
  \| s^{\frac{1}{2}} \ringA_Q \|_{L^\infty_{\ds} L^\infty_t L^\infty_x} \lesssim \sqrt{\epsilon_0} \|\psi_s\|_{\calS}.
 \]
 Analogously, one derives the more general bound~\eqref{eq:bound_ringAQ_Linftyall_Str}.
\end{proof}

The next lemma concerns $\ringPsi$.

\begin{lemma} \label{lem:bounds_ringPsi}
 Assuming \eqref{eq:overall-bootstrap} and \eqref{eq:core-bootstrap}, there holds
 \begin{align}
  \| \langle \calL_\Omega \rangle s^{\frac{1}{2}} \ringPsi \|_{L^\infty_{\ds} L^\infty_t L^\infty_x} &\lesssim \|\psi_s\|_{\calS} ,  \label{eq:bound_ringPsi_Linftyall_Str2} \\
  \| \langle \calL_\Omega \rangle s \ringPsi \|_{L^\infty_{\ds} L^\infty_t L^\infty_x} &\lesssim \|\psi_s\|_{\calS}  , \label{eq:bound_ringPsi_Linftyall_Str1} \\
  \| \langle s \Delta \rangle^2 \langle \calL_\Omega \rangle \ringPsi \|_{L^\infty_{\ds} (L^4_t L^4_x \cap L^{\frac{8}{3}}_t L^{\frac{8}{3}}_x) } &\lesssim \|\psi_s\|_{\calS} .\label{equ:bound_ringPsi_Linftys_Str_2} 
 \end{align}
\end{lemma}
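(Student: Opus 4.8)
\textbf{Proof strategy for Lemma~\ref{lem:bounds_ringPsi}.} The plan is to exploit the reconstruction formula $\ringpsi_\ell(s_0) = - \int_{s_0}^\infty \bfD_\ell \psi_s(s') \, \ud s'$ together with the pointwise bounds on $\psi_s$ already established in Proposition~\ref{p:par-reg-psi_s} and the connection-form bounds in Proposition~\ref{prop:forward-caloric-A} / Lemma~\ref{lem:bounds_ringA_Linfty}. The basic mechanism is identical to that used for $\ringA_L$ in Lemma~\ref{lem:bounds_ringA_Linfty}: write $\bfD_\ell \psi_s = \nabla_\ell \psi_s + i A_\ell \psi_s$, apply the Gagliardo--Nirenberg inequality \eqref{eq:GN-Linfty} to convert $L^\infty_x$ into $\|\jap{s'\Delta}^{1/2} \cdot \|_{L^2_x}$-type quantities (together with a further derivative for the gradient term), estimate the resulting $s'$-integrand by $(s')^{-N}$ for a suitable exponent $N$ coming from the extra powers of $s$ in the $\calS$-norm, and integrate from $s_0$ to $\infty$. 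The price of the extra derivative on $\nabla_\ell \psi_s$ and of the factor $A_\ell$ (which carries an $s^{-1/2}$ decay by Lemma~\ref{l:APsi-infty} and Lemma~\ref{lem:bounds_ringA_Linfty}) is absorbed by the higher-order smoothing norms $\|\jap{s\Delta}^{5/2} \jap{\Omega} m(s)s^{1/2}\psi_s\|$ which appear on the right side of \eqref{equ:bound_psis_Linftys_Str}.

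\textbf{Step 1: the $L^\infty_t L^\infty_x$ bounds \eqref{eq:bound_ringPsi_Linftyall_Str2} and \eqref{eq:bound_ringPsi_Linftyall_Str1}.} I would first treat the base case with no $\calL_\Omega$. Using $\ringpsi_\ell(s) = -\int_s^\infty (\nabla_\ell \psi_s + i A_\ell \psi_s)(s') \, \ud s'$, bound
\[
\|\ringPsi(s)\|_{L^\infty_t L^\infty_x} \lesssim \int_s^\infty \bigl( \|\nabla \psi_s(s')\|_{L^\infty_t L^\infty_x} + \|A(s')\|_{L^\infty_t L^\infty_x}\|\psi_s(s')\|_{L^\infty_t L^\infty_x} \bigr) \, \ud s'.
\]
By \eqref{eq:GN-Linfty}, $\|\nabla\psi_s(s')\|_{L^\infty_x} \lesssim (s')^{-1/2}\|\jap{s'\Delta}^{1/2}\nabla\psi_s\|_{L^2_x} \lesssim (s')^{-3/2}\|\jap{s'\Delta}^{3/2}(s')^{1/2}\psi_s\|_{L^2_x}$, and by Proposition~\ref{p:par-reg-psi_s} this is $\lesssim (s')^{-2}m(s')^{-1}\|\psi_s\|_{\calS}$ (the extra $(s')^{-1/2}$ comes from the gradient, and $m(s') \geq 1$ only helps); similarly $\|A(s')\|_{L^\infty_x} \lesssim (s')^{-1/2}\sqrt{\eps_0}$ and $\|\psi_s(s')\|_{L^\infty_x} \lesssim (s')^{-1}m(s')^{-1}\|\psi_s\|_{\calS}$ by \eqref{eq:GN-Linfty} and \eqref{equ:bound_psis_Linftys_Str}, so that term is even more integrable. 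Integrating $\int_s^\infty (s')^{-2}\, \ud s' \lesssim s^{-1}$ gives $\|s\,\ringPsi\|_{L^\infty_{\ds}L^\infty_tL^\infty_x}\lesssim \|\psi_s\|_{\calS}$, which is \eqref{eq:bound_ringPsi_Linftyall_Str1}; keeping only $\int_s^\infty (s')^{-3/2}$ where needed gives \eqref{eq:bound_ringPsi_Linftyall_Str2}. The $\calL_\Omega$-versions follow by commuting: $\calL_\Omega \ringpsi_\ell(s) = -\int_s^\infty (\nabla_\ell \Omega\psi_s + i A_\ell \Omega\psi_s + i(\calL_\Omega A)_\ell \psi_s)(s')\,\ud s'$ (cf. the formula in Subsection~\ref{subsec:forward-caloric}), and each new factor is controlled by the $\jap{\Omega}$ and $\jap{\calL_\Omega}$ components of Proposition~\ref{p:par-reg-psi_s} and Lemma~\ref{lem:bounds_ringA_Linfty}.

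\textbf{Step 2: the Strichartz bounds \eqref{equ:bound_ringPsi_Linftys_Str_2}.} For the $L^4_tL^4_x$ and $L^{8/3}_tL^{8/3}_x$ estimates I would again use $\ringpsi_\ell(s) = -\int_s^\infty \bfD_\ell\psi_s(s')\,\ud s'$ and Minkowski's inequality to put the space-time norm inside the $s'$-integral:
\[
\|\jap{s\Delta}^2 \jap{\calL_\Omega}\ringPsi(s)\|_{L^4_{t,x}} \lesssim \int_s^\infty \|\jap{s\Delta}^2 \jap{\calL_\Omega}(\nabla\psi_s + iA\psi_s)(s')\|_{L^4_{t,x}}\,\ud s'.
\]
Since $s \le s'$ one has $\jap{s\Delta}^2 \lesssim \jap{s'\Delta}^2$ (in the sense of the comparability of these Littlewood-Paley-type weights on the relevant frequency scales, using Corollary~\ref{c:frac-heat-Lp}-type bounds), so each integrand is controlled by $\|\jap{s'\Delta}^{5/2}\jap{\Omega}(s')^{1/2}\psi_s\|$-type quantities times the appropriate $(s')$-weight, with the $A\psi_s$ term picking up an extra $\sqrt{\eps_0}(s')^{-1/2}$ from Lemma~\ref{lem:bounds_ringA_Linfty}; Proposition~\ref{p:par-reg-psi_s} bounds these in $L^\infty_{\ds}$ by $\|\psi_s\|_{\calS}$, and an $s'$-integral of a negative power of $s'$ (arising from the surplus $s$-weights in the $\calS$-norm vs. those needed) converges and produces the required $L^\infty_{\ds}$ bound. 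The only point requiring care is the frequency-comparability step $\jap{s\Delta}^2 \lesssim \jap{s'\Delta}^2$ for $s \le s'$ applied to $\bfD_\ell\psi_s(s')$: this is where the heat-flow regularization of $\psi_s(s')$ at scale $s' \geq s$ dominates the coarser localization at scale $s$, and it is handled exactly as the analogous manipulations in \cite[\S5]{LOS5}. I do not expect any genuinely new obstacle here; the main point is simply bookkeeping of $s$-weights and invoking the already-proven Proposition~\ref{p:par-reg-psi_s} and Lemma~\ref{lem:bounds_ringA_Linfty}, and I would remark that the argument is \emph{mutatis mutandis} the same as \cite[Proof of Lemma~5.2]{LOS5} once those inputs are in place.
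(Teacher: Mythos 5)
Your proof follows essentially the same route as the paper's: the reconstruction formula $\ringpsi_\ell(s)=-\int_s^\infty \Db_\ell\psi_s\,\ud s'$, Minkowski's inequality, Gagliardo--Nirenberg and Poincar\'e to convert to the $\jap{s'\Delta}^{k}$-weighted $L^2$ norms of Proposition~\ref{p:par-reg-psi_s}, the $\ringA$ bounds of Lemmas~\ref{lem:bounds_ringA_Linfty} and~\ref{l:APsi-infty}, and convergent $s'$-integrals (with the $m(s')$ weight handling $s'\leq 1$ and the passage of $\jap{s\Delta}^2$ inside the integral being immediate from the definition of $\jap{s\Delta}^{k/2}$ as a sum of $s^{1/2}\nabla$-weighted derivatives, monotone in $s$). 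The one slip is the claim $\|A(s')\|_{L^\infty_x}\lesssim (s')^{-1/2}\sqrt{\eps_0}$, which fails for the $s$-independent piece $A^\infty$; as in the paper you should split $A=A^\infty+\ringA$ and compensate for the bounded but non-decaying $A^\infty$ by an extra application of Poincar\'e's inequality to $\psi_s(s')$ (a device you already invoke elsewhere), after which everything goes through.
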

\begin{proof}
 First, we recall that
 \[
  \ringPsi(s) = - \int_s^\infty \bfD \psi_s \, \ud s' = - \int_s^\infty ( \nabla \psi_s + i \bfA^\infty \psi_s + i \ringA \psi_s ) \, \ud s'. 
 \]
 Using the Gagliardo-Nirenberg inequality, Poincar\'e's inequality, as well as the bounds~\eqref{equ:bound_psis_Linftys_Str} and \eqref{eq:bound_ringAL_Linftyall_Str}--\eqref{eq:bound_ringAQ_Linftyall_Str}, we conclude that
 \begin{align*}
  \|\ringPsi(s)\|_{L^\infty_t L^\infty_x} &\lesssim \int_s^\infty \| (s')^{\frac{1}{2}} \nabla (s')^{\frac{1}{2}} \psi_s \|_{L^\infty_t L^\infty_x} \frac{\ud s'}{s'} + \| \bfA^\infty \|_{L^\infty_x} \int_s^\infty (s')^{-\frac{1}{2}} \| (s')^{\frac{1}{2}} \psi_s \|_{L^\infty_t L^\infty_x} \, \ud s'  \\
  &\quad \quad \quad + \| s^{\frac{1}{2}} \ringA \|_{L^\infty_{\ds} L^\infty_t L^\infty_x} \int_s^\infty \| (s')^{\frac{1}{2}} \psi_s \|_{L^\infty_t L^\infty_x} \frac{\ud s'}{s'} \\
  &\lesssim \Bigl( (1 + \|\bfA^\infty\|_{L^\infty_x}) \| \jap{s\Delta}^{\frac{3}{2}} s^{\frac{1}{2}} \psi_s \|_{L^\infty_{\ds} L^\infty_t L^2_x} + \| s^{\frac{1}{2}} \ringA \|_{L^\infty_{\ds} L^\infty_t L^\infty_x} \| \jap{s\Delta} s^{\frac{1}{2}} \psi_s \|_{L^\infty_t L^2_x} \Bigr) \int_s^\infty (s')^{-\frac{3}{2}} \, \ud s' \\
  &\lesssim s^{-\frac{1}{2}} \bigl( (1 + \|\bfA^\infty\|_{L^\infty_x}) \|\psi_s\|_{\calS} + \|\psi_s\|_{\calS}^2 \bigr).
 \end{align*}
 Proceeding analogously to estimate $\|\calL_\Omega \ringPsi\|_{L^\infty_t L^\infty_x}$, we obtain the bound~\eqref{eq:bound_ringPsi_Linftyall_Str2}. The second bound~\eqref{eq:bound_ringPsi_Linftyall_Str1} follows in a similar manner upon applying Poincar\'e's inequality once more in order to gain an additional factor $(s')^{-\frac{1}{2}}$ of decay.

 Finally, we turn to the proof of~\eqref{equ:bound_ringPsi_Linftys_Str_2}. We only consider the base case without covariant derivatives or a Lie derivative, since the other cases can be treated similarly. Then we have 
 \begin{align*}
  \| \ringPsi \|_{L^\infty_{\ds} L^4_t L^4_x \cap L^{\frac{8}{3}}_t L^{\frac{8}{3}}_x} &\lesssim \int_0^\infty \| (s')^{\frac{1}{2}} \nabla (s')^{\frac{1}{2}} \psi_s \|_{L^4_t L^4_x \cap L^{\frac{8}{3}}_t L^{\frac{8}{3}}_x} \frac{\ud s'}{s'} \\
  &\quad \quad + \|\bfA^\infty\|_{L^\infty_x} \int_0^\infty \| (s')^{\frac{1}{2}} \psi_s \|_{L^4_t L^4_x \cap L^{\frac{8}{3}}_t L^{\frac{8}{3}}_x} (s')^{-\frac{1}{2}} \, \ud s' \\
  &\quad \quad + \int_0^\infty \| (s')^{\frac{1}{2}} \ringA\|_{L^\infty_t L^\infty_x} \| (s')^{\frac{1}{2}} \psi_s\|_{L^4_t L^4_x \cap L^{\frac{8}{3}}_t L^{\frac{8}{3}}_x} \, \frac{\ud s'}{s'}. 
 \end{align*}
 In order to estimate the first term on the right-hand side, we use the extra regularity\footnote{By ``extra regularity," we are referring to the hypothesis that the data $u_0$ is close to $Q$ in $H^{1+ 2\de}$ rather than just $H^1$, which is the energy topology. At a technical level this manifests in our inclusion of the $s$-weight $m(s)$ in the norm $\calS$, which here allows us to easily integrate in the small $s$ regime. In the sequel we will often loosely use the phrase ``extra regularity" as justification for this same type of argument.} and Poincar\'e's inequality together with the bound~\eqref{equ:bound_psis_Linftys_Str} to find that
 \begin{align*}
  \int_0^\infty \| (s')^{\frac{1}{2}} \nabla (s')^{\frac{1}{2}} \psi_s \|_{L^4_t L^4_x \cap L^{\frac{8}{3}}_t L^{\frac{8}{3}}_x} \frac{\ud s'}{s'} &\lesssim \| \jap{s\Delta}^{\frac{1}{2}} m(s) s^{\frac{1}{2}} \psi_s \|_{L^\infty_{\ds} L^4_t L^4_x \cap L^{\frac{8}{3}}_t L^{\frac{8}{3}}_x} \int_0^1 (s')^{-1+\delta} \, \ud s' \\
  &\quad + \| \jap{s\Delta} s^{\frac{1}{2}} \psi_s \|_{L^\infty_{\ds} L^4_t L^4_x \cap L^{\frac{8}{3}}_t L^{\frac{8}{3}}_x} \int_1^\infty (s')^{-\frac{3}{2}} \, \ud s' \\
  &\lesssim \|\psi_s\|_{\calS}.
 \end{align*}
 The second-term on the right-hand side can be handled analogously by applying Poincar\'e's inequality twice to get enough decay in $s'$ for heat times $s' \geq 1$. For the third term on the right-hand side we can proceed in the same manner as for the first term, only that we additionally invoke the bounds~\eqref{eq:bound_ringAL_Linftyall_Str}--\eqref{eq:bound_ringAQ_Linftyall_Str}.
\end{proof}

We collect a few more estimates on $\ringA$ in the following lemma.

\begin{lemma} \label{lem:bounds_ringA_Str}
 Assuming \eqref{eq:overall-bootstrap} and \eqref{eq:core-bootstrap}, we have that
 \begin{align}
  \| \langle \calL_\Omega \rangle \ringA_L \|_{L^\infty_{\ds} L^4_t L^4_x \cap L^{\frac{8}{3}}_t L^{\frac{8}{3}}_x} &\lesssim \|\psi_s\|_{\calS(I)} ,  \label{eq:bound_ringAL_Linftys_Str} \\
  \| \nabla \langle \calL_\Omega \rangle \ringA_L \|_{L^\infty_{\ds} L^4_t L^4_x \cap L^{\frac{8}{3}}_t L^{\frac{8}{3}}_x} &\lesssim \|\psi_s\|_{\calS(I)} ,  \label{eq:bound_nabla_ringAL_Linftys_Str} \\
  \| \langle \calL_\Omega \rangle \ringA_Q \|_{L^\infty_{\ds} L^4_t L^4_x \cap L^{\frac{8}{3}}_t L^{\frac{8}{3}}_x} &\lesssim \|\psi_s\|_{\calS(I)}^2 ,   \label{eq:bound_ringAQ_Linftys_Str} \\
  \| \nabla \langle \calL_\Omega \rangle \ringA_Q \|_{L^\infty_{\ds} L^2_t L^2_x} &\lesssim \|\psi_s\|_{\calS(I)}^2 .   \label{eq:bound_nabla_ringAQ_Linftys_L2tx} 
 \end{align}
\end{lemma}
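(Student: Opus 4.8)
\textbf{Proof plan for Lemma~\ref{lem:bounds_ringA_Str}.} The starting point is the decomposition \eqref{eq:ALAQ-def} of $\ringA_\ell$ into the linear and quadratic pieces $\ringA_{L,\ell}$ and $\ringA_{Q,\ell}$, together with the explicit reconstruction formulas recorded after that equation for $\calL_\Omega \ringA_L$ and $\calL_\Omega \ringA_Q$. For the first two bounds \eqref{eq:bound_ringAL_Linftys_Str}--\eqref{eq:bound_nabla_ringAL_Linftys_Str}, I would write $\ringA_{L,\ell}(s) = -\int_s^\infty \Im(\psi_\ell^\infty \overline{\psi_s})(s') \, \ud s'$ (and for the $\calL_\Omega$ version, use the formula $\calL_\Omega \ringA_L^k(s) = -\int_s^\infty \Im(\psi^{\infty,k}\overline{\Omega\psi_s}) \, \ud s' - \int_s^\infty \Im(i\psi^{\infty,k}\overline{\psi_s}) \, \ud s'$), move the $L^4_tL^4_x$ or $L^{8/3}_tL^{8/3}_x$ norm inside the $s'$-integral, and put $\psi^\infty$ (which is smooth and exponentially decaying, in particular in $L^\infty_x$) in $L^\infty$. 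The remaining factor is $\|\langle\Omega\rangle\psi_s(s')\|_{L^p_tL^p_x}$ for $p\in\{4,8/3\}$; using Proposition~\ref{p:par-reg-psi_s} (specifically \eqref{equ:bound_psis_Linftys_Str}, which controls $\|\langle s\Delta\rangle^{5/2}\langle\Omega\rangle m(s)s^{1/2}\psi_s\|_{L^\infty_{\ds}(L^4_tL^4_x\cap L^{8/3}_tL^{8/3}_x)}$) this is bounded by $\lesssim (s')^{-1/2}m(s')^{-1}\|\psi_s\|_{\calS}$, and the $s'$-integral $\int_s^\infty (s')^{-1/2}m(s')^{-1}\,\ud s'$ is controlled uniformly in $s\in L^\infty_{\ds}$ exactly as in the proofs of Lemmas~\ref{lem:bounds_ringA_Linfty} and \ref{lem:bounds_ringPsi}, using the extra $m(s')$-weight to handle the small-$s'$ regime and Poincar\'e to manufacture decay at large $s'$ (one may need a derivative of $\psi^\infty$ or an extra $(-\Delta)^{1/2}$ absorbed via the $\langle s\Delta\rangle$-powers for \eqref{eq:bound_nabla_ringAL_Linftys_Str}, where $\nabla$ lands on $\psi^\infty$ or on $\psi_s$ via $\nabla\Im(\psi^\infty\overline{\psi_s}) = \Im(\nabla\psi^\infty\,\overline{\psi_s}) + \Im(\psi^\infty\overline{\nabla\psi_s})$, each term again placed in $L^\infty\cdot L^p$).

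For the quadratic bounds \eqref{eq:bound_ringAQ_Linftys_Str}--\eqref{eq:bound_nabla_ringAQ_Linftys_L2tx}, I would start from $\ringA_{Q,\ell}(s) = -\int_s^\infty \Im(\ringpsi_\ell\overline{\psi_s})(s')\,\ud s'$ and its $\calL_\Omega$ analogue $\calL_\Omega\ringA_Q^k(s) = -\int_s^\infty \Im(\calL_\Omega\ringpsi^k\overline{\psi_s})\,\ud s' - \int_s^\infty \Im(\ringpsi^k\overline{\Omega\psi_s})\,\ud s'$. For \eqref{eq:bound_ringAQ_Linftys_Str} I would place one factor $\langle\calL_\Omega\rangle\ringPsi$ in $L^\infty_tL^\infty_x$ using \eqref{eq:bound_ringPsi_Linftyall_Str2} (which gives $\|s^{1/2}\langle\calL_\Omega\rangle\ringPsi\|_{L^\infty_{\ds}L^\infty_tL^\infty_x}\lesssim\|\psi_s\|_{\calS}$) and the other factor $\langle\Omega\rangle\psi_s$ in $L^p_tL^p_x$ via \eqref{equ:bound_psis_Linftys_Str}, obtaining an integrand $\lesssim (s')^{-1/2}\cdot (s')^{-1/2}m(s')^{-1}\|\psi_s\|_{\calS}^2 = (s')^{-1}m(s')^{-1}\|\psi_s\|_{\calS}^2$, and the $s'$-integral is again finite uniformly in $s$ thanks to the $m$-weight near $s'=0$ and Poincar\'e (applied to gain the needed extra decay) near infinity. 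For \eqref{eq:bound_nabla_ringAQ_Linftys_L2tx}, I would distribute $\nabla$ via the Leibniz rule $\nabla\Im(\ringpsi^k\overline{\psi_s}) = \Im(\nabla\ringpsi^k\,\overline{\psi_s}) + \Im(\ringpsi^k\overline{\nabla\psi_s})$ (one should use covariant derivatives $\bfD$ and convert to $\nabla$ with Lemma~\ref{lem:d-switch}; since $A$ is uniformly bounded this is harmless), then pair one low-regularity factor in $L^\infty_tL^4_x$ (or $L^\infty$) with the derivative factor in $L^2_tL^4_x$, using the $LE\cap\Str$-type control built into $\calS$ and the $L^\infty$-type estimates of Lemmas~\ref{l:APsi-infty}, \ref{l:heatPsi}, \ref{lem:bounds_ringPsi}; H\"older in $x$ ($\tfrac12 = \tfrac14+\tfrac14$) closes the $L^2_tL^2_x$ norm, and Schur's test in $(s,s')$ as in Corollary~\ref{c:offdiag} handles the $s'$-integral.

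The routine part is the bookkeeping of which Strichartz/parabolic-regularity norm to cite and how many Poincar\'e applications are needed in each regime; none of this is conceptually new — it mirrors the proofs of Lemmas~\ref{lem:bounds_ringA_Linfty}, \ref{lem:bounds_ringPsi} above and the analogous statements in \cite{LOS5}. The one place demanding a little care, and the step I expect to be the main obstacle, is \eqref{eq:bound_nabla_ringAQ_Linftys_L2tx}: because the target is the scale-critical space $L^2_tL^2_x$ (rather than $L^4$- or $L^\infty$-type spaces where one has room to spare), one cannot afford to lose any powers of $s'$ and must track the $m(s')$-weights and the off-diagonal $\langle s'\Delta\rangle$-gains precisely, placing the derivative on whichever of $\ringpsi$ or $\psi_s$ keeps the $s'$-integrand Schur-integrable; in particular the factor carrying $\nabla$ must be estimated in an $LE$- or Strichartz-compatible norm from $\calS$ rather than a crude $L^\infty_tL^2_x$ bound. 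Once the pairing is chosen correctly, Schur's test with the kernel $K_\beta(s,s')$ from the proof of Corollary~\ref{c:offdiag} finishes the estimate.
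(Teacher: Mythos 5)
Your proposal is correct and follows essentially the same route as the paper: the same reconstruction formulas for $\ringA_L$, $\ringA_Q$ and their $\calL_\Omega$ versions, placing $\Psi^\infty$ (resp.\ $s^{1/2}\ringPsi$) in $L^\infty$ and $\langle\Omega\rangle\psi_s$ in Strichartz norms, the Leibniz rule for the $\nabla$ bounds, and Poincar\'e plus the $m(s)$ extra-regularity weight to make the $s'$-integral converge in both regimes. The only cosmetic differences are that the paper closes \eqref{eq:bound_nabla_ringAQ_Linftys_L2tx} by the symmetric H\"older split $L^4_tL^4_x\times L^4_tL^4_x$ using \eqref{equ:bound_ringPsi_Linftys_Str_2} and a direct integration in $s'$ (no Schur kernel or covariant-derivative conversion is needed), rather than your $L^\infty_t L^4_x\times L^2_tL^4_x$ pairing.
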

\begin{proof}
 In what follows we only prove the asserted estimates \eqref{eq:bound_ringAL_Linftys_Str}--\eqref{eq:bound_nabla_ringAQ_Linftys_L2tx} without a Lie derivative. The more general versions follow in a similar manner. We begin with~\eqref{eq:bound_ringAL_Linftys_Str}. Using Poincar\'e's inequality and~\eqref{equ:bound_psis_Linftys_Str}, we obtain from the definition of $\ringA_L$ that
 \begin{align*}
  \| \ringA_L \|_{L^\infty_{\ds} L^4_t L^4_x \cap L^{\frac{8}{3}}_t L^{\frac{8}{3}}_x} &\lesssim \int_0^\infty \|\Psi^\infty\|_{L^\infty_x} \| \psi_s \|_{L^4_t L^4_x \cap L^{\frac{8}{3}}_t L^{\frac{8}{3}}_x} \, \ud s' \\
  &\lesssim \| s^{\frac{1}{2}} \psi_s \|_{L^\infty_{\ds} L^4_t L^4_x \cap L^{\frac{8}{3}}_t L^{\frac{8}{3}}_x} \int_0^1 (s')^{-\frac{1}{2}} \, \ud s' + \| \jap{s\Delta} s^{\frac{1}{2}} \psi_s \|_{L^\infty_{\ds} L^4_t L^4_x \cap L^{\frac{8}{3}}_t L^{\frac{8}{3}}_x} \int_1^\infty (s')^{-\frac{3}{2}} \, \ud s' \\
  &\lesssim \|\psi_s\|_{\calS}.
 \end{align*}
 For the proof of~\eqref{eq:bound_nabla_ringAL_Linftys_Str} we note that 
 \[
  \nabla \ringA_L(s) = - \int_s^\infty \Im \, ( \nabla \Psi^\infty \overline{\psi_s} ) \, \ud s' - \int_s^\infty \Im \, ( \Psi^\infty \overline{\nabla \psi_s}) \, \ud s'. 
 \]
 Hence, using Poincar\'e's inequality, extra regularity and the bound~\eqref{equ:bound_psis_Linftys_Str}, we find that 
 \begin{align*}
  \| \nabla \ringA_L \|_{L^\infty_{\ds} L^4_t L^4_x \cap L^{\frac{8}{3}}_t L^{\frac{8}{3}}_x} &\lesssim \int_0^\infty \| \nabla \Psi^\infty \|_{L^\infty_x} \| (s')^{\frac{1}{2}} \psi_s \|_{L^4_t L^4_x \cap L^{\frac{8}{3}}_t L^{\frac{8}{3}}_x} (s')^{-\frac{1}{2}} \, \ud s' \\
  &\quad \quad + \int_0^\infty \| \Psi^\infty \|_{L^\infty_x} \| (s')^{\frac{1}{2}} \nabla (s')^{\frac{1}{2}} \psi_s \|_{L^4_t L^4_x \cap L^{\frac{8}{3}}_t L^{\frac{8}{3}}_x} \, \frac{\ud s'}{s'} \\
  &\lesssim \| \nabla \Psi^\infty \|_{L^\infty_x} \| s^{\frac{1}{2}} \psi_s \|_{L^\infty_{\ds} L^4_t L^4_x \cap L^{\frac{8}{3}}_t L^{\frac{8}{3}}_x} \int_0^1 (s')^{-\frac{1}{2}} \, \ud s' \\
  &\quad \quad + \| \nabla \Psi^\infty \|_{L^\infty_x} \| \jap{s\Delta} s^{\frac{1}{2}} \psi_s \|_{L^\infty_{\ds} L^4_t L^4_x \cap L^{\frac{8}{3}}_t L^{\frac{8}{3}}_x} \int_1^\infty (s')^{-\frac{3}{2}} \, \ud s' \\
  &\quad \quad + \| \Psi^\infty \|_{L^\infty_x} \| \jap{s\Delta}^{\frac{1}{2}} m(s) s^{\frac{1}{2}} \psi_s \|_{L^\infty_{\ds} L^4_t L^4_x \cap L^{\frac{8}{3}}_t L^{\frac{8}{3}}_x} \int_0^1 (s')^{-1+\delta} \, \ud s' \\
  &\quad \quad + \| \Psi^\infty \|_{L^\infty_x} \| \jap{s\Delta} s^{\frac{1}{2}} \psi_s \|_{L^\infty_{\ds} L^4_t L^4_x \cap L^{\frac{8}{3}}_t L^{\frac{8}{3}}_x} \int_1^\infty (s')^{-\frac{3}{2}} \, \ud s' \\
  &\lesssim \|\psi_s\|_{\calS},
 \end{align*}
 which furnishes the base case for~\eqref{eq:bound_nabla_ringAL_Linftys_Str}. It remains to prove the estimates~\eqref{eq:bound_ringAQ_Linftys_Str}--\eqref{eq:bound_nabla_ringAQ_Linftys_L2tx}. Here we recall that
 \[
  \ringA_Q(s) = - \int_s^\infty \Im \, ( \ringPsi \overline{\psi_s} ) \, \ud s'.
 \]
 From~\eqref{equ:bound_psis_Linftys_Str}, \eqref{eq:bound_ringPsi_Linftyall_Str2}, and an application of Poincar\'e's inequality combined with extra regularity, we infer that
 \begin{align*}
  \| \ringA_Q \|_{L^\infty_{\ds} L^4_t L^4_x \cap L^{\frac{8}{3}}_t L^{\frac{8}{3}}_x} &\lesssim \int_0^\infty \| \ringPsi \|_{L^\infty_t L^\infty_x} \| \psi_s \|_{L^4_t L^4_x \cap L^{\frac{8}{3}}_t L^{\frac{8}{3}}_x} \, \ud s' \\
  &\lesssim \| s^{\frac{1}{2}} \ringPsi \|_{L^\infty_{\ds} L^\infty_t L^\infty_x} \int_0^\infty \| (s')^{\frac{1}{2}} \psi_s \|_{L^4_t L^4_x \cap L^{\frac{8}{3}}_t L^{\frac{8}{3}}_x} \, \frac{\ud s'}{s'} \\ 
  &\lesssim \| s^{\frac{1}{2}} \ringPsi \|_{L^\infty_{\ds} L^\infty_t L^\infty_x} \| m(s) s^{\frac{1}{2}} \psi_s \|_{L^\infty_{\ds} L^4_t L^4_x \cap L^{\frac{8}{3}}_t L^{\frac{8}{3}}_x} \int_0^1 (s')^{-1+\delta} \, \ud s' \\ 
  &\quad \quad + \| s^{\frac{1}{2}} \ringPsi \|_{L^\infty_{\ds} L^\infty_t L^\infty_x} \| \jap{s\Delta}^{\frac{1}{2}} s^{\frac{1}{2}} \psi_s \|_{L^\infty_{\ds} L^4_t L^4_x \cap L^{\frac{8}{3}}_t L^{\frac{8}{3}}_x} \int_0^1 (s')^{-\frac{3}{2}} \, \ud s' \\ 
  &\lesssim \|\psi_s\|_{\calS}^2.
 \end{align*}
 Finally, for the proof of~\eqref{eq:bound_nabla_ringAQ_Linftys_L2tx} we invoke the estimates~\eqref{equ:bound_psis_Linftys_Str}, \eqref{equ:bound_ringPsi_Linftys_Str_2} and argue as usual via Poincar\'e's inequality together with extra regularity to conclude that
 \begin{align*}
  \| \nabla \ringA_Q \|_{L^\infty_{\ds} L^2_t L^2_x} &\lesssim \int_0^\infty \bigl( \| \nabla \ringPsi \|_{L^4_t L^4_x} \| \psi_s \|_{L^4_t L^4_x} + \| \ringPsi \|_{L^4_t L^4_x} \| \nabla \psi_s \|_{L^4_t L^4_x} \bigr) \, \ud s' \\
  &\lesssim \| \jap{s\Delta}^{\frac{1}{2}} \ringPsi \|_{L^\infty_{\ds} L^4_t L^4_x} \int_0^\infty \| \jap{s\Delta}^{\frac{1}{2}} s^{\frac{1}{2}} \psi_s \|_{L^4_t L^4_x} \, \frac{\ud s'}{s'} \\
  &\lesssim \| \jap{s\Delta}^{\frac{1}{2}} \ringPsi \|_{L^\infty_{\ds} L^4_t L^4_x} \| \jap{s\Delta} m(s) s^{\frac{1}{2}} \psi_s \|_{L^\infty_{\ds} L^4_t L^4_x} \\
  &\lesssim \|\psi_s\|_{\calS}^2. \qedhere
 \end{align*}
\end{proof}

\subsection{Estimates on $w$, $A_t$, and $\psi_t$}\label{sub:6w} 

In this subsection we start by proving two estimates on $w$, one in the dual Strichartz norm $L_t^{\frac{4}{3}}L_x^{\frac{4}{3}}$ and one in the Strichartz norm  $L_t^4L_x^4$. The latter is then used to prove additional estimates on $A_t$ and $\psi_t$.

\begin{lemma} \label{lem:bounds_on_w}
 Let $w = w(t, x, s)$ denote the Schr\"odinger tension field as defined in \eqref{eq:w-def}. Then, assuming \eqref{eq:overall-bootstrap} and \eqref{eq:core-bootstrap}, we have that 
 \begin{equation} \label{equ:nonlinear_estimates_partials_w}
  \| m(s) s^{\frac{1}{2}}  \langle \Omega \rangle \partial_s w \|_{L^\infty_{\ds} \cap L^2_{\ds} L^{\frac{4}{3}}_t L^{\frac{4}{3}}_x(I)} \lesssim \|\psi_s\|_{\calS(I)}^2 + \sqrt{\epsilon_0} \|\psi_s\|_{\calS(I)} ,
 \end{equation}
 and
 \begin{equation} \label{equ:nonlinear_estimates_w_for_psit}
  \| s^{\frac{1}{2}} \jap{\Omega} w \|_{L^\infty_{\ds} L^4_t L^4_x(I)} \lesssim \|\psi_s\|_{\calS(I)}^2 + \sqrt{\epsilon_0} \|\psi_s\|_{\calS(I)}.
 \end{equation}
\end{lemma}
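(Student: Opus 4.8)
The plan is to analyze the parabolic equation~\eqref{eq:heatH-w} for $w$, together with the corresponding equation for $\Omega w$ (obtained by applying $\calL_\Omega$, using Lemma~\ref{l:commLOm} and the symmetry relations~\eqref{eq:LOmA},~\eqref{eq:LOmpsi} as was done for $\psi_s$ in~\eqref{eq:Ompsi_s-heat}), and to derive the desired bounds on $w$ and $\partial_s w$ via the Duhamel formula for the heat semigroup $e^{-sH}$ combined with the $L^p$ parabolic bounds from Corollary~\ref{c:lin-para-Lp} and the off-diagonal/Schur's-test machinery already used in Section~\ref{s:heat-psi_s}. A crucial structural point, emphasized in Subsection~\ref{subsec:ideas} and recorded in Lemma~\ref{l:H-w}, is that the equation for $w$ contains \emph{no linear terms in $\psi_t$ or $\psi_s$}: the potentially dangerous terms are killed by the Cauchy--Riemann relation~\eqref{eq:CR1}. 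Hence every term on the right-hand side of~\eqref{eq:heatH-w} carries either a factor of $\ringPsi$, $\ringA$ (which are small, of size $\sqrt{\epsilon_0}\|\psi_s\|_{\calS}$ or $\|\psi_s\|_{\calS}$, by Lemmas~\ref{lem:bounds_ringA_Linfty}--\ref{lem:bounds_ringA_Str}) multiplying $w$, or is a genuine quadratic source term $\tcv \ringpsi^k\ringpsi_k\overline{\psi_s} + 2\tcv\psi^{\infty,k}\ringpsi_k\overline{\psi_s}$. This will produce the structure $\|\psi_s\|_{\calS}^2 + \sqrt{\epsilon_0}\|\psi_s\|_{\calS}$ on the right of~\eqref{equ:nonlinear_estimates_partials_w} and~\eqref{equ:nonlinear_estimates_w_for_psit}, with the $\sqrt{\epsilon_0}\|w\|$-type terms absorbed into the left-hand side for $\epsilon_0$ small.

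The first step is to establish an estimate for $w$ itself (say in $L^\infty_{\ds}$ with a suitable $s$-weight, in norms like $L^4_{t,x}$, $L^{8/3}_{t,x}$, and with spatial derivatives, and similarly for $\Omega w$), starting from the representation $w(s) = -\int_s^\infty \partial_s w(s')\,ds'$ together with $w(0)=0$, or directly via Duhamel: $w(s) = e^{-(s-s_0)H}w(s_0) + \int_{s_0}^s e^{-(s-s')H}(\text{RHS of }\eqref{eq:heatH-w})(s')\,ds'$, sending $s_0\to 0$ and using $w(0)=0$. Applying Corollary~\ref{c:lin-para-Lp} to gain the $(s-s')^{-1/2}$ (for the gradient) or $(s-s')^{-1}$ factors, estimating the nonlinearity pointwise in $s'$ using the $L^\infty$ bounds on $s^{1/2}\ringPsi$, $s^{1/2}\ringA$ from Lemmas~\ref{l:APsi-infty},~\ref{lem:bounds_ringA_Linfty},~\ref{lem:bounds_ringPsi}, and the parabolic regularity bounds on $\psi_s$ from Proposition~\ref{p:par-reg-psi_s}, and then running Schur's test in the $s,s'$ variables (the relevant kernels being Schur kernels exactly as in the proof of Corollary~\ref{c:offdiag}) yields the $w$-bounds. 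One must carry along the extra $m(s)$ weight — the ``extra regularity'' — to make the small-$s$ integrals converge, exactly as in the proofs in Section~\ref{s:heat-psi_s}. The second step is to bound $\partial_s w$: here we simply read off $\partial_s w = \Db^k\Db_k w - i\tcv\Im(\psi^k\bar w)\psi_k + i\tcv\psi^k\psi_k\overline{\psi_s}$ from~\eqref{equ:heat_equation_w} (equivalently from Lemma~\ref{l:H-w}), and estimate each piece in $L^{4/3}_tL^{4/3}_x$ using the $w$-bounds just proved, the $L^4$ Strichartz and parabolic regularity bounds on $\psi_s$, Hölder in $(t,x)$ (e.g.\ $L^4 \cdot L^2 \hookrightarrow L^{4/3}$), and the $L^p$-Poincaré inequality~\eqref{eq:Pip} to trade derivatives for powers of $s$ where needed; the same computation with $\Omega$ applied, using the $\Omega w$-bounds and~\eqref{eq:Omnonlin1}-type expansions, handles the $\langle\Omega\rangle$ version. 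Finally, one takes the $L^2_{\ds}$ norm by another Schur's-test argument.

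The main obstacle I anticipate is bookkeeping the $s$-weights and the Schur-kernel estimates so that the small-$s$ and large-$s$ regimes both close: near $s=0$ one must exploit the $m(s)=s^{-\delta}$ factor (extra regularity), while near $s=\infty$ one relies on the $\langle s\Delta\rangle^k$ factors in the $\calS$-norm together with Poincaré-type gains of $(s')^{-1/2}$, and the thresholds have to line up so that the resulting kernels $K_\beta(s,s') = 1_{\{s'>s\}}\frac{m(s)}{m(s')}(s/s')^{1/2-\beta}$ are bona fide Schur kernels for an appropriate $\beta<1/2$, as verified at the end of the proof of Corollary~\ref{c:offdiag}. A secondary technical point is the derivative-on-$w$ term $\Db^k\Db_k w$ appearing in $\partial_s w$: the $\Db_k = \nabla_k + iA_k$ and $A = \ringA + A^\infty$ decomposition must be invoked, with the $\ringA w$ contributions treated as (small) error terms absorbable on the left and the $A^\infty$ part estimated using the exponential decay of $A^\infty$ (which is harmless); controlling two derivatives on $w$ in $L^{4/3}$ forces us to prove the auxiliary $w$-bounds with enough derivatives and $s$-weights to begin with, so the two steps are somewhat intertwined and should really be set up as a single iteration. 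Apart from these weight-chasing issues, the argument is a routine, if lengthy, adaptation of the techniques of Section~\ref{s:heat-psi_s}, and I would present it by doing one representative term of~\eqref{eq:heatH-w} in detail and indicating that the rest are analogous.
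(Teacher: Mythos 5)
Your proposal follows essentially the same route as the paper: forward Duhamel from $w(0)=0$ with the semigroup $e^{-sH}$, the $L^p$ parabolic bounds of Corollary~\ref{c:lin-para-Lp} plus Schur's test to control $w$, $\nabla w$, $\Delta w$ (equivalently $Hw$) in $L^{4/3}_tL^{4/3}_x$ and $L^4_tL^4_x$ in terms of the nonlinearity $G$ and $s^{1/2}\nabla G$, then reading off $\partial_s w=-Hw+G$, estimating $G$ term by term with the established $\ringA,\ringPsi,\psi_s$ bounds, and absorbing the small $w$-dependent contributions — exactly the intertwined two-step iteration you anticipate. (Only note that your first representation $w(s)=-\int_s^\infty\partial_s w\,ds'$ uses the wrong endpoint; the forward Duhamel alternative you give is the one that works.)
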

\begin{proof}
We begin by recalling that the Schr\"odinger tension field $w$ obeys the parabolic equation~\eqref{eq:heatH-w}, which we record again here for convenience. 
\begin{equation} \label{equ:heat_w}
\begin{aligned}
  (\partial_s + H) w &= 
   i\Im( \ringpsi^k \ba{w}) \ringpsi_k + i \Im( \ringpsi_k \ba{w})\psi^{\infty, k} + i\Im( \psi^{\infty, k} \ba{w}) \ringpsi_k \\
 & \quad  + 2 i \ringA^k \nabla_k w - 2 A^{\infty, k} \ringA_k w + i (\nabla_k \ringA^k) w - \ringA^k \ringA_k w \\
  &\quad - i \ringpsi^{k} \ringpsi_{k} \overline{\psi_{s}} - 2 i \psi^{\infty, k} \ringpsi_{k} \overline{\psi_{s}}.
\end{aligned}
\end{equation}
Applying $\Omega$ to~\eqref{equ:heat_w} yields the following heat equation for $\Omega w$
\begin{equation} \label{equ:heat_Omega_w}
\begin{aligned}
(\partial_s+H)\Omega w &=
i\Im (\calL_\Omega\ringpsi^k \overline{w}) \ringpsi_k+ i\Im (\ringpsi^k \overline{\Omega w}) \ringpsi_k+ i\Im (\ringpsi^k \overline{w}) \calL_\Omega\ringpsi_k \\
  &\quad  + i\Im (\calL_\Omega\ringpsi^k \overline{w}) \psi^\infty_k +i \Im (\ringpsi^k \overline{\Omega w}) \psi^\infty_k \\
  &\quad + i\Im (\psi^{\infty, k} \overline{\Omega w}) \ringpsi_k+ i\Im (\psi^{\infty, k} \overline{w}) \calL_\Omega\ringpsi_k\\
  &\quad + i\Im (i\psi^{\infty, k} \overline{w}) \ringpsi_k+ i\Im (\ringpsi^k \overline{w}) \psi^\infty_k \\
&\quad+ 2 i \calL_\Omega\ringA^k \nabla_k w+2 i \ringA^k \nabla_k \Omega w - 2 A^{\infty, k} \calL_\Omega\ringA_k w- 2 A^{\infty, k} \ringA_k \Omega w\\
  &\quad + i (\nabla_k \calL_\Omega\ringA^k) w + i (\nabla_k \ringA^k) \Omega w - 2\calL_\Omega\ringA^k \ringA_k w - \ringA^k \ringA_k \Omega w \\
   &\quad - 2 i \calL_\Omega \ringpsi^{k} \ringpsi_{k} \ba{\psi_{s}}
   - i \ringpsi^{k} \ringpsi_{k} \ba{\Omega \psi_{s}} \\ 
 &\quad  + 2 \psi^{\infty, k} \ringpsi_{k} \ba{\psi_{s}}
   - 2 i \psi^{\infty, k} \calL_\Omega \ringpsi_{k} \ba{\psi_{s}}
      - 2 i \psi^{\infty, k} \ringpsi_{k} \ba{\Omega \psi_{s}}.
\end{aligned}
\end{equation}
In what follows the right-hand sides of~\eqref{equ:heat_w} and~\eqref{equ:heat_Omega_w} are denoted by $G$, respectively by $\Omega G$.

\medskip 

We now start with the proof of~\eqref{equ:nonlinear_estimates_partials_w_reduction}. Since $\partial_s w = - H w + G$, it holds that
\begin{align*}
 \| m(s) s^{\frac{1}{2}} \jap{\Omega} \partial_s w \|_{L^\infty_{\ds} \cap L^2_{\ds} L^{\frac{4}{3}}_t L^{\frac{4}{3}}_x} \lesssim \| m(s) s^{\frac{1}{2}} \jap{\Omega} H w \|_{L^\infty_{\ds} \cap L^2_{\ds} L^{\frac{4}{3}}_t L^{\frac{4}{3}}_x} + \| m(s) s^{\frac{1}{2}} \jap{\Omega} G \|_{L^\infty_{\ds} \cap L^2_{\ds} L^{\frac{4}{3}}_t L^{\frac{4}{3}}_x}.
\end{align*}
We will prove that
\begin{equation} \label{equ:bound_H_w}
 \| m(s) s^{\frac{1}{2}} \jap{\Omega} H w \|_{L^\infty_{\ds} \cap L^2_{\ds} L^{\frac{4}{3}}_t L^{\frac{4}{3}}_x} \lesssim \| m(s) s^{\frac{1}{2}} \langle \Omega \rangle G \|_{L^\infty_{\ds} \cap L^2_{\ds} L^{\frac{4}{3}}_t L^{\frac{4}{3}}_x} + \| s^{\frac{1}{2}} \nabla m(s) s^{\frac{1}{2}} \langle \Omega \rangle G \|_{L^\infty_{\ds} \cap L^2_{\ds} L^{\frac{4}{3}}_t L^{\frac{4}{3}}_x}.
\end{equation}
Then the estimate \eqref{equ:nonlinear_estimates_partials_w} would follow upon establishing that
\begin{equation} \label{equ:nonlinear_estimates_partials_w_todo1}
 \| m(s) s^{\frac{1}{2}} \langle \Omega \rangle G \|_{L^\infty_{\ds} \cap L^2_{\ds} L^{\frac{4}{3}}_t L^{\frac{4}{3}}_x} + \| s^{\frac{1}{2}} \nabla m(s) s^{\frac{1}{2}} \langle \Omega \rangle G \|_{L^\infty_{\ds} \cap L^2_{\ds} L^{\frac{4}{3}}_t L^{\frac{4}{3}}_x} \lesssim \|\psi_s\|_{\calS(I)}^2 + \sqrt{\epsilon_0} \|\psi_s\|_{\calS(I)}.
\end{equation}
However, in order to have more flexibility in estimating the nonlinear terms in $\jap{\Omega} G$, we will show, concurrently with \eqref{equ:bound_H_w}, that
\begin{equation} \label{equ:bound_Delta_w}
 \begin{aligned}
  &\sum_{j=0}^2 \| m(s) s^{-\frac{1}{2}}  (-s\Delta)^{\frac{j}{2}} \langle \Omega \rangle w \|_{L^\infty_{\ds} \cap L^2_{\ds} L^{\frac{4}{3}}_t L^{\frac{4}{3}}_x} \\
  &\quad \quad \lesssim \| m(s) s^{\frac{1}{2}} \langle \Omega \rangle G \|_{L^\infty_{\ds} \cap L^2_{\ds} L^{\frac{4}{3}}_t L^{\frac{4}{3}}_x} + \| s^{\frac{1}{2}} \nabla m(s) s^{\frac{1}{2}} \langle \Omega \rangle G \|_{L^\infty_{\ds} \cap L^2_{\ds} L^{\frac{4}{3}}_t L^{\frac{4}{3}}_x}.
 \end{aligned}
\end{equation}
Then, provided $\| \psi_s \|_{\calS}$ is sufficiently small, in order to conclude the estimate~\eqref{equ:nonlinear_estimates_partials_w}, instead of verifying~\eqref{equ:nonlinear_estimates_partials_w_todo1}, it suffices to prove that
\begin{equation} \label{equ:nonlinear_estimates_partials_w_reduction}
 \begin{aligned}
  &\| m(s) s^{\frac{1}{2}} \langle \Omega \rangle G \|_{L^\infty_{\ds} \cap L^2_{\ds} L^{\frac{4}{3}}_t L^{\frac{4}{3}}_x} + \| s^{\frac{1}{2}} \nabla m(s) s^{\frac{1}{2}} \langle \Omega \rangle G \|_{L^\infty_{\ds} \cap L^2_{\ds} L^{\frac{4}{3}}_t L^{\frac{4}{3}}_x} \\
  &\quad \lesssim (\|\psi_s\|_{\calS} + \sqrt{\epsilon_0}) \Bigl( \|\psi_s\|_{\calS} + \sum_{j=0}^2 \| m(s) s^{-\frac{1}{2}}  (-s\Delta)^{\frac{j}{2}} \langle \Omega \rangle w \|_{L^\infty_{\ds} \cap L^2_{\ds} L^{\frac{4}{3}}_t L^{\frac{4}{3}}_x} \Bigr). 
 \end{aligned}
\end{equation}

Thus, before establishing the nonlinear estimates~\eqref{equ:nonlinear_estimates_partials_w_reduction}, we first have to prove~\eqref{equ:bound_H_w} and~\eqref{equ:bound_Delta_w}. To this end we note that since $w(0) = 0$ by definition of the Schr\"odinger tension field, the heat equation~\eqref{equ:heat_w} for~$w$ can be written in Duhamel form as
\begin{equation*}
 w(s) = \int_0^s e^{-(s-s') H} G(s') \, \ud s'.
\end{equation*}
We begin with the term with $j=0$ in \eqref{equ:bound_Delta_w}. We have by Schur's test and Corollary~\ref{c:lin-para-Lp} that
\begin{equation} \label{equ:bound_w}
 \begin{aligned}
  \| m(s) s^{-\frac{1}{2}} \langle \Omega \rangle w \|_{L^\infty_{\ds} \cap L^2_{\ds} L^{\frac{4}{3}}_t L^{\frac{4}{3}}_x} &\lesssim \biggl\| \int_0^s m(s) s^{-\frac{1}{2}} \| e^{-(s-s')H} \jap{\Omega} G(s') \|_{L^{\frac{4}{3}}_t L^{\frac{4}{3}}_x} \, \ud s' \biggr\|_{L^\infty_{\ds} \cap L^2_{\ds}} \\
  &\lesssim \biggl\| \int_0^s \frac{m(s)}{m(s')} \Bigl( \frac{s'}{s} \Bigr)^{\frac{1}{2}} \| m(s') (s')^{\frac{1}{2}} \jap{\Omega} G(s') \|_{L^{\frac{4}{3}}_t L^{\frac{4}{3}}_x} \frac{\ud s'}{s'} \biggr\|_{L^\infty_{\ds} \cap L^2_{\ds}} \\
  &\lesssim \| m(s) s^{\frac{1}{2}} \langle \Omega \rangle G \|_{L^\infty_{\ds} \cap L^2_{\ds} L^{\frac{4}{3}}_t L^{\frac{4}{3}}_x},
 \end{aligned}
\end{equation}
where we used that $\frac{m(s)}{m(s')} ( \frac{s'}{s} )^{\frac{1}{2}} \chi_{\{0 \leq s' \leq s\}}$ is a Schur's kernel. Next, we turn to \eqref{equ:bound_H_w}. We find that
\begin{equation}
 \begin{aligned}
  \| m(s) s^{\frac{1}{2}} H \jap{\Omega} w \|_{L^\infty_{\ds} \cap L^2_{\ds} L^{\frac{4}{3}}_t L^{\frac{4}{3}}_x} &\lesssim \biggl\| \int_0^{\frac{s}{2}} m(s) s^{\frac{1}{2}} \| H e^{-(s-s')H} \jap{\Omega} G(s') \|_{L^{\frac{4}{3}}_t L^{\frac{4}{3}}_x} \, \ud s' \biggr\|_{L^\infty_{\ds} \cap L^2_{\ds}} \\
  &\quad \quad + \biggl\| \int_{\frac{s}{2}}^s m(s) s^{\frac{1}{2}} \| e^{-(s-s')H} H \jap{\Omega} G(s') \|_{L^{\frac{4}{3}}_t L^{\frac{4}{3}}_x} \, \ud s' \biggr\|_{L^\infty_{\ds} \cap L^2_{\ds}},
 \end{aligned}
\end{equation}
where we used the fact that $H$ and $e^{-(s-s') H}$ commute for the last term. To proceed, note that $H = -\Delta + H_{\lot}$ with 
\[
 H_{\lot} = - 2 i A^{\infty, k} \nabla_k - i (\nabla_k A^{\infty, k}) + A_k^\infty A^{\infty, k} + |\psi_2^\infty|^2.
\]
The contribution of $-\Delta$ to the first term on the right-hand side can be bounded by
\begin{align*}
 &\biggl\| \int_0^{\frac{s}{2}} \frac{m(s)}{m(s')} \frac{ s^{\frac{1}{2}} (s')^{\frac{1}{2}} }{s-s'} \| (s-s') \Delta e^{-(s-s')H} m(s') (s')^{\frac{1}{2}} \jap{\Omega} G(s') \|_{L^{\frac{4}{3}}_t L^{\frac{4}{3}}_x} \frac{\ud s'}{s'} \biggr\|_{L^\infty_{\ds} \cap L^2_{\ds}} \\
 &\lesssim \biggl\| \int_0^{\frac{s}{2}} \frac{m(s)}{m(s')} \Bigl( \frac{s'}{s} \Bigr)^{\frac{1}{2}} \| m(s') (s')^{\frac{1}{2}} \jap{\Omega} G(s') \|_{L^{\frac{4}{3}}_t L^{\frac{4}{3}}_x} \frac{\ud s'}{s'} \biggr\|_{L^\infty_{\ds} \cap L^2_{\ds}} \\
 &\lesssim \| m(s) s^{\frac{1}{2}} \jap{\Omega} G \|_{L^\infty_{\ds} \cap L^2_{\ds} L^{\frac{4}{3}}_t L^{\frac{4}{3}}_x},
\end{align*}
where we used~Corollary~\ref{c:lin-para-Lp} and the fact that $\frac{m(s)}{m(s')} ( \frac{s'}{s} )^{\frac{1}{2}} \chi_{\{0 \leq s' \leq \frac{s}{2}\}}$ is a Schur's kernel. The contribution of $H_{\lot}$ can be treated in a similar manner, by boundedness of the coefficients of $H_{\lot}$ and invoking Poincar\'e's inequality on occasion to manufacture a good Schur's kernel.
To treat the second term on the right-hand side, we again split $H = - \Delta + H_{\lot}$. The contribution of $-\Delta$ can be bounded by
\begin{align*}
 &\biggl\| \int_{\frac{s}{2}}^s \frac{m(s)}{m(s')} \frac{s^{\frac{1}{2}}}{(s-s')^{\frac{1}{2}}} \| e^{-(s-s')H} (s-s')^{\frac{1}{2}} \nabla^k (s')^{\frac{1}{2}} \nabla_k m(s') (s')^{\frac{1}{2}} \jap{\Omega} G(s') \|_{L^{\frac{4}{3}}_t L^{\frac{4}{3}}_x} \frac{\ud s'}{s'} \biggr\|_{L^\infty_{\ds} \cap L^2_{\ds}} \\
 &\lesssim \biggl\| \int_{\frac{s}{2}}^s \frac{m(s)}{m(s')} \frac{s^{\frac{1}{2}}}{(s-s')^{\frac{1}{2}}} \| (s')^{\frac{1}{2}} \nabla m(s') (s')^{\frac{1}{2}} \jap{\Omega} G(s') \|_{L^{\frac{4}{3}}_t L^{\frac{4}{3}}_x} \frac{\ud s'}{s'} \biggr\|_{L^\infty_{\ds} \cap L^2_{\ds}} \\
 &\lesssim \| s^{\frac{1}{2}} \nabla m(s) s^{\frac{1}{2}} \jap{\Omega} G \|_{L^\infty_{\ds} \cap L^2_{\ds} L^{\frac{4}{3}}_t L^{\frac{4}{3}}_x}.
\end{align*}
where in the first line above we have applied the following straightforward modification of the scalar estimates in Corollary~\ref{c:lin-para-Lp},  
\EQ{
\| e^{-sH} s^{\frac{1}{2}} \nabla^k \bfxi_k \|_{L^p_x} \lesssim \|\bfxi\|_{L^p_x}
}
applied now to a tensor $\bfxi$. The contribution of $H_{\lot}$ is handled similarly, using Poincar\'e's inequality in addition.
Combining the two previous bounds, \eqref{equ:bound_H_w} follows. Moreover, arguing similarly as in the preceding proof, one can show that
\begin{equation} \label{equ:bound_Hlot}
 \begin{aligned}
  &\| m(s) s^{\frac{1}{2}} H_{\lot} \jap{\Omega} w \|_{L^\infty_{\ds} \cap L^2_{\ds} L^{\frac{4}{3}}_t L^{\frac{4}{3}}_x} \\
  &\quad \quad \lesssim \| m(s) s^{\frac{1}{2}} \langle \Omega \rangle G \|_{L^\infty_{\ds} \cap L^2_{\ds} L^{\frac{4}{3}}_t L^{\frac{4}{3}}_x} + \| s^{\frac{1}{2}} \nabla m(s) s^{\frac{1}{2}} \langle \Omega \rangle G \|_{L^\infty_{\ds} \cap L^2_{\ds} L^{\frac{4}{3}}_t L^{\frac{4}{3}}_x}.
 \end{aligned}
\end{equation}
From \eqref{equ:bound_H_w} and \eqref{equ:bound_Hlot}, it follows that
\begin{equation} \label{equ:bound_Delta_w_second}
 \begin{aligned}
  &\| m(s) s^{\frac{1}{2}} \Delta \jap{\Omega} w \|_{L^\infty_{\ds} \cap L^2_{\ds} L^{\frac{4}{3}}_t L^{\frac{4}{3}}_x} \\
  &\quad \quad \lesssim \| m(s) s^{\frac{1}{2}} \langle \Omega \rangle G \|_{L^\infty_{\ds} \cap L^2_{\ds} L^{\frac{4}{3}}_t L^{\frac{4}{3}}_x} + \| s^{\frac{1}{2}} \nabla m(s) s^{\frac{1}{2}} \langle \Omega \rangle G \|_{L^\infty_{\ds} \cap L^2_{\ds} L^{\frac{4}{3}}_t L^{\frac{4}{3}}_x}. 
 \end{aligned}
\end{equation}
Interpolating between~\eqref{equ:bound_w} and \eqref{equ:bound_Delta_w_second}, we obtain the entire estimate~\eqref{equ:bound_Delta_w}. 

It remains to prove~\eqref{equ:nonlinear_estimates_partials_w_reduction}. To this end we group the nonlinear terms in $\jap{\Omega} G$ (that is, the nonlinear terms on the right-hand sides of the equation~\eqref{equ:heat_w} for $w$ and of~\eqref{equ:heat_Omega_w} for $\Omega w$) into the following types: 
\begin{enumerate}[(I)]
\item Involving $w$:
\begin{enumerate}[(i)]
\item $\calL_\Omega\ringA^k\nabla_kw$
\item $\ringA^k\nabla_k\jap{\Omega}w$
\item $(\nabla_k\calL_\Omega\ringA^k)w$
\item $(\nabla_k\ringA^k)\jap{\Omega}w$
\item $A^\infty_k\calL_\Omega\ringA^k w$
\item $A^\infty_k\ringA^k\jap{\Omega}w$
\item $\Im(\psi^{\infty, k} \ba{w}) \calL_\Omega \ringpsi_{k}$ and permutations
\item $\Im(\psi^{\infty, k} \ba{\jap{\Omega} w}) \ringpsi_{k}$ and permutations
\item $\calL_\Omega\ringA_k\ringA^kw$
\item $\ringA_k\ringA^k\jap{\Omega}w$
\item $\Im(\calL_\Omega \ringpsi^{k} \ba{w}) \ringpsi_{k}$ and permutations
\item $\Im(\ringpsi^{k} \ba{\jap{\Omega} w}) \ringpsi_{k}$
\end{enumerate}
\item Cubic and higher:
\begin{enumerate}[(i)]
\item $\calL_\Omega \ringpsi^{k} \ringpsi_{k} \ba{\psi_{s}}$
\item $\ringpsi^{k} \ringpsi_{k} \ba{\jap{\Omega} \psi_{s}}$
\end{enumerate}
\item Quadratic:
\begin{enumerate}[(i)]
\item $\psi^{\infty, k} \calL_\Omega \ringpsi_{k} \ba{\psi_{s}}$
\item $\psi^{\infty, k} \ringpsi_{k} \ba{\jap{\Omega} \psi_{s}}$
\end{enumerate}
\end{enumerate}
We now verify~\eqref{equ:nonlinear_estimates_partials_w_reduction} for each term in the preceding list. For the term (I-i) we use the bounds~\eqref{eq:bound_ringAL_Linftyall_Str}--\eqref{eq:bound_ringAQ_Linftyall_Str} to conclude that
\begin{align*}
 \| m(s) s^{\frac{1}{2}} \calL_\Omega \ringA^k \nabla_k w \|_{L^\infty_{\ds} \cap L^2_{\ds} L^{\frac{4}{3}}_t L^{\frac{4}{3}}_x} &\lesssim \| s^{\frac{1}{2}} \calL_\Omega \ringA \|_{L^\infty_{\ds} L^\infty_t L^\infty_x} \| m(s) \nabla w \|_{L^\infty_{\ds} \cap L^2_{\ds} L^{\frac{4}{3}}_t L^{\frac{4}{3}}_x} \\
 &\lesssim \|\psi_s\|_{\calS} \| m(s) \nabla w \|_{L^\infty_{\ds} \cap L^2_{\ds} L^{\frac{4}{3}}_t L^{\frac{4}{3}}_x}
\end{align*}
and similarly that
\begin{align*}
 \| s^{\frac{1}{2}} \nabla ( m(s) s^{\frac{1}{2}} \calL_\Omega \ringA^k \nabla_k w ) \|_{L^\infty_{\ds} \cap L^2_{\ds} L^{\frac{4}{3}}_t L^{\frac{4}{3}}_x} &\lesssim \| s^{\frac{1}{2}} \nabla s^{\frac{1}{2}} \calL_\Omega \ringA \|_{L^\infty_{\ds} L^\infty_t L^\infty_x} \| m(s) \nabla w \|_{L^\infty_{\ds} \cap L^2_{\ds} L^{\frac{4}{3}}_t L^{\frac{4}{3}}_x} \\
 &\quad \quad + \| \calL_\Omega \ringA \|_{L^\infty_{\ds} L^\infty_t L^\infty_x} \| m(s) s^{\frac{1}{2}} \Delta w \|_{L^\infty_{\ds} \cap L^2_{\ds} L^{\frac{4}{3}}_t L^{\frac{4}{3}}_x} \\
 &\lesssim \|\psi_s\|_{\calS} \sum_{j=0}^2 \| m(s) s^{-\frac{1}{2}} (-s \Delta)^{\frac{j}{2}} w \|_{L^\infty_{\ds} \cap L^2_{\ds} L^{\frac{4}{3}}_t L^{\frac{4}{3}}_x}.
\end{align*}
The terms (I-ii)--(I-iv) can be treated similarly to the term (I-i). For the term (I-v) we invoke the bounds~\eqref{eq:bound_ringAL_Linftyall_Str}--\eqref{eq:bound_ringAQ_Linftyall_Str} and also apply Poincar\'e's inequality to obtain the desired estimate
\begin{align*}
 \| m(s) s^{\frac{1}{2}} A^\infty_k \calL_\Omega \ringA^k w \|_{L^\infty_{\ds} \cap L^2_{\ds} L^{\frac{4}{3}}_t L^{\frac{4}{3}}_x} &\lesssim \| A^\infty \|_{L^\infty_x} \| s^{\frac{1}{2}} \calL_\Omega \ringA \|_{L^\infty_{\ds} L^\infty_t L^\infty_x} \| m(s) w \|_{L^\infty_{\ds} \cap L^2_{\ds} L^{\frac{4}{3}}_t L^{\frac{4}{3}}_x} \\
 &\lesssim \| \psi_s \|_{\calS} \| m(s) \nabla w \|_{L^\infty_{\ds} \cap L^2_{\ds} L^{\frac{4}{3}}_t L^{\frac{4}{3}}_x}.
\end{align*}
Then one can bound $\| s^{\frac{1}{2}} \nb m(s) s^{\frac{1}{2}} A^\infty_k \calL_\Omega \ringA^k w \|_{L^\infty_{\ds} \cap L^2_{\ds} L^{\frac{4}{3}}_t L^{\frac{4}{3}}_x}$ similarly. The terms (I-vi)--(I-viii) can be dealt with in a similar manner as the term (I-v). Finally, for the term (I-ix) we have by the estimates~\eqref{eq:bound_ringAL_Linftyall_Str}--\eqref{eq:bound_ringAQ_Linftyall_Str} that
\begin{align*}
 \| m(s) s^{\frac{1}{2}} \calL_\Omega \ringA_k \ringA^k w \|_{L^\infty_{\ds} \cap L^2_{\ds} L^{\frac{4}{3}}_t L^{\frac{4}{3}}_x} &\lesssim \| s^{\frac{1}{2}} \calL_\Omega \ringA \|_{L^\infty_{\ds} L^\infty_t L^\infty_x} \| s^{\frac{1}{2}} \ringA \|_{L^\infty_{\ds} L^\infty_t L^\infty_x} \| m(s) s^{-\frac{1}{2}} w \|_{L^\infty_{\ds} \cap L^2_{\ds} L^{\frac{4}{3}}_t L^{\frac{4}{3}}_x} \\
 &\lesssim \|\psi_s\|_{\calS}^2 \| m(s) s^{-\frac{1}{2}} w \|_{L^\infty_{\ds} \cap L^2_{\ds} L^{\frac{4}{3}}_t L^{\frac{4}{3}}_x}
\end{align*}
and the estimate for $\| s^{\frac{1}{2}} \nabla m(s) s^{\frac{1}{2}} \calL_\Omega \ringA_k \ringA^k w \|_{L^\infty_{\ds} \cap L^2_{\ds} L^{\frac{4}{3}}_t L^{\frac{4}{3}}_x}$ proceeds similarly. The terms (I-x)--(I-xii) are estimated analogously to the term (I-ix).

Next, we turn to the treatment of the terms of type (II). By the bounds \eqref{eq:bound_ringPsi_Linftyall_Str2} and \eqref{equ:bound_ringPsi_Linftys_Str_2}, we have for the term (II-i) that
\begin{align*}
 \| m(s) s^{\frac{1}{2}} |\calL_\Omega \ringPsi| | \ringPsi| |\psi_{s}| \|_{L^\infty_{\ds} \cap L^2_{\ds} L^{\frac{4}{3}}_t L^{\frac{4}{3}}_x} 
 \lesssim \| \calL_\Omega \ringPsi \|_{L^\infty_{\ds} L^4_t L^4_x} \| \ringPsi \|_{L^\infty_{\ds} L^4_t L^4_x} \| m(s) s^{\frac{1}{2}} \psi_s \|_{L^\infty_{\ds} \cap L^2_{\ds} L^4_t L^4_x} 
 \lesssim \|\psi_s\|_{\calS}^3,
\end{align*}
which is of the desired form, and the estimate for $\| s^{\frac{1}{2}} \nabla m(s) s^{\frac{1}{2}} \text{(II-i)}\|_{L^\infty_{\ds} \cap L^2_{\ds} L^{\frac{4}{3}}_t L^{\frac{4}{3}}_x}$ is analogous. The term (II-ii) can be treated similarly to the term (II-i).

Finally, we estimate the quadratic terms of type (III). For the term~(III-i) we use the bounds \eqref{equ:bound_psis_Linftys_Str} and \eqref{equ:bound_ringPsi_Linftys_Str_2} to obtain that
\begin{align*}
 \| m(s) s^{\frac{1}{2}} |\Psi^\infty| |\calL_\Omega \ringPsi| |\psi_{s}| \|_{L^\infty_{\ds} \cap L^2_{\ds} L^{\frac{4}{3}}_t L^{\frac{4}{3}}_x} 
& \lesssim \|\Psi^\infty\|_{L^\infty_x} \| \calL_\Omega \ringPsi \|_{L^\infty_{\ds} L^{\frac{8}{3}}_t L^{\frac{8}{3}}_x} \| m(s) s^{\frac{1}{2}} \psi_s \|_{L^\infty_{\ds} \cap L^2_{\ds} L^{\frac{8}{3}}_t L^{\frac{8}{3}}_x} \\
 &\lesssim \|\psi_s\|_{\calS}^2,
\end{align*}
as desired, and the estimate for $\| s^{\frac{1}{2}} \nabla m(s) s^{\frac{1}{2}} \text{(III-i)}\|_{L^\infty_{\ds} \cap L^2_{\ds} L^{\frac{4}{3}}_t L^{\frac{4}{3}}_x}$ proceeds in the same manner. Then the term (III-ii) can be handled similarly. This finishes the proof of the estimate~\eqref{equ:nonlinear_estimates_partials_w}.

\medskip 

For the proof of~\eqref{equ:nonlinear_estimates_w_for_psit} we note that by arguing exactly as in the proof of~\eqref{equ:nonlinear_estimates_partials_w}, provided $\|\psi_s\|_{\calS}$ is sufficiently small, it suffices to show that
\begin{equation} \label{equ:nonlinear_estimates_w_reduction}
 \begin{aligned}
  &\| s^{\frac{1}{2}} \langle \Omega \rangle G \|_{L^\infty_{\ds} \cap L^2_{\ds} L^4_t L^4_x} + \| s^{\frac{1}{2}} \nabla s^{\frac{1}{2}} \langle \Omega \rangle G \|_{L^\infty_{\ds} \cap L^2_{\ds} L^4_t L^4_x} \\
  &\quad \lesssim (\|\psi_s\|_{\calS} + \sqrt{\epsilon_0}) \Bigl( \|\psi_s\|_{\calS} + \sum_{j=0}^2 \| s^{-\frac{1}{2}}  (-s\Delta)^{\frac{j}{2}} \langle \Omega \rangle w \|_{L^\infty_{\ds} \cap L^2_{\ds} L^4_t L^4_x} \Bigr). 
 \end{aligned}
\end{equation}
Then it remains to verify~\eqref{equ:nonlinear_estimates_w_reduction} for all nonlinear terms of type (I)--(III). We do this term by term, without $\jap{s\Delta}^{\frac{1}{2}}$ on $G$, the general case being similar. 

We begin with the quadratic terms (III). Here (III-ii) is bounded by 
\begin{align*}
 \|\Psi^\infty\|_{L_x^\infty} \|s\ringPsi\|_{\Ls^\infty L_t^\infty L_x^\infty} \|\jap{\Omega}ms^{\frac{1}{2}}\psi_s\|_{\LLs L_t^4L_x^4}\lesssim \sqrt{\epsilon_0} \|\psi_s\|_\calS,
\end{align*}
where we use \eqref{equ:bound_ringPsi_Linftys_Str}, Gagliardo-Nirenberg, and Poincar\'e to bound $\|s\ringPsi\|_{\Ls L_t^\infty L_x^\infty}$. (III-i) is treated similarly. 

Next we turn to the cubic and higher terms (II). These are treated in the same manner as the quadratic terms (III) except that we do not need to use Poincar\'e. For instance, for (II-ii) we get
\begin{align*}
 \|s^{\frac{1}{2}}\ringPsi\|_{\Ls^\infty L_t^\infty L_x^\infty}^2 \|\jap{\Omega}ms^{\frac{1}{2}}\psi_s\|_{\LLs L_t^4L_x^4}\lesssim \epsilon_0 \|\psi_s\|_\calS,
\end{align*}
by \eqref{equ:bound_ringPsi_Linftys_Str} and Gagliardo-Nirenberg.

Finally, the treatment of the terms (I-i)--(I-xii) involving $w$ is identical to their treatment in the proof of~\eqref{equ:nonlinear_estimates_partials_w_reduction} only that $L_t^{\frac{4}{3}}L_x^{\frac{4}{3}}$ is now replaced by $L_t^4L_x^4$. Indeed, here we always place the factor involving $w$ in $\LLs L_t^4L_x^4$ and the terms involving $A$ or $\Psi$ in $\Ls^\infty L_t^\infty L_x^\infty$. The resulting contribution is then bounded by
%%%%%%%
%%%%%%%
\begin{align*}
\begin{split}
(\sqrt{\epsilon_0}+\|\psi_s\|_\calS)\|\jap{s\Delta}\jap{\Omega}ms^{\frac{1}{2}}w\|_{\LLs L_t^4 L_x^4}
\end{split}
\end{align*}
as desired.
\end{proof}

As a corollary of \eqref{equ:nonlinear_estimates_w_for_psit} we prove the following bounds on $A_t$ and $\psi_t$.

\begin{lemma} \label{lem:bounds_Psi_A_t}
 Assuming \eqref{eq:overall-bootstrap} and \eqref{eq:core-bootstrap}, we have that 
 \begin{align} 
  \| \langle \Omega \rangle s^{\frac{1}{2}} \psi_t \|_{L^\infty_{\ds} L^4_t L^4_x} &\lesssim \|\psi_s\|_{\calS} ,  \label{equ:bound_psit_Linftys_L4tx} \\
  \| \langle \calL_\Omega \rangle A_t \|_{L^\infty_{\ds} L^2_t L^2_x} &\lesssim \|\psi_s\|_{\calS}^2 .  \label{eq:bound_At_Linftys_L2tx}
 \end{align}
\end{lemma}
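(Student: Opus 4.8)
The plan is to derive both bounds from the algebraic identity $\psi_t = w + i\psi_s$ recorded in \eqref{eq:w-def1} together with a reconstruction formula for $A_t$ that is available because $A_s = 0$ in the caloric gauge.

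First I would prove \eqref{equ:bound_psit_Linftys_L4tx}. Since $\psi_t = w + i\psi_s$ and $\Omega$ acts directly on these $\bbC$-valued functions, we have $\jap{\Omega}s^{\frac12}\psi_t = \jap{\Omega}s^{\frac12}w + i\jap{\Omega}s^{\frac12}\psi_s$, so the estimate follows immediately by combining \eqref{equ:nonlinear_estimates_w_for_psit} with the parabolic regularity bound \eqref{equ:bound_psis_Linftys_Str} (using $m(s)\geq 1$ to pass from $s^{\frac12}\psi_s$ to $m(s)s^{\frac12}\psi_s$). The quadratic term $\|\psi_s\|_{\calS}^2$ and the term $\sqrt{\epsilon_0}\|\psi_s\|_{\calS}$ coming from \eqref{equ:nonlinear_estimates_w_for_psit} are absorbed into $\|\psi_s\|_{\calS}$ using the smallness $\|\psi_s\|_{\calS} + \sqrt{\epsilon_0}\lesssim 1$ guaranteed by \eqref{eq:core-bootstrap}.

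Next I would reconstruct $A_t$. Computing $[\Db_s,\Db_t]$ via \eqref{eq:comm-K}, comparing with $i(\p_s A_t - \p_t A_s)$, and using the caloric gauge condition $A_s = 0$ gives $\p_s A_t = \tcv\,\Im(\psi_s\overline{\psi_t})$. Because the limiting frame $e^\infty$ is $t$-independent and $Q$ is stationary, $A_t(s)\to 0$ as $s\to\infty$, so integrating back yields
\[
 A_t(s) = -\tcv\int_s^\infty \Im(\psi_s\overline{\psi_t})(s')\,\ud s', \qquad
 \calL_\Omega A_t(s) = \Omega A_t(s) = -\tcv\int_s^\infty \bigl(\Im(\Omega\psi_s\,\overline{\psi_t}) + \Im(\psi_s\,\overline{\Omega\psi_t})\bigr)(s')\,\ud s'.
\]
To prove \eqref{eq:bound_At_Linftys_L2tx} I would then estimate, for each fixed $s$,
$\|A_t(s)\|_{L^2_tL^2_x}\leq \int_s^\infty \|\psi_s(s')\|_{L^4_tL^4_x}\|\psi_t(s')\|_{L^4_tL^4_x}\,\ud s'$
by H\"older, bound the $\psi_t$ factor by \eqref{equ:bound_psit_Linftys_L4tx} and the $\psi_s$ factor by \eqref{equ:bound_psis_Linftys_Str}. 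For $s'\leq 1$ the weight $m(s')=(s')^{-\delta}$ in the $\calS$-norm supplies a gain, making the integrand $\lesssim (s')^{-1+\delta}\|\psi_s\|_{\calS}^2$; for $s'\geq 1$ one application of the $L^4$-Poincar\'e inequality (Lemma~\ref{l:pi}) together with the $\jap{s\Delta}$-regularity of $\psi_s$ upgrades the decay of $\|\psi_s(s')\|_{L^4_tL^4_x}$ from $(s')^{-1/2}$ to $(s')^{-3/2}$, so the integrand is $\lesssim (s')^{-2}\|\psi_s\|_{\calS}^2$. Both are integrable, uniformly in $s$, yielding $\|A_t\|_{L^\infty_{\ds}L^2_tL^2_x}\lesssim \|\psi_s\|_{\calS}^2$. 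The $\calL_\Omega$ version is handled identically: in each term one factor carries $\Omega$ (either $\Omega\psi_s$, controlled by the $\jap{\Omega}$-component of \eqref{equ:bound_psis_Linftys_Str}, or $\Omega\psi_t$, controlled by the $\jap{\Omega}$-component of \eqref{equ:bound_psit_Linftys_L4tx}), while the other factor always retains a copy of $\psi_s$ or $\Omega\psi_s$ to produce the required decay.

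The only genuinely delicate point — and hence the step I expect to be the main obstacle — is the convergence of the $s'$-integral defining $A_t$: integrability at $s'=0$ is secured precisely by the extra regularity built into the weight $m(s)$ in the norm $\calS$, and integrability at $s'=\infty$ by the spectral gap of $-\Delta$ on $\bbH^2$ (Poincar\'e) combined with the parabolic smoothing of $\psi_s$ from Proposition~\ref{p:par-reg-psi_s}. Everything else is routine H\"older and Poincar\'e bookkeeping of the type already carried out repeatedly in Sections~\ref{s:heat-psi_s} and~\ref{s:w}.
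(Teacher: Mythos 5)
Your proposal is correct and follows essentially the same route as the paper: $\psi_t = w + i\psi_s$ combined with \eqref{equ:nonlinear_estimates_w_for_psit} and \eqref{equ:bound_psis_Linftys_Str} for the first bound, and the reconstruction formula $A_t(s) = -\tcv\int_s^\infty \Im(\psi_t\overline{\psi_s})\,\ud s'$ estimated by H\"older in $L^4_tL^4_x$, with the $m(s)$-weight handling $s'\leq 1$ and Poincar\'e handling $s'\geq 1$, for the second. The only cosmetic difference is the precise power of decay you quote for $s'\geq 1$ ($(s')^{-2}$ versus the paper's $(s')^{-3/2}$), which is immaterial since both are integrable.
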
  
\begin{proof}
 The first bound~\eqref{equ:bound_psit_Linftys_L4tx} follows easily by writing $\psi_t = w + i \psi_s$ and using the estimate~\eqref{equ:nonlinear_estimates_w_for_psit}, 
 \begin{align*}
  \| \langle \Omega \rangle s^{\frac{1}{2}} \psi_t \|_{L^\infty_{\ds} L^4_t L^4_x} &\lesssim \| \langle \Omega \rangle s^{\frac{1}{2}} w \|_{L^\infty_{\ds} L^4_t L^4_x} + \| \langle \Omega \rangle s^{\frac{1}{2}} \psi_s \|_{L^\infty_{\ds} L^4_t L^4_x} \lesssim \|\psi_s\|_{\calS}.
 \end{align*}
 Then invoking the estimates \eqref{equ:bound_psis_Linftys_Str}, \eqref{equ:bound_psit_Linftys_L4tx}, and arguing as usual via Poincar\'e's inequality and extra regularity, we obtain from
 \[
  A_t(s) = - \int_s^\infty \Im \, ( \psi_t \overline{\psi_s} ) \, \ud s'
 \]
 that
 \begin{align*}
  \|A_t\|_{L^\infty_{\ds} L^2_t L^2_x} &\lesssim \int_0^\infty \|\psi_t\|_{L^4_t L^4_x} \|\psi_s\|_{L^4_t L^4_x} \, \ud s' \\
  &\lesssim \| s^{\frac{1}{2}} \psi_t \|_{L^\infty_{\ds} L^4_t L^4_x} \| m(s) s^{\frac{1}{2}} \psi_s \|_{L^\infty_{\ds} L^4_t L^4_x} \int_0^1 (s')^{-1+\delta} \, \ud s' \\
  &\quad \quad + \| s^{\frac{1}{2}} \psi_t \|_{L^\infty_{\ds} L^4_t L^4_x} \| \jap{s\Delta}^{\frac{1}{2}} s^{\frac{1}{2}} \psi_s \|_{L^\infty_{\ds} L^4_t L^4_x} \int_1^\infty (s')^{-\frac{3}{2}} \, \ud s' \\
  &\lesssim \|\psi_s\|_{\calS}^2.
 \end{align*}
 In an analogous manner, one can show that $\| \calL_\Omega A_t \|_{L^\infty_{\ds} L^2_t L^2_x} \lesssim \|\psi_s\|_{\calS}^2$, which finishes the proof of~\eqref{eq:bound_At_Linftys_L2tx}.
\end{proof}

%\newpage 

\subsection{Refined estimates on $\ringA$}\label{sub:6refined}

In this final subsection we prove a number of more refined estimates on $\ringA$ which will be used in our delicate treatment of the magnetic interaction term 
\EQ{
- 2 i \ringA^k \nabla_k \psi_s 
} 
that appears on the right-hand side of the Schr\"odinger equation~\eqref{eq:SH-psi_s} for $\psi_s$. To access the local smoothing norms $LE, LE^*$ needed to treat this term in Section~\ref{s:schrod}, we require an auxiliary frequency localization of $\ringA$ using the Littlewood Paley projections $P_\s$ defined in~\eqref{eq:LP-proj}. We write
\begin{equation} \label{eq:lowA}
 \begin{aligned}
 P_{\geq \sigma} \ringA_\ell(s) &:= \Im \int_s^\infty \biggl( \int_{s'}^\infty \nabla_\ell (P_{\geq \sigma} \psi_s)(s'') \, \ud s'' \biggr) \overline{P_{\geq \sigma} \psi_s}(s') \, \ud s' \\
 &\qquad + \Im \int_s^\infty \biggl( \int_{s'}^\infty i A_\ell(s'') (P_{\geq \sigma} \psi_s)(s'') \, \ud s'' \biggr) \overline{P_{\geq \sigma} \psi_s}(s') \, \ud s' \\
 &\qquad - \Im \int_s^\infty \psi_\ell^\infty \overline{P_{\geq \sigma} \psi_s}(s') \, \ud s' .
 \end{aligned}
\end{equation}
We also define
\begin{align}\label{eq:OmegalowA}
\begin{split}
 P_{\geq \sigma}\calL_\Omega \ringA_\ell(s) :=\calL_\Omega P_{\geq\sigma}\ringA.
 \end{split}
\end{align}

%%%%%%%%%%%%%
\begin{lemma}\label{lem:Mestimates}
Under the bootstrap assumptions \eqref{eq:overall-bootstrap} and \eqref{eq:core-bootstrap}, for any $\sigma'\leq 1$, $P_{\geq \sigma'}\ringbfA$ satisfies the following estimates, where $p\in (1,2)$ and $\alpha\equiv\alpha(p):=\frac{2-p}{2p}$, and the implicit constants are independent of $s>0$:
%%%%%%%%%%%
%%%%%%%%%%%
\begin{align}
&M_1:=(\sigma')^{\frac{1}{2}}\|P_{\geq \sigma'}\ringbfA\|_{L_{t}^\infty L_x^\infty(\Ann_{\leq -k_{\sigma'}})}\lesssim \|\psi_s\|_\calS , \label{eq:M1}\\
&M_2:=\sum_{-k_{\sigma'}\leq \ell\leq 0}2^{\ell}\|P_{\geq \sigma'}\ringbfA\|_{L_{t}^\infty L_x^\infty(\Ann_\ell)}\lesssim \|\psi_s\|_\calS , \label{eq:M2}\\
&M_3:=\|r^4P_{\geq\sigma'}\ringbfA\|_{L_{t}^\infty L_x^\infty(\Ann_{\geq0})}\lesssim \|\psi_s\|_\calS , \label{eq:M3}\\
&\tilM_1:=(\sigma')^{\frac{1}{2}}\|\nabla P_{\geq \sigma'}\ringbfA\|_{L_{t}^\infty L_x^\infty(\Ann_{\leq -k_{\sigma'}})}\lesssim (\sigma')^{-\frac{1}{2}} \|\psi_s\|_\calS , \label{eq:tilM1}\\
&\tilM_2:=\sum_{-k_{\sigma'}\leq \ell\leq 0}2^{\ell}\|\nabla P_{\geq \sigma'}\ringbfA\|_{L_{t}^\infty L_x^\infty(\Ann_\ell)}\lesssim (\sigma')^{-\frac{1}{2}}  \|\psi_s\|_\calS , \label{eq:tilM2}\\
&\tilM_3:=\|r^4\nabla P_{\geq\sigma'}\ringbfA\|_{L_{t}^\infty L_x^\infty(\Ann_{\geq0})}\lesssim (\sigma')^{-\frac{1}{2}}  \|\psi_s\|_\calS , \label{eq:tilM3}\\
&M_4:=\|P_{\geq \sigma'}\ringbfA\|_{L_t^\infty L_x^{\frac{2p}{2-p}}}\lesssim (\sigma')^{-\frac{1}{2}+\alpha}\|\psi_s\|_\calS , \label{eq:M4}\\
&M_5:=\|\sinh^{\frac{1}{2}}rP_{\geq\sigma'}\ringbfA\|_{L_t^\infty L_x^{\frac{2p}{2-p}}}\lesssim (\sigma')^{-\frac{1}{4}+\alpha}\|\psi_s\|_\calS .  \label{eq:M5}
\end{align}
\end{lemma}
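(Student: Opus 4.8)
The plan is to derive the bounds \eqref{eq:M1}--\eqref{eq:M5} from the defining formula \eqref{eq:lowA} for $P_{\geq\sigma'}\ringbfA$ together with the parabolic/dispersive control of $\psi_s$ already established. The starting point is to note that, by \eqref{eq:lowA}, $P_{\geq\sigma'}\ringbfA_\ell(s)$ is a sum of three double-$s'$-integral expressions, all quadratic (or higher) in the frequency-localized heat tension field $P_{\geq\sigma'}\psi_s$ and the harmonic map data $\psi^\infty_\ell$, $A_\ell$. The key gain over the naive estimates in Lemma~\ref{lem:bounds_ringA_Linfty} is that we may freely insert the spatial weights $\sinh^{1/2}r$, $r^4$, etc., and exploit the radial Sobolev inequality (Lemma~\ref{lem:radial_sobolev}) combined with the fact that commuting with $\Omega$ costs nothing on $\psi^\infty$ (by \eqref{eq:LOmpsi}) and on $A^\infty$ (by \eqref{eq:LOmA}), and is controlled for $\ringpsi$, $\ringA$, $\psi_s$ by the $\calS$ norm.

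First I would reduce everything to pointwise-in-$s$ estimates for $P_{\geq\sigma'}\psi_s(s')$ in the relevant mixed norms. The heart of the matter is that for $\sigma'\le 1$, the Littlewood--Paley projection $P_{\geq\sigma'}=e^{\sigma'\Delta}$ enjoys short-time heat kernel bounds (Lemma~\ref{l:hk}), so it maps $L^2$ to weighted $L^\infty$ with an explicit $\sigma'$-power loss, and it commutes with $\Omega$ (since $\Omega$ is a Killing field). I would combine this with the core bound $\|\langle s\Delta\rangle^{5/2}\langle\Omega\rangle m(s)s^{1/2}\psi_s\|_{L^\infty_{\ds}(L^\infty_tL^2_x\cap\cdots)}\lesssim\|\psi_s\|_\calS$ from Proposition~\ref{p:par-reg-psi_s}, together with the off-diagonal decay from Lemma~\ref{l:offdiag}/Corollary~\ref{c:offdiag}, to gain integrability in $s'$ against $\ds'$: schematically, $\int_s^\infty \|P_{\geq\sigma'}\psi_s(s')\|_{(\cdot)}\,\ud s'$ converges because of the extra $s^\delta$-weight $m(s)$ in the small-$s'$ regime and the $\langle s'\Delta\rangle^{5/2}$ smoothing in the large-$s'$ regime, exactly as in Lemmas~\ref{lem:bounds_ringA_Linfty}, \ref{lem:bounds_ringA_Str}. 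Then, carrying out this scheme termwise for the three pieces of \eqref{eq:lowA}: for the $\nabla_\ell(P_{\geq\sigma'}\psi_s)$ piece, I place the inner integral in a weighted-Sobolev norm using Lemma~\ref{lem:radial_sobolev} and the outer factor in $L^2_x$-type norms; for the $iA_\ell(P_{\geq\sigma'}\psi_s)$ piece, I use boundedness of $A=\ringA+A^\infty$ (via \eqref{eq:bound_ringAL_Linftyall_Str}--\eqref{eq:bound_ringAQ_Linftyall_Str} and the exponential decay of $A^\infty$); and for the $\psi^\infty_\ell$ piece, I exploit the smoothness and exponential decay of $\psi^\infty$ in space, which is what produces the strong weights $r^4$ and $\sinh^{1/2}r$ on the left side. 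For $M_4$, $M_5$ I replace the $L^\infty$-based radial Sobolev step by Sobolev embedding $\nabla:L^p\to L^{2p/(2-p)}$ and the associated $\sigma'$-power $\sigma'^{-\alpha}$ coming from Lemma~\ref{l:core-l-preg} (the $L^p$-based parabolic regularity estimates).

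The derivative estimates $\tilde M_1,\tilde M_2,\tilde M_3$ follow the same template with one extra $\nabla$ applied to \eqref{eq:lowA}: the gradient either lands on the weighted $\psi^\infty$ factor (harmless by its smoothness), on an $A$ factor (controlled by $\nabla\ringA$-type bounds and exponential decay of $\nabla A^\infty$), or on $P_{\geq\sigma'}\psi_s$, where each additional $\nabla=\sigma'^{-1/2}\cdot(\sigma'^{1/2}\nabla)$ acting after $e^{\sigma'\Delta}$ costs one factor $\sigma'^{-1/2}$ by Lemma~\ref{l:core-l-preg}; this accounts precisely for the $\sigma'^{-1/2}$ on the right side of \eqref{eq:tilM1}--\eqref{eq:tilM3}. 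Throughout, one must also track the Lie-derivative versions (the $\langle\Omega\rangle$ in the bounds), which is handled by the commutation identities \eqref{eq:LOmA}, \eqref{eq:LOmpsi}, Lemma~\ref{l:commLOm}, and the $\langle\Omega\rangle$-enhanced Proposition~\ref{p:par-reg-psi_s}. I expect the main obstacle to be bookkeeping: making the weights $\sinh^{1/2}r$ / $r^4$ interact cleanly with the short-time heat kernel's Gaussian decay in $\bfd_{\bbH^2}(x,y)$ and with the anisotropic annular structure of the $LE^*$-type norms (the split into $\Ann_{\le -k_{\sigma'}}$, $\Ann_\ell$, $\Ann_{\ge 0}$), so that each spatial region gets the sharp $\sigma'$-power — this is where the argument genuinely uses Lemma~\ref{l:core-l-preg} rather than soft estimates, and it is the step I would write out most carefully, modeling it on \cite[Proof of Lemma~4.11]{LLOS1} and the proof of Lemma~\ref{l:bern1}.
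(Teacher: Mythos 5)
Your three-term decomposition of \eqref{eq:lowA}, the use of radial Sobolev (Lemma~\ref{lem:radial_sobolev}) with the $\langle\Omega\rangle$-enhanced bounds of Proposition~\ref{p:par-reg-psi_s}, Schur's test in the double $s'$-integral, the Gagliardo--Nirenberg interpolation to $L^{2p/(2-p)}$ for $M_4$, $M_5$, and the ``one derivative costs $(\sigma')^{-1/2}$'' accounting for $\tilde M_1$--$\tilde M_3$ all match the paper's proof. However, the mechanism you describe as ``the heart of the matter'' is not the one the paper uses, and taken literally it would fail. You propose to obtain the weighted $L^\infty_x$ control by applying short-time heat kernel bounds for $P_{\geq\sigma'}=e^{\sigma'\Delta}$ (Lemma~\ref{l:hk}, Lemma~\ref{l:core-l-preg}), which maps $L^2_x\to L^\infty_x$ only at the cost of $(\sigma')^{-1/2}$ per factor. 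Since $P_{\geq\sigma'}\ringbfA$ is quadratic in $P_{\geq\sigma'}\psi_s$, this route produces a total loss of order $(\sigma')^{-1}$, which is inadmissible: $M_2$ and $M_3$ carry no negative power of $\sigma'$ at all, and $M_1$ tolerates only the single factor $(\sigma')^{-1/2}$ cancelled by its prefactor. The paper instead extracts the $L^\infty_x$ (and weighted $L^\infty_x$) control from the \emph{nonlinear} heat time: Gagliardo--Nirenberg and radial Sobolev applied to $\psi_s(s')$ give factors $(s')^{-1/2}$ or $(s')^{-1/4}$ in terms of $\|\langle s'\Delta\rangle^{k/2}\langle\Omega\rangle (s')^{1/2}\psi_s(s')\|_{L^2_x}$, which are then integrable in $\frac{\ud s'}{s'}$ against the $L^2_{\ds}\cap L^\infty_{\ds}$ structure of $\|\psi_s\|_{\calS}$ via Schur/Cauchy--Schwarz; the projection $P_{\geq\sigma'}$ is used only for its $L^p$-boundedness and, in $\tilde M_j$, $M_4$, $M_5$, to convert derivatives into powers of $(\sigma')^{-1/2}$ via Poincar\'e.

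Two further corrections to your bookkeeping. First, neither Lemma~\ref{l:core-l-preg} nor the off-diagonal decay of Corollary~\ref{c:offdiag} enters the proof of Lemma~\ref{lem:Mestimates}; those are the engines of the Bernstein estimates (Lemmas~\ref{l:bern1}, \ref{l:bern2}) and of the magnetic-term analysis in Section~\ref{s:schrod}, respectively, so modeling the hard step on \cite[Lemma~4.11]{LLOS1} points you at the wrong template. Second, you attribute the strong weights $r^4$ and $\sinh^{\frac12}r$ solely to the decay of $\psi^\infty$; that is correct only for the linear (third) piece of \eqref{eq:lowA}. For the quadratic pieces the weight $r^4$ on $A_{\geq 0}$ is absorbed because radial Sobolev is applied to \emph{each} of the two $\psi_s$ factors, yielding $\sinh^{\frac12}r\cdot\sinh^{\frac12}r=\sinh r\gtrsim r^4$ there, while on the intermediate annuli the sum $\sum_{-k_{\sigma'}\le\ell\le 0}2^{\ell}$ is played off against $-k_{s'}$ (giving gains of $(s')^{1/2}$ or permitting insertion of $m(s')$ when $s'\le\sigma'$). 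This case analysis in $(s',\sigma',\ell)$ is the genuinely delicate part of the proof and is absent from your plan.
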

%%%%%%%%%%%%%
%%%%%%%%%%%%%

%%%%%%%%%%%%
%%%%%%%%%%%%
\begin{proof}
We divide the proof into three steps corresponding to the three terms in the expansion of $P_{\geq\sigma'}\ringA$ in \eqref{eq:lowA}. Along the way we will use the estimates \eqref{equ:bound_ringPsi_Linftys_Str}, \eqref{equ:bound_psis_L2_Str}, and \eqref{equ:bound_psis_Linftys_Str} many times without explicit reference.

{\bf{Step 1.}} We consider the contribution of 
%%%%%%%
%%%%%%%
\begin{align*}
\begin{split}
P_{\geq\sigma'}\ringbfB:=\Im\int_s^\infty\int_{s'}^\infty\nabla P_{\geq \sigma'}\psi_s(s'')\ud s'' \overline{P_{\geq \sigma'}\psi_s}(s')\ud s',
\end{split}
\end{align*}
and denote the corresponding terms in the left-hand side of \eqref{eq:M1}--\eqref{eq:M5} by $M_1^\ringbfB,\dots, M_5^\ringbfB$. Here the contributions of $\tilM_1^\ringbfB,\tilM_2^\ringbfB,\tilM_3^\ringbfB$ to \eqref{eq:tilM1}, \eqref{eq:tilM2}, \eqref{eq:tilM3} are similar to those of  $M_1^\ringbfB,M_2^\ringbfB,M_3^\ringbfB$ to \eqref{eq:M1}, \eqref{eq:M2}, \eqref{eq:M3} and are left out. First
%%%%%%%
%%%%%%%
\begin{align*}
\begin{split}
M_1^\ringbfB&\lesssim \int_s^\infty \int_{s'}^\infty\|\nabla P_{\geq\sigma'}\psi_s(s'')\|_{L_{t}^\infty L_x^\infty}\ud s'' \|\jap{\sigma'\Delta}P_{\geq\sigma'}\psi_s(s')\|_{L_t^\infty L_x^2}\ud s'\\
&\lesssim \int_s^\infty\int_{s'}^\infty (\frac{s'}{s''})^{\frac{1}{2}}\|\jap{s''\Delta}^{\frac{3}{2}}(s'')^{\frac{1}{2}}\psi_s(s'')\|_{L^\infty_tL_x^2}\dspp \|(s')^{\frac{1}{2}}\psi_s(s')\|_{L_t^\infty L_x^2}\dsp\\
&\lesssim \Big\| \int_{s'}^\infty (\frac{s'}{s''})^{\frac{1}{2}}\|\jap{s''\Delta}^{\frac{3}{2}}(s'')^{\frac{1}{2}}\psi_s(s'')\|_{L^\infty_tL_x^2}\dspp\Big\|_{L^2_\dsp}\|(s')^{\frac{1}{2}}\psi_s(s')\|_{L^2_\dsp L_t^\infty L_x^2}\\
&\lesssim \|\psi_s\|_\calS^2
\end{split}
\end{align*}
by Schur's test, proving \eqref{eq:M1}. For $M_2^\ringbfB$ we further divide
%%%%%%%
%%%%%%%
\begin{align*}
\begin{split}
M_2^\ringbfB&\lesssim \int_s^{\sigma'} \sum_{-k_{\sigma'}\leq \ell\leq 0}2^\ell \Big\|\int_{s'}^\infty \nabla P_{\geq\sigma'}\psi_s(s'') P_{\geq\sigma'}\psi_s(s')\ud s''\Big\|_{L_{t}^\infty L_x^\infty(\Ann_\ell)}\ud s'\\
&\quad+\int_{\sigma'}^1\sum_{-k_{\sigma'}\leq\ell\leq -k_{s'}}2^\ell \Big\|\int_{s'}^\infty \nabla P_{\geq\sigma'}\psi_s(s'') P_{\geq\sigma'}\psi_s(s')\ud s''\Big\|_{L_{t}^\infty L_x^\infty(\Ann_\ell)}\ud s'\\
&\quad + \int_{\sigma'}^1 \sum_{-k_{s'}\leq\ell\leq0}2^{\ell}\Big\|\int_{s'}^\infty \nabla P_{\geq\sigma'}\psi_s(s'') P_{\geq\sigma'}\psi_s(s')\ud s''\Big\|_{L_{t}^\infty L_x^\infty(\Ann_\ell)}\ud s'\\
&\quad +\int_1^\infty \sum_{-k_{\sigma'}\leq\ell\leq0}2^\ell \Big\|\int_{s'}^\infty \nabla P_{\geq\sigma'}\psi_s(s'') P_{\geq\sigma'}\psi_s(s')\ud s''\Big\|_{L_{t}^\infty L_x^\infty(\Ann_\ell)}\ud s'\\
&=:M_{2,1}^\ringbfB+M_{2,2}^\ringbfB+M_{2,3}^\ringbfB+M_{2,4}^\ringbfB.
\end{split}
\end{align*}
Using the radial Sobolev estimate and the fact that $(\sigma')^{-\delta}\leq (s')^{-\delta}$ for $s'\leq \sigma'$
%%%%%%%
%%%%%%%
\begin{align*}
\begin{split}
M^\ringbfB_{2,1}&\lesssim \int_s^{\sigma'}\sum_{-k_{\sigma'}\leq\ell\leq 0}\int_{s'}^\infty \|\sinh^{\frac{1}{2}}r\nabla P_{\geq\sigma'}\psi_s(s'')\|_{L_{t}^\infty L_x^\infty}\ud s''\|\sinh^{\frac{1}{2}}r P_{\geq \sigma'}\psi_s(s')\|_{L_{t}^\infty L_x^\infty}\ud s'\\
&\lesssim \int_s^{\sigma'} \sum_{-k_{\sigma'}\leq \ell\leq0}\int_{s'}^\infty (\frac{s'}{s''})^{\frac{1}{4}}\|\jap{s''\Delta}\jap{\Omega}(s'')^{\frac{1}{2}}\psi_s(s'')\|_{L_t^\infty L_x^2}\dspp \|\jap{s'\Delta}^{\frac{1}{2}}\jap{\Omega}(s')^{\frac{1}{2}}\psi_s(s')\|_{L_t^\infty L_x^2}\dsp\\
&\lesssim \int_s^{\sigma'} \int_{s'}^\infty (\frac{s'}{s''})^{\frac{1}{4}}\|\jap{s''\Delta}\jap{\Omega}(s'')^{\frac{1}{2}}\psi_s(s'')\|_{L_t^\infty L_x^2}\dspp \|\jap{s'\Delta}^{\frac{1}{2}}\jap{\Omega}m(s')(s')^{\frac{1}{2}}\psi_s(s')\|_{L_t^\infty L_x^2}\dsp\\
&\lesssim \|\psi_s\|_\calS^2
\end{split}
\end{align*}
by Cauchy-Schwarz and Schur's test. Next, by Gagliardo-Nirenberg and adding up the spatial weights to gain a factor of $(s')^{\frac{1}{2}}$
%%%%%%%
%%%%%%%
\begin{align*}
\begin{split}
M^{\ringbfB}_{2,2}&\lesssim \int_{\sigma'}^1(s')^{\frac{1}{2}}\int_{s'}^\infty \|\nabla P_{\geq\sigma'}\psi_s(s'')\|_{L_{t}^\infty L_x^\infty}\ud s'' \|P_{\geq \sigma'}\psi_s(s')\|_{L_{t}^\infty L_x^\infty}\ud s'\\
&\lesssim \int_{\sigma'}^1\int_{s'}^\infty (\frac{s'}{s''})^{\frac{1}{2}}\|\jap{s''\Delta}^{\frac{3}{2}}(s'')^{\frac{1}{2}}\psi_s(s'')\|_{L_t^\infty L_x^2}\dspp \|\jap{s'\Delta}(s')^{\frac{1}{2}}\psi_s(s')\|_{L_t^\infty L_x^2}\dsp\\
&\lesssim \|\psi_s\|_\calS^2.
\end{split}
\end{align*}
Similarly, using radial Sobolev,
%%%%%%%
%%%%%%%
\begin{align*}
\begin{split}
M^\ringbfB_{2,3}&\lesssim \int_{\sigma'}^1(s')^{-\delta}\int_{s'}^\infty\|\sinh^{\frac{1}{2}}r|\nabla|P_{\geq\sigma'}\psi_s(s'')\|_{L_{t}^\infty L_x^\infty}\ud s'' \|\sinh^{\frac{1}{2}}r P_{\geq\sigma'}\psi_s(s')\|_{L_{t}^\infty L_x^\infty}\ud s'\\
&\lesssim \int_{\sigma'}^1\int_{s'}^\infty(\frac{s'}{s''})^{\frac{1}{4}}\|\jap{s''\Delta}\jap{\Omega}(s'')^{\frac{1}{2}}\psi_s(s'')\|_{L_t^\infty L_x^2}\dspp \|\jap{s'\Delta}^{\frac{1}{2}}\jap{\Omega}m(s')(s')^{\frac{1}{2}}\psi_s(s')\|_{L_t^\infty L_x^2}\dsp\\
&\lesssim \|\psi_s\|_\calS^2.
\end{split}
\end{align*}
For $M^\ringbfB_{2,4}$ we add up the spatial weights and, since $s'\geq1$, freely insert a factor of $(s')^{\frac{1}{2}}$ to get
%%%%%%%
%%%%%%%
\begin{align*}
\begin{split}
M^\ringbfB_{2,4}&\lesssim \int_1^\infty(s')^{\frac{1}{2}}\int_{s'}^\infty \|\nabla P_{\geq\sigma'}\psi_s(s'')\|_{L_{t,x}^\infty}\ud s'' \|P_{\geq\sigma'}\psi_s(s')\|_{L_{t,x}^\infty}\ud s'\\&\lesssim\int_1^\infty\int_{s'}^\infty (\frac{s'}{s''})^{\frac{1}{2}}\|\jap{s''\Delta}^{\frac{3}{2}}(s'')^{\frac{1}{2}}\psi_s(s'')\|_{L_t^\infty L_x^2}\dspp \|\jap{s'\Delta}(s')^{\frac{1}{2}}\psi_s(s')\|_{_t^\infty L_x^2}\dsp\\
&\lesssim \|\psi_s\|_\calS^2.
\end{split}
\end{align*}
This completes the proof of \eqref{eq:M2}. For $M_3^\ringbfB$ we use radial Sobolev to get
%%%%%%%
%%%%%%%
\begin{align*}
\begin{split}
M^\ringbfB_3&\lesssim \int_s^\infty \int_{s'}^\infty \|\sinh^{\frac{1}{2}}r\nabla P_{\geq\sigma'}\psi_s(s'')\|_{L_{t,x}^\infty}\ud s'' \|\sinh^{\frac{1}{2}}r P_{\geq\sigma'}\psi_s(s')\|_{L_{t,x}^\infty}\ud s''\\
&\lesssim \int_s^\infty \int_{s'}^\infty (\frac{s'}{s''})^{\frac{1}{4}} \|\jap{s''\Delta}\jap{\Omega}(s'')^{\frac{1}{2}}\psi_s(s'')\|_{L_t^\infty L_x^2}\dspp \|\jap{s'\Delta}^{\frac{1}{2}}\jap{\Omega}(s')^{\frac{1}{2}}\psi_s(s')\|_{L_t^\infty L_x^2}\dsp\\
&\lesssim \|\psi_s\|_\calS^2.
\end{split}
\end{align*}
For $M^\ringbfB_4$ note that by Gagliardo-Nirenberg, and with $\alpha\equiv\alpha(p):=\frac{2-p}{2p}$
%%%%%%%
%%%%%%%
\begin{align*}
\begin{split}
\|P_{\geq\sigma'}\ringbfB\|_{L_t^\infty L_{x}^{\frac{2p}{2-p}}}\lesssim \|P_{\geq\sigma'}\ringbfB\|_{L_t^\infty L_x^2}^{2\alpha} \|\nabla P_{\geq\sigma'}\ringbfB\|_{L_t^\infty L_x^2}^{1-2\alpha}.
\end{split}
\end{align*}
Now
%%%%%%%
%%%%%%%
\begin{align*}
\begin{split}
\|P_{\geq\sigma'}\ringbfB\|_{L_t^\infty L_x^2}&\lesssim \int_s^\infty \int_{s'}^\infty \|\nabla P_{\geq \sigma'}\psi_s(s'')\|_{L_{t,x}^\infty}\ud s'' \|P_{\geq \sigma'}\psi_s(s')\|_{L_t^\infty L_x^2}\ud s'\\
&\lesssim \int_s^\infty \int_{s'}^\infty (\frac{s'}{s''})^{\frac{1}{2}}\|\jap{s''\Delta}^{\frac{3}{2}}(s'')^{\frac{1}{2}}\psi_s(s'')\|_{L_t^\infty L_x^2}\dspp \|(s')^{\frac{1}{2}}\psi_s(s')\|_{L_t^\infty L_x^2}\dsp\\
&\lesssim \|\psi_s\|_\calS^2.
\end{split}
\end{align*}
Similarly, distributing the derivative on the factors of $P_{\geq \sigma'}\ringbfB$ and absorbing by $P_{\geq\sigma'}$ we get
%%%%%%%
%%%%%%%
\begin{align*}
\begin{split}
\|\nabla P_{\geq\sigma'}\ringbfB\|_{L_t^\infty L_x^2}\lesssim (\sigma')^{-\frac{1}{2}}\|\psi_s\|_\calS^2,
\end{split}
\end{align*}
which together with the previous two estimates gives \eqref{eq:M4}. Finally by the same Gagliardo-Nirenberg inequality combined with radial Sobolev 
%%%%%%%
%%%%%%%
\begin{align*}
\begin{split}
M^\ringbfB_5&\lesssim \int_s^\infty \int_{s'}^\infty \|\sinh^{\frac{1}{2}}r \nabla P_{\geq \sigma'}\psi_s(s'')\|_{L_{t}^\infty L_x^\infty}\ud s'' \|P_{\geq\sigma'}\psi_s(s')\|^{2\alpha}_{L_t^\infty L_x^2}\|\nabla P_{\geq\sigma'}\psi_s(s')\|_{L_t^\infty L_x^2}^{1-2\alpha}\ud s'\\
&\lesssim (\sigma')^{-\frac{1}{4}+\alpha} \int_s^\infty \int_{s'}^\infty (\frac{s'}{s''})^{\frac{1}{4}}\|\jap{s''\Delta}\jap{\Omega}(s'')^{\frac{1}{2}}\psi_s(s'')\|_{L_t^\infty L_x^2}\dspp \|\jap{s'\Delta}^{\frac{1}{4(1-2\alpha)}}\jap{\Omega}(s')^{\frac{1}{2}}\psi_s(s')\|_{L_t^\infty L_x^2}\dsp\\
&\lesssim (\sigma')^{-\frac{1}{4}+\alpha} \|\psi_s\|_\calS^2.
\end{split}
\end{align*}

{\bf{Step 2.}} We consider the contribution of 
%%%%%%%
%%%%%%%
\begin{align*}
\begin{split}
P_{\geq\sigma'}\ringbfC:=\Im \int_s^\infty \biggl( \int_{s'}^\infty A(s'') (P_{\geq \sigma'} \psi_s)(s'') \, \ud s'' \biggr) \, \overline{ P_{\geq \sigma'} \psi_s}(s') \, \ud s',
\end{split}
\end{align*}
and denote the corresponding terms in the left-hand side of \eqref{eq:M1}--\eqref{eq:M5} by $M_1^\ringbfC,\dots, M_5^\ringbfC$. We also decompose $A$ in the usual way as
%%%%%%%
%%%%%%%
\begin{align*}
\begin{split}
A = A^\infty+\ringbfA.
\end{split}
\end{align*}
Except for  $\tilM_1^\ringbfC,\tilM_2^\ringbfC,\tilM_3^\ringbfC$, the contribution of $\ringbfA$ can always be bounded in exactly the same way as in Step~1 by placing $\ringbfA$ in $L_{t}^\infty L_x^\infty$ and using that thanks to the estimates~\eqref{eq:bound_ringAL_Linftyall_Str}--\eqref{eq:bound_ringAQ_Linftyall_Str} we have
%%%%%%%
%%%%%%%
\begin{align*}
\begin{split}
\|\ringbfA(s'')\|_{L_t^\infty L_x^\infty}\lesssim (s'')^{-\frac{1}{2}}\|\psi_s\|_\calS.
\end{split}
\end{align*}
Therefore, in the rest of the argument we consider only the contribution of $A^\infty$ as well as $\tilM_1^\ringbfC,\tilM_2^\ringbfC,\tilM_3^\ringbfC$. Let us start by discussing the contribution of $\ringbfA$ to $\tilM_1^\ringbfC,\tilM_2^\ringbfC,\tilM_3^\ringbfC$. Using the estimates~\eqref{eq:bound_ringAL_Linftyall_Str}--\eqref{eq:bound_ringAQ_Linftyall_Str} we have
%%%%%%%
%%%%%%%
\begin{align*}
\begin{split}
\|\nabla\ringbfA(s'')\|_{L_{t}^\infty L_x^\infty}\lesssim (s'')^{-1}\|\psi_s\|_\calS.
\end{split}
\end{align*}
It follows that the contribution of $\ringbfA$ to $\tilM_1^\ringbfC$ is bounded by
%%%%%%%
%%%%%%%
\begin{align*}
\begin{split}
&\int_s^\infty\int_{s'}^\infty (s'')^{-1}\|P_{\geq\sigma'}\psi_s(s'')\|_{L_{t,x}^\infty}\ud s'' \|(\sigma')^{\frac{1}{2}}P_{\geq\sigma'}\psi_s(s')\|_{L_{t,x}^\infty}\ud s'\\
&\lesssim (\sigma')^{-\frac{1}{2}}\int_s^\infty \int_{s'}^\infty (\frac{s'}{s''})^{\frac{1}{2}}\|\jap{\sigma'\Delta}P_{\geq\sigma'}(s'')^{\frac{1}{2}}\psi_s(s'')\|_{L_t^\infty L_x^2}\dspp \|\jap{\sigma'\Delta}P_{\geq\sigma'}(s')^{\frac{1}{2}}\psi_s(s')\|_{L_t^\infty L_x^2}\dsp\\
&\lesssim (\sigma')^{-\frac{1}{2}}\int_s^\infty \int_{s'}^\infty (\frac{s'}{s''})^{\frac{1}{2}}\|(s'')^{\frac{1}{2}}\psi_s(s'')\|_{L_t^\infty L_x^2}\dspp \|(s')^{\frac{1}{2}}\psi_s(s')\|_{L_t^\infty L_x^2}\dsp\\
&\lesssim (\sigma')^{-\frac{1}{2}}\|\psi_s\|_\calS^2.
\end{split}
\end{align*}
Similarly, the contribution of $\ringbfA$ to $\tilM_{2,1}^\ringbfC$ is bounded by
%%%%%%%
%%%%%%%
\begin{align*}
\begin{split}
&\int_s^{\sigma'}\int_{s'}^\infty (s'')^{-1}\|\sinh^{\frac{1}{2}}r P_{\geq\sigma'}\psi_s(s'')\|_{L_{t}^\infty L_x^\infty}\ud s'' \|\sinh^{\frac{1}{2}}rP_{\geq \sigma'}m(s')\psi_s(s')\|_{L_{t}^\infty L_x^\infty}\ud s'\\
&\lesssim\int_s^{\sigma'}\int_{s'}^\infty (\frac{s'}{s''})^{\frac{1}{2}}(\sigma')^{-\frac{1}{4}}\|\jap{\sigma'\Delta}^{\frac{1}{2}}(s'')^{\frac{1}{2}}P_{\geq\sigma'}\psi_s(s'')\|_{L_{t}^\infty L_x^2}\dspp (\sigma')^{-\frac{1}{4}} \|\jap{\sigma'\Delta}P_{\geq \sigma'}m(s')(s')^{\frac{1}{2}}\psi_s(s')\|_{L_{t}^\infty L_x^2}\dsp\\
&\lesssim(\sigma')^{-\frac{1}{2}}\int_s^{\sigma'}\int_{s'}^\infty (\frac{s'}{s''})^{\frac{1}{2}}\|(s'')^{\frac{1}{2}}P_{\geq\sigma'}\psi_s(s'')\|_{L_{t}^\infty L_x^2}\dspp  \|m(s')(s')^{\frac{1}{2}}\psi_s(s')\|_{L_{t}^\infty L_x^2}\dsp\\
&\lesssim(\sigma')^{-\frac{1}{2}}\|\psi_s\|_\calS^2.
\end{split}
\end{align*}
The contribution of $\ringbfA$ to $\tilM_{2,2}^\ringbfC, \tilM_{2,3}^\ringbfC, \tilM_{2,4}^\ringC$, and $\tilM_{3}^\ringbfC$ can be treated in similar ways.

Finally, using the bound $\|A^\infty\|_{L_x^\infty}\lesssim1$, the contribution of $A^\infty$ is bounded in exactly the same way as in Step 1, where we simply use Poincar\'e to introduce an extra derivative. 

{\bf{Step 3.}} The contribution of
%%%%%%%
%%%%%%%
\begin{align*}
\begin{split}
\Im \int_s^\infty \Psi^\infty \overline{P_{\geq\sigma'}\psi_s(s')}\ud s'
\end{split}
\end{align*}
is treated using similar arguments and we will be brief. For $M_1$ we bound this contribution by
%%%%%%%
%%%%%%%
\begin{align*}
\begin{split}
\int_s^\infty \|\psi_s\|_{L_{t}^\infty L_x^2}\ud s'\lesssim \int_s^1\|(s')^{\frac{1}{2}}\psi_s(s')\|_{L_t L_x^2}(s')^{-\frac{1}{2}}\ud s'+\int_1^\infty \|(s')^{\frac{1}{2}}\psi_s(s')\|_{L_t^\infty L_x^2}(s')^{-\frac{1}{2}}\ud s'.
\end{split}
\end{align*}
The second integral is bounded using Poincar\'e and \eqref{equ:bound_psis_Linftys_Str} by
%%%%%%%
%%%%%%%
\begin{align*}
\begin{split}
\int_1^\infty \|(s'\Delta)(s')^{\frac{1}{2}}\psi_s(s')\|_{L_t^\infty L_x^2}(s')^{-\frac{1}{2}}\dsp\lesssim \|(s\Delta)s^{\frac{1}{2}}\psi_s\|_{\Ls^\infty L_t^\infty L_x^2}\lesssim \|\psi_s\|_\calS.
\end{split}
\end{align*}
The first integral is simply bounded by
%%%%%%%
%%%%%%%
\begin{align*}
\begin{split}
\|s^{\frac{1}{2}}\psi_s\|_{\Ls^\infty L_t^\infty L_x^2}\lesssim \|\psi_s\|_\calS.
\end{split}
\end{align*}
For $M_2$ the corresponding contribution is bounded by
%%%%%%%
%%%%%%%
\begin{align*}
\begin{split}
\int_s^\infty\|\sinh^{\frac{1}{2}}r \psi_s\|_{L_{t}^\infty L_x^\infty}\ud s'\lesssim \int_s^\infty \|\jap{s'\Delta}^{\frac{1}{2}}(s')^{\frac{1}{2}}\psi_s(s')\|_{L_t^\infty L_x^2}(s')^{-\frac{3}{4}}\ud s'\lesssim \|\psi_s\|_\calS,
\end{split}
\end{align*}
where we have used the bound $r\leq \sinh r$, radial Sobolev, Poincar\'e,  \eqref{equ:bound_psis_Linftys_Str}, and the bound
%%%%%%%
%%%%%%%
\begin{align*}
\begin{split}
\sum_{-k_{\sigma'} \leq \ell \leq 0} \|r^{\frac{1}{2}}\Psi^\infty\|_{L_x^\infty(\Ann_\ell)}\lesssim  \|\Psi^\infty\|_{L_x^\infty}\lesssim1.   
\end{split}
\end{align*}
which follows from~\eqref{eq:cf-coeff}. 
Using $\|r^4\Psi^\infty\|_{L_x^\infty}\lesssim 1$ (which is again from~\eqref{eq:cf-coeff}) the contribution to $M_3$ is bounded similarly by
%%%%%%%
%%%%%%%
\begin{align*}
\begin{split}
\|\Psi^\infty\|_{L_x^\infty}\int_s^\infty \|\sinh^{\frac{1}{2}}r\psi_s(s')\|_{L_{t}^\infty L_x^\infty}\ud s'\lesssim \|\psi_s\|_\calS.
\end{split}
\end{align*}
Similarly, by Poincar\'e, the contribution of $\nabla \Psi^\infty$ to $\tilM_1$ is bounded by
%%%%%%%
%%%%%%%
\begin{align*}
\begin{split}
\int_s^\infty \|\jap{\sigma'\Delta}P_{\geq\sigma'}\psi_s(s')\|_{L_{t}^\infty L_x^2}\ud s'& \lesssim (\sigma')^{-\frac{1}{2}}\int_s^\infty \|\jap{\sigma'\Delta}^{\frac{3}{2}}P_{\geq\sigma'}\psi_s(s')\|_{L_{t}^\infty L_x^2}\ud s'\\
&\lesssim (\sigma')^{-\frac{1}{2}}\int_s^\infty \|\psi_s(s')\|_{L_{t}^\infty L_x^2}\ud s'\lesssim (\sigma')^{-\frac{1}{2}}\|\psi_s\|_\calS,
\end{split}
\end{align*}
where we have used Gagliardo-Nirenberg and Poincar\'e. The contribution of $\nabla \Psi^\infty$ to $\tilM_2$ is bonded by
%%%%%%%
%%%%%%%
\begin{align*}
\begin{split}
\int_s^\infty\|P_{\geq\sigma'}\psi_s(s')\|_{L_{t}^\infty L_x^\infty}\lesssim (\sigma')^{-\frac{1}{2}}\int_{s}^\infty \|\jap{\sigma'\Delta}P_{\geq\sigma'}\psi_s(s')\|_{L_t^\infty L_x^2}\ud s'\lesssim (\sigma')^{-\frac{1}{2}}\|\psi_s\|_\calS,
\end{split}
\end{align*}
where we have used
%%%%%%%
%%%%%%%
\begin{align*}
\begin{split}
\sum_{-k_{\sigma'}\leq \ell \leq 0} \|r\nabla \Psi^\infty\|_{L_x^\infty(\Ann_\ell)}\lesssim \|\nabla \Psi^\infty\|_{L_x^\infty}\lesssim 1.
\end{split}
\end{align*}
Since $\|r^4\nabla\Psi^\infty\|_{L_x^\infty}\lesssim 1$, the contribution of $\nabla \Psi^\infty$ to $\tilM_3$ is also bounded by
%%%%%%%
%%%%%%%
\begin{align*}
\begin{split}
\int_s^\infty\|P_{\geq\sigma'}\psi_s(s')\|_{L_{t}^\infty L_x^\infty}\ud s'\lesssim (\sigma')^{-\frac{1}{2}}\|\psi_s\|_\calS.
\end{split}
\end{align*}
By Gagliardo-Nirenberg the contribution of $\Psi^\infty$ to $M_4$ is bounded by
%%%%%%%
%%%%%%%
\begin{align*}
\begin{split}
\int_s^\infty \|P_{\geq \sigma'}\psi_s(s')\|_{L_t^\infty L_x^2}^{2\alpha}\|\nabla P_{\geq\sigma'}\psi_s(s') \|_{L_t^\infty L_x^2}^{1-2\alpha}\ud s'\lesssim (\sigma')^{-\frac{1}{2}+\alpha}\int_s^{\infty} \|\psi_s(s')\|_{L_t^\infty L_x^2}\ud s'\lesssim  (\sigma')^{-\frac{1}{2}+\alpha}\|\psi_s\|_\calS.
\end{split}
\end{align*}
Finally, since $\|\sinh^{\frac{1}{2}}r\Psi^\infty\|_{L_x^\infty}\lesssim 1$, using Gagliard-Nirenberg the contribution of $\Psi^\infty$ to $M_5$ is bounded by
%%%%%%%
%%%%%%%
\begin{align*}
\begin{split}
&\int_s^\infty \|P_{\geq\sigma}\psi_s(s')\|_{L_t^\infty L_x^2}^{2\alpha}\|\jap{\sigma'\Delta}^{\frac{\frac{1}{4}-\alpha}{1-2\alpha}}P_{\geq\sigma'}\jap{s'\Delta}^{\frac{1}{4(1-2\alpha)}}\psi_s(s')\|_{L_t^\infty L_x^2}^{1-2\alpha}(\sigma')^{-\frac{1}{4}+\alpha}(s')^{-\frac{1}{4}}\ud s'\\
&\lesssim (\sigma')^{-\frac{1}{4}+\alpha}\int_s^\infty \|\jap{s'\Delta}^{\frac{1}{4(1-2\alpha)}}\psi_s(s')\|_{L_t^\infty L_x^2}(s')^{-\frac{1}{4}}\ud s'\lesssim (\sigma')^{-\frac{1}{4}+\alpha}\|\psi_s\|_\calS. \qedhere
\end{split}
\end{align*}
\end{proof}
%%%%%%%%%%%%
%%%%%%%%%%%%

The next lemma provides bounds to deal with the contribution of $P_{\geq\sigma'} \calL_\Omega \ringA$ to the high frequency estimate in treating the magnetic interaction term.
%%%%%%%%%%%%%
%%%%%%%%%%%%%
\begin{lemma}\label{lem:MOmegaestimates}
Under the bootstrap assumptions \eqref{eq:overall-bootstrap} and \eqref{eq:core-bootstrap}, for any $\sigma'\leq 1$, $P_{\geq \sigma'}\ringA$ satisfies the following estimates, where $p\in (1,2)$ and $\alpha\equiv\alpha(p):=\frac{2-p}{2p}$, and the implicit constants are independent of $s>0$:
%%%%%%%%%%%
%%%%%%%%%%%
\begin{align}
&M\Omega_1:= (\sigma')^{\frac{1}{2}}\|P_{\geq \sigma'} \calL_\Omega \ringbfA\|_{L_{t}^\infty L_x^\infty(\Ann_{\leq -k_{\sigma'}})}\lesssim \|\psi_s\|_\calS , \label{eq:MOmega1}\\
&M\Omega_2:= \sum_{-k_{\sigma'}\leq \ell\leq 0}2^{\ell}\|P_{\geq \sigma'} \calL_\Omega \ringbfA\|_{L_{t}^\infty L_r^\infty L_\theta^2(\Ann_\ell)}\lesssim \|\psi_s\|_\calS , \label{eq:MOmega2}\\
&M\Omega_3:= \|r^4 P_{\geq\sigma'} \calL_\Omega \ringbfA\|_{L_{t}^\infty L_r^\infty L_\theta^2(\Ann_{\geq0})}\lesssim \|\psi_s\|_\calS , \label{eq:MOmega3}\\
&\tilM\Omega_1:=(\sigma')^{\frac{1}{2}}\|\nabla P_{\geq \sigma'} \calL_\Omega \ringbfA\|_{L_{t}^\infty L_x^\infty(\Ann_{\leq -k_{\sigma'}})}\lesssim (\sigma')^{-\frac{1}{2}} \|\psi_s\|_\calS , \label{eq:tilMOmega1}\\
&\tilM\Omega_2:=\sum_{-k_{\sigma'}\leq \ell\leq 0}2^{\ell}\|\nabla P_{\geq \sigma'} \calL_\Omega \ringbfA\|_{L_{t}^\infty L_r^\infty L_\theta^2(\Ann_\ell)}\lesssim (\sigma')^{-\frac{1}{2}}  \|\psi_s\|_\calS , \label{eq:tilMOmega2}\\
&\tilM\Omega_3:=\|r^4\nabla P_{\geq\sigma'} \calL_\Omega \ringbfA\|_{L_{t}^\infty L_r^\infty L_\theta^2(\Ann_{\geq0})}\lesssim (\sigma')^{-\frac{1}{2}}  \|\psi_s\|_\calS , \label{eq:tilMOmega3}\\
&M\Omega_4:=\|P_{\geq \sigma'} \calL_\Omega \ringbfA\|_{L_t^\infty L_x^{\frac{2p}{2-p}}}\lesssim (\sigma')^{-\frac{1}{2}+\alpha}\|\psi_s\|_\calS , \label{eq:MOmega4}\\
&M\Omega_5:=\|\sinh^{\frac{1}{2}}rP_{\geq\sigma'} \calL_\Omega \ringbfA\|_{L_t^\infty L_x^{\frac{2p}{2-p}}}\lesssim (\sigma')^{-\frac{1}{4}+\alpha}\|\psi_s\|_\calS . \label{eq:MOmega5}
\end{align}
\end{lemma}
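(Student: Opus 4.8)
The plan is to mirror the proof of Lemma~\ref{lem:Mestimates} (the non-$\calL_\Omega$ version) step by step, inserting one extra Lie derivative throughout and invoking the $\calL_\Omega$-versions of all the auxiliary estimates already available. Recall that $P_{\geq\sigma'}\calL_\Omega\ringbfA = \calL_\Omega P_{\geq\sigma'}\ringbfA$ by \eqref{eq:OmegalowA}, and by \eqref{eq:lowA} together with \eqref{eq:LOmA}, \eqref{eq:LOmpsi} the quantity $\calL_\Omega P_{\geq\sigma'}\ringbfA$ expands into three groups of terms exactly as in the proof of Lemma~\ref{lem:Mestimates}, except that in each product one factor now carries an additional $\calL_\Omega$ (acting either on $P_{\geq\sigma'}\psi_s$, producing $P_{\geq\sigma'}\Omega\psi_s$, or on $A$, or on $\Psi^\infty$, where $\calL_\Omega\Psi^\infty = i\Psi^\infty$ by \eqref{eq:LOmpsi} and $\calL_\Omega A^\infty = 0$ by \eqref{eq:LOmA}). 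The key point enabling the argument is that every ingredient used in the proof of Lemma~\ref{lem:Mestimates} --- namely \eqref{equ:bound_psis_Linftys_Str}, \eqref{equ:bound_psis_L2_Str}, \eqref{equ:bound_ringPsi_Linftys_Str_2}, the radial Sobolev inequality (Lemma~\ref{lem:radial_sobolev}), the Gagliardo--Nirenberg inequality, Poincar\'e's inequality, and the $\ringbfA$-decay bounds \eqref{eq:bound_ringAL_Linftyall_Str}--\eqref{eq:bound_ringAQ_Linftyall_Str} --- has a counterpart that includes the angular vector field $\Omega$ or its Lie-derivative analogue $\calL_\Omega$: the dispersive norm $\calS$ already controls $\|m(s)s^{1/2}\Omega\psi_s\|_{LE\cap\Str}$, the bounds in Proposition~\ref{p:par-reg-psi_s} and Lemmas~\ref{l:ringPsi-L2}, \ref{l:APsi-infty}, \ref{l:heatPsi} all come with $\langle\Omega\rangle$ or $\langle\calL_\Omega\rangle$, and the radial Sobolev estimate of Lemma~\ref{lem:radial_sobolev} is stated for tensors with $\langle\calL_\Omega\rangle$. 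Crucially, the right-hand side in each of \eqref{eq:MOmega1}--\eqref{eq:MOmega5} is again quadratic (or cubic) in $\|\psi_s\|_\calS$ and the $s$-decay bookkeeping is unchanged.

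First I would record the expansion of $\calL_\Omega P_{\geq\sigma'}\ringbfA$ by commuting $\calL_\Omega$ through \eqref{eq:lowA}: using Lemma~\ref{l:commLOm} to pass $\calL_\Omega$ through $\nabla_\ell$ in the first term, the Leibniz rule in the second and third, and \eqref{eq:LOmA}, \eqref{eq:LOmpsi}, one gets a finite sum of terms, each of which is the same shape as a term treated in Steps~1--3 of the proof of Lemma~\ref{lem:Mestimates}, but with exactly one $\calL_\Omega$ distributed onto one of the factors. Then, Step~1 would treat the contribution of $\calL_\Omega P_{\geq\sigma'}\ringbfB$ (the $\nabla P_{\geq\sigma'}\psi_s\cdot \overline{P_{\geq\sigma'}\psi_s}$ term with a Lie derivative): for each of $M\Omega_1,\dots,M\Omega_5$ and $\widetilde{M\Omega}_1,\dots,\widetilde{M\Omega}_3$, I would follow the corresponding estimate for $M_j^{\ringbfB}$ verbatim, placing the $\Omega$-differentiated factor in the appropriate norm and using $\|\langle s''\Delta\rangle\langle\Omega\rangle (s'')^{1/2}\psi_s\|$-type quantities controlled by $\|\psi_s\|_\calS$, then closing with Cauchy--Schwarz and Schur's test with the same kernels. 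Step~2 would handle $\calL_\Omega P_{\geq\sigma'}\ringbfC$, writing $A = A^\infty + \ringbfA$ and observing: the $\calL_\Omega A^\infty=0$ contribution vanishes when $\calL_\Omega$ lands on $A^\infty$, and otherwise when $\calL_\Omega$ lands on $\psi_s$ or on $\ringbfA$, one uses $\|A^\infty\|_{L^\infty_x}\lesssim 1$ (with Poincar\'e to absorb derivatives) or the $\calL_\Omega$-versions of \eqref{eq:bound_ringAL_Linftyall_Str}--\eqref{eq:bound_ringAQ_Linftyall_Str}, reproducing exactly the Step~2 estimates of Lemma~\ref{lem:Mestimates}. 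Step~3 would treat the $\Psi^\infty$-term, where $\calL_\Omega$ on $\Psi^\infty$ contributes only a harmless factor $i$ by \eqref{eq:LOmpsi}, so it reduces to the Step~3 argument with $\psi_s$ replaced by $\Omega\psi_s$ where needed and the weighted bounds $\sum_\ell\|r^{1/2}\Psi^\infty\|_{L^\infty(\Ann_\ell)}\lesssim 1$, $\|r^4\Psi^\infty\|_{L^\infty}\lesssim 1$ (and similarly for $\nabla\Psi^\infty$), all of which follow from \eqref{eq:cf-coeff}.

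I do not expect a genuine obstacle here, since the lemma is essentially a mechanical ``add one $\calL_\Omega$'' upgrade of Lemma~\ref{lem:Mestimates}, and the excerpt has deliberately stated every auxiliary bound in the $\langle\Omega\rangle$- or $\langle\calL_\Omega\rangle$-form needed to run the argument. The only place requiring mild care is the very first step: one must check that commuting $\calL_\Omega$ through the nested $s'$- and $s''$-integrals and through $\nabla_\ell$ in \eqref{eq:lowA} produces no new terms beyond those accounted for by Lemma~\ref{l:commLOm}, \eqref{eq:LOmA}, and \eqref{eq:LOmpsi} --- in particular that $\calL_\Omega$ commutes with $P_{\geq\sigma}$ (true since $\Omega$ is a Killing field, so it commutes with $\Delta$ and hence with $e^{s\Delta}$) and that the decomposition $A=A^\infty+\ringbfA$ is $\calL_\Omega$-compatible. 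Given these observations, the proof proceeds by writing ``the estimates \eqref{eq:MOmega1}--\eqref{eq:MOmega5} follow from the identical argument as in the proof of Lemma~\ref{lem:Mestimates}, using \eqref{eq:OmegalowA}, Lemma~\ref{l:commLOm}, \eqref{eq:LOmA}, \eqref{eq:LOmpsi}, and the $\langle\calL_\Omega\rangle$-versions of the bounds in Proposition~\ref{p:par-reg-psi_s}, Lemma~\ref{l:ringPsi-L2}, Lemma~\ref{l:APsi-infty}, Lemma~\ref{l:heatPsi}, and \eqref{eq:bound_ringAL_Linftyall_Str}--\eqref{eq:bound_ringAQ_Linftyall_Str} in place of their scalar counterparts, together with the weighted bounds on $\Psi^\infty$, $\nabla\Psi^\infty$ from \eqref{eq:cf-coeff}; we omit the repetitive details.''
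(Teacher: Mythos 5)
Your overall strategy is the paper's: expand $\calL_\Omega P_{\geq\sigma'}\ringbfA$ via \eqref{eq:OmegalowA} and \eqref{eq:lowA}, distribute one Lie derivative using Lemma~\ref{l:commLOm}, \eqref{eq:LOmA}, \eqref{eq:LOmpsi}, and rerun the three steps of the proof of Lemma~\ref{lem:Mestimates} with the $\langle\Omega\rangle$/$\langle\calL_\Omega\rangle$ versions of the auxiliary bounds. However, your claim that the estimates follow ``verbatim'' glosses over the one point where the argument genuinely changes, and a literal repetition would fail. The issue is the angular regularity budget: the $\calS$ norm controls only \emph{one} power of $\Omega$ on $\psi_s$, while the radial Sobolev inequality of Lemma~\ref{lem:radial_sobolev} buys $L^\infty_x$ (i.e.\ $L^\infty_\theta$) control at the cost of one $\Omega$. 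In the proofs of $M_2^{\ringbfB}$, $M_3^{\ringbfB}$, $M_5^{\ringbfB}$ (and their tilded versions) both factors are placed in weighted $L^\infty_x$ via radial Sobolev; if one of those factors is now $P_{\geq\sigma'}\Omega\psi_s$, the same step would require $\|\jap{\Omega}\Omega\psi_s\|_{L^2}$-type quantities, i.e.\ two angular derivatives, which $\|\psi_s\|_\calS$ does not control. This is exactly why the statement of Lemma~\ref{lem:MOmegaestimates} measures $M\Omega_2$, $M\Omega_3$, $\tilM\Omega_2$, $\tilM\Omega_3$ in $L^\infty_r L^2_\theta$ rather than $L^\infty_x$ --- a change you do not account for.

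The paper's resolution, which your proposal needs to incorporate, is to treat the radial and angular directions asymmetrically in each bilinear term: the factor carrying $\Omega$ (or $\calL_\Omega$) is placed in $L^\infty_r L^2_\theta$ using only the one-dimensional radial integration-by-parts step of the radial Sobolev argument (which costs no angular derivative), while the derivative-free factor absorbs the full $L^\infty_\theta$ radial Sobolev estimate and hence the single available $\Omega$. Moreover, for the contribution of the term with $\nabla P_{\geq\sigma'}\Omega\psi_s$ to $M\Omega_5$ even this does not suffice, and one must replace radial Sobolev on the $\Omega$-factor entirely by a Gagliardo--Nirenberg interpolation between $\nabla$ and $\Delta$ in $L^2_x$ (absorbing the loss into $(\sigma')^{-\frac{1}{4}+\alpha}$ via the projection $P_{\geq\sigma'}$), applying radial Sobolev only to the other factor. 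With these modifications made explicit, the rest of your outline (Steps~2 and~3, the vanishing of $\calL_\Omega A^\infty$, the factor $i$ from $\calL_\Omega\Psi^\infty$, and the Schur-test bookkeeping) goes through as you describe.
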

%%%%%%%%%%%%%
%%%%%%%%%%%%%

\begin{proof}
The proof is very similar to that of Lemma~\ref{lem:Mestimates} so we only show special cases which demonstrate the strategy. In view of the definition~\eqref{eq:OmegalowA} we decompose $P_{\geq\sigma'}\ringA$ as before and apply $\calL_\Omega$.

{\bf{Step 1.}} We consider the contributions of 
%%%%%%%
%%%%%%%
\begin{align*}
 I &:=\Im \int_s^\infty \int_{s'}^\infty \nabla P_{\geq\sigma'}\psi_s(s'')\ud s'' P_{\geq\sigma'}\Omega\psi_s(s')\ud s',\\
 II &:=\Im \int_s^\infty \int_{s'}^\infty \nabla P_{\geq\sigma'}\Omega\psi_s(s'')\ud s'' P_{\geq\sigma'}\psi_s(s')\ud s'.
\end{align*}
We carry out three examples. Decomposing
%%%%%%%
%%%%%%%
\begin{align*}
\begin{split}
M\Omega_2=M\Omega_{2,1}+\dots+M\Omega_{2,4}
\end{split}
\end{align*}
as in $M_2$ in the proof of Lemma~\ref{lem:Mestimates}, the contribution of $I$ to $M\Omega_{2,1}$ is bounded by
%%%%%%%
%%%%%%%
\begin{align*}
\begin{split}
&\int_s^{\sigma'}\int_{s'}^\infty \|\sinh^{\frac{1}{2}}r \nabla P_{\geq\sigma'}\psi_s(s'')\|_{L_{t}^\infty L_x^\infty}\ud s'' \|\sinh^{\frac{1}{2}}rP_{\geq\sigma'}m(s')\Omega\psi_s(s')\|_{L_{t}^\infty L_r^\infty L_\theta^2}\ud s'\\
&\lesssim \int_s^{\sigma'}\int_{s'}^\infty (\frac{s'}{s''})^{\frac{1}{4}}\|\jap{s''\Delta}(s'')^{\frac{1}{2}}\jap{\Omega}\psi_s(s')\|_{L_t^\infty L_x^2}\dspp \|\jap{s'\Delta}^{\frac{1}{2}}m(s')(s')^{\frac{1}{2}}\Omega \psi_s(s')\|_{L_t^\infty L_x^2}\dsp\\
&\lesssim \|\psi_s\|_\calS^2.
\end{split}
\end{align*}
Here for the $L_\theta^2$ term we have applied radial Sobolev embedding to a radial function, which is why we do not pick up an extra factor of $\Omega$. Similarly, again applying radial Sobolev embedding to a radial function, the contribution of $I$ to $M\Omega_{2,2}$ is bounded as 
%%%%%%%
%%%%%%%
\begin{align*}
\begin{split}
&\int_{\sigma'}^1 \sum_{- k_{\sgm'} \leq \ell \leq - k_{s'}} \int_{s'}^\infty  2^{\frac{\ell}{2}} \|\nabla P_{\geq\sigma'}\psi_s(s'')\|_{L_{t}^\infty L_x^\infty}\ud s'' \| \sinh^{\frac{1}{2}} r P_{\geq\sigma'}\Omega\psi_s(s')\|_{L_{t}^\infty L_r^\infty L_\theta^2}\ud s'\\
&\lesssim \int_{\sigma'}^1\int_{s'}^\infty(s')^{\frac{1}{4}}\|\nabla P_{\geq\sigma'}\psi_s(s'')\|_{L_{t}^\infty L_x^\infty}\ud s'' \|\sinh^{\frac{1}{2}} r P_{\geq\sigma'}\Omega\psi_s(s')\|_{L_{t}^\infty L_r^\infty L_\theta^2}\ud s'\\
&\lesssim \int_{\sigma'}^1\int_{s'}^\infty (\frac{s'}{s''})^{\frac{1}{2}}\|\jap{s''\Delta}^{\frac{3}{2}}(s'')^{\frac{1}{2}}\psi_s(s'')\|_{L_t^\infty L_x^2}\dspp\|\jap{s'\Delta}^{\frac{1}{2}} (s')^{\frac{1}{2}}\Omega \psi_s(s')\|_{L_t^\infty L_x^2}\dsp\\
&\lesssim \|\psi_s\|_\calS^2.
\end{split}
\end{align*}

Finally to bound the contribution of $II$ to $M\Omega_5$ we have to argue a bit differently from before and bound this by
%%%%%%%
%%%%%%%
\begin{align*}
\begin{split}
&\int_s^\infty \int_{s'}^\infty \|\nabla P_{\geq\sigma'}\Omega\psi_s(s'')\|_{L_t^\infty L_x^2}^{2\alpha}\|\Delta P_{\geq\sigma'}\Omega\psi_s(s'')\|_{L_t^\infty L_x^2}^{1-2\alpha}\ud s'' \|\sinh^{\frac{1}{2}}r P_{\geq\sigma'}\psi_s(s')\|_{L_{t}^\infty L_x^\infty}\ud s'\\
&\lesssim (\sigma')^{-\frac{1}{4}+\alpha} \int_s^\infty \int_{s'}^\infty (\frac{s'}{s''})^{\frac{1}{4}}\|\jap{s''\Delta}^{\frac{3-4\alpha}{4-8\alpha}}(s'')^{\frac{1}{2}}\Omega\psi_s(s'')\|_{L_t^\infty L_x^2}\dspp \|\jap{s'\Delta}^{\frac{1}{2}}(s')^{\frac{1}{2}}\jap{\Omega}\psi_s(s')\|_{L_t^\infty L_x^2}\dsp\\
&\lesssim (\sigma')^{-\frac{1}{4}+\alpha}\|\psi_s\|_\calS^2.
\end{split}
\end{align*}

{\bf{Steps 2, 3.}} It remains to consider the contribution of $\calL_\Omega$ applied to the last two terms in the expansion \eqref{eq:lowA} of $P_{\geq\sigma'}\ringA$. But these can be treated using similar arguments as above in the same way as in the proof of Lemma~\ref{lem:Mestimates}.
\end{proof}
%%%%%%%%%%%%%
%%%%%%%%%%%%%

Finally, in the next lemma we collect the estimates needed in treating parts of the magnetic interaction term with Strichartz estimates.
%%%%%%%%%%%%%
%%%%%%%%%%%%%
\begin{lemma}\label{lem:noMestimates}
Under the bootstrap assumptions \eqref{eq:overall-bootstrap} and \eqref{eq:core-bootstrap}, for any $\sigma'\leq 1$, $P_{\geq \sigma'}\ringA$ satisfies the following estimates:
%%%%%%%
%%%%%%%
\begin{align}
 \|\nabla_k (P_{\geq\sigma'}\jap{\calL_\Omega}\ringA^k)(s)\|_{\Ls^\infty(L_{t}^2L_x^2+L_t^{\frac{8}{3}}L_x^{\frac{8}{3}})} &\lesssim \|\psi_s\|_\calS ,\label{eq:divPsigmaAStrich}\\
 \|P_{\leq\sigma'}\jap{\calL_\Omega}\ringbfA\|_{\Ls^\infty(L_{t}^2L_x^2+L_t^{\frac{8}{3}}L_x^{\frac{8}{3}})} &\lesssim (\sigma')^{\frac{1}{2}}\|\psi_s\|_{\calS} , \label{eq:PlesssigmaAStrich}\\
 \|\jap{\calL_\Omega}\ringbfA\|_{\Ls^\infty L_t^{\frac{8}{3}}L_x^{\frac{8}{3}}} &\lesssim \|\psi_s\|_\calS . \label{eq:ringAL83}
\end{align}
\end{lemma}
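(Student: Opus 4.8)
\textbf{Proof proposal for Lemma~\ref{lem:noMestimates}.}

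The plan is to treat the three estimates \eqref{eq:divPsigmaAStrich}, \eqref{eq:PlesssigmaAStrich}, \eqref{eq:ringAL83} in turn, in each case using the representation of $\ringA$ (and its decomposition $\ringA = \ringA_L + \ringA_Q$ from \eqref{eq:ALAQ-def}) as an $s'$-integral of bilinear expressions in $\psi_s$, $\ringPsi$, and $\Psi^\infty$, then bounding each factor in suitable mixed norms using the parabolic regularity estimates \eqref{equ:bound_psis_Linftys_Str}, \eqref{equ:bound_psis_L2_Str} from Proposition~\ref{p:par-reg-psi_s}, the $\ringPsi$-bounds from Lemma~\ref{lem:bounds_ringPsi}, the $\ringA$-bounds from Lemmas~\ref{lem:bounds_ringA_Linfty} and \ref{lem:bounds_ringA_Str}, and Gagliardo--Nirenberg, Poincar\'e, and Schur's test for the $s'$-integration. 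Throughout I will only write out the base case (no $\calL_\Omega$), with the $\calL_\Omega$ case handled identically after invoking the corresponding Lie-derivative versions of all the cited bounds (using $\calL_\Omega \ringA_L, \calL_\Omega \ringA_Q$ formulas from Subsection~\ref{sub:6basic} and \eqref{eq:LOmpsi}).

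For \eqref{eq:divPsigmaAStrich}: since $P_{\geq\sigma'}$ commutes with $\nabla$ and with the $s'$-integrals, it suffices to bound $\nabla_k \ringA^k = \nabla_k \ringA_L^k + \nabla_k \ringA_Q^k$. For the linear piece, $\nabla_k \ringA_L^k(s) = -\int_s^\infty \Im(\nabla_k\Psi^{\infty,k}\,\overline{\psi_s} + \Psi^{\infty,k}\,\overline{\nabla_k\psi_s})\,\ud s'$; place $\Psi^\infty, \nabla\Psi^\infty \in L^\infty_x$ (bounded by \eqref{eq:cf-coeff}) and $\psi_s, \nabla\psi_s$ in Strichartz norms, using extra regularity ($m(s)$-weight) to integrate near $s'=0$ and Poincar\'e to gain $(s')^{-1/2}$ decay for $s'\geq 1$, exactly as in the proof of \eqref{eq:bound_nabla_ringAL_Linftys_Str}; this lands in $L^{8/3}_tL^{8/3}_x$ after Schur's test. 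For the quadratic piece $\nabla_k\ringA_Q^k = -\int_s^\infty\Im(\nabla_k\ringpsi^k\,\overline{\psi_s} + \ringpsi^k\,\overline{\nabla_k\psi_s})\,\ud s'$, this is precisely the content of \eqref{eq:bound_nabla_ringAQ_Linftys_L2tx} (which gives $L^2_tL^2_x$), using \eqref{equ:bound_ringPsi_Linftys_Str_2} and \eqref{equ:bound_psis_Linftys_Str}. For \eqref{eq:PlesssigmaAStrich}: write $P_{\leq\sigma'}\ringbfA = \ringbfA - P_{\geq\sigma'}\ringbfA$; alternatively and more directly, since $P_{\leq\sigma'} = \int_0^{\sigma'} P_{\sigma''}\,\frac{\ud\sigma''}{\sigma''}$ and $\|P_{\sigma''}f\|_{L^p_x}\lesssim (\sigma'')^{1/2}\|\nabla f\|_{L^p_x}$ by Bernstein/Poincar\'e-type bounds, the extra $(\sigma')^{1/2}$ gain comes from integrating the low-frequency projection against $\nabla\ringbfA$, which is controlled in $L^{8/3}_tL^{8/3}_x + L^2_tL^2_x$ by \eqref{eq:bound_nabla_ringAL_Linftys_Str} and \eqref{eq:bound_nabla_ringAQ_Linftys_L2tx} (plus the pure $L^{8/3}$ control of $\ringbfA$ itself from \eqref{eq:bound_ringAL_Linftys_Str}, \eqref{eq:bound_ringAQ_Linftys_Str}). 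Finally \eqref{eq:ringAL83} is immediate: $\|\ringbfA\|_{L^{8/3}_tL^{8/3}_x} \leq \|\ringA_L\|_{L^{8/3}_tL^{8/3}_x} + \|\ringA_Q\|_{L^{8/3}_tL^{8/3}_x} \lesssim \|\psi_s\|_\calS + \|\psi_s\|_\calS^2 \lesssim \|\psi_s\|_\calS$ by \eqref{eq:bound_ringAL_Linftys_Str} and \eqref{eq:bound_ringAQ_Linftys_Str}, since $\|\psi_s\|_\calS$ is small under \eqref{eq:core-bootstrap}.

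The main obstacle is the bookkeeping for \eqref{eq:divPsigmaAStrich}: one must carefully distribute the $s$-weights so that the near-$s'=0$ regime is integrable (this is where the $m(s)$-weight / ``extra regularity'' enters, allowing $\int_0^1 (s')^{-1+\delta}\,\ud s' < \infty$) while simultaneously keeping enough $s'$-decay for $s'\geq 1$ (via repeated Poincar\'e, giving $(s')^{-3/2}$), and to choose at each step whether $\psi_s$ or $\nabla\psi_s$ (respectively $\ringpsi$ or $\nabla\ringpsi$) should carry the $\jap{s\Delta}$-weight so that the Schur kernel in $(s,s')$ closes. Since $P_{\geq\sigma'}$ is a bounded Fourier multiplier uniformly in $\sigma'$ on all the $L^p$ spaces involved, it plays no quantitative role here and can simply be passed through; the $\sigma'$-independence of the implicit constants is then automatic. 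All of these are routine adaptations of the arguments already given for Lemmas~\ref{lem:bounds_ringA_Str} and \ref{lem:Mestimates}, so I would state them as such and omit the repetitive details.
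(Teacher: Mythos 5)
Your treatment of \eqref{eq:ringAL83} is correct and identical to the paper's (it follows directly from \eqref{eq:bound_ringAL_Linftys_Str} and \eqref{eq:bound_ringAQ_Linftys_Str}), and your term-by-term plan for \eqref{eq:divPsigmaAStrich} reproduces the paper's estimates for its terms $I$--$VII$ in substance: the linear-in-$\ringpsi$ contributions go to $L^{8/3}_tL^{8/3}_x$ and the quadratic ones to $L^2_tL^2_x$, exactly as you propose. One caveat even there: $P_{\geq\sigma'}$ does not ``commute with the $s'$-integrals'' in the sense you suggest, because by the definition \eqref{eq:lowA} the projections are \emph{inserted on the $\psi_s$ factors inside the bilinear integrals}, not applied to $\ringA$ as a whole (the heat semigroup does not distribute over products). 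The reduction you want is still valid for \eqref{eq:divPsigmaAStrich}, but for the right reason: every estimate you invoke uses only $L^p_x$ norms of $\psi_s$, $\nabla\psi_s$, $\Delta\psi_s$, and these are bounded for $P_{\geq\sigma'}\psi_s=e^{\sigma'\Delta}\psi_s$ uniformly in $\sigma'\leq 1$, so the projections may simply be dropped factor by factor.

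The genuine gap is in \eqref{eq:PlesssigmaAStrich}. There $P_{\leq\sigma'}\ringbfA$ is \emph{defined} as $\ringbfA-P_{\geq\sigma'}\ringbfA$ with $P_{\geq\sigma'}\ringbfA$ as in \eqref{eq:lowA}; it is not the heat-flow Littlewood--Paley low-frequency piece of the function $\ringbfA$. Consequently your ``more direct'' argument --- writing $P_{\leq\sigma'}=\int_0^{\sigma'}P_{\sigma''}\,\frac{\ud\sigma''}{\sigma''}$ and gaining $(\sigma')^{1/2}$ from $\|P_{\leq\sigma'}F\|_{L^p}\lesssim(\sigma')^{1/2}\|\nabla F\|_{L^p}$ applied to $F=\ringbfA$ --- estimates a different object and proves nothing about the quantity in the lemma. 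Worse, the fallback $P_{\leq\sigma'}\ringbfA=\ringbfA-P_{\geq\sigma'}\ringbfA$ with the two pieces bounded separately only yields $O(1)$, not $O((\sigma')^{1/2})$. The correct route (and the paper's) is to exploit the bilinear structure: schematically $\ringbfA=B(\psi_s,\psi_s)+L(\psi_s)$ and $P_{\geq\sigma'}\ringbfA=B(P_{\geq\sigma'}\psi_s,P_{\geq\sigma'}\psi_s)+L(P_{\geq\sigma'}\psi_s)$, so the difference splits into terms each carrying at least one factor $P_{\leq\sigma'}\psi_s=\psi_s-e^{\sigma'\Delta}\psi_s$ (seven terms in the paper: high-low, low-high, low-low for both the $\nabla\psi_s$ and the $A\psi_s$ inner integrals, plus the $\Psi^\infty$ term). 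The $(\sigma')^{1/2}$ is then gained by Bernstein applied to \emph{that factor of $\psi_s$}, e.g.\ $\|P_{\leq\sigma'}\psi_s\|_{L^4_tL^4_x}\lesssim(\sigma')^{1/2}\|\nabla\psi_s\|_{L^4_tL^4_x}$ and $\|\nabla P_{\leq\sigma'}\psi_s\|_{L^4_tL^4_x}\lesssim(\sigma')^{1/2}\|\Delta\psi_s\|_{L^4_tL^4_x}$ (with the gain split as $(\sigma')^{1/4}+(\sigma')^{1/4}$ between the two factors in the low-low case), after which the $s''$- and $s'$-integrations close by Poincar\'e, the extra-regularity weight $m(s)$, and Schur's test as in your outline. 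Without this step the stated $(\sigma')^{1/2}$ smallness --- which is what makes the low-frequency part of the magnetic term in Lemma~\ref{lem:magnetic1} summable against $\nabla P_{\sigma'}$ --- is not obtained.
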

%%%%%%%%%%%%%
%%%%%%%%%%%%%

\begin{proof}
For simplicity of notation we give the proof for $\ringbfA$ only, but the treatment of $\calL_\Omega\ringbfA$ is identical because we never use the radial Sobolev estimate. Let us start with \eqref{eq:divPsigmaAStrich} and consider the following terms separately:
\begin{align}
\begin{split}
 I &:= \Im \int_s^\infty \biggl( \int_{s'}^\infty \Delta (P_{\geq \sigma'} \psi_s)(s'') \, \ud s'' \biggr) \, \overline{ P_{\geq \sigma'} \psi_s}(s') \, \ud s' , \\
 II &:= \Im \int_s^\infty \biggl( \int_{s'}^\infty \nabla_\ell (P_{\geq \sigma'} \psi_s)(s'') \, \ud s'' \biggr) \, \nabla^\ell\overline{ P_{\geq \sigma'} \psi_s}(s') \, \ud s' , \\
 III &:=   \Im \,i \int_s^\infty \biggl( \int_{s'}^\infty (\nabla^\ell A_\ell(s'')) (P_{\geq \sigma'} \psi_s)(s'') \, \ud s'' \biggr) \, \overline{ P_{\geq \sigma'} \psi_s}(s') \, \ud s' , \\
 IV &:=  \Im\, i \int_s^\infty \biggl( \int_{s'}^\infty  A_\ell(s'') \nabla^\ell(P_{\geq \sigma'} \psi_s)(s'') \, \ud s'' \biggr) \, \overline{ P_{\geq \sigma'} \psi_s}(s') \, \ud s' , \\
 V &:=  \Im \,i \int_s^\infty \biggl( \int_{s'}^\infty  A_\ell(s'') (P_{\geq \sigma'} \psi_s)(s'') \, \ud s'' \biggr) \,\nabla^\ell \overline{ P_{\geq \sigma'} \psi_s}(s') \, \ud s' , \\
 VI &:=\Im \int_s^\infty (\nabla^\ell\psi_\ell^\infty) \, \overline{ P_{\geq \sigma'} \psi_s}(s') \, \ud s', \\
 VII &:=\Im \int_s^\infty \psi_\ell^\infty \, \nabla^\ell\overline{ P_{\geq \sigma'} \psi_s}(s') \, \ud s' .
\end{split}
\end{align}
We will bound $I$--$V$ in $L_{t}^2L^2_x$ and $VI$ and $VII$ in $L_{t}^{\frac{8}{3}} L_{x}^{\frac{8}{3}}$. First
%%%%%%%
%%%%%%%
\begin{align*}
\begin{split}
\|I\|_{L_{t}^{2} L_x^2}\lesssim \int_s^\infty \int_{s'}^\infty (\frac{s'}{s''})^{\frac{1}{2}}\|s''\Delta (s'')^{\frac{1}{2}}\psi_s(s'')\|_{L_{t}^4 L_x^4}\dspp\|(s')^{\frac{1}{2}}\psi_s(s')\|_{L_{t}^4 L_x^4}\dsp\lesssim \|\psi_s\|_\calS^2
\end{split}
\end{align*}
by Schur's test. For $II$ we have
%%%%%%%
%%%%%%%
\begin{align*}
\begin{split}
\|II\|_{L_{t}^{2} L_x^2}\lesssim \int_s^\infty \int_{s'}^\infty \|(s'')^{\frac{1}{2}}\nabla(s'')^{\frac{1}{2}}\psi_s(s'')\|_{L_t^4 L_x^4}\dspp \|(s')^{\frac{1}{2}}\nabla(s')^{\frac{1}{2}}\psi_s(s')\|_{L_t^4 L_x^4}\dsp\lesssim \|\psi_s\|_\calS^2.
\end{split}
\end{align*}
Here we have used use Poincar\'e and extra regularity assumption to estimate
%%%%%%%
%%%%%%%
\begin{align*}
\begin{split}
\int_0^\infty \|s^{\frac{1}{2}}\nabla s^{\frac{1}{2}}\psi_s(s)\|_{L_t^4 L_x^4}\ds&\lesssim \int_0^1 s^{\delta}\|s^{\frac{1}{2}}\nabla s^{\frac{1}{2}}m(s)\psi_s(s)\|_{L_t^4 L_x^4}\ds \\
&\quad+\int_1^\infty s^{-\frac{1}{2}}\|s\Delta s^{\frac{1}{2}}\psi_s(s)\|_{L_t^4 L_x^4}\ds\\
& \lesssim \|\jap{s\Delta}s^{\frac{1}{2}}m(s)s^{\frac{1}{2}}\psi_s\|_{\Ls^\infty L_t^4 L_x^4}\lesssim \|\psi_s\|_\calS.
\end{split}
\end{align*}
For $\|III\|_{L_t^2L_x^2}$ the contribution of $A^\infty$ is bounded by
%%%%%%%
%%%%%%%
\begin{align*}
\begin{split}
\|\nabla A^\infty\|_{L_{x}^\infty}\int_s^\infty \int_{s'}^\infty \|(s'')^{\frac{1}{2}}\psi_s(s'')\|_{L_{t}^4L_x^4}(s'')^{-\frac{1}{2}}\ud s'' \|\psi_s(s')\|_{L_{t}^4L_x^4}\ud s'\lesssim \|\psi_s\|_\calS^2
\end{split}
\end{align*}
by Poincar\'e and as for $I$. Here we used $\|\nabla A^\infty\|_{L_x^\infty}\lesssim1$, which follows from~\eqref{eq:cf-coeff}. 
Next, using the estimates~\eqref{eq:bound_ringAL_Linftyall_Str}--\eqref{eq:bound_ringAQ_Linftyall_Str} and Schur's test, the contribution of $\ringbfA$ to $\|III\|_{L_{t}^2L_x^2}$ is bounded by
%%%%%%%
%%%%%%%
\begin{align*}
\begin{split}
\int_s^\infty \int_{s'}^{\infty} (\frac{s'}{s''})^{\frac{1}{2}}\|(s'')^{\frac{1}{2}}\psi_s(s'')\|_{L_{t}^4L_x^4}\dspp \|(s')^{\frac{1}{2}}\psi_s(s')\|_{L_{t}^4L_x^4}\dsp\lesssim \|\psi_s\|_\calS^2.
\end{split}
\end{align*}
For $\|IV\|_{L_t^2 L_x^2}$ we argue similarly. First, for $A^\infty$ we have
%%%%%%%
%%%%%%%
\begin{align*}
\begin{split}
\int_s^\infty \int_{s'}^\infty \|(s'')^{\frac{1}{2}}\nabla (s'')^{\frac{1}{2}}\psi_s(s'')\|_{L_{t}^4L_x^4}\dspp \|(s')^{\frac{1}{2}}\psi_s(s')\|_{L_{t}^4L_x^4}(s')^{-\frac{1}{2}}\ud s'\lesssim \|\psi_s\|_\calS^2
\end{split}
\end{align*}
by Poincar\'e and as for $I$. For $\ringbfA$, using \eqref{eq:bound_ringAL_Linftyall_Str}--\eqref{eq:bound_ringAQ_Linftyall_Str}, the corresponding contribution is bounded by
%%%%%%%
%%%%%%%
\begin{align*}
\begin{split}
\int_s^\infty \int_{s'}^\infty (\frac{s'}{s''})^{\frac{1}{2}}\|(s'')^{\frac{1}{2}}\nabla (s'')^{\frac{1}{2}}\psi_s(s'')\|_{L_{t}^4L_x^4}\dspp \|(s')^{\frac{1}{2}}\psi_s(s')\|_{L_t^4L_{x}^4}\dsp\lesssim \|\psi_s\|_\calS^2.
\end{split}
\end{align*}
For $V$ the contribution of $A^\infty$ is bounded by
%%%%%%%
%%%%%%%
\begin{align*}
\begin{split}
\int_s^\infty \int_{s'}^\infty \|(s'')^{\frac{1}{2}}\psi_s(s'')\|_{L_{t}^4L_x^4}(s'')^{-\frac{1}{2}}\ud s'' \|(s')^{\frac{1}{2}}\nabla (s')^{\frac{1}{2}}\psi_s(s')\|_{L_{t}^4L_x^4}\dsp\lesssim \|\psi_s\|_\calS^2
\end{split}
\end{align*}
by Poincar\'e and extra regularity as for $II$. Similarly the contribution of $\ringbfA$ is bounded by
%%%%%%%
%%%%%%%
\begin{align*}
\begin{split}
\int_s^\infty \int_{s'}^\infty \|(s'')^{\frac{1}{2}}\psi_s(s'')\|_{L_{t}^4L_x^4}\dspp \|(s')^{\frac{1}{2}}\nabla (s')^{\frac{1}{2}}\psi_s(s')\|_{L_{t}^4L_x^4}\lesssim \|\psi_s\|_\calS^2
\end{split}
\end{align*}
by Poincar\'e and extra regularity. Next,
%%%%%%%
%%%%%%%
\begin{align*}
\begin{split}
\|VI\|_{L_{t}^{\frac{8}{3}}L_{x}^{\frac{8}{3}}}\lesssim \int_s^\infty \|(s')^{\frac{1}{2}}\psi_s(s')\|_{L_t^{\frac{8}{3}}L_{x}^{\frac{8}{3}}}(s')^{-\frac{1}{2}}\ud s'\lesssim \|\psi_s\|_\calS
\end{split}
\end{align*}
by Poincar\'e, and where we used $\|\nabla\Psi^\infty\|_{L_x^\infty}\lesssim1$. Similarly
%%%%%%%
%%%%%%%
\begin{align*}
\begin{split}
\|VII\|_{L_t^{\frac{8}{3}}L_{x}^{\frac{8}{3}}}\lesssim \int_s^\infty \|(s')^{\frac{1}{2}}\nabla (s')^{\frac{1}{2}}\psi_s(s')\|_{L_t^{\frac{8}{3}}L_{x}^{\frac{8}{3}}}\dsp\lesssim \|\psi_s\|_\calS
\end{split}
\end{align*}
by Poincar\'e and extra regularity, and where we used $\|\Psi^\infty\|_{L_x^\infty}\lesssim1$. This completes the proof of \eqref{eq:divPsigmaAStrich}.

Next we turn to \eqref{eq:PlesssigmaAStrich} and consider the following terms separately:
%%%%%%%
%%%%%%%
\begin{align*}
\begin{split}
I &:=\Im\int_s^\infty \int_{s'}^\infty \nabla P_{\leq\sigma'}\psi_s(s'')\ud s'' \overline{P_{\geq \sigma'}\psi_s(s')}\ud s' , \\
II &:=\Im\int_s^\infty \int_{s'}^\infty \nabla P_{\geq\sigma'}\psi_s(s'')\ud s'' \overline{P_{\leq \sigma'}\psi_s(s')}\ud s' , \\
III &:=\Im\int_s^\infty \int_{s'}^\infty \nabla P_{\leq\sigma'}\psi_s(s'')\ud s'' \overline{P_{\leq \sigma'}\psi_s(s')}\ud s' , \\
IV &:=\Im \,i\int_s^\infty \int_{s'}^\infty \ringbfA(s'') P_{\leq\sigma'}\psi_s(s'')\ud s'' \overline{P_{\geq \sigma'}\psi_s(s')}\ud s' , \\
V &:=\Im \,i\int_s^\infty \int_{s'}^\infty \ringbfA(s'') P_{\geq\sigma'}\psi_s(s'')\ud s'' \overline{P_{\leq \sigma'}\psi_s(s')}\ud s' , \\
VI &:=\Im \,i\int_s^\infty \int_{s'}^\infty \ringbfA(s'') P_{\leq\sigma'}\psi_s(s'')\ud s'' \overline{P_{\leq \sigma'}\psi_s(s')}\ud s' , \\
VII &:=\Im\int_s^\infty \Psi^\infty \overline{P_{\leq\sigma'}\psi_s(s')}\ud s' .
\end{split}
\end{align*}
We bound $I$--$VI$ in $L_{t}^2L_x^2$ and $VII$ in $L_{t}^{\frac{8}{3}}L_{x}^{\frac{8}{3}}$. For $I$ first note that by writing $P_\sigma = -\sigma\Delta e^{\sigma\Delta}$ we get
%%%%%%%
%%%%%%%
\begin{align*}
\begin{split}
\|\nabla P_{\leq\sigma'}\psi_s(s'')\|_{L_{t}^4L_{x}^4}\leq \int_0^{\sigma'}\|\nabla P_{\sigma}\psi_s(s'')\|_{L_{t}^4L_{x}^4}\dsigma\lesssim (\sigma')^{\frac{1}{2}}\|\Delta\psi_s(s'')\|_{L_{t}^4L_{x}^4}.
\end{split}
\end{align*}
It follows that
%%%%%%%
%%%%%%%
\begin{align*}
\begin{split}
\|I\|_{L_{t}^2L_x^2}\lesssim (\sigma')^{\frac{1}{2}}\int_s^\infty \int_{s'}^\infty (\frac{s'}{s''})^{\frac{1}{2}}\|s''\Delta (s'')^{\frac{1}{2}}\psi_s(s'')\|_{L_{t}^4L_{x}^4}\dspp \|(s')^{\frac{1}{2}}\psi_s(s')\|_{L_{t}^4L_{x}^4}\dsp\lesssim (\sigma')^{\frac{1}{2}}\|\psi_s\|_\calS^2.
\end{split}
\end{align*}
Next, by a similar argument as above

%%%%%%%
%%%%%%%
\begin{align*}
\begin{split}
\|P_{\leq\sigma'}\psi_s(s')\|_{L_{t}^4L_{x}^4}\leq \int_0^{\sigma'}\|P_{\sigma}\psi_s(s')\|_{L_{t}^4L_{x}^4}\dsigma\lesssim (\sigma')^{\frac{1}{2}}\|\nabla\psi_s(s')\|_{L_{t}^4L_{x}^4}.
\end{split}
\end{align*}
It follows that, using Poincar\'e and extra regularity, 
%%%%%%%
%%%%%%%
\begin{align*}
\begin{split}
\|II\|_{L_{t}^2L_{x}^2}\lesssim (\sigma')^{\frac{1}{2}}\int_s^\infty \int_{s'}^\infty \|(s'')^{\frac{1}{2}}\nabla(s'')^{\frac{1}{2}}\psi_s(s'')\|_{L_{t}^4L_{x}^4}\dspp \|(s')^{\frac{1}{2}}\nabla(s')^{\frac{1}{2}}\psi_s(s')\|_{L_{t}^4L_{x}^4}\dsp\lesssim (\sigma')^{\frac{1}{2}}\|\psi_s\|_\calS^2.
\end{split}
\end{align*}
For $III$ splitting $P_\sigma$ as above we get
%%%%%%%
%%%%%%%
\begin{align*}
\begin{split}
\|\nabla P_{\leq \sigma'}\psi_s(s'')\|_{L_{t}^4L_{x}^4}\lesssim (\sigma')^{\frac{1}{4}}\|(-\Delta)^{\frac{3}{4}}\psi_s(s'')\|_{L_{t}^4L_{x}^4},\qquad \|P_{\leq\sigma'}\psi_s(s')\|_{L_{t}^4L_{x}^4}\lesssim (\sigma')^{\frac{1}{4}}\|(-\Delta)^{\frac{1}{4}}\psi_s(s')\|_{L_{t}^4L_{x}^4},
\end{split}
\end{align*}
so
%%%%%%%
%%%%%%%
\begin{align*}
\begin{split}
\|III\|_{L_{t}^2L_{x}^2}&\lesssim (\sigma')^{\frac{1}{2}}\int_s^\infty \int_{s'}^\infty(\frac{s'}{s''})^{\frac{1}{4}}\|(-s''\Delta)^{\frac{3}{4}}(s'')^{\frac{1}{2}}\psi_s(s'')\|_{L_{t}^4L_{x}^4}\dsp\|(-s'\Delta)^{\frac{1}{4}}(s')^{\frac{1}{2}}\psi_s(s')\|_{L_{t}^4L_{x}^4}\dsp\\
&\lesssim (\sigma')^{\frac{1}{2}}\|\psi_s\|_\calS^2.
\end{split}
\end{align*}
For $\|IV\|_{L_t^2L_x^2}$, $\|V\|_{L_t^2L_x^2}$, and $\|VI\|_{L_t^2L_x^2}$ note that in view of the estimates~\eqref{eq:bound_ringAL_Linftyall_Str}--\eqref{eq:bound_ringAQ_Linftyall_Str} these terms are bounded similarly to $\|I\|_{L_t^2L_x^2}$, $\|II\|_{L_t^2L_x^2}$, and $\|III\|_{L_t^2L_x^2}$ respectively. For $VII$ we again split $P_\sigma$ as above to conclude that
%%%%%%%
%%%%%%%
\begin{align*}
\begin{split}
\|P_{\leq\sigma'}\psi_s(s')\|_{L_{t}^{\frac{8}{3}}L_{x}^{\frac{8}{3}}}\lesssim (\sigma')^{\frac{1}{2}}\|\nabla\psi_s(s')\|_{L_{t}^{\frac{8}{3}}L_{x}^{\frac{8}{3}}},
\end{split}
\end{align*}
so that
%%%%%%%
%%%%%%%
\begin{align*}
\begin{split}
\|VII\|_{L_{t}^{\frac{8}{3}}L_{x}^{\frac{8}{3}}}\lesssim (\sigma')^{\frac{1}{2}}\|\Psi^\infty\|_{L_x^\infty}\int_s^\infty \|(s')^{\frac{1}{2}}\nabla (s')^{\frac{1}{2}}\psi_s(s')\|_{L_{t}^{\frac{8}{3}}L_{x}^{\frac{8}{3}}}\dsp\lesssim (\sigma')^{\frac{1}{2}}\|\psi_s\|_\calS
\end{split}
\end{align*}
using Poincar\'e and extra regularity. This completes the proof of \eqref{eq:PlesssigmaAStrich}. 

Finally we turn to \eqref{eq:ringAL83}. For $\bfAQ$ using Gagliardo-Nirenberg, \eqref{eq:bound_ringPsi_Linftyall_Str2}, extra regularity, and Poincar\'e
%%%%%%%
%%%%%%%
\begin{align*}
\begin{split}
\|\bfAQ\|_{L^\infty_\ds L^{\frac{8}{3}}_{t}  L^{\frac{8}{3}}_{x} }\leq \int_0^\infty\|s^{\frac{1}{2}}\ringPsi\|_{L_{t}^\infty L_x^\infty}\|s^{\frac{1}{2}}\psi_s\|_{L_{t}^{\frac{8}{3}} L_{x}^{\frac{8}{3}}}\ds\lesssim \|\psi_s\|_\calS^2.
\end{split}
\end{align*}
Similarly, using Poincar\'e,
%%%%%%%
%%%%%%%
\begin{align*}
\begin{split}
\|\bfAL\|_{L^\infty_\ds L_{t}^{\frac{8}{3}}L_{x}^{\frac{8}{3}}}&\leq \|\Psi^\infty\|_{L^\infty_x}\|s^{\frac{1}{2}}\psi_s\|_{L^\infty_\ds  L_{t}^{\frac{8}{3}}L_{x}^{\frac{8}{3}}}\int_0^1s^{-\frac{1}{2}}\ud s\\
&\quad+\|\Psi^\infty\|_{L^\infty_x}\|(s^{\frac{1}{2}}\nabla)^2s^{\frac{1}{2}}\psi_s\|_{L^\infty_\ds\cap L^2_\ds  L_{t}^{\frac{8}{3}}L_{x}^{\frac{8}{3}}}\int_1^\infty s^{-\frac{3}{2}}\ud s\\
& \lesssim \|\psi_s\|_\calS.
\end{split}
\end{align*}
The treatment of $\calL_\Omega \ringbfA$ is identical as we never used the radial Sobolev estimate.
\end{proof}

\section{Analysis of the Schr\"odinger equation for $\psi_{s}$} \label{s:schrod} 

The purpose of this section is to prove Proposition~\ref{prop:core-bootstrap}, that is, to establish the a priori bound~\eqref{eq:core-bootstrap-imp} on the dispersive norm $\calS$ of the heat tension field $\psi_s$, as well as the nonlinearity bound~\eqref{eq:core-bootstrap-imp-N}. Recall from~\eqref{eq:SH-psi_s} that the heat tension field $\psi_s$ satisfies the Schr\"odinger equation  
\begin{equation} \label{equ:schroed_psi_s}
 \begin{aligned}
  (i \partial_t - H) \psi_s &= - 2 i \ringA^k \nabla_k \psi_s + i \partial_s w  + 2 A^{\infty, k} \ringA_k \psi_s - i \Im ( \psi^{\infty,k} \overline{\psi_s} ) \ringpsi_k - i \Im ( \ringpsi^k \overline{\psi_s} ) \psi^\infty_k \\
  &\quad \quad - i (\nabla_k \ringA^k) \psi_s + \ringA^k \ringA_k \psi_s - i \Im ( \ringpsi^k \overline{\psi_s} ) \ringpsi_k  + A_t \psi_s.
 \end{aligned}
\end{equation}
Since $A^\infty$ and $|\psi_2^\infty|^2$ are both independent of $\theta$, the angular vector field $\Omega$ commutes with $H$. Correspondingly, applying $\Omega$ to~\eqref{equ:schroed_psi_s} yields the following equation for $\Omega \psi_s$
\begin{equation} \label{equ:schroed_Omega_psi_s}
 \begin{aligned}
  (i\partial_t-H)\Omega\psi_s &= - 2 i \calL_\Omega\ringA^k \nabla_k \psi_s - 2 i \ringA^k \nabla_k\Omega \psi_s + i \partial_s\Omega w +  2 A^{\infty, k} \calL_\Omega\ringA_k \psi_s+2 A^{\infty, k} \ringA_k \Omega\psi_s\\
  &\quad - i \Im ( \psi^{\infty,k} \overline{\Omega\psi_s} ) \ringpsi_k- i \Im ( \psi^{\infty,k} \overline{\psi_s} ) \calL_\Omega\ringpsi_k - i \Im ( \ringpsi^k \overline{\Omega\psi_s} ) \psi^\infty_k- i \Im ( \calL_\Omega\ringpsi^k \overline{\psi_s} ) \psi^\infty_k  \\
  &\quad  - i (\nabla_k \calL_\Omega\ringA^k) \psi_s - i (\nabla_k \ringA^k) \Omega\psi_s+ 2\calL_\Omega\ringA^k \ringA_k \psi_s+ \ringA^k \ringA_k \Omega\psi_s\\
  &\quad - i \Im ( \calL_\Omega\ringpsi^k \overline{\psi_s} ) \ringpsi_k- i \Im ( \ringpsi^k \overline{\Omega\psi_s} ) \ringpsi_k - i \Im ( \ringpsi^k \overline{\psi_s} ) \calL_\Omega\ringpsi_k + \calL_\Omega A_t \psi_s+ A_t \Omega\psi_s \\
  &\quad - i \Im ( i\psi^{\infty,k} \overline{\psi_s} ) \ringpsi_k + \Im ( \ringpsi^k \overline{\psi_s} ) \psi^\infty_k.
 \end{aligned}
\end{equation}
We denote by $\calN := (i \partial_t + H) \psi_s$ the right-hand side of~\eqref{equ:schroed_psi_s} and by $\calN^\Omega := (i \partial_t + H) \Omega \psi_s$ the right-hand side of~\eqref{equ:schroed_Omega_psi_s}, and group them into the following four types:
\begin{itemize}
 \item Magnetic interaction terms:
  \begin{enumerate}[(i)]
   \item $\ringA^k \nabla_k\jap{\Omega}\psi_s$
   \item $\calL_\Omega \ringA^k \nabla_k\psi_s$   
  \end{enumerate}
 
 \item $w$ term:
 \begin{enumerate}[(i)]
  \item $\jap{\Omega} \partial_s w$
 \end{enumerate}
 
 \item Quadratic power-type terms:
 \begin{enumerate}[(i)]
  \item $\Im(\psi^\infty_k\overline{\jap{\Omega}\psi_s})\ringpsi^k$ and permutations
  \item $\Im(\psi^\infty_k\overline{\psi_s})\calL_\Omega\ringpsi^k$ and permutations 
  \item $A^\infty_k\ringA^k\jap{\Omega}\psi_s$ 
  \item $A^\infty_k\calL_\Omega\ringA^k \psi_s$
  \item $(\nabla_k\AL^k)\jap{\Omega}\psi_s$  
  \item $(\nabla_k\calL_\Omega\AL^k)\psi_s$
 \end{enumerate}
 
 \item Cubic and higher power-type terms:
 \begin{enumerate}[(i)]
 
  \item $\Im(\ringpsi^k\overline{\jap{\Omega}\psi_s})\ringpsi_k$
  \item $\Im(\calL_\Omega\ringpsi^k\overline{\psi_s})\ringpsi_k$ and permutations  
  \item $\ringA_k\ringA^k\jap{\Omega}\psi_s$  
  \item $\calL_\Omega \ringA_k\ringA^k \psi_s$ 
  \item $A_t\jap{\Omega}\psi_s$
  \item $\calL_\Omega A_t\psi_s=\Omega A_t\psi_s$  
  \item $(\nabla_k\AQ^k)\jap{\Omega}\psi_s$  
  \item $(\nabla_k\calL_\Omega\AQ^k)\psi_s$
 \end{enumerate}

\end{itemize}

In what follows, in view of~\eqref{eq:core-bootstrap}, {\bf we may assume throughout that $\|\psi_s\|_{\calS(I)} < 1$}. It allows us to bound, on occasion, higher powers $\|\psi_s\|_{\calS(I)}^k$ of the dispersive norm of $\psi_s$ just by $\|\psi_s\|_{\calS(I)}$ for any integer $k \geq 1$, which simplifies the exposition. The proof of Proposition~\ref{prop:core-bootstrap} reduces to the estimates in the following four lemmas. The first two of these concern the magnetic interaction term: 

%%%%%%%%%%%%%%%
%%%%%%%%%%%%%%%
\begin{lemma}[Nonlinear estimates: magnetic interaction term (i)]\label{lem:magnetic1}
 Assuming \eqref{eq:overall-bootstrap} and \eqref{eq:core-bootstrap}, we have that 
\begin{align}
  \| \ringA^k \nabla_k \jap{\Omega} ( m(s) s^{\frac{1}{2}} \psi_s ) \|_{L^\infty_{\ds} \cap L^2_{\ds}(LE^\ast + L^{\frac{4}{3}}_t L^{\frac{4}{3}}_x)} \lesssim \|\psi_s\|_{\calS}^2.
 \end{align}
\end{lemma}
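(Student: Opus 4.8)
\textbf{Proof plan for Lemma~\ref{lem:magnetic1}.}
The plan is to bound the paradifferential pieces of $\ringA^k \nabla_k \jap{\Omega}\psi_s$ separately after a Littlewood--Paley decomposition in the heat-flow frequency $\sigma$, exactly as dictated by the definitions of $LE$ and $LE^\ast$ in Subsection~\ref{subsec:disp}. Writing $m(s)s^{\frac12}\psi_s$ as $\int_0^\infty P_\sigma(m(s)s^{\frac12}\psi_s)\,\frac{\ud\sigma}{\sigma}$ (or the $P_{\geq\sigma}$ variant for the low-frequency $LE_\low$ regime), and similarly resolving $\ringA^k$ in $\sigma$ using $P_{\geq\sigma'}\ringA$ as defined in~\eqref{eq:lowA}, I would split into three interaction regimes. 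First, the \textbf{high-high and balanced interactions}, where the factor of $\ringA$ sits at frequency $\gtrsim \sigma^{-1/2}$: here I would not try to use smoothing and instead estimate everything in $L^{\frac43}_tL^{\frac43}_x$ (the dual Strichartz space) or in $L^2_t L^2_x$ via $LE^\ast$, using H\"older together with the $L^\infty_{\ds}L^\infty_tL^\infty_x$ bounds on $s^{1/2}\langle\calL_\Omega\rangle\ringA$ from Lemma~\ref{lem:bounds_ringA_Linfty} and the parabolic regularity estimates~\eqref{equ:bound_psis_Linftys_Str}, \eqref{equ:bound_psis_L2_Str} from Proposition~\ref{p:par-reg-psi_s}; the extra $s$-weights $m(s)$ and the parabolic smoothing gains $\langle s\Delta\rangle^{k/2}$ let one integrate in $\sigma$ against the off-diagonal decay of $P_\sigma\psi_s$ (Corollary~\ref{c:offdiag}) and close with a Schur test. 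Second, the \textbf{low-high paradifferential term} --- the genuinely dangerous one --- where $\ringA$ is at frequency $\lesssim\sigma^{-1/2}$ and the derivative $\nabla_k$ lands on the high-frequency $\psi_s$; here I would feed this into the local smoothing norm $LE^\ast$, using that $\|P_\sigma\ringA^k\nabla_k(\dots)\|_{LE^\ast_\sigma}$ can be estimated by the weighted pointwise bounds on $P_{\geq\sigma}\ringA$ collected in Lemma~\ref{lem:Mestimates} (the quantities $M_1,\dots,M_5$ with the spatial weights $(\sigma')^{1/2}$ near $r\leq \sigma'^{1/2}$, $2^\ell$ on dyadic annuli, and $r^4$ for $r\gtrsim1$) multiplied against $\|P_{\geq\sigma}\psi_s\|_{LE_\sigma}\subset\|\psi_s\|_{\calS}$; the point of the $r^4$ weight is precisely to control $\ringA^k$ against the $r^{-2}$ weight appearing twice in $LE_\sigma$ and $LE^\ast_\sigma$ in the far region, and the sharp weights near $r=0$ are what permit an arbitrarily small $\delta>0$. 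Third, Bernstein-type estimates (Lemmas~\ref{l:bern1}, \ref{l:bern2}) are invoked to pass between the $LE^\ast_\sigma$/$LE^\ast_\low$ norms and the concrete $L^p_x$ bounds on $\ringA$ and $\psi_s$.

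Concretely the steps are: (1) fix the heat time $s$ and decompose $m(s)s^{\frac12}\langle\Omega\rangle\psi_s = \int_0^{1} P_\sigma(\cdots)\frac{\ud\sigma}{\sigma} + \int_{\frac18}^4 P_{\geq\sigma}(\cdots)\frac{\ud\sigma}{\sigma}$ (reflecting the two regimes in the definition of $LE^\ast$) and match the frequency of $\ringA$ to $\sigma$; (2) for the low-high piece, apply Lemma~\ref{lem:Mestimates} (and Lemma~\ref{lem:MOmegaestimates} when the $\Omega$ falls on $\ringA$) annulus-by-annulus to absorb the spatial weights, obtaining a bound of the form $\sum_\ell (\text{weight})\cdot M_j(\sigma) \cdot \|P_\sigma\psi_s\|_{LE_\sigma}$, then integrate $\frac{\ud\sigma}{\sigma}$ using that the $\sigma$-profile of $P_\sigma\psi_s(s)$ is concentrated near $\sigma\simeq s$ (off-diagonal decay, Lemma~\ref{l:offdiag} / Corollary~\ref{c:offdiag}); (3) for the high-high/balanced pieces use $s^{1/2}\ringA\in L^\infty_tL^\infty_x$ (Lemma~\ref{lem:bounds_ringA_Linfty}) and $L^{4}_tL^4_x/L^{8/3}_tL^{8/3}_x$ control of $\nabla\psi_s$ from Proposition~\ref{p:par-reg-psi_s} together with $L^p$-Poincar\'e~\eqref{eq:Pip} to soak up the extra $s^{1/2}$; (4) use Lemma~\ref{lem:noMestimates} for the pieces of $\nabla_k(P_{\geq\sigma}\ringA^k)$ that are naturally estimated in the Strichartz norm $L^2_tL^2_x + L^{8/3}_tL^{8/3}_x$, which arise when the derivative is moved off $\psi_s$ onto $\ringA$; (5) assemble with Schur's test in the $(s,\sigma,\sigma')$ variables to produce the final $L^\infty_{\ds}\cap L^2_{\ds}$ bound, with the overall output quadratic in $\|\psi_s\|_{\calS}$ since each of $\ringA$ and $\nabla\psi_s$ (after the smoothing gain) contributes one power.

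The main obstacle is the low-high paradifferential term: one loses a full derivative there, and it cannot be recovered by energy estimates --- the only mechanism available is the global-in-time local smoothing estimate of Proposition~\ref{prop:strong-stab-led}, which forces a careful bookkeeping of the spatial weights in $LE_\sigma$ and $LE^\ast_\sigma$ (the $(\sigma)^{\pm1/4}$ scaling near the origin, the $2^{\mp\ell/2}$ on dyadic annuli, and the $r^{\mp2}$ in the far region) so that the pointwise bounds on $P_{\geq\sigma'}\ringA$ from Lemma~\ref{lem:Mestimates} exactly close the budget. Getting the $\Omega$-commuted version to close simultaneously is what makes Lemma~\ref{lem:MOmegaestimates} (with its $L^\infty_r L^2_\theta$-type norms reflecting that $\calL_\Omega\ringA$ has one fewer usable angular derivative) necessary, and a minor subtlety will be tracking which terms genuinely need the $r^4$ decay versus which can be handled more crudely --- this is where the bulk of the (routine but lengthy) case analysis lives, and I would organize it by the term groupings (I)--(III) already used in the statement of Lemma~\ref{lem:Mestimates}'s proof. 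Everything else is a Schur-test/H\"older assembly that follows the template of~\cite{LOS5}.
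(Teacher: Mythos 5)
Your plan follows essentially the same route as the paper: the same paradifferential decomposition in heat-time frequency $\sigma'$ matched against the Littlewood--Paley resolution of $m(s)s^{1/2}\jap{\Omega}\psi_s$, with the low-high term (low-frequency $\ringA$ against the derivative of frequency-localized $\psi_s$) fed into $LE^\ast$ via the weighted bounds of Lemma~\ref{lem:Mestimates} and the Bernstein lemmas, and the remaining pieces placed in $L^{4/3}_tL^{4/3}_x$ using Lemmas~\ref{lem:bounds_ringA_Linfty} and \ref{lem:noMestimates} together with the off-diagonal decay of Corollary~\ref{c:offdiag} and Schur's test. The only organizational difference is that Lemma~\ref{lem:MOmegaestimates} is not needed here (the angular derivative never falls on $\ringA$ in this lemma; that case is Lemma~\ref{lem:magnetic2}), but this does not affect the correctness of the approach.
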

%%%%%%%%%%%%%%%
%%%%%%%%%%%%%%%
\begin{lemma}[Nonlinear estimates: magnetic interaction term (ii)]\label{lem:magnetic2}
 Assuming \eqref{eq:overall-bootstrap} and \eqref{eq:core-bootstrap}, we have that
\begin{align}
  \| \calL_\Omega \ringA^k \nabla_k ( m(s) s^{\frac{1}{2}} \psi_s ) \|_{L^\infty_{\ds} \cap L^2_{\ds}(LE^\ast + L^{\frac{4}{3}}_t L^{\frac{4}{3}}_x)} \lesssim \|\psi_s\|_{\calS}^2.
 \end{align}
\end{lemma}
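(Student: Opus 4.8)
\textbf{Proof strategy for Lemma~\ref{lem:magnetic2}.} The term to bound is $\calL_\Omega\ringbfA^k\nabla_k(m(s)s^{\frac12}\psi_s)$, and the plan is to frequency-decompose the factor $\calL_\Omega\ringbfA$ using the heat-based Littlewood--Paley projections $P_{\geq\sigma}$ at a scale $\sigma$ matched to the definitions of $LE^*$ and $LE^*_{\low}$. Writing $\calL_\Omega\ringbfA = P_{\leq\sigma}\calL_\Omega\ringbfA + P_{\geq\sigma}\calL_\Omega\ringbfA$ and examining how $\sigma$ enters the $LE^*$ and $LE^*_{\low}$ norms (recall $\|F\|_{LE^*}^2 = \int_{1/8}^4\|P_{\geq\sigma}F\|_{LE^*_{\low}}^2\,\tfrac{\ud\sigma}{\sigma} + \int_0^{1/2}\sigma^{1/2}\|P_\sigma F\|_{LE^*_\sigma}^2\,\tfrac{\ud\sigma}{\sigma}$), I would split the estimate into a \emph{low-frequency piece}, where $P_{\leq\sigma}\calL_\Omega\ringbfA$ is placed in Strichartz-type spaces via Lemma~\ref{lem:noMestimates} (specifically \eqref{eq:PlesssigmaAStrich} and \eqref{eq:ringAL83}) and $\nabla_k(m(s)s^{\frac12}\psi_s)$ is controlled via the parabolic regularity estimates \eqref{equ:bound_psis_Linftys_Str}--\eqref{equ:bound_psis_L2_Str} of Proposition~\ref{p:par-reg-psi_s}, and a \emph{high-frequency piece}, where $P_{\geq\sigma}\calL_\Omega\ringbfA$ is estimated in the weighted $L^\infty_x$-type norms of Lemma~\ref{lem:MOmegaestimates} (the quantities $M\Omega_1,\dots,M\Omega_5$ and $\tilde M\Omega_1,\dots,\tilde M\Omega_3$), which were tailored precisely so that multiplication against $\nabla_k\psi_s$ lands in $LE^*_\sigma$ resp.\ $LE^*_{\low}$.

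Concretely, for the high-frequency contribution I would follow the region decomposition built into $LE^*_\sigma$: the innermost ball $A_{\leq -k_\sigma}$, the dyadic annuli $A_\ell$ with $-k_\sigma\le\ell<0$, and the outer region $A_{\geq 0}$ with the $r^2$ weight. On the innermost ball and the annuli the extra spatial decay of $\psi_s$ is obtained from the radial Sobolev inequality (Lemma~\ref{lem:radial_sobolev}), which is exactly why the $\calS$-norm includes the $\Omega$-derivative of $\psi_s$: applying $\calL_\Omega$ to $\ringbfA$ is compensated by the angular regularity already present in $\psi_s$. On the outer region $A_{\geq0}$ one uses the $r^4$ pointwise bounds $M\Omega_3$, $\tilde M\Omega_3$ for $P_{\geq\sigma}\calL_\Omega\ringbfA$ together with the $r^{-2}$ weight in $LE^*_\sigma$ and the radial Sobolev gain for $\psi_s$. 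Throughout, the $\sigma$-powers in Lemma~\ref{lem:MOmegaestimates} (e.g.\ $(\sigma')^{-1/2+\alpha}$ in $M\Omega_4$, $(\sigma')^{-1/4+\alpha}$ in $M\Omega_5$) are designed to cancel against the $\sigma^{1/2}$, $\sigma^{1/4}$ weights appearing in the definition of $\|\cdot\|_{LE^*_\sigma}$ and the $\sigma^{1/2}$ weight in the outer integral $\int_0^{1/2}\sigma^{1/2}(\cdots)\tfrac{\ud\sigma}{\sigma}$, after which the $\sigma$-integral converges. The Bernstein-type estimates of Lemmas~\ref{l:bern1}--\ref{l:bern2} and the off-diagonal decay of $P_\sigma\psi_s(s)$ (Lemma~\ref{l:offdiag}, Corollary~\ref{c:offdiag}) are used to pass between the two heat-flow frequency localizations (the outer heat time $s$ and the Littlewood--Paley scale $\sigma$) and to sum over $\sigma$.

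For the $L^2_{\ds}$ and $L^\infty_{\ds}$ outer norms in $s$, Schur's test is applied with respect to $\tfrac{\ud s}{s}$, using the weight $m(s)$ and the off-diagonal decay to produce admissible Schur kernels (exactly as in Corollary~\ref{c:offdiag} and Lemma~\ref{lem:Mestimates}). The overall bilinear estimate then closes with two factors of $\|\psi_s\|_{\calS}$: one from $\calL_\Omega\ringbfA$ via Lemmas~\ref{lem:noMestimates} and \ref{lem:MOmegaestimates}, and one from $\nabla\jap{\Omega}\psi_s$ via Proposition~\ref{p:par-reg-psi_s}. \textbf{The main obstacle} is the bookkeeping of the interlocking $\sigma$- and $s$-weights across the three spatial regions in $LE^*_\sigma$: one must verify that in \emph{each} region the radial-Sobolev gain for $\psi_s$, the $\sigma$-power losses in the $M\Omega_j,\tilde M\Omega_j$ bounds, and the $\sigma$- and $s$-weights in the definitions of $LE^*$ and $\calS_s$ combine to give a convergent double integral with no borderline divergence — the role of the small parameter $\delta>0$ (through $m(s)=\max\{s^{-\delta},1\}$) is to absorb precisely the logarithmically-divergent small-$s$ endpoint, and care is needed to track where that margin is spent. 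Since the structure is parallel to Lemma~\ref{lem:magnetic1} (with $\calL_\Omega\ringbfA$ in place of $\ringbfA$ and one fewer $\Omega$ available on $\psi_s$, but with Lemma~\ref{lem:MOmegaestimates} providing the matching weighted bounds), I expect the argument to go through by the same scheme once this weight accounting is done carefully.
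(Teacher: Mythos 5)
Your proposal is correct and follows essentially the same route as the paper: a paradifferential heat-flow Littlewood--Paley decomposition, with the high-frequency-$\calL_\Omega\ringbfA$ piece estimated in $LE^\ast$ via the weighted bounds of Lemma~\ref{lem:MOmegaestimates} (placing $\calL_\Omega\ringbfA$ in $L^\infty_r L^2_\theta$ and spending the spare $\Omega$ on $\psi_s$ through the $\bbS^1$ Sobolev embedding), and the remaining pieces in dual Strichartz via Lemmas~\ref{lem:noMestimates}, \ref{l:bern1}--\ref{l:bern2} and the off-diagonal decay of Corollary~\ref{c:offdiag}, exactly as in the paper's reduction to the scheme of Lemma~\ref{lem:magnetic1}. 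The only cosmetic difference is that the paper performs the split of $\calL_\Omega\ringbfA$ at the scale $\sigma'$ of each Littlewood--Paley piece of $\psi_s$ (and moves $\nabla_k$ outside to create the four terms $I\Omega$--$IV\Omega$) rather than at the output frequency $\sigma$, but this does not change the substance of the argument.
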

%%%%%%%%%%%%%%%
%%%%%%%%%%%%%%%
The next two lemmas provide estimates on the remaining terms in the nonlinearity, except for $\partial_sw$ which was already treated in Lemma~\ref{lem:bounds_on_w}. For the cubic and higher order terms we have:
%%%%%%%%%%%%%%%
%%%%%%%%%%%%%%%
\begin{lemma}(Nonlinear estimates: cubic and higher terms)\label{lem:cubic_and_higher}
 Assuming \eqref{eq:overall-bootstrap} and \eqref{eq:core-bootstrap}, we have that 
 \begin{align}
  \| | \ringPsi|^2 \jap{\Omega} (m(s) s^{\frac{1}{2}} \psi_s) \|_{L^\infty_{\ds} \cap L^2_{\ds}L^{\frac{4}{3}}_t L^{\frac{4}{3}}_x} &\lesssim \|\psi_s\|_{\calS}^2 \label{equ:nonlinear_estimates_cubic1} \\
  \| |\calL_\Omega \ringPsi| |\ringPsi| (m(s) s^{\frac{1}{2}} \psi_s) \|_{L^\infty_{\ds} \cap L^2_{\ds}L^{\frac{4}{3}}_t L^{\frac{4}{3}}_x} &\lesssim \|\psi_s\|_{\calS}^2 \label{equ:nonlinear_estimates_cubic2} \\  
  \| \ringA_k \ringA^k \jap{\Omega} (m(s) s^{\frac{1}{2}} \psi_s) \|_{L^\infty_{\ds} \cap L^2_{\ds}L^{\frac{4}{3}}_t L^{\frac{4}{3}}_x} &\lesssim \|\psi_s\|_{\calS}^2 \label{equ:nonlinear_estimates_cubic3} \\       
  \| \calL_\Omega \ringA_k \ringA^k (m(s) s^{\frac{1}{2}} \psi_s) \|_{L^\infty_{\ds} \cap L^2_{\ds}L^{\frac{4}{3}}_t L^{\frac{4}{3}}_x} &\lesssim \|\psi_s\|_{\calS}^2 \label{equ:nonlinear_estimates_cubic4} \\       
  \| A_t \jap{\Omega} (m(s) s^{\frac{1}{2}} \psi_s) \|_{L^\infty_{\ds} \cap L^2_{\ds}L^{\frac{4}{3}}_t L^{\frac{4}{3}}_x} &\lesssim \|\psi_s\|_{\calS}^2 \label{equ:nonlinear_estimates_cubic5} \\   
  \| \calL_\Omega A_t (m(s) s^{\frac{1}{2}} \psi_s) \|_{L^\infty_{\ds} \cap L^2_{\ds}L^{\frac{4}{3}}_t L^{\frac{4}{3}}_x} &\lesssim \|\psi_s\|_{\calS}^2 \label{equ:nonlinear_estimates_cubic6} \\     
  \| (\nabla_k \AQ^k) \jap{\Omega} (m(s) s^{\frac{1}{2}} \psi_s) \|_{L^\infty_{\ds} \cap L^2_{\ds}L^{\frac{4}{3}}_t L^{\frac{4}{3}}_x} &\lesssim \|\psi_s\|_{\calS}^2 \label{equ:nonlinear_estimates_cubic7} \\     
  \| (\nabla_k \calL_\Omega \AQ^k) (m(s) s^{\frac{1}{2}} \psi_s) \|_{L^\infty_{\ds} \cap L^2_{\ds}L^{\frac{4}{3}}_t L^{\frac{4}{3}}_x} &\lesssim \|\psi_s\|_{\calS}^2 \label{equ:nonlinear_estimates_cubic8} 
 \end{align}
\end{lemma}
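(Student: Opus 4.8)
\textbf{Proof plan for Lemma~\ref{lem:cubic_and_higher}.}
The plan is to handle each of the eight estimates by the same routine: distribute the available derivatives/Lie derivatives across the factors, place the ``auxiliary'' factors (those built from $\ringPsi$, $\ringA$, $\Psi^\infty$, $A_t$, and their $\calL_\Omega$-derivatives) in $L^\infty_{\ds}L^\infty_t L^\infty_x$ or in a Strichartz-type space such as $L^4_t L^4_x$ or $L^{\frac{8}{3}}_t L^{\frac{8}{3}}_x$, and keep the single copy of $\jap{\Omega}\psi_s$ in a norm controlled by $\|\psi_s\|_{\calS}$. Throughout, the factor $m(s)s^{\frac12}$ attached to $\psi_s$ together with the smallness $\|\psi_s\|_{\calS(I)}<1$ lets us absorb extra powers of $\|\psi_s\|_{\calS}$ and, via Poincar\'e's inequality \eqref{eq:Pip} applied to the auxiliary factors, manufacture the $s$-decay needed for the $L^1_{\ds}$-integrations hidden in the various $\int_s^\infty$ reconstruction formulas. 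Concretely, the input estimates are: Lemma~\ref{lem:bounds_ringPsi} (in particular \eqref{eq:bound_ringPsi_Linftyall_Str2}, \eqref{eq:bound_ringPsi_Linftyall_Str1}, \eqref{equ:bound_ringPsi_Linftys_Str_2}) for \eqref{equ:nonlinear_estimates_cubic1}--\eqref{equ:nonlinear_estimates_cubic2}; Lemma~\ref{lem:bounds_ringA_Linfty} (estimates \eqref{eq:bound_ringAL_Linftyall_Str}--\eqref{eq:bound_ringAQ_Linftyall_Str}) for \eqref{equ:nonlinear_estimates_cubic3}--\eqref{equ:nonlinear_estimates_cubic4}; Lemma~\ref{lem:bounds_Psi_A_t} (estimate \eqref{eq:bound_At_Linftys_L2tx}) for \eqref{equ:nonlinear_estimates_cubic5}--\eqref{equ:nonlinear_estimates_cubic6}; and Lemma~\ref{lem:bounds_ringA_Str} (estimates \eqref{eq:bound_nabla_ringAL_Linftys_Str}, \eqref{eq:bound_nabla_ringAQ_Linftys_L2tx}) for \eqref{equ:nonlinear_estimates_cubic7}--\eqref{equ:nonlinear_estimates_cubic8}, together with Proposition~\ref{p:par-reg-psi_s} (the parabolic regularity bounds \eqref{equ:bound_psis_Linftys_Str}, \eqref{equ:bound_psis_L2_Str}) for $\psi_s$ itself.

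First I would dispatch \eqref{equ:nonlinear_estimates_cubic1}--\eqref{equ:nonlinear_estimates_cubic2}: by H\"older in $x$ with exponents $(\infty,\infty,2)$ or $(4,4,2)$ we bound $\||\ringPsi|^2\jap{\Omega}(m(s)s^{\frac12}\psi_s)\|_{L^{\frac43}_t L^{\frac43}_x}$ (pointwise in $s$) by $\|s^{\frac12}\ringPsi\|_{L^\infty_t L^\infty_x}^2\,\|\jap{\Omega}m(s)s^{-\frac12}\psi_s\|_{L^{\frac43}_t L^{\frac43}_x}$ or by $\|\ringPsi\|_{L^4_t L^4_x}^2\|\jap{\Omega}m(s)s^{\frac12}\psi_s\|_{L^2_t L^2_x}$, interpolate using Gagliardo--Nirenberg, and then integrate in $s$ exactly as in Lemma~\ref{lem:bounds_ringPsi}; the two weak $s$-powers near $s=0$ are integrable because of $m(s)$ and the extra regularity, and near $s=\infty$ because $\jap{s\Delta}$-control of $\psi_s$ supplies the decay. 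Estimates \eqref{equ:nonlinear_estimates_cubic3}--\eqref{equ:nonlinear_estimates_cubic6} follow the same pattern with $\ringA$ (resp.\ $A_t$) in place of $\ringPsi$: place $s^{\frac12}\ringA$ in $L^\infty_{\ds}L^\infty_t L^\infty_x$ using \eqref{eq:bound_ringAL_Linftyall_Str}--\eqref{eq:bound_ringAQ_Linftyall_Str}, or $\calL_\Omega A_t$ in $L^\infty_{\ds}L^2_t L^2_x$ via \eqref{eq:bound_At_Linftys_L2tx} and pair it against $\jap{\Omega}(m(s)s^{\frac12}\psi_s)$ in $L^\infty_{\ds}L^\infty_t L^\infty_x\cap L^\infty_{\ds}L^4_t L^4_x$ (controlled by $\|\psi_s\|_{\calS}$ via Gagliardo--Nirenberg and \eqref{equ:bound_psis_Linftys_Str}); for the $A_t$ terms one uses that $A_t\in L^\infty_{\ds}L^2_t L^2_x$ so the product lands in $L^2_t$ and then in $L^{\frac43}_t$ after H\"older in $t$ on the bounded interval... no, rather one keeps the product in $L^{\frac43}_t L^{\frac43}_x$ directly by H\"older $(2,4)$ in $t$ and $(2,4)$ in $x$. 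The terms \eqref{equ:nonlinear_estimates_cubic7}--\eqref{equ:nonlinear_estimates_cubic8} carry an extra derivative on $\AQ$; here I would use \eqref{eq:bound_nabla_ringAQ_Linftys_L2tx} to put $\nabla\jap{\calL_\Omega}\AQ$ in $L^\infty_{\ds}L^2_t L^2_x$ and pair against $\jap{\Omega}(m(s)s^{\frac12}\psi_s)$ in $L^\infty_{\ds}L^4_t L^4_x$, giving an $L^{\frac43}_t L^{\frac43}_x$ output; the $L^2_{\ds}$ membership follows since one of the two $\psi_s$-dependent factors (either in $\AQ$ or the explicit $\psi_s$) can be taken in $L^2_{\ds}$-based norms and the rest in $L^\infty_{\ds}$, with the Schur-kernel structure of the $s$-reconstruction of $\AQ$ handled as in the proof of Lemma~\ref{lem:bounds_ringA_Str}.

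I do not expect any genuine obstacle here: all eight estimates are ``soft'' in the sense that they are far from critical --- every factor except one copy of $\psi_s$ is a quadratic-or-higher object in $\ringpsi$ or $\ringA$, carrying the smallness $\sqrt{\epsilon_0}$ or at least an extra $\|\psi_s\|_{\calS}$, and none requires the local smoothing spaces $LE$, $LE^*$ (only the plain dual Strichartz space $L^{\frac43}_t L^{\frac43}_x$, into which the Main Linear Estimate Lemma~\ref{lem:main_linear_estimate} accepts forcing terms). The only points requiring a little care are (a) bookkeeping the $m(s)$ weights so that the $\ds$-integrations over $(0,\infty)$ converge at both endpoints --- this is routine given the ``extra regularity'' discussion and Poincar\'e's inequality, exactly as in Lemmas~\ref{lem:bounds_ringA_Linfty}--\ref{lem:bounds_ringA_Str}; and (b) for the $A_t$ and $\calL_\Omega A_t$ terms, remembering that $A_t$ is only in $L^2_t L^2_x$ (not $L^\infty$), so one must pair it with $\psi_s$ in $L^4_t L^4_x$ and absorb via H\"older, which works because $\frac12+\frac14 = \frac34 = \frac{1}{4/3}$. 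After assembling these bounds, the Lemma follows by summing over the finitely many terms of the ``cubic and higher'' type listed in Section~\ref{s:schrod}.
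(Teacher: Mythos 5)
Your overall strategy --- H\"older's inequality combined with the previously established bounds on $\ringPsi$, $\ringA$, and $A_t$ --- is exactly the paper's, and your treatment of \eqref{equ:nonlinear_estimates_cubic5}--\eqref{equ:nonlinear_estimates_cubic8} (pairing an $L^\infty_{\ds}L^2_tL^2_x$ factor from \eqref{eq:bound_At_Linftys_L2tx} or \eqref{eq:bound_nabla_ringAQ_Linftys_L2tx} against $\jap{\Omega}(m(s)s^{1/2}\psi_s)$ in $L^4_tL^4_x$, using $\tfrac12+\tfrac14=\tfrac34$) is correct. The problem is with the H\"older splittings you propose for \eqref{equ:nonlinear_estimates_cubic1}--\eqref{equ:nonlinear_estimates_cubic4}: neither of your two options for the $|\ringPsi|^2\,\jap{\Omega}\psi_s$ terms lands in $L^{4/3}_tL^{4/3}_x$. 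The $(\infty,\infty)$ placement of the two $\ringPsi$ factors forces $\psi_s$ itself into $L^{4/3}_tL^{4/3}_x$, which is not an $S$-admissible pair and is not controlled by $\|\psi_s\|_{\calS}$; and the $(4,4,2)$ splitting has exponents summing to $1$, so it produces an $L^1_tL^1_x$ bound, not an $L^{4/3}_tL^{4/3}_x$ bound. The same defect appears in your treatment of $\ringA_k\ringA^k\jap{\Omega}\psi_s$, where placing both copies of $s^{1/2}\ringA$ in $L^\infty_tL^\infty_x$ again leaves the $\psi_s$ factor needing to be measured in $L^{4/3}_tL^{4/3}_x$. Since $I$ may be all of $\bbR$ and $\bbH^2$ has infinite volume, there is no a posteriori way to lower the integrability of an $L^4$ or $L^2$ product down to $L^{4/3}$.

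The fix is the three-way $L^4_tL^4_x$ splitting, each factor contributing $\tfrac14$. This is what the paper does: for \eqref{equ:nonlinear_estimates_cubic1}--\eqref{equ:nonlinear_estimates_cubic2} both copies of $\jap{\calL_\Omega}\ringPsi$ go in $L^\infty_{\ds}L^4_tL^4_x$ via \eqref{equ:bound_ringPsi_Linftys_Str_2}, and for \eqref{equ:nonlinear_estimates_cubic3}--\eqref{equ:nonlinear_estimates_cubic4} both copies of $\jap{\calL_\Omega}\ringA$ go in $L^\infty_{\ds}L^4_tL^4_x$ via \eqref{eq:bound_ringAL_Linftys_Str} and \eqref{eq:bound_ringAQ_Linftys_Str} --- i.e.\ the Strichartz bounds of Lemma~\ref{lem:bounds_ringA_Str}, not the $L^\infty$ bounds of Lemma~\ref{lem:bounds_ringA_Linfty} that you invoke --- while the remaining factor $\jap{\Omega}(m(s)s^{1/2}\psi_s)$ is placed in $L^\infty_{\ds}\cap L^2_{\ds}L^4_tL^4_x$, which is part of the $\calS$ norm. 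With that substitution, all eight estimates become one-line applications of H\"older's inequality, and no $s$-integration beyond what is already packaged into Lemmas~\ref{lem:bounds_ringPsi}, \ref{lem:bounds_ringA_Str}, and \ref{lem:bounds_Psi_A_t} is required.
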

%%%%%%%%%%%%%%%
%%%%%%%%%%%%%%%
Finally, the quadratic terms are treated in the following lemma.
%%%%%%%%%%%%%%%
%%%%%%%%%%%%%%%
\begin{lemma}(Nonlinear estimates: quadratic terms)\label{lem:quadratic_terms}
 Assuming \eqref{eq:overall-bootstrap} and \eqref{eq:core-bootstrap}, we have that
 \begin{align}
  \| |\Psi^\infty| |\ringPsi| \jap{\Omega} (m(s) s^{\frac{1}{2}} \psi_s) \|_{L^\infty_{\ds} \cap L^2_{\ds} L^{\frac{4}{3}}_t L^{\frac{4}{3}}_x} &\lesssim \|\psi_s\|_{\calS}^2 \label{equ:nonlinear_estimates_quadratic1} \\
  \| |\Psi^\infty| |\calL_\Omega \ringPsi| (m(s) s^{\frac{1}{2}} \psi_s) \|_{L^\infty_{\ds} \cap L^2_{\ds} L^{\frac{4}{3}}_t L^{\frac{4}{3}}_x} &\lesssim \|\psi_s\|_{\calS}^2 \label{equ:nonlinear_estimates_quadratic2} \\     
  \| A^\infty_k \ringA^k\jap {\Omega} (m(s) s^{\frac{1}{2}} \psi_s) \|_{L^\infty_{\ds} \cap L^2_{\ds}L^{\frac{4}{3}}_t L^{\frac{4}{3}}_x} &\lesssim \|\psi_s\|_{\calS}^2 \label{equ:nonlinear_estimates_quadratic3} \\
  \| A^\infty_k \calL_\Omega \ringA^k (m(s) s^{\frac{1}{2}} \psi_s) \|_{L^\infty_{\ds} \cap L^2_{\ds}L^{\frac{4}{3}}_t L^{\frac{4}{3}}_x} &\lesssim \|\psi_s\|_{\calS}^2 \label{equ:nonlinear_estimates_quadratic4} \\  
  \| (\nabla_k\AL^k) \jap{\Omega} (m(s) s^{\frac{1}{2}} \psi_s) \|_{L^\infty_{\ds} \cap L^2_{\ds}L^{\frac{4}{3}}_t L^{\frac{4}{3}}_x} &\lesssim \|\psi_s\|_{\calS}^2 \label{equ:nonlinear_estimates_quadratic5} \\
  \| (\nabla_k\calL_\Omega\AL^k) (m(s) s^{\frac{1}{2}} \psi_s) \|_{L^\infty_{\ds} \cap L^2_{\ds}L^{\frac{4}{3}}_t L^{\frac{4}{3}}_x} &\lesssim \|\psi_s\|_{\calS}^2 \label{equ:nonlinear_estimates_quadratic6}
 \end{align}
\end{lemma}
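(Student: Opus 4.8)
The plan is to reduce each of the six quadratic terms to a product of a uniformly bounded factor coming from the harmonic map data ($\Psi^\infty$, $A^\infty$, $\nabla\Psi^\infty$ or $\nabla A^\infty$), a factor controlled by the auxiliary estimates of Section~\ref{s:w} ($\ringPsi$, $\calL_\Omega\ringPsi$, $\ringA_L$, $\nabla\ringA_L$, etc.), and a copy of $m(s)s^{\frac{1}{2}}\langle\Omega\rangle\psi_s$ controlled by the $\calS$ norm, and then to close by H\"older's inequality. The crucial structural observation is that, unlike in the magnetic interaction terms of Lemmas~\ref{lem:magnetic1}--\ref{lem:magnetic2} or the $w$-estimates of Lemma~\ref{lem:bounds_on_w}, all factors here are evaluated at the \emph{same} heat time $s$, so no $s$-integration (and hence no Schur kernel or Poincar\'e bookkeeping) is needed: I would place $m(s)s^{\frac{1}{2}}\langle\Omega\rangle\psi_s$ in $L^2_{\ds}(\Str_s)$ (respectively $L^\infty_{\ds}$) and the remaining factor in $L^\infty_{\ds}$, uniformly in $s$. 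The exponent count is $\frac{1}{\infty}+\frac{3}{8}+\frac{3}{8}=\frac{3}{4}=\frac{1}{4/3}$ in both the spatial and temporal variables, so the product lands precisely in the dual Strichartz space $L^{4/3}_t L^{4/3}_x$ that feeds the main linear estimate (Lemma~\ref{lem:main_linear_estimate}); here one uses that $(\tfrac{8}{3},\tfrac{8}{3})$ is $S$-admissible for $d=2$, so that $\Str_s \hookrightarrow L^{8/3}_tL^{8/3}_x$.

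The inputs I would invoke are: (a) $\|\Psi^\infty\|_{L^\infty_x}+\|A^\infty\|_{L^\infty_x}+\|\nabla\Psi^\infty\|_{L^\infty_x}+\|\nabla A^\infty\|_{L^\infty_x}\lesssim 1$, from the explicit Coulomb frame formulas in Lemma~\ref{lem:cf} (cf. Remark~\ref{rem:cf-nonimportance}); (b) $\|\langle\calL_\Omega\rangle\ringPsi\|_{L^\infty_{\ds}(L^4_tL^4_x\cap L^{8/3}_tL^{8/3}_x)}\lesssim\|\psi_s\|_{\calS}$ from~\eqref{equ:bound_ringPsi_Linftys_Str_2} in Lemma~\ref{lem:bounds_ringPsi}; (c) $\|\langle\calL_\Omega\rangle\ringA_L\|_{L^\infty_{\ds}(L^4\cap L^{8/3})}+\|\nabla\langle\calL_\Omega\rangle\ringA_L\|_{L^\infty_{\ds}(L^4\cap L^{8/3})}\lesssim\|\psi_s\|_{\calS}$ together with $\|\langle\calL_\Omega\rangle\ringA_Q\|_{L^\infty_{\ds}(L^4\cap L^{8/3})}\lesssim\|\psi_s\|_{\calS}^2$, from~\eqref{eq:bound_ringAL_Linftys_Str},~\eqref{eq:bound_nabla_ringAL_Linftys_Str},~\eqref{eq:bound_ringAQ_Linftys_Str} in Lemma~\ref{lem:bounds_ringA_Str}; and (d) the definition of $\calS$, which gives $\|m(s)s^{\frac{1}{2}}\langle\Omega\rangle\psi_s\|_{L^\infty_{\ds}\cap L^2_{\ds}(L^{8/3}_tL^{8/3}_x)}\lesssim\|\psi_s\|_{\calS}$. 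Throughout I would use the standing reduction $\|\psi_s\|_{\calS}<1$ to absorb any resulting cubic bound $\|\psi_s\|_{\calS}^3$ into $\|\psi_s\|_{\calS}^2$.

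The term-by-term execution is then routine. For $|\Psi^\infty||\ringPsi|\langle\Omega\rangle(m(s)s^{\frac{1}{2}}\psi_s)$ and $|\Psi^\infty||\calL_\Omega\ringPsi|(m(s)s^{\frac{1}{2}}\psi_s)$ one puts $\Psi^\infty\in L^\infty_x$, the $\ringPsi$-factor in $L^{8/3}_tL^{8/3}_x$ via (b), and the $\psi_s$-factor in $L^{8/3}_tL^{8/3}_x$ via (d). For $A^\infty_k\ringA^k\langle\Omega\rangle(m(s)s^{\frac{1}{2}}\psi_s)$ and $A^\infty_k\calL_\Omega\ringA^k(m(s)s^{\frac{1}{2}}\psi_s)$ one first splits $\ringA=\ringA_L+\ringA_Q$ and argues identically, with the $\ringA_Q$-piece yielding the acceptable cubic bound via (c). For $(\nabla_k\AL^k)\langle\Omega\rangle(m(s)s^{\frac{1}{2}}\psi_s)$ and $(\nabla_k\calL_\Omega\AL^k)(m(s)s^{\frac{1}{2}}\psi_s)$ one places the $\nabla\ringA_L$-factor in $L^{8/3}_tL^{8/3}_x$ via the second half of (c). In every case H\"older in space and time yields the bound in $L^{4/3}_tL^{4/3}_x$, and the outer $L^\infty_{\ds}\cap L^2_{\ds}$ norm is obtained by distributing the heat-time norms as above, i.e. the $L^2_{\ds}$ integrability always falls on the $\psi_s$-factor.

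I do not anticipate a genuine obstacle in proving this lemma: all of the hard analysis (the parabolic regularity bounds of Section~\ref{s:heat-psi_s} and the $\ringA$/$\ringPsi$ estimates of Section~\ref{s:w}) has already been carried out, and what remains is a bookkeeping exercise in H\"older's inequality with matched heat-time weights. The only points requiring a modicum of care are verifying the $S$-admissibility of $(\tfrac{8}{3},\tfrac{8}{3})$ so that Strichartz control transfers to $L^{8/3}_tL^{8/3}_x$, and tracking which factor carries the $L^2_{\ds}$ integrability so that the product norm closes in $L^\infty_{\ds}\cap L^2_{\ds}$.
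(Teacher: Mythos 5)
Your proposal is correct and follows essentially the same route as the paper's proof: H\"older's inequality placing the harmonic-map factors ($\Psi^\infty$, $A^\infty$) in $L^\infty_x$, the $\ringPsi$, $\calL_\Omega\ringPsi$, $\ringA$, and $\nabla\ringA_L$ factors in $L^\infty_{\ds}L^{8/3}_tL^{8/3}_x$ via Lemmas~\ref{lem:bounds_ringPsi} and~\ref{lem:bounds_ringA_Str}, and the $\langle\Omega\rangle(m(s)s^{1/2}\psi_s)$ factor in $L^\infty_{\ds}\cap L^2_{\ds}L^{8/3}_tL^{8/3}_x$ via the $\calS$ norm. The only cosmetic difference is that you split $\ringA=\ringA_L+\ringA_Q$ explicitly for \eqref{equ:nonlinear_estimates_quadratic3}--\eqref{equ:nonlinear_estimates_quadratic4}, whereas the paper cites both component bounds at once; the content is identical.
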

%%%%%%%%%%%%%%%
%%%%%%%%%%%%%%%
Lemmas~\ref{lem:magnetic1},~\ref{lem:magnetic2},~\ref{lem:cubic_and_higher}, and~\ref{lem:quadratic_terms} are proved in Subsections~\ref{subsec:7magnetic} and~\ref{subsec:7non-magnetic} below. Here we simply observe how Proposition~\ref{prop:core-bootstrap} follows easily from these lemmas along with Lemma~\ref{lem:bounds_on_w}.

%%%%%%%%%%%%%%%
%%%%%%%%%%%%%%%
\begin{proof}[Proof of Proposition~\ref{prop:core-bootstrap}]
Under the bootstrap assumptions~\eqref{eq:overall-bootstrap} and \eqref{eq:core-bootstrap}, the bound \eqref{eq:core-bootstrap-imp} follows by a standard continuity argument from the estimate
\begin{equation} \label{equ:nonlinear_estimates_put_together}
\begin{aligned}
 \|\psi_s\|_{\calS(I)} & \lesssim \| \langle \Omega \rangle (m(s) s^{\frac{1}{2}} \psi_s) |_{t=0} \|_{L^\infty_{\ds} \cap L^2_{\ds} L^2_x} + \|\psi_s\|_{\calS(I)}^2 + \sqrt{ \epsilon_0} \|\psi_s\|_{\calS(I)}.
\end{aligned}\end{equation}
Multiplying \eqref{equ:schroed_psi_s}, respectively \eqref{equ:schroed_Omega_psi_s}, by $m(s) s^{\frac{1}{2}}$, invoking the main linear estimate from Lemma~\ref{lem:main_linear_estimate}, and summing up the two resulting estimates, we deduce that for all $s > 0$,
\begin{equation*}
 \|\psi_s(s)\|_{\calS_s(I)} \lesssim \| \langle \Omega \rangle (m(s) s^{\frac{1}{2}} \psi_s) |_{t=0} \|_{L^2_x} + \| m(s) s^{\frac{1}{2}} \calN \|_{LE^\ast(I) + L^{\frac{4}{3}}_t L^{\frac{4}{3}}_x(I)} + \| m(s) s^{\frac{1}{2}} \calN^\Omega \|_{LE^\ast(I) + L^{\frac{4}{3}}_t L^{\frac{4}{3}}_x(I)}.
\end{equation*}
Taking the $L^\infty_{\ds} \cap L^2_{\ds}(\bbR^+; \calS_s(I))$ norm of both sides, we find that
\begin{equation*}
 \begin{aligned}
  \|\psi_s\|_{\calS(I)} &\lesssim \| \langle \Omega \rangle (m(s) s^{\frac{1}{2}} \psi_s) |_{t=0} \|_{L^\infty_{\ds} \cap L^2_{\ds} L^2_x} \\
  &\qquad + \| m(s) s^{\frac{1}{2}} \calN \|_{L^\infty_{\ds} \cap L^2_{\ds}(LE^\ast + L^{\frac{4}{3}}_t L^{\frac{4}{3}}_x)(I)} + \| m(s) s^{\frac{1}{2}} \calN^\Omega \|_{L^\infty_{\ds} \cap L^2_{\ds}(LE^\ast + L^{\frac{4}{3}}_t L^{\frac{4}{3}}_x)(I)}.
 \end{aligned}
\end{equation*}
Thus, the proof of~\eqref{equ:nonlinear_estimates_put_together} reduces to establishing, under the assumptions~\eqref{eq:overall-bootstrap} and~\eqref{eq:core-bootstrap}, that
\begin{equation} \label{equ:nonlinear_estimates}
 \| m(s) s^{\frac{1}{2}} \calN \|_{L^\infty_{\ds} \cap L^2_{\ds}(LE^\ast + L^{\frac{4}{3}}_t L^{\frac{4}{3}}_x)(I)} + \| m(s) s^{\frac{1}{2}} \calN^\Omega \|_{L^\infty_{\ds} \cap L^2_{\ds}(LE^\ast + L^{\frac{4}{3}}_t L^{\frac{4}{3}}_x)(I)} \lesssim \|\psi_s\|_{\calS(I)}^2 + \sqrt{\epsilon_0} \|\psi_s\|_{\calS(I)}.
\end{equation}
This is accomplished in Lemmas~\ref{lem:bounds_on_w},~\ref{lem:magnetic1},~\ref{lem:magnetic2},~\ref{lem:cubic_and_higher}, and~\ref{lem:quadratic_terms}. Finally, \eqref{eq:core-bootstrap-imp-N} follows from \eqref{equ:nonlinear_estimates} and \eqref{eq:core-bootstrap-imp}.
\end{proof}
%%%%%%%%%%%%%%%
%%%%%%%%%%%%%%%

\begin{proof}[Proof of Proposition~\ref{prop:prop-reg}]
Once we have established the estimate 
\EQ{ \label{eq:psi_s-S-norm} 
\| \psi_s \|_{\calS(I)} \lesssim \eps_0
}
by proving Proposition~\ref{prop:core-bootstrap}, the proof of Proposition~\ref{prop:prop-reg} follows from an identical argument after applying $H$ and $H^2$ to both sides of the equation~\eqref{equ:schroed_psi_s} and using the bound~\eqref{eq:psi_s-S-norm} along with Remark~\ref{rem:forward-high-reg}, where the latter translates the higher regularity of the initial data to the $s$-evolution of $\psi_s\rest_{t=0}$. Crucially, $H$ commutes with the left-hand side of~\eqref{equ:schroed_psi_s}. This is standard and we omit the details. 
\end{proof}

\subsection{Estimates for the magnetic interaction terms}\label{subsec:7magnetic}

In this section we treat the magnetic interaction terms. Specifically, our goal is to prove Lemmas~\ref{lem:magnetic1} and~\ref{lem:magnetic2}.

In the proofs of these lemmas we will use the notation $P_{\geq \sigma} \ringA$ and $P_{\geq \sigma} \calL_\Omega \ringA$ introduced in~\eqref{eq:lowA} and~\eqref{eq:OmegalowA}. For the convenience of the reader we recall that
\begin{align*}
\begin{split}
P_{\geq\sigma}\ringbfA&:=\Im \int_s^\infty \Big(\int_{s'}^\infty\nabla P_{\geq_\sigma}\psi_s(s'')\ud s''\Big)\overline{P_{\geq\sigma}\psi_s(s')} \, \ud s'\\
&\quad+\Im \int_s^\infty \Big(\int_{s'}^\infty \bfA(s'') P_{\geq_\sigma}\psi_s(s'')\ud s''\Big)\overline{P_{\geq\sigma}\psi_s(s')}\, \ud s'\\
&\quad-\Im\int_s^\infty \Psi^\infty\overline{P_{\geq \sigma}\psi_s(s')}\, \ud s',
\end{split}
\end{align*}
and
%%%%%%%
%%%%%%%
\begin{align*}
\begin{split}
P_{\leq\sigma}\ringbfA:=\ringbfA-P_{\geq\sigma}\ringbfA.
\end{split}
\end{align*}
\begin{proof}[Proof of Lemma~\ref{lem:magnetic1}]
Using a linear heat flow Littlewood-Paley decomposition and moving around the derivative, we decompose the magnetic interaction term on the right-hand side into
\begin{align}
 \ringA^k \nabla_k (\jap{\Omega} m(s) s^{\frac{1}{2}} \psi_s ) &= \nabla_k \int_0^1 ( P_{\geq \sigma'} \ringA )^k P_{\sigma'} ( \jap{\Omega}m(s) s^{\frac{1}{2}} \psi_s ) \, \frac{\ud \sigma'}{\sigma'} - \int_0^1 \nabla_k ( P_{\geq \sigma'} \ringA)^k P_{\sigma'} (\jap{\Omega} m(s) s^{\frac{1}{2}} \psi_s ) \, \frac{\ud \sigma'}{\sigma'}\nonumber \\
 &\quad \quad + \int_0^1 (P_{\leq \sigma'} \ringA)^k \nabla_k P_{\sigma'} (\jap{\Omega} m(s) s^{\frac{1}{2}} \psi_s ) \, \frac{\ud \sigma'}{\sigma'} + \ringA^k \nabla_k P_{\geq 1} (\jap{\Omega} m(s) s^{\frac{1}{2}} \psi_s ) \nonumber\\
 &=: I + II + III + IV. \label{eq:maglemma1maindecomp}
\end{align}
We will estimate the  term $I$ in the dual local smoothing space $LE^\ast$, and all other terms in the dual Strichartz space $L^{\frac{4}{3}}_t L^{\frac{4}{3}}_x$. These estimates are carried out separately for each term in the proceeding steps. For simplicity of notation we will write $m(s) s^{\frac{1}{2}} \psi_s$ instead of $\jap{\Omega} m(s) s^{\frac{1}{2}} \psi_s$, but since we never apply angular derivatives to this term, the same exact argument works for $\jap{\Omega} m(s) s^{\frac{1}{2}} \psi_s$.
%%%%%%%%%%%%%
\subsection*{Step 1: Term $I$}
%%%%%%%%%%%%%
Here our goal is to prove the estimate 
\[
 \| I \|_{LE^\ast} \lesssim \| \psi_s \|_{\calS}^2,
\]
which boils down to establishing the following two estimates
\begin{align}
 \biggl\| \sigma^{\frac{1}{4}} \Bigl\| P_\sigma \nabla_k \int_0^1 ( P_{\geq \sigma'} \ringA^k ) P_{\sigma'} ( m(s) s^{\frac{1}{2}} \psi_s ) \, \frac{\ud \sigma'}{\sigma'}  \Bigr\|_{LE_\sigma^\ast} \biggr\|_{L^\infty_{\frac{\ud s}{s}} \cap L^2_{\frac{\ud s}{s}} L^2_{\frac{\ud \sigma}{\sigma}} (0, \frac{1}{2})} &\lesssim \| \psi_s \|_{\calS}^2, \label{equ:high_freq_termI} \\
 \biggl\| \Bigl\| P_{\geq \sigma} \nabla_k \int_0^1 ( P_{\geq \sigma'} \ringA^k ) P_{\sigma'} ( m(s) s^{\frac{1}{2}} \psi_s ) \, \frac{\ud \sigma'}{\sigma'}  \Bigr\|_{LE_\low^\ast} \biggr\|_{L^\infty_{\frac{\ud s}{s}} \cap L^2_{\frac{\ud s}{s}} L^2_{\frac{\ud \sigma}{\sigma}} (\frac{1}{8}, 4)} &\lesssim \| \psi_s \|_{\calS}^2. \label{equ:low_freq_termI}
\end{align}

\subsubsection*{Step 1a: The high-frequency estimate~\eqref{equ:high_freq_termI}}

We begin with the much more difficult high-frequency estimate~\eqref{equ:high_freq_termI}. To this end we further decompose into 
\begin{align}\label{eq:IHF12}
\begin{split}
 \sigma^{\frac{1}{4}} P_\sigma \nabla_k \int_0^1 ( P_{\geq \sigma'} \ringA^k ) P_{\sigma'} ( m(s) s^{\frac{1}{2}} \psi_s ) \, \frac{\ud \sigma'}{\sigma'} &= \sigma^{\frac{1}{4}} P_\sigma \nabla_k \int_0^\sigma ( P_{\geq \sigma'} \ringA^k ) P_{\sigma'} ( m(s) s^{\frac{1}{2}} \psi_s ) \, \frac{\ud \sigma'}{\sigma'} \\
 &\quad + \sigma^{\frac{1}{4}} P_\sigma \nabla_k \int_\sigma^1 ( P_{\geq \sigma'} \ringA^k ) P_{\sigma'} ( m(s) s^{\frac{1}{2}} \psi_s ) \, \frac{\ud \sigma'}{\sigma'} \\
 &=: I_1 + I_2.
 \end{split}
\end{align}
Starting with $I_2$, we would like to show that 
\begin{equation} \label{equ:IHF2_bound}
 \Bigl\| \bigl\| I_{2} \bigr\|_{LE_\sigma^\ast} \Bigr\|_{L^\infty_{\frac{\ud s}{s}} \cap L^2_{\frac{\ud s}{s}} L^2_{\frac{\ud \sigma}{\sigma}} (0, \frac{1}{2})} \lesssim \| \psi_s \|_{\calS}^2. 
\end{equation}
To this end it is more favorable to let the derivative fall back inside the integral and consider 
\begin{align}\label{eq:IHF2decomp}
  I_{2} = \sigma^{\frac{1}{4}} P_\sigma \int_\sigma^1 \nabla_k ( P_{\geq \sigma'} \ringA^k ) P_{\sigma'} ( m(s) s^{\frac{1}{2}} \psi_s ) \, \frac{\ud \sigma'}{\sigma'} + \sigma^{\frac{1}{4}} P_\sigma \int_\sigma^1 ( P_{\geq \sigma'} \ringA^k ) \nabla_k P_{\sigma'} ( m(s) s^{\frac{1}{2}} \psi_s ) \, \frac{\ud \sigma'}{\sigma'}.
\end{align}
For the second term on the right-hand side above we drop the projection $P_\sigma$ in $LE_{\sigma}^\ast$, use that $\|F\|_{LE_\sigma^\ast} \lesssim \|F\|_{LE_{\sigma'}^\ast}$ since $\sigma' \geq \sigma$, apply H\"older's inequality, and drop $\nabla$ in $LE_{\sigma'}$, to bound this term as
\begin{align*}
 &\Big\| \sigma^{\frac{1}{4}} P_\sigma \int_\sigma^1 ( P_{\geq \sigma'} \ringA^k ) \nabla_k P_{\sigma'} ( m(s) s^{\frac{1}{2}} \psi_s ) \, \frac{\ud \sigma'}{\sigma'} \Big\|_{LE_\sigma^\ast} \\
 &\lesssim \int_\sigma^1 \sigma^{\frac{1}{4}} \bigl\| ( P_{\geq \sigma'} \ringA ^k) \nabla_k P_{\sigma'} ( m(s) s^{\frac{1}{2}} \psi_s ) \bigr\|_{LE_{\sigma'}^\ast} \, \frac{\ud \sigma'}{\sigma'} \\
 &\lesssim \int_\sigma^1 \Bigl( \frac{\sigma}{\sigma'} \Bigr)^{\frac{1}{4}} \bigl( M_1 + M_2 + M_3 \bigr) (\sigma')^{-\frac{1}{4}} \| P_{ \frac{\sigma'}{2} } ( m(s) s^{\frac{1}{2}} \psi_s ) \bigr\|_{LE_{\sigma'}} \, \frac{\ud \sigma'}{\sigma'},
\end{align*}
where
\begin{align*}
 M_1 &:= (\sigma')^{\frac{1}{2}} \| P_{\geq \sigma'} \ringbfA(s) \|_{L^\infty_t L^\infty_x (A_{\leq -k_{\sigma'}})} \\
 M_2 &:= \sum_{-k_{\sigma'} \leq \ell < 0} 2^\ell \| P_{\geq \sigma'} \ringbfA(s) \|_{L^\infty_t L^\infty_x(A_\ell)} \\
 M_3 &:= \| r^4 P_{\geq \sigma'} \ringbfA(s) \|_{L^\infty_t L^\infty_x(A_{\geq 0})}.
\end{align*}
By Schur's test the desired bound~\eqref{equ:IHF2_bound} follows if we can establish that 
\[
 M_j \lesssim \|\psi_s\|_{\calS} \quad \text{ for } j = 1, 2, 3,
\]
but these estimates are proved in \eqref{eq:M1}--\eqref{eq:M3} in Lemma~\ref{lem:Mestimates}. The first term on the right-hand side of \eqref{eq:IHF2decomp} can be treated similarly where we instead use the estimates \eqref{eq:tilM1}--\eqref{eq:tilM3} in Lemma~\ref{lem:Mestimates}. This finishes the treatment of the term $I_{2}$ in \eqref{eq:IHF12}. 

Next we turn to the term $I_1$ in \eqref{eq:IHF12} where we would like to prove that
\begin{equation} \label{equ:IHF1_bound}
 \Bigl\| \bigl\| I_{1} \bigr\|_{LE_\sigma^\ast} \Bigr\|_{L^\infty_{\frac{\ud s}{s}} \cap L^2_{\frac{\ud s}{s}} L^2_{\frac{\ud \sigma}{\sigma}} (0, \frac{1}{2})} \lesssim \| \psi_s \|_{\calS}^2. 
\end{equation}
Using Lemma~\ref{l:bern1}, for any fixed $p\in(1,2)$, and noting that $-k_\sigma \geq -k_{\sigma'}$ for $\sigma' \leq \sigma$, we obtain that 
\begin{equation} \label{equ:bound_IHF1}
 \begin{aligned}
  \| I_{1} \|_{LE_\sigma^\ast} &\lesssim \int_0^\sigma \sigma^{-\alpha} \bigl\| ( P_{\geq \sigma'} \ringbfA ) P_{\sigma'} ( m(s) s^{\frac{1}{2}} \psi_s ) \bigr\|_{L^2_t L^p_x(A_{\leq -k_{\sigma'}})} \, \frac{\ud \sigma'}{\sigma'} \\
  &\quad + \int_0^\sigma \sigma^{-\alpha} \sum_{-k_{\sigma'} \leq \ell < -k_\sigma} \bigl\| ( P_{\geq \sigma'} \ringbfA ) P_{\sigma'} ( m(s) s^{\frac{1}{2}} \psi_s ) \bigr\|_{L^2_t L^p_x(A_{\ell})} \, \frac{\ud \sigma'}{\sigma'} \\
  &\quad + \int_0^\sigma \Bigl( \frac{\sigma'}{\sigma} \Bigr)^{\frac{1}{4}} (L_2 + L_3) (\sigma')^{-\frac{1}{4}} \| P_{\sigma'} ( m(s) s^{\frac{1}{2}} \psi_s ) \|_{LE_{\sigma'}} \, \frac{\ud \sigma'}{\sigma'},
 \end{aligned}
\end{equation}
where 
\begin{align*}
 L_2 &:= \sum_{-k_\sigma \leq \ell < 0} 2^\ell \| P_{\geq \sigma'} \ringbfA(s) \|_{L^\infty_t L^\infty_x(A_\ell)} \\
 L_3 &:= \| r^4 P_{\geq \sigma'} \ringbfA(s) \|_{L^\infty_t L^\infty_x(A_{\geq 0})}
\end{align*}
By Schur's test the contributions of $L_2$ and $L_3$ to~\eqref{equ:IHF1_bound} are under control if we can show that 
\begin{equation*}
 L_j \lesssim \|\psi_s\|_{\calS}^2 \quad \text{ for } j = 2, 3.
\end{equation*}
But note that $L_3$ is exactly the same as $M_3$ above, while the condition $\sigma'\leq\sigma$ implies $L_2\leq M_2$ so these estimates also follow from  \eqref{eq:M2},\eqref{eq:M3} in Lemma~\ref{lem:Mestimates}. It remains to estimate the first two terms on the right-hand side of~\eqref{equ:bound_IHF1}. For the first term on the right-hand side, by \eqref{eq:M4} in Lemma~\ref{lem:Mestimates}, and with $\alpha = \frac{2-p}{2p}$,
\[
 \| P_{\geq \sigma'} \ringbfA(s) \|_{L^\infty_t L^{\frac{2p}{2-p}}_x(\bbH^2)} \lesssim (\sigma')^{-\frac{1}{2}+\alpha} \| \psi_s \|_{\calS}.
\]
It follows that
\begin{align*}
 &\int_0^\sigma \sigma^{-\alpha} \bigl\| ( P_{\geq \sigma'} \ringbfA ) P_{\sigma'} ( m(s) s^{\frac{1}{2}} \psi_s ) \bigr\|_{L^2_t L^p_x(A_{\leq -k_{\sigma'}})} \, \frac{\ud \sigma'}{\sigma'} \\
 &\lesssim \int_0^\sigma \sigma^{-\alpha} \| P_{\geq \sigma'} \ringbfA \|_{L^\infty_t L^{\frac{2p}{2-p}}_x(\bbH^2)} \| P_{\sigma'} ( m(s) s^{\frac{1}{2}} \psi_s ) \|_{L^2_t L^2_x(A_{\leq -k_{\sigma'}})} \, \frac{\ud \sigma'}{\sigma'} \\
 &\lesssim \|\psi_s\|_{\calS} \int_0^\sigma \Bigl( \frac{\sigma'}{\sigma} \Bigr)^\alpha (\sigma')^{-\frac{1}{2}} \| P_{\sigma'} ( m(s) s^{\frac{1}{2}} \psi_s ) \|_{L^2_t L^2_x(A_{\leq -k_{\sigma'}})} \, \frac{\ud \sigma'}{\sigma'}.
\end{align*}
By Schur's test, the last line is bounded in $L^2_\dsigma(0,\frac{1}{2})$ by
\[
 \|\psi_s\|_{\calS}^2 \, \| m(s) s^{\frac{1}{2}} \psi_s(s) \|_{LE},
\]
which is of the desired form. For the second term on the right-hand side of~\eqref{equ:bound_IHF1}, by \eqref{eq:M5} in Lemma~\ref{lem:Mestimates}
\[
 \| r^{\frac{1}{2}} P_{\geq \sigma'} \ringbfA \|_{L^\infty_t L^{\frac{2p}{2-p}}_x(A_\ell)} \lesssim \| \sinh^{\frac{1}{2}}(r) P_{\geq \sigma'} \ringA \|_{L^\infty_t L^{\frac{2p}{2-p}}_x(\bbH^2)} \lesssim (\sigma')^{-\frac{1}{4}+\alpha} \|\psi_s\|_{\calS}^2.
\]
It follows that
\begin{align*}
 &\int_0^\sigma \sigma^{-\alpha} \sum_{-k_{\sigma'} \leq \ell < k_\sigma} \bigl\| ( P_{\geq \sigma'} \ringbfA ) P_{\sigma'} ( m(s) s^{\frac{1}{2}} \psi_s ) \bigr\|_{L^2_t L^p_x(A_{\ell})} \, \frac{\ud \sigma'}{\sigma'} \\    
 &\lesssim \int_0^\sigma \sigma^{-\alpha} \sum_{-k_{\sigma'} \leq \ell < k_\sigma} \bigl\| r^{\frac{1}{2}} P_{\geq \sigma'} \ringbfA \bigr\|_{L^\infty_t L^{\frac{2p}{2-p}}_x(\bbH^2)} \| r^{-\frac{1}{2}} P_{\sigma'} ( m(s) s^{\frac{1}{2}} \psi_s ) \bigr\|_{L^2_t L^2_x(A_{\ell})} \, \frac{\ud \sigma'}{\sigma'} \\    
 &\lesssim \|\psi_s\|_{\calS} \int_0^\sigma \sum_{-k_{\sigma'} \leq \ell < k_\sigma} \Bigl( \frac{\sigma'}{\sigma} \Bigr)^\alpha (\sigma')^{-\frac{1}{4}}  \| r^{-\frac{1}{2}} P_{\sigma'} ( m(s) s^{\frac{1}{2}} \psi_s ) \bigr\|_{L^2_t L^2_x(A_{\ell})} \, \frac{\ud \sigma'}{\sigma'} \\    
 &\lesssim \|\psi_s\|_{\calS} \int_0^\sigma \Bigl( \frac{\sigma'}{\sigma} \Bigr)^{-\varepsilon} \Bigl( \frac{\sigma'}{\sigma} \Bigr)^\alpha (\sigma')^{-\frac{1}{4}} \sup_{-k_{\sigma'} \leq \ell < 0}  \| r^{-\frac{1}{2}} P_{\sigma'} ( m(s) s^{\frac{1}{2}} \psi_s ) \bigr\|_{L^2_t L^2_x(A_{\ell})} \, \frac{\ud \sigma'}{\sigma'} \\
 &\lesssim \|\psi_s\|_{\calS} \int_0^\sigma \Bigl( \frac{\sigma'}{\sigma} \Bigr)^{\alpha - \varepsilon} (\sigma')^{-\frac{1}{4}} \| P_{\sigma'} ( m(s) s^{\frac{1}{2}} \psi_s ) \bigr\|_{LE_{\sigma'}} \, \frac{\ud \sigma'}{\sigma'}.
\end{align*}
By Schur's test, the last line is bounded in $L^2_\dsigma(0,\frac{1}{2})$ by
\[
 \|\psi_s\|_{\calS}^2 \, \| m(s) s^{\frac{1}{2}} \psi_s(s) \|_{LE},
\]
as desired. This completes the treatment of the term $I_{1}$ and the proof of \eqref{equ:high_freq_termI}.
%%%%%%%%%%%
\subsubsection*{Step 1b: The low-frequency estimate~\eqref{equ:low_freq_termI}}
%%%%%%%%%%%
Using Lemma~\ref{l:bern2}, for any fixed $p\in(1,2)$
\begin{equation}\label{equ:termIA_estimate}
 \begin{aligned}
 &\biggl( \int_{\frac{1}{8}}^4 \, \Bigl\| P_{\geq \sigma} \nabla_k \int_0^1 (P_{\geq \sigma'} \ringA^k) P_{\sigma'} ( m(s) s^{\frac{1}{2}} \psi_s ) \, \frac{\ud \sigma'}{\sigma'} \Bigr\|_{LE_{\low}^\ast}^2 \, \frac{\ud \sigma}{\sigma} \biggr)^{\frac{1}{2}} \\
 &\lesssim \biggl( \int_{\frac{1}{8}}^4 \sigma^{-2+2\alpha} \frac{\ud \sigma}{\sigma} \biggr)^{\frac{1}{2}} \int_0^1 \bigl\| (P_{\geq \sigma'} \ringbfA) P_{\sigma'} ( m(s) s^{\frac{1}{2}} \psi_s ) \bigr\|_{L^2_t L^p_x(A_{\leq 0})} \frac{\ud \sigma'}{\sigma'} \\
 &\quad + \biggl( \int_{\frac{1}{8}}^4 \sigma^{-1} \frac{\ud \sigma}{\sigma} \biggr)^{\frac{1}{2}} \int_0^1 \bigl\| r^2 (P_{\geq \sigma'} \ringbfA) P_{\sigma'} (m(s) s^{\frac{1}{2}} \psi_s) \bigr\|_{L^2_t L^2_x(A_{\geq 0})} \frac{\ud \sigma'}{\sigma'} \\
 &\lesssim \int_0^1 \bigl\| (P_{\geq \sigma'} \ringbfA) P_{\sigma'} (m(s) s^{\frac{1}{2}} \psi_s) \bigr\|_{L^2_t L^p_x(A_{\leq 0})} \frac{\ud \sigma'}{\sigma'} \\
 &\quad + \int_0^1 \bigl\| r^2 (P_{\geq \sigma'} \ringbfA) P_{\sigma'} (m(s) s^{\frac{1}{2}} \psi_s) \bigr\|_{L^2_t L^2_x(A_{\geq 0})} \frac{\ud \sigma'}{\sigma'}.
 \end{aligned}
\end{equation}
We estimate the first term on the right-hand side as
\begin{align*}
 \int_0^1 \bigl\| (P_{\geq \sigma'} \ringbfA) P_{\sigma'} (m(s) s^{\frac{1}{2}} \psi_s) \bigr\|_{L^2_t L^p_x(A_{\leq 0})} \frac{\ud \sigma'}{\sigma'} &\lesssim \int_0^1 \bigl\| P_{\geq \sigma'} \ringA \bigr\|_{L^\infty_t L^{\frac{2p}{2-p}}_x(\bbH^2)} \| P_{\sigma'} ( m(s) s^{\frac{1}{2}} \psi_s ) \|_{L^2_t L^{2}_x(A_{\leq 0})} \, \frac{\ud \sigma'}{\sigma'} \\
 &\lesssim \int_0^1 \bigl\| P_{\geq \sigma'} \ringbfA \bigr\|_{L^\infty_t L^{\frac{2p}{2-p}}_x(\bbH^2)} \| P_{\sigma'} (m(s) s^{\frac{1}{2}} \psi_s) \|_{LE_{\sigma'}} \, \frac{\ud \sigma'}{\sigma'} \\
 &\simeq \int_0^1 (\sigma')^{\frac{1}{4}} \bigl\| P_{\geq \sigma'} \ringbfA \bigr\|_{L^\infty_t L^{\frac{2p}{2-p}}_x(\bbH^2)} (\sigma')^{-\frac{1}{4}} \| P_{\sigma'} (m(s) s^{\frac{1}{2}} \psi_s) \|_{LE_{\sigma'}} \, \frac{\ud \sigma'}{\sigma'}.
\end{align*}
Here we have used the estimate (valid for $\sigma'\leq1$)
%%%%%%%
%%%%%%%
\begin{align*}
\begin{split}
\|f\|_{L_t^2L_x^2(A_{\leq0})}\lesssim (\sigma')^{-\frac{1}{4}}\|f\|_{L_t^2L_x^2(A_{\leq-k_{\sigma'}})} + \biggl( \sum_{\ell=-k_{\sigma'}}^02^{\frac{\ell}{2}} \biggr) \sup_{-k_{\sigma'}\leq \ell\leq0}\|r^{-\frac{1}{2}}f\|_{L_t^2L_x^2(A_\ell)}\lesssim \|f\|_{LE_{\sigma'}}.
\end{split}
\end{align*}
By \eqref{eq:M4} in Lemma~\ref{lem:Mestimates}
\begin{align*}
 \bigl\| P_{\geq \sigma'} \ringbfA(s) \bigr\|_{L^\infty_t L^{\frac{2p}{2-p}}_x(\bbH^2)} \lesssim (\sigma')^{-\frac{p-1}{p}} \| \psi_s \|_{\calS}.
\end{align*}
Correspondingly, choosing $ p \in(1, \frac{4}{3})$, by Cauchy-Schwarz 
\begin{align*}
 &\int_0^1 \bigl\| (P_{\geq \sigma'} \ringbfA) P_{\sigma'} (m(s) s^{\frac{1}{2}} \psi_s) \bigr\|_{L^2_t L^p_x(A_{\leq 0})} \frac{\ud \sigma'}{\sigma'} \\
 &\lesssim \int_0^1 (\sigma')^{\frac{1}{4}} (\sigma')^{-\frac{p-1}{p}} \| \psi_s \|_{\calS} (\sigma')^{-\frac{1}{4}} \| P_{\sigma'} ( m(s) s^{\frac{1}{2}} \psi_s ) \|_{LE_{\sigma'}} \, \frac{\ud \sigma'}{\sigma'} \\
 &\lesssim \| \psi_s \|_{\calS} \biggl( \int_0^1 (\sigma')^{\frac{2}{p} - \frac{3}{2}} \, \frac{\ud \sigma'}{\sigma'} \biggr)^{\frac{1}{2}} \biggl( \int_0^1 (\sigma')^{-\frac{1}{2}} \| P_{\sigma'} (m(s) s^{\frac{1}{2}} \psi_s) \|_{LE_{\sigma'}}^2 \, \frac{\ud \sigma'}{\sigma'} \biggr)^{\frac{1}{2}} \\
 &\lesssim \| \psi_s \|_{\calS} \, \| m(s) s^{\frac{1}{2}} \psi_s(s) \|_{LE}, 
\end{align*}
which is of the desired form.

For the second term on the right-hand side of~\eqref{equ:termIA_estimate} we use \eqref{eq:M3} in Lemma~\ref{lem:Mestimates} and Cauchy-Schwarz to estimate
\begin{align*}
 &\int_0^1 \bigl\| r^2 (P_{\geq \sigma'} \ringbfA) P_{\sigma'} (m(s) s^{\frac{1}{2}} \psi_s) \bigr\|_{L^2_t L^2_x(A_{\geq 0})} \frac{\ud \sigma'}{\sigma'} \\
 &\lesssim \int_0^1 \bigl\| r^4 P_{\geq \sigma'} \ringbfA \bigr\|_{L^\infty_t L^\infty_x(A_{\geq 0})} \bigl\| r^{-2} P_{\sigma'} (m(s) s^{\frac{1}{2}} \psi_s) \bigr\|_{L^2_t L^2_x(A_{\geq 0})} \frac{\ud \sigma'}{\sigma'} \\
 &\lesssim \int_0^1 \bigl\| r^4 P_{\geq \sigma'} \ringbfA \bigr\|_{L^\infty_t L^\infty_x(A_{\geq 0})} \bigl\| P_{\sigma'} (m(s) s^{\frac{1}{2}} \psi_s) \bigr\|_{LE_{\sigma'}} \frac{\ud \sigma'}{\sigma'} \\
 &\lesssim \int_0^1 (\sigma')^{\frac{1}{4}} \| \psi_s \|_{\calS} (\sigma')^{-\frac{1}{4}} \bigl\| P_{\sigma'} (m(s) s^{\frac{1}{2}} \psi_s) \bigr\|_{LE_{\sigma'}} \frac{\ud \sigma'}{\sigma'} \\
 &\lesssim \| \psi_s \|_{\calS} \biggl( \int_0^1 (\sigma')^{-\frac{1}{2}} \bigl\| P_{\sigma'} (m(s) s^{\frac{1}{2}} \psi_s) \bigr\|_{LE_{\sigma'}}^2 \frac{\ud \sigma'}{\sigma'} \biggr)^{\frac{1}{2}} \\
 &\lesssim \| \psi_s \|_{\calS} \bigl\| m(s) s^{\frac{1}{2}} \psi_s \bigr\|_{LE}.
\end{align*}
This is again of the desired form and finishes the proof of the low-frequency estimate~\eqref{equ:low_freq_termI}.
%%%%%%%%%%%%%
%%%%%%%%%%%%%
\subsection*{Step 2: Term $II$}
%%%%%%%%%%%%%
%%%%%%%%%%%%%
Going back to \eqref{eq:maglemma1maindecomp} we intend to bound the term $II$ in $\LLs L^{\frac{4}{3}}_t L^{\frac{4}{3}}_x$. First, 
\begin{align}\label{eq:lemmag1step2temp1}
\begin{split}
 \| II \|_{L^{\frac{4}{3}}_t L^{\frac{4}{3}}_x} &\lesssim \int_0^1 \| \nabla_k ( P_{\geq \sigma'} \ringA^k )(s) \|_{L^2_t L^2_x+L_{t}^{\frac{8}{3}}L_x^{\frac{8}{3}}} \| P_{\sigma'} ( m(s) s^{\frac{1}{2}} \psi_s ) \|_{L^4_t L^4_x\cap L_{t}^{\frac{8}{3}}L_x^{\frac{8}{3}}} \, \frac{\ud \sigma'}{\sigma'}\\
 &\lesssim \|\psi_s\|_\calS\int_0^1 \| P_{\sigma'} ( m(s) s^{\frac{1}{2}} \psi_s ) \|_{L^4_t L^4_x\cap L_{t}^{\frac{8}{3}}L_x^{\frac{8}{3}}} \, \frac{\ud \sigma'}{\sigma'},
\end{split}
\end{align}
where we have used \eqref{eq:divPsigmaAStrich} in Lemma~\ref{lem:noMestimates}. 
To control the right-hand side we argue as follows. First, 
\begin{align}
\begin{split}\label{eq:offdiagapplication1}
 &\int_0^1 \| P_{\sigma'} ( m(s) s^{\frac{1}{2}} \psi_s ) \|_{L^4_t L^4_x\cap L_t^{\frac{8}{3}}L_x^{\frac{8}{3}}} \, \frac{\ud \sigma'}{\sigma'} \\
 &= \int_0^{s} \| P_{\sigma'} ( m(s) s^{\frac{1}{2}} \psi_s ) \|_{L^4_t L^4_x\cap L_t^{\frac{8}{3}}L_x^{\frac{8}{3}}} \, \frac{\ud \sigma'}{\sigma'} + \int_s^1 \| P_{\sigma'} \bigl( m(s) s^{\frac{1}{2}} \psi_s \bigr) \|_{L^4_t L^4_x\cap L_t^{\frac{8}{3}}L_x^{\frac{8}{3}}} \, \frac{\ud \sigma'}{\sigma'}.
\end{split}
\end{align}
The first term on the right is estimated via, 
\EQ{
\int_0^{s} \| P_{\sigma'} ( m(s) s^{\frac{1}{2}} \psi_s ) \|_{L^4_t L^4_x\cap L_t^{\frac{8}{3}}L_x^{\frac{8}{3}}} \, \frac{\ud \sigma'}{\sigma'} & \lesssim  \int_0^s \frac{\sigma'}{s} \| s \Delta e^{\sigma' \Delta} ( m(s) s^{\frac{1}{2}} \psi_s ) \|_{L^4_t L^4_x\cap L_t^{\frac{8}{3}}L_x^{\frac{8}{3}}} \, \frac{\ud \sigma'}{\sigma'}  \\
 &\lesssim \| s \Delta ( m(s) s^{\frac{1}{2}} \psi_s ) \|_{L^4_t L^4_x\cap L_t^{\frac{8}{3}}L_x^{\frac{8}{3}}}. 
 }
To estimate the second term on the right-hand-side of~\eqref{eq:offdiagapplication1} we use the off-diagonal decay estimates established in Corollary~\ref{c:offdiag} for $P_{\sigma'} ( m(s) s^{\frac{1}{2}} \psi_s )$ in $L^4_t L^4_x\cap L_t^{\frac{8}{3}}L_x^{\frac{8}{3}}$. Indeed, taking $\LLs$ of the second term, Corollary~\ref{c:offdiag} yields, 
\EQ{
\Big \|\int_s^1 \| P_{\sigma'} \bigl( m(s) s^{\frac{1}{2}} \psi_s \bigr) \|_{L^4_t L^4_x\cap L_t^{\frac{8}{3}}L_x^{\frac{8}{3}}} \Big\|_{\LLs} & \lesssim  \sum_{\ell = 0}^2  \| s^{\frac{\ell}{2}}(-\De)^{\frac{\ell}{2}}m(s) s^{\frac{1}{2}}\psi_s \|_{L^{2}_{\ds} \cap L^{\infty}_{\ds} \Str_s}  \lesssim  \|\psi_s\|_\calS.
}
Together with \eqref{eq:lemmag1step2temp1} this shows that
%%%%%%%
%%%%%%%
\begin{align*}
\begin{split}
 \| II \|_{\LLs L^{\frac{4}{3}}_t L^{\frac{4}{3}}_x}\lesssim \|\psi_s\|_\calS^2.
\end{split}
\end{align*}
%%%%%%%%%%%%%
%%%%%%%%%%%%%
\subsection*{Step 3: Term $III$}
%%%%%%%%%%%%%
%%%%%%%%%%%%%
We estimate $III$ in \eqref{eq:maglemma1maindecomp} in $L_t^{\frac{4}{3}}L_x^{\frac{4}{3}}$ as
%%%%%%%
%%%%%%%
\begin{align*}
\begin{split}
&\Big\|\int_0^{1}(P_{\leq \sigma'}\ringA(s)^k)\nabla_k P_{\sigma'}m(s)s^{\frac{1}{2}}\psi_s(s)\dsigmap\Big\|_{ L_{t}^{\frac{4}{3}}L_x^{\frac{4}{3}}}\\
&\leq \int_0^{1}\|P_{\leq\sigma'}\ringbfA(s)\|_{ L^2_{t}L_x^2+L_{t}^{\frac{8}{3}}L_x^{\frac{8}{3}} }\|\nabla P_{\sigma'}m(s)s^{\frac{1}{2}}\psi_s(s)\|_{ L^4_{t}L_x^4\cap L_{t}^{\frac{8}{3}}L_x^{\frac{8}{3}}}\dsigmap\\
&\lesssim \int_0^{1}\|P_{\leq\sigma'}\ringbfA(s)\|_{ L^2_{t}L_x^2+L_{t}^{\frac{8}{3}}L_x^{\frac{8}{3}}} (\sigma')^{-\frac{1}{2}}\| P_{\frac{\sigma'}{2}}m(s)s^{\frac{1}{2}}\psi_s(s)\|_{ L^4_{t}L_x^4\cap L_{t}^{\frac{8}{3}}L_x^{\frac{8}{3}}}\dsigmap.
\end{split}
\end{align*}
Applying \eqref{eq:PlesssigmaAStrich} in Lemma~\ref{lem:noMestimates} it follows that
%%%%%%%
%%%%%%%
\begin{align*}
\begin{split}
&\Big\|\int_0^{1}(P_{\leq \sigma'}\ringA(s)^k)\nabla_k P_{\sigma'}m(s)s^{\frac{1}{2}}\psi_s(s)\dsigmap\Big\|_{  L_{t}^{\frac{4}{3}} L_x^{\frac{4}{3}}}\lesssim \|\psi_s\|_{\calS}\int_0^{\frac{1}{2}}\|P_{\sigma'}m(s)s^{\frac{1}{2}}\psi_s(s)\|_{ L^4_{t}L_x^4\cap L_{t}^{\frac{8}{3}}L_x^{\frac{8}{3}}}\,\dsigmap.
\end{split}
\end{align*}
Now we can bound the last integral using the off-diagonal decay estimates from Corollary~\ref{c:offdiag} as in \eqref{eq:offdiagapplication1} in Step 2 above to conclude that
%%%%%%%
%%%%%%%
\begin{align*}
\begin{split}
&\|III\|_{ L^\infty_\ds \cap L^2_\ds L_{t,x}^{\frac{4}{3}}}\lesssim \|\psi_s\|_{\calS}\|\jap{s\Delta}m(s)s^{\frac{1}{2}}\psi_s(s)\|_{L^\infty_\ds \cap L^2_\ds (L^4_{t}L_x^4\cap L_{t}^{\frac{8}{3}}L_x^{\frac{8}{3}})}\lesssim \|\psi_s\|_{\calS}^2.
\end{split}
\end{align*}
%%%%%%%%%%%%%
%%%%%%%%%%%%%
\subsection*{Step 4: Term $IV$}
%%%%%%%%%%%%%
%%%%%%%%%%%%%
We place $IV$ in \eqref{eq:maglemma1maindecomp} in $L^{\frac{4}{3}}_t L^{\frac{4}{3}}_x$ and bound by
\begin{align*}
 \| \ringA^k \nabla_k P_{\geq 1} ( m(s) s^{\frac{1}{2}} \psi_s ) \|_{L^{\frac{4}{3}}_t L^{\frac{4}{3}}_x} &\lesssim \| \ringA \|_{L^{\frac{8}{3}}_t L^{\frac{8}{3}}_x} \| m(s) s^{\frac{1}{2}} \psi_s \|_{L^{\frac{8}{3}}_t L^{\frac{8}{3}}_x} \lesssim \|\psi_s\|_{\calS} \| m(s) s^{\frac{1}{2}} \psi_s \|_{L^{\frac{8}{3}}_t L^{\frac{8}{3}}_x},
\end{align*}
where we have used estimate~\eqref{eq:ringAL83} from Lemma~\ref{lem:noMestimates}. It follows that 
%%%%%%%
%%%%%%%
\begin{align*}
\begin{split}
\|IV\|_{\LLs L_t^{\frac{4}{3}}L_x^{\frac{4}{3}}}\lesssim \|\psi_s\|_\calS^2.
\end{split}
\end{align*}
\end{proof}
%%%%%%%%%%%%%%%
%%%%%%%%%%%%%%%

The proof of Lemma~\ref{lem:magnetic2} is similar to that of Lemma~\ref{lem:magnetic1}. The main difference is that in our applications of the radial Sobolev estimate from Lemma~\ref{lem:radial_sobolev} we need to treat the radial and angular directions differently to make sure that each term is differentiated with respect to $\Omega$ at most once.

\begin{proof}[Proof of Lemma~\ref{lem:magnetic2}]
Starting as in the proof of Lemma~\ref{lem:magnetic1} we decompose $\calL_\Omega \ringA^k \nabla_k ( m(s) s^{\frac{1}{2}} \psi_s )$ as
\begin{align*}
 \calL_\Omega \ringA^k \nabla_k ( m(s) s^{\frac{1}{2}} \psi_s ) &= \nabla_k \int_0^1 ( P_{\geq \sigma'} \calL_\Omega \ringA ^k) P_{\sigma'} ( m(s) s^{\frac{1}{2}} \psi_s ) \, \frac{\ud \sigma'}{\sigma'} + \int_0^1 \nabla_k ( P_{\geq \sigma'} \calL_\Omega \ringA^k) P_{\sigma'} ( m(s) s^{\frac{1}{2}} \psi_s ) \, \frac{\ud \sigma'}{\sigma'} \\
 &\quad \quad + \int_0^1 (P_{\leq \sigma'} \calL_\Omega \ringA^k) \nabla_k P_{\sigma'} ( m(s) s^{\frac{1}{2}} \psi_s ) \, \frac{\ud \sigma'}{\sigma'} + \calL_\Omega \ringA^k \nabla_k P_{\geq 1} ( m(s) s^{\frac{1}{2}} \psi_s ) \\
 &=: I\Omega + II\Omega + III\Omega + IV\Omega.
\end{align*}
The terms $II\Omega$--$IV\Omega$ can be estimated using just Strichartz estimates in the exact same manner as the terms~$II$--$IV$ in the poof of Lemma~\ref{lem:magnetic1}. The reason is that we never used the radial Sobolev estimates from Lemma~\ref{lem:radial_sobolev} in bounding these terms, hence the presence of $\calL_\Omega$ does not affect the proofs. To estimate the term $I\Omega$ we proceed as in the treatment of term $I$ in the proof of Lemma~\ref{lem:magnetic1}, keeping in mind to place the term which carries $\calL_\Omega$ in $L^2_\theta$.
The desired estimate is
\[
 \| I\Omega \|_{\LLs LE^\ast} \lesssim \| \psi_s \|_{\calS}^2,
\]
which boils down to establishing the following two estimates:
\begin{align}
 \biggl\| \sigma^{\frac{1}{4}} \Bigl\| P_\sigma \nabla_k \int_0^1 ( P_{\geq \sigma'} \calL_\Omega \ringA ^k) P_{\sigma'} ( m(s) s^{\frac{1}{2}} \psi_s ) \, \frac{\ud \sigma'}{\sigma'}  \Bigr\|_{LE_\sigma^\ast} \biggr\|_{L^\infty_{\frac{\ud s}{s}} \cap L^2_{\frac{\ud s}{s}} L^2_{\frac{\ud \sigma}{\sigma}} (0, \frac{1}{2})} &\lesssim \| \psi_s \|_{\calS}^2 .\label{equ:high_freq_termI_Omega} \\
 \biggl\| \Bigl\| P_{\geq \sigma} \nabla_k \int_0^1 ( P_{\geq \sigma'} \calL_\Omega \ringA ^k) P_{\sigma'} ( m(s) s^{\frac{1}{2}} \psi_s ) \, \frac{\ud \sigma'}{\sigma'}  \Bigr\|_{LE_\low^\ast} \biggr\|_{L^\infty_{\frac{\ud s}{s}} \cap L^2_{\frac{\ud s}{s}} L^2_{\frac{\ud \sigma}{\sigma}} (\frac{1}{8}, 4)} &\lesssim \| \psi_s \|_{\calS}^2 .\label{equ:low_freq_termI_Omega}
\end{align}

We begin with the more difficult high-frequency estimate~\eqref{equ:high_freq_termI_Omega} and further decompose into 
\begin{align}\label{eq:IOmegaHF12}
\begin{split}
 \sigma^{\frac{1}{4}} P_\sigma \nabla_k \int_0^1 ( P_{\geq \sigma'} \calL_\Omega \ringA ^k) P_{\sigma'} ( m(s) s^{\frac{1}{2}} \psi_s ) \, \frac{\ud \sigma'}{\sigma'} &= \sigma^{\frac{1}{4}} P_\sigma \nabla_k \int_0^\sigma ( P_{\geq \sigma'} \calL_\Omega \ringA ^k) P_{\sigma'} ( m(s) s^{\frac{1}{2}} \psi_s ) \, \frac{\ud \sigma'}{\sigma'} \\
 &\quad + \sigma^{\frac{1}{4}} P_\sigma \nabla_k \int_\sigma^1 ( P_{\geq \sigma'} \calL_\Omega \ringA ^k) P_{\sigma'} ( m(s) s^{\frac{1}{2}} \psi_s ) \, \frac{\ud \sigma'}{\sigma'} \\
 &=: I\Omega_{1} + I\Omega_{2}.
 \end{split}
\end{align}
Starting with the second term we want to show that 
\begin{equation} \label{equ:IOmegaHF2_bound}
 \Bigl\| \bigl\| I\Omega_{2} \bigr\|_{LE_\sigma^\ast} \Bigr\|_{L^\infty_{\frac{\ud s}{s}} \cap L^2_{\frac{\ud s}{s}} L^2_{\frac{\ud \sigma}{\sigma}} (0, \frac{1}{2})} \lesssim \| \psi_s \|_{\calS}^2, 
\end{equation}
and for this it is more favorable to let the derivative fall back inside the integral and consider 
\begin{align}\label{eq:IOmegaHF2decomp}
  I\Omega_{2} = \sigma^{\frac{1}{4}} P_\sigma \int_\sigma^1 \nabla_k ( P_{\geq \sigma'} \calL_\Omega \ringA ^k) P_{\sigma'} ( m(s) s^{\frac{1}{2}} \psi_s ) \, \frac{\ud \sigma'}{\sigma'}  + \sigma^{\frac{1}{4}} P_\sigma \int_\sigma^1 ( P_{\geq \sigma'} \calL_\Omega \ringA ^k) \nabla_k P_{\sigma'} ( m(s) s^{\frac{1}{2}} \psi_s ) \, \frac{\ud \sigma'}{\sigma'} .
\end{align}
For the second term on the right-hand side we drop the projection $P_\sigma$ in $LE_{\sigma}^\ast$ and use that $\|F\|_{LE_\sigma^\ast} \lesssim \|F\|_{LE_{\sigma'}^\ast}$ (since $\sigma' \geq \sigma$) to write
\begin{align*}
\Big\| \sigma^{\frac{1}{4}} P_\sigma \int_\sigma^1 ( P_{\geq \sigma'} \calL_\Omega \ringA ^k) \nabla_k P_{\sigma'} ( m(s) s^{\frac{1}{2}} \psi_s ) \, \frac{\ud \sigma'}{\sigma'} \Big\|_{LE_\sigma^\ast} &\lesssim \int_\sigma^1 \sigma^{\frac{1}{4}} \bigl\| ( P_{\geq \sigma'} \calL_\Omega \ringA^k) \nabla_k P_{\sigma'} ( m(s) s^{\frac{1}{2}} \psi_s ) \bigr\|_{LE_{\sigma'}^\ast} \, \frac{\ud \sigma'}{\sigma'}.
\end{align*}
First we have to estimate the integrand a bit more carefully as 
\begin{align*}
 &\bigl\| ( P_{\geq \sigma'} \calL_\Omega \ringA ^k) \nabla_k P_{\sigma'} ( m(s) s^{\frac{1}{2}} \psi_s ) \bigr\|_{LE_{\sigma'}^\ast} \\
 &= (\sigma')^{\frac{1}{4}} \bigl\| ( P_{\geq \sigma'} \calL_\Omega \ringA^k) \nabla_k P_{\sigma'} ( m(s) s^{\frac{1}{2}} \psi_s ) \bigr\|_{L^2_t L^2_x(A_{\leq -k_{\sigma'}})} \\
 &\quad + \sum_{-k_{\sigma'} \leq \ell < 0} 2^{\frac{1}{2} \ell} \bigl\| ( P_{\geq \sigma'} \calL_\Omega \ringA^k) \nabla_k P_{\sigma'} ( m(s) s^{\frac{1}{2}} \psi_s ) \bigr\|_{L^2_t L^2_x(A_\ell)} \\
 &\quad + \bigl\| r^2 ( P_{\geq \sigma'} \calL_\Omega \ringA^k) \nabla_k P_{\sigma'} ( m(s) s^{\frac{1}{2}} \psi_s ) \bigr\|_{L^2_t L^2_x(A_{\geq 0})} \\
 &\leq (\sigma')^{\frac{1}{2}} \bigl\| P_{\geq \sigma'} \calL_\Omega \ringbfA \bigr\|_{L^\infty_t L^\infty_x(A_{\leq -k_{\sigma'}})} (\sigma')^{-\frac{1}{4}} \bigl\| \nabla P_{\sigma'} ( m(s) s^{\frac{1}{2}} \psi_s ) \bigr\|_{L^2_t L^2_x(A_{\leq -k_{\sigma'}})} \\
 &\quad + \sum_{-k_{\sigma'} \leq \ell < 0} 2^\ell \bigl\| P_{\geq \sigma'} \calL_\Omega \ringbfA \bigr\|_{L^\infty_t  L^\infty_r L^2_\theta (A_\ell)} 2^{-\frac{1}{2} \ell} \bigl\| \sinh^{\frac{1}{2}}(r) \nabla P_{\sigma'} ( m(s) s^{\frac{1}{2}} \psi_s ) \bigr\|_{L^2_t  L^2_r L^\infty_\theta (A_\ell)} \\
 &\quad + \bigl\| r^4 P_{\geq \sigma'} \calL_\Omega \ringbfA \bigr\|_{L^\infty_t  L^\infty_r L^2_\theta (A_{\geq 0})} \bigl\| \sinh^{\frac{1}{2}}(r) \, r^{-2} \nabla P_{\sigma'} ( m(s) s^{\frac{1}{2}} \psi_s ) \bigr\|_{L^2_t  L^2_r L^\infty_\theta (A_{\geq 0})} .
\end{align*}
Applying Sobolev embedding on the sphere $\bbS^1$ we further bound this by
\begin{align*}
 &\lesssim (\sigma')^{\frac{1}{2}} \bigl\| P_{\geq \sigma'} \calL_\Omega \ringbfA \bigr\|_{L^\infty_t L^\infty_x(A_{\leq -k_{\sigma'}})} (\sigma')^{-\frac{1}{4}} \bigl\| \nabla P_{\sigma'} ( m(s) s^{\frac{1}{2}} \psi_s ) \bigr\|_{L^2_t L^2_x(A_{\leq -k_{\sigma'}})} \\
 &\quad + \biggl( \sum_{-k_{\sigma'} \leq \ell < 0} 2^\ell \bigl\| P_{\geq \sigma'} \calL_\Omega \ringbfA \bigr\|_{L^\infty_t L^\infty_r L^2_\theta (A_\ell)} \biggr) \sup_{-k_{\sigma'} \leq \ell < 0}  2^{-\frac{1}{2} \ell} \bigl\| \sinh^{\frac{1}{2}}(r) \nabla P_{\sigma'} (\langle \Omega \rangle  m(s) s^{\frac{1}{2}}   \psi_s ) \bigr\|_{L^2_t L^2_r  L^2_\theta (A_\ell)} \\
 &\quad + \bigl\| r^4 P_{\geq \sigma'} \calL_\Omega \ringbfA \bigr\|_{L^\infty_t L^\infty_r L^2_\theta (A_{\geq 0})} \bigl\| \sinh^{\frac{1}{2}}(r) \, r^{-2} \nabla P_{\sigma'} ( \langle \Omega \rangle m(s) s^{\frac{1}{2}}    \psi_s ) \bigr\|_{L^2_t L^2_r  L^2_\theta (A_{\geq 0})} \\
 &\lesssim \biggl( (\sigma')^{\frac{1}{2}} \bigl\| P_{\geq \sigma'} \calL_\Omega \ringbfA \bigr\|_{L^\infty_t L^\infty_x(\bbH^2)} + \sum_{-k_{\sigma'} \leq \ell < 0} 2^\ell \bigl\| P_{\geq \sigma'} \calL_\Omega \ringbfA \bigr\|_{L^\infty_t L^\infty_r L^2_\theta (A_\ell)} \\
 &\qquad \qquad \qquad \qquad \qquad \qquad + \bigl\| r^4 P_{\geq \sigma'} \calL_\Omega \ringbfA \bigr\|_{L^\infty_t L^\infty_r L^2_\theta (A_{\geq 0})} \biggr) \bigl\| \nabla P_{\sigma'} (\langle \Omega \rangle m(s) s^{\frac{1}{2}}  \psi_s ) \bigr\|_{ LE_{\sigma'} } \\
 &\lesssim \bigl( M\Omega_1 + M\Omega_2 + M\Omega_3 \bigr) (\sigma')^{-\frac{1}{2}} \bigl\| P_{\frac{\sigma'}{2}} (\langle \Omega \rangle m(s) s^{\frac{1}{2}}  \psi_s ) \bigr\|_{ LE_{\sigma'} } ,
\end{align*}
where
%%%%%%%
%%%%%%%
\begin{align*}
\begin{split}
&M\Omega_1:=(\sigma')^{\frac{1}{2}} \bigl\| P_{\geq \sigma'} \calL_\Omega \ringbfA \bigr\|_{L^\infty_t L^\infty_x(\bbH^2)},\\
&M\Omega_2:=\sum_{-k_{\sigma'} \leq \ell < 0} 2^\ell \bigl\| P_{\geq \sigma'} \calL_\Omega \ringbfA \bigr\|_{L^\infty_t L^\infty_r L^2_\theta (A_\ell)} ,\\
&M\Omega_3:=\bigl\| r^4 P_{\geq \sigma'} \calL_\Omega \ringbfA \bigr\|_{L^\infty_t L^\infty_r L^2_\theta (A_{\geq 0})}.
\end{split}
\end{align*}
If we can show that
\[
 M\Omega_j \lesssim \|\psi_s\|_{\calS} \, \text{ for } j = 1, 2, 3,
\]
the desired bound~\eqref{equ:IOmegaHF2_bound} follows by Schur's test from
\begin{align*}
 &\int_\sigma^1 \sigma^{\frac{1}{4}} \bigl\| ( P_{\geq \sigma'} \calL_\Omega \ringA^k) \nabla_k P_{\sigma'} ( m(s) s^{\frac{1}{2}} \psi_s ) \bigr\|_{LE_{\sigma'}^\ast} \, \frac{\ud \sigma'}{\sigma'} \\
 &\lesssim \int_\sigma^1 \Bigl( \frac{\sigma}{\sigma'} \Bigr)^{\frac{1}{4}} \bigl( M\Omega_1 + M\Omega_2 + M\Omega_3 \bigr) (\sigma')^{-\frac{1}{4}} \bigl\| P_{\frac{\sigma'}{2}} ( m(s) s^{\frac{1}{2}} \langle \Omega \rangle \psi_s ) \bigr\|_{ LE_{\sigma'} }\dsigmap. 
\end{align*}
Now the desired bounds on $M\Omega_j$, $j = 1, 2, 3$, are precisely the contents of \eqref{eq:MOmega1}, \eqref{eq:MOmega2}, \eqref{eq:MOmega3} in Lemma~\ref{lem:MOmegaestimates}. The treatment of the first term on the right-hand side of \eqref{eq:IOmegaHF2decomp} is similar where we instead use estimates \eqref{eq:tilMOmega1}, \eqref{eq:tilMOmega2}, \eqref{eq:tilMOmega3} in Lemma~\ref{lem:MOmegaestimates}. This completes the proof of \eqref{equ:IOmegaHF2_bound}.

Turning to the first term on the right-hand side of \eqref{eq:IOmegaHF12} we would like to prove
\begin{equation} \label{equ:IOmegaHF1_bound}
 \Bigl\| \bigl\| I\Omega_{1} \bigr\|_{LE_\sigma^\ast} \Bigr\|_{L^\infty_{\frac{\ud s}{s}} \cap L^2_{\frac{\ud s}{s}} L^2_{\frac{\ud \sigma}{\sigma}} (0, \frac{1}{2})} \lesssim \| \psi_s \|_{\calS}^2. 
\end{equation}
To this end we use Lemma~\ref{l:bern1} with $p\in(1,2)$ fixed, and upon noting that $-k_\sigma \geq -k_{\sigma'}$ for $0 < \sigma' \leq \sigma$, write 
\begin{align*}
 \bigl\| I\Omega_{1} \bigr\|_{LE_\sigma^\ast} &\lesssim \int_0^\sigma \sigma^{-\alpha} \bigl\| ( P_{\geq \sigma'} \calL_\Omega \ringbfA ) P_{\sigma'} ( m(s) s^{\frac{1}{2}} \psi_s ) \bigr\|_{L^2_t L^p_x(A_{\leq -k_{\sigma'}})} \, \frac{\ud \sigma'}{\sigma'} \\
  &\quad + \int_0^\sigma \sigma^{-\alpha} \sum_{-k_{\sigma'} \leq \ell < -k_\sigma} \bigl\| ( P_{\geq \sigma'} \calL_\Omega \ringbfA ) P_{\sigma'} ( m(s) s^{\frac{1}{2}} \psi_s ) \bigr\|_{L^2_t L^p_x(A_{\ell})} \, \frac{\ud \sigma'}{\sigma'} \\
  &\quad + \int_0^\sigma \sigma^{-\frac{1}{4}} \sum_{-k_\sigma \leq \ell < 0} 2^{\frac{1}{2} \ell} \bigl\| ( P_{\geq \sigma'} \calL_\Omega \ringbfA ) P_{\sigma'} ( m(s) s^{\frac{1}{2}} \psi_s ) \bigr\|_{L^2_t L^2_x(A_{\ell})} \, \frac{\ud \sigma'}{\sigma'} \\
  &\quad + \int_0^\sigma \sigma^{-\frac{1}{4}} \bigl\| r^2 ( P_{\geq \sigma'} \calL_\Omega \ringbfA ) P_{\sigma'} ( m(s) s^{\frac{1}{2}} \psi_s ) \bigr\|_{L^2_t L^2_x(A_{\geq 0})} \, \frac{\ud \sigma'}{\sigma'} \\
 &\lesssim \int_0^\sigma  \sigma^{-\alpha} \bigl\| P_{\geq \sigma'} \calL_\Omega \ringbfA \bigr\|_{L^\infty_t L^{\frac{2p}{2-p}}_x(\bbH^2)} \bigl\| P_{\sigma'} ( m(s) s^{\frac{1}{2}} \psi_s ) \bigr\|_{L^2_t L^2_x(A_{\leq -k_{\sigma'}})} \, \frac{\ud \sigma'}{\sigma'} \\
 &\quad + \int_0^\sigma \sigma^{-\alpha} \sum_{-k_{\sigma'} \leq \ell < -k_\sigma} 2^{\frac{1}{2} \ell} \bigl\| P_{\geq \sigma'} \calL_\Omega \ringbfA \bigr\|_{L^\infty_t L^{\frac{2p}{2-p}}_x(A_\ell)} 2^{-\frac{1}{2} \ell} \bigl\| P_{\sigma'} ( m(s) s^{\frac{1}{2}} \psi_s ) \bigr\|_{L^2_t L^2_x(A_{\ell})} \, \frac{\ud \sigma'}{\sigma'} \\
 &\quad + \int_0^\sigma \sigma^{-\frac{1}{4}} \sum_{-k_\sigma \leq \ell < 0} 2^\ell \bigl\| P_{\geq \sigma'} \calL_\Omega \ringbfA \bigr\|_{L^\infty_t  L^\infty_r L^2_\theta(A_\ell)} 2^{-\frac{1}{2} \ell} \bigl\| \sinh^{\frac{1}{2}}(r) P_{\sigma'} ( m(s) s^{\frac{1}{2}} \psi_s ) \bigr\|_{L^2_t  L^2_r L^\infty_\theta  (A_{\ell})} \, \frac{\ud \sigma'}{\sigma'} \\
 &\quad + \int_0^\sigma \sigma^{-\frac{1}{4}} \bigl\| r^4 ( P_{\geq \sigma'} \calL_\Omega \ringbfA ) \bigr\|_{L^\infty_t  L^\infty_r L^2_\theta (A_{\geq 0})} \bigl\| \sinh^{\frac{1}{2}}(r) \, r^{-2} P_{\sigma'} ( m(s) s^{\frac{1}{2}} \psi_s ) \bigr\|_{L^2_t  L^2_r L^\infty_\theta  (A_{\geq 0})} \, \frac{\ud \sigma'}{\sigma'}.
\end{align*}
Applying $L^\infty_\theta$-Sobolev embedding on $\bbS^1$ to bound
%%%%%%%
%%%%%%%
\begin{align*}
\begin{split}
 \bigl\| \sinh^{\frac{1}{2}}(r) P_{\sigma'} ( m(s) s^{\frac{1}{2}} \psi_s ) \bigr\|_{L^2_t  L^2_r L^\infty_\theta  (A_{\ell})} &\lesssim \bigl\| P_{\sigma'} ( \jap{\Omega}m(s) s^{\frac{1}{2}} \psi_s ) \bigr\|_{L^2_t L^{2}_{x} (A_{\ell})},\\
 \bigl\| \sinh^{\frac{1}{2}}(r) \, r^{-2} P_{\sigma'} ( m(s) s^{\frac{1}{2}} \psi_s ) \bigr\|_{L^2_t  L^2_r L^\infty_\theta  (A_{\geq 0})} &\lesssim \bigl\| r^{-2} P_{\sigma'} ( \jap{\Omega}m(s) s^{\frac{1}{2}} \psi_s ) \bigr\|_{L^2_t L^{2}_{x} (A_{\geq 0})},
\end{split}
\end{align*}
the desired estimate~\eqref{equ:IOmegaHF1_bound} then follows by Schur's test (as for the term $I_{1}$ in the proof of Lemma~\ref{lem:magnetic1}) using the bounds \eqref{eq:MOmega2}, \eqref{eq:MOmega3}, \eqref{eq:MOmega4}, and \eqref{eq:MOmega5} from Lemma~\ref{lem:MOmegaestimates}. This completes the proof of \eqref{equ:IOmegaHF1_bound}, and hence of the high-frequency estimate \eqref{equ:high_freq_termI_Omega}.

The proof of the low-frequency estimate \eqref{equ:low_freq_termI_Omega} is based on that of \eqref{equ:low_freq_termI} in the proof of Lemma~\ref{lem:magnetic1}, using similar modifications as above to separate the angular and radial directions, and where in the process we use the estimates \eqref{eq:MOmega3} and \eqref{eq:MOmega4} in Lemma~\ref{lem:MOmegaestimates}. We omit the details.
\end{proof}

\subsection{Analysis of the quadratic, cubic, and higher terms} \label{subsec:7non-magnetic}
In this subsection we complete our analysis by proving Lemmas~\ref{lem:cubic_and_higher} and~\ref{lem:quadratic_terms}.
\begin{proof}[Proof of Lemma~\ref{lem:cubic_and_higher}]
 The proofs of the asserted estimates are all straightforward consequences of H\"older's inequality and the previously established bounds on $\ringA$, $A_t$, and $\ringPsi$ in Lemma~\ref{lem:bounds_ringPsi}, Lemma~\ref{lem:bounds_ringA_Str}, and Lemma~\ref{lem:bounds_Psi_A_t}. We begin with the proof of~\eqref{equ:nonlinear_estimates_cubic1}. From~\eqref{equ:bound_ringPsi_Linftys_Str_2} we obtain 
 \begin{align*}
  \| | \ringPsi|^2 \jap{\Omega} (m(s) s^{\frac{1}{2}} \psi_s) \|_{L^\infty_{\ds} \cap L^2_{\ds}L^{\frac{4}{3}}_t L^{\frac{4}{3}}_x} \lesssim \| \ringPsi \|_{L^\infty_\ds L^4_t L^4_x}^2 \| \langle \Omega \rangle (m(s) s^{\frac{1}{2}} \psi_s)\|_{L^\infty_\ds \cap L^2_\ds L^4_t L^4_x} \lesssim \|\psi_s\|_{\calS}^3.
 \end{align*}
 Analogously, using~\eqref{equ:bound_ringPsi_Linftys_Str_2}, the estimate~\eqref{equ:nonlinear_estimates_cubic2} follows from
 \begin{align*}
  \| |\calL_\Omega \ringPsi| |\ringPsi| (m(s) s^{\frac{1}{2}} \psi_s) \|_{L^\infty_{\ds} \cap L^2_{\ds}L^{\frac{4}{3}}_t L^{\frac{4}{3}}_x} \lesssim \| \calL_\Omega \ringPsi \|_{L^\infty_\ds L^4_t L^4_x} \| \ringPsi \|_{L^\infty_\ds L^4_t L^4_x} \| m(s) s^{\frac{1}{2}} \psi_s \|_{L^\infty_\ds \cap L^2_\ds L^4_t L^4_x} \lesssim \|\psi_s\|_{\calS}^3.
 \end{align*}
 Next, invoking the bounds~\eqref{eq:bound_ringAL_Linftys_Str} and~\eqref{eq:bound_ringAQ_Linftys_Str}, we infer the estimate~\eqref{equ:nonlinear_estimates_cubic3} from
 \begin{align*}
  \| \ringA_k \ringA^k \jap{\Omega} (m(s) s^{\frac{1}{2}} \psi_s) \|_{L^\infty_{\ds} \cap L^2_{\ds}L^{\frac{4}{3}}_t L^{\frac{4}{3}}_x} \lesssim \| \ringA \|_{L^\infty_{\ds} L^4_t L^4_x}^2 \| \jap{\Omega} (m(s) s^{\frac{1}{2}} \psi_s) \|_{L^\infty_{\ds} \cap L^2_{\ds} L^4_t L^4_x} \lesssim \|\psi_s\|_{\calS}^3
 \end{align*}
 and the estimate~\eqref{equ:nonlinear_estimates_cubic4} follows similarly from
 \begin{align*}
  \| \calL_\Omega \ringA_k \ringA^k (m(s) s^{\frac{1}{2}} \psi_s) \|_{L^\infty_{\ds} \cap L^2_{\ds}L^{\frac{4}{3}}_t L^{\frac{4}{3}}_x} &\lesssim \| \calL_\Omega \ringA \|_{L^\infty_{\ds} L^4_t L^4_x} \| \ringA \|_{L^\infty_{\ds} L^4_t L^4_x} \| \jap{\Omega} (m(s) s^{\frac{1}{2}} \psi_s) \|_{L^\infty_{\ds} \cap L^2_{\ds} L^4_t L^4_x} \\ &\lesssim \|\psi_s\|_{\calS}^3.
 \end{align*}
 Moreover, using~\eqref{eq:bound_At_Linftys_L2tx}, we conclude that
 \begin{align*}
  \| A_t \jap{\Omega} (m(s) s^{\frac{1}{2}} \psi_s) \|_{L^\infty_{\ds} \cap L^2_{\ds} L^{\frac{4}{3}}_t L^{\frac{4}{3}}_x} &\lesssim \| A_t \|_{L^\infty_{\ds} L^2_t L^2_x} \| \jap{\Omega} (m(s) s^{\frac{1}{2}} \psi_s) \|_{L^\infty_{\ds} \cap L^2_{\ds} L^4_t L^4_x} \lesssim \|\psi_s\|_{\calS}^3,
 \end{align*}
 which proves~\eqref{equ:nonlinear_estimates_cubic5}. In a similar manner, we can use~\eqref{eq:bound_At_Linftys_L2tx} to deduce~\eqref{equ:nonlinear_estimates_cubic6}. Finally, the estimate~\eqref{equ:nonlinear_estimates_cubic7} is a consequence of~\eqref{eq:bound_nabla_ringAQ_Linftys_L2tx} and 
 \begin{align*}
  \| (\nabla_k \AQ^k) \jap{\Omega} (m(s) s^{\frac{1}{2}} \psi_s) \|_{L^\infty_{\ds} \cap L^2_{\ds}L^{\frac{4}{3}}_t L^{\frac{4}{3}}_x} \lesssim \| \nabla \ringA_Q \|_{L^\infty_{\ds} L^2_t L^2_x} \| \jap{\Omega} (m(s) s^{\frac{1}{2}} \psi_s) \|_{L^\infty_{\ds} \cap L^2_{\ds}L^4_t L^4_x} \lesssim \|\psi_s\|_{\calS}^3.
 \end{align*}
 The last estimate~\eqref{equ:nonlinear_estimates_cubic8} follows analogously.
\end{proof}
%%%%%%%%%%%%%
%%%%%%%%%%%%%
Finally, we prove Lemma~\ref{lem:quadratic_terms}.

%%%%%%%%%%%%%
%%%%%%%%%%%%%
\begin{proof}[Proof of Lemma~\ref{lem:quadratic_terms}]
 The asserted estimates are again straightforward consequences of H\"older's inequality and the previously established bounds on $\ringA$ and $\ringPsi$. In order to prove~\eqref{equ:nonlinear_estimates_quadratic1}, we use~\eqref{equ:bound_ringPsi_Linftys_Str_2} to conclude that
 \begin{align*}
  \| |\Psi^\infty| |\ringPsi| \jap{\Omega} (m(s) s^{\frac{1}{2}} \psi_s) \|_{L^\infty_{\ds} \cap L^2_{\ds} L^{\frac{4}{3}}_t L^{\frac{4}{3}}_x} \lesssim \|\Psi^\infty\|_{L^\infty_x} \| \ringPsi \|_{L^\infty_{\ds} L^{\frac{8}{3}}_t L^{\frac{8}{3}}_x} \| \jap{\Omega} (m(s) s^{\frac{1}{2}} \psi_s) \|_{L^\infty_{\ds} \cap L^2_{\ds} L^{\frac{8}{3}}_t L^{\frac{8}{3}}_x} \lesssim \|\psi_s\|_{\calS}^2.
 \end{align*}
 The estimate~\eqref{equ:nonlinear_estimates_quadratic2} follows analogously from~\eqref{equ:bound_ringPsi_Linftys_Str_2}. Next, we invoke~\eqref{eq:bound_ringAL_Linftys_Str} and \eqref{eq:bound_ringAQ_Linftys_Str} to deduce 
 \begin{align*}
  \| A^\infty_k \ringA^k\jap {\Omega} (m(s) s^{\frac{1}{2}} \psi_s) \|_{L^\infty_{\ds} \cap L^2_{\ds}L^{\frac{4}{3}}_t L^{\frac{4}{3}}_x} &\lesssim \| \bfA^\infty \|_{L^\infty_x} \| \ringA \|_{L^\infty_{\ds} L^{\frac{8}{3}}_t L^{\frac{8}{3}}_x} \| \jap{\Omega} (m(s) s^{\frac{1}{2}} \psi_s) \|_{L^\infty_{\ds} \cap L^2_{\ds} L^{\frac{8}{3}}_t L^{\frac{8}{3}}_x} \lesssim \|\psi_s\|_{\calS}^2,
 \end{align*}
 which establishes~\eqref{equ:nonlinear_estimates_quadratic3}. The estimate~\eqref{equ:nonlinear_estimates_quadratic4} follows similarly. Finally, we use~\eqref{eq:bound_nabla_ringAL_Linftys_Str} to deduce~\eqref{equ:nonlinear_estimates_quadratic5} from
 \begin{align*}
  \| (\nabla_k\AL^k) \jap{\Omega} (m(s) s^{\frac{1}{2}} \psi_s) \|_{L^\infty_{\ds} \cap L^2_{\ds}L^{\frac{4}{3}}_t L^{\frac{4}{3}}_x} \lesssim \| \nabla \ringA_L \|_{L^\infty_{\ds} L^{\frac{8}{3}}_t L^{\frac{8}{3}}_x} \| \jap{\Omega} (m(s) s^{\frac{1}{2}} \psi_s) \|_{L^\infty_{\ds} \cap L^2_{\ds}L^{\frac{8}{3}}_t L^{\frac{8}{3}}_x} \lesssim \|\psi_s\|_{\calS}^2.
 \end{align*}
 Then \eqref{equ:nonlinear_estimates_quadratic6} can be proven analogously.
\end{proof}

\bibliographystyle{plain}
\bibliography{researchbib}

\bigskip

\centerline{\scshape Andrew Lawrie}
\smallskip
{\footnotesize
 \centerline{Department of Mathematics, Massachusetts Institute of Technology}
\centerline{77 Massachusetts Ave, 2-267, Cambridge, MA 02139, U.S.A.}
\centerline{\email{ alawrie@mit.edu}}
} 

\medskip

\centerline{\scshape Jonas L\"uhrmann}
\smallskip
{\footnotesize
 \centerline{Department of Mathematics, Texas A\&M University}
 \centerline{Blocker 218B, College Station, TX 77843-3368, U.S.A.}
 \centerline{\email{luhrmann@math.tamu.edu}}
}

\medskip

\centerline{\scshape Sung-Jin Oh}
\smallskip
{\footnotesize
 \centerline{Department of Mathematics, UC Berkeley}
\centerline{Evans Hall 970, Berkeley, CA 94720-3840, U.S.A.}
\centerline{\email{sjoh@math.berkley.edu}}
} 

\medskip

\centerline{\scshape Sohrab Shahshahani}
\medskip
{\footnotesize
% please put the address of the first author
 \centerline{Department of Mathematics, University of Massachusetts, Amherst}
\centerline{710 N. Pleasant Street,
Amherst, MA 01003-9305, U.S.A.}
\centerline{\email{sohrab@math.umass.edu}}
}

\end{document}